\definecolor{darkblue}{rgb}{0,0,0.7}
\newtheorem{theorem}{Theorem}[chapter]
\newtheorem{lemma}[theorem]{Lemma}
\newtheorem{proposition}[theorem]{Proposition}
\newtheorem{corollary}[theorem]{Corollary}
\theoremstyle{definition}
\newtheorem{definition}[theorem]{Definition}
\newtheorem{example}[theorem]{Example}
\newtheorem{examples}[theorem]{Examples}
\newtheorem{remark}[theorem]{Remark}
\newtheorem{aside}[theorem]{Aside}
\newcommand\maps{{\colon}}
\newcommand{\define}[1]{{\bf \boldmath #1}}
  \newcommand{\R}{{\mathbb{R}}}
  \newcommand{\hooklongrightarrow}{\lhook\joinrel\longrightarrow}
  \newcommand{\linsub}{\operatorname{LinSub}}
  \newcommand{\lgraph}{\operatorname{Graph}}
  \newcommand{\res}{\operatorname{Res}}
  \newcommand{\FinSet}{\mathrm{FinSet}}
  \newcommand{\Set}{\mathrm{Set}}
  \newcommand{\opp}{\mathrm{op}}
  \newcommand{\FinVect}{\mathrm{FinVect}}
  \newcommand{\Vect}{\mathrm{Vect}}
  \newcommand{\LinRel}{\mathrm{LinRel}}
  \DeclareMathOperator\corel{{Corel}}
  \DeclareMathOperator\cospan{{Cospan}}
\newcommand\z{{\mathbb Z}}
\renewcommand\k{k}   
\newcommand\nn{{\mathbb N}}
\newcommand\bb{{\mathscr B}}
\newcommand{\idn}{\mathrm{id}}
\newcommand{\tw}{\mathrm{tw}}
\newcommand{\tm}{\tau}
\newcommand\pr{{\R[s,s^{-1}]}}
\newcommand\pk{{\k[s,s^{-1}]}}
\newcommand{\Defeq}{\stackrel{\mathrm{def}}{=}}
\newcommand{\vectfun}{\theta}
\newcommand{\cospanfun}{\Theta}
\newcommand{\cospanfunrest}{\overline{\Theta}}
\newcommand{\ha}{\mathbb{HA}}
\newcommand{\ih}{\mathbb{IH}}
\newcommand{\ihcsp}{\ih^{\mathrm{Csp}}}
\newcommand{\ihcor}{\ih^{\mathrm{Cor}}}
\DeclareMathOperator{\mat}{\mathrm{Mat}}
\DeclareMathOperator{\vect}{\mathrm{Vect}}
\DeclareMathOperator{\ltids}{\mathrm{LTI}}
\DeclareMathOperator{\linrel}{\mathrm{LinRel}}
\DeclareMathOperator{\fmod}{\mathrm{FMod}}
\DeclareMathOperator{\bit}{bit}
\newcommand\addgen{\lower8pt\hbox{$\includegraphics[height=0.7cm]{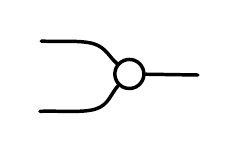}$}}
\newcommand\zerogen{\lower5pt\hbox{$\includegraphics[height=0.5cm]{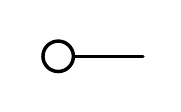}$}}
\newcommand\copygen{\lower8pt\hbox{$\includegraphics[height=0.7cm]{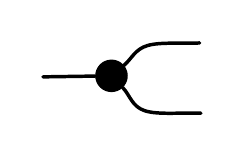}$}}
\newcommand\discardgen{\lower5pt\hbox{$\includegraphics[height=0.5cm]{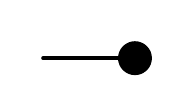}$}}
\newcommand\delaygen{\lower6pt\hbox{$\includegraphics[height=0.6cm]{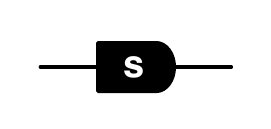}$}}
\newcommand\minonegen{\lower6pt\hbox{$\includegraphics[height=0.6cm]{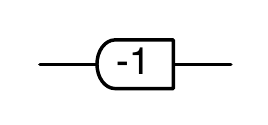}$}}
\newcommand\delayopgen{\lower6pt\hbox{$\includegraphics[height=0.6cm]{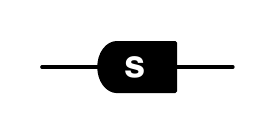}$}}
\newcommand\scalargen{\lower6pt\hbox{$\includegraphics[height=0.6cm]{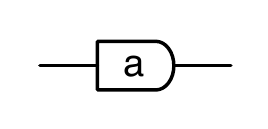}$}}
\newcommand\addopgen{\lower8pt\hbox{$\includegraphics[height=0.7cm]{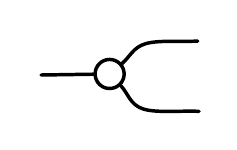}$}}
\newcommand\zeroopgen{\lower5pt\hbox{$\includegraphics[height=0.5cm]{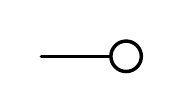}$}}
\newcommand\copyopgen{\lower8pt\hbox{$\includegraphics[height=0.7cm]{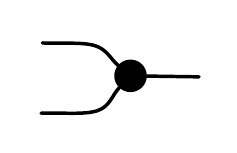}$}}
\newcommand\discardopgen{\lower5pt\hbox{$\includegraphics[height=0.5cm]{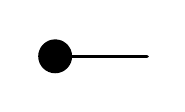}$}}
\newcommand\scalaropgen{\lower6pt\hbox{$\includegraphics[height=0.6cm]{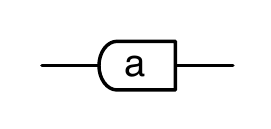}$}}
\newcommand\delaygenl{\lower6pt\hbox{$\includegraphics[height=0.6cm]{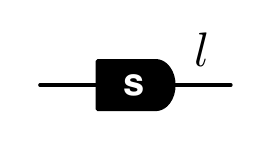}$}}
\newcommand\delayopgenl{\lower6pt\hbox{$\includegraphics[height=0.6cm]{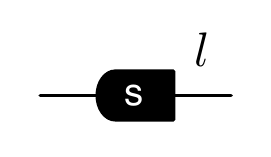}$}}
\newcommand\delaygenk{\lower6pt\hbox{$\includegraphics[height=0.6cm]{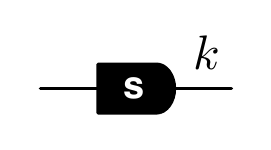}$}}
\newcommand\delayopgenk{\lower6pt\hbox{$\includegraphics[height=0.6cm]{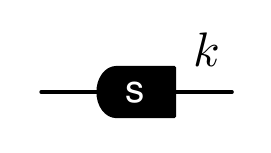}$}}
\newcommand\twist{\lower6pt\hbox{$\includegraphics[height=0.6cm]{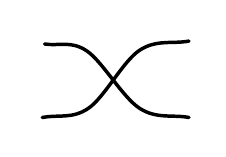}$}}
\newcommand\id{\lower3pt\hbox{$\includegraphics[height=0.3cm]{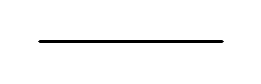}$}}
\newcommand\syntax{\mathbb{S}}
     \def\moverlay{\mathpalette\mov@rlay}
     \def\mov@rlay#1#2{\leavevmode\vtop{%
        \baselineskip\z@skip \lineskiplimit-\maxdimen
        \ialign{\hfil$#1##$\hfil\cr#2\crcr}}}
\newcommand\twarr[2]{%
\mathrel{\mathop{\moverlay{\scriptstyle\xrightarrow{\,#1\,}\cr{\lower.2em\hbox{$\scriptstyle{}_{#2}$}}}}}}
\newcommand{\dtrans}[2]{\hbox{$\;\twarr{#1}{#2}\;$}}
\newcommand{\labelSep}{\,}
\newcommand\C{{\mathbb C}}
\newcommand\F{{\mathbb F}}
\newcommand{\vectf}[1]{{\F^{#1} \oplus {(\F^{#1})}^\ast}}
\newcommand{\mc}{\mathcal}
\renewcommand{\a}{\alpha}
\newcommand{\s}{\sigma}
\newcommand{\ot}{\otimes}
\newcommand{\im}{\mathrm{Im}\,}
\newcommand{\Circ}{\mathrm{Circ}}
\newcommand{\LagrRel}{\mathrm{LagrRel}}
\title{The Algebra of Open and \\[1ex]
	Interconnected Systems}   
\author{Brendan Fong}     
\begin{document}
\baselineskip=18pt plus1pt
\setcounter{secnumdepth}{3} 
\setcounter{tocdepth}{3}

\maketitle                 
\pagestyle{empty}

\begin{quotation}
\textit{For all those who have prepared food so I could eat and created homes so
  I could live over the past four years. You too have laboured to produce this; I
hope I have done your labours justice.}
\end{quotation}

\begin{abstract}
  Herein we develop category-theoretic tools for understanding network-style
  diagrammatic languages. The archetypal network-style diagrammatic language is
  that of electric circuits; other examples include signal flow graphs, Markov
  processes, automata, Petri nets, chemical reaction networks, and so on. The
  key feature is that the language is comprised of a number of \emph{components}
  with multiple (input/output) terminals, each possibly labelled with some type,
  that may then be connected together along these terminals to form a larger
  \emph{network}. The components form hyperedges between labelled vertices, and
  so a diagram in this language forms a hypergraph. We formalise the
  compositional structure by introducing the notion of a hypergraph category.
  Network-style diagrammatic languages and their semantics thus form hypergraph
  categories, and semantic interpretation gives a hypergraph functor. 

  The first part of this thesis develops the theory of hypergraph categories. In
  particular, we introduce the tools of decorated cospans and corelations.
  Decorated cospans allow straightforward construction of hypergraph categories
  from diagrammatic languages: the inputs, outputs, and their composition are
  modelled by the cospans, while the `decorations' specify the components
  themselves. Not all hypergraph categories can be constructed, however, through
  decorated cospans. Decorated corelations are a more powerful version that
  permits construction of all hypergraph categories and hypergraph functors.
  These are often useful for constructing the semantic categories of
  diagrammatic languages and functors from diagrams to the semantics. To
  illustrate these principles, the second part of this thesis details
  applications to linear time-invariant dynamical systems and passive linear
  networks.
\end{abstract}

\pagestyle{plain}
\begin{romanpages} 
\tableofcontents
\phantomsection
\addcontentsline{toc}{chapter}{Preface}
\chapter*{Preface}

This is a thesis in the mathematical sciences, with emphasis on the mathematics.
But before we get to the category theory, I want to say a few words about the
scientific tradition in which this thesis is situated.

Mathematics is the language of science. Twinned so intimately with physics, over
the past centuries mathematics has become a superb---indeed, unreasonably
effective---language for understanding planets moving in space, particles in a
vacuum, the structure of spacetime, and so on.  Yet, while Wigner speaks of the
unreasonable effectiveness of mathematics in the natural sciences \cite{Wig60},
equally eminent mathematicians, not least Gelfand, speak of the unreasonable
\emph{ineffectiveness} of mathematics in biology \cite{Mon07} and related
fields. Why such a difference?

A contrast between physics and biology is that while physical systems can often
be studied in isolation---the proverbial particle in a vacuum---, biological
systems are necessarily situated in their environment. A heart belongs in a
body, an ant in a colony. One of the first to draw attention to this contrast
was Ludwig von Bertalanffy, biologist and founder of general systems theory, who
articulated the difference as one between closed and open systems: 
\begin{quote}
  Conventional physics deals only with closed systems, i.e. systems which are
  considered to be isolated from their environment. \dots\ However, we find
  systems which by their very nature and definition are not closed systems.
  Every living organism is essentially an open system. It maintains itself in a
  continuous inflow and outflow, a building up and breaking down of components,
  never being, so long as it is alive, in a state of chemical and thermodynamic
  equilibrium but maintained in a so-called steady state which is distinct from
  the latter \cite{Ber68}.
\end{quote}
While the ambitious generality of general systems theory has proved difficult,
von Bertalanffy's philosophy has had great impact in his home field of biology,
leading to the modern field of systems biology. Half a century later, Dennis
Noble, another great pioneer of systems biology and the originator of the first
mathematical model of a working heart, describes the shift as one from reduction
to integration.
\begin{quote}
  Systems biology\dots\ is about putting together rather than taking apart,
  integration rather than reduction. It requires that we develop ways of
  thinking about integration that are as rigorous as our reductionist
  programmes, but different \cite{Nob06}.
\end{quote}
In this thesis we develop rigorous ways of thinking about integration or, as we
refer to it, interconnection.

Interconnection and openness are tightly related. Indeed, openness implies that
a system may be interconnected with its environment. But what is an environment
but comprised of other systems? Thus the study of open systems becomes the study
of how a system changes under interconnection with other systems.

To model this, we must begin by creating language to describe the
interconnection of systems. While reductionism hopes that phenomena can be
explained by reducing them to ``elementary units investigable independently of
each other'' \cite{Ber68}, this philosophy of integration introduces as an
additional and equal priority the investigation of the way these units are
interconnected. As such, this thesis is predicated on the hope that the meaning
of an expression in our new language is determined by the meanings of its
constituent expressions together with the syntactic rules combining them. This
is known as the principle of compositionality. 

Also commonly known as Frege's principle, the principle of compositionality both
dates back to Ancient Greek and Vedic philosophy, and is still the subject of
active research today \cite{Jan86,Sza13}. More recently, through the work of
Montague \cite{Mon70} in natural language semantics and Strachey and Scott
\cite{SS71} in programming language semantics, the principle of compositionality
has found formal expression as the dictum that the interpretation of a language
should be given by a homomorphism between an algebra of syntactic
representations and an algebra of semantic objects. We too shall follow this
route.

The question then arises: what do we mean by algebra? This mathematical question
leads us back to our scientific objectives: what do we mean by system? Here we
must narrow, or at least define, our scope. We give some examples. The
investigations of this thesis began with electrical circuits and their diagrams,
and we will devote significant time to exploring their compositional
formulation. We discussed biological systems above, and our notion of system
includes these, modelled say in the form of chemical reaction networks or Markov
processes, or the compartmental models of epidemiology, population biology, and
ecology. From computer science, we consider Petri nets, automata, logic
circuits, and similar.  More abstractly, our notion of system encompasses
matrices and systems of differential equations. 

Drawing together these notions of system are well-developed diagrammatic
representations based on network diagrams---that is, topological graphs. We call
these network-style diagrammatic languages. In the abstract, by system we shall
simply mean that which can be represented by a box with a collection of terminals,
perhaps of different types, through which it interfaces with the surroundings.
Concretely, one might envision a circuit diagram with terminals, such as
\[
\begin{tikzpicture}[circuit ee IEC, set resistor graphic=var resistor IEC graphic]
\node[contact] (I1) at (0,0) {};
\node[contact] (O1) at (3,0) {};
\node (input) at (-2,0) {\small{\textsf{terminal}}};
\node (output) at (5,0) {\small{\textsf{terminal}}};
\draw (I1) 	to [resistor] node [above]
{$1\Omega$} (O1);
\path[color=gray, very thick, shorten >=10pt, ->, >=stealth] (input) edge (I1);	
\path[color=gray, very thick, shorten >=10pt, ->, >=stealth] (output) edge (O1);
\end{tikzpicture}
\]
or
\[
\begin{tikzpicture}[circuit ee IEC, set resistor graphic=var resistor IEC graphic]
\node[contact] (I1) at (0,2) {};
\node[contact] (I2) at (0,0) {};
\coordinate (int1) at (2.83,1) {};
\coordinate (int2) at (5.83,1) {};
\node[contact] (O1) at (8.66,2) {};
\node[contact] (O2) at (8.66,0) {};
\node (input) at (-2,1) {\small{\textsf{terminals}}};
\node (output) at (10.66,1) {\small{\textsf{terminals}}};
\draw (I1) 	to [resistor] node [label={[label distance=2pt]85:{$1\Omega$}}] {} (int1);
\draw (I2)	to [resistor] node [label={[label distance=2pt]275:{$1\Omega$}}] {} (int1)
				to [resistor] node [label={[label distance=3pt]90:{$2\Omega$}}] {} (int2);
\draw (int2) 	to [resistor] node [label={[label distance=2pt]95:{$1\Omega$}}] {} (O1);
\draw (int2)		to [resistor] node [label={[label distance=2pt]265:{$3\Omega$}}] {} (O2);
\path[color=gray, very thick, shorten >=10pt, ->, >=stealth, bend left] (input) edge (I1);		\path[color=gray, very thick, shorten >=10pt, ->, >=stealth, bend right] (input) edge (I2);		
\path[color=gray, very thick, shorten >=10pt, ->, >=stealth, bend right] (output) edge (O1);
\path[color=gray, very thick, shorten >=10pt, ->, >=stealth, bend left] (output) edge (O2);
\end{tikzpicture}
\]
The algebraic structure of interconnection is then simply the structure that
results from the ability to connect terminals of one system with terminals of
another. This graphical approach motivates our language of interconnection:
indeed, these diagrams will be the expressions of our language.

We claim that the existence of a network-style diagrammatic language to
represent a system implies that interconnection is inherently important in
understanding the system. Yet, while each of these example notions of system are
well-studied in and of themselves, their compositional, or algebraic, structure
has received scant attention. In this thesis, we study an algebraic structure
called a hypergraph category, and argue that this is the relevant algebraic
structure for modelling interconnection of open systems. 

Given these pre-existing diagrammatic formalisms and our visual intuition,
constructing algebras of syntactic representations is thus rather
straightforward. The semantics and their algebraic structure are more subtle. 

In some sense our semantics is already given to us too: in studying these
systems as closed systems, scientists have already formalised the meaning of
these diagrams. But we have shifted from a closed perspective to an open one,
and we need our semantics to also account for points of interconnection.

Taking inspiration from Willems' behavioural approach \cite{Wi} and Deutsch's
constructor theory \cite{Deu}, in this thesis I advocate the following position.
First, at each terminal of an open system we may make measurements appropriate
to the type of terminal. Given a collection of terminals, the \emph{universum}
is then the set of all possible measurement outcomes. Each open system has a
collection of terminals, and hence a universum. The semantics of an open system
is the subset of measurement outcomes on the terminals that are permitted by the
system. This is known as the \emph{behaviour} of the system.

For example, consider a resistor of resistance $r$. This has two terminals---the
two ends of the resistor---and at each terminal, we may measure the potential
and the current. Thus the universum of this system is the set
$\mathbb{R}\oplus\mathbb{R}\oplus\mathbb{R}\oplus\mathbb{R}$, where the summands
represent respectively the potentials and currents at each of the two terminals.
The resistor is governed by Kirchhoff's current law, or conservation of charge,
and Ohm's law.  Conservation of charge states that the current flowing into one
terminal must equal the current flowing out of the other terminal, while Ohm's
law states that this current will be proportional to the potential difference,
with constant of proportionality $1/r$. Thus the behaviour of the resistor is
the set 
\[
  \big\{\big(\phi_1,\phi_2,
    -\tfrac1r(\phi_2-\phi_1),\tfrac1r(\phi_2-\phi_1)\big)\,\big\vert\,
    \phi_1,\phi_2 \in \mathbb{R}\big\}.
\]
Note that in this perspective a law such as Ohm's law is a mechanism for
partitioning \emph{behaviours} into possible and impossible
behaviours.\footnote{That is, given a universum $\mathcal U$ of trajectories, a
  behaviour of a system is an element of the power set $\mathcal P(\mathcal
U)$ representing all possible measurements of this system, and a law or principle
is an element of $\mathcal P(\mathcal P(\mathcal U))$ representing all possible
behaviours of a class of systems.}

Interconnection of terminals then asserts the identification of the variables at
the identified terminals. Fixing some notion of open system and subsequently an
algebra of syntactic representations for these systems, our approach, based on
the principle of compositionality, requires this to define an algebra of
semantic objects and a homomorphism from syntax to semantics. The first part of
this thesis develops the mathematical tools necessary to pursue this vision for
modelling open systems and their interconnection. 

The next goal is to demonstrate the efficacy of this philosophy in applications.
At core, this work is done in the faith that the right language allows deeper
insight into the underlying structure. Indeed, after setting up such a language
for open systems there are many questions to be asked: Can we find a sound and
complete logic for determining when two syntactic expressions have the same
semantics? Suppose we have systems that have some property, for example
controllability.  In what ways can we interconnect controllable systems so that
the combined system is also controllable? Can we compute the semantics of a
large system quicker by computing the semantics of subsystems and then composing
them?  If I want a given system to acheive a specified trajectory, can we
interconnect another system to make it do so? How do two different notions of
system, such as circuit diagrams and signal flow graphs, relate to each other?
Can we find homomorphisms between their syntactic and semantic algebras? In the
second part of this thesis we explore some applications in depth, providing
answers to questions of the above sort.

\subsection*{Outline of this thesis}

This thesis is divided into two parts. Part \ref{part.maths}, comprising
Chapters \ref{ch.hypcats} to \ref{ch.deccorels}, focusses on mathematical
foundations. In it we develop the theory of hypergraph categories and a powerful
tool for constructing and manipulating them: decorated corelations.
Part~\ref{part.apps}, comprising Chapters \ref{ch.sigflow} to \ref{ch.further},
then discusses applications of this theory to examples of open systems.

The central refrain of this thesis is that the syntax and semantics of
network-style diagrammatic languages can be modelled by hypergraph categories.
These are introduced in Chapter \ref{ch.hypcats}. Hypergraph categories are
symmetric monoidal categories in which every object is equipped with the
structure of a special commutative Frobenius monoid in a way compatible with the
monoidal product. As we will rely heavily on properties of monoidal categories,
their functors, and their graphical calculus, we begin with a whirlwind review
of these ideas. We then provide a definition of hypergraph categories and their
functors, a strictification theorem, and an important example: the category of
cospans in a category with finite colimits.

Cospans are pairs of morphisms $X \to N \leftarrow Y$ with a common codomain.
In Chapter \ref{ch.deccospans} we introduce the idea of a decorated cospan, which
equips the apex $N$ with extra structure. Our motivating example is cospans of
finite sets decorated by graphs, as in the picture
\begin{center}
  \begin{tikzpicture}[auto,scale=2.15]
    \node[circle,draw,inner sep=1pt,fill=gray,color=gray]         (x) at (-1.4,-.43) {};
    \node at (-1.4,-.9) {$X$};
    \node[circle,draw,inner sep=1pt,fill]         (A) at (0,0) {};
    \node[circle,draw,inner sep=1pt,fill]         (B) at (1,0) {};
    \node[circle,draw,inner sep=1pt,fill]         (C) at (0.5,-.86) {};
    \node[circle,draw,inner sep=1pt,fill=gray,color=gray]         (y1) at (2.4,-.25) {};
    \node[circle,draw,inner sep=1pt,fill=gray,color=gray]         (y2) at (2.4,-.61) {};
    \node at (2.4,-.9) {$Y$};
    \path (B) edge  [bend right,->-] node[above] {0.2} (A);
    \path (A) edge  [bend right,->-] node[below] {1.3} (B);
    \path (A) edge  [->-] node[left] {0.8} (C);
    \path (C) edge  [->-] node[right] {2.0} (B);
    \path[color=gray, very thick, shorten >=10pt, shorten <=5pt, ->, >=stealth] (x) edge (A);
    \path[color=gray, very thick, shorten >=10pt, shorten <=5pt, ->, >=stealth] (y1) edge (B);
    \path[color=gray, very thick, shorten >=10pt, shorten <=5pt, ->, >=stealth] (y2) edge (B);
  \end{tikzpicture}
\end{center}
Here graphs are a proxy for expressions in a network-style diagrammatic
language. To give a bit more formal detail, let $\mathcal C$ be a category with
finite colimits, writing its coproduct $+$, and let $(\mathcal D, \otimes)$ be a
braided monoidal category. Decorated cospans provide a method of producing a
hypergraph category from a lax braided monoidal functor $F\colon (\mathcal C,+)
\to (\mathcal D, \otimes)$. The objects of these categories are simply the
objects of $\mathcal C$, while the morphisms are pairs comprising a cospan $X
\rightarrow N \leftarrow Y$ in $\mathcal C$ together with an element $I \to FN$
in $\mathcal D$---the so-called decoration. We will also describe how to
construct hypergraph functors between decorated cospan categories. In
particular, this provides a useful tool for constructing a hypergraph category
that captures the syntax of a network-style diagrammatic language.

Having developed a method to construct a category where the morphisms are
expressions in a diagrammatic language, we turn our attention to categories of
semantics. This leads us to the notion of a corelation, to which we devote
Chapter \ref{ch.corelations}. Given a factorisation system $(\mc E,\mc M)$ on a
category $\mc C$, we define a corelation to be a cospan $X \to N \leftarrow Y$
such that the copairing of the two maps, a map $X+Y \to N$, is a morphism in
$\mc E$. Factorising maps $X+Y \to N$ using the factorisation system leads to a
notion of equivalence on cospans, and this helps us describe when two diagrams
are equivalent. Like cospans, corelations form hypergraph categories.

In Chapter \ref{ch.deccorels} we decorate corelations. Like decorated cospans,
decorated corelations are corelations together with some additional structure on
the apex. We again use a lax braided monoidal functor to specify the sorts of
extra structure allowed. Moreover, decorated corelations too form the morphisms
of a hypergraph category. The culmination of our theoretical work is to show
that every hypergraph category and every hypergraph functor can be constructed
using decorated corelations. This implies that we can use decorated corelations
to construct a semantic hypergraph category for any network-style diagrammatic
language, as well as a hypergraph functor from its syntactic category that
interprets each diagram. We also discuss how the intuitions behind decorated
corelations guide construction of these categories and functors.

%

Having developed these theoretical tools, in the second part we turn to
demonstrating that they have useful application. Chapter \ref{ch.sigflow}
uses corelations to formalise signal flow diagrams representing linear
time-invariant discrete dynamical systems as morphisms in a category.
Our main result gives an intuitive sound and fully complete equational theory
for reasoning about these linear time-invariant systems. Using this framework,
we derive a novel structural characterisation of controllability, and
consequently provide a methodology for analysing controllability of networked
and interconnected systems.

Chapter \ref{ch.circuits} studies passive linear networks. Passive linear
networks are used in a wide variety of engineering applications, but the best
studied are electrical circuits made of resistors, inductors and capacitors. The
goal is to construct what we call the black box functor, a hypergraph functor
from a category of open circuit diagrams to a category of behaviours of
circuits. We construct the former as a decorated cospan category, with each morphism
a cospan of finite sets decorated by a circuit diagram on the apex. In this
category, composition describes the process of attaching the outputs of one
circuit to the inputs of another. The behaviour of a circuit is the relation it
imposes between currents and potentials at their terminals. The space of these
currents and potentials naturally has the structure of a symplectic vector
space, and the relation imposed by a circuit is a Lagrangian linear relation.
Thus, the black box functor goes from our category of circuits to the category
of symplectic vector spaces and Lagrangian linear relations. Decorated
corelations provide a critical tool for constructing these hypergraph categories
and the black box functor.

Finally, in Chapter \ref{ch.further} we mention two further research directions.
The first is the idea of a bound colimit, which aims to describe why epi-mono
factorisation systems are useful for constructing corelation categories of
semantics for open systems. The second research direction pertains to
applications of the black box functor for passive linear networks, discussing
the work of Jekel on the inverse problem for electric circuits \cite{Jek} and
the work of Baez, Fong, and Pollard on open Markov processes \cite{BFP,
Pol16}.

\subsection*{Related work}

The work here is underpinned not just by philosophical precedent, but by a rich
tradition in mathematics, physics, and computer science. Here we make some
remarks on work on our broad theme of categorical network theory; more specific
references are included in each chapter. 

With its emphasis on composition, category theory is an attractive framework for
modelling open systems, and the present work is not the first attempt at develop
this idea. Notably from as early as the 1960s Goguen and Rosen both led efforts,
Goguen from a computer science perspective \cite{Go}, Rosen from
biology \cite{Ros12}. Goguen in particular promoted the idea, which we take up,
that composition of systems should be modelled by colimits \cite{Gog91}. This
manifests in our emphasis on cospans.

Indeed, cospans are well known as a formalism for making entities with an
arbitrarily designated `input end' and `output end' into the morphisms of a
category. This has roots in topological quantum field theory, where a particular
type of cospan known as a `cobordism' is used to describe pieces of spacetime
\cite{BL,BaezStay}. The general idea of using functors to associate algebraic
semantics to topological diagrams has also long been a technique associated with
topological quantum field theory, dating back to \cite{At}.  

Developed around the same time, the work of Joyal and Street showing the tight
connection between string diagrams and monoidal categories \cite{JS91,JS93} is
also a critical aspect of the work here. This rigorous link inspires our use of
hypergraph categories---a type of monoidal category---in modelling network-style
diagrammatic languages.

The work of Walters, Sabadini, and collaborators also makes use of cospans to
model interconnection of systems, including electric circuits, as well as
automata, Petri nets, transition systems, and Markov processes \cite{KSW,KSW2,
RSW05,RSW08,ASW}. Indeed, the definition of a hypergraph category is due to
Walters and Carboni \cite{Car91}. A difference and original contribution is our
present use of decorations. One paper deserving of particular mention is that of
Rosebrugh, Sabadini, and Walters on calculating colimits compositionally
\cite{RSW08}, which develops the beginnings of some of the present ideas on
corelations. Their discussion of automata and the proof of Kleene's theorem is
reminscent of our proof of the functoriality of the black box functor, and the
precise relationship deserves to be pinned down.

This link between automata and control systems, however, is not novel: Arbib
was the first to lay out this vision \cite{Arb65}. In fact, the categorical
approach to this task has been central from the beginning. Following Goguen
\cite{Gog72}, Arbib and Manes unified state-space based control theory and
automata theory in their study of machines in a category, showing that examples
including sequential machines, linear machines, stochastic automata, and tree
automata and their theories of reachability, observability, and realisation all
fall under the same general categorical framework \cite{AM74a,AM74b,AM80}.

For the most part, however, the above work starts with categories in which the
systems are objects, whereas here our systems are morphisms. The idea that
systems or processes should be modelled by morphisms in a category, with the
objects becoming interface specifications and composition interaction of
processes, has roots in Abramsky's work on interaction categories
\cite{Abr93,Abr94,AGN95}.  Like those here, interaction categories are symmetric
monoidal categories, with the diagrams providing an intuitive language for
composition of systems. The tight link between this work and spans has also been
evident from the beginning \cite{CS94}.

More broadly, the work of this thesis, talking as it does of semantics for
diagrams of interacting systems, and particularly semantics in terms of
relations, is couched in the language of concurrency theory and process algebra
\cite{Bae05}. This stems from the 1980s, with Milner's calculus of communicating
systems \cite{Mil80} and Hoare's communicating sequential processes
\cite{Hoa78}.


At the present time, the work of Bonchi, Soboc\'inski, and Zanasi on signal flow
diagrams and graphical linear algebra \cite{BSZ,BSZ2,BSZ3,Za} has heavily
influenced the work here. Indeed, Chapter \ref{ch.sigflow} is built upon the
foundation they created.

Spivak and collaborators use operads to study networked systems
diagrammatically. Although a more general framework, capable of studying
diagrammatic languages with more flexible syntax than that of hypergraph
categories, Spivak studies an operad of so-called wiring diagrams built from
cospans of labelled finite sets in a number of papers, including \cite{VSL,Sp,SSR}.

Finally, this thesis fits into the categorical network theory programme led by
Baez, together with Erbele, Pollard, Courser and others. In particular, the
papers \cite{BE,Erb16,BFP,Pol16,Cou16} on signal flow diagrams, open Markov
processes, and decorated cospans have been developed alongside and influenced
the work here.

\subsection*{Statement of work}

The Examination Schools make the following request:
\begin{quote}
\emph{Where some part of the thesis is not solely the work of the candidate or
has been carried out in collaboration with one or more persons, the candidate
shall submit a clear statement of the extent of his or her own contribution.}
\end{quote}
I address this now. 

The first four chapters, on hypergraph categories and decorated corelations, are
my own work. (Chapter \ref{ch.deccospans} has been previously published as
\cite{Fon15}.) The applications chapters were developed with collaborators. 

Chapter \ref{ch.sigflow} arises from a weekly seminar with Paolo Rapisarda and
Pawe\l\ Soboc\'inski at Southampton in the Spring of 2015. The text is a minor
adaptation of that in the paper \cite{FRS16}. For that paper I developed the
corelation formalism, providing a first draft. Pawe\l\ provided much expertise
in signal flow graphs, significantly revising the text and contributing the
section on operational semantics. Paolo contributed comparisons to classical
methods in control theory.  A number of anonymous referees contributed helpful
and detailed comments.

Chapter \ref{ch.circuits} is joint work with John Baez; the majority of the text
is taken from our paper \cite{BF}. For that paper John supplied writing on
Dirichlet forms and the principle of minimum power that became the second
section of Chapter \ref{ch.circuits}, as well as parts of the next two sections.
I produced a first draft of the rest of the paper. We collaboratively revised
the text for publication.

\subsection*{Acknowledgements}
This work could not have been completed alone, and I am grateful for the
assistance of so many people, indeed far more than I list here. Foremost
deserving of mention are my supervisors John Baez, Bob Coecke, and Rob Ghrist.
John's influence on this work cannot be understated; he has been tireless in
patiently explaining new mathematics to me, in sharing his vision for this great
project, and in supporting, reading, promoting, and encouraging my work. Bob is
the reason I started this work in the first place, and with Rob they have both
provided fantastic research environments, a generous amount of freedom to pursue
my passions and, most importantly, encouragement and faith.

Jamie Vicary, too, has always been available for advice and encouragement. My
coauthors, including John, but also Brandon Coya, Hugo Nava-Kopp, Blake Pollard,
Paolo Rapisarda, Pawe\l\ Soboc\'inski, have been teachers in the process.  And
valuable comments and conversations on the present work have come, not only from
all of the above, but also from Samson Abramsky, Marcelo Fiore, Sam Staton,
Bernhard Reinke, Dan Marsden, Rashmi Kumar, Aleks Kissinger, Stefano Gogioso,
Jason Erbele, Omar Camarena, and a number of anonymous referees.

I am grateful for the generous support of a number of institutions.  This work
was supported by the Clarendon Fund, Hertford College, and the Queen Elizabeth
Scholarships, Oxford, and completed at the University of Oxford, the Centre for
Quantum Technologies, Singapore, and the University of Pennsylvania.

Finally, I thank my partner, Stephanie, and my family---Mum, Dad, Justin,
Calvin, Tania---for their love and patience.

\end{romanpages}

\pagestyle{fancy}
\part{Mathematical Foundations} \label{part.maths}

\chapter[Hypergraph categories: the algebra of interconnection]{Hypergraph
categories: the algebra of interconnection} \label{ch.hypcats}

In this chapter we introduce hypergraph categories, giving a definition,
coherence theorem, graphical language, and examples. 

The first section, \textsection\ref{sec.interconnection}, motivates hypergraph
categories as a structure that captures interconnection in network-style
diagrammatic languages.  In \textsection\ref{sec.smcs}, we then provide a rapid
introduction to symmetric monoidal categories, with emphasis on their graphical
calculus and how this can be used to model diagrams in a network-style
diagrammatic language. This leads to the introduction of hypergraph categories
in \textsection\ref{sec.hypergraphs}, where we also prove that every hypergraph
category is equivalent, as a hypergraph category, to a strict hypergraph
category.  We then conclude this chapter with an exploration of a fundamental
example of hypergraph categories: categories of cospans
(\textsection\ref{sec.cospans}).

We assume basic familiarity with category theory and symmetric monoidal
categories; although we give a sparse overview of the latter for reference. A
proper introduction to both can be found in Mac Lane \cite{Mac98}.

\section{The algebra of interconnection} \label{sec.interconnection}

Our aim is to algebraicise network diagrams. A network diagram is built from
pieces like so:
\[
  \begin{tikzpicture}
    \node [thick, circle, draw] (0) at (0, -0) {};
    \node [style=none] (1) at (-0.75, 1.5) {};
    \node [style=none] (2) at (-1.75, -0) {};
    \node [style=none] (3) at (-0.75, -1.75) {};
    \node [style=none] (4) at (1.25, -1.5) {};
    \node [style=none] (5) at (1.75, 0.75) {};
    \draw (0) to (5);
    \draw [dashed] (0) to (4);
    \draw (0) to (3);
    \draw [line width=2pt, draw=gray] (0) to (2);
    \draw (0) to (1);
  \end{tikzpicture}
\]
These represent open systems, concrete or abstract; for example a resistor, a
chemical reaction, or a linear transformation. The essential feature, for
openness and for networking, is that the system may have terminals, perhaps of
different types, each one depicted by a line radiating from the central body.
In the case of a resistor each terminal might represent a wire, for chemical
reactions a chemical species, for linear transformations a variable in the
domain or codomain.  Network diagrams are formed by connecting terminals of
systems to build larger systems.

A network-style diagrammatic language is a collection of network diagrams
together with the stipulation that if we take some of these network diagrams,
and connect terminals of the same type in any way we like, then we form
another diagram in the collection.  The point of this chapter is that hypergraph
categories provide a precise formalisation of network-style diagrammatic
languages.  

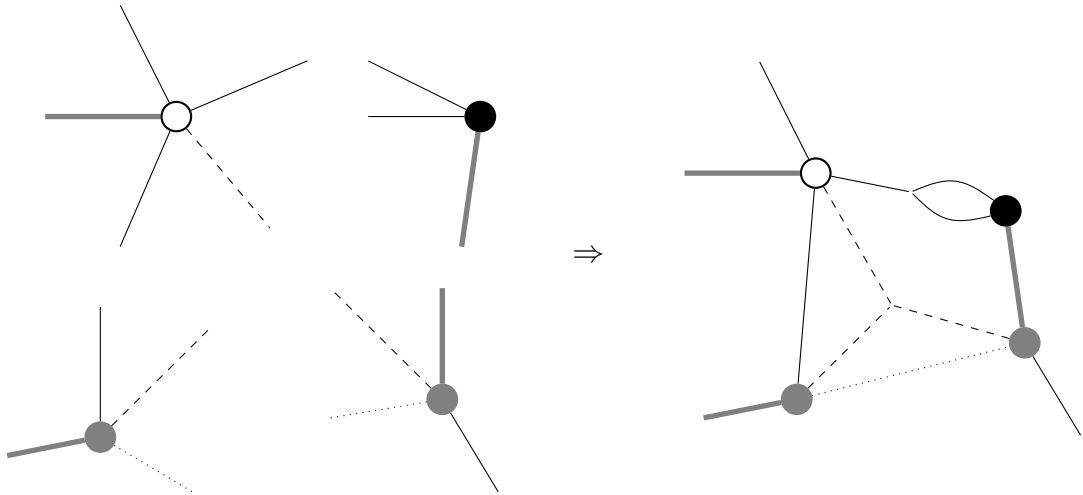
\begin{figure}
\[
\begin{aligned}
\begin{tikzpicture}
	\begin{pgfonlayer}{nodelayer}
		\node [thick, circle, draw] (0) at (0, -0) {};
		\node [style=none] (1) at (-0.75, 1.5) {};
		\node [style=none] (2) at (-1.75, -0) {};
		\node [style=none] (3) at (-0.75, -1.75) {};
		\node [style=none] (4) at (1.25, -1.5) {};
		\node [style=none] (5) at (1.75, 0.75) {};
		\node [thick, circle, draw=gray, fill=gray] (6) at (-1, -4.25) {};
		\node [style=none] (7) at (-1, -2.5) {};
		\node [style=none] (8) at (0.25, -5) {};
		\node [style=none] (9) at (-2.25, -4.5) {};
		\node [style=none] (10) at (0.5, -2.75) {};
		\node [thick, circle, draw=gray, fill=gray] (11) at (3.5, -3.75) {};
		\node [style=none] (12) at (3.5, -2.25) {};
		\node [style=none] (13) at (2, -4) {};
		\node [style=none] (14) at (4.25, -5) {};
		\node [thick, circle, draw, fill=black] (15) at (4, -0) {};
		\node [style=none] (16) at (2.5, 0.75) {};
		\node [style=none] (17) at (2.5, -0) {};
		\node [style=none] (18) at (3.75, -1.75) {};
		\node [style=none] (19) at (2, -2.25) {};
	\end{pgfonlayer}
	\begin{pgfonlayer}{edgelayer}
		\draw (0) to (5);
		\draw [dashed] (0) to (4);
		\draw (0) to (3);
		\draw [line width=2pt, draw=gray] (0) to (2);
		\draw (0) to (1);
		\draw (6) to (7);
		\draw [line width=2pt, draw=gray] (6) to (9);
		\draw [dashed] (6) to (10);
		\draw [dotted] (6) to (8);
		\draw [dotted] (11) to (13);
		\draw [line width=2pt, draw=gray] (11) to (12);
		\draw (11) to (14);
		\draw [line width=2pt, draw=gray] (15) to (18);
		\draw (15) to (17);
		\draw (15) to (16);
		\draw [dashed] (11) to (19);
	\end{pgfonlayer}
\end{tikzpicture}
\end{aligned}
\qquad
\Rightarrow
\qquad
\begin{aligned}
\begin{tikzpicture}
	\begin{pgfonlayer}{nodelayer}
		\node [draw, circle, thick] (0) at (0, -0) {};
		\node [style=none] (1) at (-0.75, 1.5) {};
		\node [style=none] (2) at (-1.75, -0) {};
		\node [style=none] (3) at (1.25, -0.25) {};
		\node [draw=gray, circle, thick, fill=gray] (4) at (-0.25, -3) {};
		\node [style=none] (5) at (1.25, -3.5) {};
		\node [style=none] (6) at (-1.5, -3.25) {};
		\node [draw=gray, circle, thick, fill=gray] (7) at (2.75, -2.25) {};
		\node [style=none] (8) at (3.5, -3.5) {};
		\node [draw, circle, thick, fill=black] (9) at (2.5, -0.5) {};
		\node [style=none] (10) at (1, -1.75) {};
	\end{pgfonlayer}
	\begin{pgfonlayer}{edgelayer}
		\draw (0) to (3);
		\draw [line width=2pt, draw=gray] (0) to (2);
		\draw (0) to (1);
		\draw [line width=2pt, draw=gray] (4) to (6);
		\draw [dotted] (4) to (7);
		\draw (7) to (8);
		\draw [dashed] (0) to (10);
		\draw [dashed] (4) to (10);
		\draw [dashed] (7) to (10);
		\draw [bend right, looseness=1.25] (9) to (3);
		\draw [bend left, looseness=1.25] (9) to (3);
		\draw [line width=2pt, draw=gray] (9) to (7);
		\draw (0) to (4);
	\end{pgfonlayer}
\end{tikzpicture}
\end{aligned}
\]
\caption{Interconnection of network diagrams. Note that we only connect
terminals of the same type, but we can connect as many as we like.}
\end{figure}

In jargon, a hypergraph category is a symmetric monoidal category in
which every object is equipped with a special commutative Frobenius monoid in a
way compatible with the monoidal product. We will walk through these terms in
detail, illustrating them with examples and a few theorems. 

The key data comprising a hypergraph category are its objects, morphisms,
composition rule, monoidal product, and Frobenius maps. Each of these model a
feature of network diagrams and their interconnection. The objects model the
terminal types, while the morphisms model the network diagrams themselves. The
composition, monoidal product, and Frobenius maps model different aspects of
interconnection: composition models the interconnection of two terminals of the
same type, the monoidal product models the network formed by taking two networks
without interconnecting any terminals, while the Frobenius maps model
multi-terminal interconnection.

These Frobenius maps are the distinguishing feature of hypergraph categories as
compared to other structured monoidal categories, and are crucial for
formalising the intuitive concept of network languages detailed above. In the
case of electric circuits the Frobenius maps model the `branching' of wires; in
the case when diagrams simply model an abstract system of equations and
terminals variables in these equations, the Frobenius maps allow variables to be
shared between many systems of equations.

Examining these correspondences, it is worthwhile to ask whether hypergraph
categories permit too much structure to be specified, given that interconnection
is now divided into three different aspects---composition, monoidal product, and
Frobenius maps---, and features such as domains and codomains of network
diagrams, rather than just a collection of terminals, exist. The answer is given
by examining the additional coherence laws that these data must obey. For
example, in the case of the domain and codomain, we shall see that hypergraph
categories are all compact closed categories, and so there is ultimately only a
formal distinction between domain and codomain objects. One way to think of
these data is as scaffolding. We could compare it to the use of matrices and
bases to provide language for talking about linear transformations and vector
spaces.  They are not part of the target structure, but nonetheless useful
paraphenalia for constructing it.

\begin{figure}
  \begin{center}
  \begin{tabular}{c|c}
    Networks & Hypergraph categories \\
    \hline 
    list of terminal types & object \\
    network diagram & morphism \\
    series connection & composition \\
    juxtaposition & monoidal product \\
    branching & Frobenius maps
  \end{tabular}
  \end{center}
  \caption{Corresponding features of networks and hypergraph categories.}
\end{figure}

Network languages are not only syntactic entities: as befitting the descriptor
`language', they typically have some associated semantics. Circuit diagrams, for
instance, not only depict wire circuits that may be constructed, they also
represent the electrical behaviour of that circuit. Such semantics considers the
circuits 
\[
  \begin{aligned}
  \begin{tikzpicture}[circuit ee IEC, set resistor graphic=var resistor IEC graphic]
    \node[contact] (I1) at (0,0) {};
    \node[circle, minimum width = 3pt, inner sep = 0pt, fill=black] (int) at (3,0) {};
    \node[contact] (O1) at (6,0) {};
    \draw (I1) 	to [resistor] node [label={[label distance=3pt]90:{$1 \Omega$}}] {} (int)
    to [resistor] node [label={[label distance=3pt]90:{$1 \Omega$}}] {} (O1);
  \end{tikzpicture}
  \end{aligned}
  \qquad
  \mbox{and}
  \qquad
  \begin{aligned}
  \begin{tikzpicture}[circuit ee IEC, set resistor graphic=var resistor IEC graphic]
    \node[contact] (I1) at (0,0) {};
    \node[contact] (O1) at (3,0) {};
    \draw (I1) 	to [resistor] node [label={[label distance=3pt]90:{$2 \Omega$}}]
    {} (O1);
  \end{tikzpicture}
  \end{aligned}
\]
the same, even though as `syntactic' diagrams they are distinct. A cornerstone
of the utility of the hypergraph formalism is the ability to also realise the
semantics of these diagrams as morphisms of another hypergraph category. This
`semantic' hypergraph category, as a hypergraph category, still permits the rich
`networking' interconnection structure, and a so-called hypergraph functor
implies that the syntactic category provides a sound framework for depicting
these morphisms. Network languages syntactically are often `free' hypergraph
categories, and much of the interesting structure lies in their functors to
their semantic hypergraph categories.

\section{Symmetric monoidal categories} \label{sec.smcs}
Suppose we have some tiles with inputs and outputs of various types like so:
\[
    \tikzset{every path/.style={line width=1.1pt}}
  \begin{tikzpicture}
	\begin{pgfonlayer}{nodelayer}
		\node [style=none] (0) at (-0.25, 0.375) {};
		\node [style=none] (1) at (0.5, 0.375) {};
		\node [style=none] (2) at (-0.25, -1.375) {};
		\node [style=none] (3) at (0.5, -1.375) {};
		\node [style=none] (4) at (0.5, 0.25) {};
		\node [style=none] (5) at (0.5, -1.25) {};
		\node [style=none] (6) at (1.25, 0.25) {};
		\node [style=none] (7) at (1.25, -1.25) {};
		\node [style=none] (8) at (0.125, -0.5) {$f$};
		\node [style=none] (9) at (1.5, 0.25) {$Y_1$};
		\node [style=none] (10) at (1.5, -1.25) {$Y_m$};
		\node [style=none] (11) at (1.25, -0.25) {};
		\node [style=none] (12) at (1.5, -0.25) {$Y_2$};
		\node [style=none] (13) at (0.5, -0.25) {};
		\node [style=none] (14) at (1, -0.75) {$\vdots$};
		\node [style=none] (15) at (-1, -1.25) {};
		\node [style=none] (16) at (-0.25, -1.25) {};
		\node [style=none] (17) at (-0.75, -0.75) {$\vdots$};
		\node [style=none] (18) at (-1.25, -1.25) {$X_n$};
		\node [style=none] (19) at (-0.25, -0.25) {};
		\node [style=none] (20) at (-1.25, 0.25) {$X_1$};
		\node [style=none] (21) at (-1, 0.25) {};
		\node [style=none] (22) at (-0.25, 0.25) {};
		\node [style=none] (23) at (-1, -0.25) {};
		\node [style=none] (24) at (-1.25, -0.25) {$X_2$};
	\end{pgfonlayer}
	\begin{pgfonlayer}{edgelayer}
		\draw (0.center) to (1.center);
		\draw (1.center) to (3.center);
		\draw (3.center) to (2.center);
		\draw (2.center) to (0.center);
		\draw (4.center) to (6.center);
		\draw (5.center) to (7.center);
		\draw (13.center) to (11.center);
		\draw (22.center) to (21.center);
		\draw (16.center) to (15.center);
		\draw (19.center) to (23.center);
	\end{pgfonlayer}
\end{tikzpicture}
\]
These tiles may vary in height and width. We can place these tiles above and
below each other, and to the left and right, so long as the inputs on the right
tile match the outputs on the left. Suppose also that some arrangements of tiles
are equal to other arrangements of tiles. How do we formalise this structure
algebraically? The theory of monoidal categories provides an answer. 

This is of relevance to us as hypergraph categories are first monoidal
categories, indeed symmetric monoidal categories.  Monoidal categories are
categories with two notions of composition: ordinary categorical composition and
monoidal composition, with the monoidal composition only associative and unital
up to natural isomorphism. They are the algebra of processes that may occur
simultaneously as well as sequentially.  First defined by B\'enabou and Mac Lane
in the 1960s \cite{Ben63, Mac63}, their theory and their links with graphical
representation are well explored \cite{JS91, Sel11}. We bootstrap on this, using
monoidal categories to define hypergraph categories, and so immediately arriving
at an understanding of how hypergraph categories formalise our network
languages. 

Moreover, symmetric monoidal functors play a key role in our framework for
defining and working with hypergraph categories: decorated cospans and
corelations. We thus use the next section to provide, for quick reference, a
definition of symmetric monoidal categories.

\subsection{Monoidal categories}
A \define{monoidal category} $(\mathcal C, \ot)$ consists of a category $\mathcal C$, a
functor $\ot: \mathcal C \times \mathcal C \to \mathcal C$, a distinguished object $I$, and natural
isomorphisms $\a_{A,B,C}: (A \ot B) \ot C \to A \ot (B \ot C)$,
$\rho_A: A \ot I  \to A$, and $\lambda_A: I \ot A \to A$ such that for all
$A,B,C,D$ in $\mc C$ the following two diagrams commute: 
\[
  \xymatrixcolsep{3pc}
  \xymatrix{
    \big((A \ot B) \ot C\big) \ot D \ar[d]_{\a_{A,B,C}\ot\idn_D} \ar[rr]^{\a_{(A\ot B),C,D}} 
    &&(A \ot B) \ot (C \ot D) \ar[d]^{\a_{A,B,(C\ot D)}} \\
    \big(A \ot (B\ot C)\big) \ot D \ar[r]_{\a_{A,(B\ot C),D}} 
    & A\ot\big((B \ot C)\ot D\big)\ar[r]_{\idn_A \ot \a_{B,C,D}}
    &A \ot \big(B \ot (C \ot D)\big)
  }
\]
\[
  \xymatrix{
    (A\ot I)\ot B  \ar[rr]^{\a_{A,I,B}} \ar[dr]_{\rho_{A}\ot \idn_B} && A \ot (I \ot B) \ar[dl]^{\idn_A\ot \lambda_B}\\
    & A \ot B \\
  }
\]
We call $\ot$ the \define{monoidal product}, $I$ the \define{monoidal unit},
$\alpha$ the \define{associator}, $\rho$ and $\lambda$ the \define{right} and
\define{left unitor} respectively. The associator and unitors are known
collectively as the \define{coherence maps}.

By Mac Lane's coherence theorem, these two axioms are equivalent to requiring
that `all formal diagrams'---that is, all diagrams in which the morphism are
built from identity morphisms and the coherence maps using composition and the
monoidal product---commute. Consequently, between any two products of the same
ordered list of objects up to instances of the monoidal unit, such as $((A \ot
I) \ot B) \ot C$ and $A \ot ((B \ot C) \ot (I \ot I))$, there is a unique
so-called \define{canonical} map. See Mac Lane \cite[Corollary of Theorem
VII.2.1]{Mac98} for a precise statement and proof.

A \define{lax monoidal functor} $(F, \varphi): (\mathcal C,\otimes) \to (\mathcal C',\boxtimes)$
between monoidal categories consists of a functor $F: \mathcal C \to \mathcal C'$, and natural
transformations $\varphi_{A,B}: FA \boxtimes FB \to F(A \ot B)$ and $\varphi_I:
I' \to FI$, such that for all $A,B,C \in \mathcal C$ the three diagrams
\[
  \xymatrixcolsep{4pc}
  \xymatrix{
    (FA \ot FB) \ot FC \ar[d]_{\a_{FA,FB,FC}} \ar[r]^{\varphi_{A,B} \ot \idn_{FC}} &
    F(A \ot B) \ot FC \ar[r]^{\varphi_{A\ot B,C}} & F((A \ot B) \ot C) \ar[d]^{F\a_{A,B,C}}\\
    FA \ot (FB \ot FC) \ar[r]_{\idn_{FA} \ot \varphi_{B,C}} & FA \ot F(B \ot C)
    \ar[r]_{\varphi_{A,B\ot C}} & F(A \ot (B \ot C))
  }
\]
\[
  \xymatrixcolsep{3pc}
  \xymatrixrowsep{3pc}
  \xymatrix{
    F(A) \ot I' \ar[d]_{\idn \ot \varphi_I} \ar[r]^{\rho} & F(A) \\
    F(A) \ot F(I) \ar[r]_{\varphi_{A,I}} & F(A \ot I) \ar[u]_{F\rho} 
  }
  \qquad
  \xymatrix{
    I' \ot F(A) \ar[d]_{\varphi_I \ot \idn} \ar[r]^{\lambda} & F(A) \\
    F(I) \ot F(A) \ar[r]_{\varphi_{I,A}} & F(I \ot A) \ar[u]_{F\lambda} 
  }
\]
commute. We further say a monoidal functor is a \define{strong monoidal functor}
if the $\varphi$ are isomorphisms, and a \define{strict monoidal functor} if the
$\varphi$ are identities. 

A \define{monoidal natural transformation} $\theta: (F,\varphi) \Rightarrow
(G,\gamma)$ between two monoidal functors $F$ and $G$ is a natural
transformation $\theta: F \Rightarrow G$ such that
\[
  \begin{aligned}
    \xymatrix{
      FI \ar[rr]^{\theta_I}&& GI \\
      & I' \ar[ul]^{\varphi_I} \ar[ur]_{\gamma_I}
    } 
  \end{aligned} 
  \qquad 
  \mbox{and}
  \qquad
  \begin{aligned}
    \xymatrixcolsep{3pc}
    \xymatrixrowsep{3pc}
    \xymatrix{
      FA \boxtimes FB \ar[r]^{\theta_A \ot \theta_B} \ar[d]_{\varphi_{A,B}} 
      & GA \boxtimes GB \ar[d]^{\gamma_{A,B}}\\
      F(A \ot B) \ar[r]_{\theta_{A\ot B}} & G(A \ot B)
    }
  \end{aligned} 
\]
commute for all objects $A,B$.

Two monoidal categories $\mc C, \mc D$ are \define{monoidally equivalent} if
there exist strong monoidal functors $F\maps \mc C \to \mc D$ and $G\maps \mc D
\to \mc C$ such that the composites $FG$ and $GF$ are monoidally naturally
isomorphic to the identity functors. (Note that identity functors are
immediately strict monoidal functors.)

\subsection{String diagrams}
A \define{strict monoidal category} category is a monoidal category in which the
associators and unitors are all identity maps. In this case then any two objects
that can be related by associators and unitors are equal, and so we may write
objects without parentheses and units without ambiguity. An equivalent statement
of Mac Lane's coherence theorem is that every monoidal category is monoidally
equivalent to strict monoidal category. 

Yet another equivalent statement of the coherence theorem is the existence of a
graphical calculus for monoidal categories. As discussed above, monoidal
categories figure strongly in our current investigations precisely because of
this. We leave the details to discussions elsewhere, such as Selinger's survey
\cite{Sel11} or the original work of Joyal and Street \cite[Theorem 1.2]{JS91}.
The main point is that we shall be free to assume our monoidal categories are
strict, writing $X_1 \otimes X_2 \otimes \dots \otimes X_n$ for objects in
$(\mathcal C,\otimes)$ without a care for parentheses. We then depict a morphism
$f\maps X_1 \otimes X_2 \otimes \dots \otimes X_n \to Y_1 \otimes Y_2 \otimes
\dots \otimes Y_n$ with the diagram:
\[
  f \quad = \quad
  \begin{aligned}
    \tikzset{every path/.style={line width=1.1pt}}
  \begin{tikzpicture}
	\begin{pgfonlayer}{nodelayer}
		\node [style=none] (0) at (-0.25, 0.375) {};
		\node [style=none] (1) at (0.5, 0.375) {};
		\node [style=none] (2) at (-0.25, -1.375) {};
		\node [style=none] (3) at (0.5, -1.375) {};
		\node [style=none] (4) at (0.5, 0.25) {};
		\node [style=none] (5) at (0.5, -1.25) {};
		\node [style=none] (6) at (1.25, 0.25) {};
		\node [style=none] (7) at (1.25, -1.25) {};
		\node [style=none] (8) at (0.125, -0.5) {$f$};
		\node [style=none] (9) at (1.5, 0.25) {$Y_1$};
		\node [style=none] (10) at (1.5, -1.25) {$Y_m$};
		\node [style=none] (11) at (1.25, -0.25) {};
		\node [style=none] (12) at (1.5, -0.25) {$Y_2$};
		\node [style=none] (13) at (0.5, -0.25) {};
		\node [style=none] (14) at (1, -0.75) {$\vdots$};
		\node [style=none] (15) at (-1, -1.25) {};
		\node [style=none] (16) at (-0.25, -1.25) {};
		\node [style=none] (17) at (-0.75, -0.75) {$\vdots$};
		\node [style=none] (18) at (-1.25, -1.25) {$X_n$};
		\node [style=none] (19) at (-0.25, -0.25) {};
		\node [style=none] (20) at (-1.25, 0.25) {$X_1$};
		\node [style=none] (21) at (-1, 0.25) {};
		\node [style=none] (22) at (-0.25, 0.25) {};
		\node [style=none] (23) at (-1, -0.25) {};
		\node [style=none] (24) at (-1.25, -0.25) {$X_2$};
	\end{pgfonlayer}
	\begin{pgfonlayer}{edgelayer}
		\draw (0.center) to (1.center);
		\draw (1.center) to (3.center);
		\draw (3.center) to (2.center);
		\draw (2.center) to (0.center);
		\draw (4.center) to (6.center);
		\draw (5.center) to (7.center);
		\draw (13.center) to (11.center);
		\draw (22.center) to (21.center);
		\draw (16.center) to (15.center);
		\draw (19.center) to (23.center);
	\end{pgfonlayer}
\end{tikzpicture}.
\end{aligned}
\]
Identity morphisms are depicted by so-called strings or wires:
\[
  \idn_X \quad = \quad
  \begin{aligned}
    \tikzset{every path/.style={line width=1.1pt}}
\begin{tikzpicture}
	\begin{pgfonlayer}{nodelayer}
		\node [style=none] (0) at (1.25, 0.25) {};
		\node [style=none] (1) at (1.5, 0.25) {$X$};
		\node [style=none] (2) at (-1.25, 0.25) {$X$};
		\node [style=none] (3) at (-1, 0.25) {};
	\end{pgfonlayer}
	\begin{pgfonlayer}{edgelayer}
		\draw (3.center) to (0.center);
	\end{pgfonlayer}
\end{tikzpicture}
\end{aligned}
\]
and the monoidal unit is not depicted at all:
\[
\idn_I\quad = \quad
  \begin{aligned}
    \tikzset{every path/.style={line width=1.1pt}}
\begin{tikzpicture}
		\node [style=none] (1) at (1.5, 0.25) {};
		\node [style=none] (2) at (-1.25, 0.25) {};
\end{tikzpicture}
\end{aligned}
\]
Composition of morphisms is depicted by connecting the relevant wires:
\[
    \tikzset{every path/.style={line width=1.1pt}}
  \begin{aligned}
    \begin{tikzpicture}
	\begin{pgfonlayer}{nodelayer}
		\node [style=none] (0) at (0.25, -0) {$Y_1$};
		\node [style=none] (1) at (0.5, -0) {};
		\node [style=none] (2) at (2.75, -0.75) {};
		\node [style=none] (3) at (3, -0.75) {$Y_2$};
		\node [style=none] (4) at (0.5, -0.75) {};
		\node [style=none] (5) at (2, -0.365) {};
		\node [style=none] (6) at (2.75, 0.25) {};
		\node [style=none] (7) at (3, 0.25) {$Z_1$};
		\node [style=none] (8) at (1.25, 0.375) {};
		\node [style=none] (9) at (1.25, -0) {};
		\node [style=none] (10) at (1.25, -0.365) {};
		\node [style=none] (11) at (2, 0.25) {};
		\node [style=none] (12) at (2, 0.375) {};
		\node [style=none] (13) at (1.625, -0) {$g$};
		\node [style=none] (14) at (2, -0.25) {};
		\node [style=none] (15) at (2.75, -0.25) {};
		\node [style=none] (16) at (3, -0.25) {$Z_2$};
		\node [style=none] (17) at (0.25, -0.75) {$Y_2$};
	\end{pgfonlayer}
	\begin{pgfonlayer}{edgelayer}
		\draw (4.center) to (2.center);
		\draw (8.center) to (12.center);
		\draw (5.center) to (10.center);
		\draw (11.center) to (6.center);
		\draw (12.center) to (5.center);
		\draw (10.center) to (8.center);
		\draw (14.center) to (15.center);
		\draw (1.center) to (9.center);
	\end{pgfonlayer}
\end{tikzpicture}
\end{aligned}
  \circ
  \begin{aligned}
  \begin{tikzpicture}
	\begin{pgfonlayer}{nodelayer}
		\node [style=none] (0) at (-0.25, 0.375) {};
		\node [style=none] (1) at (0.5, 0.375) {};
		\node [style=none] (2) at (-0.25, -.875) {};
		\node [style=none] (3) at (0.5, -.875) {};
		\node [style=none] (4) at (0.5, 0.125) {};
		\node [style=none] (5) at (1.25, 0.125) {};
		\node [style=none] (6) at (0.125, -0.25) {$f$};
		\node [style=none] (7) at (1.5, 0.125) {$Y_1$};
		\node [style=none] (8) at (1.25, -0.625) {};
		\node [style=none] (9) at (1.5, -0.625) {$Y_2$};
		\node [style=none] (10) at (0.5, -0.625) {};
		\node [style=none] (11) at (-0.25, -0.75) {};
		\node [style=none] (12) at (-1.25, -0.75) {$X_3$};
		\node [style=none] (13) at (-0.25, -0.25) {};
		\node [style=none] (14) at (-1.25, 0.25) {$X_1$};
		\node [style=none] (15) at (-1, 0.25) {};
		\node [style=none] (16) at (-0.25, 0.25) {};
		\node [style=none] (17) at (-1, -0.25) {};
		\node [style=none] (18) at (-1.25, -0.25) {$X_2$};
		\node [style=none] (19) at (-1, -0.75) {};
	\end{pgfonlayer}
	\begin{pgfonlayer}{edgelayer}
		\draw (0.center) to (1.center);
		\draw (1.center) to (3.center);
		\draw (3.center) to (2.center);
		\draw (2.center) to (0.center);
		\draw (4.center) to (5.center);
		\draw (10.center) to (8.center);
		\draw (16.center) to (15.center);
		\draw (13.center) to (17.center);
		\draw (11.center) to (19.center);
	\end{pgfonlayer}
\end{tikzpicture}
\end{aligned}
\quad = \quad
\begin{aligned}
\begin{tikzpicture}
	\begin{pgfonlayer}{nodelayer}
		\node [style=none] (0) at (-0.25, 0.375) {};
		\node [style=none] (1) at (0.5, 0.375) {};
		\node [style=none] (2) at (-0.25, -0.875) {};
		\node [style=none] (3) at (0.5, -0.875) {};
		\node [style=none] (4) at (0.5, 0.125) {};
		\node [style=none] (5) at (0.125, -0.25) {$f$};
		\node [style=none] (6) at (2.75, -0.625) {};
		\node [style=none] (7) at (3, -0.625) {$Y_2$};
		\node [style=none] (8) at (0.5, -0.625) {};
		\node [style=none] (9) at (-0.25, -0.75) {};
		\node [style=none] (10) at (-1.25, -0.75) {$X_3$};
		\node [style=none] (11) at (-0.25, -0.25) {};
		\node [style=none] (12) at (-1.25, 0.25) {$X_1$};
		\node [style=none] (13) at (-1, 0.25) {};
		\node [style=none] (14) at (-0.25, 0.25) {};
		\node [style=none] (15) at (-1, -0.25) {};
		\node [style=none] (16) at (-1.25, -0.25) {$X_2$};
		\node [style=none] (17) at (-1, -0.75) {};
		\node [style=none] (18) at (2, -0.25) {};
		\node [style=none] (19) at (2.75, 0.375) {};
		\node [style=none] (20) at (3, 0.375) {$Z_1$};
		\node [style=none] (21) at (1.25, 0.5) {};
		\node [style=none] (22) at (1.25, 0.125) {};
		\node [style=none] (23) at (1.25, -0.25) {};
		\node [style=none] (24) at (2, 0.375) {};
		\node [style=none] (25) at (2, 0.5) {};
		\node [style=none] (26) at (1.625, 0.125) {$g$};
		\node [style=none] (27) at (2, -0.125) {};
		\node [style=none] (28) at (2.75, -0.125) {};
		\node [style=none] (29) at (3, -0.125) {$Z_2$};
	\end{pgfonlayer}
	\begin{pgfonlayer}{edgelayer}
		\draw (0.center) to (1.center);
		\draw (1.center) to (3.center);
		\draw (3.center) to (2.center);
		\draw (2.center) to (0.center);
		\draw (8.center) to (6.center);
		\draw (14.center) to (13.center);
		\draw (11.center) to (15.center);
		\draw (9.center) to (17.center);
		\draw (21.center) to (25.center);
		\draw (18.center) to (23.center);
		\draw (24.center) to (19.center);
		\draw (25.center) to (18.center);
		\draw (23.center) to (21.center);
		\draw (27.center) to (28.center);
		\draw (4.center) to (22.center);
	\end{pgfonlayer}
\end{tikzpicture}
\end{aligned}
\]
while monoidal composition is just juxtaposition:
\[
    \tikzset{every path/.style={line width=1.1pt}}
  \begin{aligned}
    \begin{tikzpicture}
	\begin{pgfonlayer}{nodelayer}
		\node [style=none] (0) at (-0.25, 0.375) {};
		\node [style=none] (1) at (0.5, 0.375) {};
		\node [style=none] (2) at (-0.25, -0.375) {};
		\node [style=none] (3) at (0.5, -0.375) {};
		\node [style=none] (4) at (0.5, -0) {};
		\node [style=none] (5) at (1.25, -0) {};
		\node [style=none] (6) at (0.125, -0) {$h$};
		\node [style=none] (7) at (1.5, -0) {$Y_1$};
		\node [style=none] (8) at (-0.25, -0.25) {};
		\node [style=none] (9) at (-1.25, 0.25) {$X_1$};
		\node [style=none] (10) at (-1, 0.25) {};
		\node [style=none] (11) at (-0.25, 0.25) {};
		\node [style=none] (12) at (-1, -0.25) {};
		\node [style=none] (13) at (-1.25, -0.25) {$X_2$};
	\end{pgfonlayer}
	\begin{pgfonlayer}{edgelayer}
		\draw (0.center) to (1.center);
		\draw (3.center) to (2.center);
		\draw (2.center) to (0.center);
		\draw (4.center) to (5.center);
		\draw (11.center) to (10.center);
		\draw (8.center) to (12.center);
		\draw (1.center) to (3.center);
	\end{pgfonlayer}
\end{tikzpicture}
  \end{aligned}
  \ot \quad
  \begin{aligned}
    \begin{tikzpicture}
	\begin{pgfonlayer}{nodelayer}
		\node [style=none] (0) at (1.5, -0.75) {$Y_2$};
		\node [style=none] (1) at (-0.25, -1.375) {};
		\node [style=none] (2) at (1.25, -0.75) {};
		\node [style=none] (3) at (1.5, -1.25) {$Y_3$};
		\node [style=none] (4) at (1.25, -1.25) {};
		\node [style=none] (5) at (0.125, -1) {$k$};
		\node [style=none] (6) at (0.5, -0.75) {};
		\node [style=none] (7) at (0.5, -1.375) {};
		\node [style=none] (8) at (-0.25, -0.625) {};
		\node [style=none] (9) at (0.5, -1.25) {};
		\node [style=none] (10) at (0.5, -0.625) {};
	\end{pgfonlayer}
	\begin{pgfonlayer}{edgelayer}
		\draw (8.center) to (10.center);
		\draw (6.center) to (2.center);
		\draw (9.center) to (4.center);
		\draw (10.center) to (7.center);
		\draw (7.center) to (1.center);
		\draw (8.center) to (1.center);
	\end{pgfonlayer}
\end{tikzpicture}
  \end{aligned}
  \quad = \quad
  \begin{aligned}
    \begin{tikzpicture}
	\begin{pgfonlayer}{nodelayer}
		\node [style=none] (0) at (-0.25, 0.375) {};
		\node [style=none] (1) at (0.5, 0.375) {};
		\node [style=none] (2) at (-0.25, -0.375) {};
		\node [style=none] (3) at (0.5, -0.375) {};
		\node [style=none] (4) at (0.5, -0) {};
		\node [style=none] (5) at (1.25, -0) {};
		\node [style=none] (6) at (0.125, -0) {$h$};
		\node [style=none] (7) at (1.5, -0) {$Y_1$};
		\node [style=none] (8) at (-0.25, -0.25) {};
		\node [style=none] (9) at (-1.25, 0.25) {$X_1$};
		\node [style=none] (10) at (-1, 0.25) {};
		\node [style=none] (11) at (-0.25, 0.25) {};
		\node [style=none] (12) at (-1, -0.25) {};
		\node [style=none] (13) at (-1.25, -0.25) {$X_2$};
		\node [style=none] (14) at (0.5, -0.75) {};
		\node [style=none] (15) at (1.25, -0.75) {};
		\node [style=none] (16) at (0.125, -1) {$k$};
		\node [style=none] (17) at (0.5, -0.625) {};
		\node [style=none] (18) at (-0.25, -1.375) {};
		\node [style=none] (19) at (1.5, -1.25) {$Y_3$};
		\node [style=none] (20) at (0.5, -1.25) {};
		\node [style=none] (21) at (-0.25, -0.625) {};
		\node [style=none] (22) at (1.25, -1.25) {};
		\node [style=none] (23) at (1.5, -0.75) {$Y_2$};
		\node [style=none] (24) at (0.5, -1.375) {};
	\end{pgfonlayer}
	\begin{pgfonlayer}{edgelayer}
		\draw (0.center) to (1.center);
		\draw (3.center) to (2.center);
		\draw (2.center) to (0.center);
		\draw (4.center) to (5.center);
		\draw (11.center) to (10.center);
		\draw (8.center) to (12.center);
		\draw (21.center) to (17.center);
		\draw (14.center) to (15.center);
		\draw (20.center) to (22.center);
		\draw (1.center) to (3.center);
		\draw (21.center) to (18.center);
		\draw (17.center) to (24.center);
		\draw (24.center) to (18.center);
	\end{pgfonlayer}
\end{tikzpicture}
  \end{aligned}
\]

Only the `topology' of the diagrams matters: if two diagrams with the same
domain and codomain are equivalent up to isotopy, they represent the same
morphism.
On the other hand, two algebraic expressions might have the same diagrammatic
representation. For example, the equivalent diagrams
\[
    \tikzset{every path/.style={line width=1.1pt}}
\begin{aligned}
\begin{tikzpicture}
	\begin{pgfonlayer}{nodelayer}
		\node [style=none] (0) at (-0.25, 0.375) {};
		\node [style=none] (1) at (0.5, 0.375) {};
		\node [style=none] (2) at (-0.25, -0.875) {};
		\node [style=none] (3) at (0.5, -0.875) {};
		\node [style=none] (4) at (0.5, 0.125) {};
		\node [style=none] (5) at (0.125, -0.25) {$f$};
		\node [style=none] (6) at (2.75, -0.625) {};
		\node [style=none] (7) at (3, -0.625) {$Y_2$};
		\node [style=none] (8) at (0.5, -0.625) {};
		\node [style=none] (9) at (-0.25, -0.75) {};
		\node [style=none] (10) at (-1.25, -0.75) {$X_3$};
		\node [style=none] (11) at (-0.25, -0.25) {};
		\node [style=none] (12) at (-1.25, 0.25) {$X_1$};
		\node [style=none] (13) at (-1, 0.25) {};
		\node [style=none] (14) at (-0.25, 0.25) {};
		\node [style=none] (15) at (-1, -0.25) {};
		\node [style=none] (16) at (-1.25, -0.25) {$X_2$};
		\node [style=none] (17) at (-1, -0.75) {};
		\node [style=none] (18) at (2, -0.25) {};
		\node [style=none] (19) at (2.75, 0.375) {};
		\node [style=none] (20) at (3, 0.375) {$Z_1$};
		\node [style=none] (21) at (1.25, 0.5) {};
		\node [style=none] (22) at (1.25, 0.125) {};
		\node [style=none] (23) at (1.25, -0.25) {};
		\node [style=none] (24) at (2, 0.375) {};
		\node [style=none] (25) at (2, 0.5) {};
		\node [style=none] (26) at (1.625, 0.125) {$g$};
		\node [style=none] (27) at (2, -0.125) {};
		\node [style=none] (28) at (2.75, -0.125) {};
		\node [style=none] (29) at (3, -0.125) {$Z_2$};
		\node [style=none] (30) at (1.25, -1.75) {};
		\node [style=none] (31) at (2, -1.625) {};
		\node [style=none] (32) at (2.75, -1.125) {};
		\node [style=none] (33) at (3, -1.625) {$Y_3$};
		\node [style=none] (34) at (2.75, -1.625) {};
		\node [style=none] (35) at (1.625, -1.375) {$k$};
		\node [style=none] (36) at (3, -1.125) {$Y_2$};
		\node [style=none] (37) at (2, -1.125) {};
		\node [style=none] (38) at (2, -1.75) {};
		\node [style=none] (39) at (2, -1) {};
		\node [style=none] (40) at (1.25, -1) {};
	\end{pgfonlayer}
	\begin{pgfonlayer}{edgelayer}
		\draw (0.center) to (1.center);
		\draw (1.center) to (3.center);
		\draw (3.center) to (2.center);
		\draw (2.center) to (0.center);
		\draw (8.center) to (6.center);
		\draw (14.center) to (13.center);
		\draw (11.center) to (15.center);
		\draw (9.center) to (17.center);
		\draw (21.center) to (25.center);
		\draw (18.center) to (23.center);
		\draw (24.center) to (19.center);
		\draw (25.center) to (18.center);
		\draw (23.center) to (21.center);
		\draw (27.center) to (28.center);
		\draw (4.center) to (22.center);
		\draw (40.center) to (39.center);
		\draw (37.center) to (32.center);
		\draw (31.center) to (34.center);
		\draw (39.center) to (38.center);
		\draw (38.center) to (30.center);
		\draw (40.center) to (30.center);
	\end{pgfonlayer}
\end{tikzpicture}
\end{aligned}
=
\begin{aligned}
  \begin{tikzpicture}
	\begin{pgfonlayer}{nodelayer}
		\node [style=none] (0) at (0, 1) {};
		\node [style=none] (1) at (1, 1) {};
		\node [style=none] (2) at (0, -1.25) {};
		\node [style=none] (3) at (1, -1.25) {};
		\node [style=none] (4) at (1, -0.25) {};
		\node [style=none] (5) at (0.5, -0) {$f$};
		\node [style=none] (6) at (2.75, -0.5) {};
		\node [style=none] (7) at (3, -0.5) {$Y_2$};
		\node [style=none] (8) at (1, -0.75) {};
		\node [style=none] (9) at (0, -0.75) {};
		\node [style=none] (10) at (-1.25, -0.75) {$X_3$};
		\node [style=none] (11) at (0, -0.25) {};
		\node [style=none] (12) at (-1.25, 0.25) {$X_1$};
		\node [style=none] (13) at (-1, 0.25) {};
		\node [style=none] (14) at (0, 0.5) {};
		\node [style=none] (15) at (-1, -0.25) {};
		\node [style=none] (16) at (-1.25, -0.25) {$X_2$};
		\node [style=none] (17) at (-1, -0.75) {};
		\node [style=none] (18) at (2, -0.25) {};
		\node [style=none] (19) at (2.75, 0.375) {};
		\node [style=none] (20) at (3, 0.375) {$Z_1$};
		\node [style=none] (21) at (1.5, 0.25) {};
		\node [style=none] (22) at (1.375, -0) {};
		\node [style=none] (23) at (1.25, -0.25) {};
		\node [style=none] (24) at (2, 0.375) {};
		\node [style=none] (25) at (2, 0.5) {};
		\node [style=none] (26) at (1.75, -0) {$g$};
		\node [style=none] (27) at (2, -0.125) {};
		\node [style=none] (28) at (2.75, -0.125) {};
		\node [style=none] (29) at (3, -0.125) {$Z_2$};
		\node [style=none] (30) at (-0.5, -2.5) {};
		\node [style=none] (31) at (0.25, -1.75) {};
		\node [style=none] (32) at (2.75, -1.5) {};
		\node [style=none] (33) at (3, -1.75) {$Y_3$};
		\node [style=none] (34) at (2.75, -1.75) {};
		\node [style=none] (35) at (-0.125, -1.75) {$k$};
		\node [style=none] (36) at (3, -1.5) {$Y_2$};
		\node [style=none] (37) at (0.25, -1.5) {};
		\node [style=none] (38) at (0.25, -2.25) {};
		\node [style=none] (39) at (0.25, -1.375) {};
		\node [style=none] (40) at (-0.5, -1.375) {};
	\end{pgfonlayer}
	\begin{pgfonlayer}{edgelayer}
		\draw (0.center) to (1.center);
		\draw (1.center) to (3.center);
		\draw (3.center) to (2.center);
		\draw (2.center) to (0.center);
		\draw (8.center) to (6.center);
		\draw (14.center) to (13.center);
		\draw (11.center) to (15.center);
		\draw (9.center) to (17.center);
		\draw (21.center) to (25.center);
		\draw (18.center) to (23.center);
		\draw (24.center) to (19.center);
		\draw (25.center) to (18.center);
		\draw (23.center) to (21.center);
		\draw (27.center) to (28.center);
		\draw (4.center) to (22.center);
		\draw (40.center) to (39.center);
		\draw (37.center) to (32.center);
		\draw (31.center) to (34.center);
		\draw (39.center) to (38.center);
		\draw (38.center) to (30.center);
		\draw (40.center) to (30.center);
	\end{pgfonlayer}
\end{tikzpicture}
\end{aligned}
\]
read as all of the equivalent algebraic expressions 
\[
  ((g \ot \idn_{Y_2}) \ot k) \circ f = 
  (g \ot ((\idn_{Y_2} \ot k)) \circ \rho \circ (f \ot \idn_I) = 
  (g \ot \idn_{Y_2}) \circ f \circ (\idn_{X_1 \ot (X_2 \ot X_3)} \ot k) 
\]
and so on. The coherence theorem says that this does not matter: if two
algebraic expressions have the same diagrammatic representation, then the
algebraic expressions are equal. In more formal language, the graphical calculus
is sound and complete for the axioms of monoidal categories. Again, see
Joyal--Street for details \cite{JS91}.

The coherence theorem thus implies that the graphical calculi goes beyond
visualisations of morphisms: it can provide provide bona-fide proofs of
equalities of morphisms. As a general principle, string diagrams are often more
intuitive than the conventional algebraic language for understanding monoidal
categories.

\subsection{Symmetry}
A symmetric braiding in a monoidal category provides the ability to permute
objects or, equivalently, cross wires. We define symmetric monoidal categories
making use of the graphical notation outlined above, but introducing a new,
special symbol $\swap{.04\textwidth}$.

A \define{symmetric monoidal category} is a monoidal category $(\mc C,\ot)$
together with natural isomorphisms 
\[
    \tikzset{every path/.style={line width=1.1pt}}
    \xymatrixrowsep{0pt}
  \xymatrix{
\begin{tikzpicture}
	\begin{pgfonlayer}{nodelayer}
		\node [style=none] (0) at (1, 0.25) {$B$};
		\node [style=none] (1) at (0.5, -0.25) {};
		\node [style=none] (2) at (-1, 0.25) {$A$};
		\node [style=none] (3) at (0.5, 0.25) {};
		\node [style=none] (4) at (-0.5, -0.25) {};
		\node [style=none] (5) at (-1, -0.25) {$B$};
		\node [style=none] (6) at (1, -0.25) {$A$};
		\node [style=none] (7) at (-0.5, 0.25) {};
		\node [style=none] (8) at (-0.75, 0.25) {};
		\node [style=none] (9) at (-0.75, -0.25) {};
		\node [style=none] (10) at (0.75, 0.25) {};
		\node [style=none] (11) at (0.75, -0.25) {};
	\end{pgfonlayer}
	\begin{pgfonlayer}{edgelayer}
		\draw [in=180, out=0, looseness=1.00] (7.center) to (1.center);
		\draw [in=180, out=0, looseness=1.00] (4.center) to (3.center);
		\draw (7.center) to (8.center);
		\draw (4.center) to (9.center);
		\draw (3.center) to (10.center);
		\draw (1.center) to (11.center);
	\end{pgfonlayer}
\end{tikzpicture}
    \\
    \s_{A,B}: A \ot B \to B \ot A
  }
\]
such that
\[
    \tikzset{every path/.style={line width=1.1pt}}
  \begin{aligned}
    \begin{tikzpicture}
	\begin{pgfonlayer}{nodelayer}
		\node [style=none] (0) at (2, -0.25) {$B$};
		\node [style=none] (1) at (0.5, -0.25) {};
		\node [style=none] (2) at (-1, 0.25) {$A$};
		\node [style=none] (3) at (0.5, 0.25) {};
		\node [style=none] (4) at (-0.5, -0.25) {};
		\node [style=none] (5) at (-1, -0.25) {$B$};
		\node [style=none] (6) at (2, 0.25) {$A$};
		\node [style=none] (7) at (-0.5, 0.25) {};
		\node [style=none] (8) at (-0.75, 0.25) {};
		\node [style=none] (9) at (-0.75, -0.25) {};
		\node [style=none] (10) at (1.75, 0.25) {};
		\node [style=none] (11) at (1.75, -0.25) {};
		\node [style=none] (12) at (1.5, 0.25) {};
		\node [style=none] (13) at (1.5, -0.25) {};
	\end{pgfonlayer}
	\begin{pgfonlayer}{edgelayer}
		\draw [in=180, out=0, looseness=1.00] (7.center) to (1.center);
		\draw [in=180, out=0, looseness=1.00] (4.center) to (3.center);
		\draw (7.center) to (8.center);
		\draw (4.center) to (9.center);
		\draw [in=180, out=0, looseness=1.00] (3.center) to (13.center);
		\draw [in=180, out=-3, looseness=1.00] (1.center) to (12.center);
		\draw (13.center) to (11.center);
		\draw (12.center) to (10.center);
	\end{pgfonlayer}
\end{tikzpicture}
  \end{aligned}
\quad = \quad
  \begin{aligned}
\begin{tikzpicture}
	\begin{pgfonlayer}{nodelayer}
		\node [style=none] (0) at (1, -0.25) {$B$};
		\node [style=none] (1) at (-1, 0.25) {$A$};
		\node [style=none] (2) at (0.75, -0.25) {};
		\node [style=none] (3) at (-1, -0.25) {$B$};
		\node [style=none] (4) at (1, 0.25) {$A$};
		\node [style=none] (5) at (0.75, 0.25) {};
		\node [style=none] (6) at (-0.75, 0.25) {};
		\node [style=none] (7) at (-0.75, -0.25) {};
	\end{pgfonlayer}
	\begin{pgfonlayer}{edgelayer}
		\draw (5.center) to (6.center);
		\draw (2.center) to (7.center);
	\end{pgfonlayer}
\end{tikzpicture}
  \end{aligned}
\]
and
\[
    \tikzset{every path/.style={line width=1.1pt}}
  \begin{aligned}
    \begin{tikzpicture}
	\begin{pgfonlayer}{nodelayer}
		\node [style=none] (0) at (2, -0.25) {$C$};
		\node [style=none] (1) at (-1, 0.25) {$A$};
		\node [style=none] (2) at (-0.5, -0.25) {};
		\node [style=none] (3) at (-1, -0.25) {$B$};
		\node [style=none] (4) at (2, 0.25) {$B$};
		\node [style=none] (5) at (-0.5, 0.25) {};
		\node [style=none] (6) at (-0.75, 0.25) {};
		\node [style=none] (7) at (-0.75, -0.25) {};
		\node [style=none] (8) at (1.75, -0.25) {};
		\node [style=none] (9) at (1.75, 0.25) {};
		\node [style=none] (10) at (0.5, 0.25) {};
		\node [style=none] (11) at (1.5, -0.25) {};
		\node [style=none] (12) at (0.5, -0.25) {};
		\node [style=none] (13) at (-1, -0.75) {$C$};
		\node [style=none] (14) at (-0.75, -0.75) {};
		\node [style=none] (15) at (-0.5, -0.75) {};
		\node [style=none] (16) at (0.5, -0.75) {};
		\node [style=none] (17) at (1.5, -0.75) {};
		\node [style=none] (18) at (1.75, -0.75) {};
		\node [style=none] (19) at (2, -0.75) {$A$};
	\end{pgfonlayer}
	\begin{pgfonlayer}{edgelayer}
		\draw (11.center) to (8.center);
		\draw [in=180, out=0, looseness=1.00] (2.center) to (10.center);
		\draw [in=180, out=0, looseness=1.00] (5.center) to (12.center);
		\draw (10.center) to (9.center);
		\draw (14.center) to (15.center);
		\draw (15.center) to (16.center);
		\draw [in=180, out=0, looseness=1.00] (16.center) to (11.center);
		\draw [in=180, out=0, looseness=1.00] (12.center) to (17.center);
		\draw (17.center) to (18.center);
		\draw (6.center) to (5.center);
		\draw (2.center) to (7.center);
	\end{pgfonlayer}
\end{tikzpicture}
  \end{aligned}
\quad = \quad
  \begin{aligned}
\begin{tikzpicture}
	\begin{pgfonlayer}{nodelayer}
		\node [style=none] (0) at (1.35, 0.25) {$B\ot C$};
		\node [style=none] (1) at (0.5, -0.25) {};
		\node [style=none] (2) at (-1, 0.25) {$A$};
		\node [style=none] (3) at (0.5, 0.25) {};
		\node [style=none] (4) at (-0.5, -0.25) {};
		\node [style=none] (5) at (-1.35, -0.25) {$B\ot C$};
		\node [style=none] (6) at (1, -0.25) {$A$};
		\node [style=none] (7) at (-0.5, 0.25) {};
		\node [style=none] (8) at (-0.75, 0.25) {};
		\node [style=none] (9) at (-0.75, -0.25) {};
		\node [style=none] (10) at (0.75, 0.25) {};
		\node [style=none] (11) at (0.75, -0.25) {};
	\end{pgfonlayer}
	\begin{pgfonlayer}{edgelayer}
		\draw [in=180, out=0, looseness=1.00] (7.center) to (1.center);
		\draw [in=180, out=0, looseness=1.00] (4.center) to (3.center);
		\draw (7.center) to (8.center);
		\draw (4.center) to (9.center);
		\draw (3.center) to (10.center);
		\draw (1.center) to (11.center);
	\end{pgfonlayer}
\end{tikzpicture}
  \end{aligned}
\]
for all $A,B,C$ in $\mc C$.  We call $\s$ the \define{braiding}. We will also
talk, somewhat incidentally, of braided monoidal categories in the next
chapter; a \define{braided monoidal category} is a monoidal category with a
braiding that only obeys the latter axiom.

A \define{(lax/strong/strict) symmetric monoidal functor} is a
(lax/strong/strict) monoidal functor that further obeys
\[
\xymatrixcolsep{3pc}
\xymatrixrowsep{3pc}
\xymatrix{
FA \ot FB \ar[r]^{\varphi_{A,B}} \ar[d]_{\s'_{FA,FB}} & F(A \ot B)\ar[d]^{F\s_{A,B}}\\
FB \ot FA \ar[r]_{\varphi_{B,A}} & F(B \ot A)
}
\]
Morphisms between symmetric monoidal functors are simply monoidal natural
transformations. Thus two symmetric monoidal categories are \define{symmetric
monoidally equivalent} if they are monoidally equivalent by strong
\emph{symmetric} monoidal functors. If our categories are merely braided, we
refer to these functors as \define{braided monoidal functors}.

Roughly speaking, the coherence theorem for symmetric monoidal categories, with
respect to string diagrams, states that two morphisms in a symmetric monoidal
category are equal according to the axioms of symmetric monoidal categories if
and only if their diagrams are equal up to isotopy and applications of the
defining graphical identities above. See Joyal--Street \cite[Theorem
2.3]{JS91} for more precision and details.

\section{Hypergraph categories} \label{sec.hypergraphs}

Just as symmetric monoidal categories equip monoidal categories with precisely
enough extra structure to model crossing of strings in the graphical calculus,
hypergraph categories equip symmetric monoidal categories with precisely enough
extra structure to model multi-input multi-output interconnections of strings of
the same type. For this, we require each object to be equipped with a so-called
special commutative Frobenius monoid, which provides chosen maps to model this
interaction. These have a coherence result, known as the `spider theorem', that
says exactly how we use these chosen maps to describe the connection of strings
does not matter: all that matters is that the strings are connected. 

\subsection{Frobenius monoids}
A Frobenius monoid comprises a monoid and comonoid on the same object that
interact according to the so-called Frobenius law.
\begin{definition}
  A \define{special commutative Frobenius monoid} $(X,\mu,\eta,\delta,\epsilon)$
  in a symmetric monoidal category $(\mathcal C, \otimes)$ is an object $X$ of
  $\mathcal C$ together with maps 
\[
  \xymatrixrowsep{1pt}
  \xymatrixcolsep{20pt}
  \xymatrix{
    \mult{.075\textwidth} & & \unit{.075\textwidth} & & 
    \comult{.075\textwidth} & & \counit{.075\textwidth} \\
    \mu\maps X\otimes X \to X & & \eta\maps I \to X & & 
    \delta\maps X\to X \otimes X & & \epsilon\maps X \to I
  }
\]
obeying the commutative monoid axioms
\[
  \xymatrixrowsep{1pt}
  \xymatrixcolsep{25pt}
  \xymatrix{
    \assocl{.1\textwidth} = \assocr{.1\textwidth} & \unitl{.1\textwidth} =
    \idone{.1\textwidth} & \commute{.1\textwidth} = \mult{.07\textwidth} \\
    \textrm{(associativity)} & \textrm{(unitality)} & \textrm{(commutativity)}
  }
\]
the cocommutative comonoid axioms
\[
  \xymatrixrowsep{1pt}
  \xymatrixcolsep{25pt}
  \xymatrix{
    \coassocl{.1\textwidth} = \coassocr{.1\textwidth} & \counitl{.1\textwidth} =
    \idone{.1\textwidth} & \cocommute{.1\textwidth} = \comult{.07\textwidth} \\
    \textrm{(coassociativity)} & \textrm{(counitality)} &
    \textrm{(cocommutativity)}
  }
\]
and the Frobenius and special axioms
  \[
  \xymatrixrowsep{1pt}
  \xymatrixcolsep{25pt}
  \xymatrix{
    \frobs{.1\textwidth} = \frobx{.1\textwidth} = \frobz{.1\textwidth} & \spec{.1\textwidth} =
    \idone{.1\textwidth} \\
    \textrm{(Frobenius)} & \textrm{(special)} 
  }
  \]
  We call $\mu$ the \define{multiplication}, $\eta$ the \define{unit}, $\delta$
  the \define{comultiplication}, and $\epsilon$ the \define{counit}.
\end{definition}

Arising from work in representation theory \cite{BN37}, special commutative
Frobenius monoids were first formulated in this categorical form by Carboni and
Walters, under the name commutative separable algebras \cite{CW87}. The
Frobenius law and the special law were termed the S=X law and the diamond=1 law
respectively \cite{CW87,RSW05}.

Alternate axiomatisations are possible. In addition to the `upper' unitality law
above, the mirror image `lower' unitality law also holds, due to commutativity
and the naturality of the braiding. While we write two equations for the
Frobenius law, this is redundant: given the other axioms, the equality of any
two of the diagrams implies the equality of all three.  Further, note that a
monoid and comonoid obeying the Frobenius law is commutative if and only if it
is cocommutative. Thus while a commutative and cocommutative Frobenius monoid
might more properly be called a bicommutative Frobenius monoid, there is no
ambiguity if we only say commutative.

The common feature to these equations is that each side describes a different
way of using the generators to connect some chosen set of inputs to some chosen
set of outputs. This observation provides a coherence type result for special
commutative Frobenius monoids, known as the spider theorem.
\begin{theorem}
  Let $(X,\mu,\eta,\delta,\epsilon)$ be a special commutative Frobenius monoid,
  and let $f,g\maps X^{\ot n} \to X^{\ot m}$ be map constructed, using
  composition and the monoidal product, from $\mu$, $\eta$, $\delta$, $\epsilon$, the
  coherence maps and braiding, and the identity map on $X$. Then $f$ and $g$ are
  equal if and only if given their string diagrams in the above notation, there
  exists a bijection between the connected components of the two diagrams such
  that corresponding connected components connect the exact same sets of inputs
  and outputs.
\end{theorem}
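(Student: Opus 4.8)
The plan is to recognise this statement as the claim that interpreting diagrams by their connectivity is both sound and complete, and to prove the two implications separately by combining a normal-form argument with a concrete model. The connectivity data in the statement---a bijection of connected components matching up the inputs and outputs each touches---is exactly the data of an isomorphism of cospans of finite sets $[n] \to N \leftarrow [m]$, where $N$ is the set of connected components and the two legs record which component each input and output lands in. So I would first reformulate the theorem as: $f$ and $g$ are equal if and only if they determine the same such cospan.

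For the ``only if'' (soundness) direction I would build a concrete special commutative Frobenius monoid whose arrows expose connectivity. Take the category whose objects are natural numbers and whose morphisms $n \to m$ are isomorphism classes of cospans $[n] \to N \leftarrow [m]$ of finite sets, with composition by pushout and monoidal product by disjoint union; the object $1$ carries an evident special commutative Frobenius monoid (the generating maps being the obvious cospans into a point, and so on). A direct check that these cospans satisfy the associativity, commutativity, Frobenius and special axioms shows this assignment extends to a symmetric monoidal functor out of the category generated by the Frobenius structure. By construction, pushing out along wired-together terminals merges exactly the connected components, so this functor sends a diagram to the cospan of its connected components. Hence if $f = g$ their connectivity cospans agree, which is precisely the asserted bijection. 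This direction is routine once the model is in place.

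For the ``if'' (completeness) direction I would establish a normal form. Define the $(p,q)$-spider $S_{p,q}\maps X^{\ot p} \to X^{\ot q}$ as the iterated comultiplication $X \to X^{\ot q}$ (taken to be $\epsilon$ when $q=0$) precomposed with the iterated multiplication $X^{\ot p} \to X$ (taken to be $\eta$ when $p=0$); every generator, the identity, and the braiding are spiders or permutations thereof. The crux is a \emph{spider fusion lemma}: composing $S_{k,l}$ with $S_{p,q}$ along $j \ge 1$ connecting wires yields $S_{p+k-j,\,q+l-j}$. Granting this, an induction on the number of generators shows that each connected component of any diagram equals a single spider, so---using the symmetric monoidal coherence theorem of the previous section to absorb braidings and reordering---the whole diagram is a monoidal product of spiders indexed by its connected components. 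Two diagrams with matching connectivity thus reduce to the same product of spiders and so are equal.

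The main obstacle is the spider fusion lemma, and within it the case $j \ge 2$. Fusing along a single wire ($j=1$) is just associativity and coassociativity. When two spiders are joined along two or more wires, naively composing the iterated multiplications and comultiplications produces internal loops; the Frobenius law is used to slide these into a single chain, and the \emph{special} axiom $\mu\circ\delta = \idn$ is precisely what is needed to collapse each resulting loop so that no spurious closed components or multiplicities are created. Handling these loops cleanly---keeping careful track of how many independent cycles arise and checking that specialness removes each of them---is the delicate combinatorial heart of the argument; the commutativity and cocommutativity axioms then ensure the inputs and outputs may be freely permuted, so that only the partition into components, and not any ordering, survives.
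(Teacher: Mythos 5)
Your proposal is correct, but it takes a different route from the one the thesis relies on. The thesis does not prove the spider theorem directly: it invokes Lack's result (cited as \cite{Lac04}, and restated later as Proposition 1.9) that special commutative Frobenius monoids in a strict symmetric monoidal category correspond exactly to strict symmetric monoidal functors out of $\cospan(\FinSet)$, i.e.\ that $\cospan(\FinSet)$ is the prop for special commutative Frobenius monoids. Lack obtains this by exhibiting $\cospan(\FinSet)$ as a composite of the prop for commutative monoids with the prop for cocommutative comonoids via a distributive law (computed by pushout), and reading off the Frobenius and special axioms as the equations generated by that law. Your reformulation of the theorem --- $f=g$ iff the two diagrams present isomorphic cospans $[n]\to N \leftarrow [m]$ of connected components --- is precisely that proposition, so you are proving the same underlying fact, but by the elementary normal-form argument (the Coecke--Kissinger style proof): a cospan model for soundness, and spider fusion plus symmetric monoidal coherence for completeness. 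Your approach buys self-containedness and an explicit normal form (a tensor product of spiders); Lack's buys brevity and a template that generalises to other distributive laws (e.g.\ bialgebras, and the presentations of $\cospan\mat\pk$ used later in the thesis). Two points deserve care in your write-up: first, a single categorical composition can merge more than two components at once, so the fusion lemma must be applied iteratively until every component is a single spider --- this terminates because each fusion strictly decreases the number of spiders in a component; second, closed components are genuine data here, since the extra law $\epsilon\circ\eta=\idn_I$ is \emph{not} assumed, so each closed loop must normalise to the scalar $\epsilon\circ\eta$ and be counted, which your cospan model correctly records as apex elements hit by neither leg.
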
%
A corollary of Lack's work on distributive laws for props \cite{Lac04}, this
theorem was first explicitly formulated in the context of categorical quantum
mechanics.  See the textbook of Coecke and Kissinger \cite{CK} or paper by
Coecke, Paquette, and Pavlovic \cite{CPP} for further details.

\subsection{Hypergraph categories}

\begin{definition}
  A \define{hypergraph category} is a symmetric monoidal category $(\mc H,\ot)$
  in which each object $X$ is equipped with a special commutative Frobenius
  structure $(X,\mu_X,\delta_X,\eta_X,\epsilon_X)$ such that 
  \[
    \tikzset{every path/.style={line width=1.1pt}}
    \xymatrixcolsep{1.8ex}
    \xymatrixrowsep{1ex}
    \xymatrix{
    \begin{aligned}
      \begin{tikzpicture}[scale=.65]
	\begin{pgfonlayer}{nodelayer}
		\node [style=none] (0) at (-0.25, 0.5) {};
		\node [style=dot] (1) at (0.5, -0) {};
		\node [style=none] (2) at (-0.25, -0.5) {};
		\node [style=none] (3) at (1.25, -0) {};
		\node [style=none] (4) at (-1.5, 0.5) {$X \otimes Y$};
		\node [style=none] (5) at (-1.5, -0.5) {$X \otimes Y$};
		\node [style=none] (6) at (2, -0) {$X \otimes Y$};
		\node [style=none] (7) at (-0.75, 0.5) {};
		\node [style=none] (8) at (-0.75, -0.5) {};
	\end{pgfonlayer}
	\begin{pgfonlayer}{edgelayer}
		\draw [in=90, out=0, looseness=0.90] (0.center) to (1.center);
		\draw [in=-90, out=0, looseness=0.90] (2.center) to (1.center);
		\draw (1.center) to (3.center);
		\draw (7.center) to (0.center);
		\draw (8.center) to (2.center);
	\end{pgfonlayer}
\end{tikzpicture}
\end{aligned}
=
\begin{aligned}
  \begin{tikzpicture}[scale=.65]
	\begin{pgfonlayer}{nodelayer}
		\node [style=none] (0) at (-0.25, 0.5) {};
		\node [style=dot] (1) at (0.5, -0) {};
		\node [style=none] (2) at (-0.25, -0.5) {};
		\node [style=none] (3) at (1.25, -0) {};
		\node [style=none] (4) at (-1.75, 0.5) {$X$};
		\node [style=none] (5) at (-1.75, -1.25) {$X$};
		\node [style=none] (6) at (1.5, -0) {$X$};
		\node [style=none] (7) at (-1.5, 0.5) {};
		\node [style=none] (8) at (-1.5, -1.25) {};
		\node [style=none] (9) at (1.25, -1.75) {};
		\node [style=none] (10) at (-1.5, -2.25) {};
		\node [style=none] (11) at (1.5, -1.75) {$Y$};
		\node [style=none] (12) at (-1.5, -0.5) {};
		\node [style=none] (13) at (-0.25, -2.25) {};
		\node [style=dot] (14) at (0.5, -1.75) {};
		\node [style=none] (15) at (-0.25, -1.25) {};
		\node [style=none] (16) at (-1.75, -0.5) {$Y$};
		\node [style=none] (17) at (-1.75, -2.25) {$Y$};
	\end{pgfonlayer}
	\begin{pgfonlayer}{edgelayer}
		\draw [in=90, out=0, looseness=0.90] (0.center) to (1.center);
		\draw [in=-90, out=0, looseness=0.90] (2.center) to (1.center);
		\draw (1.center) to (3.center);
		\draw (7.center) to (0.center);
		\draw [in=180, out=0, looseness=1.00] (8.center) to (2.center);
		\draw [in=90, out=0, looseness=0.90] (15.center) to (14);
		\draw [in=-90, out=0, looseness=0.90] (13.center) to (14);
		\draw (14) to (9.center);
		\draw [in=180, out=0, looseness=1.00] (12.center) to (15.center);
		\draw (10.center) to (13.center);
	\end{pgfonlayer}
\end{tikzpicture}
\end{aligned}
& &   
\qquad
  \begin{aligned}
    \begin{tikzpicture}[scale=.65]
	\begin{pgfonlayer}{nodelayer}
		\node [style=dot] (0) at (-0.5, 0.5) {};
		\node [style=none] (1) at (1.5, 0.5) {$X\otimes Y$};
		\node [style=none] (2) at (0.75, 0.5) {};
	\end{pgfonlayer}
	\begin{pgfonlayer}{edgelayer}
		\draw (2.center) to (0.center);
	\end{pgfonlayer}
\end{tikzpicture}
  \end{aligned}
  = \qquad
  \begin{aligned}
    \begin{tikzpicture}[scale=.65]
	\begin{pgfonlayer}{nodelayer}
		\node [style=dot] (0) at (0, 0.5) {};
		\node [style=none] (1) at (1.5, 0.5) {$X$};
		\node [style=none] (2) at (1.25, 0.5) {};
		\node [style=none] (3) at (1.25, -0.25) {};
		\node [style=dot] (4) at (0, -0.25) {};
		\node [style=none] (5) at (1.5, -0.25) {$Y$};
	\end{pgfonlayer}
	\begin{pgfonlayer}{edgelayer}
		\draw (2.center) to (0.center);
		\draw (3.center) to (4.center);
	\end{pgfonlayer}
\end{tikzpicture}
  \end{aligned}
\\
    \begin{aligned}
\begin{tikzpicture}[scale=.65]
	\begin{pgfonlayer}{nodelayer}
		\node [style=none] (0) at (0.75, 0.5) {};
		\node [style=dot] (1) at (0, -0) {};
		\node [style=none] (2) at (0.75, -0.5) {};
		\node [style=none] (3) at (-0.75, -0) {};
		\node [style=none] (4) at (2, 0.5) {$X \otimes Y$};
		\node [style=none] (5) at (2, -0.5) {$X \otimes Y$};
		\node [style=none] (6) at (-1.5, -0) {$X \otimes Y$};
		\node [style=none] (7) at (1.25, 0.5) {};
		\node [style=none] (8) at (1.25, -0.5) {};
	\end{pgfonlayer}
	\begin{pgfonlayer}{edgelayer}
		\draw [in=90, out=180, looseness=0.90] (0.center) to (1.center);
		\draw [in=-90, out=180, looseness=0.90] (2.center) to (1.center);
		\draw (1.center) to (3.center);
		\draw (7.center) to (0.center);
		\draw (8.center) to (2.center);
	\end{pgfonlayer}
\end{tikzpicture}
\end{aligned}
=
\begin{aligned}
\begin{tikzpicture}[scale=.65]
	\begin{pgfonlayer}{nodelayer}
		\node [style=none] (0) at (0, 0.5) {};
		\node [style=dot] (1) at (-0.75, -0) {};
		\node [style=none] (2) at (0, -0.5) {};
		\node [style=none] (3) at (-1.5, -0) {};
		\node [style=none] (4) at (1.5, 0.5) {$X$};
		\node [style=none] (5) at (1.5, -1.25) {$X$};
		\node [style=none] (6) at (-1.75, -0) {$X$};
		\node [style=none] (7) at (1.25, 0.5) {};
		\node [style=none] (8) at (1.25, -1.25) {};
		\node [style=none] (9) at (-1.5, -1.75) {};
		\node [style=none] (10) at (1.25, -2.25) {};
		\node [style=none] (11) at (-1.75, -1.75) {$Y$};
		\node [style=none] (12) at (1.25, -0.5) {};
		\node [style=none] (13) at (0, -2.25) {};
		\node [style=dot] (14) at (-0.75, -1.75) {};
		\node [style=none] (15) at (0, -1.25) {};
		\node [style=none] (16) at (1.5, -0.5) {$Y$};
		\node [style=none] (17) at (1.5, -2.25) {$Y$};
	\end{pgfonlayer}
	\begin{pgfonlayer}{edgelayer}
		\draw [in=90, out=180, looseness=0.90] (0.center) to (1.center);
		\draw [in=-90, out=180, looseness=0.90] (2.center) to (1.center);
		\draw (1.center) to (3.center);
		\draw (7.center) to (0.center);
		\draw [in=0, out=180, looseness=1.00] (8.center) to (2.center);
		\draw [in=90, out=180, looseness=0.90] (15.center) to (14);
		\draw [in=-90, out=180, looseness=0.90] (13.center) to (14);
		\draw (14) to (9.center);
		\draw [in=0, out=180, looseness=1.00] (12.center) to (15.center);
		\draw (10.center) to (13.center);
	\end{pgfonlayer}
\end{tikzpicture}
\end{aligned}
& &
  \begin{aligned}
    \begin{tikzpicture}[scale=.65]
	\begin{pgfonlayer}{nodelayer}
		\node [style=dot] (0) at (1.5, 0.5) {};
		\node [style=none] (1) at (-0.5, 0.5) {$X\otimes Y$};
		\node [style=none] (2) at (0.25, 0.5) {};
	\end{pgfonlayer}
	\begin{pgfonlayer}{edgelayer}
		\draw (2.center) to (0.center);
	\end{pgfonlayer}
\end{tikzpicture}
  \end{aligned}
  \qquad \quad =
  \begin{aligned}
    \begin{tikzpicture}[scale=.65]
	\begin{pgfonlayer}{nodelayer}
		\node [style=dot] (0) at (1.5, 0.5) {};
		\node [style=none] (1) at (0, 0.5) {$X$};
		\node [style=none] (2) at (0.25, 0.5) {};
		\node [style=none] (3) at (0.25, -0.25) {};
		\node [style=dot] (4) at (1.5, -0.25) {};
		\node [style=none] (5) at (0, -0.25) {$Y$};
	\end{pgfonlayer}
	\begin{pgfonlayer}{edgelayer}
		\draw (2.center) to (0.center);
		\draw (3.center) to (4.center);
	\end{pgfonlayer}
\end{tikzpicture}
\qquad \quad
  \end{aligned}
}
\]
\end{definition}

Note that we do \emph{not} require these Frobenius morphisms to be natural in
$X$. While morphisms in a hypergraph category need not interact with the Frobenius
structure in any particular way, we do require functors between hypergraph
categories to preserve it.

\begin{definition}
A functor $(F,\varphi)$ of hypergraph categories, or \define{hypergraph
functor}, is a strong symmetric monoidal functor $(F,\varphi)\maps (\mc
H,\otimes) \to (\mc H',\boxtimes)$ such that for each object $X$ the following
diagrams commute:
\[
  \xymatrix{
    FX\boxtimes FX \ar[rr]^{\mu_{FX}} \ar[dr]_{\varphi} && FX \\
    & F(X \ot X) \ar[ur]_{F\mu_X}
  }
  \qquad
  \xymatrix{
    I' \ar[rr]^{\eta_{FX}} \ar[dr]_{\varphi_I} && FX \\
    & FI \ar[ur]_{F\eta_X}
  }
\]
\[
  \xymatrix{
    FX \ar[rr]^{\delta_{FX}} \ar[dr]_{F\delta_X} && FX \boxtimes FX\\
    & F(X \ot X) \ar[ur]_{\varphi^{-1}}
  }
  \qquad
  \xymatrix{
    FX \ar[rr]^{\epsilon_{FX}} \ar[dr]_{F\epsilon_X} && I' \\
    & FI \ar[ur]_{\varphi^{-1}}
  }
\]
\end{definition}

Equivalently, a strong symmetric monoidal functor $F$ is a hypergraph functor if
for every $X$ the special commutative Frobenius structure on $FX$ is
\[
  (FX,\enspace F\mu_X \circ \varphi_{X,X},\enspace  \varphi^{-1}_{X,X} \circ F\delta_X,\enspace  F\eta_X \circ
  \varphi_I,\enspace  \varphi_I^{-1} \circ F\epsilon_X).
\]

Just as monoidal natural transformations themselves are enough as morphisms
between symmetric monoidal functors, so too they suffice as morphisms between
hypergraph functors. Two hypergraph categories are \define{hypergraph
equivalent} if there exist hypergraph functors with monoidal natural
transformations to the identity functors. 

The term hypergraph category was introduced recently \cite{Fon15,Kis}, in
reference to the fact that these special commutative Frobenius monoids provide
precisely the structure required for their string diagrams to be directed graphs
with `hyperedges': edges connecting any number of inputs to any number of
outputs.  More precisely, write $\mc P(N)$ for the finite power set of $N$. Then
a directed hypergraph $(N,H,s,t)$ comprises a set $N$ of vertices, a set $H$ of
hyperedges, and source and target functions $s,t\maps H \to \mc P(N)$. We then
think of morphisms in a hypergraph category as hyperedges; for example, the
morphism
\[
  \begin{aligned}
  \begin{tikzpicture}
	\begin{pgfonlayer}{nodelayer}
		\node [style=none] (0) at (-0.25, 0.375) {};
		\node [style=none] (1) at (0.5, 0.375) {};
		\node [style=none] (2) at (-0.25, -.875) {};
		\node [style=none] (3) at (0.5, -.875) {};
		\node [style=none] (4) at (0.5, 0.125) {};
		\node [style=none] (5) at (1.25, 0.125) {};
		\node [style=none] (6) at (0.125, -0.25) {$f$};
		\node [style=none] (7) at (1.5, 0.125) {$Y_1$};
		\node [style=none] (8) at (1.25, -0.625) {};
		\node [style=none] (9) at (1.5, -0.625) {$Y_2$};
		\node [style=none] (10) at (0.5, -0.625) {};
		\node [style=none] (11) at (-0.25, -0.75) {};
		\node [style=none] (12) at (-1.25, -0.75) {$X_3$};
		\node [style=none] (13) at (-0.25, -0.25) {};
		\node [style=none] (14) at (-1.25, 0.25) {$X_1$};
		\node [style=none] (15) at (-1, 0.25) {};
		\node [style=none] (16) at (-0.25, 0.25) {};
		\node [style=none] (17) at (-1, -0.25) {};
		\node [style=none] (18) at (-1.25, -0.25) {$X_2$};
		\node [style=none] (19) at (-1, -0.75) {};
	\end{pgfonlayer}
	\begin{pgfonlayer}{edgelayer}
		\draw (0.center) to (1.center);
		\draw (1.center) to (3.center);
		\draw (3.center) to (2.center);
		\draw (2.center) to (0.center);
		\draw (4.center) to (5.center);
		\draw (10.center) to (8.center);
		\draw (16.center) to (15.center);
		\draw (13.center) to (17.center);
		\draw (11.center) to (19.center);
	\end{pgfonlayer}
\end{tikzpicture}
\end{aligned}
\]
is a hyperedge from the set $\{X_1,X_2,X_3\}$ to the set $\{Y_1,Y_2\}$. Roughly
speaking, two morphisms of a hypergraph category are equal by the axioms of a
hypergraph category if their string diagrams reduce, using the graphical
manipulations given by the symmetric monoidal category and Frobenius axioms, to
the same hypergraph. Details can be found in the work of Bonchi, Gadduci,
Kissinger, Soboci\'nski, and Zanasi \cite{BGKSZ}.

Like special commutative Frobenius monoids, hypergraph categories were first
defined by Walters and Carboni, under the name \emph{well-supported compact
closed category} (the first appearance in print is \cite{Car91}; see
\cite{RSW05} for a history).  In recent years they have been rediscovered a
number of times, also appearing under the names \emph{dgs-monoidal category}
\cite{GH98} and \emph{dungeon category} \cite{Mor14}. 


\subsection{Hypergraph categories are self-dual compact closed}
\label{ssec.compactclosed}

Note that if an object $X$ is equipped with a Frobenius monoid structure then
the maps 
\[
    \xymatrixrowsep{0pt}
    \xymatrix{
  \begin{aligned}
      \resizebox{.09\textwidth}{!}{
	\begin{tikzpicture}
	  \begin{pgfonlayer}{nodelayer}
	    \node [style=circ] (0) at (0.75, -0) {};
	    \node [style=circ] (1) at (0.125, -0) {};
	    \node [style=none] (2) at (-1, 0.5) {};
	    \node [style=none] (3) at (-1, -0.5) {};
	  \end{pgfonlayer}
	  \begin{pgfonlayer}{edgelayer}
	    \draw [line width=2pt] (0.center) to (1.center);
	    \draw [line width=2pt, in=0, out=120, looseness=1.20] (1.center) to (2.center);
	    \draw [line width=2pt, in=0, out=-120, looseness=1.20] (1.center) to (3.center);
	  \end{pgfonlayer}
	\end{tikzpicture} 
    }
  \end{aligned}
  & \quad \mbox{and} \quad&
  \begin{aligned}
      \resizebox{.09\textwidth}{!}{
	\begin{tikzpicture}
	  \begin{pgfonlayer}{nodelayer}
	    \node [style=circ] (0) at (-1, 0) {};
	    \node [style=circ] (1) at (-0.375, 0) {};
	    \node [style=none] (2) at (0.75, -0.5) {};
	    \node [style=none] (3) at (0.75, 0.5) {};
	  \end{pgfonlayer}
	  \begin{pgfonlayer}{edgelayer}
	    \draw [line width=2pt] (0.center) to (1.center);
	    \draw [line width=2pt, in=180, out=-60, looseness=1.20] (1.center) to (2.center);
	    \draw [line width=2pt, in=180, out=60, looseness=1.20] (1.center) to (3.center);
	  \end{pgfonlayer}
	\end{tikzpicture}
      } 
  \end{aligned} \\
      \epsilon \circ \mu\maps X \ot X \to I & &
      \delta \circ \eta\maps I \to X \ot X
    }
\]
obey both
\[
  \begin{aligned}
    \resizebox{3cm}{!}{
      \begin{tikzpicture}
	\begin{pgfonlayer}{nodelayer}
	  \node [style=circ] (0) at (-1.5, 0.5) {};
	  \node [style=circ] (1) at (-0.75, 0.5) {};
	  \node [style=none] (2) at (0.25, -0) {};
	  \node [style=none] (3) at (0.25, 1) {};
	  \node [style=circ] (4) at (1, -0.5) {};
	  \node [style=none] (5) at (0, -0) {};
	  \node [style=circ] (6) at (1.75, -0.5) {};
	  \node [style=none] (7) at (0, -1) {};
	  \node [style=none] (8) at (2.5, 1) {};
	  \node [style=none] (9) at (-2.5, -1) {};
	\end{pgfonlayer}
	\begin{pgfonlayer}{edgelayer}
	  \draw [line width=2pt, in=180, out=-60, looseness=1.20] (1) to (2.center);
	  \draw [line width=2pt, in=180, out=60, looseness=1.20] (1) to (3.center);
	  \draw [line width=2pt] (0.center) to (1);
	  \draw [line width=2pt] (6.center) to (4);
	  \draw [line width=2pt, in=0, out=120, looseness=1.20] (4) to (5.center);
	  \draw [line width=2pt, in=0, out=-120, looseness=1.20] (4) to (7.center);
	  \draw [line width=2pt] (3.center) to (8.center);
	  \draw [line width=2pt] (7.center) to (9.center);
	\end{pgfonlayer}
      \end{tikzpicture}
    }
  \end{aligned}
  \quad = \quad
  \begin{aligned}
    \resizebox{3cm}{!}{
      \begin{tikzpicture}
	\begin{pgfonlayer}{nodelayer}
	  \node [style=circ] (0) at (-0.5, -0) {};
	  \node [style=none] (1) at (-1.5, -0.5) {};
	  \node [style=circ] (2) at (-1.5, 0.5) {};
	  \node [style=circ] (3) at (0.5, -0) {};
	  \node [style=circ] (4) at (1.5, -0.5) {};
	  \node [style=none] (5) at (1.5, 0.5) {};
	  \node [style=none] (6) at (2.5, 0.5) {};
	  \node [style=none] (7) at (-2.5, -0.5) {};
	\end{pgfonlayer}
	\begin{pgfonlayer}{edgelayer}
	  \draw [line width=2pt, in=0, out=-120, looseness=1.20] (0.center) to (1.center);
	  \draw [line width=2pt, in=0, out=120, looseness=1.20] (0.center) to (2.center);
	  \draw [line width=2pt, in=180, out=-60, looseness=1.20] (3) to (4.center);
	  \draw [line width=2pt, in=180, out=60, looseness=1.20] (3) to (5.center);
	  \draw [line width=2pt] (0) to (3);
	  \draw [line width=2pt] (7.center) to (1.center);
	  \draw [line width=2pt] (5.center) to (6.center);
	\end{pgfonlayer}
      \end{tikzpicture}
    }
  \end{aligned}
  \quad = \quad
  \begin{aligned}
    \resizebox{2cm}{!}{
      \begin{tikzpicture}
	\begin{pgfonlayer}{nodelayer}
	  \node [style=none] (0) at (2, -0) {};
	  \node [style=none] (1) at (-2, -0) {};
	  \node [style=none] (2) at (0, -0.5) {};
	  \node [style=none] (3) at (0, 0.5) {};
	\end{pgfonlayer}
	\begin{pgfonlayer}{edgelayer}
	  \draw [line width=2pt](1.center) to (0.center);
	\end{pgfonlayer}
      \end{tikzpicture}
    }
  \end{aligned}
\]
and the reflected equations. Thus if an object carries a Frobenius monoid it is
also self-dual, and any hypergraph category is a fortiori self-dual compact
closed. 

We introduce the notation
  \[
    \tikzset{every path/.style={line width=1.1pt}}
    \begin{aligned}
      \begin{tikzpicture}[scale=.65]
	\begin{pgfonlayer}{nodelayer}
		\node [style=none] (0) at (0.75, 0.5) {};
		\node [style=none] (1) at (0, -0) {};
		\node [style=none] (2) at (0.75, -0.5) {};
		\node [style=none] (3) at (1.25, 0.5) {};
		\node [style=none] (4) at (1.25, -0.5) {};
	\end{pgfonlayer}
	\begin{pgfonlayer}{edgelayer}
		\draw [in=90, out=180, looseness=0.90] (0.center) to (1.center);
		\draw [in=-90, out=180, looseness=0.90] (2.center) to (1.center);
		\draw (3.center) to (0.center);
		\draw (4.center) to (2.center);
	\end{pgfonlayer}
\end{tikzpicture}
    \end{aligned}
    :=
    \begin{aligned}
      \begin{tikzpicture}[scale=.65]
	\begin{pgfonlayer}{nodelayer}
		\node [style=none] (0) at (0.75, 0.5) {};
		\node [style=dot] (1) at (0, -0) {};
		\node [style=none] (2) at (0.75, -0.5) {};
		\node [style=none] (3) at (1.25, 0.5) {};
		\node [style=none] (4) at (1.25, -0.5) {};
		\node [style=dot] (5) at (-0.5, -0) {};
	\end{pgfonlayer}
	\begin{pgfonlayer}{edgelayer}
		\draw [in=90, out=180, looseness=0.90] (0.center) to (1.center);
		\draw [in=-90, out=180, looseness=0.90] (2.center) to (1.center);
		\draw (3.center) to (0.center);
		\draw (4.center) to (2.center);
		\draw (5.center) to (1.center);
	\end{pgfonlayer}
\end{tikzpicture}
    \end{aligned}
    \qquad
    \qquad
    \begin{aligned}
      \begin{tikzpicture}[scale=.65]
	\begin{pgfonlayer}{nodelayer}
		\node [style=none] (0) at (0, 0.5) {};
		\node [style=none] (1) at (0.75, -0) {};
		\node [style=none] (2) at (0, -0.5) {};
		\node [style=none] (3) at (-0.5, 0.5) {};
		\node [style=none] (4) at (-0.5, -0.5) {};
	\end{pgfonlayer}
	\begin{pgfonlayer}{edgelayer}
		\draw [in=90, out=0, looseness=0.90] (0.center) to (1.center);
		\draw [in=-90, out=0, looseness=0.90] (2.center) to (1.center);
		\draw (3.center) to (0.center);
		\draw (4.center) to (2.center);
	\end{pgfonlayer}
\end{tikzpicture}
    \end{aligned}
    :=
    \begin{aligned}
      \begin{tikzpicture}[scale=.65]
	\begin{pgfonlayer}{nodelayer}
		\node [style=none] (0) at (0, 0.5) {};
		\node [style=dot] (1) at (0.75, -0) {};
		\node [style=none] (2) at (0, -0.5) {};
		\node [style=none] (3) at (-0.5, 0.5) {};
		\node [style=none] (4) at (-0.5, -0.5) {};
		\node [style=dot] (5) at (1.25, -0) {};
	\end{pgfonlayer}
	\begin{pgfonlayer}{edgelayer}
		\draw [in=90, out=0, looseness=0.90] (0.center) to (1.center);
		\draw [in=-90, out=0, looseness=0.90] (2.center) to (1.center);
		\draw (3.center) to (0.center);
		\draw (4.center) to (2.center);
		\draw (5.center) to (1.center);
	\end{pgfonlayer}
\end{tikzpicture}
    \end{aligned}
  \]
As in any self-dual compact closed category, mapping each morphism 
$
    \tikzset{every path/.style={line width=1.1pt}}
    \begin{aligned}
  \begin{tikzpicture}[scale=.65]
	\begin{pgfonlayer}{nodelayer}
		\node [style=none] (0) at (0.5, -0.5) {};
		\node [style=none] (1) at (-0.25, -0.5) {};
		\node [style=none] (2) at (-1, -0.5) {};
		\node [style=none] (3) at (-1.75, -0.5) {};
		\node [style=none] (4) at (-1, -0.125) {};
		\node [style=none] (5) at (-1, -0.875) {};
		\node [style=none] (6) at (-0.25, -0.875) {};
		\node [style=none] (7) at (-0.25, -0.125) {};
		\node [style=none] (8) at (-0.625, -0.5) {$f$};
		\node [style=none] (9) at (-2, -0.5) {$X$};
		\node [style=none] (10) at (0.75, -0.5) {$Y$};
	\end{pgfonlayer}
	\begin{pgfonlayer}{edgelayer}
		\draw (1.center) to (0.center);
		\draw (2.center) to (3.center);
		\draw (4.center) to (5.center);
		\draw (5.center) to (6.center);
		\draw (6.center) to (7.center);
		\draw (7.center) to (4.center);
	\end{pgfonlayer}
\end{tikzpicture}
    \end{aligned}
$
to its dual morphism
\[
  \tikzset{every path/.style={line width=1.1pt}}
  \begin{tikzpicture}[scale=.65]
    \begin{pgfonlayer}{nodelayer}
      \node [style=none] (0) at (0, 0.5) {};
      \node [style=none] (1) at (0.75, -0) {};
      \node [style=none] (2) at (0, -0.5) {};
      \node [style=none] (3) at (-2.5, 0.5) {};
      \node [style=none] (4) at (-0.25, -0.5) {};
      \node [style=none] (5) at (-2, -1) {};
      \node [style=none] (6) at (-1.25, -1.5) {};
      \node [style=none] (7) at (-1, -0.5) {};
      \node [style=none] (8) at (1.25, -1.5) {};
      \node [style=none] (9) at (-1.25, -0.5) {};
      \node [style=none] (10) at (-1, -0.125) {};
      \node [style=none] (11) at (-1, -0.875) {};
      \node [style=none] (12) at (-0.25, -0.875) {};
      \node [style=none] (13) at (-0.25, -0.125) {};
      \node [style=none] (14) at (-0.625, -0.5) {$f$};
      \node [style=none] (15) at (1.5, -1.5) {$X$};
      \node [style=none] (16) at (-2.75, 0.5) {$Y$};
    \end{pgfonlayer}
    \begin{pgfonlayer}{edgelayer}
      \draw [in=90, out=0, looseness=0.90] (0.center) to (1.center);
      \draw [in=-90, out=0, looseness=0.90] (2.center) to (1.center);
      \draw (3.center) to (0.center);
      \draw (4.center) to (2.center);
      \draw [in=90, out=180, looseness=0.90] (9.center) to (5.center);
      \draw [in=-90, out=180, looseness=0.90] (6.center) to (5.center);
      \draw (7.center) to (9.center);
      \draw (8.center) to (6.center);
      \draw (10.center) to (11.center);
      \draw (11.center) to (12.center);
      \draw (12.center) to (13.center);
      \draw (13.center) to (10.center);
    \end{pgfonlayer}
  \end{tikzpicture}
\]
further equips each hypergraph category with a so-called dagger functor---an
involutive contravariant endofunctor that is the identity on objects---such that
the category is a dagger compact category. Dagger compact categories were first
introduced in the context of categorical quantum mechanics \cite{AC04}, under the
name strongly compact closed category, and have been demonstrated to be a key
structure in diagrammatic reasoning and the logic of quantum mechanics.

Compactness introduces a tight relationship between composition and the monoidal
product of morphisms. Firstly, there is a one-to-one correspondence between
morphisms $X \to Y$ and morphisms $I \to X\ot Y$ given by taking 
$
    \tikzset{every path/.style={line width=1.1pt}}
    \begin{aligned}
  \begin{tikzpicture}[scale=.65]
	\begin{pgfonlayer}{nodelayer}
		\node [style=none] (0) at (0.5, -0.5) {};
		\node [style=none] (1) at (-0.25, -0.5) {};
		\node [style=none] (2) at (-1, -0.5) {};
		\node [style=none] (3) at (-1.75, -0.5) {};
		\node [style=none] (4) at (-1, -0.125) {};
		\node [style=none] (5) at (-1, -0.875) {};
		\node [style=none] (6) at (-0.25, -0.875) {};
		\node [style=none] (7) at (-0.25, -0.125) {};
		\node [style=none] (8) at (-0.625, -0.5) {$f$};
		\node [style=none] (9) at (-2, -0.5) {$X$};
		\node [style=none] (10) at (0.75, -0.5) {$Y$};
	\end{pgfonlayer}
	\begin{pgfonlayer}{edgelayer}
		\draw (1.center) to (0.center);
		\draw (2.center) to (3.center);
		\draw (4.center) to (5.center);
		\draw (5.center) to (6.center);
		\draw (6.center) to (7.center);
		\draw (7.center) to (4.center);
	\end{pgfonlayer}
\end{tikzpicture}
    \end{aligned}
$
to its so-called \define{name}
\[
    \tikzset{every path/.style={line width=1.1pt}}
  \begin{tikzpicture}[scale=.65]
	\begin{pgfonlayer}{nodelayer}
		\node [style=none] (0) at (0.5, -0.5) {};
		\node [style=none] (1) at (-0.25, -0.5) {};
		\node [style=none] (2) at (-2, -0) {};
		\node [style=none] (3) at (-1.25, -0.5) {};
		\node [style=none] (4) at (0.5, 0.5) {};
		\node [style=none] (5) at (-1, -0.5) {};
		\node [style=none] (6) at (-1.25, 0.5) {};
		\node [style=none] (7) at (-1, -0.125) {};
		\node [style=none] (8) at (-1, -0.875) {};
		\node [style=none] (9) at (-0.25, -0.875) {};
		\node [style=none] (10) at (-0.25, -0.125) {};
		\node [style=none] (11) at (-0.625, -0.5) {$f$};
		\node [style=none] (12) at (0.75, 0.5) {$X$};
		\node [style=none] (13) at (0.75, -0.5) {$Y$};
	\end{pgfonlayer}
	\begin{pgfonlayer}{edgelayer}
		\draw (1.center) to (0.center);
		\draw [in=90, out=180, looseness=0.90] (6.center) to (2.center);
		\draw [in=-90, out=180, looseness=0.90] (3.center) to (2.center);
		\draw (4.center) to (6.center);
		\draw (5.center) to (3.center);
		\draw (7.center) to (8.center);
		\draw (8.center) to (9.center);
		\draw (9.center) to (10.center);
		\draw (10.center) to (7.center);
	\end{pgfonlayer}
\end{tikzpicture}
\]
By compactness, we have the equation
\[
    \tikzset{every path/.style={line width=1.1pt}}
  \begin{aligned}
\begin{tikzpicture}[scale=.65]
	\begin{pgfonlayer}{nodelayer}
		\node [style=none] (0) at (0, -0.5) {};
		\node [style=none] (1) at (-0.25, -0.5) {};
		\node [style=none] (2) at (-2, -0) {};
		\node [style=none] (3) at (-1.25, -0.5) {};
		\node [style=none] (4) at (1.5, 0.5) {};
		\node [style=none] (5) at (-1, -0.5) {};
		\node [style=none] (6) at (-1.25, 0.5) {};
		\node [style=none] (7) at (-1, -0.125) {};
		\node [style=none] (8) at (-1, -0.875) {};
		\node [style=none] (9) at (-0.25, -0.875) {};
		\node [style=none] (10) at (-0.25, -0.125) {};
		\node [style=none] (11) at (-0.625, -0.5) {$f$};
		\node [style=none] (12) at (1.75, 0.5) {$X$};
		\node [style=none] (13) at (-2, -2) {};
		\node [style=none] (14) at (-0.25, -2.125) {};
		\node [style=none] (15) at (-0.25, -2.5) {};
		\node [style=none] (16) at (-1, -2.5) {};
		\node [style=none] (17) at (1.5, -2.5) {};
		\node [style=none] (18) at (-0.25, -2.875) {};
		\node [style=none] (19) at (-1.25, -2.5) {};
		\node [style=none] (20) at (0, -1.5) {};
		\node [style=none] (21) at (-0.625, -2.5) {$g$};
		\node [style=none] (22) at (-1, -2.125) {};
		\node [style=none] (23) at (-1.25, -1.5) {};
		\node [style=none] (24) at (-1, -2.875) {};
		\node [style=none] (25) at (1.75, -2.5) {$Z$};
		\node [style=none] (26) at (1, -1) {};
	\end{pgfonlayer}
	\begin{pgfonlayer}{edgelayer}
		\draw (1.center) to (0.center);
		\draw [in=90, out=180, looseness=0.90] (6.center) to (2.center);
		\draw [in=-90, out=180, looseness=0.90] (3.center) to (2.center);
		\draw (4.center) to (6.center);
		\draw (5.center) to (3.center);
		\draw (7.center) to (8.center);
		\draw (8.center) to (9.center);
		\draw (9.center) to (10.center);
		\draw (10.center) to (7.center);
		\draw (15.center) to (17.center);
		\draw [in=90, out=180, looseness=0.90] (23.center) to (13.center);
		\draw [in=-90, out=180, looseness=0.90] (19.center) to (13.center);
		\draw (20.center) to (23.center);
		\draw (16.center) to (19.center);
		\draw (22.center) to (24.center);
		\draw (24.center) to (18.center);
		\draw (18.center) to (14.center);
		\draw (14.center) to (22.center);
		\draw [in=90, out=0, looseness=0.90] (0.center) to (26.center);
		\draw [in=0, out=-90, looseness=0.90] (26.center) to (20.center);
	\end{pgfonlayer}
\end{tikzpicture}
  \end{aligned}
  \qquad
  =
  \qquad
  \begin{aligned}
\begin{tikzpicture}[scale=.65]
	\begin{pgfonlayer}{nodelayer}
		\node [style=none] (0) at (-0.25, -0.5) {};
		\node [style=none] (1) at (-2, -0) {};
		\node [style=none] (2) at (-1.25, -0.5) {};
		\node [style=none] (3) at (1.5, 0.5) {};
		\node [style=none] (4) at (-1, -0.5) {};
		\node [style=none] (5) at (-1.25, 0.5) {};
		\node [style=none] (6) at (-1, -0.125) {};
		\node [style=none] (7) at (-1, -0.875) {};
		\node [style=none] (8) at (-0.25, -0.875) {};
		\node [style=none] (9) at (-0.25, -0.125) {};
		\node [style=none] (10) at (-0.625, -0.5) {$f$};
		\node [style=none] (11) at (1.75, 0.5) {$X$};
		\node [style=none] (12) at (1, -0.125) {};
		\node [style=none] (13) at (1, -0.5) {};
		\node [style=none] (14) at (0.25, -0.5) {};
		\node [style=none] (15) at (1.5, -0.5) {};
		\node [style=none] (16) at (1, -.875) {};
		\node [style=none] (17) at (0.625, -0.5) {$g$};
		\node [style=none] (18) at (0.25, -0.125) {};
		\node [style=none] (19) at (0.25, -.875) {};
		\node [style=none] (20) at (1.75, -0.5) {$Z$};
	\end{pgfonlayer}
	\begin{pgfonlayer}{edgelayer}
		\draw [in=90, out=180, looseness=0.90] (5.center) to (1.center);
		\draw [in=-90, out=180, looseness=0.90] (2.center) to (1.center);
		\draw (3.center) to (5.center);
		\draw (4.center) to (2.center);
		\draw (6.center) to (7.center);
		\draw (7.center) to (8.center);
		\draw (8.center) to (9.center);
		\draw (9.center) to (6.center);
		\draw (13.center) to (15.center);
		\draw (18.center) to (19.center);
		\draw (19.center) to (16.center);
		\draw (16.center) to (12.center);
		\draw (12.center) to (18.center);
		\draw (0.center) to (14.center);
	\end{pgfonlayer}
\end{tikzpicture}
  \end{aligned}
\]
Here the right hand side is the name of the composite $f \circ g$, while the
left hand side is the monoidal product post-composed with the map
\[
    \tikzset{every path/.style={line width=1.1pt}}
\begin{tikzpicture}[scale=.65]
	\begin{pgfonlayer}{nodelayer}
		\node [style=none] (0) at (0, -0.5) {};
		\node [style=none] (1) at (1.5, 0.5) {};
		\node [style=none] (2) at (1.75, 0.5) {$X$};
		\node [style=none] (3) at (-0.5, -0.5) {$Y$};
		\node [style=none] (4) at (-0.25, -2.5) {};
		\node [style=none] (5) at (1.5, -2.5) {};
		\node [style=none] (6) at (0, -1.5) {};
		\node [style=none] (7) at (1.75, -2.5) {$Z$};
		\node [style=none] (8) at (1, -1) {};
		\node [style=none] (9) at (-0.25, 0.5) {};
		\node [style=none] (10) at (-0.5, 0.5) {$X$};
		\node [style=none] (11) at (-0.5, -2.5) {$Z$};
		\node [style=none] (12) at (-0.5, -1.5) {$Y$};
		\node [style=none] (13) at (-0.25, -1.5) {};
		\node [style=none] (14) at (-0.25, -0.5) {};
	\end{pgfonlayer}
	\begin{pgfonlayer}{edgelayer}
		\draw (4.center) to (5.center);
		\draw [in=90, out=0, looseness=0.90] (0.center) to (8.center);
		\draw [in=0, out=-90, looseness=0.90] (8.center) to (6.center);
		\draw (1.center) to (9.center);
		\draw (6.center) to (13.center);
		\draw (14.center) to (0.center);
	\end{pgfonlayer}
\end{tikzpicture}
\]
Thus this morphism, the product of a cap and two identity maps, enacts the
categorical composition on monoidal products of names. We will make liberal use
of this fact.

\subsection{Coherence}

The lack of naturality of the Frobenius maps in hypergraph categories affects
some common, often desirable, properties of structured categories. For example,
it is not always possible to construct a skeletal hypergraph category hypergraph
equivalent to a given hypergraph category: isomorphic objects may be equipped
with distinct Frobenius structures.  Similarly, a fully faithful, essentially
surjective hypergraph functor does not necessarily define a hypergraph
equivalence of categories. 

Nonetheless, in this section we prove that every hypergraph category is
hypergraph equivalent to a strict hypergraph category. This coherence result
will be important in proving that every hypergraph category can be constructed
using decorated corelations.

\begin{theorem} \label{thm.stricthypergraphs}
  Every hypergraph category is hypergraph equivalent to a strict hypergraph
  category. Moreover, the objects of this strict hypergraph category form a free
  monoid.
\end{theorem}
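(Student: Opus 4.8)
The plan is to carry out Mac Lane's word-strictification and then transport the Frobenius structure along the strictification functor, using the criterion displayed just above the theorem for when a strong symmetric monoidal functor is a hypergraph functor. First I would construct the underlying strict symmetric monoidal category. Let $\mc H_s$ be the category whose objects are finite lists $(X_1,\dots,X_n)$ of objects of $\mc H$, and whose morphisms $(X_1,\dots,X_n) \to (Y_1,\dots,Y_m)$ are exactly the morphisms $X_1 \ot \dots \ot X_n \to Y_1 \ot \dots \ot Y_m$ of $\mc H$, with products bracketed to the left and the empty list interpreted as $I$. Composition is inherited from $\mc H$, while the monoidal product $\cdot$ is concatenation of lists with the empty list as unit; this is strictly associative and unital, so the objects of $\mc H_s$ form the free monoid on $\ob \mc H$. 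By Mac Lane's coherence theorem the functor $L \maps \mc H_s \to \mc H$ sending a list to the left-bracketed product of its entries, and acting as the identity on hom-sets, is a fully faithful strong symmetric monoidal equivalence whose coherence isomorphisms $\varphi$ are the canonical maps built from associators, unitors, and the braiding.

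Next I would transport the hypergraph structure so that $L$ becomes a hypergraph functor by fiat. Each object $LA$ of $\mc H$ already carries its chosen special commutative Frobenius monoid, so I define Frobenius maps on a list $A$ to be the morphisms of $\mc H_s$ specified, via the characterisation above, by
\[
  \mu_A := \mu_{LA} \circ \varphi_{A,A}\inv, \quad \delta_A := \varphi_{A,A} \circ \delta_{LA}, \quad \eta_A := \eta_{LA} \circ \varphi_I\inv, \quad \epsilon_A := \varphi_I \circ \epsilon_{LA},
\]
reading the right-hand sides as morphisms of $\mc H_s$ through the identification of hom-sets. Because $L$ is fully faithful and strong symmetric monoidal, verifying the commutative monoid, cocommutative comonoid, Frobenius, and special axioms for $(A,\mu_A,\eta_A,\delta_A,\epsilon_A)$ reduces, upon applying $L$, to the corresponding equations for the chosen Frobenius monoid on $LA$; these already hold, and the intervening $\varphi$'s cancel by their naturality and the coherence axioms for $\varphi$. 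This step is routine: a strong monoidal functor preserves such algebraic structures and a fully faithful one reflects them.

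The hard part will be the hypergraph compatibility axiom: I must show that the Frobenius monoid on a concatenation $A \cdot B$ equals the monoidal product, formed in $\mc H_s$, of those on $A$ and $B$. Here the warning preceding the theorem is in force, since the Frobenius maps are \emph{not} natural and so cannot simply be transported along arbitrary isomorphisms. Applying the fully faithful $L$ turns the claim into an equation in $\mc H$ relating the chosen structure on $L(A \cdot B)$ to that on $LA \ot LB$ through the canonical isomorphism $\varphi_{A,B}$. Now $L(A \cdot B)$ and $LA \ot LB$ are merely two bracketings of the same iterated tensor product of the $X_i$ and $Y_j$; iterating the hypergraph axiom of $\mc H$ shows that the chosen Frobenius structure on each is the corresponding iterated monoidal product of the structures on the $X_i$ and $Y_j$. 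It then remains to check that the canonical coherence isomorphism $\varphi_{A,B}$, assembled from associators, intertwines these two iterated structures. I expect this to be the crux, and I would prove it by induction on the lengths of the lists, at each stage using only the hypergraph axiom relating the structure on a binary product to that on its factors, together with Mac Lane coherence, and crucially never appealing to naturality of $\mu,\eta,\delta,\epsilon$.

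Finally I would conclude. The functor $L$ is now a hypergraph functor that is also an equivalence of categories, so it admits a pseudo-inverse $R \maps \mc H \to \mc H_s$ which can be chosen strong symmetric monoidal with monoidal unit and counit isomorphisms; since $L$ preserves the Frobenius structure and these isomorphisms are monoidal, $R$ is again a hypergraph functor and the unit and counit are monoidal natural isomorphisms to the identity functors. Hence $\mc H_s$ is a strict hypergraph category, hypergraph equivalent to $\mc H$, whose objects form the free monoid on $\ob \mc H$, as required.
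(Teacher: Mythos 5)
Your proposal is correct and follows essentially the same route as the paper: Mac Lane strictification to a category of finite lists (hence a free monoid of objects), transport of the Frobenius structure along the canonical coherence isomorphisms, verification of the axioms and of compatibility with concatenation via naturality of the coherence maps together with the hypergraph axiom of $\mc H$ (never via naturality of the Frobenius maps, which is unavailable), and finally checking that both equivalence functors are hypergraph functors. The only cosmetic difference is that you define the structure on a list by transporting the chosen structure on its bracketed product, whereas the paper assembles it from the component structures $\mu_{x_i}$; these coincide by iterating the hypergraph compatibility axiom, and both require the same crux verification you correctly identify.
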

\begin{proof}
  Let $(\mc H,\ot)$ be a hypergraph category. Write $I$ for the monoidal unit in
  $\mc H$. As $\mc H$ is a fortiori a symmetric monoidal category, a standard
  construction (see Mac Lane \cite[Theorem XI.3.1]{Mac98}) gives an equivalent
  strict symmetric monoidal category $(\mc H_{\mathrm{str}}, \cdot)$ with
  finite lists $[x_1:\ldots:x_n]$ of objects of $\mc H$ as objects and as morphisms
  $[x_1:\ldots:x_n] \to [y_1:\ldots:y_m]$ those morphisms from $(((x_1 \ot x_2)
  \ot \dots) \ot x_n) \ot I \to (((y_1 \ot y_2) \ot \dots) \ot y_m) \ot I$ in $\mc H$.
  Composition is given by composition in $\mc H$.

  The monoidal structure on $\mc H_{\mathrm{str}}$ is given as follows. The
  monoidal product of objects in $\mc H_{\mathrm{str}}$ is given by
  concatenation $\cdot$ of lists; the monoidal unit is the empty list. Given a
  list $X = [x_1:\ldots: x_n]$ of objects in $\mc H$, write $PX = (((x_1 \ot x_2)
  \ot \dots) \ot x_n) \ot I$ for the `pre-parenthesised product of $X$' in $\mc H$ with
  all open parenthesis at the front.  Note that the empty list maps to the
  monoidal unit $I$. The monoidal product of two morphisms is
  given by their monoidal product in $\mc H$ pre- and post-composed with the
  necessary canonical maps: given $f\maps X \to Y$ and $g\maps Z \to W$, their
  product $f\cdot g\maps X\cdot Y \to Z \cdot W$ is 
  \[ 
    P(X \cdot Y) \stackrel{\sim}\longrightarrow PX \ot PY \stackrel{f \ot g}{\longrightarrow}
    PZ \ot PW \stackrel{\sim}\longrightarrow P(Z \cdot W).  
  \] 
  By design, the associators and unitors of $\mc H_{\mathrm{str}}$ are simply
  identity maps. The braiding $X \cdot Y \to Y \cdot X$ is given by the braiding
  $PX \ot PY \to PY \ot PX$ in $\mc H$, similarly pre- and post-composed with
  the necessary canonical maps. This defines a strict symmetric monoidal
  category \cite{Mac98}.

  To make $\mc H_{\mathrm{str}}$ into a hypergraph category, we make each
  object $[x_1:\ldots:x_n]$ into a special commutative Frobenius monoid in a
  similar way, pre- and post-composing lists of Frobenius maps with the
  necessary canonical maps. For example, the multiplication on $[x_1:\ldots:x_n]$
  is given by 
  \[
    \xymatrix{
      P([x_1:\ldots:x_n]\cdot[x_1:\,\ldots:x_n])
      \ar@{=}[d]
      \\ (((((((x_1 \ot x_2) \ot \dots) \ot x_n) \ot x_1) \ot x_2) \ot \dots)
      \ot x_n) \ot I
  \ar[d]^{\sim} \\
      ((((x_1 \ot x_1) \ot (x_2 \ot x_2)) \ot \dots) \ot (x_n \ot x_n)) \ot I
      \ar[d]^{(((\mu_{x_1} \ot \mu_{x_2}) \ot \dots ) \ot \mu_{x_n}) \ot 1_I}
      \\
      (((x_1 \ot x_2) \ot \dots ) \ot x_n) \ot I \ar@{=}[d]\\
      P([x_1:\ldots:x_n])
    }
  \]
  where the map labelled $\sim$ is the canonical map such that each pair of $x_i$'s
  remains in the same order. As the coherence maps are natural, each special
  commutative Frobenius monoid axiom for this data on $[x_1:\ldots:x_n]$ reduces
  to a list of the corresponding axioms for the objects $x_i$ in $\mc H$.
  Similarly, the coherence axioms and naturality of the coherence maps imply the
  Frobenius structure on the monoidal product of objects is given by the
  Frobenius structures on the factors in the required way. This defines a
  hypergraph category.
  \smallskip

  \begin{center}
    \begin{tabular}{| c | p{.65\textwidth} |}
      \hline
      \multicolumn{2}{|c|}{The strict hypergraph category $(\mc H_{\mathrm{str}},
      \cdot)$} \\
      \hline
      \textbf{objects} & finite lists $[x_1: \ldots: x_n]$ of objects of
      $\mathcal H$ \\ 
      \textbf{morphisms} & $\mathrm{hom}_{\mc H_{\mathrm{str}}}\big([x_1: \ldots:
      x_n],[y_1: \ldots: y_m]\big)$ \newline $= \mathrm{hom}_{\mc H}\big(((x_1 \ot
      \dots) \ot x_n) \ot I, ((y_1 \ot \dots) \ot y_m) \ot I\big)$\\ 
      \textbf{composition} & composition of corresponding maps in $\mc H$ \\
      \textbf{monoidal product} & concatenation of lists \\
      \textbf{coherence maps} & associators and unitors are strict; braiding is
      inherited from $\mc H$  \\
      \textbf{hypergraph maps} & lists of hypergraph maps in $\mc H$ \\
      \hline
    \end{tabular}
  \end{center}
  \smallskip

  Mac Lane's standard construction further gives a strong symmetric monoidal
  functors $P\maps \mc H_{\mathrm{str}} \to \mc H$ extending the map $P$ above,
  and $S\maps \mc H \to \mc H_{\mathrm{str}}$ sending $x \in \mc H$ to the
  string $[x]$ of length 1 in $\mc H_{\mathrm{str}}$. These extend to hypergraph
  functors.

  In detail, the functor $P$ is given on morphisms by taking a map in
  $\mathrm{hom}_{\mc H_{\mathrm{str}}}(X,Y)$ to the same map considered now as a
  map in $\mathrm{hom}_{\mc H}(PX,PY)$; its coherence maps are given by the
  canonical maps $PX \ot PY \to P(X\cdot Y)$. The functor $S$ is also easy to
  define: tensoring with the identity on $I$ gives a one-to-one correspondence
  between morphisms $x \to y$ in $\mc H$ and morphisms $[x] \to [y]$ in $\mc
  H_{\mathrm{str}}$, so $S$ is a monoidal embedding of $\mc H$ into $\mc
  H_{\mathrm{str}}$. By Mac Lane's proof of the coherence theorem for monoidal
  categories these are both strong monoidal functors. 
  
  By inspection these functors also preserve hypergraph structure: note in
  particular that the axioms interrelating the Frobenius structure and monoidal
  structure in $\mc H$ are crucial for the hypergraph-preserving nature of $P$.
  Hence $P$ and $S$ are hypergraph functors. As they already witness an
  equivalence of symmetric monoidal categories, thus $\mc H$ and $\mc
  H_{\mathrm{str}}$ are equivalent as hypergraph categories.
\end{proof}

\section{Example: cospan categories} \label{sec.cospans}

A central example of a hypergraph category is the category $\cospan(\mathcal C)$
of cospans in a category $\mathcal C$ with finite colimits. We call such
categories cospan categories. In the coming chapters we will successively
generalise this structure, first to decorated cospans, then corelations, then
decorated corelations. Each time, the new categories we construct will inherit
hypergraph structure from their relationship with cospan categories.

We first recall the basic definitions. Let $\mc C$ be a category with finite
colimits, writing the coproduct $+$. A \define{cospan}
\[
  \xymatrix{
    & N \\
    X \ar[ur]^{i} && Y \ar[ul]_{o}
  }
\]
from $X$ to $Y$ in $\mathcal C$ is a pair of morphisms with common codomain. We
refer to $X$ and $Y$ as the \define{feet}, and $N$ as the \define{apex}.  Given
two cospans $X \stackrel{i}{\longrightarrow} N \stackrel{o}{\longleftarrow} Y$
and $X \stackrel{i'}{\longrightarrow} N' \stackrel{o'}{\longleftarrow} Y$ with
the same feet, a \define{map of cospans} is a morphism $n\colon  N \to N'$ in
$\mathcal C$ between the apices such that
\[
  \xymatrix{
    & N \ar[dd]^n  \\
    X \ar[ur]^{i} \ar[dr]_{i'} && Y \ar[ul]_{o} \ar[dl]^{o'}\\
    & N'
  }
\]
commutes.

Cospans may be composed using the pushout from the common
foot: given cospans $X \stackrel{i_X}{\longrightarrow} N
\stackrel{o_Y}{\longleftarrow} Y$ and $Y \stackrel{i_Y}{\longrightarrow} M
\stackrel{o_Z}{\longleftarrow} Z$, their composite cospan is $X \xrightarrow{j_N
\circ i_X} N+_YM \xleftarrow{j_M\circ i_Z} Z$,
where the maps $j$ are given by the pushout square in the diagram
\[
  \xymatrix{
    && N+_YM \\
    & N \ar[ur]^{j_N} && M \ar[ul]_{j_M} \\
    \quad X \quad \ar[ur]^{i_X} && Y \ar[ul]_{o_Y} \ar[ur]^{i_Y} && \quad Z
    \quad \ar[ul]_{o_Z}.
  }
\]
This composition rule is associative up to isomorphism, and
so we may define a category, in fact a symmetric monoidal category,
$\mathrm{Cospan}(\mathcal C)$ with objects the objects of $\mathcal C$ and
morphisms isomorphism classes of cospans. 

The symmetric monoidal structure is `inherited' from $\mc C$. Indeed, we shall
consider any category $\mc C$ with finite colimits a symmetric monoidal category
as follows. Given maps $f \maps A \to C$, $g \maps B \to C$ with common
codomain, the universal property of the coproduct gives a unique map $A+B \to
C$. We call this the \define{copairing} of $f$ and $g$, and write it $[f,g]$. The
monoidal product on $\mc C$ is then given by the coproduct $+$, with monoidal
unit the initial object $\varnothing$ and coherence maps given by copairing the
appropriate identity, inclusion, and initial object maps. For example, the
braiding is given by $[\iota_X,\iota_Y]\maps X+Y \to Y+X$ where $\iota_X\maps X
\to Y+X$ and $\iota_Y\maps Y \to Y+X$ are the inclusion maps into the coproduct
$Y+X$.

The category $\mathrm{Cospan(\mathcal C)}$ inherits this symmetric monoidal
structure from $\mathcal C$ as follows. Call a subcategory $\mathcal C$ of a
category $\mathcal D$ \define{wide} if $\mathcal C$ contains all objects of
$\mathcal D$, and call a functor that is faithful and bijective-on-objects a
\define{wide embedding}.  Note then that we have a wide embedding
\[
  \mathcal C \hooklongrightarrow \mathrm{Cospan(\mathcal C)}
\]
that takes each object of $\mathcal C$ to itself as an object of
$\mathrm{Cospan(\mathcal C)}$, and each morphism $f\colon  X \to Y$ in $\mathcal
C$ to the cospan
\[
  \xymatrix{
    & Y \\
    X \ar[ur]^{f} && Y, \ar@{=}[ul]
  }
\]
where the extended `equals' sign denotes an identity morphism. Now since the
monoidal product $+\colon \mathcal C \times \mathcal C \to \mathcal C$ is left
adjoint to the diagonal functor, it preserves colimits, and so extends to a
functor $+\colon \mathrm{Cospan(\mathcal C)} \times \mathrm{Cospan(\mathcal C)}
\to \mathrm{Cospan(\mathcal C)}$. The coherence maps are just the images of the
coherence maps in $\mathcal C$ under this wide embedding; checking naturality is
routine, and clearly they still obey the required axioms.

\begin{remark}
  Recall that all finite colimits in a category $\mc C$ can be assembled from an
  initial object and the use of pushouts \cite{Mac98}. The monoidal category
  $(\cospan(\mc C),+)$ thus provides an excellent setting for describing finite
  colimits in $\mc C$: the initial object is the monoidal unit, and pushouts are
  given by composition of cospans. Rosebrugh, Sabadini, and Walters begin an
  exploration of the consequences of this idea in \cite{RSW08}.
  
  Recall also that coproduct in the category $\Set$ of sets and functions is the
  disjoint union of sets. Colimits often have this sort of intuitive
  interpretation of combining systems, and so are frequently useful for
  discussing interconnection of systems. This intuition is a cornerstone of the
  present work.
\end{remark}

The theory of symmetric monoidal categories of cospans is all well known,
following from theory originally developed for the dual concept: spans. First
discussed by Yoneda \cite{Yon54}, spans are pairs of morphisms $X \leftarrow N
\rightarrow Y$ with a common domain. B\'enabou introduced the bicategory of
objects, spans, and maps of spans in a category with finite limits as an example
of a symmetric monoidal bicategory \cite{Ben67}. The symmetric monoidal category
$\cospan(\mc C)$ is a decategorified version of the symmetric monoidal bicategory of
cospans. 

More recently, Lack proved that cospans and Frobenius monoids are intimately
related. Write $\FinSet$ for the category of finite sets and functions. This
category is a monoidal category with the coproduct $+$ as monoidal product.
Moreover, as a symmetric monoidal category $(\FinSet,+)$ has a strict skeleton.
In the following proposition $\FinSet$ denotes this strict skeleton.

\begin{proposition} \label{prop.cospanscfm}
  Special commutative Frobenius monoids in a strict symmetric monoidal category
  $\mc C$ are in one-to-one correspondence with strict symmetric monoidal functors
  $\cospan(\FinSet) \to \mc C$.
\end{proposition}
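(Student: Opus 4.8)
The plan is to exhibit the correspondence as a pair of mutually inverse assignments, with the spider theorem (and the combinatorics of pushouts in $\FinSet$) doing the essential work. The organising idea is that $\cospan(\FinSet)$ is the strict symmetric monoidal category freely generated by a single special commutative Frobenius monoid, carried by the one-element set $1$.

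For the easy direction I would first check that the generating object $1$ of the skeletal $(\FinSet,+)$ carries a canonical special commutative Frobenius structure inside $\cospan(\FinSet)$, given by the cospans $\mu = (2 \xrightarrow{\nabla} 1 \xleftarrow{\idn} 1)$, $\eta = (0 \to 1 \xleftarrow{\idn} 1)$, $\delta = (1 \xrightarrow{\idn} 1 \xleftarrow{\nabla} 2)$, and $\epsilon = (1 \xrightarrow{\idn} 1 \leftarrow 0)$, where $\nabla\maps 2 \to 1$ is the codiagonal. Each Frobenius axiom then becomes a routine pushout computation in $\FinSet$; for instance, associativity and the Frobenius law both reduce to the fact that the relevant pushouts compute the same quotient of a three-element set. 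Given any strict symmetric monoidal functor $F\maps \cospan(\FinSet) \to \mc C$, strictness forces $F(n) = (F1)^{\ot n}$, and since $F$ preserves composites, monoidal products, units, and the symmetry, the images $(F1, F\mu, F\eta, F\delta, F\epsilon)$ satisfy exactly the same equations and so form a special commutative Frobenius monoid in $\mc C$. This defines the assignment from functors to Frobenius monoids.

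Conversely, from a special commutative Frobenius monoid $(X,\mu,\eta,\delta,\epsilon)$ I would build $F$ by setting $F(n) = X^{\ot n}$ and sending a cospan $n \xrightarrow{f} N \xleftarrow{g} m$ to the canonical \emph{spider} morphism $X^{\ot n} \to X^{\ot m}$ whose connected components are indexed by the elements $z \in N$, the component at $z$ connecting the inputs $f^{-1}(z)$ to the outputs $g^{-1}(z)$ (an apex point outside the image of $[f,g]$ contributes an empty component, that is, a factor $\epsilon\circ\eta$). The spider theorem guarantees that this morphism is well defined independently of how we assemble it from $\mu,\eta,\delta,\epsilon$, the braiding, and identities; it also shows that an isomorphism of cospans, which merely relabels $N$, induces the identity bijection on connected components and hence leaves the morphism unchanged, so $F$ descends to isomorphism classes.

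The main obstacle is functoriality: that $F$ carries the pushout composite of cospans to the composite of spider morphisms. Here the combinatorics of $\FinSet$ and the spider theorem cooperate. Composing $n \to N \leftarrow m$ with $m \to M \leftarrow p$ forms the apex $N +_m M$, whose elements are the classes of the equivalence relation on $N \sqcup M$ that glues two apex points exactly when they are joined through a common boundary point of $m$; on the other side, composing the two spiders connects them along the $m$ wires, and by the spider theorem the connected components of the result are computed by precisely this same gluing. Matching the two descriptions of connected components (including the bookkeeping of empty components arising from boundary points lying in neither image) shows the morphisms agree, and preservation of identities is the special case of the identity cospan. Strict symmetric monoidal structure is then immediate, since a disjoint union of apices yields the monoidal product of the corresponding spiders, the empty set maps to $I$, and a cospan whose legs assemble into a bijection — in particular the braiding cospan — maps to the corresponding permutation built from $\sigma$ and identities. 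Finally, applying $F$ to the four generating cospans recovers $(X,\mu,\eta,\delta,\epsilon)$ on the nose, while the spider construction makes evident that these generators generate $\cospan(\FinSet)$ under composition and monoidal product; hence two strict symmetric monoidal functors agreeing on them coincide, and the two assignments are mutually inverse, establishing the bijection.
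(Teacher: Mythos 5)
Your argument is correct, but it takes a genuinely different route from the paper. The paper's proof is a one-line citation: it identifies the special commutative Frobenius monoid as the image of the one-element set and defers everything else to Lack's work on composing PROPs, where $\cospan(\FinSet)$ is exhibited as the composite of the PROP for commutative monoids with its opposite via a distributive law given by pushout, and the algebras of the composite are computed directly. You instead build both assignments by hand and lean on the spider theorem twice: once to make the spider morphism of a cospan well defined (independent of the chosen decomposition into generators, and invariant under relabelling the apex), and once to match the connected components of a glued pair of spiders with the equivalence classes computed by the pushout $N+_mM$, which is exactly what functoriality requires. Your bookkeeping of the floating components $\epsilon\circ\eta$ coming from apex points outside the image of $[f,g]$ is right, and is needed since $0\to 1\leftarrow 0$ and $0\to 0\leftarrow 0$ are distinct scalars in $\cospan(\FinSet)$. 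The one caveat worth flagging: in this thesis the spider theorem is itself presented as a corollary of Lack's result, so within the paper's own chain of citations your proof would be circular. It becomes a legitimate independent proof only if you take the spider theorem as established by other means (e.g.\ the normal-form/rewriting proofs of Coecke--Paquette--Pavlovic or Coecke--Kissinger), and you should say so explicitly. What your approach buys is a self-contained, combinatorially explicit description of the inverse functor; what Lack's approach buys is that it proves the spider theorem and this proposition in one stroke, without needing a separate normal-form argument.
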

\begin{proof}
  The special commutative Frobenius monoid is given by the image of the one
  element set. See Lack \cite{Lac04}.
\end{proof}

Over the next few chapters we will further explore this deep link between
cospans and special commutative Frobenius monoids and hypergraph categories.
To begin, we detail a hypergraph structure on each cospan category $\cospan(\mc
C)$. 

Like the braiding, this hypergraph structure also comes from copairings of
identity morphisms.  Call cospans 
\[
  \xymatrix{
    & N \\
    X \ar[ur]^{i} && Y \ar[ul]_{o}
  }
  \qquad \xymatrix@R=8pt{\\\textrm{and}} \qquad 
  \xymatrix{
    & N \\
    Y \ar[ur]^{o} && X \ar[ul]_{i}
  }
\]
that are reflections of each other \define{opposite} cospans. Given any object
$X$ in $\mathcal C$, the copairing $[1_X,1_X]\colon  X + X \to X$ of two identity
maps on $X$, together with the unique map $!\colon  \varnothing \to X$ from the
initial object to $X$, define a monoid structure on $X$. Considering these
maps as morphisms in $\mathrm{Cospan(\mathcal C)}$, we may take them together
with their opposites to give a special commutative Frobenius structure on $X$.
Using the universal property of the coproduct, and noting that the Frobenius
maps and the braiding are both given by copairings of identity morphisms, it is
easily verified that this gives a hypergraph category.

Given $f \maps X \to Y$ in $\mc C$, abuse notation by writing $f \in
\mathrm{Cospan}(\mc C)$ for the cospan $X \stackrel{f}\to Y
\stackrel{1_Y}\leftarrow Y$, and $f^\opp$ for the cospan $Y \stackrel{1_Y}\to Y
\stackrel{f}\leftarrow X$. To summarise:

\begin{proposition}[Rosebrugh--Sabadini--Walters \cite{RSW08}]
  Given a category $\mc C$ with finite colimits, we may define a hypergraph
  category $\cospan(\mc C)$ as follows:
\smallskip 

  \begin{center}
  \begin{tabular}{ |c| p{.65\textwidth}|}
      \hline
      \multicolumn{2}{|c|}{The hypergraph category $(\mathrm{Cospan(\mc C)},+)$} \\
    \hline
    \textbf{objects} & the objects of $\mathcal C$ \\ 
    \textbf{morphisms} & isomorphism classes of cospans in
    $\mathcal C$\\ 
  \textbf{composition} & given by pushout \\
  \textbf{monoidal product} & the coproduct in $\mathcal C$ \\
  \textbf{coherence maps} & inherited from $(\mc C,+)$\\
  \textbf{hypergraph maps} & $\mu = [1,1]$, $\eta = !$,
  $\delta = [1,1]^\opp$, $\epsilon = !^\opp$. \\
      \hline
  \end{tabular}
\end{center}
\smallskip
\end{proposition}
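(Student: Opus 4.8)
The plan is to exploit the fact that the symmetric monoidal structure on $\cospan(\mc C)$, with monoidal product the coproduct $+$, has already been established in the preceding discussion; what remains is to check that the claimed maps $\mu,\eta,\delta,\epsilon$ assemble into a special commutative Frobenius monoid on each object and that these structures are compatible with the monoidal product in the sense required of a hypergraph category. The central observation, as flagged in the text, is that every structure map in sight---the Frobenius maps, the braiding, and the coherence maps---is a copairing of identity morphisms or a unique map $!\maps\varnothing\to X$ from the initial object. Consequently each axiom becomes an equation between isomorphism classes of cospans whose apices are built by pushout from coproducts and whose legs are copairings of identities, and each such equation can be verified by appeal to the universal property of the coproduct, checking the asserted equality summand by summand on the feet.

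First I would fix the cospan representatives: $\mu$ is $X+X \xrightarrow{[1_X,1_X]} X \xleftarrow{1_X} X$, $\eta$ is $\varnothing \xrightarrow{!} X \xleftarrow{1_X} X$, and $\delta=\mu^\opp$, $\epsilon=\eta^\opp$ are their opposites. Then I would verify the commutative monoid axioms (associativity, unitality, commutativity) for $(\mu,\eta)$: each reduces to comparing two composite cospans, and since composition is by pushout and all legs are copairings of identities, the relevant pushouts are computed explicitly and the universal property of $+$ identifies the two sides. The cocommutative comonoid axioms for $(\delta,\epsilon)$ then follow formally, because taking opposite cospans is an involutive, identity-on-objects, contravariant functor on $\cospan(\mc C)$ that interchanges $(\mu,\eta)$ with $(\delta,\epsilon)$; this halves the work.

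Next I would treat the Frobenius and special laws. For the special law, composing $\delta$ then $\mu$ requires the pushout of $X \xleftarrow{[1_X,1_X]} X+X \xrightarrow{[1_X,1_X]} X$; since $[1_X,1_X]$ is a split epimorphism, this pushout is $X$ with both structure maps the identity, so the composite is the identity cospan, as required. For the Frobenius law, the three composites each produce, after a pushout computation, the same cospan $X+X \to X \leftarrow X+X$ whose apex identifies all four feet, and the universal property again matches them. Finally, compatibility of the Frobenius structure with the monoidal product---the four hypergraph coherence diagrams relating the structure on $X+Y$ to those on $X$ and $Y$---follows because the coproduct is itself a symmetric monoidal structure and all maps involved are copairings, so the comparison is once more a routine application of the universal property of $+$ together with naturality of the coherence isomorphisms.

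The main obstacle is not conceptual but computational bookkeeping: each axiom demands identifying the apex of a composite cospan as a specific pushout and exhibiting the comparison isomorphism. The work is kept manageable by two devices---the opposite-cospan duality, which deduces the comonoid axioms from the monoid axioms, and the systematic reduction of every cospan identity to a check on the feet via the universal property of $+$. An alternative, more conceptual route would invoke Proposition \ref{prop.cospanscfm}: the copower functor $-\cdot X\maps \FinSet \to \mc C$ preserves finite coproducts and hence induces a strict symmetric monoidal functor $\cospan(\FinSet) \to \cospan(\mc C)$ sending the one-element set to $X$, whence the image of the canonical special commutative Frobenius monoid on the one-element set supplies the desired structure on $X$; this trades the hand computation for the functoriality of the $\cospan$ construction.
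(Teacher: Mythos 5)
Your proposal is correct and follows essentially the same route as the paper, which simply observes that the Frobenius maps, braiding, and coherence maps are all copairings of identities (or maps out of the initial object) and leaves the verification to the universal property of the coproduct; your explicit pushout computations for the special and Frobenius laws, and the use of the opposite-cospan duality to halve the work, are a faithful elaboration of that argument. The alternative route you sketch via the copower functor and Proposition \ref{prop.cospanscfm} is also sound, though one should note that Lack's correspondence is stated for strict monoidal categories and still leaves the compatibility of the Frobenius structures with $+$ to be checked separately.
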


We will often abuse our terminology and refer to cospans themselves as
morphisms in some cospan category $\mathrm{Cospan}(\mathcal C)$; we of course
refer instead to the isomorphism class of the said cospan.

It is not difficult to show that we in fact have a functor from the category
having categories with finite colimits as objects and colimit preserving
functors as morphisms to the category of hypergraph categories and hypergraph
functors.\footnote{Categories with finite colimits are known as \emph{finitely
cocomplete}, or (rarely) \emph{rex} or \emph{right exact}, while functors that
preserve finite colimits are known as \emph{finitely cocontinuous} or
\emph{right exact}. This extra layer of terminology is not necessary here, and
so we favour the more friendly, if occasionally a touch cumbersome, terms
above.} In the next chapter we show that this extends to a functor from a
category of so-called symmetric lax monoidal presheaves over categories with
finite colimits. This is known as the decorated cospan construction, and will
be instrumental in constructing hypergraph categories with network diagrams as
morphisms.



\chapter{Decorated cospans: from closed systems to open} \label{ch.deccospans}

In the previous chapter we discussed hypergraph categories as a model for
network languages, and showed that cospans provide a method of constructing such
categories. In many situations, however, we wish to model our systems not just
with cospans, but `decorated' cospans, where the apex of each cospan is equipped
with some extra structure. In this chapter we detail a method for composing such
decorated cospans. 

The next section provides an introductory overview. Following this, we give a
formal definition of decorated cospan categories and show they are well defined
in \textsection\ref{sec:dcc}, and do the same for functors between decorated
cospan categories in \textsection\ref{sec:dcf}. While we first define decorated
cospans on any lax symmetric monoidal functor $F\maps (\mathcal C,+) \to
(\mathcal D,\otimes)$, where $\mc C$ has finite colimits and $+$ is the
coproduct, in \textsection\ref{sec.setdecs} we show that it is enough to
consider the case when $(\mc D, \otimes)$ is the category $(\Set,\times)$ of
sets and functions with monoidal structure given by the cartesian product. We
then conclude the chapter with some examples.

\section{From closed systems to open} \label{sec.closedtoopen}

The key idea in this chapter is that we may take our network diagrams, mark
points of interconnection, or terminals, using cospans, and connect these
terminals with others using pushouts. This marking of terminals defines a
boundary of the system, turning it from a closed system to an open system.
This process freely constructs a hypergraph category from a library of network
pieces.

It has been recognised for some time that spans and cospans provide an intuitive
framework for composing network diagrams \cite{KSW}, and the material we develop
here is a variant on this theme. Rather than use cospans in a category of
network diagrams, however, we define a category which models just the
interconnection structure, and then decorate cospans in this category with
additional structure. In the case of finite graphs, the intuition reflected is
this: given two graphs, we may construct a third by gluing chosen vertices of
the first with chosen vertices of the second. Our goal in this chapter is to
view this process as composition of morphisms in a category, in a way that also
facilitates the construction of a composition rule for any semantics associated
to the diagrams, and the construction of a functor between these two resulting
categories.

To see how this works, let us start with the following graph:
\begin{center}
  \begin{tikzpicture}[auto,scale=2.3]
    \node[circle,draw,inner sep=1pt,fill]         (A) at (0,0) {};
    \node[circle,draw,inner sep=1pt,fill]         (B) at (1,0) {};
    \node[circle,draw,inner sep=1pt,fill]         (C) at (0.5,-.86) {};
    \path (B) edge  [bend right,->-] node[above] {0.2} (A);
    \path (A) edge  [bend right,->-] node[below] {1.3} (B);
    \path (A) edge  [->-] node[left] {0.8} (C);
    \path (C) edge  [->-] node[right] {2.0} (B);
  \end{tikzpicture}
\end{center}
We shall work with labelled, directed graphs, as the additional data help
highlight the relationships between diagrams. Now, for this graph to be a
morphism, we must equip it with some notion of `input' and `output'. We do this by
marking vertices using functions from finite sets:
\begin{center}
  \begin{tikzpicture}[auto,scale=2.15]
    \node[circle,draw,inner sep=1pt,fill=gray,color=gray]         (x) at (-1.4,-.43) {};
    \node at (-1.4,-.9) {$X$};
    \node[circle,draw,inner sep=1pt,fill]         (A) at (0,0) {};
    \node[circle,draw,inner sep=1pt,fill]         (B) at (1,0) {};
    \node[circle,draw,inner sep=1pt,fill]         (C) at (0.5,-.86) {};
    \node[circle,draw,inner sep=1pt,fill=gray,color=gray]         (y1) at (2.4,-.25) {};
    \node[circle,draw,inner sep=1pt,fill=gray,color=gray]         (y2) at (2.4,-.61) {};
    \node at (2.4,-.9) {$Y$};
    \path (B) edge  [bend right,->-] node[above] {0.2} (A);
    \path (A) edge  [bend right,->-] node[below] {1.3} (B);
    \path (A) edge  [->-] node[left] {0.8} (C);
    \path (C) edge  [->-] node[right] {2.0} (B);
    \path[color=gray, very thick, shorten >=10pt, shorten <=5pt, ->, >=stealth] (x) edge (A);
    \path[color=gray, very thick, shorten >=10pt, shorten <=5pt, ->, >=stealth] (y1) edge (B);
    \path[color=gray, very thick, shorten >=10pt, shorten <=5pt, ->, >=stealth] (y2) edge (B);
  \end{tikzpicture}
\end{center}
Let $N$ be the set of vertices of the graph. Here the finite sets $X$, $Y$, and
$N$ comprise one, two, and three elements respectively, drawn as points, and the
values of the functions $X \to N$ and $Y \to N$ are indicated by the grey
arrows. This forms a cospan in the category of finite sets, one with the set at
the apex decorated by our given graph.

Given another such decorated cospan with input set equal to the output of the
above cospan
\begin{center}
  \begin{tikzpicture}[auto,scale=2.15]
    \node[circle,draw,inner sep=1pt,fill=gray,color=gray]         (y1) at (-1.4,-.25) {};
    \node[circle,draw,inner sep=1pt,fill=gray,color=gray]         (y2) at (-1.4,-.61) {};
    \node at (-1.4,-.9) {$Y$};
    \node[circle,draw,inner sep=1pt,fill]         (A) at (0,0) {};
    \node[circle,draw,inner sep=1pt,fill]         (B) at (1,0) {};
    \node[circle,draw,inner sep=1pt,fill]         (C) at (0.5,-.86) {};
    \node[circle,draw,inner sep=1pt,fill=gray,color=gray]         (z1) at (2.4,-.25) {};
    \node[circle,draw,inner sep=1pt,fill=gray,color=gray]         (z2) at (2.4,-.61) {};
    \node at (2.4,-.9) {$Z$};
    \path (A) edge  [->-] node[above] {1.7} (B);
    \path (C) edge  [->-] node[right] {0.3} (B);
    \path[color=gray, very thick, shorten >=10pt, shorten <=5pt, ->, >=stealth] (y1) edge (A);
    \path[color=gray, very thick, shorten >=10pt, shorten <=5pt, ->, >=stealth] (y2)
    edge (C);
    \path[color=gray, very thick, shorten >=10pt, shorten <=5pt, ->, >=stealth] (z1) edge (B);
    \path[color=gray, very thick, shorten >=10pt, shorten <=5pt, ->, >=stealth] (z2) edge (C);
  \end{tikzpicture}
\end{center}
composition involves gluing the graphs along the identifications
\begin{center}
  \begin{tikzpicture}[auto,scale=2]
    \node[circle,draw,inner sep=1pt,fill=gray,color=gray]         (x) at (-1.3,-.43) {};
    \node at (-1.3,-.9) {$X$};
    \node[circle,draw,inner sep=1pt,fill]         (A) at (0,0) {};
    \node[circle,draw,inner sep=1pt,fill]         (B) at (1,0) {};
    \node[circle,draw,inner sep=1pt,fill]         (C) at (0.5,-.86) {};
    \node[circle,draw,inner sep=1pt,fill=gray,color=gray]         (y1) at (2.3,-.25) {};
    \node[circle,draw,inner sep=1pt,fill=gray,color=gray]         (y2) at (2.3,-.61) {};
    \node at (2.3,-.9) {$Y$};
    \path (B) edge  [bend right,->-] node[above] {0.2} (A);
    \path (A) edge  [bend right,->-] node[below] {1.3} (B);
    \path (A) edge  [->-] node[left] {0.8} (C);
    \path (C) edge  [->-] node[right] {2.0} (B);
    \path[color=gray, very thick, shorten >=10pt, shorten <=5pt, ->, >=stealth] (x) edge (A);
    \path[color=gray, very thick, shorten >=10pt, shorten <=5pt, ->, >=stealth] (y1) edge (B);
    \path[color=gray, very thick, shorten >=10pt, shorten <=5pt, ->, >=stealth] (y2) edge (B);
    \node[circle,draw,inner sep=1pt,fill]         (A') at (3.6,0) {};
    \node[circle,draw,inner sep=1pt,fill]         (B') at (4.6,0) {};
    \node[circle,draw,inner sep=1pt,fill]         (C') at (4.1,-.86) {};
    \node[circle,draw,inner sep=1pt,fill=gray,color=gray]         (z1) at (5.9,-.25) {};
    \node[circle,draw,inner sep=1pt,fill=gray,color=gray]         (z2) at (5.9,-.61) {};
    \node at (5.9,-.9) {$Z$};
    \path (A') edge  [->-] node[above] {1.7} (B');
    \path (C') edge  [->-] node[right] {0.3} (B');
    \path[color=gray, very thick, shorten >=10pt, shorten <=5pt, ->, >=stealth] (y1) edge (A');
    \path[color=gray, very thick, shorten >=10pt, shorten <=5pt, ->, >=stealth] (y2)
    edge (C');
    \path[color=gray, very thick, shorten >=10pt, shorten <=5pt, ->, >=stealth] (z1) edge (B');
    \path[color=gray, very thick, shorten >=10pt, shorten <=5pt, ->, >=stealth] (z2) edge (C');
  \end{tikzpicture}
\end{center}
specified by the shared foot of the two cospans. This results in the decorated
cospan
\begin{center}
  \begin{tikzpicture}[auto,scale=2.15]
    \node[circle,draw,inner sep=1pt,fill=gray,color=gray]         (x) at (-1.4,-.43) {};
    \node at (-1.4,-.9) {$X$};
    \node[circle,draw,inner sep=1pt,fill]         (A) at (0,0) {};
    \node[circle,draw,inner sep=1pt,fill]         (B) at (1,0) {};
    \node[circle,draw,inner sep=1pt,fill]         (C) at (0.5,-.86) {};
    \node[circle,draw,inner sep=1pt,fill]         (D) at (2,0) {};
    \node[circle,draw,inner sep=1pt,fill=gray,color=gray]         (z1) at (3.4,-.25) {};
    \node[circle,draw,inner sep=1pt,fill=gray,color=gray]         (z2) at (3.4,-.61) {};
    \node at (3.4,-.9) {$Z$};
    \path (B) edge  [bend right,->-] node[above] {0.2} (A);
    \path (A) edge  [bend right,->-] node[below] {1.3} (B);
    \path (A) edge  [->-] node[left] {0.8} (C);
    \path (C) edge  [->-] node[right] {2.0} (B);
    \path (B) edge  [bend left,->-] node[above] {1.7} (D);
    \path (B) edge  [bend right,->-] node[below] {0.3} (D);
    \path[color=gray, very thick, shorten >=10pt, shorten <=5pt, ->, >=stealth] (x) edge (A);
    \path[color=gray, very thick, shorten >=10pt, shorten <=5pt, ->, >=stealth] (z1)
    edge (D);
    \path[bend left, color=gray, very thick, shorten >=10pt, shorten <=5pt, ->, >=stealth] (z2)
    edge (B);
  \end{tikzpicture}
\end{center}
The decorated cospan framework generalises this intuitive construction.

More precisely: fix a set $L$. Then given a finite set $N$, we may talk of the
collection of finite $L$-labelled directed multigraphs, to us just $L$-graphs
or simply graphs, that have $N$ as their set of vertices. Write such a graph
$(N,E,s,t,r)$, where $E$ is a finite set of edges, $s\colon E \to N$ and $t\colon E \to N$
are functions giving the source and target of each edge respectively, and $r\colon  E
\to L$ equips each edge with a label from the set $L$.  Next, given a function
$f\colon N \to M$, we may define a function from graphs on $N$ to graphs on $M$
mapping $(N,E,s,t,r)$ to $(M,E,f \circ s,f \circ t, r)$.  After dealing
appropriately with size issues, this gives a lax monoidal functor from
$(\FinSet,+)$ to $(\Set,\times)$.\footnote{Here $(\FinSet,+)$ is the monoidal
  category of finite sets and functions with disjoint union as monoidal
  product, and $(\Set,\times)$ is the category of sets and functions with
  cartesian product as monoidal product. One might ensure the collection of
  graphs forms a set in a number of ways. One such method is as follows: the
  categories of finite sets and finite graphs are essentially small; replace
  them with equivalent small categories. We then constrain the graphs
  $(N,E,s,t,r)$ to be drawn only from the objects of our small category of finite
  graphs.}  

Now, taking any lax monoidal functor $(F,\varphi)\colon  (\mathcal C,+) \to (\mathcal
D,\otimes)$ with $\mathcal C$ having finite colimits and coproduct written $+$,
the decorated cospan category associated to $F$ has as objects the objects of
$\mathcal C$, and as morphisms pairs comprising a cospan in $\mathcal C$
together with some morphism $I \to FN$, where $I$ is the unit in $(\mathcal
D,\otimes)$ and $N$ is the apex of the cospan. In the case of our graph
functor, this additional data is equivalent to equipping the apex $N$ of the
cospan with a graph. We thus think of our morphisms as having two distinct
parts: an instance of our chosen structure on the apex, and a cospan describing
interfaces to this structure. Our first theorem (Theorem \ref{thm:fcospans})
says that when $(\mathcal D,\otimes)$ is braided monoidal and $(F,\varphi)$ lax
braided monoidal, we may further give this data a composition rule and monoidal
product such that the resulting decorated cospan category is hypergraph.  

Suppose now we have two such lax monoidal functors; we then have two such
decorated cospan categories. Our second theorem (Theorem
\ref{thm:decoratedfunctors}) is that, given also a monoidal natural
transformation between these functors, we may construct a strict hypergraph
functor between their corresponding decorated cospan categories.  These natural
transformations can often be specified by some semantics associated to some type
of topological diagram. A trivial case of such is assigning to a finite graph
its number of vertices, but richer examples abound, including assigning to a
directed graph with edges labelled by rates its depicted Markov process, or
assigning to an electrical circuit diagram the current--voltage relationship
such a circuit would impose.

\section{Decorated cospan categories} \label{sec:dcc}
Our task in this section is to define decorated cospans and prove that they can
be used to construct hypergraph categories. Motivated by our discussion in the
previous section, we thus begin with the following definition.

\begin{definition} \label{def:fcospans}
  Let $\mathcal C$ be a category with finite colimits, and
  \[
    (F,\varphi)\colon  (\mathcal C,+) \longrightarrow (\mathcal D, \otimes)
  \]
  be a lax monoidal functor. We define a \define{decorated cospan}, or more
  precisely an $F$-decorated cospan, to be a pair 
  \[
    \left(
    \begin{aligned}
      \xymatrix{
	& N \\  
	X \ar[ur]^{i} && Y \ar[ul]_{o}
      }
    \end{aligned}
    ,
    \qquad
    \begin{aligned}
      \xymatrix{
	FN \\
	I \ar[u]_{s}
      }
    \end{aligned}
    \right)
  \]
  comprising a cospan $X \stackrel{i}\rightarrow N \stackrel{o}\leftarrow Y$ in
  $\mathcal C$ together with an element $I \stackrel{s}\rightarrow FN$ of
  the $F$-image $FN$ of the apex of the cospan. We shall call the element $I
  \stackrel{s}\rightarrow FN$ the \define{decoration} of the decorated
  cospan. A \define{morphism of decorated cospans}
  \[
    n\colon  \big(X \stackrel{i_X}\longrightarrow N \stackrel{o_Y}\longleftarrow
    Y,\enspace I \stackrel{s}\longrightarrow FN\big) \longrightarrow \big(X
    \stackrel{i'_X}\longrightarrow N' \stackrel{o'_Y}\longleftarrow Y,\enspace I
    \stackrel{s'}\longrightarrow FN'\big)
  \]
  is a morphism $n\colon  N \to N'$ of cospans such that $Fn \circ s = s'$.
\end{definition}

Observe that we may define a category with decorated cospans as objects and
morphisms of decorated cospans as morphisms.

\subsection{Composing decorated cospans}

Our first concern is to show that isomorphism classes of decorated cospans form
the \emph{morphisms} of a category. The objects in this category will be the
objects of $\mc C$.

\begin{proposition} \label{prop.composingdeccospans}
  There is a category $F\mathrm{Cospan}$ of $F$-decorated cospans, with objects
  the objects of $\mathcal C$, and morphisms isomorphism classes of
  $F$-decorated cospans. On representatives of the isomorphism classes,
  composition in this category is given by pushout of cospans in $\mathcal C$
  \[
    \xymatrix{
      && N+_YM \\
      & N \ar[ur]^{j_N} && M \ar[ul]_{j_M} \\
      \quad X \quad \ar[ur]^{i_X} && Y \ar[ul]_{o_Y} \ar[ur]^{i_Y} && \quad Z
      \quad \ar[ul]_{o_Z}
    }
  \]
  paired with the composite
  \[
    I \stackrel{\lambda^{-I}}\longrightarrow I \otimes I \stackrel{s \otimes
    t}\longrightarrow FN \otimes FM \stackrel{\varphi_{N,M}}\longrightarrow
    F(N+M) \stackrel{F[j_N,j_M]}\longrightarrow F(N+_YM)
  \]
  of the tensor product of the decorations with the $F$-image of the copairing
  of the pushout maps.
\end{proposition}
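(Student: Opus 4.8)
The plan is to verify the three category axioms: that the composition rule descends to isomorphism classes, that it is associative, and that the evident identity morphisms work. At the level of the underlying cospans each of these is the standard theory of $\cospan(\mathcal C)$ recalled in \textsection\ref{sec.cospans}: pushouts are defined up to canonical isomorphism, cospan composition is associative up to the canonical comparison of iterated pushouts, and $X \xrightarrow{1_X} X \xleftarrow{1_X} X$ is a two-sided unit. So all the genuine content is in checking that the \emph{decorations} are transported correctly along these canonical isomorphisms, and in each case this reduces to one of the coherence axioms for the lax monoidal functor $(F,\varphi)$. Throughout I would write $s \odot t$ for the composite decoration $F[j_N,j_M]\circ\varphi_{N,M}\circ(s\otimes t)\circ\lambda^{-1}$ built in the statement.

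For well-definedness, suppose $n\maps N \to N'$ and $m\maps M \to M'$ are isomorphisms of cospans with $Fn\circ s = s'$ and $Fm\circ t = t'$. The universal property of the pushout produces an isomorphism $p\maps N+_Y M \to N'+_Y M'$ with $p\circ j_N = j_{N'}\circ n$ and $p\circ j_M = j_{M'}\circ m$, so that $p\circ[j_N,j_M] = [j_{N'},j_{M'}]\circ(n+m)$. Then $Fp\circ(s\odot t) = F[j_{N'},j_{M'}]\circ F(n+m)\circ\varphi_{N,M}\circ(s\otimes t)\circ\lambda^{-1}$, and applying naturality of $\varphi$ to the square for $n+m$ turns $F(n+m)\circ\varphi_{N,M}$ into $\varphi_{N',M'}\circ(Fn\otimes Fm)$; absorbing $Fn$ and $Fm$ into $s$ and $t$ gives exactly $s'\odot t'$. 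Hence $p$ is an isomorphism of the composite decorated cospans, and composition is independent of representatives.

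For identities, take the identity on $X$ to be $X \xrightarrow{1_X} X \xleftarrow{1_X} X$ decorated by $e_X \Defeq F!\circ\varphi_I\maps I \to FX$, where $!\maps\varnothing\to X$ is the unique map from the initial object (the monoidal unit of $(\mathcal C,+)$). Composing a decorated cospan $(N,s)\maps X \to Y$ with the identity on $Y$ gives the pushout $N+_Y Y$, which is $N$ itself with $j_N = 1_N$ and $j_Y = o_Y$. For the decoration, I would substitute $e_Y = F!\circ\varphi_I$ and push $F!$ through $\varphi_{N,Y}$ by naturality; the resulting copairing $[1_N,o_Y]\circ(1_N + !)$ collapses to the right unitor $\rho_N = [1_N,!]\maps N+\varnothing\to N$ of $(\mathcal C,+)$. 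The right unitality axiom of $(F,\varphi)$ then rewrites $F\rho_N\circ\varphi_{N,\varnothing}\circ(1_{FN}\otimes\varphi_I)$ as $\rho_{FN}$, and after using naturality of $\rho$ together with the coherence identity $\rho_I = \lambda_I$ in $\mathcal D$ the whole composite reduces to $s$. The left unit law is identical using the left unitality axiom, so these decorated cospans are genuine two-sided units.

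The associativity of the decorations is where the real work lies, and I would handle it by passing to a triple apex rather than comparing the two bracketings directly. Given composable $(N,s)$, $(M,t)$, $(P,u)$, both $(N+_Y M)+_Z P$ and $N+_Y(M+_Z P)$ are canonically isomorphic to the colimit $Q$ of the diagram $N \xleftarrow{o_Y} Y \xrightarrow{i_Y} M \xleftarrow{o_Z} Z \xrightarrow{i_Z} P$, with legs $\ell_N,\ell_M,\ell_P$. Using the lax associativity (hexagon) axiom of $(F,\varphi)$ one obtains a well-defined triple comparison $\varphi^{(3)}\maps FN\otimes FM\otimes FP \to F(N+M+P)$ independent of bracketing, and Mac Lane coherence in $\mathcal D$ supplies a well-defined map $\lambda^{(3)}\maps I \to I\otimes I\otimes I$. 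The plan is to show, by the same naturality-of-$\varphi$ and functoriality manipulations used above (together with the fact that composing copairings into iterated pushouts yields the copairing $[\ell_N,\ell_M,\ell_P]$ into $Q$), that each of $(s\odot t)\odot u$ and $s\odot(t\odot u)$ equals the single symmetric expression $F[\ell_N,\ell_M,\ell_P]\circ\varphi^{(3)}\circ(s\otimes t\otimes u)\circ\lambda^{(3)}$ transported along the respective isomorphism to $Q$. Since this expression refers to no bracketing, associativity follows. The main obstacle is purely organisational: keeping the large coherence diagram in order and, in particular, reconciling the two nested inverse-unitor factors $\lambda^{-1}$ with the single $\lambda^{(3)}$, which is precisely where the unit coherence of $\mathcal D$ is invoked.
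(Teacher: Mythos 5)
Your proposal is correct and follows essentially the same strategy as the paper's proof: the underlying cospan identities are quoted from $\cospan(\mathcal C)$, and the decorations are tracked through exactly the same coherence facts (naturality of $\varphi$, the unit and associativity axioms of $(F,\varphi)$, Mac Lane coherence in $\mathcal D$, and the colimit identities in $\mathcal C$). The only divergence is organisational: for associativity the paper compares the two bracketed decorations directly in a single large diagram related by the canonical isomorphism $\tilde a\maps (N+_YM)+_ZP \to N+_Y(M+_ZP)$, whereas you route both through a symmetric expression over the colimit of the whole zig-zag --- the same axioms are invoked either way.
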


\begin{proof}
  The identity morphism on an object $X$ in a decorated cospan category is
  simply the identity cospan decorated as follows:
  \[
    \left(
    \begin{aligned}
      \xymatrix{
	& X \\  
	X \ar@{=}[ur] && X \ar@{=}[ul]
      }
    \end{aligned}
    ,
    \qquad
    \begin{aligned}
      \xymatrixrowsep{.5pc}
      \xymatrix{
	FX \\
	F\varnothing \ar[u]_{F!} \\
	I \ar[u]_{\varphi_I}
      }
    \end{aligned}
    \right).
  \]
  We must check that the composition defined is well defined on isomorphism
  classes, is associative, and, with the above identity maps, obeys the
  unitality axiom. These are straightforward, but lengthy, exercises in
  using the available colimits and monoidal structure to show that
  the relevant diagrams of decorations commute. We do this now.

\paragraph{Representation independence for composition of isomorphism
classes of decorated cospans.}
Let 
\[
  n\colon  \big(X \stackrel{i_X}\longrightarrow N \stackrel{o_Y}\longleftarrow Y,\enspace I
\stackrel{s}\longrightarrow FN\big) \longrightarrow \big(X \stackrel{i'_X}\longrightarrow N'
\stackrel{o'_Y}\longleftarrow Y,\enspace I \stackrel{s'}\longrightarrow FN'\big)
\]
and
\[
  m\colon  \big(Y \stackrel{i_Y}\longrightarrow M \stackrel{o_Z}\longleftarrow Z,\enspace I
\stackrel{t}\longrightarrow FM\big) \longrightarrow \big(Y \stackrel{i'_Y}\longrightarrow M'
\stackrel{o'_Z}\longleftarrow Z,\enspace I \stackrel{t'}\longrightarrow FM'\big)
\]
be isomorphisms of decorated cospans. We wish to show that the composite of the
decorated cospans on the left is isomorphic to the composite of the decorated
cospans on the right. As discussed in \textsection\ref{sec.cospans}, it is well
known that the composite cospans are isomorphic. It remains for us to check
the decorations agree too. Let $p\colon N+_YM \to N'+_YM'$ be the isomorphism
given by the universal property of the pushout and the isomorphisms $n\colon N
\to N'$ and $m\colon  M \to M'$. Then the two decorations in question are given
by the top and bottom rows of the following diagram.
\[
  \xymatrixrowsep{1pc}
  \xymatrixcolsep{1pc}
  \xymatrix{
    &&&& FN \otimes FM \ar[dd]^{Fn \otimes Fm}_\sim
    \ar[rr]^{\varphi_{N,M}} && F(N+M) \ar[dd]^{F(n+m)}_\sim
    \ar[rr]^{F[j_N,j_M]} && F(N+_YM) \ar[dd]^{Fp}_\sim \\ 
    I \ar[rr]^(.4){\lambda^{-1}} && I\otimes I \ar[urr]^{s \otimes t}
    \ar[drr]_{s' \otimes t'} &
    \qquad\textrm{\tiny(I)} && \textrm{\tiny(F)} && \textrm{\tiny(C)}\\ 
    &&&& FN' \otimes FM' \ar[rr]_{\varphi_{N',M'}} && F(N'+M')
    \ar[rr]_{F[j_{N'},j_{M'}]} && F(N'+_YM')
  }
\]
The triangle (I) commutes as $n$ and $m$ are morphisms of decorated cospans and
$- \otimes -$ is functorial, (F) commutes by the monoidality of $F$, and (C)
commutes by properties of colimits in $\mathcal C$ and the functoriality of $F$.
This proves the claim.

\paragraph{Associativity.}
Suppose we have morphisms
\[
  (X \stackrel{i_X}\longrightarrow N \stackrel{o_Y}\longleftarrow Y,\enspace I
  \stackrel{s}\longrightarrow FN),
\]
\[
  (Y \stackrel{i_Y}\longrightarrow M \stackrel{o_Z}\longleftarrow Z,\enspace I
  \stackrel{t}\longrightarrow FM), 
\]
\[
  (Z \stackrel{i_Z}\longrightarrow P \stackrel{o_W}\longleftarrow W,\enspace I
  \stackrel{u}\longrightarrow FP).
\]
It is well known that composition of isomorphism classes of cospans via
pushout of representatives is associative; this follows from the universal
properties of the relevant colimit. We must check that the pushforward of the
decorations is also an associative process. Write 
\[
  \tilde a\colon  (N+_YM)+_ZP \longrightarrow N+_Y(M+_ZP)
\]
for the unique isomorphism between the two pairwise pushouts constructions
from the above three cospans. Consider then the following diagram, with leftmost
column the decoration obtained by taking the composite of the first two morphisms
first, and the rightmost column the decoration obtained by taking the composite
of the last two morphisms first.
\[
  \xymatrixcolsep{-.58pc}
  \xymatrix{ 
    \scriptstyle F((N+_YM)+_ZP) \ar[rrrrrrrr]^{F\tilde a} &&&&&&&&
    \scriptstyle F(N+_Y(M+_ZP)) \\
    &&&& \textrm{\tiny(C)} \\
    \scriptstyle F((N+_YM)+P) \ar[uu]_{F[j_{N+_YM},j_P]} &&&&&&&& \scriptstyle
    F(N+(M+_ZP)) \ar[uu]^{F[j_N,j_{M+_ZP}]} \\
    && \scriptstyle F((N+M)+P) \ar[ull]_(.35)*+<6pt>_{\scriptstyle F([j_N,J_M]+1_P)}
    \ar[rrrr]^{Fa} &&&& \scriptstyle F(N+(M+P))
    \ar[urr]^(.35)*+<6pt>^{\scriptstyle F(1_N+[j_M,j_P])} \\
    & \textrm{\tiny(F2)} &&&&&& \textrm{\tiny(F3)} \\
    \scriptstyle F(N+_YM)\otimes FP \ar[uuu]^{\varphi_{N+_YM,P}} &&&& 
    \textrm{\tiny(F1)} &&&& \scriptstyle FN\otimes F(M+_ZP)
    \ar[uuu]_{\varphi_{N,M+_ZP}} \\
    && \scriptstyle F(N+M)\otimes FP \ar[ull]^(.6)*+<6pt>^{\scriptstyle
      F[j_N,j_M] \otimes 1_{FP}} \ar[uuu]_{\varphi_{N+M,P}} &&&& \scriptstyle
      FN \otimes F(M+P) \ar[urr]_(.6)*+<6pt>_{\scriptstyle 1_{FN} \otimes
    F[j_M,j_P]} \ar[uuu]^{\varphi_{N,M+P}} \\
    &&& \scriptstyle (FN \otimes FM) \otimes FP \ar[ul]^{\varphi_{N,M} \otimes
    1_{FP}} \ar[rr]^{a} && \scriptstyle FN \otimes (FM \otimes FP)
    \ar[ur]_{\phantom{1}1_{FN} \otimes \varphi_{M,P}} \\ 
    &&&& \textrm{\tiny(D2)} \\
    &&& \scriptstyle (I \otimes I) \otimes I \ar[uu]^{(s \otimes t) \otimes u}
    \ar[rr]^{a} && \scriptstyle I \otimes (I \otimes I) \ar[uu]_{s
    \otimes (t \otimes u)} \\
    &&&& \textrm{\tiny(D1)} \\
    &&&& \scriptstyle I \otimes I \ar[uul]^{\rho^{-1} \otimes 1} \ar[uur]_{1
      \otimes \lambda^{-1}} \\
      &&&& \scriptstyle I \ar[u]^{\lambda^{-1}}
  }
\]
This diagram commutes as (D1) is the triangle coherence equation for the
monoidal category $(\mathcal D,\otimes)$, (D2) is naturality for the associator
$a$, (F1) is the associativity condition for the monoidal functor $F$, (F2)
and (F3) commute by the naturality of $\varphi$, and (C) commutes as it is the
$F$-image of a hexagon describing the associativity of the pushout.  This
shows that the two decorations obtained by the two different orders of
composition of our three morphisms are equal up to the unique isomorphism
$\tilde a$ between the two different pushouts that may be obtained. Our
composition rule is hence associative.

\paragraph{Identity morphisms.} 
We shall show that the claimed identity morphism on $Y$, the decorated cospan
\[
  (Y \stackrel{1_Y}\longrightarrow Y \stackrel{1_Y}\longleftarrow Y,\enspace I
\xrightarrow{F!\circ \varphi_I} FY),
\]
is an identity for composition
on the right; the case for composition on the left is similar. The cospan in
this pair is known to be the identity cospan in $\mathrm{Cospan}(\mathcal C)$.
We thus need to check that, given a morphism 
\[
  (X \stackrel{i}\longrightarrow N
\stackrel{o}\longleftarrow Y,\enspace I \stackrel{s}\longrightarrow FN),
\] 
the composite of the product $s \otimes (F! \circ \varphi_I)$ with the $F$-image
of the copairing \linebreak $[1_N,o]\colon  N+Y \to N$ of the pushout maps is
again the same element $s$; this composite being, by definition, the decoration
of the composite of the given morphism and the claimed identity map. This is
shown by the commutativity of the diagram below, with the path along the lower
edge equal to the aforementioned pushforward.
\[
  \xymatrixcolsep{.96pc}
  \xymatrix{
    I \ar[rrrrrrrrrr]^s \ar[ddrr]_{\lambda_I^{-1}} &&&&&&&&&& FN \\
    &&&& \textrm{\tiny(D1)} &&&& \textrm{\tiny(F1)}\\
    && I \otimes I \ar[rr]^{s\otimes 1} \ar[ddrrrr]_{s \otimes (F! \circ
    \varphi_I)} && FN \otimes I \ar[uurrrrrr]^{\rho_{FN}} \ar[rr]^(.57){1_{FN}
  \otimes \varphi_I}
    && FN \otimes F\varnothing \ar[rr]^{\varphi_{N,\varnothing}} \ar[dd]_{1_{FN}
  \otimes F!} && F(N+\varnothing) \ar[uurr]^(.2){F\rho_N=F[1_N,!]}
      \ar[dd]_{F(1_N+!)} \quad \textrm{\tiny(C)}\\
      &&&&& \textrm{\tiny(D2)} && \textrm{\tiny(F2)}\\
      &&&&&& FN \otimes FY \ar[rr]_{\varphi_{N,M}} && F(N+Y)
      \ar[uuuurr]_{F[1_N,i_Y]}
  }
\]
This diagram commutes as each subdiagram commutes: (D1) commutes by the
naturality of $\rho$, (D2) by the functoriality of the monoidal product in
$\mathcal D$, (F1) by the unit axiom for the monoidal functor $F$, (F2) by the
naturality of $\varphi$, and (C) due to the properties of colimits in $\mathcal
C$ and the functoriality of $F$.

We thus have a category.
\end{proof}

\begin{remark}
While at first glance it might seem surprising that we can construct a
composition rule for decorations $s\colon  I\to FN$ and $t\colon  I \to FM$ just from
monoidal structure, the copair $[j_N,j_M]\colon  N+M \to N+_YM$ of the pushout maps
contains the data necessary to compose them. Indeed, this is the key insight of the
decorated cospan construction. To wit, the coherence maps for the lax monoidal
functor allow us to construct an element of $F(N+M)$ from the monoidal product
$s \otimes t$ of the decorations, and we may then post-compose with $F[j_N,j_M]$ to
arrive at an element of $F(N+_YM)$. The map $[j_N,j_M]$ encodes the
identification of the image of $Y$ in $N$ with the image of the same in $M$, and
so describes merging the `overlap' of the two decorations.
\end{remark}

\subsection{The hypergraph structure}
The main theorem of this chapter is that when \emph{braided} monoidal structure
is present, the category of decorated cospans is a hypergraph category, and
moreover one into which the category of `undecorated' cospans widely embeds.
Indeed, this embedding motivates the monoidal and hypergraph structures we put
on $F\mathrm{Cospan}$. This section is devoted to stating and proving this
theorem.

\begin{theorem} \label{thm:fcospans}
  Let $\mathcal C$ be a category with finite colimits, $(\mathcal D, \otimes)$ a
  braided monoidal category, and $(F,\varphi)\colon  (\mathcal C,+) \to (\mathcal D,
  \otimes)$ be a lax braided monoidal functor. Then we may give
  $F\mathrm{Cospan}$ a symmetric monoidal and hypergraph structure such that
  there is a wide embedding of hypergraph categories
  \[
    \mathrm{Cospan(\mathcal C)} \hooklongrightarrow F\mathrm{Cospan}.
  \]
\end{theorem}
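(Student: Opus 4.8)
The plan is to induce every piece of structure on $F\mathrm{Cospan}$ from the known hypergraph structure on $\cospan(\mc C)$ by carrying it along the \emph{empty decoration}. Write $e_N \Defeq F!\circ\varphi_I\maps I \to FN$ for the decoration on an apex $N$ obtained from the unique map $!\maps\varnothing\to N$, and let
\[
  K\maps \cospan(\mc C) \longrightarrow F\mathrm{Cospan}
\]
be the identity on objects and send a cospan $X\to N\leftarrow Y$ to itself decorated by $e_N$. I would take the monoidal product of objects to be the coproduct $+$ with unit $\varnothing$; the monoidal product of two decorated cospans to be the coproduct of the underlying cospans carrying the decoration $\varphi_{N,N'}\circ(s\otimes t)\circ\lambda_I^{-1}$; and the associator, unitors, braiding and the Frobenius maps $\mu_X,\eta_X,\delta_X,\epsilon_X$ to be the $K$-images of the corresponding maps of $\cospan(\mc C)$ from Proposition~\ref{prop.cospanscfm} and its surrounding discussion, each therefore carrying an empty decoration. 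The first thing to check is that $K$ is well defined and functorial, i.e.\ that empty decorations are closed under the composition rule of Proposition~\ref{prop.composingdeccospans}; this is a short diagram chase of exactly the type carried out there for the identity axiom.

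The substantive step is bifunctoriality of $\otimes$, the interchange law $(f'\circ f)\otimes(g'\circ g) = (f'\otimes g')\circ(f\otimes g)$. On the underlying cospans this is the standard fact that coproducts commute with pushouts. On decorations the left side assembles $(s_f\otimes s_{f'})\otimes(s_g\otimes s_{g'})$ while the right side assembles $(s_f\otimes s_g)\otimes(s_{f'}\otimes s_{g'})$, so the two agree only after the middle factors $s_g$ and $s_{f'}$ are transposed. Reconciling them is a diagram chase whose cells are the associativity and unit coherence of $(\mc D,\otimes)$, the naturality of the braiding, the lax \emph{braided} axiom for $(F,\varphi)$, and functoriality of $F$ on the relevant pushout squares. \textbf{This interchange law is the main obstacle}: it is the representative place where the braided (rather than merely monoidal) hypothesis is genuinely used, the braiding being exactly what licenses the transposition of two nontrivial decorations. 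The only other axiom requiring this same transposition is naturality of the inherited braiding, which is handled identically.

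Every remaining axiom then reduces to data already in hand. Each coherence axiom built solely from structure maps (pentagon, triangle, the hexagons) and each special commutative Frobenius axiom (associativity, commutativity and their duals, the Frobenius and special laws, and the compatibility of the Frobenius structure on $X+Y$ with that on $X$ and $Y$) involves only empty-decorated maps, so it splits into (i) the corresponding identity of underlying cospans, which holds in $\cospan(\mc C)$, and (ii) an equality of decorations that collapses to $e=e$ by the empty-decoration closure established above. Naturality of the associator and unitors uses only the associativity and unit coherence of $\mc D$ to re-bracket decorations, no braiding. I would also note that the symmetry axiom $\s_{Y,X}\circ\s_{X,Y}=\idn$ is inherited from the coproduct symmetry of $\mc C$ on empty decorations, so $F\mathrm{Cospan}$ is genuinely symmetric even though $\mc D$ is only assumed braided.

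Finally I would conclude that $K$ is a wide embedding of hypergraph categories. It is bijective on objects by construction, and faithful because an isomorphism of empty-decorated cospans is just an isomorphism of cospans (the condition $Fn\circ e_N = e_{N'}$ holds automatically by functoriality of $F$ and the universal property of $\varnothing$). By the decoration computations above, $K$ preserves the monoidal product and all four Frobenius maps on the nose, so it is a strict — in particular strong — symmetric monoidal functor preserving the special commutative Frobenius structure, hence a hypergraph functor. This exhibits the required wide embedding $\cospan(\mc C)\hooklongrightarrow F\mathrm{Cospan}$ and completes the proof.
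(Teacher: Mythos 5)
Your proposal is correct and follows essentially the same route as the paper's proof: transport all structure along the empty-decoration embedding, verify the interchange law (and braiding naturality) by a diagram chase that is the one genuine use of the braided hypothesis, and let every remaining coherence and Frobenius axiom collapse to its counterpart in $\cospan(\mc C)$. The only quibble is that the Frobenius maps on $\cospan(\mc C)$ come from the Rosebrugh--Sabadini--Walters proposition at the end of \textsection\ref{sec.cospans} rather than from Proposition~\ref{prop.cospanscfm}, but this does not affect the argument.
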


We first prove a lemma.  Recall that the identity decorated cospan has apex
decorated by $I \stackrel{\varphi_I}\longrightarrow F\varnothing
\stackrel{F!}\longrightarrow FX$. Given any cospan $X \to N \leftarrow Y$, we
call the decoration $I \stackrel{\varphi_I}\longrightarrow F\varnothing
\stackrel{F!}\longrightarrow FN$ the \define{empty decoration} on $N$. For
example, in the case of cospans of finite sets decorated by graphs, the empty
decoration is simply the graph with no edges. Our lemma shows that empty
decorations indeed are empty of content: when composed with other decorations,
they have no impact.

\begin{lemma} \label{lem.emptydecorations}
  Let
\[
  (X \stackrel{i_X}\longrightarrow N
\stackrel{o_Y}\longleftarrow Y,\enspace I \stackrel{s}\longrightarrow FN),
\]
be a decorated cospan, and suppose we have an empty-decorated cospan 
\[
  (Y \stackrel{i_Y}\longrightarrow M \stackrel{o_Z}\longleftarrow Z,\enspace I
  \stackrel{\varphi\circ F!}\longrightarrow FM).
\]
Then the composite of these decorated cospans is 
\[
  \big(X \xrightarrow{j_N \circ i_X} N+_YM \xleftarrow{j_M \circ o_Z} Z,\enspace 
  I \xrightarrow{Fj_N \circ s} F(N+_YM)\big).
\]
In particular, the decoration on the composite is the decoration $s$ pushed
forward along the $F$-image of the map $j_N\colon N \to N+_YM$ to become an
$F$-decoration on $N+_YM$. The analogous statement also holds for composition
with an empty-decorated cospan on the left.
\end{lemma}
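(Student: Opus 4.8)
The plan is to unwind the composition rule of Proposition~\ref{prop.composingdeccospans} for this particular pair and reduce the resulting decoration to $Fj_N \circ s$ by a diagram chase that is essentially the one already carried out for identity morphisms, now with a general apex $M$ in place of $Y$. The claim about the underlying cospan is standard, being the usual pushout composition in $\mathrm{Cospan}(\mathcal C)$, so the only genuine work is verifying the decoration.

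By definition, the decoration on the composite is
\[
  I \xrightarrow{\lambda^{-1}} I \otimes I \xrightarrow{s \otimes (F! \circ \varphi_I)} FN \otimes FM \xrightarrow{\varphi_{N,M}} F(N+M) \xrightarrow{F[j_N,j_M]} F(N+_YM),
\]
where $F! \circ \varphi_I$ is the empty decoration on $M$. First I would factor $s \otimes (F! \circ \varphi_I) = (\idn_{FN} \otimes F!) \circ (s \otimes \varphi_I)$ by functoriality of $\otimes$, exposing the empty decoration as the image of $!\colon \varnothing \to M$. The heart of the argument is then the chain
\begin{align*}
  F[j_N,j_M] \circ \varphi_{N,M} \circ (\idn_{FN} \otimes F!) \circ (s \otimes \varphi_I) \circ \lambda^{-1}
  &= F[j_N,j_M] \circ F(\idn_N + \,!) \circ \varphi_{N,\varnothing} \circ (s \otimes \varphi_I) \circ \lambda^{-1} \\
  &= Fj_N \circ F\rho_N \circ \varphi_{N,\varnothing} \circ (\idn_{FN} \otimes \varphi_I) \circ (s \otimes \idn_I) \circ \lambda^{-1} \\
  &= Fj_N \circ \rho_{FN} \circ (s \otimes \idn_I) \circ \lambda^{-1} \\
  &= Fj_N \circ s \circ \rho_I \circ \lambda^{-1} = Fj_N \circ s.
\end{align*}
The first equality is naturality of $\varphi$ along the pair $(\idn_N, !\colon \varnothing \to M)$; the second uses the colimit identity $[j_N,j_M] \circ (\idn_N + !) = j_N \circ \rho_N$, where $\rho_N\colon N+\varnothing \to N$ is the coproduct unitor in $\mathcal C$, together with functoriality of $F$ and the re-factoring $s \otimes \varphi_I = (\idn_{FN} \otimes \varphi_I)\circ(s \otimes \idn_I)$; the third is the right-unit axiom of the lax monoidal functor $(F,\varphi)$; the fourth is naturality of the unitor $\rho$ in $\mathcal D$; and the last step uses the coherence identity $\rho_I = \lambda_I$, so that $\rho_I \circ \lambda_I^{-1} = \idn_I$.

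I do not expect a genuine obstacle here: each step is a single appeal to naturality, the functor's unit axiom, or a coherence law, and the computation is structurally identical to the identity-morphism verification inside Proposition~\ref{prop.composingdeccospans}. The one point requiring care is the colimit identity $[j_N,j_M] \circ (\idn_N + !) = j_N \circ \rho_N$, which encodes precisely that gluing an empty-decorated $M$ onto $N$ along $Y$ merely reroutes the decoration $s$ into $N+_YM$ along $j_N$; I would justify it by computing the copairing on each summand of $N + \varnothing$, on $N$ giving $j_N$ and on $\varnothing$ giving the unique map. The statement for composition with an empty-decorated cospan on the left is proved identically, substituting the left-unit axiom of $F$ and the left unitor $\lambda$ for their right-handed analogues.
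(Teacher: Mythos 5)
Your proof is correct and follows essentially the same route as the paper's: the paper verifies the identity via one large commutative diagram whose subregions are exactly your naturality-of-$\varphi$, unit-axiom, unitor-naturality, and colimit steps (your single identity $[j_N,j_M]\circ(\idn_N+!)=j_N\circ\rho_N$ packages the paper's two colimit subdiagrams), and it explicitly notes the computation mirrors the identity-morphism verification in Proposition~\ref{prop.composingdeccospans}, just as you observe. The only cosmetic difference is that the paper starts from $\rho_I^{-1}$ where you use $\lambda_I^{-1}$, which agree by coherence.
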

\begin{proof}
As is now familiar, a statement of this sort is proved by a large commutative
diagram:
\[
  \xymatrixcolsep{.4pc}
  \xymatrix{
    I \ar[rrrrrr]^s \ar[ddrr]_{\rho_I^{-1}} &&&&&& FN \ar[rrrr]^{Fj_N} &&&& F(N+_YM) \\
    &&& \textrm{\tiny(D1)} &&& \textrm{\tiny(F1)} && \textrm{\tiny(C1)} \\
    && I \otimes I \ar[rr]^{s\otimes 1} \ar[ddrrrr]_{s \otimes (F! \circ
    \varphi_I)} && FN \otimes I \ar[uurr]^{\rho_{FN}} \ar[rr]^{1_{FN}
  \otimes \varphi_I}
    && FN \otimes F\varnothing \ar[rr]^{\varphi_{N,\varnothing}} \ar[dd]_{1_{FN}
  \otimes F!} && F(N+\varnothing) \ar[uull]_{F\rho_N=F[1_N,!]}
  \ar[uurr]^{F[j_N,!]}
      \ar[dd]_{F(1_N+!)} \quad \textrm{\tiny(C2)}\\
      &&&&& \textrm{\tiny(D2)} && \textrm{\tiny(F2)}\\
      &&&&&& FN \otimes FM \ar[rr]_{\varphi_{N,M}} && F(N+M)
      \ar[uuuurr]_{F[j_N,j_M]}
  }
\]
Note that identity decorated cospans are empty decorated. The subdiagrams in
this diagram commute for the same reasons as their corresponding regions in the
diagram for identity morphisms (see proof of Proposition
\ref{prop.composingdeccospans}). 
\end{proof}

\begin{proof}[Proof of Theorem \ref{thm:fcospans}]
We first prove that we have a wide embedding of categories, and then use this to
transfer monoidal and hypergraph structure onto $F\mathrm{Cospan}$.

\paragraph{Embedding.} 
Define a functor
\[
  \mathrm{Cospan(\mathcal C)} \hooklongrightarrow F\mathrm{Cospan}.
\]
mapping each object of $\mathrm{Cospan(\mathcal C)}$ to itself as an object
of $F\mathrm{Cospan}$, and each cospan in $\mathcal C$ to the same cospan
decorated with the empty decoration on its apex. 

To check this is functorial, it suffices to check that the composite of two
empty-decorated cospans is again empty-decorated. This is an immediate
consquence of Lemma \ref{lem.emptydecorations}. Glancing at the definition, it
is also clear that this functor is faithful and bijective-on-objects. We thus
have a wide embedding $\mathrm{Cospan(\mathcal{C})} \hookrightarrow
F\mathrm{Cospan}$.

\paragraph{Monoidal structure.} 

We define the monoidal product of objects $X$ and $Y$ of $F\mathrm{Cospan}$ to
be their coproduct $X+Y$ in $\mathcal C$, and define the monoidal product of
decorated cospans 
\[
  (X \stackrel{i_X}\longrightarrow N
  \stackrel{o_Y}\longleftarrow Y,\enspace I \stackrel{s}\longrightarrow FN)
  \qquad \mbox{and} \qquad
  (X' \stackrel{i_{X'}}\longrightarrow N' \stackrel{o_{Y'}}\longleftarrow
  Y',\enspace I \stackrel{t}\longrightarrow FN')
\]
to be 
\[
  \left(
  \begin{aligned}
    \xymatrix{
      & N+N' \\  
      X+X' \ar[ur]^{i_X+i_{X'}} && Y+Y' \ar[ul]_{o_Y+o_{Y'}}
    }
  \end{aligned}
  ,
  \qquad
  \begin{aligned}
    \xymatrixrowsep{.8pc}
    \xymatrix{
      F(N+N') \\
      FN \otimes FN' \ar[u]_{\varphi_{N,N'}}\\
      I \otimes I \ar[u]_{s \otimes t} \\
      I \ar[u]_{\lambda^{-1}}
    }
  \end{aligned}
  \right).
\]
Using the braiding in $\mathcal D$, we can show that this proposed monoidal
product is functorial. 

Indeed, suppose we have decorated cospans
\[
  (X \stackrel{i_X}\longrightarrow N \stackrel{o_Y}\longleftarrow Y,\enspace I
  \stackrel{s}\longrightarrow FN),
  \qquad
  (Y \stackrel{i_Y}\longrightarrow M \stackrel{o_Z}\longleftarrow Z,\enspace I
  \stackrel{t}\longrightarrow FM), 
\]
\[
  (U \stackrel{i_U}\longrightarrow P \stackrel{o_V}\longleftarrow V,\enspace I
  \stackrel{u}\longrightarrow FP),
  \qquad
  (V \stackrel{i_V}\longrightarrow Q \stackrel{o_W}\longleftarrow W,\enspace I
  \stackrel{v}\longrightarrow FQ). 
\]
We must check the so-called interchange law: that the composite of the
column-wise monoidal products is equal to the monoidal product of the row-wise
composites.

Again, for the cospans we take this equality as familiar fact. Write 
\[
  b\colon  (N+P)+_{(Y+V)}(M+Q) \stackrel{\sim}{\longrightarrow} (N+_YM)+(P+_{V}Q).
\]
for the isomorphism between the two resulting representatives of the isomorphism
class of cospans. The two resulting decorations are then given by the leftmost 
and rightmost columns respectively of the diagram below.
\[
  \xymatrixcolsep{1.4pc}
  \xymatrixrowsep{1.2pc}
  \xymatrix{ 
    \scriptstyle F((N+P)+_{(Y+V)}(M+Q)) \ar[rrrr]^{Fb}_{\sim} &&&&
    \scriptstyle F((N+_YM)+(P+_VQ)) \\
    & \textsc{\tiny(C)} \\
    \scriptstyle F((N+P)+(M+Q)) \ar[uu]^{F[j_{N+P},j_{M+Q}]} \ar@{.>}[uurrrr] &&&
    \textsc{\tiny(F2)} & \scriptstyle F(N+_YM)\otimes F(P+_VQ)
    \ar[uu]_{\varphi_{N+_YM,P+_VQ}} \\
    \\
    \scriptstyle F(N+P)\otimes F(M+Q) \ar[uu]^{\varphi_{N+P,M+Q}} &&&&
    \scriptstyle F(N+M)\otimes F(P+Q) \ar@{.>}[uullll]
    \ar[uu]_{F[j_N,j_M] \otimes F[j_P,j_Q]} \\
    && \textsc{\tiny(F1)} \\
    \scriptstyle (FN \otimes FP) \otimes (FM \otimes FQ) \ar[uu]^{\varphi_{N,P} \otimes
    \varphi_{M,Q}} \ar@{.>}[rrrr] &&&& \scriptstyle (FN \otimes FM) \otimes (FP \otimes FQ)
    \ar[uu]_{\varphi_{N,M} \otimes \varphi_{P,Q}} \\
    && \textsc{\tiny(D)} \\
    && \scriptstyle (I\otimes I)\otimes(I\otimes I) \ar[uull]^{(s\otimes u) \otimes (t \otimes
    v)} \ar[uurr]_{(s \otimes t) \otimes (u \otimes v)} \\
    && \scriptstyle I \ar[u]_{(\lambda^{-1}\otimes\lambda^{-1})\circ\lambda^{-1}}
  }
\]
These two decorations are related by the isomorphism $b$ as the diagram
commutes. We argue this more briefly than before, as the basic structure of
these arguments is now familiar to us. Briefly then, there exist dotted arrows
of the above types such that the subdiagram (D) commutes by the naturality of
the associators and braiding in $\mathcal D$, (F1) commutes by the coherence
diagrams for the braided monoidal functor $F$, (F2) commutes by the naturality of
the coherence map $\varphi$ for $F$, and (C) commutes by the properties of
colimits in $\mathcal C$ and the functoriality of $F$. 

Using now routine methods, it also is straightforward to show that the monoidal
product of identity decorated cospans on objects $X$ and $Y$ is the identity
decorated cospan on $X+Y$; for the decorations this amounts to the observation
that the monoidal product of empty decorations is again an empty decoration.

Choosing associator, unitors, and braiding in $F\mathrm{Cospan}$ to be the
images of those in $\mathrm{Cospan(\mathcal{C})}$, we have a symmetric
monoidal category. These transformations remain natural transformations when
viewed in the category of $F$-decorated cospans as they have empty
decorations.

We consider the case of the left unitor in detail; the naturality of the right
unitor, associator, and braiding follows similarly, using the relevant axiom
where here we use the left unitality axiom. 

Given a decorated cospan $(X \stackrel{i}\longrightarrow N
\stackrel{o}\longleftarrow Y,\enspace I \stackrel{s}\longrightarrow FN)$, we
must show that the square of decorated cospans
\[
  \xymatrix{
    X+\varnothing \ar[r]^{i+1} \ar[d]_{\lambda_{\mathcal C}} & N+\varnothing & Y+\varnothing
    \ar[l]_{i+1} \ar[d]^{\lambda_{\mathcal C}} \\
    X \ar[r]_{i} & N & Y \ar[l]^{o} 
  }
\]
commutes, where the $\lambda_{\mathcal C}$ are the maps of the left unitor in
$\mathcal C$ considered as empty-decorated cospans, the top cospan has
decoration 
\[
  I \stackrel{\lambda^{-1}_{\mathcal D}}{\longrightarrow} I \otimes I \stackrel{s \otimes
  \varphi_I}\longrightarrow FN \otimes F\varnothing
  \stackrel{\varphi_{N,\varnothing}}\longrightarrow F(N+\varnothing),
\]
and the lower cospan has decoration $I \stackrel{s}{\longrightarrow} FN$. 

Now as the $\lambda$ are isomorphisms in $\mathcal C$ and have empty
decorations, the composite through the upper right corner is the decorated
cospan
\[
  \big(X+\varnothing \xrightarrow{i+1} N+\varnothing
  \xleftarrow{(o+1)\circ \lambda^{-1}_{\mathcal C}} Y,\enspace I
  \xrightarrow{\varphi_{N,\varnothing} \circ (s \otimes \varphi_1) \circ
  \lambda^{-1}_{\mathcal D}} F(N+\varnothing)\big).
\]
The composite through the lower left corner is the decorated cospan
\[
  (X+\varnothing \stackrel{[i,!]}\longrightarrow N
\stackrel{o}\longleftarrow Y,\enspace 1 \stackrel{s}\longrightarrow FN).
\]
Then $\lambda_{\mathcal C}\colon  N+\varnothing \rightarrow N$ gives an isomorphism
between these two cospans, and the naturality of the left unitor and the left
unitality axiom in $\mathcal D$ imply that $\lambda_{\mc C}$ is in fact an
isomorphism of decorated cospans:
\[
  \xymatrix{
    I \ar[rrr]^{s} \ar[d]_{\lambda^{-1}_{\mathcal D}} &&& FN \\
    I \otimes I \ar[r]_{s \otimes 1} & FN \otimes I \ar[r]_{1 \otimes \varphi_I}
    \ar[urr]^{\lambda_{\mathcal D}} & FN \otimes F\varnothing
    \ar[r]_{\varphi_{N,\varnothing}} & F(N+\varnothing).
    \ar[u]_{F\lambda_{\mathcal C}}
  }
\]
Thus $\lambda$ is indeed a natural transformation; again the naturality of the
remaining coherence maps can be proved similarly. Moreover, the coherence maps
obey the required coherence laws as they are images of maps that obey these laws
in $\mathrm{Cospan(\mathcal{C})}$. 

\paragraph{Hypergraph structure.}
Similarly, to arrive at the hypergraph structure on \linebreak
$F\mathrm{Cospan}$, we simply equip each object $X$ with the image of the
special commutative Frobenius monoid specified by the hypergraph structure of
$\mathrm{Cospan(\mathcal{C})}$. The axioms of hypergraph structure follow from
the functoriality of our embedding. Moreover, it is evident that this choice of
structures implies the above wide embedding is a hypergraph functor.
\end{proof}

In summary, the hypergraph category $F\mathrm{Cospan}$ comprises:
\smallskip

\begin{center}
  \begin{tabular}{| c | p{.65\textwidth} |}
    \hline
    \multicolumn{2}{|c|}{The hypergraph category $(F\mathrm{Cospan},+)$} \\
    \hline
    \textbf{objects} & the objects of $\mathcal C$ \\ 
    \textbf{morphisms} & isomorphism classes of $F$-decorated cospans in
    $\mathcal C$\\ 
    \textbf{composition} & given by pushout \\
    \textbf{monoidal product} & the coproduct in $\mathcal C$ \\
    \textbf{coherence maps} &  maps from $\cospan(\mc C)$ with empty decoration \\
    \textbf{hypergraph maps} & maps from $\cospan(\mc C)$ with empty decoration
    \\
    \hline
  \end{tabular}
\end{center}
\smallskip

Note that if the monoidal unit in $(\mathcal D,\otimes)$ is the initial object,
then each object only has one possible decoration: the empty decoration. This
immediately implies the following corollary.
\begin{corollary}
  Let $1_{\mathcal C}\colon (\mathcal C,+) \to (\mathcal C,+)$ be the identity functor
  on a category $\mathcal C$ with finite colimits. Then
  $\mathrm{Cospan}(\mathcal C)$ and $1_{\mathcal C}\mathrm{Cospan}$ are
  isomorphic as hypergraph categories.
\end{corollary}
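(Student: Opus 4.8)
The plan is to show that the wide embedding supplied by Theorem \ref{thm:fcospans} is in fact an isomorphism. First I would observe that $1_{\mathcal C}\colon (\mathcal C,+) \to (\mathcal C,+)$ is the identity functor, hence a strict symmetric monoidal functor, and in particular a lax braided monoidal functor with braided monoidal target $(\mathcal C,+)$. Theorem \ref{thm:fcospans} therefore applies and produces a hypergraph category $1_{\mathcal C}\mathrm{Cospan}$ together with a wide embedding of hypergraph categories $\mathrm{Cospan}(\mathcal C) \hooklongrightarrow 1_{\mathcal C}\mathrm{Cospan}$. It suffices to prove this embedding is an isomorphism.

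The key point, already flagged in the remark preceding the corollary, is that decorations are forced. Since $F = 1_{\mathcal C}$ we have $FN = N$ for every object $N$, and the monoidal unit $I$ of the target category $(\mathcal C,+)$ is the initial object $\varnothing$ (the coproduct being the monoidal product). A decoration on a cospan with apex $N$ is thus a morphism $I \to FN$, that is, a morphism $\varnothing \to N$ in $\mathcal C$; because $\varnothing$ is initial, there is exactly one such morphism, and it necessarily coincides with the empty decoration $I \xrightarrow{\varphi_I} F\varnothing \xrightarrow{F!} FN$ (here $\varphi_I$ is an identity, as $F$ is strict). Hence every $1_{\mathcal C}$-decorated cospan is empty-decorated, so it lies in the image of the wide embedding.

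Consequently the embedding is full: every morphism of $1_{\mathcal C}\mathrm{Cospan}$ is the image of an underlying cospan. Moreover, uniqueness of maps out of $\varnothing$ makes the morphism condition $Fn \circ s = s'$ automatic, so a morphism of decorated cospans is simply a morphism of the underlying cospans, and isomorphism classes of decorated cospans correspond bijectively to isomorphism classes of cospans. Combining fullness with the faithfulness and bijectivity-on-objects guaranteed by Theorem \ref{thm:fcospans}, the embedding is an isomorphism of categories; being a hypergraph functor, it is an isomorphism of hypergraph categories.

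I do not expect any genuine obstacle here: the entire argument is bookkeeping driven by the initiality of $\varnothing$. The one step worth stating carefully is the promotion of the embedding from merely faithful (as given by Theorem \ref{thm:fcospans}) to full, which is exactly what the uniqueness of the decoration $\varnothing \to N$ delivers.
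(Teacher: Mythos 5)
Your argument is correct and is exactly the paper's: the corollary is deduced from the observation that when the monoidal unit of the target is the initial object, every apex admits a unique decoration, which must be the empty one, so the wide embedding of Theorem \ref{thm:fcospans} is also full and hence an isomorphism. You have simply spelled out the bookkeeping that the paper leaves implicit.
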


Thus we see that there is always a hypergraph functor between decorated cospan
categories $1_{\mathcal C}\mathrm{Cospan} \rightarrow F\mathrm{Cospan}$. This
provides an example of a more general way to construct hypergraph functors
between decorated cospan categories. We detail this in the next section. First
though, we conclude this section by remarking on the bicategory of decorated
cospans.

\subsection{A bicategory of decorated cospans} \label{ssec.bicatdeccospan}
As shown by B\'enabou \cite{Ben67}, cospans are most naturally thought of as
1-morphisms in a bicategory.  To obtain the category we are calling
$\mathrm{Cospan}(\mc C)$, we `decategorify' this bicategory by discarding
2-morphisms and identifying isomorphic 1-morphisms. This suggests that decorated
cospans might also be most naturally thought of as 1-morphisms in a bicategory.
Indeed this is the case.  

Recall that a morphism of $F$-decorated cospans is a morphism $n$ in $\mc C$ such
that the diagrams
\[
  \begin{aligned}
    \xymatrix{
      & N \ar[dd]^n \\  
      X \ar[ur]^{i} \ar[dr]_{i'} && Y \ar[ul]_{o} \ar[dl]^{o'}\\
      & N'
    }
  \end{aligned}
  \qquad \mbox{and}
  \qquad
  \begin{aligned}
    \xymatrix{
      & FN \ar[dd]^{Fn} \\
      1 \ar[ur]^{s} \ar[dr]_{s'} \\
      & FN'
    }
  \end{aligned}
\]
commute. Then Courser has proved the following proposition.
\begin{proposition}
  There exists a symmetric monoidal bicategory with:
\begin{center}
  \begin{tabular}{| c | p{.65\textwidth} |}
    \hline
    \multicolumn{2}{|c|}{The symmetric monoidal bicategory $(F\mathrm{Cospan},+)$} \\
    \hline
    \textbf{objects} & the objects of $\mathcal C$ \\ 
    \textbf{morphisms} & $F$-decorated cospans in
    $\mathcal C$\\ 
    \textbf{2-morphisms} & morphisms of $F$-decorated cospans in $\mc C$ \\
    \hline
  \end{tabular}
\end{center}
\end{proposition}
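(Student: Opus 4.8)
The plan is to recognise $(F\mathrm{Cospan},+)$ as the horizontal, globular bicategory underlying a symmetric monoidal \emph{double} category, and to obtain the symmetric monoidal bicategory structure by transferring along Shulman's theorem on constructing symmetric monoidal bicategories from symmetric monoidal double categories, rather than verifying the Gordon--Power--Street coherence data by hand. The point of departure is B\'enabou's bicategory of cospans in $\mathcal C$ \cite{Ben67}: its objects are objects of $\mathcal C$, its $1$-morphisms are undecorated cospans, its $2$-morphisms are maps of cospans, and horizontal composition is by pushout. The task is to enrich each $1$-morphism with a decoration $I \to FN$ and each $2$-morphism with the compatibility condition $Fn \circ s = s'$, and to check that all the bicategorical and monoidal structure lifts through the decorations.

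Concretely, I would first assemble the bicategory data. Horizontal composition of $1$-cells is the pushout-with-pushforward composite of Proposition \ref{prop.composingdeccospans}; since a pushout is determined only up to canonical isomorphism, this is weakly associative and weakly unital. The associator is the $2$-isomorphism induced by the canonical comparison $\tilde a\colon (N+_YM)+_ZP \to N+_Y(M+_ZP)$ between the two iterated pushouts, and the fact that it \emph{is} a morphism of decorated cospans, i.e.\ that it intertwines the two composite decorations, is exactly the large commuting diagram already established in the associativity step of Proposition \ref{prop.composingdeccospans}. Likewise the left and right unitors are the canonical isomorphisms that collapse an identity leg, and their compatibility with decorations is precisely Lemma \ref{lem.emptydecorations} together with the identity computation in that same proof. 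Vertical composition of $2$-cells is composition of the underlying apex maps in $\mathcal C$, which is strict; horizontal composition of $2$-cells uses the universal property of the pushout to induce a map on apices, the decoration condition being preserved by functoriality of $F$. The interchange law then follows from the universal property of pushouts and functoriality of $F$, and the pentagon and triangle coherences descend from those already holding in $\mathrm{Cospan}(\mathcal C)$, because every comparison $2$-cell involved is the decoration-compatible lift of a canonical colimit map.

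For the monoidal structure I would take the product on objects and $1$-cells from Theorem \ref{thm:fcospans}: coproduct $+$ of apices, with decorations combined via $\lambda^{-1}$, $s\otimes t$, and $\varphi_{N,N'}$; on $2$-cells it is the coproduct of apex maps. That this is a pseudofunctor, and that its associator, unitors, and braiding can be taken as the empty-decorated images of the corresponding coherence cospans, reuses the interchange and naturality computations from the proof of Theorem \ref{thm:fcospans}, where the braiding of $\mathcal D$ and the lax \emph{braided} monoidality of $F$ are exactly what force the decorations to match. The braiding of the monoidal bicategory is again inherited from the coproduct symmetry of $\mathcal C$, carried along with empty decorations.

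The genuine obstacle is coherence: a symmetric monoidal bicategory carries a formidable list of invertible modifications (pentagonator, two hexagonators, syllepsis, and their axioms), and checking these directly for decorated cospans would be prohibitive. The efficient resolution I would pursue is to avoid them altogether: build the evidently symmetric monoidal double category whose objects are objects of $\mathcal C$, whose vertical arrows are maps of $\mathcal C$, whose horizontal arrows are $F$-decorated cospans, and whose squares are decorated-cospan morphisms filling a vertical frame. Monoidal coherence for a double category is far more tractable, amounting to a pseudomonoid in the $2$-category of double categories, and the required comparisons are all canonical colimit maps carried through $F$. The horizontal bicategory is fibrant, with companions and conjoints supplied by maps of $\mathcal C$ viewed as cospans, so Shulman's transfer theorem applies and yields a symmetric monoidal bicategory whose objects, $1$-cells, and globular $2$-cells are precisely the stated data. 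This is the route taken by Courser \cite{Cou16}.
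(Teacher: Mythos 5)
Your proposal is essentially the paper's own treatment: the paper gives no proof of this proposition itself, deferring entirely to Courser \cite{Cou16}, and the route you sketch --- assembling a symmetric monoidal double category of decorated cospans and transferring to its horizontal bicategory via Shulman's theorem, with the coherence data lifted from canonical colimit comparisons through $F$ --- is precisely the argument of that cited reference. The only point worth flagging is that the squares of the double category must allow non-identity vertical boundaries, which slightly generalises the paper's notion of morphism of decorated cospans (defined there only over fixed feet), but this is exactly what Courser does.
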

Further details can be found in Courser's paper \cite{Cou16}. These 2-morphisms
could be employed to model transformations, coarse grainings, rewrite rules, and
so on of open systems and their representations.

\section{Functors between decorated cospan categories} \label{sec:dcf}

Decorated cospans provide a setting for formulating various operations that we
might wish to enact on the decorations, including the composition of these
decorations, both sequential and monoidal, as well as dagger, dualising, and
other operations afforded by the hypergraph structure. In this section we
observe that these operations are formulated in a systematic way, so that
transformations of the decorating structure---that is, monoidal transformations
between the lax monoidal functors defining decorated cospan categories---respect
these operations. 

\begin{theorem} \label{thm:decoratedfunctors}
  Let $\mathcal C$, $\mathcal C'$ be categories with finite colimits, abusing
  notation to write the coproduct in each category $+$, and $(\mathcal D,
  \otimes)$, $(\mathcal D',\boxtimes)$ be braided monoidal categories. Further let
  \[
    (F,\varphi)\colon  (\mathcal C,+) \longrightarrow (\mathcal D,\otimes)
    \qquad \mbox{and} \qquad
    (G,\gamma)\colon  (\mathcal C',+) \longrightarrow (\mathcal D',\boxtimes)
  \]
  be lax braided monoidal functors. This gives rise to decorated cospan
  categories $F\mathrm{Cospan}$ and $G\mathrm{Cospan}$. 

  Suppose then that we have a finite colimit-preserving functor $A\colon  \mathcal C
  \to \mathcal C'$ with accompanying natural isomorphism $\alpha\colon  A(-)+A(-)
  \Rightarrow A(-+-)$, a lax monoidal functor $(B,\beta)\colon  (\mathcal D, \otimes)
  \to (\mathcal D', \boxtimes)$, and a monoidal natural transformation $\theta\colon 
  (B \circ F, B\varphi\circ\beta) \Rightarrow (G \circ A, G\alpha\circ\gamma)$.
  This may be depicted by the diagram:
  \[
    \xymatrixcolsep{3pc}
    \xymatrixrowsep{3pc}
    \xymatrix{
      (\mathcal C,+) \ar^{(F,\varphi)}[r] \ar_{(A,\alpha)}[d] \drtwocell
      \omit{_\:\theta} & (\mathcal D,\otimes) \ar^{(B,\beta)}[d]  \\
      (\mathcal C',+) \ar_{(G,\gamma)}[r] & (\mathcal D',\boxtimes).
    }
  \]

  Then we may construct a hypergraph functor 
  \[
    (T, \tau)\colon  F\mathrm{Cospan} \longrightarrow G\mathrm{Cospan}
  \]
  mapping objects $X \in F\mathrm{Cospan}$ to $AX \in G\mathrm{Cospan}$, and
  morphisms 
  \[
    \left(
    \begin{aligned}
      \xymatrix{
	& N \\  
	X \ar[ur]^{i} && Y \ar[ul]_{o}
      }
    \end{aligned}
    ,
    \quad
    \begin{aligned}
      \xymatrix{
	FN \\
	I_{\mathcal D} \ar[u]_{s}
      }
    \end{aligned}
    \right)
    \qquad
    to
    \qquad
    \left(
    \begin{aligned}
      \xymatrix{
	& AN \\  
	AX \ar[ur]^{Ai} && AY \ar[ul]_{Ao}
      }
    \end{aligned}
    ,
    \quad
    \begin{aligned}
      \xymatrixrowsep{.8pc}
      \xymatrix{
	GAN \\
	BFN \ar[u]_{\theta_N}\\
	BI_{\mathcal D} \ar[u]_{Bs} \\
	I_{\mathcal D'} \ar[u]_{\beta_I}
      }
    \end{aligned}
    \right).
  \]
  Moreover, $(T,\tau)$ is a strict monoidal functor if and only if $(A,\alpha)$
  is.
\end{theorem}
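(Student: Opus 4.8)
The plan is to define $(T,\tau)$ exactly as prescribed and then verify, in turn, well-definedness on isomorphism classes, functoriality, the monoidal and hypergraph axioms, and finally the strictness equivalence. The guiding principle throughout is that every structural map in a decorated cospan category consists of a cospan in the base paired with a decoration assembled from coherence data, so each check splits into a routine cospan-level part (handled because $A$ preserves finite colimits) and a decoration-level part, and only the latter requires work. For the coherence maps I would take $\tau_{X,Y}\colon AX+AY \to A(X+Y)$ to be the empty-decorated cospan image of $\alpha_{X,Y}$, and $\tau_I\colon \varnothing \to A\varnothing$ the empty-decorated image of the canonical isomorphism witnessing that $A$ preserves the initial object; both exist since $A$ preserves finite colimits and $\alpha$ is a natural isomorphism.

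First I would check $T$ is well-defined on isomorphism classes. Given an isomorphism $n\colon N \to N'$ of decorated cospans, so $Fn\circ s = s'$ with the cospan triangles commuting, the morphism $An$ evidently gives an isomorphism of underlying cospans, and for the decorations one computes $G(An)\circ\theta_N\circ Bs\circ\beta_I = \theta_{N'}\circ B(Fn)\circ Bs\circ\beta_I = \theta_{N'}\circ B(s')\circ\beta_I$, where the first equality is naturality of $\theta\colon BF \Rightarrow GA$ along $n$ and the second is functoriality of $B$ together with $Fn\circ s = s'$. Thus $An$ is an isomorphism of the image decorated cospans.

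The substance is functoriality. Identity preservation I would handle by expanding the image of the identity decoration $F!\circ\varphi_I$ and rewriting $\theta_X\circ B(F!)\circ B\varphi_I\circ\beta_I = G(A!)\circ\theta_\varnothing\circ B\varphi_I\circ\beta_I$ using naturality of $\theta$ along $!\colon\varnothing\to X$, then applying the \emph{unit} square of the monoidal natural transformation $\theta$ (comparing the unit coherence of $(BF,\,B\varphi\circ\beta)$ with that of $(GA,\,G\alpha\circ\gamma)$) to recognise the result as $G!'\circ\gamma_I$, the identity decoration on $AX$; here $A!\circ(\text{unit of }A) = !'$ by initiality. The hard part will be composition preservation: I expect to need one large commuting diagram showing that applying $T$ to the composite decoration $F[j_N,j_M]\circ\varphi_{N,M}\circ(s\otimes t)\circ\lambda^{-1}$ yields, up to the canonical isomorphism $A(N+_YM)\cong AN+_{AY}AM$ (which exists since $A$ preserves pushouts), the same $G$-decoration as the $G\mathrm{Cospan}$-composite of the two image decorations $\sigma_1 = \theta_N\circ Bs\circ\beta_I$ and $\sigma_2 = \theta_M\circ Bt\circ\beta_I$. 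This diagram knits together four ingredients: naturality of $\theta$ (to pull $\theta_{N+_YM}$ past $BF[j_N,j_M]$ into $GA[j_N,j_M]\circ\theta_{N+M}$), the \emph{binary} square of $\theta$ together with naturality of $\beta$ and the unit coherence of $B$ (to convert $\theta_{N+M}\circ B\varphi_{N,M}$ into $G\alpha_{N,M}\circ\gamma_{AN,AM}\circ(\theta_N\boxtimes\theta_M)$), and the compatibility of $\alpha$ with the pushout squares, a colimit argument in $\mathcal C'$ identifying $G(A[j_N,j_M])$ with $G[j_{AN},j_{AM}]$ under the iso above. This is the analogue of the associativity and interchange diagrams of Proposition \ref{prop.composingdeccospans} and Theorem \ref{thm:fcospans}, now carrying the extra layer $(B,\theta)$, and it is where essentially all the care is concentrated.

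Finally, the monoidal, hypergraph, and strictness claims are comparatively quick. Since $(A,\alpha)$ is a strong monoidal functor $(\mathcal C,+)\to(\mathcal C',+)$, its coherence data transports through the wide embeddings $\mathrm{Cospan}(\mathcal C)\hookrightarrow F\mathrm{Cospan}$ and $\mathrm{Cospan}(\mathcal C')\hookrightarrow G\mathrm{Cospan}$, so the $\tau$ are natural isomorphisms and the monoidal-functor axioms for $(T,\tau)$ reduce to those for $(A,\alpha)$; hence $T$ is strong symmetric monoidal. Because the Frobenius and coherence maps in both categories are empty-decorated images of copairings of identities in the base, and $A$ preserves these colimits, $T$ carries the Frobenius structure on $X$ to that on $AX$ in the manner required of a hypergraph functor, with the empty decorations matching automatically by Lemma \ref{lem.emptydecorations}. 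For the last sentence, $(T,\tau)$ is strict precisely when every $\tau_{X,Y}$ and $\tau_I$ is an identity; since these are the images of $\alpha_{X,Y}$ and the unit coherence of $A$ under a faithful embedding, this holds if and only if those maps are identities, that is, if and only if $(A,\alpha)$ is strict.
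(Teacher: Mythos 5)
Your proposal is correct and follows essentially the same route as the paper's proof: the same definition of $\tau$ as empty-decorated images of $\alpha$, the same large composition-preservation diagram assembled from naturality and monoidality of $\theta$, naturality of $\beta$, the unit law for $B$, and colimit-preservation of $A$, and the same reduction of the monoidal, hypergraph, and strictness claims to the corresponding properties of $(A,\alpha)$. The only addition is your explicit check of well-definedness on isomorphism classes, which the paper leaves implicit but which is verified exactly as you describe.
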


\begin{proof}
  We must prove that $(T,\tau)$ is a functor, is strong symmetric monoidal, and
  that it preserves the special commutative Frobenius structure on each object.

  \paragraph{Functoriality.}
  Checking the functoriality of $T$ is again an exercise in applying the
  properties of structure available---in this case the colimit-preserving nature
  of $A$ and the monoidality of $(\mathcal D,\boxtimes)$, $(B,\beta)$, and
  $\theta$---to show that the relevant diagrams of decorations commute. 

  In detail, let 
  \[
    (X \stackrel{i_X}\longrightarrow N \stackrel{o_Y}\longleftarrow Y, \enspace I
    \stackrel{s}\longrightarrow FN)
    \qquad \mbox{and} \qquad
    (Y \stackrel{i_Y}\longrightarrow M \stackrel{o_Z}\longleftarrow Z, \enspace I
    \stackrel{t}\longrightarrow FM), 
  \]
  be morphisms in $F\mathrm{Cospan}$. As the composition of the cospan part is by
  pushout in $\mathcal C$ in both cases, and as $T$ acts as the colimit preserving
  functor $A$ on these cospans, it is clear that $T$ preserves composition of
  isomorphism classes of cospans. Write
  \[
    c\colon  AX+_{AY}AZ \stackrel\sim\longrightarrow A(X+_YZ)
  \]
  for the isomorphism from the cospan obtained by composing the $A$-images of the
  above two decorated cospans to the cospan obtained by taking the $A$-image of their
  composite. To see that this extends to an isomorphism of decorated cospans,
  observe that the decorations of these two cospans are given by the rightmost and
  leftmost columns respectively in the following diagram:
  \[
    \xymatrixcolsep{.5pc}
    \xymatrixrowsep{.9pc}
    \xymatrix{ 
      GA(N+_YM) &&&&&&&& \ar[llllllll]_{Gc}^{\sim} G(AN+_{AY}AM) \\
      &&&&& \textsc{\tiny(A)} \\
      BF(N+_YM) \ar[uu]^{\theta_{N+YM}} && \textsc{\tiny(T2)} && GA(N+M)
      \ar[uullll]_{GA[j_N,j_M]} &&&& G(AN+AM) \ar[uu]_{G[j_{AN},j_{AM}]}
      \ar[llll]^{\sim}_{G\alpha} \\
      \\
      BF(N+M) \ar[uu]^{BF[j_N,j_M]} \ar[uurrrr]_{\theta_{N,M}} &&&&
      \textsc{\tiny(T1)} &&&& GAN \boxtimes GAM \ar[uu]_{\gamma_{AN,AM}} \\
      \\
      B(FN \otimes FM) \ar[uu]^{B\varphi_{N,M}} &&&&&&&& BFN \boxtimes BFM
      \ar[uu]_{\theta_N \boxtimes \theta_M} \ar[llllllll]_{\beta_{FN,FM}} \\
      &&&& \textsc{\tiny(B2)} \\
      B(I_{\mathcal D} \otimes I_{\mathcal D}) \ar[uu]^{B(s \otimes t)} &&&&&&&&
      BI_{\mathcal D} \boxtimes BI_{\mathcal D} \ar[uu]_{Bs \boxtimes Bt}
      \ar[llllllll]_{\beta_{I,I}} \\
      &&& \textsc{\tiny(B1)} &&&& \textsc{\tiny(D2)} \\
      BI_{\mathcal D} \ar[uu]^{\beta\lambda^{-1}_I} \ar[rrrr]^{\lambda^{-1}_{BI}}
      &&&& I_{\mathcal D'} \boxtimes BI_{\mathcal D} \ar[uurrrr]^{\beta_I \boxtimes 1}
      &&&& I_{\mathcal D'} \boxtimes I_{\mathcal D'} \ar[uu]^{\beta_I \boxtimes \beta_I}
      \ar[llll]_{1 \boxtimes \beta_I} \\
      &&&& \textsc{\tiny(D1)} \\
      &&&& I_{\mathcal D'} \ar[uullll]^{\beta_I} \ar[uurrrr]_{\lambda^{-1}_I}
    }
  \]
  From bottom to top, \textsc{(D1)} commutes by the naturality of $\lambda$,
  \textsc{(D2)} by the functoriality of the monoidal product $-\boxtimes-$,
  \textsc{(B1)} by the unit law for $(B,\beta)$, \textsc{(B2)} by the
  naturality of $\beta$, \textsc{(T1)} by the monoidality of the natural
  transformation $\theta$, \textsc{(T2)} by the naturality of $\theta$, and
  \textsc{(A)} by the colimit preserving property of $A$ and the functoriality of
  $G$.

We must also show that identity morphisms are mapped to identity morphisms. Let 
\[
  (X \stackrel{1_X}\longrightarrow X \stackrel{1_X}\longleftarrow X, \enspace I
  \stackrel{F!\circ \varphi_I}\longrightarrow FX)
\]
be the identity morphism on some object $X$ in the category of $F$-decorated
cospans. Now this morphism has $T$-image
\[
  (AX \xrightarrow{1_{AX}} AX \xleftarrow{1_{AX}} AX, \enspace 
  I \xrightarrow{\theta_X \circ B(F!\circ \varphi_I) \circ \beta_I} GAX).
\]
But we have the following diagram
\[
  \xymatrixcolsep{2pc}
  \xymatrixrowsep{.3pc}
  \xymatrix{ 
    &&&& BF\varnothing_{\mathcal C} \ar[ddrr]^{BF!}
    \ar[dddd]^{\theta_\varnothing} \\
    \\
    && BI_{\mathcal D} \ar[uurr]^{B\varphi_I} &&&& BFX \ar[ddrr]^{\theta_X} \\
    &&& \textsc{\tiny (T1)} && \textsc{\tiny (T2)} \\
    I_{\mathcal D'} \ar[uurr]^{\beta_I} \ar[rr]_{\gamma_I}
    \ar[ddddrrrr]_{\gamma_I} && G\varnothing_{\mathcal C'} \ar[rr]_{G\alpha_I}
    && GA\varnothing_{\mathcal C} \ar[rrrr]_{GA!} &&&& GAX \\
    &&& \textsc{\tiny (A1)} && \textsc{\tiny (A2)} \\
    \\
    \\
    &&&& G\varnothing_{\mathcal C'} \ar[uuuu]^{\sim}_{G!} \ar[uuuurrrr]_{G!}
  }
\]
Here \textsc{(A1)} and \textsc{(A2)} commute by the fact $A$ preserves colimits,
\textsc{(T1)} commutes by the unit law for the monoidal natural transformation
$\theta$, and \textsc{(T2)} commutes by the naturality of $\theta$.

Thus we have the equality of decorations $\theta_X \circ B(F! \circ \varphi_I)
\circ \beta_I = G! \circ \gamma_I\colon  I \to GAX$, and so $T$ sends identity
morphisms to identity morphisms.

\paragraph{Monoidality.} The coherence maps of the functor $T$ are given by the
coherence maps for the monoidal functor $A$, viewed now as cospans with the
empty decoration. That is, we define the coherence maps $\tau$ to be the
collection of isomorphisms
\[
  \tau_I = 
  \left(
  \begin{aligned}
    \xymatrix{
      & A\varnothing_{\mathcal C} \\  
      \varnothing_{\mathcal C'} \ar[ur]^{\alpha_I} && A\varnothing_{\mathcal
      C} \ar@{=}[ul]
    }
  \end{aligned}
  ,
  \qquad
  \begin{aligned}
    \xymatrixrowsep{.9pc}
    \xymatrix{
      GA\varnothing_{\mathcal C} \\
      G\varnothing_{\mathcal C'} \ar[u]_{G!} \\
      I_{\mathcal D'} \ar[u]_{\gamma_I}
    }
  \end{aligned}
  \right),
\]
\[
  \tau_{X,Y}=
  \left(
  \begin{aligned}
    \xymatrix{
      & A(X+Y) \\  
      AX+AY \ar[ur]^{\alpha_{X,Y}} && A(X+Y) \ar@{=}[ul]
    }
  \end{aligned}
  ,
  \qquad
  \begin{aligned}
    \xymatrixrowsep{.9pc}
    \xymatrix{
      GA(X+Y) \\
      G\varnothing_{\mathcal C'} \ar[u]_{G!} \\
      I_{\mathcal D} \ar[u]_{\gamma_I}
    }
  \end{aligned}
  \right),
\]
where $X$, $Y$ are objects of $G\mathrm{Cospan}$. As $(A,\alpha)$ is already
strong symmetric monoidal and $\tau$ merely views these maps in $\mathcal C$
as empty-decorated cospans in $G\mathrm{Cospan}$, $\tau$ is natural in $X$ and
$Y$, and obeys the required coherence axioms for $(T,\tau)$ to also be strong
symmetric monoidal. 

Indeed, the monoidality of a functor $(T,\tau)$ has two aspects: the naturality
of the transformation $\tau$, and the coherence axioms. We discuss the former;
since $\tau$ is just an empty-decorated version of $\alpha$, the latter then
immediately follow from the coherence of $\alpha$.

The naturality of $\tau$ may be proved via the same method as that employed for
the naturality of the coherence maps of decorated cospan categories: we first
use the composition of empty decorations to compute the two paths around the
naturality square, and then use the naturality of the coherence map $\alpha$ to
show that these two decorated cospans are isomorphic.

In slightly more detail, suppose we have decorated cospans
\[
  (X \stackrel{i_X}\longrightarrow N
  \stackrel{o_Z}\longleftarrow Z,\enspace I \stackrel{s}\longrightarrow FN) \quad
  \textrm{and} \quad (Y \stackrel{i_Y}\longrightarrow M
  \stackrel{o_W}\longleftarrow W,\enspace I \stackrel{t}\longrightarrow FM).
\]
Then naturality demands that the cospans
\[
  AX+AY \xrightarrow{Ai_X+Ai_Y} AN+AM \xleftarrow{(o_Z+o_W)\circ\alpha^{-1}}
  A(Z+W)
\]
and
\[
  AX+AY \xrightarrow{A(i_X+i_Y)\circ\alpha} A(N+M) \xleftarrow{A(o_Z+o_W)}
  A(Z+W)
\]
are isomorphic as decorated cospans, with decorations the top and bottom rows of
the diagram below respectively.
\[
  \xymatrixcolsep{3pc}
  \xymatrixrowsep{.7pc}
  \xymatrix{
    & \scriptstyle  \scriptstyle I \otimes I \ar[r]^(.4){\beta_I \otimes \beta_I} & \scriptstyle  BI \otimes BI \ar[r]^(.4){Bs
    \otimes Bt} & \scriptstyle  BFN \otimes BFM \ar[r]^{\theta_N \otimes \theta_M} & \scriptstyle  GAN \otimes
    GAM \ar[r]^{\gamma_{AN,AM}} & \scriptstyle  G(AN+AM) \ar[dd]^{G\alpha_{N,M}} \\
    \scriptstyle I \ar[ur]^{\lambda^{-1}} \ar[dr]_{\beta_I} \\
    & \scriptstyle  BI \ar[r]_(.4){B\lambda^{-1}} & \scriptstyle  B(I \otimes I) \ar[r]_(.4){B(s\otimes t)} & \scriptstyle  B(FN
    \otimes FM) \ar[r]_{B\varphi_{N,M}} & \scriptstyle  B(F(N+M))
    \ar[r]_{\theta_{N,M}} & \scriptstyle  G(A(N+M)).
  }
\]
As it is a subdiagram of the large functoriality commutative diagram, this
diagram commutes. The diagrams required for $\alpha_{N,M}$ to be a morphism of
cospans also commute, so our decorated cospans are indeed isomorphic. This
proves $\tau$ is a natural tranfomation.  

Moreover, as $A$ is coproduct-preserving and the Frobenius structures on
$F\mathrm{Cospan}$ and $G\mathrm{Cospan}$ are built using various copairings
of the identity map, $(T,\tau)$ preserves the hypergraph structure.

Finally, it is straightforward to observe that the maps $\tau$ are identity
maps if and only if the maps $\alpha$ are, so $(T,\tau)$ is a strict monoidal
functor if and only $(A,\alpha)$ is.
\end{proof}

When the decorating structure comprises some notion of topological diagram, such
as a graph, these natural transformations $\theta$ might describe some semantic
interpretation of the decorating structure. In this setting the above theorem
might be used to construct semantic functors for the decorated cospan category
of diagrams. It has the limitation, however, of only being able to construct
semantic functors to other decorated cospan categories. Note that this idea of
semantic functor is loosely inspired by, but not strictly the same as, Lawvere's
functorial semantics \cite{Law}.

\section{Decorations in $\Set$ are general} \label{sec.setdecs}

So far we have let decorations lie in any braided monoidal category. While this
gives us greater choice in the functors we may use for decorated cospan
categories, if we are interested in constructing a particular hypergraph
category as a decorated cospan category, it is more general than we need. In
this section, we prove an observation of Sam Staton that it is general enough to
let decorations lie in the symmetric monoidal category $(\Set,\times)$ of sets,
functions, and the cartesian product. 

The key observation is that decorated cospans only make use of the sets of
monoidal elements $x\maps I \to X$ in the decorating category $\mc D$. To
extract this information, use the covariant hom functor $\mathcal D(I,-)\maps (\mc
D,\ot) \to (\Set,\times)$. This takes each object $X$ in $\mc D$ to the homset
$\mc D(I,X)$, and each morphism $f\maps X \to Y$ to the function 
\begin{align*}
  \mc D(I,f)\maps \mc D(I,X) &\longrightarrow \mc D(I,Y); \\ 
  x &\longmapsto f \circ x.
\end{align*}
Write $1=\{\bullet\}$ for the monoidal unit of $(\Set,\times)$.

\begin{proposition} \label{prop.monglobalsecs}
  Let $(\mathcal D, \otimes)$ be a braided monoidal category. Define maps  
  \begin{align*}
    d_1\maps 1 &\longrightarrow \mc D(I,I); \\
    \bullet &\longmapsto \idn_I
  \end{align*}
  and
  \begin{align*}
    d_{X,Y} \maps \mc D(I,X) \times \mc D(I,Y) &\longrightarrow \mc D(I, X \ot
    Y);\\
    (x,y) &\longmapsto (I \xrightarrow{\rho^{-1}} I \ot I \xrightarrow{x \ot y} X \ot
    Y).
  \end{align*}
  These are natural in $X$ and $Y$. 

  Furthermore, with these as coherence maps, the covariant hom functor on $I$,
  $\mathcal D(I,-)\maps (\mc D,\ot) \to (\Set,\times)$, is a lax braided monoidal
  functor. 
\end{proposition}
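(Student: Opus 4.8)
The plan is to treat $F = \mc D(I,-)$ as the ``points'' (global elements) functor and to verify the three defining commuting diagrams of a lax monoidal functor together with the single braiding-compatibility square, each by chasing one point through on generators. Since $F$ is manifestly a functor (post-composition), the first task is the asserted naturality. The map $d_1$ is constant in $X,Y$, so there is nothing to check; for $d_{X,Y}$, given $f\maps X\to X'$ and $g\maps Y\to Y'$ I would chase $(x,y)\in\mc D(I,X)\times\mc D(I,Y)$ around the naturality square. One route yields $\bigl((f\circ x)\ot(g\circ y)\bigr)\circ\rho^{-1}$ and the other $(f\ot g)\circ(x\ot y)\circ\rho^{-1}$; these agree by the interchange law (bifunctoriality of $\ot$). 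So the naturality squares commute on the nose, and the fixed datum $\rho^{-1}_I$ never needs to be moved.

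For the lax monoidal axioms I would verify the two unit triangles first, as they are short. For the right unit triangle, chasing $(x,\bullet)$ down-and-across gives $\rho_A\circ(x\ot\idn_I)\circ\rho^{-1}_I$, and naturality of $\rho$ rewrites $\rho_A\circ(x\ot\idn_I)=x\circ\rho_I$, so the composite collapses to $x\circ\rho_I\circ\rho^{-1}_I=x$, matching the top edge. The left unit triangle is identical except that it produces $x\circ\lambda_I\circ\rho^{-1}_I$, which equals $x$ by the standard coherence lemma $\lambda_I=\rho_I$; this is the one extra ingredient the left triangle requires.

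The associativity square is the main obstacle, since both composites send $(x,y,z)$ to a canonical morphism out of $I$ with the unitor insertions $\rho^{-1}_I$ buried at different depths inside the associators, and reconciling them honestly requires the pentagon and triangle of $\mc D$ together with naturality of $\alpha$ to slide $x\ot y\ot z$ and the unitors past one another. The clean way to dispatch this is the device used throughout the chapter: by Mac Lane's coherence theorem assume $\mc D$ is strict monoidal, whereupon $\rho^{-1}_I=\idn$, $I\ot I=I$, and $d_{X,Y}(x,y)=x\ot y$; all three axioms then reduce to the strict associativity and unitality of $\ot$ applied to points, and the unit lemma above becomes automatic. Finally, for the braiding compatibility I would chase $(x,y)$ through the square $d_{B,A}\circ\mathrm{swap}=\mc D(I,\s_{A,B})\circ d_{A,B}$: naturality of $\s$ gives $\s_{A,B}\circ(x\ot y)=(y\ot x)\circ\s_{I,I}$, and the standard fact $\s_{I,I}=\idn_{I\ot I}$ finishes it. Thus the only genuinely delicate points are the two classical coherence lemmas $\lambda_I=\rho_I$ and $\s_{I,I}=\idn$, both of which may simply be cited.
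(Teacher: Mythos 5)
Your proof is correct and follows essentially the same route as the paper, which simply records that naturality follows from the (bi)functoriality/unitor coherence and that the lax braided monoidal axioms follow from coherence for braided monoidal categories; you have just filled in the diagram chases, correctly isolating the two classical lemmas $\lambda_I=\rho_I$ and $\s_{I,I}=\idn_{I\ot I}$ as the only nontrivial inputs. One small improvement over the paper's wording: naturality of $d_{X,Y}$ really rests on the interchange law as you say (the fixed $\rho^{-1}_I$ never moves), with naturality of $\rho$ and $\lambda$ only entering in the unit triangles.
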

\begin{proof}
  The naturality of the maps $d_{X,Y}$ follows from the naturality of $\rho$.
  The coherence axioms immediately follow from the coherence of the braided
  monoidal category $\mc D$.
\end{proof}

Composing a decorating functor $F$ with this hom functor produces an isomorphic
decorated cospan category. Note that we write $\mc D(I,F-) = \mc D(I,-) \circ
F$.

\begin{proposition} \label{prop.setdecorations}
  Let $F\maps (\mathcal C, +) \to (\mathcal D, \otimes)$ be a braided lax
  monoidal functor. Then $F\mathrm{Cospan}$ and $\mathcal D(I,F-)\mathrm{Cospan}$ are isomorphic as hypergraph categories.
\end{proposition}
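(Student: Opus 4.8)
The plan is to produce the isomorphism by invoking Theorem~\ref{thm:decoratedfunctors}, taking the base-change functor to be the identity on $\mc C$ and the change-of-decoration functor to be $\mc D(I,-)$. Concretely, I would set $A = 1_{\mc C}\maps \mc C \to \mc C$ with $\alpha$ the identity natural isomorphism, and $(B,\beta) = (\mc D(I,-), d)\maps (\mc D,\ot) \to (\Set,\times)$ the lax braided monoidal functor supplied by Proposition~\ref{prop.monglobalsecs}. The crucial observation is that $B \circ F = \mc D(I,-)\circ F = \mc D(I,F-)$, which is exactly the functor $G$ decorating the target category: not only do the underlying functors agree, but the coherence map $B\varphi \circ \beta$ of the composite $B\circ F$ is $\mc D(I,\varphi)\circ d$, which is precisely the coherence map $\gamma$ of $G$ (and $G\alpha \circ \gamma = \gamma$ since $\alpha$ is the identity). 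Hence the two lax monoidal functors $(B\circ F,\, B\varphi\circ\beta)$ and $(G\circ A,\, G\alpha\circ\gamma)$ literally coincide, so the identity natural transformation $\theta = \idn$ is a (trivially) monoidal natural transformation between them, filling the square demanded by the theorem.

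Theorem~\ref{thm:decoratedfunctors} then yields a hypergraph functor $(T,\tau)\maps F\mathrm{Cospan} \to \mc D(I,F-)\mathrm{Cospan}$, and since $A = 1_{\mc C}$ is strict monoidal the theorem's final clause makes $(T,\tau)$ strict monoidal, so the maps $\tau$ are identities. Next I would unwind $T$ on morphisms. It is the identity on objects and on the cospan part (as $A = 1_{\mc C}$ and $\alpha = \idn$), and it sends a decoration $s\maps I \to FN$ to the composite $1 \xrightarrow{d_1} \mc D(I,I) \xrightarrow{\mc D(I,s)} \mc D(I,FN)$, which picks out the element $\bullet \mapsto s\circ\idn_I = s$. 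Thus on decorations $T$ realises the canonical bijection $\mc D(I,FN) \cong \Set\big(1,\mc D(I,FN)\big)$ identifying a morphism $I \to FN$ in $\mc D$ with the corresponding global element of the set $\mc D(I,FN)$.

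It remains to check that $T$ is an \emph{isomorphism} of hypergraph categories; since $T$ is already a strict hypergraph functor that is the identity on objects, it suffices to show $T$ is a bijection on each hom-set. The decoration map just described is a bijection for every apex $N$, and it respects the equivalence relation defining morphisms: a morphism of $\mc D(I,F-)$-decorated cospans is a cospan map $n\maps N\to N'$ with $\mc D(I,Fn)\circ \tilde s = \tilde s'$, and under the bijection this is exactly the condition $Fn \circ s = s'$ defining a morphism of $F$-decorated cospans. Hence isomorphism classes of decorated cospans correspond bijectively, $T$ is fully faithful, and so $T$ is an isomorphism of categories, and therefore of hypergraph categories. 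The only genuinely delicate point is the bookkeeping confirming the strict equality of decorating functors $B\circ F = G$ \emph{together with their coherence data}, which is what legitimises the choice $\theta = \idn$; once that identification is in place, all functoriality and structure-preservation come for free from Theorem~\ref{thm:decoratedfunctors}, and no further large diagram chase is required.
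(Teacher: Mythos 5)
Your proposal is correct and follows essentially the same route as the paper: the paper also applies Theorem~\ref{thm:decoratedfunctors} to the commutative-by-definition triangle with $A = 1_{\mc C}$, $B = \mc D(I,-)$, and $\theta$ the identity, then observes the resulting strict hypergraph functor is bijective on objects and morphisms. Your additional bookkeeping on the coherence data and the hom-set bijection simply fills in the details the paper leaves as "easily observed."
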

\begin{proof}
  We have the commutative-by-definition triangle of braided lax monoidal
  functors
  \[
    \xymatrixrowsep{2ex}
    \xymatrix{
      && (\mathcal D,\otimes) \ar[dd]^{\mathcal D(I,-)} \\
      (\mathcal C,+) \ar[urr]^{F} \ar[drr]_{\mathcal D(I,F-)} \\
      && (\Set, \times)
    }
  \]
  By Theorem \ref{thm:decoratedfunctors}, this gives a strict hypergraph functor
  $F\mathrm{Cospan} \to \mathcal D(I,F-)\mathrm{Cospan}$. It is easily
  observed that this functor is bijective on objects and on morphisms. 
\end{proof}

Similarly, taking $\Set$ to be our decorating category does not impinge upon the
functors that can be constructed using decorated cospans.
\begin{proposition}
  Let $T\maps F\mathrm{Cospan} \to G\mathrm{Cospan}$ be a decorated cospans
  functor. Then there is a decorated cospans functor $U$ such that the square of
  hypergraph functors 
  \[
    \xymatrixcolsep{3pc}
    \xymatrixrowsep{3pc}
    \xymatrix{
      F\mathrm{Cospan} \ar[r]^T \ar[d]_{\sim} & G\mathrm{Cospan} \ar[d]^{\sim}\\ 
      \mathcal D(I,F-)\mathrm{Cospan} \ar[r]^U & \mathcal D'(I,G-)\mathrm{Cospan}
    }
  \]
  commutes, where the vertical maps are the isomorphisms given by the previous
  proposition.
\end{proposition}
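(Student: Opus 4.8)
The plan is to obtain $U$ as a further application of Theorem~\ref{thm:decoratedfunctors}, reusing the data that defines $T$. Being a decorated cospans functor, $T$ is built by Theorem~\ref{thm:decoratedfunctors} from a finite colimit-preserving functor $(A,\alpha)\colon \mc C \to \mc C'$, a lax monoidal functor $(B,\beta)\colon (\mc D,\ot) \to (\mc D',\boxtimes)$, and a monoidal natural transformation $\theta\colon (BF, B\varphi\circ\beta) \Rightarrow (GA, G\alpha\circ\gamma)$. I would also recall that the two vertical isomorphisms are themselves instances of Theorem~\ref{thm:decoratedfunctors}, namely the ones produced in Proposition~\ref{prop.setdecorations}: each is built from the identity functor on $\mc C$ (resp.\ $\mc C'$), the hom functor $(\mc D(I,-),d)$ (resp.\ $(\mc D'(I,-),d')$) of Proposition~\ref{prop.monglobalsecs}, and the identity transformation. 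Tracing the decoration formula of Theorem~\ref{thm:decoratedfunctors} through this data shows that each vertical isomorphism fixes the underlying cospan and sends a decoration $s\colon I \to FN$ to the element $s$ of the homset $\mc D(I,FN)$ that it names; this observation is what makes the commutativity computation transparent.

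Next I would define $U$ by applying Theorem~\ref{thm:decoratedfunctors} to the data consisting of the same $(A,\alpha)$, the identity functor $(1_{\Set},1)\colon (\Set,\times)\to(\Set,\times)$, and a monoidal natural transformation
\[
  \theta'\colon \mc D(I,F-) \Longrightarrow \mc D'(I,GA-),
\]
defined componentwise by $\theta'_N(s) = \theta_N \circ Bs \circ \beta_I$ for $s \in \mc D(I,FN)$. Conceptually, $\theta'$ is the vertical composite of the whiskering by $F$ of the transformation $\tilde\beta\colon \mc D(I,-) \Rightarrow \mc D'(I,B-)$, $x \mapsto Bx\circ\beta_I$, with the image $\mc D'(I,\theta)$ of $\theta$ under the hom functor $\mc D'(I,-)$.

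The main obstacle is checking that $\theta'$ is genuinely a monoidal natural transformation, so that Theorem~\ref{thm:decoratedfunctors} applies. I would argue this by exhibiting $\theta'$ as a composite of monoidal transformations: $\tilde\beta$ is monoidal precisely because $(B,\beta)$ is lax monoidal (this is the standard fact that a lax monoidal functor induces a monoidal transformation between global-sections functors, and reduces to the unit and multiplication coherence of $\beta$ together with the naturality of the unitors defining $d$ and $d'$ in Proposition~\ref{prop.monglobalsecs}); applying the lax monoidal functor $\mc D'(I,-)$ to the monoidal transformation $\theta$ yields a monoidal transformation $\mc D'(I,\theta)$; and whiskering and vertical composition preserve monoidality. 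Since the source and target lax structures produced this way are exactly those required by Theorem~\ref{thm:decoratedfunctors} for the functors $1_{\Set}\circ\mc D(I,F-)$ and $\mc D'(I,G-)\circ A$, the theorem delivers the hypergraph functor $U$.

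Finally I would verify commutativity of the square directly. Both composites send an object $X$ to $AX$ and act as $A$ on the cospan part of any morphism, so only the decorations require attention. Starting from a morphism with decoration $s\colon I \to FN$: the right-then-down route first applies $T$, giving decoration $\theta_N\circ Bs\circ\beta_I$ by the formula of Theorem~\ref{thm:decoratedfunctors}, and the right vertical isomorphism leaves this element of $\mc D'(I,GAN)$ unchanged; the down-then-right route first reinterprets $s$ as the element $s\in\mc D(I,FN)$ and then applies $U$, which, since $B'=1_{\Set}$ and $\beta'=1$, sends it to $\theta'_N(s)=\theta_N\circ Bs\circ\beta_I$. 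The two decorations coincide, so the square commutes; as both paths are bijective-on-objects functors agreeing on every morphism, they are in fact equal.
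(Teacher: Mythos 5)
Your proposal is correct and follows essentially the same route as the paper: the paper also defines $\overline\beta\colon \mathcal D(I,-)\Rightarrow \mathcal D'(I,B-)$ by $x\mapsto Bx\circ\beta_I$ and obtains $U$ from the horizontal composite of $\theta$ with $\overline\beta$, which is exactly your $\theta'$ with components $s\mapsto \theta_N\circ Bs\circ\beta_I$. Your commutativity check via the common decoration $\theta_N\circ Bs\circ\beta_I$ is the same computation the paper records; you merely spell out the monoidality of the composite transformation in more detail.
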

\begin{proof}
  Write 
  \[
    \xymatrixcolsep{3pc}
    \xymatrixrowsep{3pc}
    \xymatrix{
      (\mathcal C,+) \ar^{(F,\varphi)}[r] \ar_{(A,\alpha)}[d] \drtwocell
      \omit{_\:\theta} & (\mathcal D,\otimes) \ar^{(B,\beta)}[d]  \\
      (\mathcal C',+) \ar_{(G,\gamma)}[r] & (\mathcal D',\boxtimes).
    }
  \]
  for the monoidal natural transformation yielding $T$, and write
  \begin{align*}
    \overline\beta_X\maps \mc D(I,X) &\longrightarrow \mc D'(I,BX); \\
    (I_{\mc D} \xrightarrow{x} X) &\longmapsto (I_{\mc D'} \xrightarrow{\beta_I}
    BI_{\mc D} \xrightarrow{Bx} BX)
  \end{align*}
  for all $X \in \mc D$. This defines a monoidal natural transformation
  $\overline\beta\maps \mc D(I,-) \Rightarrow \mc D'(I,B-)$, with the
  monoidality following from the monoidality of the functor $(B,\beta)$.

  Next, define $U$ as the functor resulting from the decorated cospan
  construction applied to the horizontal composition of monoidal natural
  transformations
  \[
    \xymatrixcolsep{3pc}
    \xymatrixrowsep{3pc}
    \xymatrix{
      (\mathcal C,+) \ar^{(F,\varphi)}[r] \ar_{(A,\alpha)}[d] \drtwocell
      \omit{_\:\theta} & (\mathcal D,\otimes) \ar[r]^{\mc D(I,-)}
      \ar^{(B,\beta)}[d] \drtwocell \omit{_\:\overline{\beta}} & (\Set,\times)
      \ar@{=}[d] \\
      (\mathcal C',+) \ar_{(G,\gamma)}[r] & (\mathcal D',\boxtimes) \ar[r]_{\mc
      D'(I,-)} & (\Set,\times).
    }
  \]
  To see that the required square of hypergraph functors commutes, observe that
  both functors $F\mathrm{Cospan} \to \mc D'(I,G-)\mathrm{Cospan}$ take
  objects $X$ to $AX$ and morphisms 
  \[
    (X \stackrel{i}\longrightarrow N \stackrel{o}\longleftarrow Y, \enspace I
    \stackrel{s}\longrightarrow FN)
  \]
  to 
  \[
    \big(AX \stackrel{Ai}\longrightarrow AN
    \stackrel{Ao}\longleftarrow AY, \enspace \theta_N \circ Bs\circ \beta_I\in \mc
    D'(I,GAN)\big). \qedhere
  \]
\end{proof}

\begin{remark}
Note that the braided monoidal categories $(\mc C,+)$ and $(\Set,\times)$ are
both symmetric monoidal categories. As symmetric monoidal functors are just
braided monoidal functors between symmetric monoidal categories, when working
with the decorating category $\Set$ we may refer to the decorating functors $F$
as symmetric lax monoidal, rather than merely braided lax monoidal.
\end{remark}

\begin{remark}
  When $\mc C$ has finite colimits, call a lax symmetric monoidal functor $(\mc
  C,+) \to (\Set,\times)$ a lax symmetric monoidal copresheaf on $\mc C$. A
  morphism between lax symmetric monoidal copresheaves $F$ and $G$ on categories
  $\mc C$ and $\mc C'$ with finite colimits is a colimit preserving functor $A
  \maps F \to G$ together with a monoidal natural transformation $\theta\maps F
  \Rightarrow G \circ A$. This forms a category.

  The work in this chapter implies there exists a full and faithful functor from
  this category to the category of hypergraph categories, taking a lax symmetric
  monoidal copresheaf on a category with finite colimits to its decorated cospans
  category.
\end{remark}

\section{Examples} \label{sec:ex}
In this final section we outline two constructions of decorated cospan
categories, based on labelled graphs and linear subspaces respectively, and a
functor between these two categories interpreting each graph as an electrical
circuit. We shall see that the decorated cospan framework allows us to take a
notion of closed system and construct a corresponding notion of open or
composable system, together with semantics for these systems. 

This electrical circuits example is the motivating application for the
decorated cospan construction, and its shortcomings motivate the further
theoretical development of decorated cospans in the next chapter. Armed with
these additional tools, we will return to the application for a full discussion
in Chapter \ref{ch.circuits}.

\subsection{Labelled graphs} \label{ssec.exlabelledgraphs}

Recall that a \define{$[0,\infty)$-graph} $(N,E,s,t,r)$ comprises a finite set
$N$ of vertices (or nodes), a finite set $E$ of edges, functions $s,t\colon  E \to N$
describing the source and target of each edge, and a function $r\colon  E \to
[0,\infty)$ labelling each edge. The decorated cospan framework allows us to
construct a category with, roughly speaking, these graphs as morphisms. More
precisely, our morphisms will consist of these graphs, together with subsets of
the nodes marked, with multiplicity, as `input' and `output' connection points.

Pick small categories equivalent to the category of $[0,\infty)$-graphs such
that we may talk about the set of all $[0,\infty)$-graphs on each finite set
$N$.  Then we may consider the functor
\[
  \lgraph\colon  (\FinSet,+) \longrightarrow (\Set,\times)
\]
taking a finite set $N$ to the set $\lgraph(N)$ of $[0,\infty)$-graphs
$(N,E,s,t,r)$ with set of nodes $N$. On
morphisms let it take a function $f\colon N \to M$ to the function that pushes
labelled graph structures on a set $N$ forward onto the set $M$:
\begin{align*}
  \lgraph(f)\colon  \lgraph(N) &\longrightarrow
  \lgraph(M); \\
  (N,E,s,t,r) &\longmapsto (M,E,f \circ s, f \circ t, r).
\end{align*}
As this map simply acts by post-composition, our map $\lgraph$ is indeed
functorial.

We then arrive at a lax braided monoidal functor $(\lgraph,\zeta)$ by equipping
this functor with the natural transformation 
\begin{align*}
  \zeta_{N,M}\colon  \lgraph(N) \times \lgraph(M)
  &\longrightarrow \lgraph(N+M); \\
  \big( (N,E,s,t,r), (M,F,s',t',r') \big) &\longmapsto
  \big(N+M,E+F,s+s',t+t',[r,r']\big),
\end{align*}
together with the unit map
\begin{align*}
  \zeta_1\colon  1=\{\bullet\} &\longrightarrow \lgraph(\varnothing); \\
  \bullet &\longmapsto
  (\varnothing,\varnothing,!,!,!),
\end{align*}
where we remind ourselves
that we write $[r,r']$ for the copairing of the functions $r$ and $r'$. The
naturality of this collection of morphisms, as well as the coherence laws for
lax braided monoidal functors, follow from the universal property of the coproduct.

Theorem \ref{thm:fcospans} thus allows us to construct a hypergraph category
$\mathrm{GraphCospan}$.  For an intuitive visual understanding of the morphisms
of this category and its composition rule, see
\textsection\ref{sec.closedtoopen}. The
main idea is that composition glues together any pair of terminal that have the same
preimage under the cospan maps. An identity morphism is the empty decorated
identity cospan or, equivalently, a graph with no edges in which every vertex is
both an input and an output:
\[
  \begin{tikzpicture}[circuit ee IEC]
    \node[circle,draw,inner sep=1pt,fill=gray,color=gray]         (y1) at
    (-1.6,1.2) {};
    \node[circle,draw,inner sep=1pt,fill=gray,color=gray]         (y2) at
    (-1.6,0) {};
    \node at (-1.6,-.5) {$X$};
    \node[contact]         (A) at (0,1.2) {};
    \node[contact]         (B) at (0,0) {};
    \node[circle,draw,inner sep=1pt,fill=gray,color=gray]         (z1) at
    (1.6,1.2) {};
    \node[circle,draw,inner sep=1pt,fill=gray,color=gray]         (z2) at
    (1.6,0) {};
    \node at (1.6,-.5) {$X$};
    \path[color=gray, very thick, shorten >=10pt, shorten <=5pt, ->, >=stealth] (y1) edge (A);
    \path[color=gray, very thick, shorten >=10pt, shorten <=5pt, ->, >=stealth] (y2)
    edge (B);
    \path[color=gray, very thick, shorten >=10pt, shorten <=5pt, ->, >=stealth] (z1) edge (A);
    \path[color=gray, very thick, shorten >=10pt, shorten <=5pt, ->, >=stealth] (z2) edge (B);
  \end{tikzpicture}
\]
The Frobenius maps are also empty decorated, with each node in the apex marked
as input or output perhaps multiple or no times, as appropriate. For example,
the Frobenius multiplication on a one element set $Y$ is given by the decorated
cospan:
\[
  \begin{tikzpicture}[circuit ee IEC]
    \node[circle,draw,inner sep=1pt,fill=gray,color=gray]         (y1) at
    (-1.6,1.2) {};
    \node[circle,draw,inner sep=1pt,fill=gray,color=gray]         (y2) at
    (-1.6,0) {};
    \node at (-1.6,-.5) {$Y+Y$};
    \node[contact]         (A) at (0,.6) {};
    \node[circle,draw,inner sep=1pt,fill=gray,color=gray]         (z1) at
    (1.6,.6) {};
    \node at (1.6,-.5) {$Y$};
    \path[color=gray, very thick, shorten >=10pt, shorten <=5pt, ->, >=stealth] (y1) edge (A);
    \path[color=gray, very thick, shorten >=10pt, shorten <=5pt, ->, >=stealth] (y2)
    edge (A);
    \path[color=gray, very thick, shorten >=10pt, shorten <=5pt, ->, >=stealth] (z1) edge (A);
  \end{tikzpicture}
\]

\subsection{Linear relations} 
Another example of a decorated cospan category arising from a functor $(\FinSet,+)
\to (\Set, \times)$ is closely related to the category of linear relations. Here
we decorate each cospan in $\Set$ with a linear subspace of $\R^N \oplus
(\R^N)^\ast$, the sum of the vector space generated by the apex $N$ over $\R$
and its vector space dual.

First let us recall some facts about relations. Let $R \subseteq X\times Y$ be
a relation; we write this also as $R\colon  X \to Y$. The opposite relation $R^\opp\colon 
Y\to X$, is the subset $R^\opp \subseteq Y\times X$ such that $(y,x) \in R^\opp$
if and only if $(x,y) \in R$. We say that the image of a subset $S \subseteq X$
under a relation $R\colon  X \to Y$ is the subset of all elements of the codomain $Y$
related by $R$ to an element of $S$. Note that if $X$ and $Y$ are vector spaces
and $S$ and $R$ are both linear subspaces, then the image $R(S)$ of $S$ under
$R$ is again a linear subspace.

Now any function $f\colon N \to M$ induces a linear map $f^\ast\colon  \R^M \to
\R^N$ by precomposition. This linear map $f^\ast$ itself induces a dual map
$f_\ast\colon (\R^N)^\ast \to (\R^M)^\ast$ by precomposition. Furthermore
$f^\ast$ has, as a linear relation $f^\ast \subseteq \R^M \oplus \R^N$, an
opposite linear relation $(f^\ast)^\opp\colon  \R^N \to \R^M$.  Define the
functor 
\[
  \linsub\colon  (\FinSet,+) \longrightarrow (\Set,\times)
\]
taking a finite set $N$ to set of linear subspaces of the vector space $\R^N
\oplus (\R^N)^\ast$, and taking a function $f\colon  N \to M$ to the function
$\linsub(N) \to \linsub(M)$ induced by the sum of these two relations:
\begin{align*}
  \linsub(f)\colon  \linsub(N) &\longrightarrow \linsub(M); \\
  L &\longmapsto \big((f^\ast)^\opp \oplus f_\ast\big)(L).
\end{align*}
The above operations on $f$ used in the construction of this map are functorial,
and so it is readily observed that $\linsub$ is indeed a functor.

It is moreover lax braided monoidal as the sum of a linear subspace of $\R^N
\oplus (\R^N)^\ast$ and a linear subspace of $\R^M \oplus (\R^M)^\ast$ may be
viewed as a subspace of $\R^N \oplus (\R^N)^\ast \oplus \R^M \oplus (\R^M)^\ast
\cong \R^{N+M} \oplus (\R^{N+M})^\ast \cong \R^{M+N} \oplus (\R^{M+N})^\ast$,
and the empty subspace is a linear subspace of each $\R^N \oplus (\R^N)^\ast$.

We thus have a hypergraph category $\mathrm{LinSubCospan}$.

\subsection{Electrical circuits} \label{ssec.exelectricalcircuits}
Electrical circuits and their diagrams are the motivating application for the
decorated cospan construction. Specialising to the case of networks of linear
resistors, we detail here how we may use the category $\mathrm{LinSubCospan}$ to
provide semantics for the morphisms of $\mathrm{GraphCospan}$ as diagrams of
networks of linear resistors.

Intuitively, after choosing a unit of resistance, say ohms ($\Omega$), each
$[0,\infty)$-graph can be viewed as a network of linear resistors, with the
$[0,\infty)$-graph of \textsection\ref{sec.closedtoopen} now more commonly
depicted as
\begin{center}
  \begin{tikzpicture}[circuit ee IEC, set resistor graphic=var resistor IEC graphic]
    \node[contact]         (A) at (0,0) {};
    \node[contact]         (B) at (3,0) {};
    \node[contact]         (C) at (1.5,-2.6) {};
    \coordinate         (ua) at (.5,.25) {};
    \coordinate         (ub) at (2.5,.25) {};
    \coordinate         (la) at (.5,-.25) {};
    \coordinate         (lb) at (2.5,-.25) {};
    \path (A) edge (ua);
    \path (A) edge (la);
    \path (B) edge (ub);
    \path (B) edge (lb);
    \path (ua) edge  [resistor] node[label={[label distance=1pt]90:{$0.2\Omega$}}] {} (ub);
    \path (la) edge  [resistor] node[label={[label distance=1pt]270:{$1.3\Omega$}}] {} (lb);
    \path (A) edge  [resistor] node[label={[label distance=2pt]180:{$0.8\Omega$}}] {} (C);
    \path (C) edge  [resistor] node[label={[label distance=2pt]0:{$2.0\Omega$}}] {} (B);
  \end{tikzpicture}
\end{center}
$\mathrm{GraphCospan}$ may then be viewed as a category with morphisms
circuits of linear resistors equipped with chosen input and output terminals.

The suitability of this language is seen in the way the different categorical
structures of $\mathrm{GraphCospan}$ capture different operations that can be
performed with circuits. To wit, the sequential composition expresses the fact
that we can connect the outputs of one circuit to the inputs of the next, while
the monoidal composition models the placement of circuits side-by-side.
Furthermore, the symmetric monoidal structure allows us reorder input and output
wires, the compactness captures the interchangeability between input and
output terminals of circuits---that is, the fact that we can choose any input
terminal to our circuit and consider it instead as an output terminal, and vice
versa---and the Frobenius structure expresses the fact that we may wire any
node of the circuit to as many additional components as we like.

Moreover, Theorem \ref{thm:decoratedfunctors} provides semantics. Each node in a
network of resistors can be assigned an electric potential and a net current
outflow at that node, and so the set $N$ of vertices of a $[0,\infty)$-graph can
be seen as generating a space $\R^N \oplus (\R^N)^\ast$ of electrical states of
the network. We define a natural transformation 
\[
  \res\colon  \lgraph \Longrightarrow \linsub
\]
mapping each $[0,\infty)$-graph on $N$, viewed as a network of resistors, to the
linear subspace of $\R^N \oplus (\R^N)^\ast$ of electrical states permitted by
Ohm's law.\footnote{Note that these states need not obey Kirchhoff's current
law.} In detail, let $\psi \in \R^N$. We define the power $Q\colon  \R^N \to \R$
corresponding to a $[0,\infty)$-graph $(N,E,s,t,r)$ to be the function
\[
  Q(\psi) = \sum_{e \in E} \frac1{r(e)}
  \Big(\psi\big(t(e)\big)-\psi\big(s(e)\big)\Big)^2.
\]
Then the states of a network of resistors are given by a potential $\phi$ on the
nodes and the gradient of the power at this potential:
\begin{align*}
  \res_N\colon  \lgraph(N) &\longrightarrow \linsub(N)\\
  (N,E,s,t,r) &\longmapsto \{(\phi,\nabla Q_\phi) \mid \phi \in \R^N\}.
\end{align*}
This defines a monoidal natural transformation. Hence, by Theorem
\ref{thm:decoratedfunctors}, we obtain a hypergraph functor
$\mathrm{GraphCospan} \to \mathrm{LinSubCospan}$. 

The semantics provided by this functor match the standard interpretation of
networks of linear resistors. The maps of the Frobenius monoid take on the
interpretation of perfectly conductive wires, forcing the potentials at all
nodes they connect to be equal, and the sum of incoming currents to equal the
sum of outgoing currents---precisely the behaviour implied by Kirchhoff's laws.
More generally, let $(X \stackrel{i}{\rightarrow} N
\stackrel{o}{\leftarrow} Y, \, (N,E,s,t,r))$ be a morphism of
$\mathrm{GraphCospan}$, with $Q$ the power function corresponding to the graph
$\Gamma = (N,E,s,t,r)$. The image of this decorated cospan in
$\mathrm{LinSubCospan}$ is the decorated cospan $(X \stackrel{i}{\rightarrow} N
\stackrel{o}{\leftarrow} Y, \, \{(\phi,\nabla Q_\phi) \mid \phi \in \R^N\})$.
Then it is straightforward to check that the subspace 
\[
  \linsub[i,o]\big(\res_N(\Gamma)\big)\subseteq \R^{X+Y} \oplus (\R^{X+Y})^\ast
\]
is the subspace of electrical states on the terminals $X+Y$ such that currents
and potentials can be chosen across the network of resistors $(N,E,s,t,r)$ that
obey Ohm's and, on its interior, Kirchhoff's laws. In particular, after passing
to a subspace of the terminals in this way, composition in
$\mathrm{LinSubCospan}$ corresponds to enforcing Kirchhoff's laws on the shared
terminals of the two networks. 

This behaviour at the terminals $X+Y$ is often all we are interested in: for
example, in a large electrical network, substituting a subcircuit for a
different subcircuit with the same terminal behaviour will not affect the rest
of the network. Yet this terminal behaviour is not yet encoded directly in the
categorical structure. The images of circuits in $\mathrm{GraphCospan}$ in
$\mathrm{LinSubCospan}$ are cospans decorated by a subspace of $\R^N \oplus
(\R^N)^\ast$, keeping track of \emph{all} the internal behaviour as well. This
can be undesirable, for reasons of information compression as well as reasoning
about equivalence. The next chapter addresses this issue by introducing corelations.

\chapter{Corelations: a tool for black boxing} \label{ch.corelations}

As we have seen, cospans are a useful tool for describing finite colimits, and
so for describing the interconnection of systems. In the previous chapter
we defined decorated cospans to take advantage of this fact and provide
a tool for composing structures that had no inherent composition law, like
graphs and subspaces. In many situations, however, the colimit contains more
information than we care about.  Rather than concerning ourselves with all the
internal structure of a system, we only find a certain aspect of it
relevant---roughly, the part that affects what happens at the boundary. This is
better modelled by corelations. In this chapter we develop the theory of
corelations.

As usual, we begin by giving an intuitive overview of the aims of this chapter
in \textsection\ref{sec.blackboxing}. We then give the formal details of
corelation categories (\textsection\ref{sec.corels}) and their functors
(\textsection\ref{sec.corelfunctors}), before concluding in
\textsection\ref{sec.corelexs} with two important
examples: equivalence relations as epi-mono corelations in $(\Set,+)$, and
linear relations as epi-mono corelations in $(\Vect,\oplus)$. This sets us up
for a decorated version of this theory in Chapter \ref{ch.deccorels}.

\section{The idea of black boxing} \label{sec.blackboxing}

Thus far we have argued that network languages should be modelled using
hypergraph categories, and shown that cospans provide a good language for
talking about interconnection. We then developed the theory of decorated
cospans, which allows us to take diagrams, mark `inputs' and `outputs' using
cospans, and then compose these diagrams using pushouts. This turns a notion of
closed system into a notion of open one.

A serious limitation of using cospans alone, however, is that cospans
indiscrimately accumulate information. For example, suppose we consider the
morphisms of $\mathrm{GraphCospan}$ as open electrical circuits as in
\textsection\ref{ssec.exelectricalcircuits}. Pursuing this, let us depict a
graph decorated cospan from $(X \xrightarrow{i} N \xleftarrow{o} Y, \enspace
(N,E,s,t,r))$ by 
\[
\resizebox{.35\textwidth}{!}{
    \tikzset{every path/.style={line width=1.1pt}}
  \begin{tikzpicture}[circuit ee IEC, set resistor graphic=var resistor IEC graphic]
	\begin{pgfonlayer}{nodelayer}
		\node [style=dot] (0) at (-2.5, 0.75) {};
		\coordinate (1) at (-2, -2) {};
		\coordinate (2) at (1.5, 2) {};
		\coordinate (A) at (-2, 0.75) {};
		\node [style=dot] (4) at (2, 1.25) {};
		\coordinate (B) at (1.5, 0.75) {};
		\node [style=dot] (6) at (2, 0.25) {};
		\coordinate (ub) at (0.75, 1) {};
		\coordinate (la) at (-1.25, 0.5) {};
		\coordinate (C) at (-0.25, -1.375) {};
		\coordinate (lb) at (0.75, 0.5) {};
		\coordinate (ua) at (-1.25, 1) {};
	\end{pgfonlayer}
	\begin{pgfonlayer}{edgelayer}
		\draw (0.center) to (A);
		\draw (6) to (B);
		\draw (4) to (B);
    \path (A) edge (ua);
    \path (A) edge (la);
    \path (B) edge (ub);
    \path (B) edge (lb);
    \path (ua) edge  [resistor] (ub);
    \path (la) edge  [resistor] (lb);
    \path (A) edge  [resistor]  (C);
    \path (C) edge  [resistor]  (B);
	\end{pgfonlayer}
	\begin{pgfonlayer}{background}
	  \filldraw [fill=black!5!white, draw=black!40!white] (1) rectangle (2);
	\end{pgfonlayer}
\end{tikzpicture}
}
\]
where the bullets $\bullet$ on the left represent the elements of the set $X$,
those on the right represent the elements of $Y$, and we draw a line from $x \in
X$ or $y \in Y$ to a node $n$ in the graph when $i(x) = n$ or $o(y)=n$.  We omit
the labels (resistances) on the edges as these are not essential to the main
point here.  

We also depict an example of composition of these open circuits using decorated
cospans:
\[
\begin{aligned}
\resizebox{.45\textwidth}{!}{
    \tikzset{every path/.style={line width=1.1pt}}
  \begin{tikzpicture}[circuit ee IEC, set resistor graphic=var resistor IEC graphic]
	\begin{pgfonlayer}{nodelayer}
		\node [style=dotbig] (0) at (-2.25, 4) {};
		\coordinate (1) at (-1.75, 1.25) {};
		\coordinate (2) at (1.75, 5.25) {};
		\coordinate (3) at (-1.75, 4) {};
		\node [style=dotbig] (4) at (2.25, 4.5) {};
		\coordinate (5) at (1.75, 4) {};
		\node [style=dotbig] (6) at (2.25, 3.5) {};
		\coordinate (7) at (1, 4.25) {};
		\coordinate (8) at (-1, 3.75) {};
		\coordinate (9) at (0, 1.75) {};
		\coordinate (10) at (1, 3.75) {};
		\coordinate (11) at (-1, 4.25) {};
		\node [style=dotbig] (12) at (7.25, 4) {};
		\coordinate (13) at (-3, -1.5) {};
		\coordinate (14) at (6, 2) {};
		\coordinate (15) at (3.5, 2) {};
		\coordinate (16) at (4.75, -0) {};
		\coordinate (17) at (3, 3.5) {};
		\coordinate (18) at (3, 4.5) {};
		\node [style=dotbig] (19) at (2.5, -1.5) {};
		\coordinate (20) at (6.75, 4) {};
		\node [style=dotbig] (21) at (7.25, -1.5) {};
		\node [style=dotbig] (22) at (-6.5, -1) {};
		\node [style=dotbig] (23) at (7.25, 1.5) {};
		\node [style=dotbig] (24) at (-6.5, 4.5) {};
		\node [style=dotbig] (25) at (-6.5, 3) {};
		\coordinate (26) at (-6, -2) {};
		\coordinate (27) at (-3, 5.25) {};
		\coordinate (28) at (1.75, 1) {};
		\coordinate (29) at (-1.75, -2) {};
		\coordinate (30) at (3, -2) {};
		\coordinate (31) at (-6, 4.5) {};
		\coordinate (32) at (-6, 3) {};
		\coordinate (33) at (-3, 4) {};
		\coordinate (34) at (6.75, 2.5) {};
		\coordinate (35) at (3, 2.75) {};
		\coordinate (36) at (6.75, 5.25) {};
		\node [style=dotbig] (37) at (7.25, 0.5) {};
		\node [style=dotbig] (38) at (-2.25, -1.5) {};
		\node [style=dotbig] (39) at (-2.5, 4) {};
		\node [style=dotbig] (40) at (2.5, 4.5) {};
		\node [style=dotbig] (41) at (2.5, 3.5) {};
		\node [style=dotbig] (42) at (-2.25, 0.5) {};
		\node [style=dotbig] (43) at (-2.5, -1.5) {};
		\node [style=dotbig] (44) at (-2.5, 0.5) {};
		\node [style=dotbig] (45) at (2.25, -1.5) {};
		\node [style=dotbig] (46) at (2.25, -0) {};
		\node [style=dotbig] (47) at (2.5, -0) {};
		\node [style=dotbig] (48) at (-2.5, -0.5) {};
		\node [style=dotbig] (49) at (-2.25, -0.5) {};
		\coordinate (50) at (-6, -1) {};
		\coordinate (51) at (-5, -0) {};
		\coordinate (52) at (-3, 0.5) {};
		\coordinate (53) at (-3, -0.5) {};
		\coordinate (54) at (-1.75, 0.5) {};
		\coordinate (55) at (-1.75, -0.5) {};
		\coordinate (56) at (0, -0) {};
		\coordinate (57) at (1.75, -0) {};
		\coordinate (58) at (3, -0) {};
		\coordinate (59) at (-5.75, 1.25) {};
		\coordinate (60) at (4.75, 1) {};
		\coordinate (61) at (6.75, 1.5) {};
		\coordinate (62) at (6.75, 0.5) {};
		\coordinate (63) at (-1.75, -1.5) {};
		\coordinate (64) at (1.75, -1.5) {};
		\coordinate (65) at (3, -1.5) {};
		\coordinate (66) at (6.75, -1.5) {};
		\coordinate (67) at (-5.5, 1.75) {};
		\coordinate (68) at (-5.25, 1) {};
		\coordinate (69) at (-3.75, 2.5) {};
		\coordinate (70) at (-3.5, 1.75) {};
		\coordinate (71) at (-3.25, 2.25) {};
	\end{pgfonlayer}
	\begin{pgfonlayer}{edgelayer}
		\path (50) edge [resistor] (13);
		\draw (24) to (31);
		\draw (61) to (23);
		\draw (62) to (37);
		\draw (25) to (32);
		\draw (22) to (50);
		\draw (33) to (39);
		\draw (0) to (3);
		\draw (52) to (44);
		\draw (42) to (54);
		\draw (53) to (48);
		\draw (49) to (55);
		\path (31) edge [resistor] (33);
		\path (32) edge [resistor] (33);
		\draw (3) to (11);
		\draw (3) to (8);
		\path (11) edge [resistor] (7);
		\path (8) edge [resistor] (10);
		\draw (7) to (5);
		\draw (10) to (5);
		\draw (5) to (4);
		\draw (5) to (6);
		\draw (41) to (17);
		\draw (40) to (18);
		\path (18) edge [resistor] (20);
		\path (17) edge [resistor] (20);
		\draw (20) to (12);
		\path (15) edge [resistor] (14);
		\path (54) edge [resistor] (56);
		\path (55) edge [resistor] (56);
		\path (56) edge [resistor] (57);
		\path (51) edge [resistor] (52);
		\path (51) edge [resistor] (53);
		\draw (57) to (46);
		\draw (47) to (58);
		\path (58) edge [resistor] (16);
		\path (3) edge [resistor] (9);
		\path (9) edge [resistor] (5);
		\draw (13) to (43);
		\path (60) edge [resistor] (61);
		\path (60) edge [resistor] (62);
		\draw (38) to (63);
		\path (63) edge [resistor] (64);
		\draw (64) to (45);
		\draw (19) to (65);
		\path (65) edge [resistor] (66);
		\draw (66) to (21);
		\path (67) edge [resistor] (69);
		\path (68) edge [resistor] (70);
		\draw (59) to (67);
		\draw (59) to (68);
		\draw (69) to (71);
		\draw (71) to (70);
	\end{pgfonlayer}
	\begin{pgfonlayer}{background}
	  \filldraw [fill=black!5!white, draw=black!40!white] (1) rectangle (2);
	  \filldraw [fill=black!5!white, draw=black!40!white] (26) rectangle (27);
	  \filldraw [fill=black!5!white, draw=black!40!white] (29) rectangle (28);
	  \filldraw [fill=black!5!white, draw=black!40!white] (35) rectangle (36);
	  \filldraw [fill=black!5!white, draw=black!40!white] (30) rectangle (34);
	\end{pgfonlayer}
\end{tikzpicture}
}
\end{aligned}
\quad
\mapsto
\enspace
\begin{aligned}
\resizebox{.4\textwidth}{!}{
    \tikzset{every path/.style={line width=1.1pt}}
  \begin{tikzpicture}[circuit ee IEC, set resistor graphic=var resistor IEC graphic]
	\begin{pgfonlayer}{nodelayer}
		\coordinate (0) at (-1.75, 4) {};
		\coordinate (1) at (1.75, 4) {};
		\coordinate (2) at (1, 4.25) {};
		\coordinate (3) at (-1, 3.75) {};
		\coordinate (4) at (0, 1.75) {};
		\coordinate (5) at (1, 3.75) {};
		\coordinate (6) at (-1, 4.25) {};
		\node [style=dotbig] (7) at (6.25, 4) {};
		\coordinate (8) at (5, 2) {};
		\coordinate (9) at (2.5, 2) {};
		\coordinate (10) at (3.75, -0) {};
		\coordinate (11) at (2.5, 3.75) {};
		\coordinate (12) at (2.5, 4.25) {};
		\coordinate (13) at (5.75, 4) {};
		\node [style=dotbig] (14) at (6.25, -1.5) {};
		\node [style=dotbig] (15) at (-5.5, -1) {};
		\node [style=dotbig] (16) at (6.25, 1.5) {};
		\node [style=dotbig] (17) at (-5.5, 4.5) {};
		\node [style=dotbig] (18) at (-5.5, 3) {};
		\coordinate (19) at (-5, -2) {};
		\coordinate (20) at (-5, 4.5) {};
		\coordinate (21) at (-5, 3) {};
		\coordinate (22) at (5.75, 5.25) {};
		\node [style=dotbig] (23) at (6.25, 0.5) {};
		\coordinate (24) at (-5, -1) {};
		\coordinate (25) at (-4, -0) {};
		\coordinate (26) at (-1.75, 0.5) {};
		\coordinate (27) at (-1.75, -0.5) {};
		\coordinate (28) at (0, -0) {};
		\coordinate (29) at (1.75, -0) {};
		\coordinate (30) at (-4.75, 1.25) {};
		\coordinate (31) at (3.75, 1) {};
		\coordinate (32) at (5.75, 1.5) {};
		\coordinate (33) at (5.75, 0.5) {};
		\coordinate (34) at (-1.75, -1.5) {};
		\coordinate (35) at (1.75, -1.5) {};
		\coordinate (36) at (5.75, -1.5) {};
		\coordinate (37) at (-4.25, 1.75) {};
		\coordinate (38) at (-4.25, 1) {};
		\coordinate (39) at (-2.75, 2.5) {};
		\coordinate (40) at (-2.5, 1.75) {};
		\coordinate (41) at (-2.25, 2.25) {};
		\coordinate (42) at (5, 4.25) {};
		\coordinate (43) at (5, 3.75) {};
	\end{pgfonlayer}
	\begin{pgfonlayer}{edgelayer}
		\draw (17) to (20);
		\draw (18) to (21);
		\draw (15) to (24);
		\draw (13) to (7);
		\draw (36) to (14);
		\draw (32) to (16);
		\draw (33) to (23);
		\draw (1) to (12);
		\draw (1) to (11);
		\draw (2) to (1);
		\draw (5) to (1);
		\draw (0) to (6);
		\draw (0) to (3);
		\draw (30) to (37);
		\draw (30) to (38);
		\draw (39) to (41);
		\draw (41) to (40);
		\draw (42) to (13);
		\draw (43) to (13);
		\path (6) edge [resistor] (2);
		\path (3) edge [resistor] (5);
		\path (9) edge [resistor] (8);
		\path (26) edge [resistor] (28);
		\path (27) edge [resistor] (28);
		\path (28) edge [resistor] (29);
		\path (0) edge [resistor] (4);
		\path (4) edge [resistor] (1);
		\path (31) edge [resistor] (32);
		\path (31) edge [resistor] (33);
		\path (34) edge [resistor] (35);
		\path (37) edge [resistor] (39);
		\path (38) edge [resistor] (40);
		\path (20) edge [resistor] (0);
		\path (21) edge [resistor] (0);
		\path (25) edge [resistor] (26);
		\path (25) edge [resistor] (27);
		\path (24) edge [resistor] (34);
		\path (29) edge [resistor] (10);
		\path (35) edge [resistor] (36);
		\path (12) edge [resistor] (42);
		\path (11) edge [resistor] (43);
	\end{pgfonlayer}
	\begin{pgfonlayer}{background}
	  \filldraw [fill=black!5!white, draw=black!40!white] (19) rectangle (22);
	\end{pgfonlayer}
\end{tikzpicture}
}
\end{aligned}
\]
Note in particular that the composite of these open circuits contains a unique
resistor for every resistor in the factors. If we are interested in describing
the syntax of a diagrammatic language, then this is useful: composition builds
given expressions into a larger one. If we are only interested in the semantics,
however, this is often unnecessary and thus often wildly inefficient.

Indeed, suppose our semantics for open circuits is given by the information that
can be gleaned by connecting other open circuits, such as measurement devices,
to the terminals. In these semantics we consider two open circuits equivalent
if, should they be encased, but for their terminals, in a black box
\[
\resizebox{.4\textwidth}{!}{
    \tikzset{every path/.style={line width=1.1pt}}
  \begin{tikzpicture}
    \begin{pgfonlayer}{nodelayer}
		\node [style=dotbig] (14) at (-5.5, 4.5) {};
		\node [style=dotbig] (15) at (-5.5, 3) {};
		\node [style=dotbig] (12) at (-5.5, -1) {};
		\node [style=dotbig] (7) at (6.25, 4) {};
		\node [style=dotbig] (13) at (6.25, 1.5) {};
		\node [style=dotbig] (20) at (6.25, 0.5) {};
		\node [style=dotbig] (11) at (6.25, -1.5) {};
		\coordinate (17) at (-5, 4.5) {};
		\coordinate (18) at (-5, 3) {};
		\coordinate (21) at (-5, -1) {};
		\coordinate (10) at (5.75, 4) {};
		\coordinate (23) at (5.75, 1.5) {};
		\coordinate (24) at (5.75, 0.5) {};
		\coordinate (27) at (5.75, -1.5) {};
		\coordinate (16) at (-5, -2) {};
		\coordinate (19) at (5.75, 5.25) {};
	\end{pgfonlayer}
	\begin{pgfonlayer}{edgelayer}
		\draw (14) to (17);
		\draw (15) to (18);
		\draw (12) to (21);
		\draw (10) to (7);
		\draw (27) to (11);
		\draw (23) to (13);
		\draw (24) to (20);
	\end{pgfonlayer}
	\begin{pgfonlayer}{background}
	  \filldraw [fill=black!80!white, draw=black!40!white] (16) rectangle (19);
	\end{pgfonlayer}
\end{tikzpicture}
}
\]
we would be unable to distinguish them through our electrical investigations. In
this case, at the very least, the previous circuit is equivalent to the circuit
\[
\resizebox{.4\textwidth}{!}{
    \tikzset{every path/.style={line width=1.1pt}}
  \begin{tikzpicture}[circuit ee IEC, set resistor graphic=var resistor IEC graphic]
	\begin{pgfonlayer}{nodelayer}
		\coordinate (0) at (-1.75, 4) {};
		\coordinate (1) at (1.75, 4) {};
		\coordinate (2) at (1, 4.25) {};
		\coordinate (3) at (-1, 3.75) {};
		\coordinate (4) at (0, 1.75) {};
		\coordinate (5) at (1, 3.75) {};
		\coordinate (6) at (-1, 4.25) {};
		\node [style=dotbig] (7) at (6.25, 4) {};
		\coordinate (8) at (2.5, 3.75) {};
		\coordinate (9) at (2.5, 4.25) {};
		\coordinate (10) at (5.75, 4) {};
		\node [style=dotbig] (11) at (6.25, -1.5) {};
		\node [style=dotbig] (12) at (-5.5, -1) {};
		\node [style=dotbig] (13) at (6.25, 1.5) {};
		\node [style=dotbig] (14) at (-5.5, 4.5) {};
		\node [style=dotbig] (15) at (-5.5, 3) {};
		\coordinate (16) at (-5, -2) {};
		\coordinate (17) at (-5, 4.5) {};
		\coordinate (18) at (-5, 3) {};
		\coordinate (19) at (5.75, 5.25) {};
		\node [style=dotbig] (20) at (6.25, 0.5) {};
		\coordinate (21) at (-5, -1) {};
		\coordinate (22) at (3.75, 1) {};
		\coordinate (23) at (5.75, 1.5) {};
		\coordinate (24) at (5.75, 0.5) {};
		\coordinate (25) at (-1.75, -1.5) {};
		\coordinate (26) at (1.75, -1.5) {};
		\coordinate (27) at (5.75, -1.5) {};
		\coordinate (28) at (5, 4.25) {};
		\coordinate (29) at (5, 3.75) {};
	\end{pgfonlayer}
	\begin{pgfonlayer}{edgelayer}
		\draw (14) to (17);
		\draw (15) to (18);
		\draw (12) to (21);
		\draw (10) to (7);
		\draw (27) to (11);
		\draw (23) to (13);
		\draw (24) to (20);
		\draw (1) to (9);
		\draw (1) to (8);
		\draw (2) to (1);
		\draw (5) to (1);
		\draw (0) to (6);
		\draw (0) to (3);
		\draw (28) to (10);
		\draw (29) to (10);
		\path (6) edge [resistor] (2);
		\path (3) edge [resistor] (5);
		\path (0) edge [resistor] (4);
		\path (4) edge [resistor] (1);
		\path (22) edge [resistor] (23);
		\path (22) edge [resistor] (24);
		\path (25) edge [resistor] (26);
		\path (17) edge [resistor] (0);
		\path (18) edge [resistor] (0);
		\path (21) edge [resistor] (25);
		\path (26) edge [resistor] (27);
		\path (9) edge [resistor] (28);
		\path (8) edge [resistor] (29);
	\end{pgfonlayer}
	\begin{pgfonlayer}{background}
	  \filldraw [fill=black!5!white, draw=black!40!white] (16) rectangle (19);
	\end{pgfonlayer}
\end{tikzpicture}
}
\]
where we have removed circuitry not connected to the terminals.  Moreover, this
second circuit is a much more efficient representation, as it does not model
inaccessible, internal structure. If we wish to construct a hypergraph category
modelling the semantics of open circuits, we require circuit representations and
a composition rule that only retain the information relevant to the black boxed
circuit.

Dealing with this problem fully requires further discussion of the semantics of
circuit diagrams, and the introduction of structures more flexible than graphs,
such as linear relations.  We deal with this in depth in Chapter
\ref{ch.circuits}. In this chapter, however, we are able to contribute an
important piece of the puzzle: corelations.

Corelations allow us to pursue a notion of composition that discards extraneous
information as we compose our systems. Consider, for example, the category
$\cospan(\FinSet)$ of cospans in the category of finite sets and functions. A
morphism is then a finite set $N$ together with functions $X \to N$ and $Y \to
N$. We depict these like so:
\begin{center}
  \begin{tikzpicture}[auto,scale=2]
    \node[circle,draw,inner sep=1pt,fill=gray,color=gray] (x1) at (-1.5,.2) {};
    \node[circle,draw,inner sep=1pt,fill=gray,color=gray] (x2) at (-1.5,-.2) {};
    \node at (-1.5,-.7) {$X$};
    \node[circle,draw,inner sep=1pt,fill]         (A) at (0,.4) {};
    \node[circle,draw,inner sep=1pt,fill]         (B) at (0,.133) {};
    \node[circle,draw,inner sep=1pt,fill]         (C) at (0,-.133) {};
    \node[circle,draw,inner sep=1pt,fill]         (D) at (0,-.4) {};
    \node at (0,-.7) {$N$};
    \node[circle,draw,inner sep=1pt,fill=gray,color=gray] (y1) at (1.5,.3) {};
    \node[circle,draw,inner sep=1pt,fill=gray,color=gray] (y2) at (1.5,.1) {};
    \node[circle,draw,inner sep=1pt,fill=gray,color=gray] (y3) at (1.5,-.1) {};
    \node[circle,draw,inner sep=1pt,fill=gray,color=gray] (y4) at (1.5,-.3) {};
    \node at (1.5,-.7) {$Y$};
    \path[color=gray, very thick, shorten >=10pt, shorten <=5pt, ->, >=stealth]
    (x1) edge (C);
    \path[color=gray, very thick, shorten >=10pt, shorten <=5pt, ->, >=stealth]
    (x2) edge (C);
    \path[color=gray, very thick, shorten >=10pt, shorten <=5pt, ->, >=stealth]
    (y1) edge (B);
    \path[color=gray, very thick, shorten >=10pt, shorten <=5pt, ->, >=stealth]
    (y2) edge (C);
    \path[color=gray, very thick, shorten >=10pt, shorten <=5pt, ->, >=stealth]
    (y3) edge (D);
    \path[color=gray, very thick, shorten >=10pt, shorten <=5pt, ->, >=stealth]
    (y4) edge (D);
  \end{tikzpicture}
\end{center}
Here $X$ is a two element set, while $N$ and $Y$ are four element sets.

Suppose we have a pair of cospans $X \to N \leftarrow Y$, $Y \to M \leftarrow
Z$. By definition, their composite has apex the pushout $N+_YM$ which, roughly
speaking, is the union of $N$ and $M$ with two points identified if they are
both images of the same element of $Y$. For example, the following pair of
cospans:
\begin{center}
  \begin{tikzpicture}[auto,scale=2]
    \node[circle,draw,inner sep=1pt,fill=gray,color=gray] (x1) at (-1.5,.2) {};
    \node[circle,draw,inner sep=1pt,fill=gray,color=gray] (x2) at (-1.5,-.2) {};
    \node at (-1.5,-.8) {$X$};
    \node[circle,draw,inner sep=1pt,fill]         (A) at (0,.4) {};
    \node[circle,draw,inner sep=1pt,fill]         (B) at (0,.133) {};
    \node[circle,draw,inner sep=1pt,fill]         (C) at (0,-.133) {};
    \node[circle,draw,inner sep=1pt,fill]         (D) at (0,-.4) {};
    \node at (0,-.8) {$N$};
    \node[circle,draw,inner sep=1pt,fill=gray,color=gray] (y1) at (1.5,.3) {};
    \node[circle,draw,inner sep=1pt,fill=gray,color=gray] (y2) at (1.5,.1) {};
    \node[circle,draw,inner sep=1pt,fill=gray,color=gray] (y3) at (1.5,-.1) {};
    \node[circle,draw,inner sep=1pt,fill=gray,color=gray] (y4) at (1.5,-.3) {};
    \node at (1.5,-.8) {$Y$};
    \node[circle,draw,inner sep=1pt,fill]         (A') at (3,.5) {};
    \node[circle,draw,inner sep=1pt,fill]         (B') at (3,.25) {};
    \node[circle,draw,inner sep=1pt,fill]         (C') at (3,0) {};
    \node[circle,draw,inner sep=1pt,fill]         (D') at (3,-.25) {};
    \node[circle,draw,inner sep=1pt,fill]         (E') at (3,-.5) {};
    \node at (3,-.8) {$M$};
    \node[circle,draw,inner sep=1pt,fill=gray,color=gray] (z1) at (4.5,.2) {};
    \node[circle,draw,inner sep=1pt,fill=gray,color=gray] (z2) at (4.5,-.2) {};
    \node at (4.5,-.8) {$Z$};
    \path[color=gray, very thick, shorten >=10pt, shorten <=5pt, ->, >=stealth]
    (x1) edge (C);
    \path[color=gray, very thick, shorten >=10pt, shorten <=5pt, ->, >=stealth]
    (x2) edge (C);
    \path[color=gray, very thick, shorten >=10pt, shorten <=5pt, ->, >=stealth]
    (y1) edge (B);
    \path[color=gray, very thick, shorten >=10pt, shorten <=5pt, ->, >=stealth]
    (y2) edge (C);
    \path[color=gray, very thick, shorten >=10pt, shorten <=5pt, ->, >=stealth]
    (y3) edge (D);
    \path[color=gray, very thick, shorten >=10pt, shorten <=5pt, ->, >=stealth]
    (y4) edge (D);
    \path[color=gray, very thick, shorten >=10pt, shorten <=5pt, ->, >=stealth]
    (y1) edge (A');
    \path[color=gray, very thick, shorten >=10pt, shorten <=5pt, ->, >=stealth]
    (y2) edge (B');
    \path[color=gray, very thick, shorten >=10pt, shorten <=5pt, ->, >=stealth]
    (y3) edge (C');
    \path[color=gray, very thick, shorten >=10pt, shorten <=5pt, ->, >=stealth]
    (y4) edge (D');
    \path[color=gray, very thick, shorten >=10pt, shorten <=5pt, ->, >=stealth]
    (z1) edge (B');
    \path[color=gray, very thick, shorten >=10pt, shorten <=5pt, ->, >=stealth]
    (z2) edge (E');
  \end{tikzpicture}
\end{center}
becomes
\begin{center}
  \begin{tikzpicture}[auto,scale=2]
    \node[circle,draw,inner sep=1pt,fill=gray,color=gray] (x1) at (1.5,.2) {};
    \node[circle,draw,inner sep=1pt,fill=gray,color=gray] (x2) at (1.5,-.2) {};
    \node at (1.5,-.8) {$X$};
    \node[circle,draw,inner sep=1pt,fill]         (A) at (3,.5) {};
    \node[circle,draw,inner sep=1pt,fill]         (B) at (3,.25) {};
    \node[circle,draw,inner sep=1pt,fill]         (C) at (3,0) {};
    \node[circle,draw,inner sep=1pt,fill]         (D) at (3,-.25) {};
    \node[circle,draw,inner sep=1pt,fill]         (E) at (3,-.5) {};
    \node at (3,-.8) {$N+_YM$};
    \node[circle,draw,inner sep=1pt,fill=gray,color=gray] (z1) at (4.5,.2) {};
    \node[circle,draw,inner sep=1pt,fill=gray,color=gray] (z2) at (4.5,-.2) {};
    \node at (4.5,-.8) {$Z$};
    \path[color=gray, very thick, shorten >=10pt, shorten <=5pt, ->, >=stealth]
    (x1) edge (C);
    \path[color=gray, very thick, shorten >=10pt, shorten <=5pt, ->, >=stealth]
    (x2) edge (C);
    \path[color=gray, very thick, shorten >=10pt, shorten <=5pt, ->, >=stealth]
    (z1) edge (C);
    \path[color=gray, very thick, shorten >=10pt, shorten <=5pt, ->, >=stealth]
    (z2) edge (E);
  \end{tikzpicture}
\end{center}
Here we see essentially the same phenomenon as we described for circuits above:
the apex of the cospan is much larger than the image of the maps from the feet.

Corelations address this with what is known as a $(\mc E,\mc M)$-factorisation
system. A factorisation system comprises subcategories $\mc E$ and $\mc M$ of
$\mc C$ such that every morphism in $\mc C$ factors, in a coherent way, as the
composite of a morphism in $\mc E$ followed by a morphism in $\mc M$. An
example, known as the epi-mono factorisation system on $\Set$, is yielded by the
observation that every function can be written as a surjection followed by an
injection.

Corelations, or more precisely $(\mc E,\mc M)$-corelations, are cospans $X
\to N \leftarrow Y$ such that the copairing $X+Y \to N$ of the two maps is an
element of the first factor $\mc E$ of the factorisation system. Composition of
corelations proceeds first as composition of cospans, but then takes only the
so-called $\mc E$-part of the composite cospan, to ensure the composite is again
a corelation. If we take the $\mc E$-part of a cospan $X \to N \leftarrow Y$, we
write the new apex $\overline{N}$, and so the resulting corelation $X \to
\overline{N} \leftarrow Y$. 

Mapping the above two cospans to epi-mono corelations in $\FinSet$ they become 
\begin{center}
  \begin{tikzpicture}[auto,scale=2]
    \node[circle,draw,inner sep=1pt,fill=gray,color=gray] (x1) at (-1.5,.2) {};
    \node[circle,draw,inner sep=1pt,fill=gray,color=gray] (x2) at (-1.5,-.2) {};
    \node at (-1.5,-.8) {$X$};
    \node[circle,draw,inner sep=1pt,fill]         (B) at (0,.3) {};
    \node[circle,draw,inner sep=1pt,fill]         (C) at (0,0) {};
    \node[circle,draw,inner sep=1pt,fill]         (D) at (0,-.3) {};
    \node at (0,-.8) {$\overline{N}$};
    \node[circle,draw,inner sep=1pt,fill=gray,color=gray] (y1) at (1.5,.3) {};
    \node[circle,draw,inner sep=1pt,fill=gray,color=gray] (y2) at (1.5,.1) {};
    \node[circle,draw,inner sep=1pt,fill=gray,color=gray] (y3) at (1.5,-.1) {};
    \node[circle,draw,inner sep=1pt,fill=gray,color=gray] (y4) at (1.5,-.3) {};
    \node at (1.5,-.8) {$Y$};
    \node[circle,draw,inner sep=1pt,fill]         (A') at (3,.5) {};
    \node[circle,draw,inner sep=1pt,fill]         (B') at (3,.25) {};
    \node[circle,draw,inner sep=1pt,fill]         (C') at (3,0) {};
    \node[circle,draw,inner sep=1pt,fill]         (D') at (3,-.25) {};
    \node[circle,draw,inner sep=1pt,fill]         (E') at (3,-.5) {};
    \node at (3,-.8) {$M=\overline{M}$};
    \node[circle,draw,inner sep=1pt,fill=gray,color=gray] (z1) at (4.5,.2) {};
    \node[circle,draw,inner sep=1pt,fill=gray,color=gray] (z2) at (4.5,-.2) {};
    \node at (4.5,-.8) {$Z$,};
    \path[color=gray, very thick, shorten >=10pt, shorten <=5pt, ->, >=stealth]
    (x1) edge (C);
    \path[color=gray, very thick, shorten >=10pt, shorten <=5pt, ->, >=stealth]
    (x2) edge (C);
    \path[color=gray, very thick, shorten >=10pt, shorten <=5pt, ->, >=stealth]
    (y1) edge (B);
    \path[color=gray, very thick, shorten >=10pt, shorten <=5pt, ->, >=stealth]
    (y2) edge (C);
    \path[color=gray, very thick, shorten >=10pt, shorten <=5pt, ->, >=stealth]
    (y3) edge (D);
    \path[color=gray, very thick, shorten >=10pt, shorten <=5pt, ->, >=stealth]
    (y4) edge (D);
    \path[color=gray, very thick, shorten >=10pt, shorten <=5pt, ->, >=stealth]
    (y1) edge (A');
    \path[color=gray, very thick, shorten >=10pt, shorten <=5pt, ->, >=stealth]
    (y2) edge (B');
    \path[color=gray, very thick, shorten >=10pt, shorten <=5pt, ->, >=stealth]
    (y3) edge (C');
    \path[color=gray, very thick, shorten >=10pt, shorten <=5pt, ->, >=stealth]
    (y4) edge (D');
    \path[color=gray, very thick, shorten >=10pt, shorten <=5pt, ->, >=stealth]
    (z1) edge (B');
    \path[color=gray, very thick, shorten >=10pt, shorten <=5pt, ->, >=stealth]
    (z2) edge (E');
  \end{tikzpicture}
\end{center}
with composite
\begin{center}
  \begin{tikzpicture}[auto,scale=2]
    \node[circle,draw,inner sep=1pt,fill=gray,color=gray] (x1) at (1.5,.2) {};
    \node[circle,draw,inner sep=1pt,fill=gray,color=gray] (x2) at (1.5,-.2) {};
    \node at (1.5,-.45) {$X$};
    \node[circle,draw,inner sep=1pt,fill]         (A) at (3,.2) {};
    \node[circle,draw,inner sep=1pt,fill]         (B) at (3,-.2) {};
    \node at (3,-.45) {$\overline{N+_YM}$};
    \node[circle,draw,inner sep=1pt,fill=gray,color=gray] (z1) at (4.5,.2) {};
    \node[circle,draw,inner sep=1pt,fill=gray,color=gray] (z2) at (4.5,-.2) {};
    \node at (4.5,-.45) {$Z$.};
    \path[color=gray, very thick, shorten >=10pt, shorten <=5pt, ->, >=stealth]
    (x1) edge (A);
    \path[color=gray, very thick, shorten >=10pt, shorten <=5pt, ->, >=stealth]
    (x2) edge (A);
    \path[color=gray, very thick, shorten >=10pt, shorten <=5pt, ->, >=stealth]
    (z1) edge (A);
    \path[color=gray, very thick, shorten >=10pt, shorten <=5pt, ->, >=stealth]
    (z2) edge (B);
  \end{tikzpicture}
\end{center}
Note that the apex of the composite corelation is the subset of the apex of the
composite cospan comprising those elements that are in the image of the maps
from the feet. The intuition, again, is that composition of corelations discards
irrelevant information---of course, exactly what it discards depends on our
choice of factorisation system.

In this chapter we show that given a category $\mc C$ with finite colimits and a
factorisation system $(\mc E,\mc M)$, if $\mc M$ obeys a condition known as
`stability under pushout', then corelations in $\mc C$ form a hypergraph
category. We also show that given a colimit-preserving functor $A$ between such
categories $\mathcal C$, $\mathcal C'$ with factorisation systems $(\mathcal E,
\mathcal M)$, $(\mathcal E', \mathcal M')$, $A$ induces a hypergraph functor
between their corelation categories if the image of $\mathcal M$ lies in
$\mathcal M'$.

\section{Corelations} \label{sec.corels}

Given sets $X$, $Y$, a relation $X \to Y$ is a subset of the product $X
\times Y$. Note that by the universal property of the product, spans $X
\leftarrow N \to Y$ are in one-to-one correspondence with functions $N \to X
\times Y$. When this map is monic, we say that the span is \emph{jointly monic}.
More abstractly then, we might say a relation is an isomorphism class of jointly
monic spans in the category of sets. Here we generalise the dual concept: these
are our so-called corelations.

Relations and, equivalently, multi-valued functions have long been a structure
of mathematical interest, with their formalisation going back to De Morgan
\cite{DeM60} and Pierce \cite{Pie70} in the nineteenth century. One hundred
years later, their category theoretic generalisation as subobjects of pairwise
products was introduced by Puppe \cite{Pup62} and Mac Lane \cite{Mac63a} in the
setting of abelian categories, and Barr \cite{Bar70} for regular categories.
Subsequently, Klein \cite{Kle70} provided conditions under which these relations
can be given an associative composition rule---albeit conditions slightly more
restrictive than that which we use here---and by now the categorical theory of
relations is well studied \cite{FS90, Mil00, JW00}.  The key insight is the
definition of a factorisation system. We begin this section by introducing this
idea, before developing the theory of not just categories of relations, but
monoidal categories of (co)relations.

\subsection{Factorisation systems}
The relevant properties of jointly monic spans come from the fact that
monomorphisms form one half of a factorisation system. A factorisation system
allows any morphism in a category to be factored into the composite of two
morphisms in a coherent way. This subsection introduces factorisation systems
and monoidal factorisation systems.

\begin{definition}
  A \define{factorisation system} $(\mathcal E,\mathcal M)$ in a category
  $\mathcal C$ comprises subcategories $\mathcal E$, $\mathcal M$ of $\mathcal
  C$ such that
  \begin{enumerate}[(i)]
    \item $\mathcal E$ and $\mathcal M$ contain all isomorphisms of $\mathcal
      C$.
    \item  every morphism $f \in \mathcal C$ admits a factorisation $f=m \circ
      e$, $e \in \mathcal E$, $m \in \mathcal M$.
\item given morphisms $f,f'$, with factorisations $f = m \circ e$, $f' = m' \circ
  e'$ of the above sort, for every $u$, $v$ such that the square
  \[
    \xymatrixcolsep{3pc}
    \xymatrixrowsep{3pc}
    \xymatrix{
       \ar[r]^f \ar[d]_u &  \ar[d]^v \\
       \ar[r]_{f'} & 
    }
  \]
  commutes, there exists a unique morphism $s$ such that
  \[
    \xymatrixcolsep{3pc}
    \xymatrixrowsep{3pc}
    \xymatrix{
      \ar[r]^e \ar[d]_u & \ar[r]^m \ar@{.>}[d]^{\exists! s} &  \ar[d]^v \\
       \ar[r]_{e'}& \ar[r]_{m'} & 
    }
  \]
  commutes.
  \end{enumerate}
\end{definition}

\begin{examples} \label{ex.factsysts}\ 

  \begin{itemize}
    \item Write $\mathcal I_{\mathcal C}$ for the wide subcategory of
      $\mathcal C$ containing exactly the isomorphisms of $\mathcal C$. Then
      $(\mathcal I_{\mathcal C}, \mathcal C)$ and $(\mathcal C, \mathcal
      I_{\mathcal C})$ are both factorisation systems in $\mathcal C$. While
      these may seem too trivial to mention, we will see they are of central
      importance in what follows.
    
    \item The prototypical example of a factorisation system is the epi-mono
      factorisation system $(\mathrm{Sur},\mathrm{Inj})$ in $\Set$. Here we
      write $\mathrm{Sur}$ for the subcategory of surjections in $\Set$, and
      $\mathrm{Inj}$ for the subcategory of injections. 
      
      Recall that a split monomorphism is a map $m\colon X\to Y$ such that there
      exists a one-sided inverse, i.e. a map $m'\colon Y\to X$ such that
      $m'm=\idn_X$. Observe that, assuming the axiom of choice, all monos in
      $\Set$ split.  One way of proving that the above is a factorisation system
      on $\Set$ is via the more general fact, true in any category: if every
      arrow can be factorised as an epi followed by a split mono, then
      epimorphisms and split monomorphisms form the factors of a factorisation
      system.  The only non-trivial part to check is the uniqueness condition:
      given epis $e_1,e_2$, split monos $m_1,m_2$, and commutative diagram
      \[
	\xymatrixcolsep{3pc}
	\xymatrixrowsep{3pc}
	\xymatrix{
	  \ar[r]^{e_1} \ar[d]_u & \ar[r]^{m_1} \ar@{.>}[d]^{\exists! t} &
	  \ar[d]^v \\
	  \ar[r]_{e_2}& \ar[r]_{m_2} & 
	}
      \]
      we must show that there is a unique $t$ that makes the diagram commute.
      Indeed let $t= m_2'vm_1$ where $m_2'$ satisfies $m_2'm_2=id$. 
      To see that the right square commutes, observe
      \[
	m_2 t e_1 =  m_2 m_2' v m_1 e_1 = m_2 m_2' m_2 e_2 u = m_2 e_2 u = v m_1 e_1
      \]
      and since $e_1$ is epi we have $m_2 t = v m_1$. For the left square,
      \[
	t e_1 = m_2' v m_1 e_1 = m_2' m_2 e_2 u = e_2 u.
      \] 
      Uniqueness is immediate, since, $e_1$ is epi and $m_2$ is mono. 
    
    \item Recall that a regular epimorphism is an epimorphism that is the
      coequaliser for some pair of parallel morphisms. (Dually, a regular
      monomorphism is an equaliser for some pair of parallel morphisms.) In any
      so-called regular category the regular epimorphisms and monomorphisms form
      a factorisation system.  Examples of regular categories include $\Set$,
      toposes, and abelian categories. Regular categories were introduced by
      Barr and Grillet; for more details see \cite{Bar71,Gri71}.
\end{itemize}
More details and further examples can be found in \cite[\textsection 14]{AHS}.
\end{examples}

As we are concerned with building monoidal categories of corelations, it will be
important that our factorisation systems are monoidal factorisation systems.

\begin{definition}
Call a factorisation system $(\mc E,\mc M)$ in a monoidal category $(\mc C,\ot)$
a \define{monoidal factorisation system} if $(\mc E,\ot)$ is a monoidal
category.
\end{definition}

One might wonder why $\mc M$ does not appear in the above definition. To give a
touch more intuition for this definition, we quote a theorem of Ambler. Recall a
symmetric monoidal closed category is one in which each functor $- \ot X$ has a
specified right adjoint $[X,-]$. 
\begin{proposition}
  Let $(\mc E,\mc M)$ be a factorisation system in a symmetric monoidal
  closed category $(\mc C,\ot)$. Then the following are equivalent:
  \begin{enumerate}[(i)]
    \item $(\mc E,\mc M)$ is a monoidal factorisation system.
    \item $\mc E$ is closed under $- \ot X$ for all $X \in \mc C$.
    \item $\mc M$ is closed under $[X,-]$ for all $X \in \mc C$.
\end{enumerate}
\end{proposition}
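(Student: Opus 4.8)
The plan is to prove the cycle of equivalences as two separate biconditionals: first the routine (i)$\iff$(ii), which is pure bookkeeping about the monoidal product restricting to $\mc E$, and then the substantive (ii)$\iff$(iii), which is where the closed structure earns its keep. Throughout I would exploit that $\mc E$ and $\mc M$ are \emph{wide} subcategories (they contain all objects and all isomorphisms of $\mc C$), so that the only real content in the assertion ``$(\mc E,\ot)$ is a monoidal category'' is closure of $\mc E$ under $\ot$ on morphisms; the associator, unitors, braiding and monoidal unit are automatically present as isomorphisms/objects of $\mc C$, and the coherence axioms are inherited from $\mc C$.

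For (i)$\Rightarrow$(ii): if $(\mc E,\ot)$ is monoidal then for $e\in\mc E$ and any object $X$ we have $e\ot\idn_X\in\mc E$ since $\idn_X\in\mc E$. For (ii)$\Rightarrow$(i) I would factor a general tensor of $\mc E$-maps $e\maps A\to B$, $e'\maps A'\to B'$ as the composite $A\ot A' \xrightarrow{\idn_A\ot e'} A\ot B' \xrightarrow{e\ot\idn_{B'}} B\ot B'$. The second factor lies in $\mc E$ directly by (ii). For the first factor I would invoke naturality of the braiding to write $\idn_A\ot e' = \s^{-1}\circ(e'\ot\idn_A)\circ\s$; since $\s$ is an isomorphism (hence in $\mc E$) and $e'\ot\idn_A\in\mc E$ by (ii), and $\mc E$ is closed under composition, this first factor lies in $\mc E$ too. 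Hence $e\ot e'\in\mc E$, so $\ot$ restricts to $\mc E$ and $(\mc E,\ot)$ is monoidal. This is the one place where symmetry, rather than mere monoidality, is genuinely used.

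The heart of the argument is (ii)$\iff$(iii), for which I would reformulate membership in $\mc E$ and $\mc M$ using the orthogonality (unique diagonal lifting) relation. Writing $e\perp m$ when every commutative square with $e$ on the left and $m$ on the right has a unique filler, the standard theory of orthogonal factorisation systems (see \cite{AHS}) supplies the mutual-orthogonality characterisations $\mc E={}^\perp\mc M$ and $\mc M=\mc E^\perp$: that is, $f\in\mc E$ iff $f\perp m$ for all $m\in\mc M$, and dually $g\in\mc M$ iff $e\perp g$ for all $e\in\mc E$. The key lemma I would then establish is the transfer of orthogonality across the adjunction $-\ot X\dashv[X,-]$: for all morphisms $e,m$ and every object $X$,
\[
  (e\ot X)\perp m \iff e\perp[X,m].
\]
Granting this, the two conditions unwind to the \emph{same} statement. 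Indeed, (ii) asserts $e\ot X\in\mc E$ for all $e\in\mc E$ and all $X$, i.e.\ $(e\ot X)\perp m$ for all $e\in\mc E$, $m\in\mc M$, $X$; while (iii) asserts $[X,m]\in\mc M$ for all $m\in\mc M$ and all $X$, i.e.\ $e\perp[X,m]$ for all $e\in\mc E$, $m\in\mc M$, $X$. The lemma matches these quantified statements term by term, so (ii)$\iff$(iii).

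The main obstacle will be the orthogonality-transfer lemma itself. The idea is that the data of a lifting square for $e\ot X$ against $m$ (a pair of morphisms filling the top and bottom edges, making the square commute) transpose under the natural isomorphism $\mc C(-\ot X,-)\cong\mc C(-,[X,-])$ to the data of a lifting square for $e$ against $[X,m]$, and likewise a diagonal $B\ot X\to P$ for the first transposes to a diagonal $B\to[X,P]$ for the second. One must check that the transposition respects both triangle-commutativity conditions and restricts to a bijection between fillers, so that existence \emph{and} uniqueness of the lift transfer. This is a careful naturality verification in all four variables, and is the only step that genuinely requires $\mc C$ to be closed. I would also take care to confirm that the saturation facts $\mc E={}^\perp\mc M$ and $\mc M=\mc E^\perp$ do follow from the paper's three-part definition (by factoring a map and applying the unique-lift axiom), rather than merely asserting them.
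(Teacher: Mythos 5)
Your proposal is correct, but note that the paper itself offers no argument for this proposition: the ``proof'' is a one-line deferral to Ambler's thesis \cite[Lemma 5.2.2]{Am}, so there is no in-text proof to match yours against. What you have written is a sound, self-contained substitute, and it follows what is essentially the standard route for results of this shape. Your reduction of (i)$\iff$(ii) is right: since $\mc E$ contains all isomorphisms it is wide, the coherence data of $(\mc E,\ot)$ is inherited, and the only content is closure of $\mc E$ under $\ot$ on morphisms; the decomposition $e\ot e' = (e\ot\idn_{B'})\circ(\idn_A\ot e')$ together with naturality of the braiding, $\idn_A\ot e' = \s^{-1}\circ(e'\ot\idn_A)\circ\s$, correctly locates the one use of symmetry. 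For (ii)$\iff$(iii), the two ingredients you rely on are both standard and both genuinely needed: the saturation identities $\mc E={}^{\perp}\mc M$ and $\mc M=\mc E^{\perp}$ do follow from the paper's three-clause definition (given a square from $e\in\mc E$ to $m\in\mc M$, factor each trivially as $\idn\circ e$ and $m\circ\idn$ and apply the unique-filler axiom to get orthogonality; conversely, if $f\perp\mc M$, factoring $f=m'\circ e'$ and lifting against $m'$ produces a two-sided inverse for $m'$ by uniqueness of fillers, so $f\in\mc E$); and the transfer $(e\ot X)\perp m \iff e\perp[X,m]$ is the usual transposition of lifting problems across the adjunction $-\ot X\dashv[X,-]$, with naturality guaranteeing that both triangle conditions and the bijection on fillers are preserved. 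With those two lemmas in place the quantifier-matching argument closes the equivalence, so the proof is complete; it is almost certainly the same argument Ambler gives, merely written out rather than cited.
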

\begin{proof}
  See Ambler for proof and further details \cite[Lemma 5.2.2]{Am}.
\end{proof}

We need not worry too much, however, about the distinction between factorisation
systems and monoidal factorisation systems. The reason is that for our
purposes---where the underlying category $\mc C$ has finite colimits and the
monoidal product is the coproduct---\emph{all} factorisation systems are
monoidal factorisation systems. This is implied by the following lemma.

\begin{lemma} \label{lem.monfact}
  Let $\mc C$ be a category with finite coproducts, and let $(\mc E, \mc M)$ be a
  factorisation system on $\mc C$. Then $(\mc E,+)$ is a symmetric monoidal
  category.
\end{lemma}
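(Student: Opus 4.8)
The plan is to exploit the fact that $\mc E$ is a \emph{wide} subcategory of $\mc C$: by axiom (i) it contains every isomorphism, in particular every identity $\idn_X$, so it contains every object of $\mc C$, and by hypothesis it is closed under composition. Consequently almost all of the symmetric monoidal structure of $(\mc C,+)$ restricts to $\mc E$ for free. The monoidal unit $\varnothing$ is an object of $\mc E$; the associator, the two unitors, and the braiding of $(\mc C,+)$ are isomorphisms and hence already lie in $\mc E$; and the pentagon, triangle, and hexagon identities persist in $\mc E$ because they hold in $\mc C$ and $\mc E$ carries the same composition. The only genuinely new thing to check is that the coproduct bifunctor $+\colon \mc C\times \mc C \to \mc C$ restricts to a bifunctor $\mc E \times \mc E \to \mc E$; once this is known, naturality of the coherence isomorphisms \emph{within} $\mc E$ is inherited from $\mc C$, since every edge of each naturality square then lies in $\mc E$.

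So the crux of the argument, and the step I expect to be the main obstacle, is the closure lemma: if $e\colon A \to B$ and $e'\colon A'\to B'$ lie in $\mc E$, then so does $e+e'\colon A+A'\to B+B'$. I would first record that axiom (iii), specialised to the trivial factorisations $e = \idn\circ e$ for $e\in\mc E$ and $m = m\circ \idn$ for $m\in\mc M$, yields the usual unique diagonal lifting property of $\mc E$ against $\mc M$: for every commuting square with left edge in $\mc E$ and right edge in $\mc M$ there is a unique diagonal filler. Then I would factor $e+e' = m\circ \epsilon$ with $\epsilon\in\mc E$ and $m\in\mc M$ (axiom (ii)) and reduce the lemma to showing that $m$ is an isomorphism; since $\mc E$ contains all isomorphisms and is closed under composition, $e+e' = m\circ\epsilon$ would then lie in $\mc E$.

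To invert $m\colon Q \to B+B'$ I would construct a two-sided inverse by diagonal filling. The square with top $\epsilon\circ\iota_A$, left $e$, right $m$, and bottom $\iota_B$ commutes, since both composites equal $(e+e')\circ\iota_A = \iota_B\circ e$; the lifting property supplies a unique $d_B\colon B \to Q$ with $d_B e = \epsilon\iota_A$ and $m d_B = \iota_B$, and symmetrically a $d_{B'}$. Their copairing $d = [d_B,d_{B'}]$ then satisfies $m d = [\iota_B,\iota_{B'}] = \idn_{B+B'}$ by the universal property of the coproduct. For the reverse composite I would verify on injections that $d\circ(e+e') = \epsilon$ (indeed $d(e+e')\iota_A = d_B e = \epsilon\iota_A$, and likewise on $A'$), so that both $dm$ and $\idn_Q$ are diagonal fillers for the commuting square with top $\epsilon$, left $\epsilon$, right $m$, bottom $m$; the uniqueness clause, applied to $\epsilon\in\mc E$ against $m\in\mc M$, forces $dm = \idn_Q$. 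Hence $m$ is invertible, the closure lemma holds, and $(\mc E,+)$ is a monoidal category. Since the braiding of $(\mc C,+)$ is inherited and still satisfies its axioms in $\mc E$, it is in fact symmetric monoidal, completing the proof.
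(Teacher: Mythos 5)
Your proof is correct and follows essentially the same route as the paper: reduce to closure of $\mc E$ under $+$, factor $e+e'$ as $m\circ\epsilon$, build the components of $m$'s inverse by the unique diagonal filling property applied to the coproduct injections, and use uniqueness of fillers to get $dm=\idn$. The only cosmetic difference is that you explicitly extract the orthogonality/lifting property from axiom (iii) first, whereas the paper invokes axiom (iii) directly on the relevant pairs of factorisations.
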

\begin{proof}
  The only thing to check is that $\mc E$ is closed under $+$. That is, given
  $f\maps A \to B$ and $g\maps C \to D$ in $\mc E$, we wish to show that
  $f+g\maps A+C \to B+D$, defined in $\mc C$, is also a morphism in $\mc E$. 

  Let $f+g$ have factorisation $A+C \stackrel{e}\longrightarrow \overline{B+D}
  \stackrel{m}\longrightarrow B+D$, where $e \in \mc E$ and $m \in \mc
  M$. We will prove that $m$ is an isomorphism. To construct an inverse, recall
  that by definition, as $f$ and $g$ lie in $\mc E$, there exist morphisms
  $x\maps B \to \overline{B+D}$ and $y\maps D \to \overline{B+D}$ such that
  \[ \label{eq.coreltensor}
    \xymatrixcolsep{2pc}
    \xymatrixrowsep{2pc}
    \xymatrix{
      A \ar[r]^f \ar[d] & B \ar@{=}[r] \ar@{.>}[d]^x & B
      \ar[d] \\
      A+C \ar[r]_{e}&\overline{B+D} \ar[r]_{m} & B+D
    }
    \qquad \mbox{and} \qquad
    \xymatrixcolsep{2pc}
    \xymatrixrowsep{2pc}
    \xymatrix{
      C \ar[r]^g \ar[d] & D \ar@{=}[r] \ar@{.>}[d]^y & D
      \ar[d] \\
      A+C \ar[r]_{e}&\overline{B+D} \ar[r]_{m} & B+D
    }
    \tag{1}
  \]
  The copairing $[x,y]$ is an inverse to $m$. 
  
  Indeed, taking the coproduct of the top rows of the two diagrams above and the
  copairings of the vertical maps gives the commutative diagram
  \[
    \xymatrix{
      A+C \ar[r]^{f+g} \ar@{=}[d] & B+D \ar@{=}[r] \ar[d]_{[x,y]} & B+D \ar@{=}[d] \\
      A+C \ar[r]^{e} & \overline{B+D} \ar[r]^{m} & B+D
    }
  \]
  Reading the right-hand square immediately gives $m \circ [x,y] =1$.
  
  Conversely, to see that $[x,y] \circ m = 1$, remember that by definition $f+g
  = m \circ e$. So the left-hand square above implies that
  \[
    \xymatrixcolsep{2pc}
    \xymatrixrowsep{2pc}
    \xymatrix{
      A+C \ar[r]^e \ar@{=}[d] & \overline{B+D} \ar[d]^{[x,y] \circ m} \\
      A+C \ar[r]_{e}&\overline{B+D} 
    }
  \]
  commutes. But by the universal property of factorisation systems, there is a
  unique map $\overline{B+D} \to \overline{B+D}$ such that this diagram
  commutes, and clearly the identity map also suffices. Thus $[x,y] \circ m =
  1$.
\end{proof}

\subsection{Corelations}
Now that we have introduced factorisation systems, observe that relations are
just spans $X \leftarrow N \to Y$ in $\Set$ such that $N \to X \times Y$ is an
element of $\mathrm{Inj}$, the right factor in the factorisation system
$(\mathrm{Sur},\mathrm{Inj})$. Relations may thus be generalised as spans such
that the span maps jointly belong to some class $\mc M$ of an $(\mc E,\mc
M)$-factorisation system. We define corelations in the dual manner.

\begin{definition}
  Let $\mathcal C$ be a category with finite colimits, and let $(\mathcal E,
  \mathcal M)$ be a factorisation system on $\mathcal C$. An $(\mathcal
  E,\mathcal M)$\define{-corelation} $X \to Y$ is a cospan $X
  \stackrel{i}\longrightarrow N \stackrel{o}\longleftarrow Y$ in $\mc C$ such
  that the copairing $[i,o]\maps X+Y \to N$ lies in $\mathcal E$.
\end{definition}

When the factorisation system is clear from context, we simply call $(\mathcal
E,\mathcal M)$-corelations `corelations'.

We also say that a cospan $X \stackrel{i}\longrightarrow N
\stackrel{o}\longleftarrow Y$ with the property that the copairing $[i,o]\maps
X+Y \to N$ lies in $\mathcal E$ is \define{jointly} $\mathcal E$\define{-like}.
Note that if a cospan is jointly $\mc E$-like then so are all isomorphic
cospans. Thus the property of being a corelation is closed under isomorphism of
cospans, and we again are often lazy with our language, referring to both
jointly $\mc E$-like cospans and their isomorphism classes as corelations. 

If $f\maps A \to N$ is a morphism with factorisation $f = m \circ e$, write
$\overline N$ for the object such that $e\maps A \to \overline N$ and $m\maps
\overline N \to N$. Now, given a cospan $X \stackrel{i_X}{\longrightarrow} N
\stackrel{o_Y}{\longleftarrow} Y$, we may use the factorisation system to write
the copairing $[i_X,o_Y]\maps X+Y \to N$ as
\[
  X+Y \stackrel{e}{\longrightarrow} \overline{N} \stackrel{m}{\longrightarrow}
  N.
\]
From the universal property of the coproduct, we also have maps $\iota_X\maps X
\to X+Y$ and $\iota_Y\maps Y \to X+Y$. We then call the corelation 
\[
  X \stackrel{e\circ \iota_X}{\longrightarrow} \overline{N} \stackrel{e \circ
  \iota_Y}{\longleftarrow} Y
\]
the $\mathcal E$\define{-part} of the above cospan. On occasion we will also
write $e\maps X+Y \to \overline N$ for the same corelation.

\begin{examples} \label{ex.corels}
  Many examples of corelations are already familiar.
  \begin{itemize}
    \item For the morphism-isomorphism factorisation system $(\mc C,\mc I_{\mc
      C})$, corelations are just cospans.

    \item For the isomorphism-morphism factorisation $(\mc I_{\mc C}, \mc C)$,
      jointly $\mc I_{\mc C}$-like cospans $X \to Y$ are simply isomorphisms
      $X+Y \stackrel\sim\to N$. Thus there is a unique isomorphism class of
      corelations between any two objects.

    \item Note that the category $\Set$ has finite colimits and an epi-mono
      factorisation system $(\mathrm{Sur},\mathrm{Inj})$. Epi-mono corelations
      from $X \to Y$ in $\mathrm{Set}$ are surjective functions $X+Y \to N$;
      thus their isomorphism classes are partitions, or equivalence relations on
      $X+Y$. 

      While this compositional structure on equivalence relations has not the
      prominence of that on relations, these corelations have been recognised as
      an important structure. Ellerman gives a detailed treatment from a logic
      viewpoint in \cite{Ell14}, while basic category theoretic aspects can be
      found in Lawvere and Rosebrugh \cite{LR}.  Note that in these and in other
      sources, including \cite{CF,BF}, the term corelation is used to solely
      refer to these $(\mathrm{Sur},\mathrm{Inj})$-corelations in $\Set$, while
      here we use the term corelation primarily in the generalised sense.
  \end{itemize}
\end{examples}

Our next task is to define a composition rule on corelations.

\subsection{Categories of corelations} \label{ssec.corelcats}
We begin this subsection by defining a composition rule on isomorphism classes
of corelations. The end goal, however, is to define a hypergraph category in
which the morphism are corelations. Here we will explain how to define such a
category, introducing and exploring the important condition that $\mc M$ is
stable under pushout. We leave the proof that we have truly defined a hypergraph
category for the next subsection.

We compose corelations by taking the $\mathcal E$-part of their composite
cospan. That is, given corelations $X \stackrel{i_X}{\longrightarrow} N
\stackrel{o_Y}{\longleftarrow} Y$ and $Y \stackrel{i_Y}{\longrightarrow} M
\stackrel{o_Z}{\longleftarrow} Z$, their composite is given by the cospan $X
\xrightarrow{e\circ\iota_X} \overline{N+_YM} \xleftarrow{e \circ \iota_Z} Z$ in the commutative diagram
\[
\includegraphics{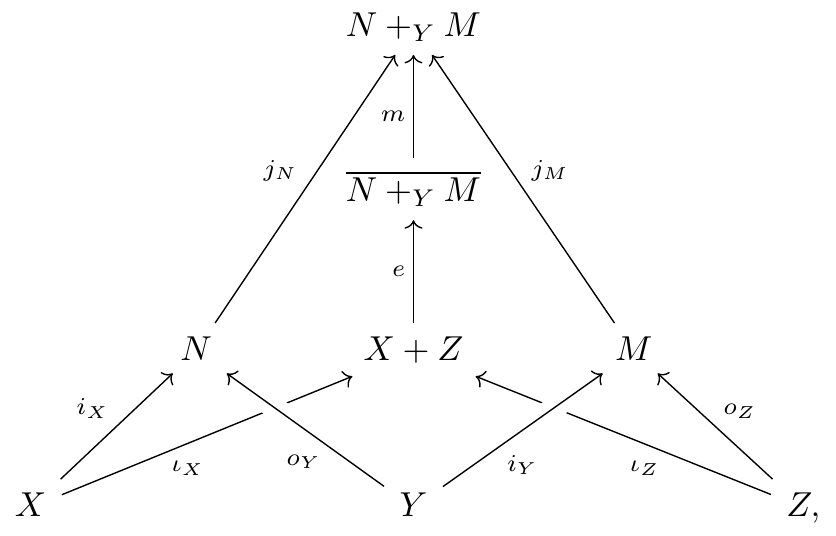}
\]
where $m \circ e$ is the $(\mc E,\mc M)$-factorisation of $[j_N\circ i_X,j_M
\circ o_Z]\maps X+Z
\to N+_YM$. 

It is well known that this composite is unique up to isomorphism, and that when
$\mc M$ is well behaved it defines a category with isomorphism classes of
corelations as morphisms. For instance, a bicategorical version of the dual
theorem, for spans and relations, can be found in \cite{JW00}. Nonetheless, for
the sake of completeness we will explain all the details and sketch our own
argument here. The first fact to show is that, as we have just stated, the
composite of corelations is unique up to isomorphism.

\begin{proposition} \label{prop.corelcomp}
  Let $\mc C$ be a category with finite colimits and with a factorisation system
  $(\mc E,\mc M)$. Then the above is a well-defined composition rule on
  isomorphism classes of corelations.
\end{proposition}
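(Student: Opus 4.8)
The plan is to reduce everything to two standard facts: that pushouts are determined up to isomorphism by their universal property, and that $(\mathcal E,\mathcal M)$-factorisations are determined up to a unique isomorphism compatible with both factors. Since the composite corelation is built by first forming the pushout $N+_YM$ and then taking the $\mathcal E$-part of the copairing $[j_N\circ i_X, j_M\circ o_Z]\maps X+Z \to N+_YM$, the only choices involved are the choice of pushout, the choice of factorisation, and the choice of representatives of the two input isomorphism classes. I would show that each of these affects the output only up to an isomorphism of corelations, so that the composite descends to a well-defined isomorphism class.

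First I would record the uniqueness of factorisations. Given two factorisations $m\circ e = f = m'\circ e'$ of a single map $f$, the square with $e'\in\mathcal E$ on one side and $m\in\mathcal M$ on the other commutes, and axiom (iii) (equivalently, the orthogonality of $\mathcal E$ and $\mathcal M$) produces a unique $s$ with $s\circ e'=e$ and $m\circ s=m'$; the symmetric construction gives an $s'$ the other way, and the uniqueness clause forces $s\circ s'$ and $s'\circ s$ to be identities. Hence the two factorisation objects are canonically isomorphic via an $s$ respecting both $e$ and $m$. In particular, for a fixed composite cospan the $\mathcal E$-part $\overline{N+_YM}$ is determined up to a unique isomorphism commuting with the legs $e\circ\iota_X$ and $e\circ\iota_Z$. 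I would also note here that the copairing of these two legs is precisely $e$, which lies in $\mathcal E$, so the output is genuinely a corelation.

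Next, for independence of the chosen pushout, I would use that any two pushouts of the span $N \leftarrow Y \to M$ are related by a unique isomorphism commuting with the injections $j_N,j_M$; this isomorphism carries one copairing $[j_N\circ i_X,j_M\circ o_Z]$ to the other, and by the uniqueness of factorisations just established it induces an isomorphism of the $\mathcal E$-parts compatible with their feet. Finally, for independence of representatives, an isomorphism of the first corelation is an iso $n\maps N\to N'$ with $n\circ i_X=i_X'$ and $n\circ o_Y=o_Y'$, and similarly $p\maps M\to M'$ for the second; the universal property of the pushout yields a unique iso $N+_YM\to N'+_YM'$ commuting with the injections and intertwining the two copairings out of $X+Z$, which again promotes to an isomorphism of $\mathcal E$-parts commuting with the feet. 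Composing these three comparisons shows the composite is independent of all choices.

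The main obstacle is bookkeeping: at each stage one must check not merely that the apices are isomorphic but that the comparison isomorphisms commute with the inclusions of the feet $X$ and $Z$, so that they are genuine isomorphisms of corelations rather than of bare objects. This is exactly where the uniqueness half of the fill-in axiom does the real work, as it is what guarantees that the induced maps on $\mathcal E$-parts are mutually inverse and compatible with the structure maps; everything else is routine diagram chasing with the universal properties of the coproduct and the pushout.
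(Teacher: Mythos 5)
Your proposal is correct and follows essentially the same route as the paper: the paper likewise first observes (citing the earlier result on composing cospans) that the composite cospans are isomorphic via some $p\maps N+_YM \to N'+_YM'$, and then uses the fill-in/orthogonality axiom of the factorisation system to produce the isomorphism $s$ between the $\mathcal E$-parts commuting with the feet. Your additional explicit checks (uniqueness of factorisations, independence of the chosen pushout, and that the copairing of the composite's legs lies in $\mathcal E$) are all correct elaborations of steps the paper leaves implicit.
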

\begin{proof}
  Let
  $(X \stackrel{i_X}{\longrightarrow} N \stackrel{o_Y}{\longleftarrow} Y$,
  $X \stackrel{i_X'}{\longrightarrow} N' \stackrel{o_Y'}{\longleftarrow} Y)$
  and
  $(Y \stackrel{i_Y}{\longrightarrow} M \stackrel{o_Z}{\longleftarrow} Z$, $Y
  \stackrel{i_Y'}{\longrightarrow} M' \stackrel{o_Z'}{\longleftarrow} Z)$
  be pairs of isomorphic jointly $\mathcal E$-like cospans. By Proposition
  \ref{prop.composingdeccospans} their composites \emph{as cospans} 
  are isomorphic via an isomorphism $p\maps N+_YM \to N'+_YM'$. The
  factorisation system then gives an isomorphism $s$ such that the diagram
  \[
    \xymatrixcolsep{3pc}
    \xymatrixrowsep{2pc}
    \xymatrix{
      X+Z \ar[r]^e \ar@{=}[d] & \overline{N+_YM} \ar[r]^m \ar@{.>}[d]^{s}_\sim & N+_YM
      \ar[d]_\sim^{p} \\
      X+Z \ar[r]_{e'}&\overline{N'+_YM'} \ar[r]_{m'} & N'+_YM'
    }
  \]
  commutes. Thus $s$ is an isomorphism of the composite corelations.
\end{proof}

As we have said, this composition rule only gives a category when $\mc M$ is
well behaved. The reason is that composition of corelations is \emph{not}
associative in general. It is, however, associative when $\mathcal M$ is stable
under pushout. We now define this, provide some examples, and prove a crucial
lemma.

\begin{definition}
  Given a category $\mc C$, we say that a subcategory $\mc M$ is \define{stable
  under pushout} if for every pushout square
  \[
    \xymatrixcolsep{3pc}
    \xymatrixrowsep{3pc}
    \xymatrix{
      \ar[r]^j & \\
      \ar[u] \ar[r]^m &  \ar[u]
    }
  \]
  such that $m \in \mathcal M$, we also have that $j \in \mathcal M$. 
\end{definition}

\begin{examples}
  There are many examples of $(\mc E,\mc M)$-factorisation systems with $\mc M$
  stable under pushout, including the factorisation systems $(\mc I_{\mc C},\mc
  C)$ and $(\mc C,\mc I_{\mc C})$ in $\mc C$, and $(\mathrm{Sur},\mathrm{Inj})$
  in $\Set$.

  This last example generalises to any topos. Indeed, Lack and Soboci\'nski
  showed that monomorphisms are stable under pushout in any adhesive category
  \cite{LS04}.  Since any topos is both a regular category and an adhesive
  category \cite{LS06,Lac11}, the regular epimorphism-monomorphism factorisation
  system in any topos is an $(\mc E,\mc M)$-factorisation system with $\mc M$
  stable under pushout.
  
  Another example is the dual of any regular category. Such a category is known
  as a coregular category, and is by definition a category that has finite
  colimits and an epimorphism-regular monomorphism factorisation system with
  regular monomorphisms stable under pushout. Examples of these include the
  category of topological spaces and continuous maps, as well as $\Set^\opp$,
  any cotopos, and so on.
\end{examples}

Stability under pushout is a powerful property. A key corollary, both for
associativity and in general, is that it implies $\mc M$ is also closed under
$+$. 

\begin{lemma} \label{lem.mcoproductsmc}
  Let $\mathcal C$ be a category with finite colimits, and let $\mathcal M$ be a
  subcategory of $\mathcal C$ stable under pushouts and containing all
  isomorphisms. Then $(\mc M,+)$ is a symmetric monoidal category.
\end{lemma}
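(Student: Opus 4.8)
The plan is to follow the template of Lemma \ref{lem.monfact}, noting that almost the entire symmetric monoidal structure on $(\mc M,+)$ is inherited from $(\mc C,+)$. Since $\mc M$ contains all isomorphisms of $\mc C$, it contains every identity morphism $\idn_X$, and hence is a \emph{wide} subcategory: its objects are exactly the objects of $\mc C$. In particular the initial object $\varnothing$ is an object of $\mc M$, and the associator, unitors, and braiding of $(\mc C,+)$ --- being isomorphisms --- all lie in $\mc M$. Their naturality and the pentagon, triangle, and hexagon coherence axioms then hold automatically in $\mc M$, because they hold in $\mc C$ and $\mc M$ is closed under composition. So the only genuine thing to verify is that the coproduct restricts to a bifunctor $+\maps \mc M \times \mc M \to \mc M$; equivalently, that $\mc M$ is closed under $+$ on morphisms.

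First I would reduce closure under $+$ to a statement about single pushouts. Given $f\maps A \to B$ and $g\maps C \to D$ in $\mc M$, I would factor the coproduct map as the composite
\[
  A+C \xrightarrow{\;\idn_A + g\;} A+D \xrightarrow{\;f+\idn_D\;} B+D,
\]
so that $f+g = (f+\idn_D)\circ(\idn_A+g)$. Since $\mc M$ is a subcategory and hence closed under composition, it suffices to show each of the two factors lies in $\mc M$.

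The key step is to recognise each factor as a pushout of a map already known to be in $\mc M$, then invoke stability under pushout. Concretely, I claim the square
\[
  \xymatrix{
    C \ar[r]^{\iota_C} \ar[d]_g & A+C \ar[d]^{\idn_A + g} \\
    D \ar[r]_{\iota_D} & A+D
  }
\]
is a pushout: given a cocone $h\maps A+C \to Z$, $k\maps D \to Z$ with $h\iota_C = kg$, the map copairing $h\iota_A$ on $A$ with $k$ on $D$ is the unique mediating morphism, as one checks on the $A$- and $C$-components. Since $g \in \mc M$ and $\mc M$ is stable under pushout, $\idn_A + g \in \mc M$; the identical argument applied to the pushout of $f$ along $\iota_A\maps A \to A+D$ gives $f+\idn_D \in \mc M$. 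Composing, $f+g \in \mc M$, so $+$ restricts to $\mc M$ and $(\mc M,+)$ is symmetric monoidal.

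The only real content is this pushout-recognition observation, so I expect the main (and rather mild) obstacle to be bookkeeping the universal property carefully enough to confirm the square genuinely is a pushout, and to confirm that stability under pushout is the only hypothesis on $\mc M$ actually used beyond its being a wide subcategory closed under composition.
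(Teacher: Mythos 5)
Your proof is correct and follows essentially the same route as the paper: the coherence data and axioms are inherited from $(\mc C,+)$ since $\mc M$ contains all isomorphisms, and closure under $+$ is obtained by writing $f+g$ as a composite of two maps, each of which is the pushout of a morphism of $\mc M$ along a coproduct inclusion, hence lies in $\mc M$ by stability under pushout. The only difference is that you verify the pushout square's universal property explicitly where the paper simply asserts it; that verification is correct.
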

\begin{proof}
  It is enough to show that for all morphisms $m,m' \in \mc M$ we have $m+m'$ in
  $\mc M$. Since $\mc M$ contains all isomorphisms, the coherence maps are
  inherited from $\mc C$. The required axioms---the functoriality of the tensor
  product, the naturality of the coherence maps, and the coherence laws---are
  also inherited as they hold in $\mc C$.

  To see $m+m'$ is in $\mc M$, simply observe that we have the pushout square
  \[
  \xymatrixcolsep{3pc}
  \xymatrixrowsep{2pc}
    \xymatrix{
      A+C \ar[r]^{m+1} & B+C \\
      A \ar[r]^m \ar[u]^{\iota} & B \ar[u]_{\iota} \\
    }
  \]
  in $\mc C$. As $\mc M$ is stable under pushout, $m+1 \in \mc M$. Similarly,
  $1+m' \in \mc M$. Thus their composite $m+m'$ lies in $\mc M$, as required.
\end{proof}

An analogous argument shows that pushouts of maps $m+_Ym'$ also lie in $\mc M$.
Using this lemma it is not difficult to show associativity---the key point is
that factorisation `commutes' with pushouts, and that we have a category
$\corel_{(\mc E,\mc M)}(\mc C)$. Again, this is all well known, and can be found
in \cite{JW00}. We will incidentally reprove these facts in the following,
while pursuing richer structure.

Indeed, for modelling networks, we require not just a category, but a hypergraph
category. Corelation categories come equipped with this extra structure.  Recall
that we gave decorated cospan categories a hypergraph structure by defining a
wide embedding $\mathrm{Cospan}(\mc C) \hookrightarrow F\mathrm{Cospan}$, via
which $F\mathrm{Cospan}$ inherited the coherence and Frobenius maps (Theorem
\ref{thm:fcospans}). We will argue similarly here, after showing that the map
\[
  \cospan(\mc C) \longrightarrow \corel(\mc C)
\]
taking each cospan to its jointly $\mc E$-like part is functorial. Indeed, we
define the coherence and Frobenius maps of $\corel(\mc C)$ to be their image
under this map. For the monoidal product we again use the coproduct in $\mc C$;
the monoidal product of two corelations is their monoidal product as cospans.

\begin{theorem} \label{thm.cospantocorel}
  Let $\mathcal C$ be a category with finite colimits, and let $(\mathcal E,
  \mathcal M)$ be a factorisation system on $\mathcal C$ such that $\mathcal M$
  is stable under pushout. Then there exists a hypergraph category
  $\mathrm{Corel}_{(\mc E,\mc M)}(\mathcal C)$ with 
  \smallskip

  \begin{center}
    \begin{tabular}{| c | p{.65\textwidth} |}
      \hline
      \multicolumn{2}{|c|}{The hypergraph category $(\mathrm{Corel}_{(\mc E,\mc M)}(\mc C),+)$} \\
      \hline
      \textbf{objects} & the objects of $\mathcal C$ \\ 
      \textbf{morphisms} & isomorphism classes of $(\mc E,\mc M)$-corelations in $\mathcal C$\\ 
      \textbf{composition} & given by the $\mc E$-part of pushout \\
      \textbf{monoidal product} & the coproduct in $\mathcal C$ \\
      \textbf{coherence maps} & inherited from $\cospan(\mc C)$  \\
      \textbf{hypergraph maps} & inherited from $\cospan(\mc C)$ \\
      \hline
    \end{tabular}
  \end{center}  
  \smallskip
\end{theorem}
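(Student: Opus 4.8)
The plan is to follow the template of Theorem~\ref{thm:fcospans}: rather than verify the hypergraph axioms directly, I will exhibit a strict symmetric monoidal functor
\[
  Q\maps \cospan(\mc C) \longrightarrow \corel_{(\mc E,\mc M)}(\mc C)
\]
that is the identity on objects and sends each cospan to its $\mc E$-part, and then simply \emph{define} the coherence and Frobenius maps of $\corel_{(\mc E,\mc M)}(\mc C)$ to be the $Q$-images of those of the hypergraph category $\cospan(\mc C)$. Since every corelation is, up to isomorphism, its own $\mc E$-part, $Q$ is surjective on morphisms; hence every equation among the structure maps of $\corel_{(\mc E,\mc M)}(\mc C)$---the coherence laws, the special commutative Frobenius axioms, the compatibility of the Frobenius structure with $+$, and the naturality of the coherence maps---is the $Q$-image of the corresponding equation already known to hold in $\cospan(\mc C)$, and so holds automatically once $Q$ is shown to be a strict symmetric monoidal functor.

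The technical heart, and the only real obstacle, is establishing that $Q$ respects composition, that is, that taking $\mc E$-parts commutes with composition: $Q(c' \circ c)$, the $\mc E$-part of a pushout of cospans, agrees with the $\mc E$-part of the pushout of the $\mc E$-parts. (Well-definedness of the latter operation up to isomorphism is already Proposition~\ref{prop.corelcomp}.) Concretely, given a cospan $X \xrightarrow{i_X} N \xleftarrow{o_Y} Y$ with $\mc E$-part $X \to \overline N \leftarrow Y$ and $\mc M$-map $m\maps \overline N \to N$, I would compose with $Y \xrightarrow{i_Y} M \xleftarrow{o_Z} Z$ and compare the apices $\overline N +_Y M$ and $N +_Y M$. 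By the pasting lemma for pushouts, the square pushing $m$ out along the coprojection $\overline N \to \overline N +_Y M$ has apex $N +_Y M$; since $m \in \mc M$ and $\mc M$ is stable under pushout, the induced comparison map $\overline N +_Y M \to N +_Y M$ lies in $\mc M$. Uniqueness of $(\mc E,\mc M)$-factorisations then forces the two composite cospans to have the same $\mc E$-part. Applying this comparison on each of the two feet in turn yields the desired identity $Q(c' \circ c) = Q(c') \circ Q(c)$.

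Granting this, the category structure is formal. Preservation of identities follows from $Q(1)\circ Q(c) = Q(c)$, and, writing every corelation as $Q$ of itself, the associativity and unitality of the corelation composition rule descend from those of cospan composition together with the functoriality just established; thus $\corel_{(\mc E,\mc M)}(\mc C)$ is a category and $Q$ a functor. It then remains to check $Q$ is strict symmetric monoidal, which is routine given the earlier lemmas. The monoidal product on both categories is the coproduct $+$; that the coproduct of two corelations is again a corelation follows from Lemma~\ref{lem.monfact} (that $\mc E$ is closed under $+$), after transporting along the evident symmetry isomorphism, and that $Q$ preserves $+$ follows by combining Lemma~\ref{lem.monfact} with Lemma~\ref{lem.mcoproductsmc} (that $\mc M$ is closed under $+$) and uniqueness of factorisation, so that $\overline{N+N'} \cong \overline N + \overline{N'}$.

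Finally, I take the associator, unitors and braiding of $\corel_{(\mc E,\mc M)}(\mc C)$ to be the $Q$-images of those in $\cospan(\mc C)$: these are images of isomorphisms, remain natural because $Q$ is surjective on morphisms, and obey the coherence laws because their preimages do. Defining the Frobenius maps on each object as the $Q$-images of $\mu = [1,1]$, $\eta = \,!$, $\delta = [1,1]^\opp$, $\epsilon = \,!^\opp$ and invoking the strict monoidality of $Q$ then transfers every special commutative Frobenius axiom and every Frobenius--monoidal compatibility axiom from $\cospan(\mc C)$, yielding the claimed hypergraph category. I expect the pushout-pasting argument controlling the comparison map via stability of $\mc M$ to be the delicate step; everything downstream is bookkeeping transported along $Q$.
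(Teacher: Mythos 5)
Your proposal is correct and follows essentially the same route as the paper: establish that the $\mc E$-part map $\cospan(\mc C)\to\corel(\mc C)$ preserves composition (the comparison map between apices lies in $\mc M$ by stability under pushout, so uniqueness of $(\mc E,\mc M)$-factorisations forces the $\mc E$-parts to agree), and then transport every axiom along this surjective, composition- and $+$-preserving map. The only cosmetic difference is that the paper proves the composition-preservation lemma in the greater generality of a map between two corelation categories over possibly different base categories (Proposition~\ref{prop.corelfunctors}) and specialises to $A=1_{\mc C}$, $(\mc E,\mc M)=(\mc C,\mc I_{\mc C})$, whereas you argue the special case directly, replacing one apex by its $\mc E$-part one foot at a time.
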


Again, we will drop explicit reference to the factorisation system when context allows, simply writing $\mathrm{Corel}(\mc C)$.

\begin{examples} \label{ex.corelcats}
In each factorisation system of Examples \ref{ex.corels} the right factor
$\mathcal M$ is stable under pushout. The hypergraph category 
$\corel_{(\mc C,\mc I_{\mc C})}(\mc C)$ is just the hypergraph category of
cospans in $\mc C$. In the hypergraph category $\corel_{(\mc I_{\mc C},\mc
C)}(\mc C)$, the only morphism between any two objects $X$ and $Y$ is the
isomorphism class of the corelation $X \xrightarrow{\iota_X} X+Y
\xleftarrow{\iota_Y} Y$ corresponding to the identity map $X+Y \to X+Y$ in $\mc
I_{\mc C}$. Thus $\corel_{(\mc I_{\mc C},\mc C)}(\mc C)$ is the indiscrete
category on the objects of $\mc C$. 

We discussed epi-mono corelations in $\FinSet$ informally in our motivation
section \textsection\ref{sec.blackboxing}. These are equivalence relations
between finite sets. This example is perhaps the most instructive for
black-boxing open systems, and we will return to it in
\textsection\ref{ssec.equivrels}.
\end{examples}

Proposition \ref{prop.corelcomp} shows that our composition rule is a
well-defined function; Lemma \ref{lem.monfact} shows likewise for the monoidal
product $+\maps \corel(\mc C)\times \corel(\mc C) \to \corel(\mc C)$. Thus we
have the required data for a hypergraph category. It remains to check a
number of axioms: associativity and unitality of the categorical composition,
functoriality of the monoidal product, naturality of the coherence maps, the
coherence axioms for symmetric monoidal categories, the Frobenius laws.

Our strategy for this will be to show that the surjective function from cospans
to corelations defined by taking a cospan to its jointly $\mc E$-part preserves
both composition and the monoidal product. This then implies that to evaluate an
expression in the monoidal category of corelations, we may simply evaluate it in
the monoidal category of cospans, and then take the $\mc E$-part. Thus if an
equation is true for cospans, it is true for corelations.

Instead of proving just this, however, we will prove a generalisation regarding
an analogous map between any two corelation categories. Such a map exists
whenever we have two corelation categories $\corel_{(\mc E,\mc M)}(\mc C)$ and
$\corel_{(\mc E',\mc M')}(\mc C')$ and a colimit
preserving functor $A\maps \mc C \to \mc C'$ such that the image of $\mc M$ lies
in $\mc M'$. As $(\mc C,\mc I_{\mc C})$-corelations are just cospans, this
reduces to the desired special case by taking the domain to be the category of
$(\mc C,\mc I_{\mc C})$-corelations, $\mc C'$ to be equal to $\mc C$, and $A$ to
be the identity functor. But the generality is not spurious: it has the
advantage of proving the existence of a class of hypergraph functors between
corelation categories in the same fell swoop.

Although a touch convoluted, this strategy is worth the pause for thought. We
will use it once again for \emph{decorated} corelations, to great economy.

\section{Functors between corelation categories} \label{sec.corelfunctors}
We have seen that to construct a functor between cospan categories one may start
with a colimit-preserving functor between the underlying categories. Corelations
are cospans where we forget the $\mc M$-part of each cospan. Hence for functors
between corelation categories, we require not just a colimit-preserving functor
but, loosely speaking, also that we don't forget too much in the domain category
compared to the codomain category.

We devote the next few pages to proving the following proposition. Along the way
we prove, as promised, that corelation categories are well-defined hypergraph
categories.

\begin{proposition} \label{prop.corelfunctors}
  Let $\mathcal C$, $\mathcal C'$ have finite colimits and respective
  factorisation systems $(\mathcal E, \mathcal M)$, $(\mathcal E', \mathcal M')$,
  such that $\mathcal M$ and $\mathcal M'$ are stable under pushout. Further let
  $A\maps \mathcal C \to \mathcal C'$ be a functor that preserves finite colimits
  and such that the image of $\mathcal M$ lies in $\mathcal M'$.

  Then we may define a hypergraph functor $\square\maps \corel(\mathcal C) \to
  \corel(\mathcal C')$ sending each object $X$ in $\corel(\mathcal C)$ to $AX$ in
  $\corel(\mc C')$ and each corelation 
  \[
    X \stackrel{i_X}{\longrightarrow} N \stackrel{o_Y}{\longleftarrow} Y 
  \]
  to the $\mc E'$-part
  \[
    AX \xrightarrow{e'\circ\iota_{AX}} \overline{AN}
    \xleftarrow{e'\circ\iota_{AY}} AY.
  \]
  of the image cospan. The coherence maps are the $\mc E'$-part
  $\overline{\kappa_{X,Y}}$ of the isomorphisms $\kappa_{X,Y}\maps AX+AY \to
  A(X+Y)$ given as $A$ preserves colimits.
\end{proposition}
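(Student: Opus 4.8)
The plan is to verify in turn that $\square$ is well defined on isomorphism classes, that it preserves composition and identities, that it is strong symmetric monoidal with the stated coherence maps, and finally that it preserves the Frobenius structure. Along the way, specialising to $A = \idn_{\mc C}$ with the domain factorisation system taken to be $(\mc C, \mc I_{\mc C})$---so that the domain corelations are just cospans---simultaneously establishes Theorem \ref{thm.cospantocorel}, that $\corel(\mc C)$ is a well-defined hypergraph category, since $\square$ then becomes the $\mathcal E$-part map out of the known hypergraph category $\cospan(\mc C)$. The strategy throughout echoes that of Theorem \ref{thm:fcospans}: reduce each claim to the corresponding fact about the underlying cospans, where $A$ preserves the relevant colimits, and then pass to $\mathcal E'$-parts.

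The heart of the argument, and the main obstacle, is preservation of composition. Given corelations $c_1 = (X \xrightarrow{i_X} N \xleftarrow{o_Y} Y)$ and $c_2 = (Y \xrightarrow{i_Y} M \xleftarrow{o_Z} Z)$ in $\mc C$, I would first record two elementary but essential sublemmas about the codomain factorisation system. First, for any map $g$ factoring as $g = m' \circ h$ with $m' \in \mathcal M'$, the $\mathcal E'$-part of $g$ coincides with that of $h$; this is immediate from uniqueness of factorisations together with the fact that $\mathcal M'$ is closed under composition. Second, the comparison map $\overline{AN} +_{AY} \overline{AM} \to AN +_{AY} AM$ induced by the $\mathcal M'$-maps $\overline{AN} \to AN$ and $\overline{AM} \to AM$ again lies in $\mathcal M'$; this is precisely the pushout version of Lemma \ref{lem.mcoproductsmc} remarked upon in its wake, using stability of $\mathcal M'$ under pushout.

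With these in hand the computation runs as follows. Because $A$ preserves finite colimits, it carries the pushout $N +_Y M$ to $AN +_{AY} AM$ and the relevant copairings to their images, so both $\square(c_2 \circ c_1)$ and $\square(c_2) \circ \square(c_1)$ are computed as the $\mathcal E'$-part of the single copairing $g\maps AX + AZ \to AN +_{AY} AM$, but taken through two different intermediate apices, namely $A\overline{N +_Y M}$ and $\overline{AN} +_{AY} \overline{AM}$. Each of these apices maps to $AN +_{AY} AM$ by a morphism in $\mathcal M'$---the former by $Am$, since $A(\mathcal M) \subseteq \mathcal M'$, the latter by the second sublemma---so by the first sublemma both $\mathcal E'$-parts equal the $\mathcal E'$-part of $g$ and hence agree. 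Representation independence is checked similarly, factoring the image of the known cospan isomorphism and invoking the universal property of the factorisation system exactly as in Proposition \ref{prop.corelcomp}; identities are preserved since $A[1,1] = [1,1]$ and $A$ fixes identity maps.

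For the monoidal structure, the coherence maps $\overline{\kappa_{X,Y}}$ are the $\mathcal E'$-parts of the canonical isomorphisms $\kappa_{X,Y}\maps AX + AY \to A(X+Y)$; as factorisation systems contain all isomorphisms, these remain isomorphisms in $\corel(\mc C')$, so $\square$ is strong monoidal. Naturality of the coherence maps and the symmetric coherence axioms then follow by the routine method used for decorated cospans: evaluate both sides of each square in $\cospan(\mc C')$, where the corresponding square commutes because $A$ preserves colimits, and then apply the now-established composition-preserving passage to $\mathcal E'$-parts. Finally, since the Frobenius maps of both corelation categories are inherited from copairings of identity morphisms in $\cospan$, and $A$ preserves these copairings, $\square$ sends Frobenius maps to Frobenius maps; being strong symmetric monoidal, it is therefore a hypergraph functor.
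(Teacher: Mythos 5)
Your proposal is correct and follows essentially the same route as the paper: composition preservation is reduced to the uniqueness of $(\mathcal E',\mathcal M')$-factorisations by observing that both intermediate apices map into $AN+_{AY}AM$ through $\mathcal M'$-morphisms (one because $A(\mathcal M)\subseteq\mathcal M'$, the other by pushout-stability of $\mathcal M'$), the specialisation to $A=\idn_{\mc C}$ with the $(\mc C,\mc I_{\mc C})$ factorisation system is exactly how the paper simultaneously establishes Theorem \ref{thm.cospantocorel}, and naturality of $\overline{\kappa}$ and the Frobenius axioms are handled by the same descent from $\cospan(\mc C')$ to $\mathcal E'$-parts. The only cosmetic difference is that your "first sublemma" repackages the paper's uniqueness-of-factorisation argument as "postcomposition with $\mathcal M'$ does not change the $\mathcal E'$-part," which is the same content.
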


As discussed, we still have to prove that $\corel(\mc C)$ is a hypergraph
category. We address this first with two lemmas regarding these proposed
functors.

\begin{lemma} \label{lem.corelfuncomposition}
  The above function $\square\maps \corel(\mc C) \to \corel(\mc C')$ preserves
  composition.
\end{lemma}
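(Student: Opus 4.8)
The plan is to show that \emph{both} orders of computation produce the $\mc E'$-part of one and the same cospan in $\mc C'$: the cospan-composite of the $A$-images of the two corelations. Write $\rho_1 = (X \xrightarrow{i_X} N \xleftarrow{o_Y} Y)$ and $\rho_2 = (Y \xrightarrow{i_Y} M \xleftarrow{o_Z} Z)$ for the corelations to be composed, and let $\rho$ denote their composite in $\corel(\mc C)$. The single observation driving the whole argument is this: if $X \xrightarrow{i} V \xleftarrow{o} Y$ is a cospan in $\mc C'$ and $\mu\maps V \to V'$ lies in $\mc M'$, then the $\mc E'$-part of this cospan is isomorphic, as a corelation, to the $\mc E'$-part of $X \xrightarrow{\mu i} V' \xleftarrow{\mu o} Y$. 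Indeed, factoring the copairing as $X+Y \xrightarrow{e} \overline V \xrightarrow{m} V$ with $e \in \mc E'$ and $m \in \mc M'$, the copairing of the second cospan is $\mu \circ [i,o] = (\mu m)\circ e$; since $\mc M'$ is a subcategory, $\mu m \in \mc M'$, so this is an $(\mc E',\mc M')$-factorisation with the same left factor $e$, and uniqueness of factorisations identifies the two $\mc E'$-parts.

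First I would handle $\square\rho$. By construction $\rho = (X \xrightarrow{e\iota_X} \overline{N+_YM} \xleftarrow{e\iota_Z} Z)$, where $m\circ e$ factors the copairing of the composite cospan and $m\maps \overline{N+_YM} \to N+_YM$ lies in $\mc M$. Applying $\square$ means applying $A$ and then taking the $\mc E'$-part. Since $A$ preserves finite colimits we have canonical isomorphisms $A(N+_YM) \cong AN +_{AY} AM$ and $A(X+Z)\cong AX+AZ$ compatible with all structure maps, and since $A(\mc M)\subseteq \mc M'$ the map $Am\maps A\overline{N+_YM} \to A(N+_YM)$ lies in $\mc M'$. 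By the observation (with $\mu = Am$), the $\mc E'$-part of $AX \to A\overline{N+_YM} \leftarrow AZ$ agrees with the $\mc E'$-part of $AX \to A(N+_YM) \leftarrow AZ$. Under the isomorphism $A(N+_YM)\cong AN+_{AY}AM$ the latter is precisely the cospan-composite of $A\rho_1 = (AX \to AN \leftarrow AY)$ and $A\rho_2 = (AY \to AM \leftarrow AZ)$. Hence $\square\rho$ is the $\mc E'$-part of that cospan-composite.

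Next I would compute the composite of $\square\rho_1$ and $\square\rho_2$. By definition $\square\rho_1 = (AX \to \overline{AN} \leftarrow AY)$ and $\square\rho_2 = (AY \to \overline{AM} \leftarrow AZ)$, with $\mc M'$-morphisms $m_1\maps \overline{AN} \to AN$ and $m_2\maps \overline{AM} \to AM$ from the defining factorisations. Composing as corelations first forms the pushout $\overline{AN}+_{AY}\overline{AM}$ and then takes the $\mc E'$-part of $AX \to \overline{AN}+_{AY}\overline{AM} \leftarrow AZ$. As $m_1$ and $m_2$ are compatible with the coprojections from $AY$, they induce a comparison map $\overline m = m_1 +_{AY} m_2\maps \overline{AN}+_{AY}\overline{AM} \to AN+_{AY}AM$; applying Lemma \ref{lem.mcoproductsmc} and the remark following it to the factorisation system $(\mc E',\mc M')$ on $\mc C'$, this induced map lies in $\mc M'$. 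Post-composing the cospan feet with $\overline m$ recovers exactly the cospan-composite of $A\rho_1$ and $A\rho_2$, so the observation once more (with $\mu = \overline m$) shows that the $\mc E'$-part of $AX \to \overline{AN}+_{AY}\overline{AM} \leftarrow AZ$ equals the $\mc E'$-part of the cospan-composite of $A\rho_1$ and $A\rho_2$.

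Both sides therefore equal the $\mc E'$-part of the cospan $AX \to AN +_{AY} AM \leftarrow AZ$, which gives $\square\rho = \square\rho_2 \circ \square\rho_1$ and proves that $\square$ preserves composition. The one genuinely load-bearing step is that the comparison map $\overline m = m_1 +_{AY} m_2$ between the two pushouts lies in $\mc M'$: this is where stability of $\mc M'$ under pushout, and its consequence that fibered coproducts of $\mc M'$-maps remain in $\mc M'$, is essential, and it is the step I would verify most carefully. The remaining work—checking that $A$-images of feet, pushout coprojections, and copairings match up—is formal bookkeeping that follows from $A$ preserving finite colimits.
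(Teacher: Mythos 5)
Your proof is correct and follows essentially the same route as the paper's: both arguments come down to exhibiting $\square(g\circ f)$ and $\square g\circ\square f$ as $(\mc E',\mc M')$-factorisations of the same morphism $AX+AZ\to AN+_{AY}AM$, using that $A$ preserves the pushout, that $Am\in\mc M'$ because $A(\mc M)\subseteq\mc M'$, and that $m_1+_{AY}m_2\in\mc M'$ by stability of $\mc M'$ under pushout. The paper packages this as one commuting diagram with two factorisations of a common map, whereas you isolate the reusable observation that post-composition by an $\mc M'$-map does not change the $\mc E'$-part and apply it twice; this is a purely organisational difference.
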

\begin{proof}
  Let $f = (X \longrightarrow N \longleftarrow Y)$ and $g= (Y \longrightarrow M
  \longleftarrow Z)$ be corelations in $\mathcal C$. By definition, the
  corelations $\square(g) \circ \square(f)$ and $\square(g \circ f)$ are given
  by the first arrows in the top and bottom row respectively of the diagram:
  \[ \label{diag.eparts}
    \begin{aligned}
      \xymatrixcolsep{5.5pc}
      \xymatrixrowsep{2pc}
      \xymatrix{
	\scriptstyle AX+AZ \ar[r]^{\mc E'} \ar@{=}[d] & \scriptstyle \overline{\overline{AN}+_{AY}\overline{AM}}
	\ar[r]^{\mc M'} \ar@{<.>}[d]^{n} & \scriptstyle \overline{AN}+_{AY}\overline{AM}
	\ar[r]^{m'_{AN}+_{AY}m'_{AM}} & \scriptstyle
	AN+_{AY}AM \\
	\scriptstyle AX+AZ \ar[r]^{\mc E'} & \scriptstyle \overline{A(\overline{N+_YM})} \ar[r]^{\mc M'} & \scriptstyle
	A(\overline{N+_YM}) \ar[r]^{Am_{N+_YM}} & \scriptstyle A(N+_YM) \ar@{<->}[u]_{\sim}
      }
    \end{aligned}
    \tag{$\ast$}
  \]
  The morphisms labelled $\mc E'$ lie in $\mc E'$, and similarly for $\mc M'$;
  these are given by the factorisation system on $\mc C'$.  The maps
  $Am_{N+_YM}$ and $m'_{AN}+_{AY}m'_{AM}$ lie in $\mc M'$ too: $Am_{N+_YM}$ as
  it is in the image of $\mc M$, and $m'_{AN}+_{AY}m'_{AM}$ as $\mc M'$ is
  stable under pushout. 

  Moreover, the diagram commutes as both maps $AX+AZ \to AN+_{AY}AM$ compose to
  that given by the pushout of the images of $f$ and $g$ over $AY$.  Thus the
  diagram represents two $(\mc E', \mc M')$ factorisations of the same morphism,
  and there exists an isomorphism $n$ between the corelations $\square(g) \circ
  \square(f)$ and $\square(g\circ f)$. This proves that $\square$ preserves
  composition.
\end{proof}
This first lemma allows us to verify the associativity and unit laws for
$\corel(\mc C)$.
\begin{corollary}
  $\corel(\mc C)$ is well defined as a category.
\end{corollary}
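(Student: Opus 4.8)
The plan is to transfer the category axioms from cospans to corelations, using the composition-preserving map supplied by Lemma \ref{lem.corelfuncomposition} rather than arguing associativity directly. First I would specialise that lemma to the case where the domain is the cospan category $\cospan(\mc C) = \corel_{(\mc C,\mc I_{\mc C})}(\mc C)$, the codomain is $\corel_{(\mc E,\mc M)}(\mc C)$ itself, and $A = 1_{\mc C}$. This is legitimate: $1_{\mc C}$ preserves finite colimits, the right factor $\mc I_{\mc C}$ of the domain's factorisation system is stable under pushout (pushouts of isomorphisms are isomorphisms) and maps into $\mc M$ since $\mc M$ contains all isomorphisms, and $\mc M$ is stable under pushout by hypothesis. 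The lemma then yields an identity-on-objects function $\square\maps \cospan(\mc C) \to \corel(\mc C)$, sending each cospan to its $\mc E$-part, that preserves composition in the sense $\square(b \circ a) = \square(b) \mathbin{\bar\circ} \square(a)$, where $\circ$ is cospan composition and $\bar\circ$ is the composition of corelations (already well defined by Proposition \ref{prop.corelcomp}).

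Second, I would record the key observation that $\square$ fixes corelations: if $f = (X \xrightarrow{i} N \xleftarrow{o} Y)$ is jointly $\mc E$-like then $[i,o] \in \mc E$ is already its own $\mc E$-part, since the $\mc M$-factor is an isomorphism by the essential uniqueness of factorisations, so $\square(f) \cong f$. By the definition of corelation composition, the composite of two corelations is exactly the $\mc E$-part of their cospan composite, i.e. $g \mathbin{\bar\circ} f = \square(g \circ f)$ for corelations $f, g$.

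Third, for associativity I would compute both bracketings of composable corelations $f, g, h$ through $\square$. Since each of $f, g, h$ equals its own $\mc E$-part, repeated application of composition-preservation gives $(h \mathbin{\bar\circ} g) \mathbin{\bar\circ} f = \square\big((h \circ g) \circ f\big)$ and $h \mathbin{\bar\circ} (g \mathbin{\bar\circ} f) = \square\big(h \circ (g \circ f)\big)$. Cospan composition is associative up to canonical isomorphism, and $\square$ is well defined on isomorphism classes of cospans, so the two sides coincide. For the unit laws I would take the identity on $X$ to be $\square(1_X^{\cospan})$, the $\mc E$-part of the identity cospan; because $1_X^{\cospan}$ is a unit for cospan composition, for any corelation $f$ one gets $f \mathbin{\bar\circ} \square(1_X^{\cospan}) = \square(f \circ 1_X^{\cospan}) = \square(f) = f$, and symmetrically on the other side.

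I expect the only genuinely delicate points to be verifying that $\square$ fixes corelations and establishing the clean identity $g \mathbin{\bar\circ} f = \square(g \circ f)$; everything else is then a formal transfer. The upshot is that the associativity law, which would otherwise require a laborious direct manipulation of iterated pushouts and $(\mc E,\mc M)$-factorisations, is reduced entirely to the already-known associativity of cospan composition.
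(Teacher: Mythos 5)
Your proposal is correct and follows essentially the same route as the paper: both specialise the composition-preserving map $\square\maps \cospan(\mc C) \to \corel(\mc C)$ obtained from Lemma \ref{lem.corelfuncomposition} with $(\mc E,\mc M) = (\mc C,\mc I_{\mc C})$ on the domain and $A = 1_{\mc C}$, observe that every corelation lies in the image of $\square$ (so corelation composition is computed by composing as cospans and taking the $\mc E$-part), and then transfer associativity and unitality from $\cospan(\mc C)$. Your explicit verification that $\square$ fixes jointly $\mc E$-like cospans is a slightly more detailed phrasing of the paper's surjectivity-on-morphisms observation, but the argument is the same.
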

\begin{proof}
  Consider the case of Proposition \ref{prop.corelfunctors} with $\mc C = \mc
  C'$, $(\mc E,\mc M) = (\mc C, \mc I_{\mc C})$, and $A = 1_{\mc C}$. Then the
  domain of $\square$ is $\cospan(\mc C)$ by definition. In this case, the
  function $\square\maps \cospan(\mc C) \to \corel(\mc C)$ is
  bijective-on-objects and surjective-on-morphisms. Thus to compute the
  composite of any two corelations, we may consider them as cospans, compute
  their composite \emph{as cospans}, and then take the $\mc E$-part of the
  result. Since composition of cospans is associative and unital, so is
  composition of corelations, with the identity corelation just the image of the
  identity cospan.
\end{proof}

Note that the identity in $\corel(\mc C)$ may not be the identity cospan itself.
For example, with the factorisation system $(\mc I_{\mc C}, \mc C)$ the $\mc
I_{\mc C}$-part of the identity cospan is simply $X \xrightarrow{\iota_{X_1}}
X+X \xleftarrow{\iota_{X_2}} X$, where $\iota_{X_i}$ is the inclusion of
$X$ into the $X_i$ factor of the coproduct $X+X$.

This first lemma is also useful in proving the second important lemma: the
naturality of $\overline{\kappa}$.

\begin{lemma} \label{lem.corelfunmonoidal}
  The maps $\overline{\kappa_{X,Y}}$, as defined in Proposition
  \ref{prop.corelfunctors}, are natural.
\end{lemma}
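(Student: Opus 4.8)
The plan is to exploit exactly the strategy advertised just before Lemma \ref{lem.corelfuncomposition}: the map $R\maps\cospan(\mc C')\to\corel(\mc C')$ sending a cospan to its $\mc E'$-part preserves composition and the monoidal product, so an equation between corelations can be checked by lifting it to cospans, verifying it there, and then applying $R$. Fix corelations $f\maps X\to X'$ and $g\maps Y\to Y'$ in $\corel(\mc C)$; naturality asks that $\square(f+g)\circ\overline{\kappa_{X,Y}} = \overline{\kappa_{X',Y'}}\circ(\square f+\square g)$. Every corelation in sight is an $\mc E'$-part: writing $Af$ for the image cospan $AX\to AN\leftarrow AX'$, we have $\square f = R(Af)$, and by definition $\overline{\kappa_{X,Y}} = R(\kappa_{X,Y})$ is the $\mc E'$-part of the comparison isomorphism $\kappa_{X,Y}\maps AX+AY\to A(X+Y)$.

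First I would record the monoidality of $R$, namely $\square f+\square g = R(Af+Ag)$. After reassociating $(AX+AY)+(AX'+AY')\cong(AX+AX')+(AY+AY')$, the $(\mc E',\mc M')$-factorisation of the copairing of $Af+Ag$ is the monoidal product of the factorisations of $Af$ and of $Ag$: the map into $\overline{AN}+\overline{AM}$ lies in $\mc E'$ by Lemma \ref{lem.monfact}, while $\overline{AN}+\overline{AM}\to AN+AM$ lies in $\mc M'$ by Lemma \ref{lem.mcoproductsmc}. Hence $\square f+\square g$ and $R(Af+Ag)$ have the same apex and the same legs.

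Given this, the two composites reduce to
\[
  \square(f+g)\circ\overline{\kappa_{X,Y}} = R\big(A(f+g)\big)\circ R(\kappa_{X,Y}) = R\big(A(f+g)\circ\kappa_{X,Y}\big)
\]
and
\[
  \overline{\kappa_{X',Y'}}\circ(\square f+\square g) = R(\kappa_{X',Y'})\circ R(Af+Ag) = R\big(\kappa_{X',Y'}\circ(Af+Ag)\big),
\]
using that $R$ preserves composition (this is Lemma \ref{lem.corelfuncomposition} specialised to $A=1_{\mc C'}$ with the factorisation system $(\mc C',\mc I_{\mc C'})$ on the domain) together with the monoidality just verified. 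It therefore suffices to exhibit an isomorphism of cospans $A(f+g)\circ\kappa_{X,Y}\cong\kappa_{X',Y'}\circ(Af+Ag)$ in $\cospan(\mc C')$ and then apply $R$, which descends to isomorphism classes by the uniqueness clause of the factorisation system (the same argument as in Proposition \ref{prop.corelcomp}).

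Finally, that cospan-level square is precisely the naturality of $\kappa$ as the coherence transformation of the strong symmetric monoidal functor $A\maps\cospan(\mc C)\to\cospan(\mc C')$ induced by the colimit-preserving $A$ (see \textsection\ref{sec.cospans}); it holds because $A$ preserves the coproducts and pushouts from which cospan composition is built. The main obstacle will be bookkeeping rather than conceptual: keeping the coproduct reassociations and pushout apices straight, and confirming that the cospan isomorphism descends through the factorisation to a genuine isomorphism of $\mc E'$-parts. This last point is handled uniformly by the factorisation-uniqueness argument already deployed in the proofs of Proposition \ref{prop.corelcomp} and Lemma \ref{lem.corelfuncomposition}, so the structure of the argument is entirely parallel to the naturality proof for the coherence maps $\tau$ of decorated cospan functors.
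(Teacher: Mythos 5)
Your proof is correct and follows essentially the same route as the paper's: both arguments reduce the naturality square in $\corel(\mc C')$ to the cospan-level naturality of $\kappa$, using the closure of $\mc E'$ and $\mc M'$ under coproducts (Lemmas \ref{lem.monfact} and \ref{lem.mcoproductsmc}) together with uniqueness of factorisations to identify $\square f+\square g$ with the $\mc E'$-part of $Af+Ag$, and the composition-preserving property of the reduction map from Lemma \ref{lem.corelfuncomposition} to transport the identity downstairs. The paper presents this via two explicit commutative diagrams rather than your equational chain through $R$, but the content is the same.
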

\begin{proof}
  Let $f = (X \longrightarrow N \longleftarrow Y)$, $g= (Z \longrightarrow M
  \longleftarrow W)$ be corelations in $\mc C$. We wish to show that
  \[
    \xymatrixcolsep{4pc}
    \xymatrixrowsep{3pc}
    \xymatrix{
      AX+AY \ar[r]^{\square(f)+\square(g)}
      \ar[d]_{\overline{\kappa_{X,Y}}} & 
      AZ+AW \ar[d]^{\overline{\kappa_{Z,W}}} \\
      A(X+Y) \ar[r]^{\square(f+g)} & A(Z+W)
    }
  \]
  commutes in $\corel(\mc C')$. 

  Consider the following commutative diagram in $\mc C'$, with the outside
  square equivalent to the naturality square for the coherence maps of the
  monoidal functor \linebreak $\cospan(\mc C) \to \cospan(\mc C')$:
  \[ \label{diag.natural}
    \begin{aligned}
      \xymatrixcolsep{4pc}
      \xymatrixrowsep{2.5pc}
      \xymatrix{
	(AX+AY)+(AZ+AW) \ar[r]^(.65){\mc E'+\mc E'}
	\ar[d]_{\kappa_{X,Y}+\kappa_{Z,W}} & 
	\overline{AN}+\overline{AM} \ar[r]^{\mc M'+\mc M'} \ar@{.>}[d]^{p} & 
	AN+AM \ar[d]^{\kappa_{N,M}}\\
	A(X+Y)+A(Z+W) \ar[r]^{\mc E'} & \overline{A(N+M)} \ar[r]^{\mc M'} & A(N+M)
      }
    \end{aligned}
    \tag{$\#$}
  \]
  We have factored the top edge as the coproduct of the respective
  factorisations of $f$ and $g$, and the bottom edge simply as the factorisation
  of the coproduct $f+g$. 

  Note that by Lemma \ref{lem.monfact} the coproduct of two maps in $\mc E'$ is
  again in $\mc E'$, while Lemma \ref{lem.mcoproductsmc} implies the same for
  $\mc M'$. Thus the top edge is an $(\mc E',\mc M')$-factorisation, and the
  uniqueness of factorisations gives the isomorphism $n$. 
  Given that the map reducing cospans to corelations is functorial, the
  commutative square
  \[
    \xymatrixcolsep{2.5pc}
    \xymatrixrowsep{2.5pc}
    \xymatrix{
      (AX+AY)+A(Z+W) \ar[r]^{1+\kappa_{Z,W}^{-1}} \ar@{=}[d] & (AX+AY)+(AZ+AW)
      \ar[r]^(.65){\mc E'+\mc E'} & 
      \overline{AN}+\overline{AM} \ar[d]^{n} \\
      (AX+AY)+A(Z+W) \ar[r]^{\kappa_{X,Y}+1} & A(X+Y)+A(Z+W) \ar[r]^(.6){\mc E'} & 
      \overline{A(N+M)}
    }
  \]
  then implies the naturality of the maps $\overline{\kappa}$.
\end{proof}

These lemmas now imply that $\corel(\mc C)$ is a well-defined hypergraph
category.
\begin{proof}[Proof of Theorem \ref{thm.cospantocorel}]
  To complete the proof then, again consider the case of Proposition
  \ref{prop.corelfunctors} with $\mc C = \mc C'$, $(\mc E,\mc M) = (\mc C, \mc
  I_{\mc C})$, and $A = 1_{\mc C}$. Note that by definition this function maps
  the coherence and hypergraph maps of $\cospan(\mc C)$ onto the corresponding
  maps of $\corel(\mc C)$. As $\cospan(\mc C)$ is a hypergraph, and $\square$
  preserves composition and respects the monoidal and hypergraph structure,
  $\corel(\mc C)$ is also a hypergraph category. 
  
  For instance, suppose we want to check the functoriality of the monoidal
  product $+$. We then wish to show $(g \circ f) + (k \circ h) = (g + k) \circ
  (f + h)$ for corelations of the appropriate types.  But $\square$ preserves
  composition, and the naturality of $\kappa$, here the identity map, implies
  that for any two cospans the $\mc E$-part of their coproduct is equal to the
  coproduct of their $\mc E$-parts. Thus we may compute these two expressions by
  viewing $f$, $g$, $h$, and $k$ as cospans, evaluating them in the category of
  cospans, and then taking their $\mc E$-parts. Since the equality holds in the
  category of cospans, it holds in the category of corelations.
\end{proof}

\begin{corollary}
  There is a strict hypergraph functor 
  \[
    \square\maps \mathrm{Cospan}(\mathcal C) \longrightarrow \mathrm{Corel}(\mathcal C)
  \]
  that takes each object of $\cospan(\mathcal C)$ to itself as an object of
  $\corel(\mathcal C)$ and each cospan to its $\mathcal E$-part.
\end{corollary}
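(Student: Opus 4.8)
The plan is to obtain $\square$ as a special case of Proposition~\ref{prop.corelfunctors}, exactly the instance already invoked in the proof of Theorem~\ref{thm.cospantocorel}. Concretely, I would set $\mc C' = \mc C$, equip the domain with the morphism--isomorphism factorisation system $(\mc C, \mc I_{\mc C})$ and the codomain with the given system $(\mc E, \mc M)$, and take $A = 1_{\mc C}$ to be the identity functor with its identity comparison maps. By Examples~\ref{ex.corels}, the $(\mc C, \mc I_{\mc C})$-corelations are precisely the cospans, so $\corel_{(\mc C,\mc I_{\mc C})}(\mc C) = \cospan(\mc C)$ and the domain of the resulting functor is indeed $\cospan(\mc C)$.

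Next I would check that the hypotheses of Proposition~\ref{prop.corelfunctors} hold in this instance. Both copies of the underlying category are $\mc C$, which has finite colimits, and the identity functor $1_{\mc C}$ preserves them. The right factor of the codomain system, $\mc M$, is stable under pushout by the hypothesis of the corollary; the right factor of the domain system, $\mc I_{\mc C}$, is stable under pushout because any pushout of an isomorphism is again an isomorphism. Finally, since every factorisation system contains all isomorphisms (axiom~(i) of the definition), we have $A(\mc I_{\mc C}) = \mc I_{\mc C} \subseteq \mc M$, so the image of the domain's $\mc M$-part lies in the codomain's. Proposition~\ref{prop.corelfunctors} then yields a hypergraph functor $\square\maps \cospan(\mc C) \to \corel(\mc C)$ that is the identity on objects and sends each cospan to its $\mc E'$-part; since the codomain system is $(\mc E,\mc M)$, this $\mc E'$-part is exactly the $\mc E$-part, matching the statement.

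The one point deserving genuine care is \emph{strictness}, since the monoidal structure inherited by $\cospan(\mc C)$ from $(\mc C,+)$ is itself not strict, so one must remember that a strict monoidal functor is required only to have identity comparison maps, not strict source and target. On objects, $\square X + \square Y = X + Y = \square(X+Y)$ since $\square$ is the identity on objects and both monoidal products are $+$. The comparison maps of $\square$ are the $\mc E'$-parts $\overline{\kappa_{X,Y}}$ of the coherence isomorphisms $\kappa_{X,Y}\maps AX+AY \to A(X+Y)$; because $A = 1_{\mc C}$, each $\kappa_{X,Y}$ is the identity map on $X+Y$, that is, the identity cospan on $X+Y$. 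Its $\mc E$-part is, by the description of identities recorded just before the statement, exactly the identity corelation on $X+Y$, so each $\overline{\kappa_{X,Y}}$ is the identity morphism of $\corel(\mc C)$; the unit comparison is an identity for the same reason. Hence $\square$ is a strict hypergraph functor, which completes the proof.
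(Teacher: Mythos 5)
Your proposal is correct and follows essentially the same route as the paper: the paper obtains this corollary as the instance of Proposition~\ref{prop.corelfunctors} with $\mc C' = \mc C$, domain factorisation system $(\mc C,\mc I_{\mc C})$ (whose corelations are cospans), and $A = 1_{\mc C}$, exactly as you do. Your additional check that the comparison maps $\overline{\kappa_{X,Y}}$ are the $\mc E$-parts of identity cospans, hence identity corelations, correctly accounts for the strictness claim, which the paper leaves implicit.
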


  Finally, we complete the proof that $\square$ is always a hypergraph functor.

\begin{proof}[Proof of Proposition \ref{prop.corelfunctors}] 
  We show $\square$ is a functor, a symmetric monoidal functor, and then finally
  a hypergraph functor.

  \paragraph{Functoriality.} First, recall that $\square$ preserves composition
  (Lemma \ref{lem.corelfuncomposition}). Thus to prove $\square$ is a functor it
  remains to show identities are mapped to identities. The general idea for this
  and for similar axioms is to recall that the special maps are given by reduced
  versions of particular colimits, and that $(\mc E',\mc M')$ reduces maps more
  than $(\mc E,\mc M)$. 

  In this case, recall the identity corelation is given by the $\mc E$-part $X+X
  \to \overline{X}$ of $[1,1]\maps X+X \to X$. Thus the image of the identity on
  $X$ and the identity on $AX$ are given by the top and bottom rows of the
  commuting square
  \[
    \xymatrixcolsep{4pc}
    \xymatrixrowsep{2pc}
    \xymatrix{
      A(X+X) \ar[d]^{\kappa^{-1}}_{\sim} \ar[r]^{\mc E'} &
      \overline{A\overline{X}} \ar[r]^{\mc M'} \ar@{.>}[d]^{n} &
      A\overline{X} \ar[r]^{A\mc M} & AX \ar@{=}[d]\\
      AX+AX \ar[r]^{\mc E'} & \overline{AX} \ar[rr]^{\mc M'} && AX
    }
  \]
  The outside square commutes as we know $A$ maps the identity cospan of $\mc C$
  to the identity cospan of $\mc C'$. The top row is the image under $A$ of the
  identity cospan in $\mc C$, factored first in $\mc C$, and then in $\mc C'$.
  The bottom row is just the factored identity cospan on $AX$ in $\mc C'$. As
  $A$ maps $\mc M$ into $\mc M'$, the map marked $A\mc M$ lies in $\mc M'$. Thus
  both rows are $(\mc E',\mc M')$-factorisations, and so we have the isomorphism
  $n$. Thus $\square$ preserves identities.

  \paragraph{Strong monoidality.} We proved in Lemma \ref{lem.corelfunmonoidal} that
    our proposed coherence maps are natural. The rest of the properties follow
    from the composition preserving map $\cospan(\mc C') \to \corel(\mc C')$.
    Since the $\kappa$ obey all the required axioms as cospans, they obey them
    as corelations too.

  \paragraph{Hypergraph structure.} The proof of preservation of the hypergraph
  structure follows the same pattern as the identity maps. 
\end{proof}

\begin{example}
  In the notation of Proposition \ref{prop.corelfunctors}, note that if both
  $(\mathcal E, \mathcal M)$ and $(\mathcal E', \mathcal M')$ are epi-split mono
  factorisations, then we always have that $A(\mathcal M) \subseteq \mathcal
  M'$. Indeed, if an (one-sided) inverse exists in the domain category, it
  exists in the codomain category. Thus colimit-preserving functors between
  categories with finite colimits and epi-split mono factorisation systems also
  induce a functor between the epi-split mono corelation categories. We will use
  this in Chapter \ref{ch.sigflow}.
\end{example}

\begin{remark} \label{rem.corelposet}
  On any category $\mc C$ with finite colimits, reverse inclusions of the right factor
  $\mc M$ defines a partial order on the set of factorisation systems $(\mc E,\mc
  M)$ with $\mc M$ stable under pushout. That is, we write $(\mc E,\mc M) \ge (\mc
  E,\mc M')$ whenever $\mc M \subseteq \mc M'$.  The trivial factorisation
  systems $(\mc C,\mc I_{\mc C})$ and $(\mc I_{\mc C},\mc C)$ are the top and
  bottom elements of this poset respectively.

  Corelation categories realise this poset as a subcategory of the category of
  hypergraph categories. One way to understand this is that corelations are
  cospans with the $\mc M$-part `forgotten'. Using a morphism-isomorphism
  factorisation system nothing is forgotten, so these corelations are just
  cospans. Using the isomorphism-morphism factorisation system everything is
  forgotten, so there is a unique corelation between any two objects.

  We can construct a hypergraph functor between two corelation categories if the
  codomain forgets more than the domain: i.e. if the codomain is less than
  the domain in the poset. In particular, this implies there is always a
  hypergraph functor from the cospan category $\corel_{(\mc C,\mc I_{\mc
  C})}(\mc C)= \cospan(\mc C)$ to any other corelation category $\corel_{(\mc
  E,\mc M)}(\mc C)$, and from $\corel_{(\mc E,\mc M)}(\mc C)$ any corelation
  category to the indiscrete category $\corel_{(\mc I_{\mc C},\mc C)}(\mc C)$ on
  the objects of $\mc C$.
\end{remark}

\section{Examples} \label{sec.corelexs}
We conclude this chapter with two examples of corelation categories. These both
play a central role in the applications of Part \ref{part.apps}: the first as
an algebra of ideal wires, and the second as semantics for signal flow graphs.

\subsection{Equivalence relations as corelations in $\Set$}
\label{ssec.equivrels}
   
As we saw in Examples \ref{ex.corelcats}, an epi-mono corelation $X \to Y$ in
$\Set$ is an equivalence relation on $X+Y$. We might depict these as follows
\[
  \begin{tikzpicture}[circuit ee IEC]
	\begin{pgfonlayer}{nodelayer}
		\node [contact, outer sep=5pt] (0) at (-2, 1) {};
		\node [contact, outer sep=5pt] (1) at (-2, 0.5) {};
		\node [contact, outer sep=5pt] (2) at (-2, -0) {};
		\node [contact, outer sep=5pt] (3) at (-2, -0.5) {};
		\node [contact, outer sep=5pt] (4) at (-2, -1) {};
		\node [contact, outer sep=5pt] (5) at (1, 1.25) {};
		\node [contact, outer sep=5pt] (6) at (1, 0.75) {};
		\node [contact, outer sep=5pt] (7) at (1, 0.25) {};
		\node [contact, outer sep=5pt] (8) at (1, -0.25) {};
		\node [contact, outer sep=5pt] (9) at (1, -0.75) {};
		\node [contact, outer sep=5pt] (10) at (1, -1.25) {};
		\node [style=none] (11) at (-2.75, -0) {$X$};
		\node [style=none] (12) at (1.75, -0) {$Y$};
	\end{pgfonlayer}
	\begin{pgfonlayer}{edgelayer}
		\draw [rounded corners=5pt, dashed] 
   (node cs:name=0, anchor=north west) --
   (node cs:name=1, anchor=south west) --
   (node cs:name=6, anchor=south east) --
   (node cs:name=5, anchor=north east) --
   cycle;
		\draw [rounded corners=5pt, dashed] 
   (node cs:name=2, anchor=north west) --
   (node cs:name=3, anchor=south west) --
   (node cs:name=3, anchor=south east) --
   (node cs:name=2, anchor=north east) --
   cycle;
		\draw [rounded corners=5pt, dashed] 
   (node cs:name=4, anchor=north west) --
   (node cs:name=4, anchor=south west) --
   (node cs:name=10, anchor=south east) --
   (node cs:name=9, anchor=north east) --
   cycle;
   		\draw [rounded corners=5pt, dashed] 
   (node cs:name=7, anchor=north west) --
   (node cs:name=7, anchor=south west) --
   (node cs:name=7, anchor=south east) --
   (node cs:name=7, anchor=north east) --
   cycle;
   		\draw [rounded corners=5pt, dashed] 
   (node cs:name=8, anchor=north west) --
   (node cs:name=8, anchor=south west) --
   (node cs:name=8, anchor=south east) --
   (node cs:name=8, anchor=north east) --
   cycle;
	\end{pgfonlayer}
\end{tikzpicture}
\]
Here we have a corelation from a set $X$ of five elements to a set $Y$ of six
elements. Elements belonging to the same equivalence class of $X+Y$ are grouped
(`connected') by a dashed line.

Composition of corelations first takes the transitive closure of the two
partitions (the pushout in $\Set$), before restricting the partition to the new
domain and codomain (restricting to the jointly epic part). For example,
suppose in addition to the corelation $\alpha\maps X \to Y$ above we have
another corelation $\beta\maps Y \to Z$
\[
\begin{tikzpicture}[circuit ee IEC]
	\begin{pgfonlayer}{nodelayer}
		\node [style=none] (0) at (-2.75, -0) {$Y$};
		\node [style=none] (1) at (1.75, 0) {$Z$};
		\node [contact, outer sep=5pt] (2) at (-2, 1.25) {};
		\node [contact, outer sep=5pt] (3) at (-2, 0.75) {};
		\node [contact, outer sep=5pt] (4) at (-2, 0.25) {};
		\node [contact, outer sep=5pt] (5) at (-2, -0.25) {};
		\node [contact, outer sep=5pt] (6) at (-2, -0.75) {};
		\node [contact, outer sep=5pt] (7) at (-2, -1.25) {};
		\node [contact, outer sep=5pt] (8) at (1, 1) {};
		\node [contact, outer sep=5pt] (9) at (1, 0.5) {};
		\node [contact, outer sep=5pt] (10) at (1, -0) {};
		\node [contact, outer sep=5pt] (11) at (1, -0.5) {};
		\node [contact, outer sep=5pt] (12) at (1, -1) {};
	\end{pgfonlayer}
		\draw [rounded corners=5pt, dashed] 
   (node cs:name=2, anchor=north west) --
   (node cs:name=3, anchor=south west) --
   (node cs:name=8, anchor=south east) --
   (node cs:name=8, anchor=north east) --
   cycle;
		\draw [rounded corners=5pt, dashed] 
   (node cs:name=4, anchor=north west) --
   (node cs:name=4, anchor=south west) --
   (node cs:name=4, anchor=south east) --
   (node cs:name=4, anchor=north east) --
   cycle;
		\draw [rounded corners=5pt, dashed] 
   (node cs:name=5, anchor=north west) --
   (node cs:name=6, anchor=south west) --
   (node cs:name=11, anchor=south east) --
   (node cs:name=10, anchor=north east) --
   cycle;
		\draw [rounded corners=5pt, dashed] 
   (node cs:name=7, anchor=north west) --
   (node cs:name=7, anchor=south west) --
   (node cs:name=12, anchor=south east) --
   (node cs:name=12, anchor=north east) --
   cycle;
		\draw [rounded corners=5pt, dashed] 
   (node cs:name=9, anchor=north west) --
   (node cs:name=9, anchor=south west) --
   (node cs:name=9, anchor=south east) --
   (node cs:name=9, anchor=north east) --
   cycle;
\end{tikzpicture}
\]
Then the composite $\beta\circ\alpha$ of our two corelations is given by
\vspace{-1ex}
\[
  \begin{aligned}
\begin{tikzpicture}[circuit ee IEC]
	\begin{pgfonlayer}{nodelayer}
		\node [contact, outer sep=5pt] (-2) at (1, 1.25) {};
		\node [contact, outer sep=5pt] (-1) at (1, 0.75) {};
		\node [contact, outer sep=5pt] (0) at (1, 0.25) {};
		\node [contact, outer sep=5pt] (1) at (1, -0.25) {};
		\node [contact, outer sep=5pt] (2) at (1, -0.75) {};
		\node [contact, outer sep=5pt] (3) at (1, -1.25) {};
		\node [style=none] (4) at (-2.75, -0) {$X$};
		\node [style=none] (5) at (4.75, -0) {$Z$};
		\node [contact, outer sep=5pt] (6) at (-2, 1) {};
		\node [contact, outer sep=5pt] (7) at (-2, -0.5) {};
		\node [contact, outer sep=5pt] (8) at (-2, 0.5) {};
		\node [contact, outer sep=5pt] (9) at (-2, -0) {};
		\node [contact, outer sep=5pt] (10) at (-2, -1) {};
		\node [contact, outer sep=5pt] (11) at (4, -0) {};
		\node [contact, outer sep=5pt] (12) at (4, -1) {};
		\node [contact, outer sep=5pt] (13) at (4, -0.5) {};
		\node [contact, outer sep=5pt] (14) at (4, 0.5) {};
		\node [contact, outer sep=5pt] (19) at (4, 1) {};
		\node [style=none] (20) at (1, -1.75) {$Y$};
		\node [style=none] (21) at (1, 1.75) {\phantom{$Y$}};
	\end{pgfonlayer}
	\begin{pgfonlayer}{edgelayer}
		\draw [rounded corners=5pt, dashed] 
   (node cs:name=6, anchor=north west) --
   (node cs:name=8, anchor=south west) --
   (node cs:name=-1, anchor=south east) --
   (node cs:name=-2, anchor=north east) --
   cycle;
		\draw [rounded corners=5pt, dashed] 
   (node cs:name=9, anchor=north west) --
   (node cs:name=7, anchor=south west) --
   (node cs:name=7, anchor=south east) --
   (node cs:name=9, anchor=north east) --
   cycle;
		\draw [rounded corners=5pt, dashed] 
   (node cs:name=10, anchor=north west) --
   (node cs:name=10, anchor=south west) --
   (node cs:name=3, anchor=south east) --
   (node cs:name=2, anchor=north east) --
   cycle;
		\draw [rounded corners=5pt, dashed] 
   (node cs:name=-2, anchor=north west) --
   (node cs:name=-1, anchor=south west) --
   (node cs:name=19, anchor=south east) --
   (node cs:name=19, anchor=north east) --
   cycle;
		\draw [rounded corners=5pt, dashed] 
   (node cs:name=0, anchor=north west) --
   (node cs:name=0, anchor=south west) --
   (node cs:name=0, anchor=south east) --
   (node cs:name=0, anchor=north east) --
   cycle;
		\draw [rounded corners=5pt, dashed] 
   (node cs:name=1, anchor=north west) --
   (node cs:name=1, anchor=south west) --
   (node cs:name=1, anchor=south east) --
   (node cs:name=1, anchor=north east) --
   cycle;
		\draw [rounded corners=5pt, dashed] 
   (node cs:name=1, anchor=north west) --
   (node cs:name=2, anchor=south west) --
   (node cs:name=13, anchor=south east) --
   (node cs:name=11, anchor=north east) --
   cycle;
		\draw [rounded corners=5pt, dashed] 
   (node cs:name=3, anchor=north west) --
   (node cs:name=3, anchor=south west) --
   (node cs:name=12, anchor=south east) --
   (node cs:name=12, anchor=north east) --
   cycle;
		\draw [rounded corners=5pt, dashed] 
   (node cs:name=14, anchor=north west) --
   (node cs:name=14, anchor=south west) --
   (node cs:name=14, anchor=south east) --
   (node cs:name=14, anchor=north east) --
   cycle;
	\end{pgfonlayer}
\end{tikzpicture}
\end{aligned}
\:
  =
\:
\begin{aligned}
\begin{tikzpicture}[circuit ee IEC]
	\begin{pgfonlayer}{nodelayer}
		\node [style=none] (0) at (-2.75, -0) {$X$};
		\node [style=none] (1) at (1.75, -0) {$Z$};
		\node [contact, outer sep=5pt] (2) at (-2, 1) {};
		\node [contact, outer sep=5pt] (3) at (-2, -0.5) {};
		\node [contact, outer sep=5pt] (4) at (-2, 0.5) {};
		\node [contact, outer sep=5pt] (5) at (-2, -0) {};
		\node [contact, outer sep=5pt] (6) at (-2, -1) {};
		\node [contact, outer sep=5pt] (7) at (1, -0) {};
		\node [contact, outer sep=5pt] (8) at (1, -1) {};
		\node [contact, outer sep=5pt] (9) at (1, -0.5) {};
		\node [contact, outer sep=5pt] (10) at (1, 0.5) {};
		\node [contact, outer sep=5pt] (13) at (1, 1) {};
		\node [style=none] (20) at (1, -1.75) {\phantom{$Y$}};
		\node [style=none] (21) at (1, 1.75) {\phantom{$Y$}};
	\end{pgfonlayer}
	\begin{pgfonlayer}{edgelayer}
		\draw [rounded corners=5pt, dashed] 
   (node cs:name=2, anchor=north west) --
   (node cs:name=4, anchor=south west) --
   (node cs:name=13, anchor=south east) --
   (node cs:name=13, anchor=north east) --
   cycle;
		\draw [rounded corners=5pt, dashed] 
   (node cs:name=5, anchor=north west) --
   (node cs:name=3, anchor=south west) --
   (node cs:name=3, anchor=south east) --
   (node cs:name=5, anchor=north east) --
   cycle;
		\draw [rounded corners=5pt, dashed] 
   (node cs:name=6, anchor=north west) --
   (node cs:name=6, anchor=south west) --
   (node cs:name=8, anchor=south east) --
   (node cs:name=7, anchor=north east) --
   cycle;
		\draw [rounded corners=5pt, dashed] 
   (node cs:name=10, anchor=north west) --
   (node cs:name=10, anchor=south west) --
   (node cs:name=10, anchor=south east) --
   (node cs:name=10, anchor=north east) --
   cycle;
	\end{pgfonlayer}
\end{tikzpicture}
\end{aligned}
\]
Informally, this captures the idea that two elements of $X+Z$ are `connected' if
we may travel from one to the other staying within connected components of
$\alpha$ and $\beta$.
  
For a greater resemblance of the diagrams in the motivating comments of
\textsection\ref{sec.blackboxing}, epi-mono corelations in $\Set$ can also be
visualised as terminals connected by junctions of ideal wires. We draw these by
marking each equivalence class with a point (the `junction'), and then
connecting each element of the domain and codomain to their equivalence class
with a `wire'. The junction serves as the apex of the corelation, the terminals
as the feet, and the wires depict a function from the feet to the apex.
Composition then involves collapsing connected junctions down to a point.
\vspace{-1ex}
\[
  \begin{aligned}
\begin{tikzpicture}[circuit ee IEC]
	\begin{pgfonlayer}{nodelayer}
		\node [contact, outer sep=5pt] (6) at (-2, 1) {};
		\node [contact, outer sep=5pt] (7) at (-2, -0.5) {};
		\node [contact, outer sep=5pt] (8) at (-2, 0.5) {};
		\node [contact, outer sep=5pt] (9) at (-2, -0) {};
		\node [contact, outer sep=5pt] (10) at (-2, -1) {};
		\node [style=none] (15) at (-0.5, 0.875) {};
		\node [style=none] (28) at (-0.5, 0.25) {};
		\node [style=none] (16) at (-0.5, -0.125) {};
		\node [style=none] (29) at (-0.5, -0.375) {};
		\node [style=none] (17) at (-0.5, -1) {};
		\node [contact, outer sep=5pt] (-2) at (1, 1.25) {};
		\node [contact, outer sep=5pt] (-1) at (1, 0.75) {};
		\node [contact, outer sep=5pt] (0) at (1, 0.25) {};
		\node [contact, outer sep=5pt] (1) at (1, -0.25) {};
		\node [contact, outer sep=5pt] (2) at (1, -0.75) {};
		\node [contact, outer sep=5pt] (3) at (1, -1.25) {};
		\node [style=none] (18) at (2.5, -1.125) {};
		\node [style=none] (21) at (2.5, 1) {};
		\node [style=none] (22) at (2.5, -0.375) {};
		\node [style=none] (23) at (2.5, 0.475) {};
		\node [style=none] (24) at (2.5, 0.25) {};
		\node [contact, outer sep=5pt] (19) at (4, 1) {};
		\node [contact, outer sep=5pt] (14) at (4, 0.5) {};
		\node [contact, outer sep=5pt] (11) at (4, -0) {};
		\node [contact, outer sep=5pt] (13) at (4, -0.5) {};
		\node [contact, outer sep=5pt] (12) at (4, -1) {};
		\node [style=none] (4) at (-2.75, -0) {$X$};
		\node [style=none] (5) at (4.75, -0) {$Z$};
		\node [style=none] (20) at (1, -1.75) {$Y$};
		\node [style=none] (30) at (1, 1.75) {\phantom{$Y$}};
	\end{pgfonlayer}
	\begin{pgfonlayer}{edgelayer}
		\draw [thick] (6.center) to (15.center);
		\draw [thick] (8.center) to (15.center);
		\draw [thick] (-2.center) to (15.center);
		\draw [thick] (-1.center) to (15.center);
		\draw [thick] (9.center) to (16.center);
		\draw [thick] (7.center) to (16.center);
		\draw [thick] (10.center) to (17.center);
		\draw [thick] (17.center) to (2.center);
		\draw [thick] (17.center) to (3.center);
		\draw [thick] (3.center) to (18.center);
		\draw [thick] (18.center) to (12.center);
		\draw [thick] (-2.center) to (21.center);
		\draw [thick] (-1.center) to (21.center);
		\draw [thick] (21.center) to (19.center);
		\draw [thick] (1.center) to (22.center);
		\draw [thick] (2.center) to (22.center);
		\draw [thick] (22.center) to (11.center);
		\draw [thick] (22.center) to (13.center);
		\draw [thick] (23.center) to (14.center);
		\draw [thick] (28.center) to (0.center);
		\draw [thick] (0.center) to (24.center);
		\draw [thick] (29.center) to (1.center);
		\draw [rounded corners=5pt, dashed, color=gray] 
   (node cs:name=6, anchor=north west) --
   (node cs:name=8, anchor=south west) --
   (node cs:name=-1, anchor=south east) --
   (node cs:name=-2, anchor=north east) --
   cycle;
		\draw [rounded corners=5pt, dashed, color=gray] 
   (node cs:name=9, anchor=north west) --
   (node cs:name=7, anchor=south west) --
   (node cs:name=7, anchor=south east) --
   (node cs:name=9, anchor=north east) --
   cycle;
		\draw [rounded corners=5pt, dashed, color=gray] 
   (node cs:name=10, anchor=north west) --
   (node cs:name=10, anchor=south west) --
   (node cs:name=3, anchor=south east) --
   (node cs:name=2, anchor=north east) --
   cycle;
		\draw [rounded corners=5pt, dashed, color=gray] 
   (node cs:name=-2, anchor=north west) --
   (node cs:name=-1, anchor=south west) --
   (node cs:name=19, anchor=south east) --
   (node cs:name=19, anchor=north east) --
   cycle;
		\draw [rounded corners=5pt, dashed, color=gray] 
   (node cs:name=0, anchor=north west) --
   (node cs:name=0, anchor=south west) --
   (node cs:name=0, anchor=south east) --
   (node cs:name=0, anchor=north east) --
   cycle;
		\draw [rounded corners=5pt, dashed, color=gray] 
   (node cs:name=1, anchor=north west) --
   (node cs:name=1, anchor=south west) --
   (node cs:name=1, anchor=south east) --
   (node cs:name=1, anchor=north east) --
   cycle;
		\draw [rounded corners=5pt, dashed, color=gray] 
   (node cs:name=1, anchor=north west) --
   (node cs:name=2, anchor=south west) --
   (node cs:name=13, anchor=south east) --
   (node cs:name=11, anchor=north east) --
   cycle;
		\draw [rounded corners=5pt, dashed, color=gray] 
   (node cs:name=3, anchor=north west) --
   (node cs:name=3, anchor=south west) --
   (node cs:name=12, anchor=south east) --
   (node cs:name=12, anchor=north east) --
   cycle;
		\draw [rounded corners=5pt, dashed, color=gray] 
   (node cs:name=14, anchor=north west) --
   (node cs:name=14, anchor=south west) --
   (node cs:name=14, anchor=south east) --
   (node cs:name=14, anchor=north east) --
   cycle;
	\end{pgfonlayer}
\end{tikzpicture}
\end{aligned}
\:
  =
\:
\begin{aligned}
\begin{tikzpicture}[circuit ee IEC]
	\begin{pgfonlayer}{nodelayer}
		\node [style=none] (0) at (-2.75, -0) {$X$};
		\node [style=none] (1) at (1.75, -0) {$Z$};
		\node [contact, outer sep=5pt] (2) at (-2, 1) {};
		\node [contact, outer sep=5pt] (3) at (-2, -0.5) {};
		\node [contact, outer sep=5pt] (4) at (-2, 0.5) {};
		\node [contact, outer sep=5pt] (5) at (-2, -0) {};
		\node [contact, outer sep=5pt] (6) at (-2, -1) {};
		\node [contact, outer sep=5pt] (7) at (1, -0) {};
		\node [contact, outer sep=5pt] (8) at (1, -1) {};
		\node [contact, outer sep=5pt] (9) at (1, -0.5) {};
		\node [contact, outer sep=5pt] (10) at (1, 0.5) {};
		\node [style=none] (11) at (-0.5, 0.875) {};
		\node [style=none] (12) at (-0.5, 0.3) {};
		\node [contact, outer sep=5pt] (13) at (1, 1) {};
		\node [style=none] (14) at (-0.5, -0.2) {};
		\node [style=none] (15) at (-0.5, -0.6) {};
	\end{pgfonlayer}
	\begin{pgfonlayer}{edgelayer}
		\draw [thick] (2.center) to (11.center);
		\draw [thick] (4.center) to (11.center);
		\draw [thick] (11.center) to (13.center);
		\draw [thick] (5.center) to (14.center);
		\draw [thick] (3.center) to (14.center);
		\draw [thick] (15.center) to (7.center);
		\draw [thick] (15.center) to (9.center);
		\draw [thick] (6.center) to (15.center);
		\draw [thick] (15.center) to (8.center);
		\draw [thick] (12.center) to (10.center);
		\draw [rounded corners=5pt, dashed, color=gray] 
   (node cs:name=2, anchor=north west) --
   (node cs:name=4, anchor=south west) --
   (node cs:name=13, anchor=south east) --
   (node cs:name=13, anchor=north east) --
   cycle;
		\draw [rounded corners=5pt, dashed, color=gray] 
   (node cs:name=5, anchor=north west) --
   (node cs:name=3, anchor=south west) --
   (node cs:name=3, anchor=south east) --
   (node cs:name=5, anchor=north east) --
   cycle;
		\draw [rounded corners=5pt, dashed, color=gray] 
   (node cs:name=10, anchor=north west) --
   (node cs:name=10, anchor=south west) --
   (node cs:name=10, anchor=south east) --
   (node cs:name=10, anchor=north east) --
   cycle;
		\draw [rounded corners=5pt, dashed, color=gray] 
   (node cs:name=6, anchor=north west) --
   (node cs:name=6, anchor=south west) --
   (node cs:name=8, anchor=south east) --
   (node cs:name=7, anchor=north east) --
   cycle;
	\end{pgfonlayer}
\end{tikzpicture}
\end{aligned}
\]
The composition law captures the idea that connectivity is all that
matters: as long as the wires are `ideal', the exact path does not matter. 

In Coya--Fong \cite{CF} we formalise this idea by saying that corelations are
the prop for extraspecial commutative Frobenius monoids. An \define{extraspecial
commutative Frobenius monoid} $(X,\mu,\eta,\delta,\epsilon)$ in a monoidal
category $(\mathcal C, \otimes)$ is a special commutative Frobenius monoid that
further obeys the extra law
  \[
    \extral{.1\textwidth} = \extrar{.1\textwidth}
  \]

Two morphisms built from the generators of an extraspecial commutative Frobenius
monoid are equal and if and only if their diagrams impose the same connectivity
relations on the disjoint union of the domain and codomain. This is an extension
of the spider theorem for special commutative Frobenius monoids.

\subsection{Linear relations as corelations in $\Vect$} \label{ssec.linrel}

Recall that a linear relation $L\maps U \leadsto V$ is a subspace $L \subseteq U
\oplus V$. We compose linear relations as we do relations, and vector spaces and
linear relations form a category $\LinRel$. It is straightforward to show that
this category can be constructed as the category of relations in the category
$\Vect$ of vector spaces and linear maps with respect to epi-mono
factorisations: monos in $\Vect$ are simply injective linear maps, and hence
subspace inclusions. We show that they may also be constructed as corelations in
$\Vect$ with respect to epi-mono factorisations.

If we restrict to the full subcategory $\FinVect$ of finite dimensional vector
spaces duality makes this easy to see: after picking a basis for each vector
space the transpose yields an equivalence of $\FinVect$ with its opposite
category, so the category of $(\mathcal E,\mathcal M)$-corelations (jointly epic
cospans) is isomorphic to the category of $(\mathcal E,\mathcal M)$-relations
(jointly monic spans) in $\FinVect$. This fact has been fundamental in work on
finite dimensional linear systems and signal flow diagrams \cite{BE,BSZ,FRS16}.

We prove the general case in detail. To begin, note $\mathrm{Vect}$ has an
epi-mono factorisation system with monos stable under pushouts. This
factorisation system is inherited from $\Set$: the epimorphisms in $\Vect$ are
precisely the surjective linear maps, the monomorphisms are the injective
linear maps, and the image of a linear map is always a subspace of the
codomain, and so itself a vector space. Monos are stable under pushout as the
pushout of a diagram $V \xleftarrow{f} U \xrightarrow{m} W$ is $V \oplus
W/\im[f\; -m]$. The map $m'\maps V \to V \oplus W/\im[f\; -m]$ into the pushout
has kernel $f(\ker m)$. Thus when $m$ is a monomorphism, $m'$ is too.

Thus we have a category of corelations $\corel(\Vect)$. We show that the map
$\corel(\Vect) \to \LinRel$ sending each vector space to itself and each
corelation
\[
  U \stackrel{f}\longrightarrow A \stackrel{g}\longleftarrow V
\]
to the linear subspace $\ker[f\;-g]$ is a full, faithful, and
bijective-on-objects functor.

Indeed, corelations $U \xrightarrow{f} A \xleftarrow{g} V$ are in one-to-one
correspondence with surjective linear maps $U\oplus V \to A$, which are in
turn, by the isomorphism theorem, in one-to-one correspondence with subspaces
of $U\oplus V$. These correspondences are described by the kernel construction
above. Thus our map is evidently full, faithful, and bijective-on-objects. It
also maps identities to identities. It remains to check that it preserves
composition.

Suppose we have corelations $U \xrightarrow{f} A \xleftarrow{g} V$
and $V \xrightarrow{h} B \xleftarrow{k} W$. Then their pushout is given by
$P=A \oplus B/\im[g\;-h]$, and we may draw the pushout diagram
\[
  \xymatrix{
    U \ar[dr]_{f} & & V \ar[dl]^{g}  
    \ar[dr]_{h} & & W \ar[dl]^{k} {} 
    \\
    & A \ar[dr]_{\iota_A} & & B \ar[dl]^{\iota_B}  \\
    & & P {\save*!<0cm,-.5cm>[dl]@^{|-}\restore}
  }
\]
We wish to show the equality of relations
\[
  \ker[f\;-g];\ker[h\;-k] = \ker[\iota_A f\; -\iota_B g].
\]
Now $(\mathbf{u},\mathbf{w}) \in U \oplus W$ lies in the composite relation
$\ker[f\;-g];\ker[h\;-k]$ iff there exists $\mathbf{v} \in V$ such that
$f\mathbf{u} = g\mathbf{v}$ and $h\mathbf{v} = k\mathbf{w}$. But as $P$ is the
pushout, this is true iff 
\[
  \iota_A f \mathbf{u} = \iota_A g \mathbf{v} = \iota_B h \mathbf{v} =
  \iota_B k \mathbf{w}.
\]
This in turn is true iff $(\mathbf{u}, \mathbf{w}) \in \ker[\iota_Af\;
-\iota_Bk]$, as required. 

This corelations perspective is important as it fits the relational picture into
our philosophy of black boxing. In Chapter \ref{ch.sigflow} we will see it is
the corelation construction of $\LinRel$ that correctly generalises to the case
of non-controllable systems.
\smallskip

In the next chapter we discuss decorations on corelations.

\chapter{Decorated corelations: black-boxed open systems} \label{ch.deccorels}

When enough structure is available to us, we may decorate corelations too.
Furthermore, and key to the idea of `black-boxing', we get a hypergraph functor
from decorated cospans to decorated corelations. This assists with constructing
hypergraph categories of semantics for network-style diagrammatic languages.
Indeed, we will prove that every hypergraph category can be constructed using
decorated corelations, and so can every hypergraph functor.

In the next section we motivate this chapter by discussing how to construct the
category of matrices using decorations, illustrating the shortcomings of
decorated cospans and how they are overcome by decorated corelations. As usual,
we then use the subsequent two sections to get into the technical details,
defining decorated corelations (\textsection\ref{sec:dcorc}) and their functors
(\textsection\ref{sec.dcorf}). This sets us up to state and prove the climactic
theorem of Part \ref{part.maths}: roughly, decorated corelations are powerful
enough to construct all hypergraph categories and functors. We do this in
\textsection\ref{sec.allhypergraphs}. Finally, in \textsection\ref{sec:excor},
we give some examples.

\section{Black-boxed open systems} \label{sec.blackboxedsystems}

Suppose we have devices built from paths that take the signal at some input,
amplify it, and deliver it to some output. For simplicity let these signals be
real numbers, and amplication be linear: we just multiply by some fixed scalar.
We depict an example device like so:
\[
    \tikzset{every path/.style={line width=.8pt}}
\begin{tikzpicture}
	\begin{pgfonlayer}{nodelayer}
		\node [style=sdot] (0) at (-2.5, 1.5) {};
		\node [style=sdot] (1) at (-2.5, -0) {};
		\node [style=sdot] (2) at (-2.5, -1.5) {};
		\node [style=amp] (3) at (-0.75, 2) {$5$};
		\node [style=amp] (4) at (-0.75, 1.25) {$1$};
		\node [style=amp] (5) at (-0.75, -1) {$-2$};
		\node [style=amp] (6) at (-0.75, 0.25) {$2.1$};
		\node [style=amp] (7) at (-0.75, -0.25) {$-0.4$};
		\node [style=sdot] (8) at (1, -0.5) {};
		\node [style=sdot] (9) at (1, -1.5) {};
		\node [style=sdot] (10) at (1, 0.5) {};
		\node [style=sdot] (11) at (1, 1.5) {};
	\end{pgfonlayer}
	\begin{pgfonlayer}{edgelayer}
		\draw (3.west) to (0.center);
		\draw (4.west) to (0.center);
		\draw (6.west) to (1.center);
		\draw (7.west) to (1.center);
		\draw (5.west) to (2.center);
		\draw (3) to (11);
		\draw (4) to (11);
		\draw (6) to (10);
		\draw (7) to (8);
		\draw (5) to (8);
	\end{pgfonlayer}
\end{tikzpicture}
\]
Here there are three inputs, four outputs, and five paths. Formally, we might
model these devices as finite sets of inputs $X$, outputs $Y$, and paths $N$,
together with functions $i\maps N \to X$ and $o\maps N \to Y$ describing the
start and end of each path, and a function $s\maps N \to \R$ describing the
amplification along it. In other words, these are cospans $X \to N \leftarrow Y$
in $\FinSet^\opp$ decorated by `scalar assignment' functions $N \to \R$. This
suggests a decorated cospans construction.

Such a construction can be obtained from the lax symmetric monoidal functor
$\mathbb R^{(-)}\maps \FinSet^\opp \to \Set$ taking a finite set $N$ to the set
$\R^N$ of functions $s\maps N \to \R$, and opposite functions $f^\opp\maps N \to
M$ to the map taking $s\maps N \to \R$ to $s \circ f \maps M \to \R$. Note that
the coproduct in $\FinSet^\opp$ is the cartesian product of sets. The coherence
maps of the functor which, recall, are critical for the composition of the
decorations, are given by $\varphi_{N,M}\maps \R^N \times \R^M \to \R^{N\times
M}$, taking $(s,t) \in \R^N \times \R^M$ to the function $s \cdot t\maps N
\times M \to \R$ defined by pointwise multiplication in $\R$. 

Composition in this decorated cospan category is thus given by the
multiplication in $\R$. In detail, given decorated cospans $(X
\xrightarrow{i_X^\opp} N \xleftarrow{o_Y^\opp} Y, N \xrightarrow{s} \R)$ and $(Y
\xrightarrow{i_Y^\opp} M \xleftarrow{o_Z^\opp} Z, M \xrightarrow{t} \R)$, the
composite has a path from $x \in X$ to $z \in Z$ for every triple $(y,n,m)$
where $y \in Y$, $n \in N$, and $m \in M$, such that $n$ is a path from $x$ to
$y$ and $m$ is a path from $y$ to $z$. The scalar assigned to this path is the
product of those assigned to $n$ and $m$. For example, we have the following
composite
\[
    \tikzset{every path/.style={line width=.8pt}}
    \begin{aligned}
\begin{tikzpicture}
	\begin{pgfonlayer}{nodelayer}
		\node [style=sdot] (0) at (-2.5, 1.5) {};
		\node [style=sdot] (1) at (-2.5, -0) {};
		\node [style=sdot] (2) at (-2.5, -1.5) {};
		\node [style=amp] (3) at (-0.75, 2) {$5$};
		\node [style=amp] (4) at (-0.75, 1.25) {$1$};
		\node [style=amp] (5) at (-0.75, -1) {$-2$};
		\node [style=amp] (6) at (-0.75, 0.25) {$2.1$};
		\node [style=amp] (7) at (-0.75, -0.25) {$-0.4$};
		\node [style=sdot] (8) at (1, -0.5) {};
		\node [style=sdot] (9) at (4.5, 0.5) {};
		\node [style=sdot] (10) at (1, -1.5) {};
		\node [style=sdot] (11) at (4.5, -1.5) {};
		\node [style=amp] (12) at (2.75, -0.5) {$-1$};
		\node [style=amp] (13) at (2.75, 1.25) {$3$};
		\node [style=amp] (14) at (2.75, -1.5) {$-2.3$};
		\node [style=amp] (15) at (2.75, 2) {$1$};
		\node [style=sdot] (16) at (4.5, 1.5) {};
		\node [style=amp] (17) at (2.75, -0) {$3$};
		\node [style=sdot] (18) at (4.5, -0.5) {};
		\node [style=sdot] (19) at (1, 0.5) {};
		\node [style=sdot] (20) at (1, 1.5) {};
	\end{pgfonlayer}
	\begin{pgfonlayer}{edgelayer}
		\draw (3.west) to (0.center);
		\draw (4.west) to (0.center);
		\draw (6.west) to (1.center);
		\draw (7.west) to (1.center);
		\draw (5.west) to (2.center);
		\draw (16) to (15);
		\draw (16) to (13);
		\draw (18) to (12);
		\draw (14) to (11);
		\draw (10) to (14);
		\draw (15) to (20);
		\draw (13) to (20);
		\draw (3) to (20);
		\draw (4) to (20);
		\draw (6) to (19);
		\draw (7) to (8);
		\draw (5) to (8);
		\draw (12) to (8);
		\draw (19) to (17);
		\draw (17) to (18);
	\end{pgfonlayer}
\end{tikzpicture}
\end{aligned}
\qquad = \qquad
\begin{aligned}
\begin{tikzpicture}
	\begin{pgfonlayer}{nodelayer}
		\node [style=sdot] (0) at (-2.5, 1.5) {};
		\node [style=sdot] (1) at (-2.5, -0) {};
		\node [style=sdot] (2) at (-2.5, -1.5) {};
		\node [style=amp] (3) at (-0.75, 1.75) {$15$};
		\node [style=amp] (4) at (-0.75, 1.25) {$1$};
		\node [style=amp] (5) at (-0.75, -1) {$2$};
		\node [style=amp] (6) at (-0.75, -0) {$6.3$};
		\node [style=amp] (7) at (-0.75, -0.5) {$0.4$};
		\node [style=sdot] (8) at (1, -0.5) {};
		\node [style=sdot] (9) at (1, -1.5) {};
		\node [style=sdot] (10) at (1, 0.5) {};
		\node [style=sdot] (11) at (1, 1.5) {};
		\node [style=amp] (12) at (-0.75, 2.25) {$5$};
		\node [style=amp] (13) at (-0.75, 0.75) {$3$};
	\end{pgfonlayer}
	\begin{pgfonlayer}{edgelayer}
		\draw (3.west) to (0.center);
		\draw (4.west) to (0.center);
		\draw (6.west) to (1.center);
		\draw (7.west) to (1.center);
		\draw (5.west) to (2.center);
		\draw (3) to (11);
		\draw (4) to (11);
		\draw (7) to (8);
		\draw (5) to (8);
		\draw (0) to (12);
		\draw (12) to (11);
		\draw (0) to (13);
		\draw (13) to (11);
		\draw (6) to (8);
	\end{pgfonlayer}
\end{tikzpicture}
\end{aligned}
\]
There are four paths between the top-most element $x_1$ of the domain and the
top-most element $z_1$ of the codomain: we may first take the path that
amplifies by $5\times$ and then the path that amplifies by $1\times$ for a total
amplification of $5\times$, or $5\times$ and $3\times$ for $15\times$, and so
on. This means we end up with four elements relating $x_1$ and $z_1$ in the
composite. The apex of the composite is in fact given by the pullback $N
\times_Y M$ of the cospan $N \xrightarrow{o_Y} Y \xleftarrow{i_Y} M$ in
$\FinSet$.

Here we again see the problem of decorated cospans: the composite of the above
puts decorations on $N\times_Y M$, which can be of much larger cardinality than
$N$ and $M$. We wish to avoid the size of our decorated cospan from growing so
fast. Moreover, from our open systems perspective, we care not about the path
but by the total amplification of the signal from some chosen input to some
chosen output. The intuition is that if we black box the system, then we cannot
tell what paths the signal took through the system, only the total amplification
from input to output.

We thus want to restrict our apex to contain at most one point for each
input--output pair $(x,y)$. We do this by pushing the decoration along the
surjection $e$ in the epi-mono factorisation of the function $N\times_YM
\xrightarrow{e} \overline{N\times_YM} \xrightarrow{m} X \times Z$. Put another
way, we want the category of decorated corelations, not decorated cospans.

Represented as decorated corelations, the above composite becomes
\[
  \tikzset{every path/.style={line width=.8pt}}
  \begin{aligned}
    \begin{tikzpicture}
      \begin{pgfonlayer}{nodelayer}
	\node [style=sdot] (0) at (-2.5, 1.5) {};
	\node [style=sdot] (1) at (-2.5, -0) {};
	\node [style=sdot] (2) at (-2.5, -1.5) {};
	\node [style=amp] (3) at (-0.75, 1.5) {$6$};
	\node [style=amp] (4) at (-0.75, -1) {$-2$};
	\node [style=amp] (5) at (-0.75, 0.25) {$2.1$};
	\node [style=amp] (6) at (-0.75, -0.25) {$-0.4$};
	\node [style=sdot] (7) at (1, -0.5) {};
	\node [style=sdot] (8) at (4.5, 0.5) {};
	\node [style=sdot] (9) at (1, -1.5) {};
	\node [style=sdot] (10) at (4.5, -1.5) {};
	\node [style=amp] (11) at (2.75, -0.5) {$-1$};
	\node [style=amp] (12) at (2.75, -1.5) {$-2.3$};
	\node [style=amp] (13) at (2.75, 1.5) {$4$};
	\node [style=sdot] (14) at (4.5, 1.5) {};
	\node [style=amp] (15) at (2.75, -0) {$3$};
	\node [style=sdot] (16) at (4.5, -0.5) {};
	\node [style=sdot] (17) at (1, 0.5) {};
	\node [style=sdot] (18) at (1, 1.5) {};
      \end{pgfonlayer}
      \begin{pgfonlayer}{edgelayer}
	\draw (3.west) to (0.center);
	\draw (5.west) to (1.center);
	\draw (6.west) to (1.center);
	\draw (4.west) to (2.center);
	\draw (14) to (13);
	\draw (16) to (11);
	\draw (12) to (10);
	\draw (9) to (12);
	\draw (13) to (18);
	\draw (3) to (18);
	\draw (5) to (17);
	\draw (6) to (7);
	\draw (4) to (7);
	\draw (11) to (7);
	\draw (17) to (15);
	\draw (15) to (16);
      \end{pgfonlayer}
    \end{tikzpicture}
  \end{aligned}
  \qquad = \qquad
  \begin{aligned}
    \begin{tikzpicture}
      \begin{pgfonlayer}{nodelayer}
	\node [style=sdot] (0) at (-2.5, 1.5) {};
	\node [style=sdot] (1) at (-2.5, -0) {};
	\node [style=sdot] (2) at (-2.5, -1.5) {};
	\node [style=amp] (3) at (-0.75, 1.5) {$24$};
	\node [style=amp] (4) at (-0.75, -1) {$2$};
	\node [style=amp] (5) at (-0.75, -0.25) {$6.7$};
	\node [style=sdot] (6) at (1, -0.5) {};
	\node [style=sdot] (7) at (1, -1.5) {};
	\node [style=sdot] (8) at (1, 0.5) {};
	\node [style=sdot] (9) at (1, 1.5) {};
      \end{pgfonlayer}
      \begin{pgfonlayer}{edgelayer}
	\draw (3.west) to (0.center);
	\draw (5.west) to (1.center);
	\draw (4.west) to (2.center);
	\draw (3) to (9);
	\draw (5) to (6);
	\draw (4) to (6);
      \end{pgfonlayer}
    \end{tikzpicture}
  \end{aligned}
\]
Note that composite is not simply the composite as decorated cospans, but the
composite decorated cospan reduced to a decorated corelation. In
\textsection\ref{ssec.matrices}, we will show that this decorated corelations
category is equivalent to the category of real vector spaces and linear maps,
with monoidal product the tensor product.

It is not a trivial fact that the above composition rule for decorated
corelations defines a category. Indeed, the reason that it is possible to push
the decoration along the surjection $e$ is that the lax symmetric monoidal
functor $\R^{(-)}\maps \FinSet^\opp \to \Set$ extends to a lax symmetric
monoidal functor $(\FinSet^\opp;\mathrm{Sur}^\opp) \to \Set$. Here
$\FinSet^\opp;\mathrm{Sur}^\opp$ is the subcategory of $\cospan(\FinSet^\opp)$
comprising cospans of the form $\xrightarrow{f^\opp}\xleftarrow{e^\opp}$, where
$f$ is any function but $e$ must be a surjection.

More generally, given a category $\mc C$ with finite colimits and a subcategory
$\mc M$ stable under pushouts, we may construct a symmetric monoidal category
$\mc C;\mc M^\opp$ with isomorphisms classes of cospans of the form
$\xrightarrow{f}\xleftarrow{m}$, where $f \in \mc C$, $m \in \mc M$, as
morphisms. The monoidal product is again derived from the coproduct in $\mc C$.

We prove two key theorems in this section. The first, Theorem \ref{thm.fcorel},
is that these decorated corelations form a hypergraph category. That is, given a
category $\mc C$ with finite colimits, factorisation system $(\mc E,\mc M)$ such
that $\mc M$ is stable under pushouts, and a lax symmetric monoidal functor 
\[
  F\maps \mc C; \mc M^\opp \longrightarrow \Set,
\]
define a decorated corelation to be, as might be expected, an $(\mc E,\mc
M)$-corelation $X \to N \leftarrow Y$ in $\mc C$ together with an element of
$FN$. Then there is a hypergraph category $F\mathrm{Corel}$ with the objects of
$\mc C$ as objects and isomorphism classes of decorated corelations as
morphisms. As usual, functors between these so-called decorated corelations
categories can be defined from natural transformations between the decorating
functors.

The second key theorem is that every hypergraph category and hypergraph
functor can be constructed in this way, yielding an equivalence of categories
(Theorem \ref{thm.equivhypdccor}). This shows our decorated corelations
construction is as general as we need for the study of hypergraph categories,
and hence network-style diagrammatic languages.

\section{Decorated corelations} \label{sec:dcorc}

Decorating cospans requires more than just choosing a set of decorations for
each apex: for composition, we need to describe how these decorations transfer
along the copairing of pushout maps $[j_N,j_M]\maps N+M \to N+_YM$. Thus to
construct a decorated cospan category we need, as we have seen, not merely a
function from the objects of $\mc C$ to $\Set$, but a lax symmetric monoidal
functor $(\mc C,+) \to (\Set,\times)$. 

Similarly, decorating $(\mc E,\mc M)$-corelations requires still more
information: we now further need to know how to transfer decorations backwards along the
morphisms $N+_YM \xleftarrow{m} \overline{N+_YM}$. We thus introduce the
symmetric monoidal category $\mc C;\mc M^\opp$ with morphisms isomorphism
classes of cospans of the form $\xrightarrow{f}\xleftarrow{m}$, where $f \in \mc
C$ and $m \in \mc M$.  For constructing categories of decorated $(\mc E,\mc
M)$-corelations, we then require a lax symmetric monoidal functor $F$ from $\mc
C;\mc M^\opp$ to $\Set$. 

This section is structured in the same way as \textsection\ref{sec.corels}.
First, in \textsection\ref{ssec.rightads}, we introduce the categories $\mc
C;\mc M^\opp$, a prerequisite for the definition of decorated corelations. Next,
in \textsection\ref{ssec.deccorel}, we define decorated corelations, and then,
in \textsection\ref{ssec.deccorelcats} the hypergraph category of decorated
corelations. As in the previous chapter, we defer the proof that our hypergraph
category is a well-defined category until our discussion of functors in the
following section, \textsection\ref{sec.dcorf}.

\subsection{Adjoining right adjoints} \label{ssec.rightads}

Suppose we have a cospan $X+Y \to N$ with a decoration on $N$. Reducing this to
a corelation requires us to factor this to $X+Y \stackrel{e}\to \overline{N}
\stackrel{m}\to N$. To define a category of decorated corelations, then, we must
specify how to take decoration on $N$ and `pull it back' along $m$ to a decoration on
$\overline{N}$.

For decorated cospans, it is enough to have a functor $F$ from a category $\mc
C$ with finite colimits; the image $Ff$ of morphisms $f$ in $\mc C$ describes
how to move decorations forward along $f$. In this subsection we explain how to
expand $\mc C$ to include morphisms $m^\opp$ for each $m \in \mc C$, so that the
image $Fm^\opp$ of $m^\opp$ describes how to move morphisms backwards along $m$.
The construction of this expanded category $\mc C;\mc M^\opp$ is a simple
consequence of the stability of $\mc M$ under pushouts.

\begin{proposition}
  Let $\mathcal C$ be a category with finite colimits, and let $\mathcal M$ be a
  subcategory of $\mathcal C$ stable under pushouts. Then we define the category
  $\mathcal C; \mathcal M^\opp$ as follows  
  \smallskip 

  \begin{center}
    \begin{tabular}{| c | p{.65\textwidth} |}
      \hline
      \multicolumn{2}{|c|}{The symmetric monoidal category $(\mc C;\mc M^\opp,+)$} \\
      \hline
      \textbf{objects} & the objects of $\mathcal C$ \\ 
      \textbf{morphisms} & isomorphism classes of cospans of the form
      $\stackrel{c}\rightarrow \stackrel{m}\leftarrow$, where $c$ lies in
      $\mathcal C$ and $m$ in $\mathcal M$\\ 
      \textbf{composition} & given by pushout \\
      \textbf{monoidal product} & the coproduct in $\mathcal C$ \\
      \textbf{coherence maps} & the coherence maps in $\mc C$ \\
      \hline
    \end{tabular}
  \end{center}
  \smallskip 
\end{proposition}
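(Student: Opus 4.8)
The plan is to recognise $\mc C; \mc M^\opp$ as a wide subcategory of the hypergraph category $\cospan(\mc C)$ constructed earlier, and then to observe that almost every axiom is inherited from the ambient category for free. Concretely, the objects coincide with those of $\cospan(\mc C)$, and the morphisms are those isomorphism classes of cospans $X \xrightarrow{c} N \xleftarrow{m} Y$ whose right leg lies in $\mc M$. So the entire content reduces to three closure checks: that this class of morphisms contains the identities, is closed under cospan composition (pushout), and is closed under the coproduct $+$; together with the remark that the coherence and braiding morphisms also belong to the class. Once these are established, associativity, unitality, naturality of the coherence maps, and the monoidal coherence laws all hold in $\mc C; \mc M^\opp$ simply because they are equations between morphisms that already hold in $\cospan(\mc C)$ and that only involve morphisms lying in the subcategory.

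First I would record that the defining property is invariant under isomorphism of cospans and that identities qualify: if $n\maps N \to N'$ is an isomorphism of cospans carrying $X \xrightarrow{c} N \xleftarrow{m} Y$ to $X \xrightarrow{c'} N' \xleftarrow{m'} Y$, then $m' = n \circ m$ with $n \in \mc M$ (as $\mc M$ contains all isomorphisms of $\mc C$, as it does for the $\mc M$ of any factorisation system) and $\mc M$ closed under composition, so $m' \in \mc M$; and the identity cospan $X \xrightarrow{1_X} X \xleftarrow{1_X} X$ has right leg $1_X \in \mc M$. The main step is closure under composition. Given composable representatives $X \xrightarrow{c_1} N \xleftarrow{m_1} Y$ and $Y \xrightarrow{c_2} M \xleftarrow{m_2} Z$, the composite cospan is $X \xrightarrow{j_N \circ c_1} N+_YM \xleftarrow{j_M \circ m_2} Z$, formed from the pushout of $N \xleftarrow{m_1} Y \xrightarrow{c_2} M$. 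The left leg manifestly lies in $\mc C$; for the right leg, note that in the pushout square
\[
  \xymatrix{
    Y \ar[r]^{c_2} \ar[d]_{m_1} & M \ar[d]^{j_M} \\
    N \ar[r]_{j_N} & N+_YM
  }
\]
the map $j_M$ is the pushout of $m_1 \in \mc M$ along $c_2$, so $j_M \in \mc M$ by stability under pushout; since $m_2 \in \mc M$ and $\mc M$ is closed under composition, $j_M \circ m_2 \in \mc M$. Thus the composite is again of the required form. As composition agrees with that of $\cospan(\mc C)$, associativity and the unit laws are inherited, giving a wide subcategory $\mc C; \mc M^\opp \hookrightarrow \cospan(\mc C)$.

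Finally, for the monoidal structure I would take $+$ to be the restriction of the coproduct on $\cospan(\mc C)$. The monoidal product of $X \xrightarrow{c_1} N \xleftarrow{m_1} Y$ and $X' \xrightarrow{c_2} N' \xleftarrow{m_2} Y'$ is $X+X' \xrightarrow{c_1+c_2} N+N' \xleftarrow{m_1+m_2} Y+Y'$, whose right leg $m_1+m_2$ lies in $\mc M$ by Lemma \ref{lem.mcoproductsmc} (which is precisely why that lemma was proved); hence $+$ restricts to a functor $\mc C; \mc M^\opp \times \mc C; \mc M^\opp \to \mc C; \mc M^\opp$. The associator, unitors, and braiding of $\cospan(\mc C)$ are images of isomorphisms of $\mc C$, that is, cospans of the form $X \xrightarrow{\sim} Z \xleftarrow{1_Z} Z$, whose right leg $1_Z \in \mc M$; so they are morphisms of $\mc C; \mc M^\opp$. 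Their naturality and the pentagon, triangle, and hexagon axioms then descend immediately from $\cospan(\mc C)$, as all the morphisms involved lie in the subcategory. I expect the only genuine obstacle to be the composition-closure step, and even there the difficulty is entirely absorbed by the hypothesis of stability under pushout; the remainder is bookkeeping about which structure maps of $\cospan(\mc C)$ happen to have right leg in $\mc M$.
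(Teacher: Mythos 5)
Your proof is correct and follows essentially the same route as the paper's (which is much terser): composition closure via stability of $\mc M$ under pushout, monoidal closure via Lemma \ref{lem.mcoproductsmc}, and inheritance of all coherence laws from the symmetric monoidal subcategory inclusion into $\cospan(\mc C)$. Your parenthetical appeal to $\mc M$ containing all isomorphisms is indeed needed (for well-definedness on isomorphism classes) and is implicit in the paper's standing assumptions, since $\mc M$ there always arises as the right class of a factorisation system.
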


\begin{proof}
  Our data is well defined: composition because $\mc M$ is stable under
  pushouts, and monoidal composition by Lemma \ref{lem.mcoproductsmc}. The
  coherence laws follow as this is a symmetric monoidal subcategory of
  $\cospan(\mc C)$. 
\end{proof}

\begin{remark}
As we state in the proof, the category $\mc C;\mc M^\opp$ is a subcategory of
$\cospan(\mc C)$. We can in fact view it as a sub-bicategory of the bicategory
of cospans in $\mc C$, where the 2-morphisms given by maps of cospans. In this
bicategory every morphism of $\mc M$ has a right adjoint.
\end{remark}

\begin{examples} 
  The factorisation systems introduced in Examples \ref{ex.factsysts} and
  considered again in Examples \ref{ex.corelcats} all have $\mc M$ stable under
  pushout. This yields the following examples.
  \begin{itemize}
    \item $\mathcal C; \mathcal C^\opp$ is by definition equal to
      $\cospan(\mathcal C)$.
    \item $\mathcal C;\mathcal I_{\mathcal C}^\opp$ is isomorphic to $\mathcal C$.
    \item $\Set;\mathrm{Inj}^\opp$ is the category with sets as objects and partial
      functions as morphisms.
  \end{itemize}
\end{examples}

The following lemma details how to construct functors between this type of
category.

\begin{lemma} \label{lem.madjointsfunctor}
  Let $\mathcal C$, $\mathcal C'$ be categories with finite colimits, and let
  $\mathcal M$, $\mathcal M'$ be subcategories of $\mc C$, $\mc C'$ respectively
  each stable under pushouts. Let $A\maps \mathcal C \to \mc C'$ be functor that
  preserves colimits and such that the image of $\mc M$ lies in $\mc M'$. Then
  $A$ extends to a symmetric strong monoidal functor
  \[
    A\maps \mc C;\mc M^\opp \longrightarrow \mc C'; \mc M'^\opp.
  \]
  mapping $X$ to $AX$ and $\stackrel{c}\rightarrow \stackrel{m}\leftarrow$ to
  $\stackrel{Ac}\rightarrow \stackrel{Am}\leftarrow$.
\end{lemma}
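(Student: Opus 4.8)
The plan is to verify that $A$ is well-defined on morphisms, preserves composition and identities, and carries the symmetric monoidal structure strongly. Since the coherence maps of $\mc C;\mc M^\opp$ are inherited from $\mc C$ (indeed from $\cospan(\mc C)$), and $A$ is already assumed colimit-preserving, most of the verification will reduce to invoking properties of the original functor $A\maps\mc C\to\mc C'$ together with the universal property of pushouts.

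First I would check well-definedness on objects and morphisms. A morphism of $\mc C;\mc M^\opp$ is an isomorphism class of a cospan $X \stackrel{c}\to W \stackrel{m}\leftarrow Y$ with $c \in \mc C$ and $m \in \mc M$. Applying $A$ gives $AX \stackrel{Ac}\to AW \stackrel{Am}\leftarrow AY$. Here $Ac$ is automatically a morphism of $\mc C'$, and $Am \in \mc M'$ precisely because the image of $\mc M$ lies in $\mc M'$ by hypothesis; so this is a legitimate morphism of $\mc C';\mc M'^\opp$. Because $A$ is a functor, it sends isomorphisms of cospans to isomorphisms of cospans (applying $A$ to the commuting triangle defining a map of cospans yields another commuting triangle), so the assignment descends to isomorphism classes.

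Next I would verify functoriality. Composition in $\mc C;\mc M^\opp$ is by pushout, exactly as in $\cospan(\mc C)$. Given composable cospans, their composite is formed via the pushout of the two inner legs; since $A$ preserves finite colimits, $A$ sends this pushout square to a pushout square in $\mc C'$, so $A$ of the composite is canonically isomorphic to the composite of the $A$-images — this is the same argument already used for $\cospan$ functors in the excerpt (cf.\ the isomorphism $c\maps AX+_{AY}AZ \stackrel\sim\to A(X+_YZ)$ appearing in the proof of Lemma \ref{lem.corelfuncomposition}). The new ingredient relative to plain cospans is only that the right leg of each composite must land in $\mc M'$: the composite's right leg is a pushout of a map in $\mc M$, hence in $\mc M$, and $A$ carries it into $\mc M'$. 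Identities are the identity cospans, which $A$ visibly preserves. For monoidality, the monoidal product is the coproduct $+$; since $A$ preserves colimits it comes equipped with natural isomorphisms $\alpha\maps A(-)+A(-) \Rightarrow A(-+-)$, and these furnish the strong monoidal coherence maps, with the required coherence axioms following from those of $\alpha$ and the functoriality of $A$ on $\mc C$. Symmetry follows because the braiding in both categories is inherited from the coproduct and $A$ respects it.

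The main obstacle, such as it is, is bookkeeping rather than conceptual: one must confirm that every structural map used to define a morphism or a composite in $\mc C;\mc M^\opp$ is genuinely preserved, and in particular that the right-hand legs stay inside $\mc M'$ at each stage (in the composite, in the monoidal product, and in the coherence maps). But each of these reduces to the two standing hypotheses — $A$ preserves finite colimits and $A(\mc M)\subseteq\mc M'$ — together with stability of $\mc M,\mc M'$ under pushout, so the argument runs parallel to, and is in fact lighter than, the earlier cospan-functor and corelation-functor proofs. I would therefore present it briefly, referring back to Proposition \ref{prop.corelfunctors} and Lemma \ref{lem.corelfuncomposition} for the repeated colimit-preservation computations rather than reproducing the large commuting diagrams.
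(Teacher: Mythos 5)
Your proposal is correct and matches the paper's argument: the paper simply observes that $A(\mc M)\subseteq\mc M'$ makes the assignment well defined and then notes the functor is the restriction and corestriction of the usual (strong symmetric monoidal) functor $\cospan(\mc C)\to\cospan(\mc C')$ induced by the colimit-preserving $A$, which is exactly the reduction you describe. Your extra bookkeeping about the right legs of composites staying in $\mc M'$ (via stability under pushout) is the same check the paper has already folded into the earlier proposition establishing that $\mc C;\mc M^\opp$ is a category.
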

\begin{proof}
  Note $A(\mc M) \subseteq \mc M'$, so $\stackrel{Ac}\rightarrow
  \stackrel{Am}\leftarrow$ is indeed a morphism in $\mc C';\mc M'^\opp$. This is
  then a restriction and corestriction of the usual functor $\cospan(\mc C) \to
  \cospan(\mc C')$ to the above domain and codomain.
\end{proof}

Note that a similar construction giving subcategories of cospan categories could
be defined more generally using any two isomorphism-containing wide
subcategories stable under pushout.  The above, however, suffices for decorated
corelations.

\subsection{Decorated corelations} \label{ssec.deccorel}

As we have said, decorated corelations are constructed from a lax symmetric
monoidal functor from $\mc C;\mc M^\opp$ to $\Set$. In this subsection we define
decorated corelations and give a composition rule for them, showing that this
composition rule is well defined up to isomorphism.

\begin{definition}
  Let $\mathcal C$ be a category with finite colimits, $(\mathcal E, \mathcal
  M)$ be a factorisation system on $\mathcal C$ with $\mc M$ stable under
  pushout, and 
  \[
    F: (\mathcal C;\mathcal M^\opp,+) \longrightarrow (\Set, \times)
  \]
  be a lax symmetric monoidal functor.  We define an $F$-\define{decorated
  corelation} to a pair
  \[
    \left(
    \begin{aligned}
      \xymatrix{
	& N \\  
	X \ar[ur]^{i} && Y \ar[ul]_{o}
      }
    \end{aligned}
    ,
    \qquad
    \begin{aligned}
      \xymatrix{
	FN \\
	1 \ar[u]_{s}
      }
    \end{aligned}
    \right)
  \]
  where the cospan is jointly $\mathcal E$-like. A morphism of decorated
  corelations is a morphism of decorated cospans between two decorated
  corelations.  As usual, we will be lazy about the distinction between a
  decorated corelation and its isomorphism class.
\end{definition}

Suppose we have decorated corelations
\[
  \left(
  \begin{aligned}
    \xymatrix{
      & N \\  
      X \ar[ur]^{i_X} && Y \ar[ul]_{o_Y}
    }
  \end{aligned}
  ,
  \qquad
  \begin{aligned}
    \xymatrix{
      FN \\
      1 \ar[u]_{s}
    }
  \end{aligned}
  \right)
  \qquad
  \mbox{and}
  \qquad
  \left(
  \begin{aligned}
    \xymatrix{
      & M \\  
      Y \ar[ur]^{i_Y} && Z \ar[ul]_{o_Z}
    }
  \end{aligned}
  ,
  \qquad
  \begin{aligned}
    \xymatrix{
      FM \\
      1 \ar[u]_{t}
    }
  \end{aligned}
  \right).
\]
Then, recalling the notation introduced in
\textsection\ref{ssec.corelcats}, their composite is given by the composite
corelation
\[
  \xymatrix{
    & \overline{N+_YM} \\  
    X \ar[ur]^{e\circ \iota_X} && Z \ar[ul]_{e \circ \iota_Z}
  }
\]
paired with the decoration
\[
  1 \xrightarrow{\varphi_{N,M}\circ\langle s,t\rangle} F(N+M)
  \xrightarrow{F[j_N,j_M]} F(N+_YM) \xrightarrow{F(m^\opp)} F(\overline{N+_YM}).
\]
As composition of corelations and decorated cospans are both well defined up to
isomorphism, this too is well defined up to isomorphism. 

\begin{proposition} \label{prop.deccorelcomp}
  Let $\mc C$ be a category with finite colimits and a factorisation system
  $(\mc E,\mc M)$ with $\mc M$ stable under pushout. Then the above is a
  well-defined composition rule on isomorphism classes of decorated corelations.
\end{proposition}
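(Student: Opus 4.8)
The plan is to follow the template established for decorated cospans (Proposition~\ref{prop.composingdeccospans}) and for corelations (Proposition~\ref{prop.corelcomp}): the cospan-theoretic part of the construction is already known to be well defined up to isomorphism, so the only real work is to check that the decorations are carried along compatibly. Concretely, suppose we are given isomorphisms of decorated corelations $n$ from $(X \xrightarrow{i_X} N \xleftarrow{o_Y} Y, s)$ to $(X \xrightarrow{i'_X} N' \xleftarrow{o'_Y} Y, s')$ and $p$ from $(Y \xrightarrow{i_Y} M \xleftarrow{o_Z} Z, t)$ to $(Y \xrightarrow{i'_Y} M' \xleftarrow{o'_Z} Z, t')$. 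Since a morphism of decorated corelations is in particular a morphism of decorated cospans, $n$ and $p$ are isomorphisms in $\mc C$ commuting with the cospan legs and satisfying $Fn \circ s = s'$ and $Fp \circ t = t'$, where $n,p$ are read as morphisms of $\mc C;\mc M^\opp$ via their left legs. By Proposition~\ref{prop.corelcomp} the composite corelations are isomorphic: there is an isomorphism $q\maps \overline{N+_YM} \to \overline{N'+_YM'}$, arising from the diagonal fill-in of the factorisation system, which together with the pushout isomorphism $r\maps N+_YM \to N'+_YM'$ of Proposition~\ref{prop.composingdeccospans} satisfies $m' \circ q = r \circ m$. It remains to show that $q$ is an isomorphism of decorated corelations, i.e. that $F(q)$ sends the decoration of the left composite to that of the right.

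To this end I would assemble a single commutative diagram in $\Set$ whose top row is the left composite decoration
\[
  1 \xrightarrow{\langle s,t\rangle} FN \times FM \xrightarrow{\varphi_{N,M}} F(N+M) \xrightarrow{F[j_N,j_M]} F(N+_YM) \xrightarrow{F(m^\opp)} F(\overline{N+_YM}),
\]
whose bottom row is the primed analogue, and whose vertical maps are $Fn \times Fp$, $F(n+p)$, $Fr$, and $Fq$. This diagram splits into four regions. The leftmost square commutes by functoriality of the cartesian product together with the conditions $Fn\circ s = s'$ and $Fp \circ t = t'$. The $\varphi$-square commutes by the naturality of the lax structure map $\varphi$ of $F$ applied to the morphisms $n,p$ of $\mc C;\mc M^\opp$. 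The $F[j_N,j_M]$-square is the $F$-image of the commuting square $r \circ [j_N,j_M] = [j_{N'},j_{M'}] \circ (n+p)$ in $\mc C$, which holds by the universal properties of the coproduct and the pushout, so it commutes by functoriality of $F$.

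The final, and I expect hardest, region is the $F(m^\opp)$-square, where one must establish $F(q)\circ F(m^\opp) = F(m'^\opp) \circ F(r)$. The clean way to see this is to recognise it as the $F$-image of a square in $\mc C;\mc M^\opp$, namely $q \circ m^\opp = m'^\opp \circ r$, so that functoriality of $F$ gives the result. The subtlety is that $m^\opp$ and $m'^\opp$ are the back-transfer morphisms, and one must unwind composition in $\mc C;\mc M^\opp$ (pushout of cospans, with $q,r$ embedded through their left legs). Doing so, both $q\circ m^\opp$ and $m'^\opp\circ r$ are cospans from $N+_YM$ to $\overline{N'+_YM'}$, and the relation $m'\circ q = r\circ m$ established above furnishes precisely the isomorphism of cospans identifying them. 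The care needed is entirely in this translation of $m^\opp$ into an honest morphism of $\mc C;\mc M^\opp$ and in verifying that its interaction with the isomorphisms is governed by the single identity $m'\circ q = r\circ m$; everything else reduces to the colimit and functor bookkeeping already carried out in Propositions~\ref{prop.composingdeccospans} and~\ref{prop.corelcomp}. Once these four regions commute, $F(q)$ intertwines the two decorations, so $q$ is an isomorphism of decorated corelations and the composition rule is well defined on isomorphism classes.
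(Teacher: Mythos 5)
Your proposal is correct and follows essentially the same route as the paper: the paper's proof also splits the verification into the part handled by well-definedness of decorated cospan composition (your first three regions, which it cites as a single triangle via Proposition~\ref{prop.composingdeccospans}) and the $F(m^\opp)$-square, which it likewise identifies as the $F$-image of the commuting square in $\mc C;\mc M^\opp$ furnished by the factorisation system's fill-in isomorphism. Your unwinding of $q\circ m^\opp = m'^\opp\circ r$ from the identity $m'\circ q = r\circ m$ is exactly the content the paper compresses into its citation of the well-definedness of corelation composition.
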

\begin{proof}
Let 
\[
  \big(X \stackrel{i_X}\longrightarrow N \stackrel{o_Y}\longleftarrow Y,\enspace 1
\stackrel{s}\longrightarrow FN\big) \stackrel\sim\longrightarrow \big(X \stackrel{i'_X}\longrightarrow N'
\stackrel{o'_Y}\longleftarrow Y,\enspace 1 \stackrel{s'}\longrightarrow FN'\big)
\]
and
\[
  \big(Y \stackrel{i_Y}\longrightarrow M \stackrel{o_Z}\longleftarrow Z,\enspace 1
\stackrel{t}\longrightarrow FM\big) \stackrel\sim\longrightarrow \big(Y \stackrel{i'_Y}\longrightarrow M'
\stackrel{o'_Z}\longleftarrow Z,\enspace 1 \stackrel{t'}\longrightarrow FM'\big)
\]
be isomorphisms of decorated corelations. We wish to show that the composite of the
decorated corelations on the left is isomorphic to the composite of the decorated
corelations on the right. 

As discussed in Proposition \ref{prop.deccorelcomp}, the composites of the
underlying corelations are isomorphic, via an isomorphism $s$ which exists by the
factorisation system. We need to show this $s$ is an isomorphism of decorations.
This is a matter of showing the diagram 
\[
    \xymatrixcolsep{3pc}
    \xymatrixrowsep{1pc}
 \xymatrix{
   && F(N+_YM) \ar[r]^{Fm^\opp} \ar[dd]_\sim^{Fp} & F(\overline{N+_YM})
   \ar[dd]_\sim^{Fs} \\
   1 \ar[urr]^(.4){F[j_N,j_M]\circ\varphi_{N,M}\circ \langle s,t \rangle\qquad}
   \ar[drr]_(.4){F[j_{N'},j_{M'}]\circ\varphi_{N',M'}\circ \langle s',t'
 \rangle\qquad\quad} \\
   && F(N'+_YM') \ar[r]_{Fm'^\opp} & F(\overline{N'+_YM'})\\
 }
\]
The triangle commutes as composition of decorated cospans is well defined
(Proposition \ref{prop.composingdeccospans}), while the square commutes as
composition of corelations is well defined (Proposition
\ref{prop.deccorelcomp}).
\end{proof}

\begin{remark}
  Note that we have chosen to define decorated corelations only for lax symmetric
  monoidal functors 
  \[
    (\mathcal C;\mathcal M^\opp,+) \longrightarrow (\Set, \times),
  \]
  and not more generally for lax braided monoidal functors 
  \[
    (\mathcal C;\mathcal M^\opp,+) \longrightarrow (\mc D, \otimes)
  \]
  for braided monoidal categories $\mc D$. While we could use the latter type of
  functor for our constructions, a similar arguments to those in Section
  \ref{sec.setdecs} show we gain no extra generality. On the other hand, keeping
  track of this possibly varying category $\mc D$ in the following distracts
  from the main insights. We thus merely remark that it is possible to make the
  more general definition, and leave it at that.
\end{remark}

\subsection{Categories of decorated corelations} \label{ssec.deccorelcats}

In this subsection we define the hypergraph category $F\mathrm{Corel}$ of
decorated corelations. Having defined decorated corelations and their
composition in the previous subsection, the key question to address is the
provenance of the monoidal and hypergraph structure. 

Recall, from \textsection\ref{ssec.corelcats}, that to define the monoidal and hypergraph
structure on categories of corelations, we used functors $\mathrm{Cospan}(\mc C)
\to \mathrm{Corel}(\mc C)$, leveraging the monoidal and hypergraph structure on
cospan categories. In analogy, here we leverage the same fact for decorated
cospans, this time using a structure preserving map 
\[
  \square\maps F\mathrm{Cospan} \longrightarrow F\mathrm{Corel}.
\]
Here $F\mathrm{Cospan}$ denotes the decorated cospan category constructed from
the restriction of the functor $F\maps \mc C;\mc M^\opp \to \Set$ to the domain
$\mc C$. 

The monoidal product of two decorated corelations is their monoidal product as
decorated cospans. To define the coherence maps for this monoidal product, as
well as the coherence maps, we introduce the notion of a restricted decoration.

Given a cospan $X \to N \leftarrow Y$, write $m\maps \overline{N} \to N$ for the
$\mc M$ factor of the copairing $X+Y \to N$. The map $\square$ takes a
decorated cospan 
\[
  (X \stackrel{i}\longrightarrow N \stackrel{o}\longleftarrow Y, \enspace 1
    \stackrel{s}\longrightarrow FN)
\]
to the decorated corelation 
\[
  (X \stackrel{\overline{i}}\longrightarrow \overline N
  \stackrel{\overline{o}}{\longleftarrow} Y, \enspace 1 \xrightarrow{Fm^\opp \circ
  s} F\overline{N}),
\]
where the corelation is given by the jointly $\mc E$-part of the cospan, and the
decoration is given by composing $s$ with the $F$-image $Fm^\opp\maps FN \to
F\overline{N}$ of the map $N \stackrel{1_N}\to N \stackrel{m}\leftarrow
\overline{N}$ in $\mc C;\mc M^\opp$. This is well-defined up to isomorphism of
decorated corelations. We call $Fm^\opp \circ s$ the \define{restricted
decoration} of the decoration on the cospan $(X \to N \leftarrow Y, \enspace 1
\stackrel{s}\to FN)$.

We then make the following definition.

\begin{theorem} \label{thm.fcorel}
  Let $\mathcal C$ be a category with finite colimits and factorisation system
  $(\mathcal E, \mathcal M)$ with $\mathcal M$ stable under pushout, and let 
  \[
    F: (\mathcal C;\mathcal M^\opp,+) \longrightarrow (\Set, \times)
  \]
  be a lax symmetric monoidal functor.  Then we may define 
  \smallskip 

  \begin{center}
    \begin{tabular}{| c | p{.65\textwidth} |}
      \hline
      \multicolumn{2}{|c|}{The hypergraph category $(F\mathrm{Corel},+)$} \\
      \hline
      \textbf{objects} & the objects of $\mathcal C$ \\ 
      \textbf{morphisms} & isomorphism classes of $F$-decorated corelations in
      $\mathcal C$\\ 
      \textbf{composition} & given by $\mc E$-part of pushout with restricted
      decoration  \\
      \textbf{monoidal product} & the coproduct in $\mathcal C$  \\
      \textbf{coherence maps} & maps from $\cospan(\mc C)$ with restricted empty
      decoration \\
      \textbf{hypergraph maps} & maps from $\cospan(\mc C)$ with restricted empty
      decoration \\
      \hline
    \end{tabular}
  \end{center}
  \smallskip 
\end{theorem}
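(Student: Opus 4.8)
The plan is to mimic the proof of Theorem~\ref{thm:fcospans} exactly, transferring the required structure from the decorated cospan category $F\mathrm{Cospan}$ via the reduction map $\square\maps F\mathrm{Cospan}\to F\mathrm{Corel}$. The strategy that served us twice already---once for undecorated corelations in Theorem~\ref{thm.cospantocorel}, and once for decorated cospans in Theorem~\ref{thm:fcospans}---is to show that $\square$ preserves composition and the monoidal product, so that any equation holding in the already-established hypergraph category $F\mathrm{Cospan}$ descends to $F\mathrm{Corel}$ once we take restricted $\mc E$-parts. Since $F\mathrm{Cospan}$ is a bona fide hypergraph category by Theorem~\ref{thm:fcospans} (applied to the restriction of $F$ to $\mc C$), this reduces all the axiom-checking to a single structural fact about $\square$.

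First I would verify that the composition rule of Proposition~\ref{prop.deccorelcomp} makes $F\mathrm{Corel}$ a category, by running the same trick used in the corollary to Lemma~\ref{lem.corelfuncomposition}: realise $\square\maps F\mathrm{Cospan}\to F\mathrm{Corel}$ as a bijective-on-objects, surjective-on-morphisms functor. Concretely, I would show that composing two decorated corelations agrees with first composing them as decorated cospans in $F\mathrm{Cospan}$ and then applying $\square$. The crux is that the restricted decoration $Fm^\opp\circ s$ is functorial in the right sense: the diagram computing the composite decoration factors as (i) the decorated-cospan composite $F[j_N,j_M]\circ\varphi_{N,M}\circ\langle s,t\rangle$, followed by (ii) the backward transfer $F(m^\opp)$ along the $\mc M$-factor of the composite's copairing. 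Because $F$ is a functor on $\mc C;\mc M^\opp$ and $\mc M$ is stable under pushout, the two $\mc M$-parts compose coherently---this is precisely the computation packaged in the commuting diagram of Proposition~\ref{prop.deccorelcomp}. Associativity and unitality then follow from the corresponding facts in $F\mathrm{Cospan}$ together with associativity of taking $\mc E$-parts, which we established for undecorated corelations in the proof of Theorem~\ref{thm.cospantocorel}.

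With $\square$ shown to preserve composition, I would then define the monoidal product on $F\mathrm{Corel}$ to be the coproduct in $\mc C$ on objects and the decorated-cospan monoidal product on morphisms, and the coherence and hypergraph (Frobenius) maps to be the images under $\square$ of the corresponding maps in $F\mathrm{Cospan}$---that is, the cospan coherence and Frobenius maps carrying the restricted empty decoration. To conclude, I would observe that $\square$ also preserves the monoidal product: taking the restricted $\mc E$-part of a coproduct of two decorated cospans equals the coproduct of their restricted $\mc E$-parts. This mirrors Lemma~\ref{lem.corelfunmonoidal} (naturality of $\overline{\kappa}$) and Lemma~\ref{lem.mcoproductsmc} (that $\mc M$ is closed under $+$), the latter guaranteeing that the $\mc M$-factor of a coproduct copairing is the coproduct of the $\mc M$-factors, so the backward transfer $F(m^\opp)$ respects $+$ via the monoidality of $F$. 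Once both preservation properties hold, every symmetric monoidal and hypergraph axiom---functoriality of $+$, naturality of the coherence maps, the coherence hexagons and triangles, and the Frobenius and special laws---descends automatically from $F\mathrm{Cospan}$.

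The main obstacle I anticipate is the composition-preservation step, specifically bookkeeping the interaction between the \emph{forward} decoration transfer (the $F[j_N,j_M]\circ\varphi$ of decorated-cospan composition) and the \emph{backward} transfer $F(m^\opp)$ that defines the restricted decoration. Unlike the undecorated case, here we must confirm that reducing to the $\mc E$-part \emph{after} composing as decorated cospans yields the same decoration as the rule stated in the theorem; this hinges on the functoriality of $F$ on $\mc C;\mc M^\opp$ applied to a pushout square of $\mc M$-maps, and on the fact (from Lemma~\ref{lem.mcoproductsmc} and its analogue for pushouts $m+_Ym'$) that the relevant $\mc M$-factors compose. I expect this to be the one genuinely new commuting diagram of the proof, with everything else following the now-familiar template of transferring structure along a surjective-on-morphisms, structure-preserving map.
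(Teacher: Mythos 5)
Your proposal is correct and follows essentially the same route as the paper: the paper likewise establishes the result by showing that the reduction map $\square\maps F\mathrm{Cospan}\to F\mathrm{Corel}$ (obtained as the special case $(\mc E,\mc M)=(\mc C,\mc I_{\mc C})$ of the general decorated-corelation functor) preserves composition and has natural coherence maps, and then transfers all hypergraph axioms along this bijective-on-objects, surjective-on-morphisms map. The one new commuting diagram you anticipate---reconciling the forward transfer $F[j_N,j_M]\circ\varphi$ with the backward transfer $F(m^\opp)$ via functoriality of $F$ on the $F$-image of a pushout square in $\mc C;\mc M^\opp$---is exactly the diagram the paper checks.
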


Similar to Theorem~\ref{thm.cospantocorel} defining the hypergraph category
$\corel(\mc C)$, we have now specified well-defined data and just need to check a
collection of coherence axioms. As before, we prove this in the next section,
alongside a theorem regarding functors between decorated corelation categories.

\begin{examples} \label{ex.cospansascorels}
  Recall that corelations with respect to the trivial \linebreak
  morphism--isomorphism factorisation system $(\mc C, \mc I_{\mc C})$ are simply
  cospans. Using this factorisation system, we see that decorated cospans are a
  special case of decorated corelations.
\end{examples}

\begin{example} \label{ex.undeccorel}
  `Undecorated' corelations are also a special case of decorated
  corelations: they are corelations decorated by the functor 
  \[
    1\maps \mc C;\mc M^\opp \to \Set
  \] 
  that maps each object to the one element set $1$, and each morphism to the
  identity function on $1$. This is a symmetric monoidal functor with the
  coherence maps all also the identity function on $1$.
\end{example}


\section{Functors between decorated corelation categories} \label{sec.dcorf}
In this section we show how to construct hypergraph functors between decorated
corelation categories. The construction of these functors holds no surprises:
their requirements combine the requirements of corelations and decorated
cospans. In the process of proving that our construction gives well-defined
hypergraph functors, we complete the necessary prerequisite proof that decorated
corelation categories are well-defined hypergraph categories.

Recall that Lemma \ref{lem.madjointsfunctor} says that, when the image of $\mc
M$ lies in $\mc M'$, we can extend a colimit-preserving functor $\mc C \to \mc
C'$ to a symmetric monoidal functor $\mc C;\mc M^\opp \to \mc C';\mc M'^\opp$ .

\begin{proposition}\label{prop.deccorelfunctors}
  Let $\mathcal C$, $\mathcal C'$ have finite colimits and respective
factorisation systems $(\mathcal E, \mathcal M)$, $(\mathcal E', \mathcal M')$,
such that $\mathcal M$ and $\mathcal M'$ are stable under pushout, and suppose
that we have lax symmetric monoidal functors
\[
  F: (\mathcal C;\mathcal M^\opp,+) \longrightarrow (\Set, \times)
\]
and
\[
  G: (\mathcal C';\mathcal M'^\opp,+) \longrightarrow (\Set, \times).
\]

Further let $A\maps \mathcal C \to \mathcal C'$ be a functor that preserves
finite colimits and such that the image of $\mathcal M$ lies in $\mathcal M'$.
This functor $A$ extends to a symmetric monoidal functor $\mc C;\mc M^\opp \to
\mc C';\mc M'^\opp$.

Suppose we have a monoidal natural transformation $\theta$:
\[
  \xymatrixrowsep{2ex}
  \xymatrix{
    \mc C; \mc M^\opp \ar[dd]_{A} \ar[drr]^F  \\
    &\twocell \omit{_\:\theta}& \Set \\
    \mc C'; \mc {M'}^\opp \ar[urr]_{G} 
  }
\]

Then we may define a hypergraph functor $T\maps F\mathrm{Corel} \to
G\mathrm{Corel}$ sending each object $X \in F\mathrm{Corel}$ to $AX \in
G\mathrm{Corel}$ and each decorated corelation 
  \[
    \left(
    \begin{aligned}
      \xymatrix{
	& N \\  
	X \ar[ur]^{i} && Y \ar[ul]_{o}
      }
    \end{aligned}
    ,
    \qquad
    \begin{aligned}
      \xymatrix{
	FN \\
	1 \ar[u]_{s}
      }
    \end{aligned}
    \right)
  \]  
to
  \[
    \left(
    \begin{aligned}
      \xymatrix{
	& \overline{AN} \\  
	AX \ar[ur]^{e'\circ\iota_{AX}} && AY \ar[ul]_{e'\circ\iota_{AY}}
      }
    \end{aligned}
    ,
    \qquad
    \begin{aligned}
      \xymatrixrowsep{1.5ex}
      \xymatrix{
	G\overline{AN} \\
	GAN \ar[u]_{Gm_{AN}^\opp}\\
        FN \ar[u]_{\theta_N}\\
	1 \ar[u]_{s}
      }
    \end{aligned}
    \right)
  \]  
  The coherence maps $\overline{\kappa_{X,Y}}$ are given by the coherence maps of
  $A$ with the restricted empty decoration.
\end{proposition}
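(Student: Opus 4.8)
The plan is to follow the two-stage strategy used for undecorated corelations in the proof of Proposition \ref{prop.corelfunctors}: rather than verify the hypergraph axioms for $F\mathrm{Corel}$ and the functoriality of $T$ separately, I would route everything through the structure-preserving reduction maps
\[
  \square_F\maps F\mathrm{Cospan} \longrightarrow F\mathrm{Corel}, \qquad
  \square_G\maps G\mathrm{Cospan} \longrightarrow G\mathrm{Corel},
\]
each sending a decorated cospan $(X \to N \leftarrow Y,\, s)$ to its jointly $\mathcal E$-like part equipped with the restricted decoration $Fm^\opp \circ s$ (respectively $Gm^\opp\circ s$). Here $F\mathrm{Cospan}$ is the decorated cospan category built from the restriction of $F$ along $\mc C \hookrightarrow \mc C;\mc M^\opp$, which is a hypergraph category by Theorem \ref{thm:fcospans}, and $T'\maps F\mathrm{Cospan} \to G\mathrm{Cospan}$ denotes the hypergraph functor supplied by Theorem \ref{thm:decoratedfunctors} applied to $A$, the identity functor $B = 1_{\Set}$ with $\beta=\mathrm{id}$, and the restriction of $\theta$. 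Since each $\square$ is bijective-on-objects and surjective-on-morphisms, once I show the square
\[
  \xymatrix{
    F\mathrm{Cospan} \ar[r]^{T'} \ar[d]_{\square_F} & G\mathrm{Cospan} \ar[d]^{\square_G} \\
    F\mathrm{Corel} \ar[r]_{T} & G\mathrm{Corel}
  }
\]
commutes and that $\square_F$, $\square_G$ preserve composition, monoidal product, and hypergraph maps, the desired properties of $T$ transfer wholesale from those of $T'$.

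First I would establish that $\square_F$ (and identically $\square_G$) preserves composition, which simultaneously completes the proof of Theorem \ref{thm.fcorel} upon specialising to $A = 1_{\mc C}$, $G = F$, and $\theta = \mathrm{id}$. At the level of underlying cospans this is exactly Lemma \ref{lem.corelfuncomposition}; the new content is the agreement of decorations. Given decorated cospans $(X \to N \leftarrow Y, s)$ and $(Y \to M \leftarrow Z, t)$, composing-then-reducing factors the copairing $X+Z \to N+_YM$ once, whereas reducing-then-composing factors $N$ and $M$ separately, transports the decorations back along $Fm_N^\opp$ and $Fm_M^\opp$, and then factors the pushout $\overline N +_Y \overline M$ a second time. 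I would show both decorations arise as the $F$-image of a single composite in $\mc C;\mc M^\opp$: the uniqueness clause of the factorisation system produces the comparison isomorphism between the two apices, naturality of $\varphi$ together with the lax monoidality coherences combines $s$ and $t$ compatibly, and functoriality of $F$ on the adjoined morphisms $m^\opp$ identifies the two transports. Commutativity of the resulting diagram then yields an isomorphism of decorated corelations.

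With $\square_F$ in hand, associativity and unitality of composition in $F\mathrm{Corel}$, functoriality of $+$, naturality and coherence of the braiding and unitors, and the Frobenius laws all transfer from $F\mathrm{Cospan}$ exactly as in the proof of Theorem \ref{thm.cospantocorel}: any equation among decorated corelations may be lifted to decorated cospans, evaluated there, and pushed back down through the composition- and $+$-preserving surjection $\square_F$. This proves $F\mathrm{Corel}$ is a hypergraph category. For the functor $T$ itself I would then verify the square above commutes. On objects both composites give $AX$; on morphisms the two routes yield decorations differing only in the order in which one applies the $F$-side restriction and the comparison $\theta$, together with the further $\mc C'$-side re-factorisation of the $A$-image needed to reach $\overline{AN}$. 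The cells needed are the naturality of $\theta$ at the $\mc C;\mc M^\opp$-morphism $m^\opp\maps N \to \overline N$, and the second comparison isomorphism obtained—precisely as in diagram $(\ast)$ of Lemma \ref{lem.corelfuncomposition}—from the fact that $A$ preserves the relevant colimits and carries $\mathcal M$ into $\mathcal M'$, so that applying $G$ to the $(\mc E',\mc M')$-factorisation of the $A$-image of the $\mc C$-factorisation is forced by uniqueness. Naturality of the coherence maps $\overline{\kappa_{X,Y}}$ follows precisely as in Lemma \ref{lem.corelfunmonoidal}, and preservation of the hypergraph maps is immediate since these are copairings of identities carrying the restricted empty decoration, which $A$ preserves.

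The step I expect to be the main obstacle is the decoration-level verification that $\square_F$ preserves composition: tracking a single decoration through two distinct applications of the factorisation system and showing the two resulting elements of $F(\overline{N+_YM})$ coincide. Everything else is bookkeeping reusing the diagrammatic arguments already assembled for decorated cospans and for undecorated corelations, but this compatibility of \emph{factor once} with \emph{factor, transport, factor again} is genuinely new, and is where stability of $\mathcal M$ under pushout, functoriality of $F$ on the adjoined right adjoints $m^\opp$, and uniqueness of factorisations must be marshalled together.
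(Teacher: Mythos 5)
Your proposal is correct, and it rests on the same underlying strategy as the paper's proof -- lift everything to decorated cospans and push it back down through a bijective-on-objects, surjective-on-morphisms, composition-preserving reduction map -- but your decomposition is organised differently. The paper proves in one large diagram that the \emph{general} map $F\mathrm{Corel} \to G\mathrm{Corel}$ preserves composition (the diagram whose cells are the monoidality and naturality of $\theta$ and $G$, the colimit-preservation of $A$, and the $G$-image of the comparison square from Lemma~\ref{lem.corelfuncomposition}); it then obtains both Theorem~\ref{thm.fcorel} and the proposition by specialising this single computation, first to $A = 1_{\mc C'}$ with $(\mc E,\mc M) = (\mc C',\mc I_{\mc C'})$ to get the hypergraph category, and then by transferring the remaining axioms through $G\mathrm{Cospan}$. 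You instead isolate the two special cases $\square_F$, $\square_G$, import $T'$ wholesale from Theorem~\ref{thm:decoratedfunctors}, and carry the genuinely new content -- the compatibility of \emph{factor once} with \emph{factor, transport along $\theta$ and $m^\opp$, factor again} -- in the commutativity of your square, which you correctly reduce to naturality of $\theta$ at $m_N^\opp$ together with the uniqueness of $(\mc E',\mc M')$-factorisations (using that $Am_N \in \mc M'$ and $\mc M'$ is closed under composition). What your version buys is better modularity: the decorated-cospan functoriality is reused rather than re-derived inside the big diagram, and the square-commutativity is a single-morphism statement, easier to check than the two-morphism composition diagram. What the paper's version buys is that one diagram does all the work at once, so nothing needs to be stated twice. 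One very minor correction: the comparison isomorphism you need for the square is the single-morphism one appearing in the identity-preservation step of Proposition~\ref{prop.corelfunctors}, not the two-cospan diagram $(\ast)$ of Lemma~\ref{lem.corelfuncomposition}, though both come from the same uniqueness clause.
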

\begin{proof}[Proof of Theorem \ref{thm.fcorel} and Proposition
  \ref{prop.deccorelfunctors}]
  In the proof of Theorem \ref{thm.cospantocorel} and Proposition
  \ref{prop.corelfunctors} we proved that the map 
  \[
    \square\maps \corel(\mc C) \longrightarrow \corel(\mc C')
  \]
  preserved composition and had natural coherence maps. Specialising to the case
  when $\corel(\mc C)=\cospan(\mc C')$, we saw that this bijective-on-objects,
  surjective-on-morphisms, composition and monoidal product preserving map
  proved $\corel(\mc C')$ is a hypergraph category, and it immediately followed
  that $\square$ is a hypergraph functor.

  The analogous argument holds here: we simply need to prove
  \[
    \square\maps F\corel \longrightarrow G\corel
  \]
  preserves composition and has natural coherence maps. Theorem \ref{thm.fcorel}
  then follows from examining the map $F\cospan \to F\corel$ obtained by
  choosing $\mc C = \mc C'$, $(\mc E,\mc M) = (\mc C', \mc I_{\mc C'})$, $F$ the
  restriction of $G$ to $\mc C'$, $A$ the identity functor on $\mc C'$, and
  $\theta$ the identity natural transformation. Subsequently Proposition
  \ref{prop.deccorelfunctors} follows from noting that all the axioms hold for
  the corresponding maps in $G\mathrm{Cospan}$.
  
  \paragraph{$\square$ preserves composition.} Suppose we have decorated corelations
  \[
    f=(X \stackrel{i_X}{\longrightarrow} N \stackrel{o_Y}{\longleftarrow} Y,
    \enspace 1 \stackrel{s}{\to} FN)
    \qquad
    \mbox{and}
    \qquad 
    g=(Y \stackrel{i_Y}{\longrightarrow} M \stackrel{o_Y}{\longleftarrow} Z,
    \enspace 1 \stackrel{t}{\to} FM)
  \]
  We know the functor $\square$ preserves composition on the cospan part; this
  is precisely the content of Proposition \ref{prop.corelfunctors}. It remains to
  check that $\square( g \circ f)$ and $\square g \circ \square f$ have
  isomorphic decorations. This is expressed by the commutativity of the
  following diagram:
  \[
    \xymatrixrowsep{1.1pc}
    \xymatrixcolsep{.6pc}
    \xymatrix{
      \scriptstyle G\overline{A(\overline{N+_YM})} \ar[rrrrrr]^{Gn} &&&&&&
      \scriptstyle G\overline{(\overline{AN}+_{AY}\overline{AM})}\\
      \\
      \scriptstyle GA(\overline{N+_YM}) \ar[uu]^{Gm^\opp_{A(\overline{N+_YM})}} &&&
      \textsc{\tiny($\ast\ast$)} &&& 
      \scriptstyle G(\overline{AN}+_{AY}\overline{AM})
      \ar[uu]_{Gm^\opp_{\overline{AN}+_{AY}\overline{AM}}} \\
      \\
      \scriptstyle F(\overline{N+_YM}) \ar[uu]^{\theta_{\overline{N+_YM}}} &
      &
      \scriptstyle GA(N+_YM) \ar[uull]_{GAm^\opp_{N+_YM}} && 
      \scriptstyle G(AN+_{AY}AM) \ar[ll]_{G\sim} \ar[uurr]^{G(m_{AN}^\opp
      +_{AY}m_{AM}^\opp)\phantom{spac}}
      & \textsc{\tiny(c)} & 
      \scriptstyle G(\overline{AN}+\overline{AM})
      \ar[uu]_{G[j_{\overline{AN}},j_{\overline{AM}}]} 
      \\
      &\textsc{\tiny(tn)}&& \textsc{\tiny(a)} 
      \\
      \scriptstyle F(N+_YM) \ar[uu]^{Fm_{N+_YM}^\opp}
      &&
      \scriptstyle GA(N+M)\ar[uu]_{GA[j_N,j_M]} && 
      \scriptstyle G(AN+AM) \ar[uu]^{G[j_{AN},j_{AM}]} \ar[ll]^{G\alpha_{N,M}}
      \ar[uurr]^(.6){G(m_{AN}^\opp +m_{AM}^\opp)\phantom{s}} & \textsc{\tiny(gm)} & 
      \scriptstyle G\overline{AN} \times G\overline{AM}
      \ar[uu]_{\gamma_{\overline{AN},\overline{AM}}} \\
      \\
      \scriptstyle F(N+M) \ar[uu]^{F[j_N,j_M]} \ar[uurr]_{\theta_{N+M}} &&&
      \textsc{\tiny(tm)} &&& 
      \scriptstyle GAN \times GAM \ar[uu]_{Gm_{AN}^\opp \times Gm_{AM}^\opp}
      \ar[uull]^{\gamma_{AN,AM}} \\
      \\
      &&&
      \scriptstyle FN \times FM \ar[uulll]^{\varphi_{N,M}}
      \ar[uurrr]_{\theta_N \times \theta_M} \\\\
      &&&
      \scriptstyle 1 \ar[uu]_{\rho_1\circ (s \times t)}
    }
  \]
  This diagram does indeed commute. To check this, first observe that \textsc{(tm)}
  commutes by the monoidality of $\theta$, \textsc{(gm)} commutes by the
  monoidality of $G$, and \textsc{(tn)} commutes by the naturality of $\theta$.
  The remaining three diagrams commute as they are $G$-images of diagrams that
  commute in $\mc C';\mc M'^\opp$. Indeed, \textsc{(a)} commutes since $A$ preserves
  colimits and $G$ is functorial, \textsc{(c)} commutes as it is the $G$-image
  of a pushout square in $\mc C'$, so 
  \[
    \xleftarrow{m_{AN}+m_{AM}}
    \xrightarrow{[j_{\overline{AN}},j_{\overline{AM}}]}
    \quad 
    \textrm{and}
    \quad
    \xrightarrow{[j_{AN},j_{AM}]}
    \xleftarrow{m_{AN}+_{AY}m_{AM}} 
  \]
  are equal as morphisms of $\mc C';\mc M'^\opp$, and \textsc{($\ast\ast$)}
  commutes as it is the $G$-image of the right-hand subdiagram of
  (\ref{diag.eparts}) used to define $n$ in the proof of Lemma
  \ref{lem.corelfuncomposition}.

  \paragraph{Coherence maps are natural.}
  Let $f = (X \longrightarrow N \longleftarrow Y, \enspace 1 \to FN)$, $g= (Z
  \longrightarrow M \longleftarrow W, \enspace 1 \to FM)$ be $F$-decorated
  corelations in $\mc C$. We wish to show that
  \[
    \xymatrixcolsep{4pc}
    \xymatrixrowsep{2pc}
    \xymatrix{
      AX+AY \ar[r]^{\square f+\square g}
      \ar[d]_{\overline{\kappa_{X,Y}}} & 
      AZ+AW \ar[d]^{\overline{\kappa_{Z,W}}} \\
      A(X+Y) \ar[r]^{\square(f+g)} & A(Z+W)
    }
  \]
  commutes in $G\mathrm{Corel}$, where the coherence maps are given by
  \[
    \overline{\kappa_{X,Y}}=          
    \left(
    \begin{aligned}
      \xymatrix{
	& \overline{A(X+Y)} \\  
	AX+AY \ar[ur] && A(X+Y) \ar[ul]
      }
    \end{aligned}
    ,
    \qquad
    \begin{aligned}
      \xymatrixrowsep{1.4ex}
      \xymatrix{
	G(\overline{A(X+Y)}) \\
	GA(X+Y) \ar[u]_{Gm_{AX+AY}^\opp} \\
	G\varnothing \ar[u]_{G!} \\
	1 \ar[u]_{\gamma_1}
      }
    \end{aligned}
    \right).
  \]
  Lemma \ref{lem.corelfunmonoidal} shows that the composites of corelations
  agree. It remains to check that the decorations also agree.

  Here Lemma \ref{lem.emptydecorations} is helpful. Since $\square$ is
  composition preserving, we can replace the $\overline{\kappa}$ with the empty
  decorated coherence maps $\kappa$ of $G\mathrm{Cospan}$, and compute these
  composites in $G\mathrm{Cospan}$, before restricting to the $\mc E'$-parts.
  Lemma \ref{lem.emptydecorations} then implies that the restricted empty
  decorations on the isomorphisms $\overline{\kappa}$ play no role in
  determining the composite decorations. It is thus enough to prove that the
  decorations of $\square f + \square g$ and $\square(f+g)$ are the same up to
  the isomorphism $p\maps G(\overline{AN} +\overline{AM}) \to
  G\overline{A(N+M)}$ between their apices, as defined in the diagram
  (\ref{diag.natural}) in the proof of Lemma \ref{lem.corelfunmonoidal}.

  This comes down to proving the following diagram commutes:
  \[
    \xymatrixrowsep{.8pc}
    \xymatrixcolsep{.8pc}
    \xymatrix{
      &&&& 
      GAN \times GAM \ar[rrdd]^{\gamma} \ar[rr]^{Gm \times Gm} && 
      G\overline{AN} \times G\overline{AM} \ar[rr]^{\gamma} && 
      G(\overline{AN}+\overline{AM}) \ar[dddd]^{Gp}_\sim \\ 
      &&&&&& 
      \textrm{\tiny(G)} \\
      1 \ar[rr]^(.4){\langle s,t \rangle} && 
      FN\times FM \ar[uurr]^{\theta} \ar[ddrr]_{\varphi} && 
      \textrm{\tiny(T)} && 
      G(AN+AM) \ar[uurr]^{G(m+m)} \ar[dd]_{G\kappa} \\
      &&&&&&& 
      \textrm{\tiny(\#\#)}\\ 
      &&&& 
      F(N+M) \ar[rr]_{\theta} && 
      GA(N+M) \ar[rr]_{Gm} && 
      G\overline{A(N+M)}
    }
  \]
  This is straightforward to check: (T) commutes by the monoidality of $\theta$,
  (G) by the monoidality of $G$, and (\#\#) as it is the $G$-image of the
  rightmost square in (\ref{diag.natural}).
\end{proof}

In particular, we get a hypergraph functor from the category of $F$-decorated
cospans to the category of $F$-decorated corelations. In our applications, this
is often the key aspect of constructing `black box' or semantic functors.


\begin{corollary}
  Let $\mathcal C$ be a category with finite colimits, and let $(\mathcal E,
  \mathcal M)$ be a factorisation system on $\mathcal C$. Suppose that we also
  have a lax monoidal functor
  \[
    F: (\mathcal C;\mathcal M^\opp,+) \longrightarrow (\Set, \times).
  \]
  Then we may define a category $F\mathrm{Corel}$ with objects the objects of
  $\mathcal C$ and morphisms isomorpism classes of $F$-decorated corelations.

  Write also $F$ for the restriction of $F$ to the wide subcategory $\mathcal
  C$ of $\mathcal C;\mathcal M^\opp$. We can thus also obtain the category
  $F\mathrm{Cospan}$ of
  $F$-decorated cospans. We moreover have a functor 
  \[
    F\mathrm{Cospan} \to F\mathrm{Corel}
  \]
  which takes each object of $F\mathrm{Cospan}$ to itself as an object of
  $F\mathrm{Corel}$, and each decorated cospan
  \[
    \left(
    \begin{aligned}
      \xymatrix{
	& N \\  
	X \ar[ur]^{i} && Y \ar[ul]_{o}
      }
    \end{aligned}
    ,
    \qquad
    \begin{aligned}
      \xymatrix{
	FN \\
	1 \ar[u]_{s}
      }
    \end{aligned}
    \right)
  \]  
  to its jointly $\mathcal E$-part
  \[
    \xymatrix{
      & \overline{N} \\  
      X \ar[ur]^{e\circ \iota_X} && Y \ar[ul]_{e\circ\iota_Y}
    }
  \]
  decorated by the composite
  \[
    \xymatrix{
      1 \ar[r]^s & FN \ar[r]^{Fm_N^\opp} & F\overline{N}.
    }
  \]
\end{corollary}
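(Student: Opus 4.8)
The plan is to recognise this corollary as the special case of Proposition \ref{prop.deccorelfunctors} in which the domain carries the trivial morphism--isomorphism factorisation system, exactly as flagged in the proof of Theorem \ref{thm.fcorel}. The existence of $F\mathrm{Corel}$ as a (hypergraph, and hence a fortiori an ordinary) category is already supplied by Theorem \ref{thm.fcorel}, and the existence of $F\mathrm{Cospan}$ by the decorated cospan construction of Theorem \ref{thm:fcospans} applied to the restriction of $F$ to the wide subcategory $\mathcal C \hookrightarrow \mathcal C;\mathcal M^\opp$. Thus the only thing genuinely requiring proof is the comparison functor between them, and I would obtain this as an instance of the general functor construction rather than building it by hand.

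First I would record that, by Examples \ref{ex.cospansascorels}, decorated corelations with respect to the factorisation system $(\mathcal C,\mathcal I_{\mathcal C})$ are precisely decorated cospans: the copairing $X+Y \to N$ automatically lies in $\mathcal E = \mathcal C$, so the jointly $\mathcal E$-like condition is vacuous and the $\mathcal M$-factor is an isomorphism; moreover $\mathcal C;\mathcal I_{\mathcal C}^\opp \cong \mathcal C$ identifies the decorating functor with the restriction of $F$ to $\mathcal C$. Hence $F\mathrm{Cospan}$ is literally $F\mathrm{Corel}$ for the trivial factorisation system, and its morphisms and composition agree on the nose.

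Next I would invoke Proposition \ref{prop.deccorelfunctors} with the data $\mathcal C = \mathcal C'$, domain factorisation system $(\mathcal C,\mathcal I_{\mathcal C})$ and codomain factorisation system $(\mathcal E,\mathcal M)$, functor $A = 1_{\mathcal C}$, decorating functor $G = F$ in the codomain together with its restriction in the domain. All hypotheses hold: $\mathcal I_{\mathcal C}$ is stable under pushout trivially and $\mathcal M$ by the standing assumption implicit in the very definition of $\mathcal C;\mathcal M^\opp$; $A$ preserves colimits; and the requirement $A(\mathcal I_{\mathcal C}) \subseteq \mathcal M$ holds because every factorisation system has $\mathcal M$ containing all isomorphisms. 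By Lemma \ref{lem.madjointsfunctor} the extension of $A$ to $\mathcal C;\mathcal I_{\mathcal C}^\opp \to \mathcal C;\mathcal M^\opp$ is just the inclusion of the wide subcategory, so $F \circ A$ equals the domain decorating functor and the identity natural transformation serves as $\theta$. Proposition \ref{prop.deccorelfunctors} then delivers a hypergraph functor $F\mathrm{Cospan} \to F\mathrm{Corel}$, and specialising its morphism formula --- with $A = 1$, $\theta = \idn$, $G = F$ --- sends $\bigl(X \to N \leftarrow Y,\ 1 \xrightarrow{s} FN\bigr)$ to its $\mathcal E$-part $X \to \overline{N} \leftarrow Y$ decorated by $Fm_N^\opp \circ s$, precisely as claimed.

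I expect no serious obstacle, since every required property is transported directly from Proposition \ref{prop.deccorelfunctors}; the only point demanding care is the bookkeeping that identifies the trivial-factorisation decorated corelation category with $F\mathrm{Cospan}$ and then checks that the general restricted-decoration formula $Gm_{AN}^\opp \circ \theta_N \circ s$ collapses to the stated $Fm_N^\opp \circ s$ under these choices. One should also note that, although the statement says merely ``lax monoidal'', the target $(\Set,\times)$ is symmetric, so by the Remark following Proposition \ref{prop.setdecorations} this is interchangeable with lax symmetric monoidal and the cited results apply verbatim.
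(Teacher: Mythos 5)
Your proposal is correct and follows essentially the same route as the paper: the corollary is exactly the specialisation of Proposition \ref{prop.deccorelfunctors} flagged in the proof of Theorem \ref{thm.fcorel}, with $\mathcal C = \mathcal C'$, the trivial morphism--isomorphism factorisation system on the domain (so that decorated corelations there are decorated cospans), $A$ the identity, and $\theta$ the identity natural transformation. Your hypothesis checks (isomorphisms stable under pushout, $A(\mathcal I_{\mathcal C}) \subseteq \mathcal M$ since $\mathcal M$ contains all isomorphisms) and the collapse of the morphism formula to $Fm_N^\opp \circ s$ are exactly the bookkeeping the paper leaves implicit.
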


\section{All hypergraph categories are decorated corelation categories}
\label{sec.allhypergraphs}

Not all hypergraph categories are decorated \emph{cospan} categories. To see
this, we can count so-called scalars: morphisms from the monoidal unit
$\varnothing$ to itself. In a decorated cospan category, the set of morphisms
from $X$ to $Y$ always comprises all decorated cospans $(X \to N \leftarrow
Y,\enspace 1 \to FN)$. Now for any object $N$ in the underlying category $\mc
C$, there is a unique morphism $\varnothing \to N$. This means that the
morphisms $\varnothing \to \varnothing$ are indexed by (isomorphism classes of)
elements of $FN$, ranging over $N$.

Suppose we have a decorated cospan category with a unique morphism $\varnothing
\to \varnothing$. By the previous paragraph, and replacing $\mc C$ with an
equivalent skeletal category, this implies there is only one object $N$ such
that $FN$ is nonempty. But $FN$ must always contain at least one element, the
empty decoration $1 \xrightarrow{\varphi_I} F\varnothing \xrightarrow{F!} FN$.
This implies there is only one object $N$ in $\mc C$: the object $\varnothing$.
Thus $\mc C$ must be the one object discrete category, and $F\maps \mc C \to
\Set$ is the functor that sends the object of $\mc C$ to the one element set
$1$.

Hence any decorated cospan category with a unique morphism $\varnothing \to
\varnothing$ is the one object discrete category. But the hypergraph
category of finite sets and equivalence relations, discussed in Subsection
\ref{ssec.equivrels}, is a nontrivial category with a unique morphism
$\varnothing \to \varnothing$.  Thus it cannot be constructed as a decorated
cospan category.

Decorated corelation categories, however, are more powerful. In this section we
show that all hypergraph categories are decorated corelation categories, and
provide some examples for intuition.

\subsection{Representing a morphism with its name}
Suppose we are given a hypergraph category $\mc H$ and wish to construct a
hypergraph equivalent decorated corelation category. The main idea to is to take
advantage of the compact closed structure: recall from
\textsection\ref{ssec.compactclosed} that morphisms $X \to Y$ in a hypergraph
category are in one-to-one correspondence with their so-called names $I \to X\ot
Y$. We shall also witness the great utility of the seemingly trivial
isomorphism-morphism factorisation system. 

We start with the wide subcategory of $\mc H$ comprising, roughly speaking, just
the coherence and hypergraph maps. Then, choosing the isomorphism-morphism
factorisation system on this subcategory, we use the restriction of hom functor
$\mc H(I,-)$ to this subcategory to decorate each corelation $X \to X \ot Y
\leftarrow Y$ with the monoidal elements of $X\ot Y$.  The morphisms in the
resulting category are cospans $X \xrightarrow{1_X \otimes \eta_Y} X\otimes Y
\xleftarrow{\eta_X \otimes 1_Y} Y$ decorated by a map $I \to X \ot Y$. This
recovers the original hypergraph category.

\begin{theorem}\label{thm.hypdeccorcats}
  Every hypergraph category is hypergraph equivalent to a decorated corelation
  category. That is, given a hypergraph category $(\mc H,\otimes)$, there exists
  a lax symmetric monoidal functor $F$ such that $(\mc H,\otimes)$ is hypergraph
  equivalent to $F\mathrm{Corel}$.
\end{theorem}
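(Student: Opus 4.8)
The plan is to realise the given hypergraph category as a corelation category for the trivial isomorphism--morphism factorisation system, decorated by its own scalars, so that a morphism is recorded by its name. First I would invoke Theorem \ref{thm.stricthypergraphs} to assume $\mc H$ is strict with objects the free monoid on a set $S$ of generating objects; this fixes a clean underlying category. Taking $\mc C$ to be a skeleton of the category $\FinSet_S$ of finite $S$-coloured sets, which has finite colimits with coproduct matching $\ot$, the hypergraph structure on $\mc H$ supplies, via the multi-object form of Proposition \ref{prop.cospanscfm}, a strict symmetric monoidal functor $W\maps \cospan(\mc C) \to \mc H$ that is the identity on objects (under the evident identification of coloured sets with words in $S$) and sends each cospan to the corresponding wiring map built from the Frobenius and coherence maps of $\mc H$.

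Next I would equip $\mc C$ with the isomorphism--morphism factorisation system $(\mc I_{\mc C}, \mc C)$, so that $\mc C;\mc C^{\opp} = \cospan(\mc C)$, and define the decorating functor as the composite $F = \mc H(I, W(-))\maps \cospan(\mc C) \to \Set$. Since $\mc H(I,-)$ is lax symmetric monoidal by Proposition \ref{prop.monglobalsecs} and $W$ is strict symmetric monoidal, $F$ is lax symmetric monoidal, so Theorem \ref{thm.fcorel} yields a hypergraph category $F\mathrm{Corel}$. Because corelations for $(\mc I_{\mc C},\mc C)$ are indiscrete, the hom-set from $X$ to $Y$ is carried entirely by the decoration and equals $F(X+Y) = \mc H(I, X\ot Y)$; the representing corelation becomes, under $W$, the cospan $X \xrightarrow{\idn_X \ot \eta_Y} X \ot Y \xleftarrow{\eta_X \ot \idn_Y} Y$, which is exactly the cup of the compact-closed structure of \textsection\ref{ssec.compactclosed}.

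I would then define $\Phi\maps \mc H \to F\mathrm{Corel}$ to be the identity on objects and to send a morphism $f\maps X \to Y$ to the corelation above decorated by the name $I \to X\ot Y$ of $f$, using the bijection $\mc H(X,Y) \cong \mc H(I, X\ot Y)$ from \textsection\ref{ssec.compactclosed}. Being the identity on objects and, by this bijection, fully faithful, $\Phi$ is an isomorphism of underlying categories, so it only remains to transport the structure. The main obstacle is verifying functoriality, i.e.\ that composition of decorations in $F\mathrm{Corel}$ matches composition in $\mc H$: composing the corelations for $X\to Y$ and $Y \to Z$ pushes the tensored names forward along the pushout copairing $[j_N,j_M]$ and then pulls back along the $\mc M$-part $m$, and under $W$ these are precisely the wiring maps $\idn_X \ot \mu_Y \ot \idn_Z$ and $\idn_X \ot \epsilon_Y \ot \idn_Z$.

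By the spider theorem their composite $\epsilon_Y \circ \mu_Y$ is the cap $Y \ot Y \to I$, so the composite decoration is the tensor of the two names with the middle $Y\ot Y$ capped off -- which is exactly the name-of-the-composite identity established in \textsection\ref{ssec.compactclosed}. Preservation of identities, of the symmetric monoidal structure, and of the Frobenius maps then follows by the same dictionary between wiring maps and cospans: units and comultiplications correspond to cups while multiplications and counits correspond to caps, and their restricted-empty decorations under $F$ reproduce the names of the Frobenius maps of $\mc H$. Since $\Phi$ is thus a structure-preserving isomorphism of categories, it is in particular a hypergraph equivalence, which proves the theorem.
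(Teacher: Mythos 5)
Your proposal is correct and follows essentially the same route as the paper: strictify via Theorem \ref{thm.stricthypergraphs}, build the wide hypergraph embedding $\cospan(\FinSet_{\mc O}) \to \mc H_{\mathrm{str}}$ from the Frobenius structure, compose with the hom functor $\mc H_{\mathrm{str}}(I,-)$, and use the isomorphism--morphism factorisation system so that morphisms are carried entirely by their names, with composition verified via the cap/spider argument. The paper's proof is the same construction with the same verification.
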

\begin{proof}
  Let $(\mc H,\ot)$ be a hypergraph category. By Theorem
  \ref{thm.stricthypergraphs}, we have a hypergraph equivalent strict hypergraph
  category $(\mc H_{\mathrm{str}},\cdot)$ with objects finite lists of objects
  in $\mc H$. We will build $\mc H_{\mathrm{str}}$ as a decorated corelation
  category.

  Write $\mc O$ for the collection of objects in $\mc H$.  Recall that we may
  write $\FinSet$ for the skeletal strict symmetric monoidal category of finite
  sets and functions, and here let us further denote the objects of $\FinSet$ as
  $n= \{0,1,\dots,n-1\}$ for $n \in \mathbb N$, where $0$ is the empty set. We
  then write $\FinSet_{\mc O}$ for the category of $\mc O$-labelled finite sets
  (i.e.  an object is a finite set $n$ with each element in $n$ labelled by some
  object in $\mc H$, and a morphism is a function that preserves labels). Note
  that, due to the chosen ordering on elements of each finite set $n$, the
  objects of $\FinSet_{\mc O}$ are in one-to-one correspondence with finite
  lists of objects of $\mc H$, and hence can be considered the same as the
  objects of $\mc H_{\mathrm{str}}$.
  
  Next, observe that the category $\FinSet_{\mc O}$ has finite colimits: the
  colimit is just the colimit set in $\FinSet$ with each element labelled by the
  label of any element that maps to it in the colimit diagram. This labelling
  exists and is unique by the universal property of the colimit. Thus we view
  $\FinSet_{\mc O}$ as a symmetric monoidal category with the coproduct, written
  $+$ as per usual, as its monoidal product.  

  We define a strict hypergraph functor that is a wide embedding of categories
  \[
    H\maps (\cospan(\FinSet_{\mc O}),+) \longrightarrow (\mc
    H_{\mathrm{str}},\cdot),
  \]
  whose image is the hypergraph structure of $\mc H_{\mathrm{str}}$. More
  precisely, $H$ sends each object of $\cospan(\FinSet_{\mc O})$ to the same as
  an object of $\mc H_{\mathrm{str}}$.  As the morphisms of $\FinSet$---that is,
  all functions between finite sets---can be generated via composition and
  coproduct from the unique functions $\mu \maps 2 \to 1$, $\eta \maps 0 \to 1$,
  and the braiding $2 \to 2$ in $\FinSet$, the morphisms of $\FinSet_{\mc O}$
  are generated as a symmetric monoidal category by the unique morphisms
  $\mu_x\maps [x:x]\to [x]$ and $\eta_x\maps \varnothing \to [x]$, where $x$
  ranges over objects of $\mc H$.  This then implies the morphisms of
  $\cospan(\FinSet_{\mc O})$ are generated as a symmetric monoidal category by
  the cospans $\mu_x$, $\eta_x$, $\delta_x = \mu_x^\opp$, and $\epsilon_x =
  \eta_x^\opp$. The functor $H$ maps these generators to the corresponding
  Frobenius maps on $[x]$ in $\mc H_{\mathrm{str}}$. Since $\cospan(\FinSet)$ is
  the `generic special commutative monoid' (see Proposition
  \ref{prop.cospanscfm}), it is straightforward to check this defines a
  hypergraph functor.

  Next, recall that the hom functor $\mc H_{\mathrm{str}}(I,-)\maps (\mc
  H_{\mathrm{str}},\cdot) \to (\Set,\times)$ is a lax symmetric monoidal functor taking each
  object $X$ of $\mc H_{\mathrm{str}}$ to the homset $\mc H_{\mathrm{str}}(I,X)$
  (Proposition \ref{prop.monglobalsecs}). Write $F$ for the composite of these
  two functors: 
  \[
    F\maps (\cospan(\FinSet_{\mc O}),+) \stackrel{H}\longrightarrow (\mc
    H_{\mathrm{str}},\cdot) \xrightarrow{\mc H_{\mathrm{str}}(I,-)}
    (\Set,\times).
  \]
  This is lax symmetric monoidal functor mapping a finite list $X$ of objects in
  $\mc H$ to the homset $\mc H_{\mathrm{str}}(I,X)$, and a cospan between lists
  of objects in $\mc H$ to the corresponding Frobenius map.

  Now consider $\FinSet_{\mc O}$ with an isomorphism-morphism factorisation
  system. This implies $F$ defines a decorated corelation category
  $F\mathrm{Corel}$ with the objects of $\mc H_{\mathrm{str}}$ as objects, and
  the unique isomorphism-morphism corelation $X \xrightarrow{\iota_X} X+Y
  \xleftarrow{\iota_Y} Y$ from $X \to Y$ decorated by some morphism $s \in \mc
  H_{\mathrm{str}}(I,X\cdot Y)$ as morphisms $X \to Y$. We complete this proof
  by showing $(F\mathrm{Corel},+)$ and $(\mc H_{\mathrm{str}},\cdot)$ are
  isomorphic as hypergraph categories.
  
  First, let us examine composition in $F\mathrm{Corel}$. As morphisms $X \to Y$
  are completely specified by their decoration $s \in \mc
  H_{\mathrm{str}}(I,X\cdot Y)$, we will abuse terminology and refer to the
  decorations themselves as the morphisms.  Given morphisms $s \in \mc
  H_{\mathrm{str}}(I,X\cdot Y)$ and $t \in \mc H_{\mathrm{str}}(I,Y\cdot Z)$ in
  $F\mathrm{Corel}$, composition is given by the map 
  \[
    H_{\mathrm{str}}(I,X\cdot Y\cdot Y\cdot Z) \to H_{\mathrm{str}}(I,X\cdot Z)
  \]
  arising as the $F$-image of the cospan $X+Y+Y+Z
  \xrightarrow{[j_{X+Y},j_{Y+Z}]} X+Y+Z \xleftarrow{[\iota_X,\iota_Z]} X+Z$,
  where these maps come from the pushout square
  \[
    \xymatrix{
      && X+Y+Z \\
      & X+Y \ar[ur]^{j_{X+Y}} && Y+Z \ar[ul]_{j_{Y+Z}} \\
      X \ar[ur]^{\iota_X} && Y \ar[ul]_{\iota_Y} \ar[ur]^{\iota_Y} && Z
      \ar[ul]_{\iota_Z}
    }
  \]
  and the trivial isomorphism-morphism factorisation $X+Z = X+Z
  \xrightarrow{[\iota_X,\iota_Z]} X+Y+Z$ in $\FinSet_{\mc O}$.
  
  In terms of string diagrams in $\mc H_{\mathrm{str}}$, this means composing
  the maps 
  \[
    \tikzset{every path/.style={line width=1.1pt}}
    \begin{tikzpicture}
      \begin{pgfonlayer}{nodelayer}
	\node [style=none] (0) at (-0.25, 0.375) {};
	\node [style=none] (1) at (0.5, 0.375) {};
	\node [style=none] (2) at (-0.25, -0.375) {};
	\node [style=none] (3) at (0.5, -0.375) {};
	\node [style=none] (4) at (0.5, 0.25) {};
	\node [style=none] (5) at (0.5, -0.25) {};
	\node [style=none] (6) at (1.25, 0.25) {};
	\node [style=none] (7) at (1.25, -0.25) {};
	\node [style=none] (8) at (0.125, -0) {$s$};
	\node [style=none] (9) at (1.5, 0.25) {$X$};
	\node [style=none] (10) at (1.5, -0.25) {$Y$};
      \end{pgfonlayer}
      \begin{pgfonlayer}{edgelayer}
	\draw (0.center) to (1.center);
	\draw (1.center) to (3.center);
	\draw (3.center) to (2.center);
	\draw (2.center) to (0.center);
	\draw (4.center) to (6.center);
	\draw (5.center) to (7.center);
      \end{pgfonlayer}
    \end{tikzpicture}
    \qquad 
    \qquad
    \begin{tikzpicture}
      \begin{pgfonlayer}{nodelayer}
	\node [style=none] (0) at (-0.25, 0.375) {};
	\node [style=none] (1) at (0.5, 0.375) {};
	\node [style=none] (2) at (-0.25, -0.375) {};
	\node [style=none] (3) at (0.5, -0.375) {};
	\node [style=none] (4) at (0.5, 0.25) {};
	\node [style=none] (5) at (0.5, -0.25) {};
	\node [style=none] (6) at (1.25, 0.25) {};
	\node [style=none] (7) at (1.25, -0.25) {};
	\node [style=none] (8) at (0.125, -0) {$t$};
	\node [style=none] (9) at (1.5, 0.25) {$Y$};
	\node [style=none] (10) at (1.5, -0.25) {$Z$};
      \end{pgfonlayer}
      \begin{pgfonlayer}{edgelayer}
	\draw (0.center) to (1.center);
	\draw (1.center) to (3.center);
	\draw (3.center) to (2.center);
	\draw (2.center) to (0.center);
	\draw (4.center) to (6.center);
	\draw (5.center) to (7.center);
      \end{pgfonlayer}
    \end{tikzpicture}
  \]
  with the Frobenius map
  \[
    \tikzset{every path/.style={line width=1.1pt}}
    \begin{aligned}
      \begin{tikzpicture}
	\begin{pgfonlayer}{nodelayer}
	  \node [style=none] (0) at (-0.125, 0.75) {};
	  \node [style=none] (1) at (-0.125, 0.25) {};
	  \node [style=none] (2) at (-0.125, -0.25) {};
	  \node [style=none] (3) at (-0.125, -0.75) {};
	  \node [style=none] (4) at (0.5, -0) {};
	  \node [style=none] (5) at (1, 0.75) {};
	  \node [style=none] (6) at (1, -0.75) {};
	  \node [style=none] (8) at (-0.375, 0.75) {$X$};
	  \node [style=none] (9) at (-0.375, 0.25) {$Y$};
	  \node [style=none] (10) at (-0.375, -0.25) {$Y$};
	  \node [style=none] (11) at (-0.375, -0.75) {$Z$};
	  \node [style=none] (12) at (1.25, 0.75) {$X$};
	  \node [style=none] (13) at (1.25, -0.75) {$Z$};
	\end{pgfonlayer}
	\begin{pgfonlayer}{edgelayer}
	  \draw (0.center) to (5.center);
	  \draw [in=90, out=0, looseness=1.00] (1.center) to (4.center);
	  \draw [in=-90, out=0, looseness=1.00] (2.center) to (4.center);
	  \draw (3.center) to (6.center);
	\end{pgfonlayer}
      \end{tikzpicture}
    \end{aligned}
    \quad
    =
    \quad
    \begin{aligned}
      \begin{tikzpicture}
	\begin{pgfonlayer}{nodelayer}
	  \node [style=none] (0) at (-0.125, 0.75) {};
	  \node [style=none] (1) at (-0.125, 0.25) {};
	  \node [style=none] (2) at (-0.125, -0.25) {};
	  \node [style=none] (3) at (-0.125, -0.75) {};
	  \node [style=none] (4) at (0.5, -0) {};
	  \node [style=none] (5) at (1.25, 0.75) {};
	  \node [style=none] (6) at (1.25, -0.75) {};
	  \node [style=circ2] (7) at (1, -0) {};
	  \node [style=none] (8) at (-0.375, 0.75) {$X$};
	  \node [style=none] (9) at (-0.375, 0.25) {$Y$};
	  \node [style=none] (10) at (-0.375, -0.25) {$Y$};
	  \node [style=none] (11) at (-0.375, -0.75) {$Z$};
	  \node [style=none] (12) at (1.5, 0.75) {$X$};
	  \node [style=none] (13) at (1.5, -0.75) {$Z$};
	\end{pgfonlayer}
	\begin{pgfonlayer}{edgelayer}
	  \draw (0.center) to (5.center);
	  \draw [in=90, out=0, looseness=1.00] (1.center) to (4.center);
	  \draw [in=-90, out=0, looseness=1.00] (2.center) to (4.center);
	  \draw (3.center) to (6.center);
	  \draw (4.center) to (7.center);
	\end{pgfonlayer}
      \end{tikzpicture}
    \end{aligned}
  \]
  to get 
  \[
    \tikzset{every path/.style={line width=1.1pt}}
    \begin{aligned}
      \begin{tikzpicture}
	\begin{pgfonlayer}{nodelayer}
	  \node [style=none] (0) at (-0.25, 0.375) {};
	  \node [style=none] (1) at (0.5, 0.375) {};
	  \node [style=none] (2) at (-0.25, -0.375) {};
	  \node [style=none] (3) at (0.5, -0.375) {};
	  \node [style=none] (4) at (0.5, 0.25) {};
	  \node [style=none] (5) at (0.5, -0.25) {};
	  \node [style=none] (6) at (1.25, 0.25) {};
	  \node [style=none] (7) at (1.25, -0.25) {};
	  \node [style=none] (8) at (0.125, -0) {$t\circ s$};
	  \node [style=none] (9) at (1.5, 0.25) {$X$};
	  \node [style=none] (10) at (1.5, -0.25) {$Z$};
	\end{pgfonlayer}
	\begin{pgfonlayer}{edgelayer}
	  \draw (0.center) to (1.center);
	  \draw (1.center) to (3.center);
	  \draw (3.center) to (2.center);
	  \draw (2.center) to (0.center);
	  \draw (4.center) to (6.center);
	  \draw (5.center) to (7.center);
	\end{pgfonlayer}
      \end{tikzpicture}
    \end{aligned}
    \quad = 
    \quad
    \begin{aligned}
      \begin{tikzpicture}
	\begin{pgfonlayer}{nodelayer}
	  \node [style=none] (0) at (-0.875, 0.875) {};
	  \node [style=none] (1) at (-0.125, 0.875) {};
	  \node [style=none] (2) at (-0.125, 0.125) {};
	  \node [style=none] (3) at (-0.875, 0.125) {};
	  \node [style=none] (4) at (-0.875, -0.125) {};
	  \node [style=none] (5) at (-0.125, -0.125) {};
	  \node [style=none] (6) at (-0.125, -0.875) {};
	  \node [style=none] (7) at (-0.875, -0.875) {};
	  \node [style=none] (8) at (-0.125, 0.75) {};
	  \node [style=none] (9) at (-0.125, 0.25) {};
	  \node [style=none] (10) at (-0.125, -0.25) {};
	  \node [style=none] (11) at (-0.125, -0.75) {};
	  \node [style=none] (12) at (0.5, -0) {};
	  \node [style=none] (13) at (0.75, 0.75) {};
	  \node [style=none] (14) at (0.75, -0.75) {};
	  \node [style=none] (15) at (-0.5, 0.5) {$s$};
	  \node [style=none] (16) at (-0.5, -0.5) {$t$};
	  \node [style=none] (17) at (1, 0.75) {$X$};
	  \node [style=none] (18) at (1, -0.75) {$Z$};
	\end{pgfonlayer}
	\begin{pgfonlayer}{edgelayer}
	  \draw (0.center) to (1.center);
	  \draw (1.center) to (2.center);
	  \draw (2.center) to (3.center);
	  \draw (3.center) to (0.center);
	  \draw (4.center) to (5.center);
	  \draw (5.center) to (6.center);
	  \draw (6.center) to (7.center);
	  \draw (7.center) to (4.center);
	  \draw (8.center) to (13.center);
	  \draw [in=90, out=0, looseness=1.00] (9.center) to (12.center);
	  \draw [in=-90, out=0, looseness=1.00] (10.center) to (12.center);
	  \draw (11.center) to (14.center);
	\end{pgfonlayer}
      \end{tikzpicture}
    \end{aligned}
  \]
  in $\mc H_{\mathrm{str}}(I,X\cdot Z)$. 

  The monoidal product is given by
  \[
    \tikzset{every path/.style={line width=1.1pt}}
    \begin{aligned}
      \begin{tikzpicture}
	\begin{pgfonlayer}{nodelayer}
	  \node [style=none] (0) at (-0.25, 0.375) {};
	  \node [style=none] (1) at (0.5, 0.375) {};
	  \node [style=none] (2) at (-0.25, -0.375) {};
	  \node [style=none] (3) at (0.5, -0.375) {};
	  \node [style=none] (4) at (0.5, 0.25) {};
	  \node [style=none] (5) at (0.5, -0.25) {};
	  \node [style=none] (6) at (1.25, 0.25) {};
	  \node [style=none] (7) at (1.25, -0.25) {};
	  \node [style=none] (8) at (0.125, -0) {$s$};
	  \node [style=none] (9) at (1.5, 0.25) {$X$};
	  \node [style=none] (10) at (1.5, -0.25) {$Y$};
	\end{pgfonlayer}
	\begin{pgfonlayer}{edgelayer}
	  \draw (0.center) to (1.center);
	  \draw (1.center) to (3.center);
	  \draw (3.center) to (2.center);
	  \draw (2.center) to (0.center);
	  \draw (4.center) to (6.center);
	  \draw (5.center) to (7.center);
	\end{pgfonlayer}
      \end{tikzpicture}
    \end{aligned}
    \quad 
    +
    \quad
    \begin{aligned}
      \begin{tikzpicture}
	\begin{pgfonlayer}{nodelayer}
	  \node [style=none] (0) at (-0.25, 0.375) {};
	  \node [style=none] (1) at (0.5, 0.375) {};
	  \node [style=none] (2) at (-0.25, -0.375) {};
	  \node [style=none] (3) at (0.5, -0.375) {};
	  \node [style=none] (4) at (0.5, 0.25) {};
	  \node [style=none] (5) at (0.5, -0.25) {};
	  \node [style=none] (6) at (1.25, 0.25) {};
	  \node [style=none] (7) at (1.25, -0.25) {};
	  \node [style=none] (8) at (0.125, -0) {$t$};
	  \node [style=none] (9) at (1.5, 0.25) {$Z$};
	  \node [style=none] (10) at (1.5, -0.25) {$W$};
	\end{pgfonlayer}
	\begin{pgfonlayer}{edgelayer}
	  \draw (0.center) to (1.center);
	  \draw (1.center) to (3.center);
	  \draw (3.center) to (2.center);
	  \draw (2.center) to (0.center);
	  \draw (4.center) to (6.center);
	  \draw (5.center) to (7.center);
	\end{pgfonlayer}
      \end{tikzpicture}
    \end{aligned}
    \quad = \quad 
    \begin{aligned}
      \begin{tikzpicture}
	\begin{pgfonlayer}{nodelayer}
	  \node [style=none] (0) at (-0.875, 0.875) {};
	  \node [style=none] (1) at (-0.125, 0.875) {};
	  \node [style=none] (2) at (-0.125, 0.125) {};
	  \node [style=none] (3) at (-0.875, 0.125) {};
	  \node [style=none] (4) at (-0.875, -0.125) {};
	  \node [style=none] (5) at (-0.125, -0.125) {};
	  \node [style=none] (6) at (-0.125, -0.875) {};
	  \node [style=none] (7) at (-0.875, -0.875) {};
	  \node [style=none] (8) at (-0.125, 0.75) {};
	  \node [style=none] (9) at (-0.125, 0.25) {};
	  \node [style=none] (10) at (-0.125, -0.25) {};
	  \node [style=none] (11) at (-0.125, -0.75) {};
	  \node [style=none] (12) at (1, 0.75) {};
	  \node [style=none] (13) at (1, 0.25) {};
	  \node [style=none] (14) at (1, -0.25) {};
	  \node [style=none] (15) at (1, -0.75) {};
	  \node [style=none] (16) at (-0.5, 0.5) {$s$};
	  \node [style=none] (17) at (-0.5, -0.5) {$t$};
	  \node [style=none] (18) at (1.25, 0.75) {$X$};
	  \node [style=none] (19) at (1.25, 0.25) {$Z$};
	  \node [style=none] (20) at (1.25, -0.25) {$Y$};
	  \node [style=none] (21) at (1.25, -0.75) {$W.$};
	\end{pgfonlayer}
	\begin{pgfonlayer}{edgelayer}
	  \draw (0.center) to (1.center);
	  \draw (1.center) to (2.center);
	  \draw (2.center) to (3.center);
	  \draw (3.center) to (0.center);
	  \draw (4.center) to (5.center);
	  \draw (5.center) to (6.center);
	  \draw (6.center) to (7.center);
	  \draw (7.center) to (4.center);
	  \draw (8.center) to (12.center);
	  \draw [in=180, out=0, looseness=1.00] (9.center) to (14.center);
	  \draw [in=180, out=0, looseness=1.00] (10.center) to (13.center);
	  \draw (11.center) to (15.center);
	\end{pgfonlayer}
      \end{tikzpicture}
    \end{aligned}
  \]

  Taking a hint from the compact closed structure, the isomorphism between
  $F\mathrm{Corel}$ and $\mc H_{\mathrm{str}}$ is then clear: the functors act as
  the identity on objects, and on morphisms take $f: X \to Y$ in $\mc
  H_{\mathrm{str}}$ to its name $\hat f: I \to X\cdot Y$ as a morphism of
  $F\mathrm{Corel}$, and vice versa. 
  
  It is straightforward to check that these are strict hypergraph functors. To
  demonstrate the most subtle aspect, the Frobenius structure, we consider the
  multiplication on an object $X$ of $F\mathrm{Corel}$. To obtain this, we start
  with the multiplication in $F\mathrm{Cospan}$, the cospan $X+X
  \xrightarrow{[1,1]} X \xleftarrow{1} X$ decorated with the empty decoration,
  which in this case is $\eta_X \in \mc H_{\mathrm{str}}(I,X)$, and restrict this empty
  decoration along the map $X \xleftarrow{[1,1,1]} X+X+X$. Thus the
  multiplication on $X$ in $F\mathrm{Corel}$ is 
  \[
    \tikzset{every path/.style={line width=1.1pt}}
    \begin{aligned}
      \begin{tikzpicture}[scale=.65]
	\begin{pgfonlayer}{nodelayer}
	  \node [style=none] (0) at (0.75, 0.5) {};
	  \node [style=dot] (1) at (0, -0) {};
	  \node [style=none] (2) at (0.75, -0.5) {};
	  \node [style=none] (3) at (1.5, 0.5) {$X$};
	  \node [style=none] (4) at (1.25, 0.5) {};
	  \node [style=none] (5) at (1.25, -0.5) {};
	  \node [style=dot] (6) at (-1.25, -0.5) {};
	  \node [style=none] (7) at (-0.5, -1) {};
	  \node [style=none] (8) at (1.25, -1) {};
	  \node [style=none] (9) at (-0.5, -0) {};
	  \node [style=dot] (10) at (-2, -0.5) {};
	  \node [style=none] (11) at (1.5, -0.5) {$X$};
	  \node [style=none] (12) at (1.5, -1) {$X$};
	\end{pgfonlayer}
	\begin{pgfonlayer}{edgelayer}
	  \draw [in=90, out=180, looseness=0.90] (0.center) to (1.center);
	  \draw [in=-90, out=180, looseness=0.90] (2.center) to (1.center);
	  \draw (4.center) to (0.center);
	  \draw (5.center) to (2.center);
	  \draw [in=90, out=180, looseness=0.90] (9.center) to (6);
	  \draw [in=-90, out=180, looseness=0.90] (7.center) to (6);
	  \draw (6) to (10.center);
	  \draw (8.center) to (7.center);
	  \draw (9.center) to (1);
	\end{pgfonlayer}
      \end{tikzpicture}
    \end{aligned}
    \in \mc H_{\mathrm{str}}(I,X \cdot X \cdot X),
  \]
  which corresponds under our isomorphism to the map $\mu_X\maps X \cdot X \to X
  \in \mc H_{\mathrm{str}}$, as required.
\end{proof}

Similarly, every hypergraph functor can be recovered using decorated
corelations. The key point here is again the compact closed structure: a
hypergraph functor $(T,\tau) \maps (\mc H,\ot) \to (\mc H',\boxtimes)$ takes
morphisms $f\maps X \to Y$ to morphisms $Tf\maps TX \to TY$, and hence induces a
function from the set of names $I \to X \ot Y$ to the set of names
$I' \to TX \boxtimes TY$.

\begin{theorem} \label{thm.hypdeccorfunctors}
  Every hypergraph functor can be constructed as a decorated corelation functor.
  
  More precisely, let $(\mc H,\ot)$, $(\mc H',\boxtimes)$ be hypergraph
  categories and $(T,\tau)\maps \mc H \to \mc H'$ be a hypergraph functor. By
  the previous theorem, there exist lax symmetric monoidal functors
  \[
    F\maps \cospan(\FinSet_{\mc O}) \to \Set \quad \mbox{and} \quad F'\maps
    \cospan(\FinSet_{\mc O'}) \to \Set
  \]
  such that we have isomorphisms $\mc H_{\mathrm{str}} \cong F\mathrm{Corel}$
  and $\mc H_{\mathrm{str}}' \cong F'\mathrm{Corel}$.
  
  There exists a colimit-preserving functor $A\maps \FinSet_{\mc O} \to
  \FinSet_{\mc O'}$ and a monoidal natural transformation
  \[
    \xymatrixrowsep{2ex}
    \xymatrix{
      \cospan(\FinSet_{\mc O}) \ar[dd]_{A} \ar[drr]^F  \\
      &\twocell \omit{_\:\theta}& \Set \\
      \cospan(\FinSet_{\mc O'}) \ar[urr]_{F'} 
    }
  \]
  such that the resulting decorated corelation functor makes the
  diagram of hypergraph functors
  \[
    \xymatrix{
      \mc H \ar[d]_\sim \ar[r]^{T} & \mc H' \ar[d]^\sim \\
      F\mathrm{Corel} \ar[r] & F'\mathrm{Corel}
    }
  \]
  commute.
\end{theorem}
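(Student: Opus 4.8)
The plan is to build the pair $(A,\theta)$ directly from the strictification $T_{\mathrm{str}}\maps \mc H_{\mathrm{str}} \to \mc H'_{\mathrm{str}}$ of the given hypergraph functor, exploiting the fact established in Theorem \ref{thm.stricthypergraphs} that $T$ induces a strict hypergraph functor on strictifications which, by Mac Lane's construction, sends the list $[x_1:\ldots:x_n]$ to $[Tx_1:\ldots:Tx_n]$. First I would define $A\maps \FinSet_{\mc O} \to \FinSet_{\mc O'}$ to be the relabelling functor: it acts as the identity on the underlying finite set (the ``shape'') and replaces each label $x \in \mc O$ by $Tx \in \mc O'$. Since colimits in $\FinSet_{\mc O}$ are computed on underlying sets with labels induced by the universal property, and $A$ does not touch the underlying functions, $A$ preserves finite colimits; and as the factorisation systems here are the isomorphism--morphism systems, $\mc M = \FinSet_{\mc O}$ and $\mc M' = \FinSet_{\mc O'}$ are all of the respective categories, so the hypothesis $A(\mc M) \subseteq \mc M'$ of Proposition \ref{prop.deccorelfunctors} is automatic and $A$ extends via Lemma \ref{lem.madjointsfunctor} to a functor on the cospan categories.

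Next I would establish the key identity $T_{\mathrm{str}} \circ H = H' \circ A$ as functors $\cospan(\FinSet_{\mc O}) \to \mc H'_{\mathrm{str}}$, where $H$, $H'$ are the wide embeddings of the previous proof. Both composites agree on objects, since $T_{\mathrm{str}}(HX) = T_{\mathrm{str}}(X) = AX = H'(AX)$, and since $\cospan(\FinSet_{\mc O})$ is generated as a symmetric monoidal category by the Frobenius cospans $\mu_x,\eta_x,\delta_x,\epsilon_x$, it suffices to check agreement on these. This holds because $H$ sends each generator to the corresponding Frobenius map on $[x]$, $T_{\mathrm{str}}$ is a hypergraph functor and hence preserves Frobenius maps, and $H'\circ A$ sends the generator to the corresponding Frobenius map on $[Tx]$. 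With this identity in hand I would define $\theta\maps F \Rightarrow F'\circ A$ by $\theta_X(s) = T_{\mathrm{str}}(s)$ for $s \in \mc H_{\mathrm{str}}(I,HX) = FX$; this lands in $\mc H'_{\mathrm{str}}(I', T_{\mathrm{str}}HX) = \mc H'_{\mathrm{str}}(I', H'AX) = F'(AX)$ precisely because $T_{\mathrm{str}}$ is strict on the unit (the empty list maps to the empty list). Naturality is immediate from the functoriality of $T_{\mathrm{str}}$, and the two squares defining a monoidal natural transformation reduce, via Proposition \ref{prop.monglobalsecs}, to the strong monoidality of $T_{\mathrm{str}}$.

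Finally I would feed $(A,\theta)$ into Proposition \ref{prop.deccorelfunctors} to obtain a hypergraph functor $F\mathrm{Corel} \to F'\mathrm{Corel}$ and verify the square commutes. Under the isomorphisms $\mc H_{\mathrm{str}} \cong F\mathrm{Corel}$ and $\mc H'_{\mathrm{str}} \cong F'\mathrm{Corel}$ of Theorem \ref{thm.hypdeccorcats}, a morphism $f\maps X \to Y$ corresponds to its name $\hat f \in \mc H_{\mathrm{str}}(I, X\cdot Y)$ decorating the corelation $X \to X+Y \leftarrow Y$, and the constructed functor sends this decoration to $\theta_{X+Y}(\hat f) = T_{\mathrm{str}}(\hat f)$. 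I would then show $T_{\mathrm{str}}(\hat f) = \widehat{T_{\mathrm{str}}f}$: since the name $\hat f$ is obtained from $f$ and the compact-closed cup $\delta_X \circ \eta_X$ --- itself built from the Frobenius structure --- and $T_{\mathrm{str}}$ preserves that structure, applying $T_{\mathrm{str}}$ commutes with taking names. Hence the constructed functor agrees with $T_{\mathrm{str}}$ under the isomorphisms, and the diagram commutes.

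The main obstacle I anticipate is this last verification, that the decorated corelation functor produced by Proposition \ref{prop.deccorelfunctors} matches $T_{\mathrm{str}}$ exactly under the name correspondence, rather than merely up to the coherence isomorphisms $\overline{\kappa}$. Care is needed to track the restricted empty decorations on the coherence maps --- appealing to Lemma \ref{lem.emptydecorations} to see they play no role --- and to confirm that the compact-closed name bijection is genuinely natural with respect to hypergraph functors; this is where preservation of the cup and cap by $T_{\mathrm{str}}$ does the real work.
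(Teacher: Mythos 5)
Your proposal is correct and follows essentially the same route as the paper: the relabelling functor $A$ and the transformation $\theta$ given by applying $T$ to the decoration (your $T_{\mathrm{str}}(s)$ is exactly the paper's $\tau_\ast \circ Ts \circ \tau_I$), with naturality reduced to $T$ preserving Frobenius maps and the square checked via the name correspondence. Your intermediate identity $T_{\mathrm{str}} \circ H = H' \circ A$ is just a cleaner packaging of the paper's informal naturality argument, so no substantive difference.
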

\begin{proof}
  On objects, define the functor $A\maps \cospan(\FinSet_{\mc O}) \to
  \cospan(\FinSet_{\mc O'})$ to take each $\mc O$-labelled set $X=[x_1:\ldots:x_n]$,
  written here as a string of elements of $\mc O$, to the $\mc O'$-labelled set
  $TX=[Tx_1:\ldots:Tx_n]$ comprising the same set but changing each label $x_i$ to
  $Tx_i$. On morphisms, let it take a function between $\mc O$-labelled sets to
  the same function between the underlying sets, noting that the function now
  preserves the $\mc O'$-labels. This functor $A$ clearly preserves colimits:
  again, colimits in $\FinSet_{\mc O}$ are just colimits in $\FinSet$ with the
  labels inherited as described above.

  Let $X=[x_1:\ldots:x_n]$ be an $\mc O$-labelled set, and recall that we write
  $PX= (((x_1\ot x_2) \ot \dots )\ot x_n) \ot I$ for the monoidal product of
  this string in $\mc H$ with all open parentheses at the front. We define
  \begin{align*}
    \theta_X\maps \mc H_{\mathrm{str}}(I,X) = \mc H(I,PX) &\longrightarrow \mc
    H'_{\mathrm{str}}(I', TX)= \mc H'(I',PTX); \\
    \Big(I \stackrel{s}\to PX\Big) &\longmapsto \Big(I' \xrightarrow{\tau_I} TI
    \xrightarrow{Ts} TPX \xrightarrow{\tau_\ast} PTX\Big), 
  \end{align*}
  where $\tau_\ast$ is the appropriate composite of coherence isomorphisms of
  $T$. This collection of maps $\theta$ is natural because (i) functions in
  $\FinSet_{\mc O}$ act as the Frobenius maps on the homsets $\mc H(I,X)$ and
  $\mc H'(I',TX)$, and (ii) the functor $T$ is hypergraph and thus sends,
  loosely speaking, Frobenius maps to Frobenius maps. Moreover $\theta$ is
  monoidal as $T$ is monoidal.  Thus $\theta$ defines a monoidal natural
  transformation.

  The hypergraph functor $F\mathrm{Corel} \to F'\mathrm{Corel}$ induced by
  $\theta$ is by definition the map taking the decoration $s \in \mc
  H(I,P(X\cdot Y))$ to the decoration $\tau_\ast \circ Ts \circ
  \tau_I \in \mc H'(I',PT(X \cdot Y))$. It is immediate that the above square
  commutes. We thus say that $T$ can be constructed as a decorated corelation
  functor.
\end{proof}

\subsection{A categorical equivalence}

Choose Grothendieck universes so that we may talk about the category $\Set$ now
only of all so-called small sets, and the category $\mathrm{Cat}$ of all small
categories. If we restrict our attention to small hypergraph categories, we may
summarise these results as a categorical equivalence. 

Indeed, given some fixed object set $\mc O$, we have a category of lax symmetric
monoidal functors and monoidal natural transformations
\[
  \mathrm{LaxSymMon}(\cospan(\FinSet_{\mc O}),\Set).
\]
This is equivalent to some subcategory of the category $\mathrm{HypCat}$ of
hypergraph categories. To get a category equivalent to $\mathrm{HypCat}$, we
must patch together these functor categories by varying the object set and
specifying morphisms from objects in one category to objects in another. For
this we use the Grothendieck construction. 

Given a (contravariant) functor $S\maps \mc B \to \mathrm{Cat}$ from some
category $\mc B$ to $\mathrm{Cat}$, we define the \define{(contravariant)
Grothendieck construction} $\mc B \int S$ to be the category with pairs
$(\mc O,F)$ where $\mc O$ is an object of $\mc B$ and $F$ is an object of $S\mc
O$ as objects, and pairs 
\[
  (f,\theta)\maps (\mc O,F) \longrightarrow (\mc O',F')
\]
where $f\maps \mc O \to \mc O'$ is a morphism in $\mc B$ and $\theta\maps F \to
Sf(F')$ is a morphism in $S\mc O$ as morphisms. Given another morphism
$(g,\zeta)\maps (\mc O', F') \to (\mc O'',F'')$, the composite morphism in the
pair $(g \circ f, Sf\zeta \circ \theta)$.

Now, define the functor 
\[
  \mathrm{LaxSymMon}(\cospan(\FinSet_{-}), \Set)\maps \Set \longrightarrow
  \mathrm{Cat}
\]
as follows. On objects let it map a set $\mc O$ to the lax symmetric monoidal
functor category $\mathrm{LaxSymMon}(\cospan(\FinSet_{\mc O}), \Set)$. For
morphisms, we noted at the beginning of the proof of Theorem
\ref{thm.hypdeccorfunctors} that a function $r\maps \mc O \to \mc O'$ from one
set of labels to another induces a functor $R\maps \cospan(\FinSet_{\mc O}) \to
\cospan(\FinSet_{\mc O'})$ taking each $\mc O$-labelled set $N \stackrel{l}\to
\mc O$ to the $\mc O'$-labelled set $N \xrightarrow{r \circ l} \mc O'$. This in
turn defines a functor 
\begin{align*}
 \mathrm{LaxSymMon}(\cospan(\FinSet_{\mc O'}), \Set) &\longrightarrow
\mathrm{LaxSymMon}(\cospan(\FinSet_{\mc O}), \Set); \\
\Big(F'\maps
\cospan(\FinSet_{\mc O'}) \to \Set\Big) &\longmapsto \Big(R\circ F'\maps  \cospan(\FinSet_{\mc O})
\to \Set\Big).
\end{align*}
by post-composition with $R$. Note that
$\mathrm{LaxSymMon}(\cospan(\FinSet_{-}), \Set)$ is a well-defined functor: in
particular, it preserves composition on the nose.

The Grothendieck construction 
\[
 \Set\int \mathrm{LaxSymMon}(\cospan(\FinSet_{-}), \Set)
\]
thus gives a category where the objects are some label set $\mc O$ together with
an object in $\mathrm{LaxSymMon}(\cospan(\FinSet_{\mc O}), \Set)$---that is,
with a lax symmetric monoidal functor $F\maps (\cospan(\FinSet_{\mc O}),+) \to
(\Set,\times)$. The morphisms $(\mc O, F) \to (\mc O,F')$ are functions of label
sets $r: \mc O \to \mc O'$ together with a natural transformation $\theta\maps
R \circ F'\Rightarrow F$.

In the case of small categories, it is not difficult to show that Theorems
\ref{thm.hypdeccorcats} and \ref{thm.hypdeccorfunctors} imply:

\begin{theorem} \label{thm.equivhypdccor}
  There is an equivalence of categories
\[
  \mathrm{HypCat} \cong \Set\int \mathrm{LaxSymMon}(\cospan(\FinSet_{-}), \Set).
\]
\end{theorem}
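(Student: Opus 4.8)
The plan is to assemble the equivalence from the two main structural results already proved, Theorem \ref{thm.hypdeccorcats} and Theorem \ref{thm.hypdeccorfunctors}, by constructing functors in both directions and checking they are mutually (pseudo-)inverse up to natural isomorphism. Write $\mathcal G = \Set\int \mathrm{LaxSymMon}(\cospan(\FinSet_{-}), \Set)$ for the Grothendieck construction. First I would define a functor $\Phi\maps \mathcal G \to \mathrm{HypCat}$ sending an object $(\mc O, F)$ to the hypergraph category $F\mathrm{Corel}$ (which is a hypergraph category by Theorem \ref{thm.fcorel}), and sending a morphism $(r,\theta)\maps (\mc O,F) \to (\mc O', F')$ to the hypergraph functor $F\mathrm{Corel} \to F'\mathrm{Corel}$ produced by Proposition \ref{prop.deccorelfunctors} applied to the colimit-preserving functor $R\maps \cospan(\FinSet_{\mc O}) \to \cospan(\FinSet_{\mc O'})$ induced by $r$ and the natural transformation $\theta$. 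I would check functoriality of $\Phi$ by noting that composition in $\mathcal G$ is defined precisely so that the induced decorated corelation functors compose correctly; this is essentially bookkeeping given that $\mathrm{LaxSymMon}(\cospan(\FinSet_{-}),\Set)$ was already verified to preserve composition on the nose.

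Next I would define a functor $\Psi$ in the other direction, $\mathrm{HypCat} \to \mathcal G$. Theorem \ref{thm.hypdeccorcats} shows each small hypergraph category $\mc H$ gives rise, via its strictification $\mc H_{\mathrm{str}}$ with object collection $\mc O$, to a lax symmetric monoidal functor $F\maps \cospan(\FinSet_{\mc O}) \to \Set$, namely the composite $\mc H_{\mathrm{str}}(I,-) \circ H$; so set $\Psi(\mc H) = (\mc O, F)$. On a hypergraph functor $(T,\tau)\maps \mc H \to \mc H'$, Theorem \ref{thm.hypdeccorfunctors} produces exactly the data $(r,\theta)$ of a morphism in $\mathcal G$: the label function $r\maps \mc O \to \mc O'$ sending $x \mapsto Tx$ (which induces the colimit-preserving $A$), together with the monoidal natural transformation $\theta$. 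I would verify $\Psi$ respects composition and identities, again largely formal given that $T \mapsto Tx$ on objects and $T \mapsto (\tau_\ast \circ T(-) \circ \tau_I)$ on names are both evidently functorial.

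The two natural isomorphisms witnessing the equivalence then come directly from the theorems. The isomorphism $\Phi \circ \Psi \cong \mathrm{id}_{\mathrm{HypCat}}$ is the content of Theorem \ref{thm.hypdeccorcats}: for each $\mc H$ the hypergraph functor $\mc H \to F\mathrm{Corel}$ sending $f\maps X \to Y$ to its name $\hat f\maps I \to X\cdot Y$ is an isomorphism (using the compact closed structure of \textsection\ref{ssec.compactclosed}), and these are natural in $\mc H$ precisely because Theorem \ref{thm.hypdeccorfunctors} shows the naming construction commutes with application of any hypergraph functor $T$. The isomorphism $\Psi \circ \Phi \cong \mathrm{id}_{\mathcal G}$ holds by inspection: starting from $(\mc O, F)$, forming $F\mathrm{Corel}$ and reading off its strictified object set returns $\mc O$, and its hom-into-$I$ functor returns $F$ up to the canonical identification already used in Theorem \ref{thm.hypdeccorcats}. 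I would close by remarking that everything respects the hypergraph structure, so these are equivalences of categories and not merely of underlying categories.

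The main obstacle I anticipate is not any single hard computation but rather the coherent handling of the Grothendieck construction together with strictification: $\Psi$ is only canonically defined after replacing $\mc H$ by $\mc H_{\mathrm{str}}$, so I must confirm that the equivalence $\mc H \simeq \mc H_{\mathrm{str}}$ of Theorem \ref{thm.stricthypergraphs} interacts naturally with the naming isomorphisms, and that the choice of object set $\mc O$ (which depends on $\mc H$) assembles into a genuine functor to $\mathcal G$ whose morphism part is the $(r,\theta)$ of Theorem \ref{thm.hypdeccorfunctors}. In other words, the delicate point is checking naturality squares across \emph{different} fibres of the Grothendieck construction, where both the base function $r$ and the fibre natural transformation $\theta$ vary; this is where I would spend the most care, whereas the fibrewise statements are handed to us by the earlier theorems.
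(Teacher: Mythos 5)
Your proposal is correct and follows essentially the same route as the paper, which in fact gives no detailed argument here beyond asserting that the equivalence follows from Theorems \ref{thm.hypdeccorcats} and \ref{thm.hypdeccorfunctors}; your plan is precisely the natural fleshing-out of that assertion, with $\Phi$ built from Theorem \ref{thm.fcorel} and Proposition \ref{prop.deccorelfunctors} and $\Psi$ from the naming construction. You also correctly identify the one genuinely delicate point the paper glosses over, namely that $\Psi$ factors through strictification, so the comparison $\mc H \simeq \mc H_{\mathrm{str}} \cong F\mathrm{Corel}$ must be checked to be natural across varying fibres of the Grothendieck construction.
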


The 2-categorical version of the above equivalence is also the subject of a
forthcoming paper by Vagner, Spivak, and Schultz, from their operadic
perspective \cite{VSS}.

\begin{remark}
  It is prudent, to wonder, in the definition of hypergraph category, why it is
  useful to define a structure---the Frobenius maps---on a category that does
  not interact with the morphisms of the category. This affects, for example,
  invariance under equivalence of symmetric monoidal categories: given two
  equivalent symmetric monoidal categories, the hypergraph structures on these
  categories need not be in one-to-one correspondence. The Frobenius maps,
  however, are seen by the monoidal product and by hypergraph functors. Theorem
  \ref{thm.equivhypdccor} provides a more invariant definition of the category
  of hypergraph categories.
\end{remark}

\subsection{Factorisations as decorations}

We have seen that every hypergraph category can be constructed as a decorated
corelations category. More precisely, we have seen that every hypergraph
category can be constructed as a decorated corelation category with the
factorisation system the trivial isomorphism-morphism factorisation system. But
we can also use other factorisation systems to construct decorated corelation
categories, and these are also hypergraph categories. This implies that we might
have multiple decorated corelation constructions for the same hypergraph
category.  How do these constructions relate to each other?

Recall from Remark \ref{rem.corelposet} that for any category $\mc C$ with
finite colimits there exists a poset of factorisation systems $(\mc E,\mc M)$
with $\mc M$ stable under pushout, where the order is given by reverse inclusion
on $\mc M$. Suppose we choose a factorisation system $(\mc E,\mc M)$ and have a
lax symmetric monoidal functor
\[
  F\maps \mc C;\mc M^\opp \longrightarrow \Set.
\]
This defines a decorated corelation category. Although we will not prove it
here, it is in fact possible to construct a hypergraph equivalent decorated
corelation category using any factorisation system $(\mc E',\mc M')$ less than
$(\mc E,\mc M)$. Roughly, the idea is that although moving to a smaller
factorisation system results in fewer corelations, we may store any lost
information in the decorations by altering the functor $F$ in the right way.
The proof of Theorem \ref{thm.hypdeccorcats} shows how to do this for the
factorisation system at the bottom of this poset, the isomorphism-morphism
factorisation system $(\mc I_{\mc C},\mc M)$.

For illustration, we example this interaction for the simplest hypergraph
category: $\cospan(\FinSet)$, the free hypergraph category on a single object.

\begin{example}
  As per Example \ref{ex.undeccorel}, $\cospan(\FinSet)$ is the hypergraph
  category of undecorated morphism-isomorphism corelations in $\FinSet$.
  Theorem \ref{thm.hypdeccorcats} shows it is also the hypergraph category of
  function-decorated isomorphism-morphism corelations in $\FinSet$.   
  
  The construction in Theorem \ref{thm.hypdeccorcats} proceeds as follows.
  First, we construct a lax symmetric monoidal functor $F\maps \cospan(\FinSet)
  \to \Set$. On objects $F$ takes each finite set $X$ to the set of isomorphism
  classes of cospans $\varnothing \to D \leftarrow X$ from the monoidal unit
  $\varnothing$ to $X$ or, equivalently, the set $FX$ of functions $s\maps X \to
  D$, where a unique codomain $D$ is chosen for each finite cardinality. On
  morphisms, $F$ takes a cospan $X \stackrel{f}\to N \stackrel{g}\leftarrow Y$
  to the function $FX \to FY$ that maps $s\maps X \to D$ in $FX$ to the function
  $Y \to N+_XD$ in $FY$ given by
  \[
    \xymatrix{
      & X \ar[r]^s \ar[d]_f & D \ar[d] \\
      Y \ar[r]^g  & N \ar[r] & N+_XD
    }
  \]
  where the square is a pushout square. For coherence maps, the map
  $\varphi_1\maps 1 \to F\varnothing$ takes the unique element of $1$ to the
  unique function $!\maps \varnothing \to \varnothing$, while $\varphi_{X,Y}$
  takes a pair of functions $a\maps X \to D$, $b\maps Y \to E$ to $a+b\maps X+Y
  \to D+E$. This defines a lax symmetric monoidal functor $(F,\varphi)$.

  A decorated \emph{cospan} in $\FinSet$ with respect to $(F,\varphi)$ is then a
  cospan of finite sets $X \to N \leftarrow Y$ together with a function of
  finite sets $N \to D$. Using the isomorphism-morphism factorisation, a
  decorated \emph{corelation} is thus a cospan $X \xrightarrow{\iota_X} X+Y
  \xleftarrow{\iota_Y} Y$ together with a function $X+Y \to D$. As there is a
  unique isomorphism class of $\mc I_{\FinSet}$-like cospans $X
  \xrightarrow{\iota_X} X+Y \xleftarrow{\iota_Y} Y$, a decorated corelation is
  thus specified by its decoration $X+Y \to D$ alone. Note that
  maps $X+Y \to D$ are in one-to-one correspondence with cospans $X \to D
  \leftarrow Y$ in $\FinSet$ via the coproduct inclusion maps.

  As observed in the proof of Theorem \ref{thm.hypdeccorcats}, the hypergraph
  structure on $F\mathrm{Corel}$ agrees with that on $\cospan(\FinSet)$ via
  this correspondence; the multiplication \linebreak $\mu_X\maps X+X \to X$ in
  $F\mathrm{Corel}$ is simply given by the decoration $(X+X)+X \to X$, and so
  on. The intuition is that $F$ takes the `factored out part' of the corelation
  and puts it into the decoration. 

  This correspondence between maps $X+Y \to D$ and cospans $X \to D\leftarrow Y$
  suggests an isomorphism. Indeed, the equivalence given by Theorem
  \ref{thm.hypdeccorcats} is precisely this. We can construct one direction, the
  one from the smaller to larger $\mc M$---that is, from $\mc M = \mc
  I_{\FinSet}$ to $\mc M = \FinSet$---as a decorated corelations functor.  Note
  that the identity on $\FinSet$ maps the subcategory $\mc I_{\FinSet}$ into
  $\FinSet$, and so extends to a morphism $\FinSet \to \cospan(\FinSet)$. Also
  recall (from say Example \ref{ex.undeccorel}) that the `undecorated' cospan
  category $\cospan(\FinSet)$ is equal to the decorated cospan category given by
  the functor $1\maps \FinSet \to \Set$ mapping each finite set to some chosen
  one element set $1$, and each morphism to the identity morphism on $1$. Define
  monoidal natural transformation 
  \[
    \xymatrixrowsep{2ex}
    \xymatrix{
      \FinSet = \FinSet; \mc I_{\FinSet}^\opp \ar[dd]_{\iota} \ar[drr]^(.65)1  \\
      &\twocell \omit{_\:\theta}& \Set \\
      \cospan(\FinSet) = \FinSet; \FinSet^\opp \ar[urr]_(.65){G} 
    }
  \]
  with each $\theta_X \maps 1 \to GX$ mapping the unique element to the identity
  function $1_X\maps X \to X$.  This gives the hypergraph functor we expect,
  mapping the undecorated cospan $X \to N \leftarrow Y$ to the trivial cospan $X
  \to X+Y \leftarrow Y$ decorated by $X+Y \to N$. It is now routine to verify
  this is an isomorphism.
\end{example}

The previous example extends to any category $\mc C$ with finite colimits: the
hypergraph category $\cospan(\mc C)$ can always be constructed as (i)
trivially decorated $(\mc C, \mc I_{\mc C})$-corelations, or (ii) $(\mc I_{\mc
C}, \mc C)$-corelations decorated by equivalence classes of morphisms with
domain the apex of the corelation. Moreover, the isomorphism of these
hypergraph categories is given by the analogous monoidal natural transformation
between the decorating functors.

More general still, a category of trivially decorated $(\mc E, \mc
M)$-corelations in $\mc C$ can always be constructed also as $(\mc I_{\mc C},
\mc C)$-corelations decorated by equivalence classes of morphisms in $\mc E$
with domain the apex of the corelation, and the isomorphism of these
hypergraph categories a decorated corelations functor.

Most generally, we can still perform this construction on decorated corelation
categories: Theorem \ref{thm.hypdeccorcats} implies any category of $(\mc E,\mc
M)$-decorated corelations can be constructed also as $(\mc I_{\mc C}, \mc
C)$-corelations decorated by codomain decorated morphisms in $\mc E$. Given some
lax symmetric monoidal functor
\[
  F\maps \enspace \mc C;\mc M^\opp \longrightarrow \Set,
\]
the decorated $(\mc I_{\mc C}, \mc C)$-corelations are specified by the functor
\[
  F'\maps \cospan(\mc C) \longrightarrow \Set
\]
taking any object $Z \in \cospan(\mc C)$ to the pair $(Z \stackrel{e}\to N,
\enspace 1\stackrel{s}\to FN)$, where $e$ is a morphism in $\mc E$.

What is the utility of this variety of constructions? Different constructions
suit different purposes. At the top of the poset, consider the
morphism-isomorphism factorisation system: these give decorated cospan
categories. Decorated cospan categories provide, as we saw for open electric
circuits, an intuitive way to construct a `syntactic' hypergraph category from
some notion of network-style diagrammatic language. Moreover, they are decorated
corelation categories with $\mc M = \mc I_{\mc C}$ as small as possible. This
makes it easy to use decorated corelations to construct hypergraph
functors---such as those describing the semantics of diagrams---from a decorated
cospan category to another category.

To do this, however, we need to construct hypergraph categories of semantics.
Here epi-mono corelations are a useful tool. Indeed, some hypergraph categories
are very naturally constructed as corelation (or, dually, relation) categories,
such as the categories of equivalence relations, relations, or linear relations.
Here the factorisation system has an intuitive interpretation, such as the
epimorphisms retaining only the structure in the apex that is `accessible' or
`mapped onto' by the feet/boundary. Decorating these corelations retains the
same sort of intuition. 

Finally, at the bottom of the poset, we have the isomorphism-morphism
factorisation system. These give rise to the decorated corelation categories
constructed in Theorem \ref{thm.hypdeccorcats}, in which the factorisation
system is the trivial isomorphism-morphism one and all interesting structure is
carried by the apex of the decorated corelation. Note that these have $\mc M =
\mc C$ as large as possible, and also large sets of decorations. This makes it
easy to construct functors into a given hypergraph category. Thus we might first
define a semantic category as an epi-mono corelation category to match our
intuitions, and then later use an equivalent definition as a
isomorphism-morphism corelation category to facilitate construction of semantic
functors.

Indeed, the ability to construct functors from one hypergraph category to
another is essential for the understanding of hypergraph categories as
network-type diagrammatic languages, with functors not just giving rise to
notions of semantics, but hence also notions of equivalence of diagrams, and
reasoning tools.

We will illustrate these principles in greater depth in Part \ref{part.apps},
which delves into applications of this philosophy and framework. We conclude
this section and Part \ref{part.maths} with two examples of a more abstract
nature.

\section{Examples} \label{sec:excor}
We give two extended examples. Our first example is to revisit the matrix
example from the introduction, having now developed the necessary material. Our
second example is to revisit the category of linear relations once more, showing
that we can also construct it as a decorated corelation category. 

\subsection{Matrices} \label{ssec.matrices}

Let $R$ be a commutative rig.\footnote{Also known as a semiring, a rig is a ring
  without \emph{n}egatives.} In this subsection we will
construct matrices over $R$ as decorated corelations over $\FinSet^\opp$. 

In $\FinSet^\opp$ the coproduct is the Cartesian product $\times$ of sets, the
initial object is the one element set $1$, and cospans are spans in $\FinSet$.
The notation will thus be less confusing if we talk of decorated spans on
$(\FinSet,\times)$ given by the contravariant lax monoidal functor
\begin{align*}
  R^{(-)}: (\mathrm{FinSet},\times) &\longrightarrow (\Set,\times); \\
  N &\longmapsto R^N \\
  \Big(f\maps N \to M\Big) &\longmapsto \Big(R^f\maps R^M \to R^N; v \mapsto v \circ
  f\big).
\end{align*}
The coherence maps $\varphi_{N,M}\maps R^N \times R^M \to R^{N\times M}$ take a
pair $(s,t)$ of maps $s\maps N \to R$, $t\maps M \to R$ to the pointwise product
$s\cdot t \maps N\times M \to R; (n,m) \mapsto s(n) \cdot t(m)$. The unit
coherence map $\varphi_1\maps 1 \to R^1$ sounds almost tautological: it takes
the unique element of the one element set $1$ to the function $1 \to R$ that
maps the unique element of the one element set to the multiplicative identity
$1_R$ of the rig $R$.  As described in Section \ref{sec.blackboxedsystems},
$R^{(-)}\mathrm{Cospan}$ can be considered as the category of `multivalued
matrices' over $R$, and $R^{(-)}\mathrm{Corel}$ the category of matrices over
$R$.

Just as the coherence map $\varphi_1$ gives the unit for the multiplication, it
is the coherence maps $\varphi_{N,M}$ that enact multiplication of scalars:
the composite of decorated spans $(X \xleftarrow{i_X} N \xrightarrow{o_Y} Y,\enspace N
\xrightarrow{s} R)$ and $(Y \xleftarrow{i_Y} M \xrightarrow{o_Z} Z,\enspace M
\xrightarrow{t} R)$ is the span $X \leftarrow N\times_YM \rightarrow Z$ decorated
by the map
\[
  N \times_YM \hooklongrightarrow N \times M \xrightarrow{\varphi_{N,M}(s,t) = s \cdot t}
  R,
\]
where the inclusion from $N \times_YM$ into $N \times M$ is that given by the
categorical product. We discussed the intuition for this composition rule, in
terms of paths between elements of $X$ and those of $Z$, in Section
\ref{sec.blackboxedsystems}. 

As $\varphi_1$ selects the multiplicative unit $1_R$ of $R$, the empty
decoration on any set $N$ is the function that sends every element of $N$ to
$1_R$. This implies the identity decorated span on $X = \{x_1,\dots, x_n\}$ is
that represented by the diagram
\[
    \tikzset{every path/.style={line width=.8pt}}
\begin{tikzpicture}
	\begin{pgfonlayer}{nodelayer}
		\node [style=sdot] (0) at (1.75, 0.5) {};
		\node [style=sdot] (1) at (-1.75, -1) {};
		\node [style=sdot] (2) at (1.75, -1) {};
		\node [style=none] (3) at (0, -0.25) {$\vdots$};
		\node [style=amp] (4) at (0, -1) {$1$};
		\node [style=amp] (5) at (0, 1) {$1$};
		\node [style=sdot] (6) at (1.75, 1) {};
		\node [style=amp] (7) at (0, 0.5) {$1$};
		\node [style=sdot] (8) at (-1.75, 0.5) {};
		\node [style=sdot] (9) at (-1.75, 1) {};
		\node [style=none] (10) at (1.75, -0.25) {$\vdots$};
		\node [style=none] (11) at (-1.75, -0.25) {$\vdots$};
		\node [style=none] (12) at (-2.125, 1) {$x_1$};
		\node [style=none] (13) at (-2.125, 0.5) {$x_2$};
		\node [style=none] (14) at (-2.125, -1) {$x_n$};
		\node [style=none] (15) at (2.125, 1) {$x_1$};
		\node [style=none] (16) at (2.125, 0.5) {$x_2$};
		\node [style=none] (17) at (2.125, -1) {$x_n$};
	\end{pgfonlayer}
	\begin{pgfonlayer}{edgelayer}
		\draw (6) to (5);
		\draw (4) to (2);
		\draw (1) to (4);
		\draw (5) to (9);
		\draw (8) to (7);
		\draw (7) to (0);
	\end{pgfonlayer}
\end{tikzpicture}
\]
while the Frobenius multiplication and unit are 
\[
    \tikzset{every path/.style={line width=.8pt}}
  \begin{aligned}
\begin{tikzpicture}
	\begin{pgfonlayer}{nodelayer}
		\node [style=sdot] (0) at (1.75, -0.5) {};
		\node [style=sdot] (1) at (1.75, -2.25) {};
		\node [style=none] (2) at (0, -1.25) {$\vdots$};
		\node [style=amp] (3) at (0, -2.25) {$1$};
		\node [style=amp] (4) at (0, 1.5) {$1$};
		\node [style=sdot] (5) at (1.75, 1.5) {};
		\node [style=amp] (6) at (0, -0.5) {$1$};
		\node [style=sdot] (7) at (-1.75, -0.5) {};
		\node [style=none] (8) at (1.75, -1.25) {$\vdots$};
		\node [style=none] (9) at (-1.75, -1.25) {$\vdots$};
		\node [style=sdot] (10) at (-1.75, -2.25) {};
		\node [style=sdot] (11) at (-1.75, 1.5) {};
		\node [style=none] (12) at (2.125, 1.5) {$x_1$};
		\node [style=none] (13) at (2.125, -0.5) {$x_2$};
		\node [style=none] (14) at (2.125, -2.25) {$x_n$};
		\node [style=none] (15) at (-2.5, 1.5) {$(x_1,x_1)$};
		\node [style=none] (16) at (-2.5, -0.5) {$(x_2,x_2)$};
		\node [style=none] (17) at (-2.5, -2.25) {$(x_n,x_n)$};
		\node [style=none] (18) at (-2.5, 1) {$(x_1,x_2)$};
		\node [style=sdot] (19) at (-1.75, 1) {};
		\node [style=none] (21) at (-1.75, 0.5) {$\vdots$};
		\node [style=none] (22) at (1.75, 0.75) {$\vdots$};
		\node [style=sdot] (23) at (-1.75, -0) {};
		\node [style=none] (24) at (-2.5, -0) {$(x_2,x_1)$};
		\node [style=sdot] (25) at (-1.75, -1.75) {};
		\node [style=none] (26) at (-2.7, -1.75) {$(x_n,x_{n-1})$};
	\end{pgfonlayer}
	\begin{pgfonlayer}{edgelayer}
		\draw (5) to (4);
		\draw (3) to (1);
		\draw (7) to (6);
		\draw (6) to (0);
		\draw (10) to (3);
		\draw (4) to (11);
	\end{pgfonlayer}
\end{tikzpicture}
  \end{aligned}
  \qquad \mbox{and} \qquad
  \begin{aligned}
\begin{tikzpicture}
	\begin{pgfonlayer}{nodelayer}
		\node [style=sdot] (0) at (1.75, 0.5) {};
		\node [style=sdot] (1) at (1.75, -1) {};
		\node [style=none] (2) at (0, -0.25) {$\vdots$};
		\node [style=amp] (3) at (0, -1) {$1$};
		\node [style=amp] (4) at (0, 1) {$1$};
		\node [style=sdot] (5) at (1.75, 1) {};
		\node [style=amp] (6) at (0, 0.5) {$1$};
		\node [style=sdot] (7) at (-1.75, -0) {};
		\node [style=none] (8) at (1.75, -0.25) {$\vdots$};
		\node [style=none] (9) at (2.125, 1) {$x_1$};
		\node [style=none] (10) at (2.125, 0.5) {$x_2$};
		\node [style=none] (11) at (2.125, -1) {$x_n$};
	\end{pgfonlayer}
	\begin{pgfonlayer}{edgelayer}
		\draw (5) to (4);
		\draw (3) to (1);
		\draw (7) to (6);
		\draw (6) to (0);
		\draw (7) to (4);
		\draw (7) to (3);
	\end{pgfonlayer}
\end{tikzpicture}
  \end{aligned}
\]
respectively, with the comultiplication and counit the mirror images.

These morphisms are multivalued matrices in the following sense: the
cardinalities of the domain $X$ and the codomain $Y$ give the dimensions of the
matrix, and the apex $N$ indexes its entries. If $n \in N$ maps to $x \in X$ and
$y \in Y$, we say there is an entry of value $s(n) \in \R$ in the $x$th row and
$y$th column of the matrix. It is multivalued in the sense that there may be
multiple entries in any position $(x,y)$ of the matrix.

To construct matrices proper, and not just multivalued matrices, as decorated
relations, we extend $R^{(-)}$ to the contravariant functor
\[
  R^{(-)}: (\mathrm{Span(FinSet)},\times) \longrightarrow (\Set,\times)
\]
mapping now a span $N \stackrel{f}\leftarrow A \stackrel{g}\to M$ to the
function
\begin{align*}
  R^{f^\opp;g}\maps R^M &\longrightarrow R^N; \\
  v &\longmapsto \Big(n \mapsto \sum_{a \in f^{-1}(n)} v\circ g(a)\Big).
\end{align*}
It is simply a matter of computation to check this is functorial.

Decorated corelations in this category then comprise trivial spans $X
\xleftarrow{\pi_X} X \times Y \xrightarrow{\pi_Y} Y$, where $\pi$ is the
projection given by the categorical product, together with a decoration $X\times
Y \to R$. Such morphisms give a value of $R$ for each pair $(x,y) \in X \times
Y$, and thus are trivially in one-to-one correspondence with $\lvert X \rvert
\times \lvert Y\rvert$-matrices. 

The map $R^{(-)}\mathrm{Cospan} \to R^{(-)}\mathrm{Corel}$ transports the
decoration $N\times_YM \to R$ along the function $N \times_YM \to N \times M$
that identifies elements over the same pair $(x,y)$. In terms of the multivalued
matrices, this sums over (the potentially empty) set of entries over $(x,y)$ to
create a single entry. It is thus easily observed that composition in this
category is matrix multiplication. Moreover, it is not difficult to check that
the monoidal product is the Kroenecker product of matrices, and thus that
$R^{(-)}\mathrm{Corel}$ is monoidally equivalent to the monoidal category of
$(\FinVect, \otimes)$ of finite dimensional vector spaces, linear maps, and the
tensor product.


Note that $R^X$ is always an $R$-module, and $R^f$ a homomorphism of
$R$-modules. Thus we could take decorations here in the category $R\mathrm{Mod}$
of $R$-modules, rather than the category $\Set$. While Proposition
\ref{prop.setdecorations} shows that the resulting decorated cospan category
would be isomorphic, this may hint at an enriched version of the theory.

\subsection{Two constructions for linear relations}

We saw in \textsection\ref{ssec.linrel} that linear relations are epi-mono
corelations in $\Vect$. As linear relations thus form a hypergraph category
$\LinRel$, we can also give a decorated corelations construction. 

Indeed, Theorem \ref{thm.hypdeccorcats} shows that from the hom functor
$\LinRel(0,-)$ on the monoidal unit of the hypergraph category $\LinRel$, we can
build the functor
\[
  \mathrm{Lin} \maps\mathrm{Cospan}(\mathrm{FinSet}) \longrightarrow \mathrm{Set}
\]
taking a finite set $N$ to the set $\mathrm{Lin}(N)$ of linear subspaces of the
vector space $k^N$. Moreover, the image $\mathrm{Lin}(f)$ of a function $f\maps
N \to M$ maps a subspace $L \subseteq k^N$ to $\{v \mid v\circ f \in L\}
\subseteq k^M$, while the image $\mathrm{Lin}(f^\opp)$ of an opposite function
$g^\opp: N \to M$ maps a subspace $L \subseteq k^N$ to $\{v = u \circ g \mid u
\in L\} \subseteq k^M$. 

Given this functor, it can be shown that $\mathrm{LinCospan}$ is the category of
cospans decorated by subspaces, while $\mathrm{LinCorel}$ is the category of
linear relations. 

This pair of constructions is important for circuits work \cite{BF,BSZ}. Recall
the functor $\linsub\maps \FinSet \to \Set$ of Section \ref{sec:ex}, which takes
a finite set $X$ to the set of subspaces of $\R^X \oplus (\R^X)^*$.  This may be
extended to a functor $\cospan(\FinSet) \to \Set$, giving a decorated
corelations category of linear relations where the objects are the direct sum of
a vector space and its dual. Recall also that in Section \ref{sec:ex} we
discussed a functor 
\[
  \mathrm{GraphCospan} \to \mathrm{LinSubCospan}
\]
interpreting labelled graphs as linear subspaces. Composing this with the
quotient functor $\mathrm{LinSubCospan} \to \mathrm{LinSubCorel}$, this gives a
compositional linear relations semantics for circuit diagrams. This hints at the
material we explore in depth in Chapter \ref{ch.circuits}.

\part{Applications} \label{part.apps}

\chapter{Signal flow diagrams} \label{ch.sigflow}
In this chapter we use corelations to guide the development of a graphical
language for reasoning about linear time-invariant systems. 

%
%

We begin in the next section with motivation and an overview of this
chapter. In \S\ref{sec.systems} we then develop a categorical account of
complete LTI discrete dynamical systems. This serves as a denotational semantics
for the graphical language, introduced in \S\ref{sec.diagrams}, where we also
derive the equational characterisation.  In \textsection\ref{sec.opsem} we
relate this to an operational semantics, in terms of biinfinite streams of
elements of $k$. We conclude in \S\ref{sec.control} with a structural account of
controllability.

\section{Behavioural control theory}

Control theory begins with the following picture:		
\[
\begin{tikzpicture}
\node(system) [shape=rectangle,draw,inner sep=20pt] at (0,0) {\textsf{system}};
\node(input) at (-3.5,0) {\small{\textsf{input}}};
\node(output) at (3.5,0) {\small{\textsf{output}}};
\draw [->,shorten >=5pt,>=stealth,very thick] (input) 	-- (system);
\draw [->,shorten <=5pt,>=stealth,very thick] (system)	-- (output);
\end{tikzpicture}
\]
In this picture we have an object under study, referred to as a \emph{system}, which when fed certain inputs produces certain outputs. These outputs need not be uniquely determined by the inputs; in general the relationship may be stochastic or non-deterministic, or depend also on internal states of the system. Although we take it as given that the system is an \emph{open} system---so it interacts with its environment, and so we can observe its inputs and outputs---we assume no access to the details of these internal states or the inner workings of the system, and this can add considerable complexity to our models. The end goal of control theory is then to control the system: to understand how to influence its behaviour in order to achieve some desired goal. Two key questions arise: 
\begin{itemize}
\item \textbf{Analysis}: Given a system, what is the relationship it induces between input and output?
\item \textbf{Synthesis}: Given a target relationship between input and output, how do we build a system that produces this relationship?
\end{itemize}
The first, the question of system analysis, provides the basic understanding
required to find the inputs that lead to the desired outputs. The second
question, the question of synthesis, is the central question of feedback control
theory, which aims to design controllers to regulate other systems. We give a
brief overview of the field, illustrated with some questions of these kinds.

Control has a long history. Indeed, many systems found in the biology and chemistry of living organisms have interesting interpretations from a control theoretic viewpoint, such as those that are responsible for body temperature regulation or bipedal balance \cite{So2}. Human understanding of control developed alongside engineering, and so dates back at least as far as antiquity, with for example the ancient Romans devising elaborate systems to maintain desired water levels in aqueducts \cite{So}. The origin of formal mathematical control theory, however, is more recent, and in general taken to be James Clerk Maxwell's seminal analysis of centrifugal governors, presented to the Royal Society of London in 1868 \cite{CM}. 

The techniques used and developed from Maxwell's paper, in particular by Rayleigh and Heaviside, are in general known as \emph{transfer function} techniques. A transfer function is a linear map from the set of inputs to the set of outputs of a linear time-invariant system with zero initial conditions. Transfer functions are commonly used in the analysis of single-input single-output linear systems, but become unwieldy or inapplicable for more general systems.

In the 1960s, led by Wiener and Kalman, so-called \emph{state space} techniques were developed to address multiple-input multiple-output, time-varying, nonlinear systems \cite{Fr}. These methods are characterised by defining a system as a collection of input, output, and internal state variables, with the state changing as a function of the input and state variables over time, and the output a function of the input and state. These functions are often implicitly specified by differential equations.

In general, however, classical control theory remains grounded in a paradigm that defines and analyses systems in terms of inputs and outputs or, from another perspective, causes and effects. In recent years Willems, among others, has argued that this input-output perspective is limiting, as in the case of many systems studied through control theory there is no clear distinction between input and output variables \cite{Wi}. For example, given a circuit component for which the relevant variables are the voltages and currents, different contexts may call for the voltage to be viewed as the input and current output, or vice versa. It is useful to have a single framework capable of discussing the behaviour of the component without making this choice.

Moreover, in the drive to understand larger and more complex systems, increasing
emphasis has been put on understanding the way systems can be broken down into
composite subsystems, and conversely how systems link together to form larger
systems \cite{Wi2, KT}. Indeed, interconnection or composition of systems has
always played a central role in systems engineering, and the
difficulty of discussing how systems compose within an input-output framework
lends support to Willems' call for a more nuanced definition of control system.
To illustrate these difficulties, consider the simple example, due to Willems,
of two water tanks each with two access pipes:
\[
\begin{tikzpicture}
    \begin{pgfonlayer}{main}
    \draw (-4,.1)--(-3,.1)--(-3,0)--(-1,0)--(-1,.1)--(0,.1);
    \draw (-4,.3)--(-3,.3)--(-3,2)--(-1,2)--(-1,.3)--(0,.3);
    \node at (-3,.3) [anchor=-55]{\scriptsize$\begin{array}{c} p_{A1} \\ f_{A1} \end{array}$};
    \node at (-1,.3) [anchor=-125]{\scriptsize$\begin{array}{c} p_{A2} \\ f_{A2} \end{array}$};
    \node at (-2,1){Tank $A$};
    \end{pgfonlayer}
    \begin{pgfonlayer}{background}
        \filldraw[blue!20] (-4,.1)--(-3,.1)--(-3,0)--(-1,0)--(-1,.1)
        --(0,.1)--(0,.3)--(-1,.3)--(-1,1.8)--(-3,1.8)--(-3,.3)--(-4,.3)
        --cycle;
    \end{pgfonlayer}
\end{tikzpicture}
\qquad
\begin{tikzpicture}
  \begin{pgfonlayer}{main}
    \draw (-4,.1)--(-3,.1)--(-3,0)--(-1,0)--(-1,.1)--(0,.1);
    \draw (-4,.3)--(-3,.3)--(-3,1.7)--(-1,1.7)--(-1,.3)--(0,.3);
    \node at (-3,.3) [anchor=-55]{\scriptsize$\begin{array}{c} p_{B1} \\ f_{B1} \end{array}$};
    \node at (-1,.3) [anchor=-125]{\scriptsize$\begin{array}{c} p_{B2} \\ f_{B2} \end{array}$};
    \node at (-2,1){Tank $B$};
    \end{pgfonlayer}
    \begin{pgfonlayer}{background}
        \filldraw[blue!20] (-4,.1)--(-3,.1)--(-3,0)--(-1,0)--(-1,.1)
        --(0,.1)--(0,.3)--(-1,.3)--(-1,1.5)--(-3,1.5)--(-3,.3)--(-4,.3)
        --cycle;
    \end{pgfonlayer}
\end{tikzpicture}
\]
For each tank, the relevant variables are the pressure $p$ and the flow $f$. A typical control theoretic analysis might view the water pressure as inducing flow through the tank, and hence take pressure as the input variable, flow as the output variable, and describe each tank as a transfer function $H_\bullet: P_\bullet \to F_\bullet$ between the sets $P_\bullet$ and $F_\bullet$ these vary over. 

Ideally then, the composite system below, connecting pipe 2 of Tank $A$ to pipe 1 of Tank $B$, would be described by the composite of the transfer functions $H_A$ and $H_B$ of these tanks.
\[
\begin{tikzpicture}
    \draw (-4,.1)--(-3,.1)--(-3,0)--(-1,0)--(-1,.1)--(1,.1)--(1,0)--(3,0)--(3,.1)--(4,.1);
    \draw (-4,.3)--(-3,.3)--(-3,2)--(-1,2)--(-1,.3)--(1,.3)--(1,1.7)--(3,1.7)--(3,.3)--(4,.3);
    \begin{pgfonlayer}{background}
        \filldraw[blue!20] (-4,.1)--(-3,.1)--(-3,0)--(-1,0)--(-1,.1)
        --(1,.1)--(1,0)--(3,0)--(3,.1)--(4,.1)
        --(4,.3)--(3,.3)--(3,1.63)--(1,1.63)--(1,.3)
        --(-1,.3)--(-1,1.7)--(-3,1.7)--(-3,.3)--(-4,.3)
        --cycle;
    \end{pgfonlayer}
    \node at (-3,.3) [anchor=-55]{\scriptsize$\begin{array}{c} p_{A1} \\ f_{A1} \end{array}$};
    \node at (3,.3) [anchor=-125]{\scriptsize$\begin{array}{c} p_{B2} \\ f_{B2} \end{array}$};
    \node at (-1,.3) [anchor=-125]{\scriptsize$\begin{array}{c} p_{A2} \\ f_{A2} \end{array}$};
    \node at (1,.3) [anchor=-55]{\scriptsize$\begin{array}{c} p_{B1} \\ f_{B1} \end{array}$};
    \node at (-2,1){Tank $A$};
    \node at (2,1){Tank $B$};
\end{tikzpicture}
\]
This is rarely the case, and indeed makes little sense: the output of transfer
function $H_A$ describes the flow through Tank $A$, while the domain of the
transfer function $H_B$ describes pressures through Tank $B$, so taking the
output of $H_A$ as the input of $H_B$ runs into type issues. Instead, here
the relationship between the transfer functions $H_A$ and $H_B$, and the
transfer function $H_{AB}$ of the composite system can be understood through the
fact that connection requires that the pressure at pipe 1 of Tank $A$ must be equal to the pressure at pipe 2 of Tank $B$, and that the flow out of the former pipe must equal the flow into the latter---that is, by imposing the relations 
\begin{align*}
p_{A2} &= p_{B1} \\ f_{A2} &= - f_{B1}
\end{align*}
on the variables. Indeed in many contexts, hydraulics and electronics among them, connections between systems are characterised not by the `output' of one system forming the `input' of the next, but by \emph{variable sharing} between the systems. Such relations are often difficult to describe using the language of transfer functions and state space methods.

The themes of this thesis fit tightly into this programme of modelling control
theoretic systems with an emphasis on interconnection---Willems' so-called
behavioural approach. Willems first demonstrated this approach in the setting of
linear time-invariant discrete-time (LTI) dynamical systems \cite{Wi3}. A limitation
of the approach has been lack of formal language for representing and reasoning
about the interconnection of systems. In this chapter we address this by
developing a graphical language for LTI systems.

The expressions of this graphical language, closely resembling the signal flow
graphs of Shannon~\cite{Sh}, will form the morphisms of a category of
corelations, and its derivation as such will be crucial in developing a sound
and complete equational theory of LTI systems.  

To acquaint ourselves with signal flow graphs, we begin with the example below,
rendered in traditional, directed notation.
\begin{equation}\label{eq:examplesfg}
  \begin{aligned}
\lower11pt\hbox{$\includegraphics[height=2cm]{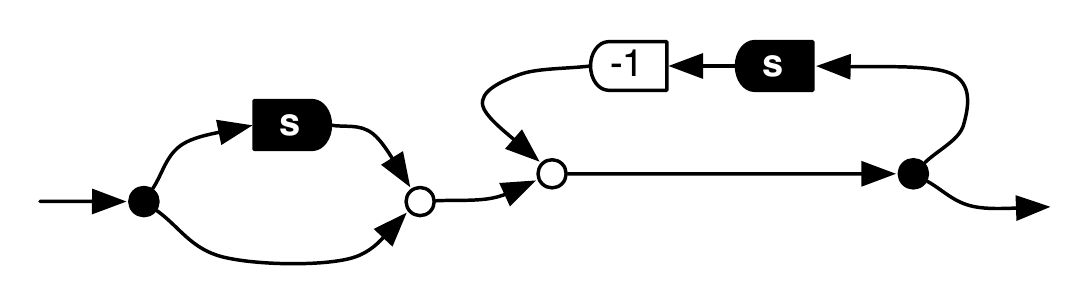}$}
\end{aligned}
\end{equation}
This system takes, as input on the left, a stream of values from a field $\k$,
e.g.\ the rational numbers, and on the right outputs a processed stream of
values. The white circles are adders, the black circles are duplicators, the
$s$ gates are 1-step delays and the $-1$ gate is an instance of an amplifier
that outputs $-1$ times its input. Processing is done synchronously according to
a global clock. 

For instance, assume that at time $0$ the left $s$ gate `stores' the value $1$
and the right $s$ gate stores $2$. Given an input of $-1$, the flow graph first
adds the left stored value $1$, and then adds $-1 \times 2$, for an output of
$-2$. Immediately after this time step the $s$ gates, acting as delays, now
store $-1$ and $-2$ respectively, and we repeat the process with the next input.
Thus from this time $0$ an input stream of $-1,1,-1,1\dots$ results in an output
stream of $-2,2,-2,2,\dots$.

We can express \eqref{eq:examplesfg} as a string diagram by forgetting the
directionality of wires and composing the following basic building blocks using
the operations of monoidal categories:
\[
\copygen \mathrel{;} 
\!\!\!\!\!\!\!
\begin{array}{c} \delaygen \\ \oplus \\ \id \end{array} 
\!\!\!\!\!\!\!\mathrel{;}\!\! \addgen
\mathrel{;} \!\!\!\!\!\!\! \begin{array}{c} \discardopgen \\ \oplus \\ \id \end{array} 
\!\!\!\!\!
\mathrel{;} 
\!\!\!\!\!
 \begin{array}{c} \copygen \\ \oplus \\ \id \end{array} 
\!\!\!\!\!
\mathrel{;}
\!\!\!\!\! 
 \begin{array}{c} \minonegen \\ \oplus \\ \id \end{array} 
\!\!\!\!\! 
 \mathrel{;}
\!\!\!\!\! 
 \begin{array}{c} \delayopgen \\ \oplus \\ \id \end{array} 
 \!\!\!\!\! 
 \mathrel{;}
\!\!\!\!\! 
 \begin{array}{c} \id \\ \oplus \\ \copygen \end{array} 
  \!\!\!\!\! 
 \mathrel{;}
\!\!\!\!\! 
 \begin{array}{c} \copyopgen \\ \oplus \\ \id \end{array} 
 \!\!\!\!\! 
 \mathrel{;}
\!\!\!\!\! 
 \begin{array}{c} \discardgen \\ \oplus \\ \id \end{array} 
\]
The building blocks come from the signature of an algebraic theory---a
\emph{symmetric monoidal theory} to be exact. The terms of this theory comprise
the morphisms of a \emph{prop}, a symmetric monoidal category in which the
objects are the natural numbers. With an operational semantics suggested by the
above example, the terms can also be considered as a process algebra for signal
flow graphs. The idea of understanding complex systems by ``tearing'' them into
more basic components, ``zooming'' to understand their individual behaviour and
``linking'' to obtain a composite system is at the core of the behavioural
approach in control. The algebra of symmetric monoidal categories
thus seems a good fit for a formal account of these compositional principles.

This work is the first to make this link between monoidal categories and the
behavioural approach to control explicit. Moreover, it is the first to endow
signal flow graphs with their standard systems theoretic semantics in which the
registers---the `$s$' gates---are permitted to hold \emph{arbitrary} values at
the beginning of a computation.  This extended notion of behaviour is not merely
a theoretical curiosity: it gives the class of \emph{complete LTI discrete
dynamical systems}. The interest of systems theorists is due to practical
considerations: physical systems seldom evolve from zero initial conditions.

Although previous work~\cite{BSZ2,BSZ3,Za} made the connection between signal
flow graphs and string diagrams, their operational semantics is more restrictive
than that considered here, considering only trajectories with finite past and
demanding that, initially, all the registers contain the value $0$.  Indeed,
with this restriction, it is not difficult to see that the trajectories
of~\eqref{eq:examplesfg} are those where the output is \emph{the same} as the
input. The input/output behaviour is thus that of a stateless wire.  The
equational presentation in this case is the theory $\ih_{\k[s]}$ of
interacting Hopf algebras \cite{Za}, and indeed, in $\ih_{\k[s]}$:
\begin{equation}\label{eq:exampleproof}
\lower12pt\hbox{$\includegraphics[height=2cm]{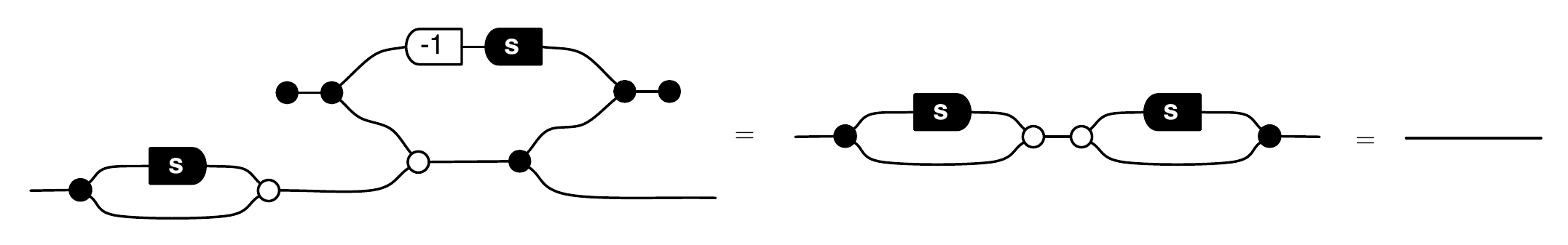}$} 
\end{equation}

Note that~\eqref{eq:exampleproof} is \emph{not sound} for circuits
with our more liberal, operational semantics. Indeed, recall that when the
registers of~\eqref{eq:examplesfg} initially hold values $1$ and $2$, the input
$-1,1,-1,1,\dots$ results in the output $-2,2,-2,2,\dots$. This trajectory is
not permitted by a stateless wire, so $\ih_{k[s]}$ is not sound for reasoning
about LTI systems in general. We have provided a sound
and complete theory to do just that.

In terms of the algebraic semantics, the difference from previous
work is that where there streams were handled with Laurent
(formal power) series, here we use biinfinite streams. These are sequences of
elements of $\k$ that are infinite in the past \emph{as well} as in the
future---that is, elements of $\k^\z$.  Starting with the operational
description, one obtains a biinfinite trajectory by executing circuits forwards
and backwards in time, for some initialisation of the registers. The dynamical
system defined by a signal flow diagram is the set of trajectories obtained by
considering all possible executions from all possible initialisations.  Indeed,
this is the very extension that allows us to discuss non-controllable
behaviours; in~\cite{BSZ,BSZ3,Za,BE} all definable behaviours were controllable. 

An equational theory also requires equations between the terms. We obtain the
equations in two steps. First, we show there is a full, but not faithful,
morphism from the prop $\cospan\mat\pk$ of cospans of matrices over the ring
$\pk$ to the prop $\ltids$ of complete LTI discrete dynamical systems.
Using the presentation of $\cospan\mat\pk$ in~\cite{BSZ2,Za}, the result is a
sound, but not complete, proof system. The second ingredient is restricting our
attention from cospans to corelations. This gives a faithful morphism, allowing
us to present the prop of corelations as a symmetric monoidal theory, and hence
giving a sound and complete proof system for reasoning about LTIs (Theorem
\ref{thm.main}).

The advantages of the string diagram calculus over the traditional matrix
calculus are manifold. The operational semantics make the notation intuitive, as
does the compositional aspect: it is cumbersome to describe connection of
systems using matrices, whereas with string diagrams you just connect the right
terminals. Moreover, the calculus unifies the variety of distinct methods for
representing LTI systems with matrix equations---built from kernel and image
representations~\cite{Wi,Wi3}---into a single framework, heading off
possibilities for ambiguity and confusion.

We hope, however, the greatest advantage will be the way these properties can be
leveraged in analysis of controllability. In Theorem \ref{cor.spanreps}, we
show that in our setting controllability has an elegant structural
characterisation.  Compositionality pays off here, with our proof system giving
a new technique for reasoning about control of compound systems (Proposition
\ref{prop:veryexciting}).  From the systems theoretic point of view, these
results are promising since the compositional, diagrammatic techniques we bring
to the subject seem well-suited to problems such as controllability of
interconnections, of primary interest for multiagent and spatially
interconnected systems~\cite{OFM}.
%

\section{Linear time-invariant dynamical systems} \label{sec.systems}

Following Willems~\cite{Wi3}, a \define{dynamical system} $(T, W,\bb)$ is: a
\define{time axis} $T$, a \define{signal space} $W$, and a \define{behaviour}
$\bb \subseteq W^T$. We refer to $w \in \bb$ as \define{trajectories}. 

Here we are interested in discrete trajectories that are \define{biinfinite}:
infinite in past and future.  Our time axis is thus the integers $\z$.  Let $\k$
be a field: for concreteness one may take this to be the rationals $\mathbb{Q}$,
the reals $\R$ or the booleans $\z_2$.  The signal space is $\k^d$, where $d$ is
the number of \define{terminals} of a system.  These, in engineering terms, are
the interconnection variables that enable interaction with an environment.

The dynamical systems of concern to us are thus specified by some natural number
$d$ and a subset $\bb$ of $(\k^d)^\z$. The sets $(\k^d)^\z$ are $\k$-vector
spaces, with pointwise addition and scalar multiplication. We restrict attention
to \emph{linear} systems, meaning that $\bb$ is required to be a \emph{$\k$-linear subspace}---i.e.
closed under addition and multiplication by $\k$-scalars---of
$(\k^d)^\z$. 

We partition terminals into a \emph{domain} and \emph{codomain} of $m$ and $n$
terminals respectively, writing $\bb \subseteq (\k^m)^\z \oplus (\k^n)^\z \cong
(\k^d)^\z$.  This may seem artificial, in the sense that the assignment is
arbitrary.  In particular, it is crucial not to confuse the domains (codomains)
with inputs (outputs). In spite of the apparent contrivedness of choosing such a
partition, Willems and others have argued that it is vital for a sound theory of
system \emph{decomposition}; indeed, it enables the ``tearing'' of Willems'
tearing, zooming and linking~\cite{Wi}.

Once the domains and codomains have been chosen, systems are linked by
connecting terminals. In models of physical systems this means variable coupling
or sharing; in our discrete setting where behaviours are subsets of a
cartesian product---i.e.\ relations---it amounts to relational composition.
Since behaviours are both relations and linear subspaces, a central underlying
mathematical notion is a linear relation.  

\smallskip
A behaviour is \define{time-invariant} when for every trajectory $w \in \bb$ and
any fixed $i\in\z$, the trajectory whose value at every time $t\in\z$ is
$w(t+i)$ is also in $\bb$.  
Time-invariance brings with it a connection with the algebra of polynomials.
Following the standard approach in control theory, going back to
Rosenbrock~\cite{Ro}, we work with polynomials over an indeterminate $s$ as well
as its formal inverse $s^{-1}$---i.e.\ the elements of the ring
$\pk$.

The indeterminate $s$ acts on a given biinfinite stream $w \in \k^\z$ as a
one-step delay, and $s^{-1}$ as its inverse, a one step anticipation: 
\[ 
  (s\cdot w) (t) \Defeq w(t-1),\quad (s^{-1}\cdot w)(t) \Defeq w(t+1).
\]
We can extend this, in the obvious linear, pointwise manner, to an action of any
polynomial $p\in \pk$ on $w$.  Since $\k^\z$ is a $\k$-vector space, any such
$p$ defines a $\k$-linear map $\k^\z\to \k^\z$ ($w \mapsto p\cdot w$).

Given this, we can view $n\times m$ matrices over $\pk$ as $\k$-linear maps from
$(\k^m)^\z$ to $(\k^n)^\z$. This viewpoint can be explained succinctly as a
functor from the prop $\mat\pk$, defined below, to the category of $\k$-vector
spaces and linear transformations $\vect_\k$.

Recall that a \define{prop} is a strict symmetric monoidal category where the
set of objects is the natural numbers $\nn$, and monoidal product ($\oplus$) on
objects is addition. Homomorphism of props are identity-on-objects strict
symmetric monoidal functors.

\begin{definition}
  The prop $\mat\pk$ has as arrows $m \to n$ the $n\times m$-matrices over
  $\pk$. Composition is matrix multiplication, and the monoidal product of $A$
  and $B$ is $\left[\begin{smallmatrix} A & 0 \\ 0 & B
  \end{smallmatrix}\right]$. The symmetries are permutation matrices.
\end{definition}

The functor of interest
\[
  \vectfun\maps \mat\pk \longrightarrow \vect_\k
\]
takes a natural number $n$ to $(\k^n)^\z$, and an $n\times m$ matrix to the
induced linear transformation $(\k^m)^\z \to (\k^n)^\z$. Note that $\vectfun$ is
faithful.

\smallskip
The final restriction on the set of behaviours is called \emph{completeness},
and is a touch more involved. For $t_0,t_1 \in \z$, $t_0 \le t_1$, write
$w|_{[t_0,t_1]}$ for the restriction of $w: \z \to \k^n$ to the set $[t_0,t_1] =
\{t_0, t_0+1, \dots, t_1\}$. Write  $\bb|_{[t_0,t_1]}$ for the set of the
restrictions of all trajectories $w \in \bb$ to $[t_0,t_1]$.  Then $\bb$ is
\define{complete} when $w|_{[t_0,t_1]} \in \bb|_{[t_0,t_1]}$ for all $t_0,t_1
\in \z$ implies $w \in \bb$. This topological condition is important as it
characterises the linear time-invariant behaviours that are kernels of the
action of $\mat\pk$; see Theorem \ref{thm.kernelreps}.


\begin{definition}
  A \define{linear time-invariant (LTI) behaviour} comprises a domain
  $(\k^m)^\z$, a codomain $(\k^n)^\z$, and a subset $\bb \subseteq (\k^m)^\z
  \oplus (\k^n)^\z$ such that $(\z,\k^m \oplus \k^n,\bb)$ is a complete, linear,
  time-invariant dynamical system.
\end{definition}

The algebra of LTI behaviours is captured concisely as a prop.
\begin{proposition} \label{prop.ltidsiswelldefined}
  There exists a prop $\ltids$ 
  with morphisms $m \to n$ the LTI behaviours with domain $(\k^m)^\z$ and
  codomain $(\k^n)^\z$. Composition is relational. The monoidal product is
  direct sum.
\end{proposition}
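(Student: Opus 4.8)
The plan is to realise $\ltids$ as a (generally non-full) subcategory of the category $\LinRel$ of $\k$-linear relations, so that the categorical and coherence axioms come for free and only the closure of the class of LTI behaviours under the relevant operations needs checking. Concretely, each trajectory space $(\k^n)^\z$ is a $\k$-vector space, and an LTI behaviour with domain $(\k^m)^\z$ and codomain $(\k^n)^\z$ is in particular a linear relation $(\k^m)^\z \leadsto (\k^n)^\z$, i.e.\ a subspace of $(\k^m)^\z \oplus (\k^n)^\z$. Since relational composition of linear relations is associative and unital, and direct sum equips $\LinRel$ with a symmetric monoidal structure whose symmetries are the swap relations, it suffices to prove: (i) the identity and swap relations are LTI; (ii) LTI behaviours are closed under relational composition; and (iii) LTI behaviours are closed under direct sum. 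Together with the observation that $(\k^m)^\z \oplus (\k^n)^\z \cong (\k^{m+n})^\z$, so that the objects are natural numbers and $\oplus$ restricts to addition, this exhibits $\ltids$ as a prop.

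Two of the three defining conditions, linearity and time-invariance, are routine. Relational composites, direct sums, diagonals and swaps of $\k$-linear subspaces are again $\k$-linear subspaces, giving (i)--(iii) for linearity immediately. For time-invariance I would note that the shift $\s$ acts diagonally on each $(\k^n)^\z$ and that a behaviour is time-invariant exactly when it is invariant under the simultaneous action of $\s$ on all of its terminals; since this action commutes with the projection along the shared middle space and with direct sum, the composite and the direct sum of time-invariant behaviours are again time-invariant, and the diagonal and swap relations visibly commute with $\s$.

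The real content is \emph{completeness}, which is a topological closure condition: $\bb$ is complete precisely when it is closed in the pro-discrete (pointwise convergence) topology on $(\k^d)^\z$. Intersections of closed subspaces are closed, but relational composition also involves a projection eliminating the middle variable, and projections of closed subspaces of an infinite product need not be closed in general. Here I would invoke the kernel-representation characterisation of complete LTI behaviours (Theorem~\ref{thm.kernelreps}): every complete LTI behaviour is the kernel of the $\vectfun$-action of a matrix over $\pk$. Writing $\bb_1 = \ker[\vectfun R_1]$ and $\bb_2 = \ker[\vectfun R_2]$, the composite $\bb_2 \circ \bb_1$ is the image, under the projection discarding the middle coordinate, of the kernel behaviour cut out by the block matrix combining $R_1$ and $R_2$. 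That this projection is again complete is the LTI elimination theorem, and the key ring-theoretic input is that $\pk = \k[s,s^{-1}]$ is a principal ideal domain: finitely generated $\pk$-modules decompose nicely, so the latent (middle) variable can be eliminated to produce an explicit kernel representation of $\bb_2 \circ \bb_1$, which is therefore complete. For direct sums, the block-diagonal matrix built from representations of the summands is a kernel representation of $\bb_1 \oplus \bb_2$, so completeness is immediate; and the identity and swap relations admit obvious kernel representations.

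I expect the completeness-under-composition step to be the main obstacle: linearity, time-invariance, and all the symmetric-monoidal coherence are inherited or elementary, whereas preservation of completeness genuinely requires the structure theory of $\pk$-modules (equivalently, the behavioural elimination theorem). I would therefore organise the proof so that this is isolated as a single lemma about eliminating latent variables from kernel representations over the PID $\pk$, dispatching the remaining prop axioms by appeal to $\LinRel$.
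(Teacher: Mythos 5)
Your proposal is correct and follows essentially the same route as the paper: both reduce everything to the kernel-representation theorem (Theorem~\ref{thm.kernelreps}) and isolate closure of completeness under relational composition as the one nontrivial step, resolved by eliminating the latent middle variable over the PID $\pk$. The only difference is packaging — the paper organises this elimination as functoriality of $\cospanfun$ on the prop $\cospan\mat\pk$ (where composing cospans by pushout produces the eliminated kernel representation directly), whereas you work with the block matrix and projection by hand; these are the same computation.
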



The proof of Proposition~\ref{prop.ltidsiswelldefined} relies on \emph{kernel
representations} of LTI  systems.  The following result lets us pass between
behaviours and polynomial matrix algebra.
%
%
\begin{theorem}[Willems {\cite[Theorem 5]{Wi3}}] \label{thm.kernelreps}
  Let $\bb$ be a subset of $(\k^n)^\z$ for some $n \in \mathbb N$. Then $\bb$ is
  an LTI behaviour iff there exists $M \in
  \mat\pk$ such that $\bb = \mathrm{ker}(\vectfun M)$.
\end{theorem}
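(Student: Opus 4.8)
The plan is to prove the two implications separately, with the reverse direction (every kernel representation is an LTI behaviour) routine and the forward direction (every LTI behaviour admits a kernel representation) carrying the real weight.

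First I would dispatch the easy direction. Suppose $\bb = \ker(\vectfun M)$ for some $M \in \mat\pk$, say $M\colon n \to p$, so that $\vectfun M$ is a $\k$-linear map $(\k^n)^\z \to (\k^p)^\z$. Its kernel is a $\k$-linear subspace, giving linearity. Time-invariance follows because the one-step shift $\sigma$ on $(\k^n)^\z$ is exactly the action of $s\cdot I_n$, and every entry of $M$ is a Laurent polynomial in $s$, so $\vectfun M$ commutes with $\sigma$ and $\sigma^{-1}$; hence $w \in \ker(\vectfun M)$ implies $\sigma^{\pm 1} w \in \ker(\vectfun M)$. Completeness is the key structural feature here: each coordinate $(\vectfun M (w))(t)$ is a fixed $\k$-linear combination of the finitely many values of $w$ in a window about $t$ whose width is bounded by the degrees occurring in $M$. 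Thus if every finite restriction $w|_{[t_0,t_1]}$ agrees with the restriction of some trajectory in $\bb$, then for each $t$ we may choose a window on which $(\vectfun M(w))(t)$ is computed from data matching a genuine trajectory, forcing $(\vectfun M(w))(t)=0$; hence $w \in \bb$.

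For the hard direction I would first recast completeness topologically: give $\k$ the discrete topology and $(\k^n)^\z$ the product topology, so that $w_k \to w$ iff the $w_k$ eventually agree with $w$ on every finite window. A short argument shows $\bb$ is complete exactly when it is closed in this topology. Next, consider the annihilator module
\[
  N \Defeq \{\, r \in \pk^{1\times n} : \vectfun r\,(w) = 0 \text{ for all } w \in \bb \,\},
\]
a submodule of the free module $\pk^{1\times n}$. Since $\pk = \k[s,s^{-1}]$ is the localisation of the PID $\k[s]$ and hence itself a Noetherian principal ideal domain, $N$ is finitely generated; stacking a generating set as the rows of a matrix produces $M \in \mat\pk$ with $\bb \subseteq \ker(\vectfun M)$ by construction.

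The crux is the reverse inclusion $\ker(\vectfun M) \subseteq \bb$, equivalently that the double-annihilator operation $\bb \mapsto N \mapsto \ker(\vectfun N)$ returns $\bb$ itself. This is the main obstacle, and it is where completeness is indispensable: without it the double annihilator recovers only the closure $\overline{\bb}$, and closedness (= completeness) is exactly what promotes this to equality. The structural input is that the signal module $\k^\z$ is an injective cogenerator over $\pk$ --- the discrete-time analogue of Malgrange's duality, due in this setting to Oberst --- which sets up an order-reversing bijection between submodules of $\pk^{1 \times n}$ and closed LTI behaviours, with $\mathrm{ann}(-)$ and $\ker(\vectfun(-))$ mutually inverse. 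The faithfulness of $\vectfun$ recorded above is the cogenerator half (no nonzero $p \in \pk$ kills all of $\k^\z$, as every nontrivial Laurent recurrence has nonzero biinfinite solutions); the injective half is the Fundamental Principle that a compatible system $Mw = f$ is always solvable over $\k^\z$, and here the biinfinite time axis is essential, the invertible shift $s^{-1}$ allowing recurrences to be solved freely in both directions. Establishing this cogenerator property is the technical heart; the case $n=1$, where $\ker(\vectfun [p]) \cong \pk/(p)$ is cyclic so that its submodules biject with the divisors of $p$ and the annihilator recovers each subspace, already exhibits the mechanism. Granting it, $\ker(\vectfun M) = \ker(\vectfun N) = \bb$, completing the proof.
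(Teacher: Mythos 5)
First, a point of reference: the paper does not prove this statement --- it is quoted from Willems \cite{Wi3} --- so your proposal can only be judged on its own terms. The overall architecture is the right one and matches the standard module-theoretic treatments: the easy direction is complete and correct (in particular the observation that $(\vectfun M(w))(t)$ depends only on a finite window of values of $w$, so that completeness of $\ker(\vectfun M)$ follows), and in the hard direction the passage to the annihilator submodule $N \subseteq \pk^{1\times n}$, its finite generation by Noetherianity of the PID $\pk$, and the identification of the crux as the inclusion $\ker(\vectfun M)\subseteq \bb$ are all correct.

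The gap is in how that crux is discharged. You appeal to $\k^\z$ being an injective cogenerator over $\pk$, but the two ingredients you isolate --- faithfulness of $\vectfun$ and solvability of compatible inhomogeneous systems $Mw=f$ --- are what one needs for the \emph{module-side} half of the Oberst correspondence, namely $\mathrm{ann}(\ker(\vectfun N))=N$. The theorem needs the \emph{behaviour-side} half, $\ker(\vectfun(\mathrm{ann}(\bb)))=\bb$ for complete $\bb$, and neither half formally implies the other; asserting the full ``order-reversing bijection between submodules and closed LTI behaviours'' at this point comes close to assuming what is to be proved. Fortunately the half you actually need has an elementary proof that your setup is one step away from, and it is exactly where completeness enters: if $w\notin\bb$, completeness supplies a finite window $[t_0,t_1]$ with $w|_{[t_0,t_1]}\notin\bb|_{[t_0,t_1]}$; since $\bb|_{[t_0,t_1]}$ is a linear subspace of the finite-dimensional space $(\k^n)^{[t_0,t_1]}$, ordinary linear algebra gives a functional $\lambda$ vanishing on $\bb|_{[t_0,t_1]}$ but not on $w|_{[t_0,t_1]}$; encoding $\lambda$ as a row vector $r\in\pk^{1\times n}$ supported on that window and using time-invariance of $\bb$ shows $r\in\mathrm{ann}(\bb)=N$ while $\vectfun r(w)\neq 0$, so $w\notin\ker(\vectfun M)$. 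Replacing the appeal to injective cogenerators by this paragraph closes the argument without any unproved input.
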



The prop $\mat \pk$ is equivalent to the category $\fmod\pk$ of
finite dimensional free $\pk$-modules. Since $\fmod R$ over a principal ideal
domain (PID) $R$ has finite colimits \cite{BSZ2}, and $\pk$ is a PID, $\mat \pk$
has finite colimits, and thus it has pushouts.

We can therefore define the prop $\cospan\mat\pk$ where arrows are (isomorphism
classes of) cospans of matrices: arrows $m \to n$ comprise a natural number
$d$ together with a $d\times m$ matrix $A$ and a $d\times n$ matrix $B$; we
write this $m\xrightarrow{A} d \xleftarrow{B}n$. 

We can then extend $\vectfun$ to the functor
\[
  \cospanfun \maps \cospan\mat\pk \longrightarrow \linrel_\k
\]
where on objects $\cospanfun(n)=\vectfun(n)=(\k^n)^\z$, and on arrows
\[
  m\xrightarrow{A} d \xleftarrow{B}n
\]
maps to
\begin{equation}\label{eq.K}
  \big\{(\mathbf{x},\mathbf{y}) \,\big|\,
  (\vectfun A)\mathbf{x} = (\vectfun B)\mathbf{y}\big\} 
  \subseteq (\k^m)^\z \oplus (\k^n)^\z.
\end{equation}
It is straightforward to prove that this is well defined.
\begin{proposition}\label{prop.funct}
$\cospanfun$ is a functor.
\end{proposition}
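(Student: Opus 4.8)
The plan is to recognise $\cospanfun$ as a composite of two functors and verify each factor separately. Writing $\bar A \Defeq \vectfun A$ for brevity, the relation assigned in \eqref{eq.K} to a cospan $m \xrightarrow{A} d \xleftarrow{B} n$ is exactly the pullback of $(\k^m)^\z \xrightarrow{\bar A} (\k^d)^\z \xleftarrow{\bar B} (\k^n)^\z$ in $\vect_\k$, regarded as a subspace of $(\k^m)^\z \oplus (\k^n)^\z$; indeed it is the kernel of $[\bar A, -\bar B]$, so it is manifestly a linear relation. Thus $\cospanfun$ factors as
\[
  \cospan\mat\pk \xrightarrow{\ \cospan(\vectfun)\ } \cospan\vect_\k
  \xrightarrow{\ \mathrm{Pb}\ } \linrel_\k,
\]
where $\mathrm{Pb}$ sends a cospan of vector spaces to its pullback relation. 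On objects and arrows this composite reproduces the stated formula, so it suffices to show both factors are functors.

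For the second factor I would use a standard fact about abelian categories. If $D \xrightarrow{j_D} P \xleftarrow{j_E} E$ is the pushout of $D \xleftarrow{B} N \xrightarrow{C} E$, then for any $A\colon X \to D$ and $G\colon Z \to E$ one has $\{(x,z) : j_D A x = j_E G z\} = \{(x,z): \exists\, y,\ A x = B y,\ C y = G z\}$. This is a one-line check: since $P = (D\oplus E)/\mathrm{im}[B,\,-C]$ with $j_D, j_E$ the inclusions followed by the quotient, the element $j_D A x - j_E G z$ vanishes iff $(Ax,\,-Gz)$ lies in the image of $[B,\,-C]$, iff such a $y$ exists. As relational composition in $\linrel_\k$ is exactly this existential formula and cospan composition is pushout, this says $\mathrm{Pb}$ preserves composition; it evidently preserves identities and is well defined on isomorphism classes, so $\mathrm{Pb}$ is a functor.

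For the first factor I must show $\vectfun$ preserves finite colimits, so that $\cospan(\vectfun)$ exists by the functoriality of the $\cospan$ construction in colimit-preserving functors (\textsection\ref{sec.cospans}, cf.\ Theorem~\ref{thm:decoratedfunctors}). The initial object is sent to $0$, so the content is preservation of pushouts, and this is the main obstacle. Here I would exploit that $\vectfun \cong (-)\otimes_\pk \k^\z$: on the free module $\pk^n$ it yields $(\k^n)^\z$, and on a matrix $M$ the map $M\otimes\idn$, namely the polynomial action by shifts. As a tensor functor it is right exact, hence preserves cokernels and all colimits computed in $\mathrm{Mod}_\pk$. The subtlety is that colimits in $\fmod\pk \simeq \mat\pk$ are the torsion-free reflections of those in $\mathrm{Mod}_\pk$: the pushout $d +_n e$ is the free quotient $Q/\mathrm{tors}(Q)$ of the $\mathrm{Mod}_\pk$ pushout $Q$. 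I would resolve this with the key lemma that $\k^\z$ is a divisible $\pk$-module: for nonzero $p \in \pk$, after normalising by a unit $s^k$ the equation $p\cdot w = v$ is a linear recurrence with invertible leading and trailing coefficients, solvable both forwards and backwards, so $p\cdot(-)\colon \k^\z\to\k^\z$ is surjective and $\pk/(p)\otimes_\pk\k^\z = \mathrm{coker}(p\cdot\,) = 0$. Hence every finitely generated torsion module is annihilated by $-\otimes_\pk\k^\z$, and applying $\vectfun$ to $0 \to \mathrm{tors}(Q) \to Q \to d+_n e \to 0$ gives $\vectfun(d+_n e) \cong \vectfun(Q)$, which by right exactness is the $\vect_\k$-pushout of $\bar B, \bar C$.

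Finally I would assemble the pieces: $\cospan(\vectfun)$ is a functor by the preceding lemma, well-definedness on isomorphism classes being immediate since an isomorphism of cospans is sent to an isomorphism and hence leaves the pullback relation unchanged; composing with $\mathrm{Pb}$ then recovers $\cospanfun$ and preserves identities and composition. Thus $\cospanfun$ is a functor, with the divisibility of $\k^\z$ as the one genuinely non-formal ingredient.
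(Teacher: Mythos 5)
Your proof is correct, but it takes a genuinely different route from the paper's. The paper argues directly: it writes down the set-theoretic identity asserting that the kernel relation of the pushout cospan equals the relational composite, verifies the easy inclusion ($\supseteq$) by the four-term chain $\theta CA_1\mathbf{x}=\theta CB_1\mathbf{z}=\theta DA_2\mathbf{z}=\theta DB_2\mathbf{y}$, and outsources the hard inclusion ($\subseteq$) to a cited property of pushouts in $\mat\pk$ \cite[Proposition 5.7]{BSZ2}. You instead factor $\cospanfun$ as $\mathrm{Pb}\circ\cospan(\vectfun)$ and isolate exactly where the difficulty lives: the abelian-category computation showing $\mathrm{Pb}$ turns pushouts in $\vect_\k$ into relational composition is formal, and the hard inclusion becomes the statement that $\vectfun\cong(-)\otimes_\pk\k^\z$ carries pushouts in $\mat\pk$ (torsion-free reflections of the $\mathrm{Mod}_\pk$ pushouts) to pushouts in $\vect_\k$, which you settle by proving $\k^\z$ is a divisible $\pk$-module. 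What your approach buys is self-containedness and a sharper diagnosis: the one non-formal ingredient is the two-sided solvability of the recurrence $p\cdot w=v$, i.e.\ precisely the biinfiniteness of the streams (this is also where the later claim, in the proof of Theorem \ref{thm.main}, that $\vectfun$ preserves colimits is really grounded). What the paper's version buys is brevity and alignment with the existing $\ih$ literature it builds on. The only blemishes in your write-up are cosmetic: the map whose image you take is $N\to D\oplus E$, $y\mapsto(By,-Cy)$, so writing it $[B,-C]$ clashes with the paper's convention that square brackets denote copairing out of a coproduct; and "invertible leading and trailing coefficients" just means nonzero, since $\k$ is a field.
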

\begin{proof}
Identities are clearly preserved; it suffices to show that composition is too.
Consider the diagram below, where the pushout is calculated in $\mat\pk$.
\[
\xymatrix{
  {}\ar[dr]_{A_1} & & \ar[dl]^{B_1}  
  \ar[dr]_{A_2} & & \ar[dl]^{B_2} {} 
 \\
& \ar[dr]_{C} & & \ar[dl]^{D}  \\
& & {\save*!<0cm,-.5cm>[dl]@^{|-}\restore}
}
\]
To show that $\cospanfun$ preserves composition we must verify that 
\[
  \{(\mathbf{x},\mathbf{y})\,|\, \theta CA_1\mathbf{x} = \theta DB_2\mathbf{y}\} =
  \{(\mathbf{x},\mathbf{z})\,|\, \theta A_1\mathbf{x} = \theta B_1\mathbf{z}\} ; 
  \{(\mathbf{z},\mathbf{y})\,|\, \theta A_2\mathbf{z} = \theta B_2\mathbf{y}\}.
\]
The inclusion $\subseteq$ follows from properties of pushouts in $\mat\pk$ (see
\cite[Proposition 5.7]{BSZ2}).  To see $\supseteq$, we need to show that if there
exists $\mathbf{z}$ such that $\theta A_1\mathbf{x} = \theta B_1\mathbf{z}$ and
$\theta A_2\mathbf{z} = \theta B_2\mathbf{y}$, then $\theta CA_1\mathbf{x} =
\theta DB_2\mathbf{y}$.  But $\theta CA_1\mathbf{x}=\theta CB_1\mathbf{z}=\theta
DA_2\mathbf{z}=\theta DB_2\mathbf{y}$.
\end{proof}


Rephrasing the definition of $\cospanfun$ on morphisms~\eqref{eq.K}, the
behaviour consists of those $(\mathbf{x},\mathbf{y})$ that satisfy
\[
  \vectfun\left[\begin{array}{cc} A & -B\end{array}\right]
  \left[\begin{array}{c}\mathbf{x}\\\mathbf{y}\end{array}\right] 
  = \mathbf{0},
\]
so one may say---ignoring for a moment the terminal domain/codomain assignment---that 
\[ 
  \cospanfun (\xrightarrow{A}\xleftarrow{B}) = \ker \vectfun\left[
  \begin{array}{cc} A & -B\end{array}\right].
\]

With this observation, as a consequence of Theorem~\ref{thm.kernelreps},
$\cospanfun$ has as its image (essentially) the prop $\ltids$. This proves
Proposition \ref{prop.ltidsiswelldefined}. We may thus consider $\cospanfun$ a
functor onto the codomain $\ltids$; denote this corestriction $\cospanfunrest$. We thus have a full functor:
\[
  \cospanfunrest \maps \cospan\mat\pk \longrightarrow \ltids.
\]

\begin{remark}\label{rmk:faithfulness}
It is important for the sequel to note that $\cospanfunrest$ is \emph{not} faithful.
For instance, $\cospanfunrest(1\xrightarrow{[1]}1\xleftarrow{[1]}1) =
\cospanfunrest(1\xrightarrow{ \left[\begin{smallmatrix} 1\\ 0\end{smallmatrix}\right]
}2 \xleftarrow{ \left[\begin{smallmatrix} 1\\ 0\end{smallmatrix}\right]} 1)$, yet
the cospans are not isomorphic. The task of the next section is to develop a
setting where these cospans are nonetheless \emph{equivalent}.
\end{remark}

\section{Presentation of $\ltids$} \label{sec.diagrams}
Recall that a \define{symmetric monoidal theory} (SMT) is a presentation of a prop: a pair
$(\Sigma,E)$ where $\Sigma$ is a set of \define{generators} $\sigma\colon m\to
n$, where $m$ is the \define{arity} and $n$ the \define{coarity}. A
$\Sigma$-term is a obtained from $\Sigma$, identity $\idn\colon 1\to 1$ and
symmetry $\tw\colon 2\to 2$ by composition and monoidal product, according to
the grammar
\[
  \tm\ ::=\ \sigma\ |\ \idn\ |\ \tw\ |\ \tm\mathrel{;}\tm\ |\ \tm\oplus \tm 
\]
where $\mathrel{;}$ and $\oplus$ satisfy the standard typing discipline that
keeps track of the domains (arities) and codomains (coarities)
\[
\frac{\tm: m\to d \quad \tm': d\to n}
{\tm\mathrel{;}\tm': m\to n}
\quad
\frac{\tm: m\to n \quad \tm': m'\to n'}
{\tm\oplus \tm': m+m'\to n+n'}
\]
The second component $E$ of an SMT is a set of \define{equations}, where an
equation is a pair $(\tm,\mu)$ of $\Sigma$-terms with compatible types, i.e.
$\tm,\mu\colon m\to n$ for some $m,n\in\mathbb{N}$.

Given an SMT $(\Sigma,E)$, the prop $\mathbf{S}_{(\Sigma,E)}$ has as arrows the
$\Sigma$-terms quotiented by the smallest congruence that includes the laws of
symmetric monoidal categories and equations $E$. We sometimes abuse
notation by referring to $\mathbf{S}_{(\Sigma,E)}$ as an SMT. Given an arbitrary
prop $\mathbb{X}$, a \define{presentation} of $\mathbb{X}$ is an SMT
$(\Sigma,E)$ s.t.\ $\mathbb{X} \cong \mathbf{S}_{(\Sigma,E)}$.

In this section we give a presentation of $\ltids$ as an SMT. This means 
that (\emph{i}) we obtain a syntax---conveniently expressed using string
diagrams---for specifying every LTI behaviour, and (\emph{ii}) a sound and
complete equational theory for reasoning about them.

\subsection{Syntax}
We start by describing the graphical syntax of dynamical systems, the arrows of
the category $\syntax = \mathbf{S}_{(\Sigma,\varnothing)}$, where $\Sigma$ is
the set of generators:
\begin{multline}\label{eq:generators}
\{
\addgen,
\zerogen,
\copygen,
\discardgen,
\delaygen, 
\addopgen,
\zeroopgen,
\copyopgen,
\discardopgen,
\delayopgen
\} \\
\cup \{ \scalargen \mid a\in\k \,\} \cup \{ \scalaropgen \mid a\in\k \,\}
\end{multline}
For each generator, we give its denotational semantics, an LTI behaviour,
thereby defining a prop morphism $\llbracket - \rrbracket: \syntax \to\ltids$.
\[
\addgen \!\!\mapsto \{\,( \left( 
  {\begin{smallmatrix}\tau \\ \upsilon\end{smallmatrix}} \right),\, \tau+\upsilon) \mid \tau,\upsilon \in \k^\z \,\}\maps 2\to 1
\]
\[
\quad
\zerogen \!\!\mapsto
\{\,((),0)\,\} \subseteq \k^\z\maps 0\to 1
\]
\[
\copygen \!\!\mapsto 
\{\, (\tau, \left( \begin{smallmatrix} \tau \\ \tau\end{smallmatrix} \right)) \mid \tau\in \k^\z \,\}\maps1\to 2
\]
\[
\quad
\discardgen \!\!\mapsto
\{\,(\tau,()) \mid \tau \in \k^\z\,\} \maps 1\to 0
\]
\[
\scalargen  \!\!\mapsto
\{\, (\tau, a\cdot\tau) \mid \tau\in\k^\z \, \} \maps 1 \to 1 \quad (a\in\k)
\]
\[
\ 
\delaygen \!\!\mapsto
\{\, (\tau, s\cdot\tau) \mid \tau\in\k^\z\,\} \maps 1 \to 1
\]

\noindent 
The denotations of the mirror image generators are the opposite relations.
Parenthetically, we note that a finite set of generators is possible
over a finite field, or the field $\mathbb{Q}$ of rationals, \emph{cf.}\
\S\ref{subsec:eqhopf}.

The following result guarantees that the syntax is fit for purpose: every
behaviour in $\ltids$ has a syntactic representation in $\syntax$.
\begin{proposition}\label{prop.syntaxfull}
  $\llbracket - \rrbracket: \syntax\to\ltids$ is full.
\end{proposition}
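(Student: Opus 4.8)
The plan is to factor the semantics $\llbracket-\rrbracket$ through the full functor $\cospanfunrest\maps\cospan\mat\pk\to\ltids$ of the previous section. Concretely, I would exhibit a prop morphism $P\maps\syntax\to\cospan\mat\pk$ with $\cospanfunrest\circ P=\llbracket-\rrbracket$, and then deduce fullness of $\llbracket-\rrbracket$ from fullness of $P$ together with the already-established fullness of $\cospanfunrest$. Since a composite of full functors that agree on objects is again full, the conclusion is then immediate: given an LTI behaviour $\bb\maps m\to n$, fullness of $\cospanfunrest$ yields a cospan $c$ with $\cospanfunrest(c)=\bb$, fullness of $P$ yields a $\Sigma$-term $t$ with $P(t)=c$, and hence $\llbracket t\rrbracket=\cospanfunrest(P(t))=\bb$.

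First I would construct $P$. As $\syntax=\mathbf{S}_{(\Sigma,\varnothing)}$ is the free prop on $\Sigma$, it suffices to send each generator to a cospan of matrices of the appropriate type; this determines $P$ uniquely as a prop morphism. The assignment is read off from the denotational clauses: for instance addition maps to $2\xrightarrow{[1\ 1]}1\xleftarrow{[1]}1$, copying to $1\xrightarrow{[1\ 1]^{\mathrm T}}2\xleftarrow{I_2}2$, the zero to $0\to 1\xleftarrow{[1]}1$, the delay to $1\xrightarrow{[s]}1\xleftarrow{[1]}1$, and each mirror-image generator to the opposite cospan of the image of its reflection. One then checks that $\cospanfunrest$ applied to each of these cospans recovers exactly the behaviour that $\llbracket-\rrbracket$ assigns to the corresponding generator, a direct computation from the definition of $\cospanfunrest$ on morphisms. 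Since both $\cospanfunrest\circ P$ and $\llbracket-\rrbracket$ are prop morphisms out of the free prop $\syntax$ and they agree on all generators, they coincide.

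The crux is fullness of $P$. Here I would invoke the presentation of $\cospan\mat\pk$ established by Bonchi, Soboci\'nski, and Zanasi \cite{BSZ2,Za}: this prop is presented by a symmetric monoidal theory $(\Sigma,E)$ on precisely the generating set $\Sigma$ of $\syntax$, for a suitable set of equations $E$. Consequently the canonical quotient functor $\mathbf{S}_{(\Sigma,\varnothing)}\to\mathbf{S}_{(\Sigma,E)}$, which is identity-on-objects and surjective on each homset, is exactly $P$ (both send each generator to its class); hence $P$ is full. Combined with the fullness of $\cospanfunrest$, the corestriction of $\cospanfun$ obtained as a consequence of Theorem \ref{thm.kernelreps}, the chain above completes the proof.

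The main obstacle I anticipate is the bookkeeping needed to match the cited presentation's generators with $\Sigma$, and in particular to see that every matrix over $\pk=\k[s,s^{-1}]$, including negative powers of $s$, lies in the image of $P$. This is where the opposite generators are essential: although the forward fragment of $\Sigma$ only realises matrices with entries in $\k[s]$, the cospan $1\xrightarrow{[1]}1\xleftarrow{[s]}1$ (the image of the mirrored delay) is isomorphic as a cospan to $1\xrightarrow{[s^{-1}]}1\xleftarrow{[1]}1$, so multiplication by $s^{-1}$ is nonetheless represented. Should one prefer to avoid relying on the exact generating set of the cited presentation, fullness of $P$ can instead be proved directly by decomposing an arbitrary cospan $m\xrightarrow{A}d\xleftarrow{B}n$ as the composite in $\cospan\mat\pk$ of the embedded matrix $A$ and the opposite of $B$, and representing each matrix and co-matrix by forward and mirror-image diagrams respectively; this reduces fullness of $P$ to the presentation of $\mat\pk$ alone, at the cost of a short separate argument handling the units $s^{-1}$.
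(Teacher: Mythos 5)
Your proposal is correct and takes essentially the same route as the paper: the paper likewise factors $\llbracket-\rrbracket$ as $\syntax\to\cospan\mat\pk\xrightarrow{\cospanfunrest}\ltids$, observing that $\cospanfunrest$ is full by definition and that fullness of the first factor follows from the presentation of $\cospan\mat\pk$ as the SMT $\ihcsp$ on the generating set $\Sigma$ (Zanasi's Theorem 3.41). Your extra remarks on checking the generators and on realising $s^{-1}$ via the mirrored delay are just the bookkeeping the paper defers to its subsequent subsections.
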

\begin{proof}
  The fact that $\llbracket - \rrbracket$ is a prop morphism is immediate since
  $\mathbb{S}$ is free on the SMT $(\Sigma,\varnothing)$ with no
  equations.  Fullness follows from the fact that $\llbracket - \rrbracket$
  factors as the composite of two full functors:
  \[
    \xymatrixrowsep{2pc}
    \xymatrix{
      \syntax \ar[d] \ar[dr]^{\llbracket-\rrbracket} \\
      \cospan\mat\pk \ar[r]_-{\cospanfunrest} & \ltids
    }
  \]
  The functor $\cospanfunrest$ is full by definition. The existence and fullness
  of the functor $\mathbb{S}\to\cospan\pk$ follows from \cite[Theorem 3.41]{Za}. We
  give details in the next two subsections.
\end{proof}

Having defined the syntactic prop $\syntax$ capable of representing every
behaviour in $\ltids$, our task for this section is to identify an equational
theory that \emph{characterises} equivalent representations in $\ltids$: i.e.
one that is sound and complete.  The first step is to use the existence of
$\cospanfun$: with the results of~\cite{BSZ2,Za} we can obtain a presentation
for $\cospan\mat\pk$. This is explained in the next subsection, where we present
$\mat\pk$ and $\cospan\mat\pk$. 


\subsection{Presentations of $\mat\pk$ and $\cospan\mat\pk$}\label{subsec:eqhopf}


To obtain a presentation of $\mat\pk$ as an SMT we only require 
some of the generators:
\[
  \{
\addgen,
\zerogen,
\copygen,
\discardgen,
\delaygen,
\delayopgen\}
\cup \{ \scalargen \mid a\in\k \,\}
\]
and the
following equations. 
First, the white and the black structure forms a (bicommutative) bimonoid:
\[
\includegraphics[width=.4\textwidth]{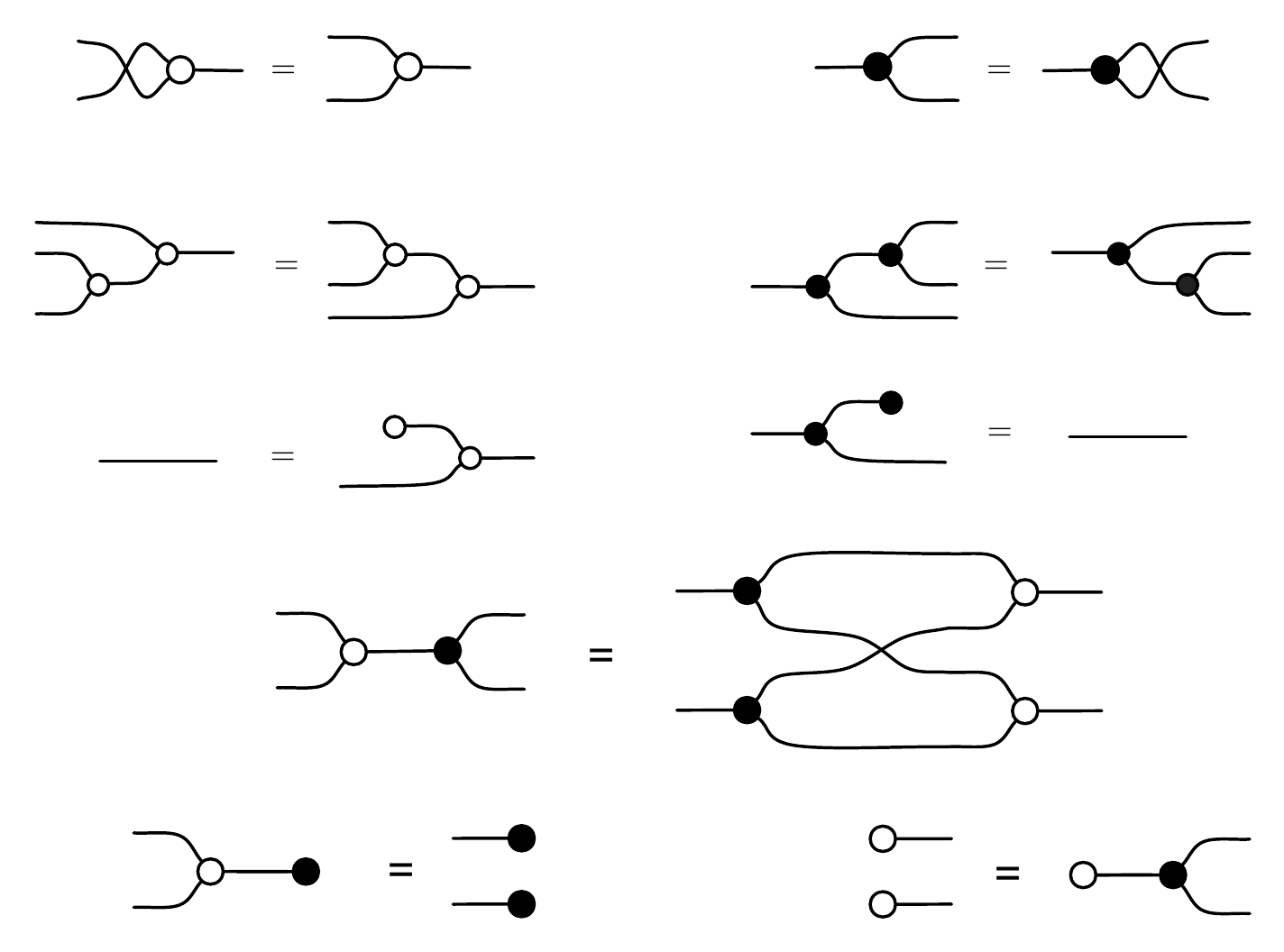}
\]
Next, the formal indeterminate $s$ is compatible with the bimonoid structure
and has its mirror image as a formal inverse.
\[
\includegraphics[height=3.1cm]{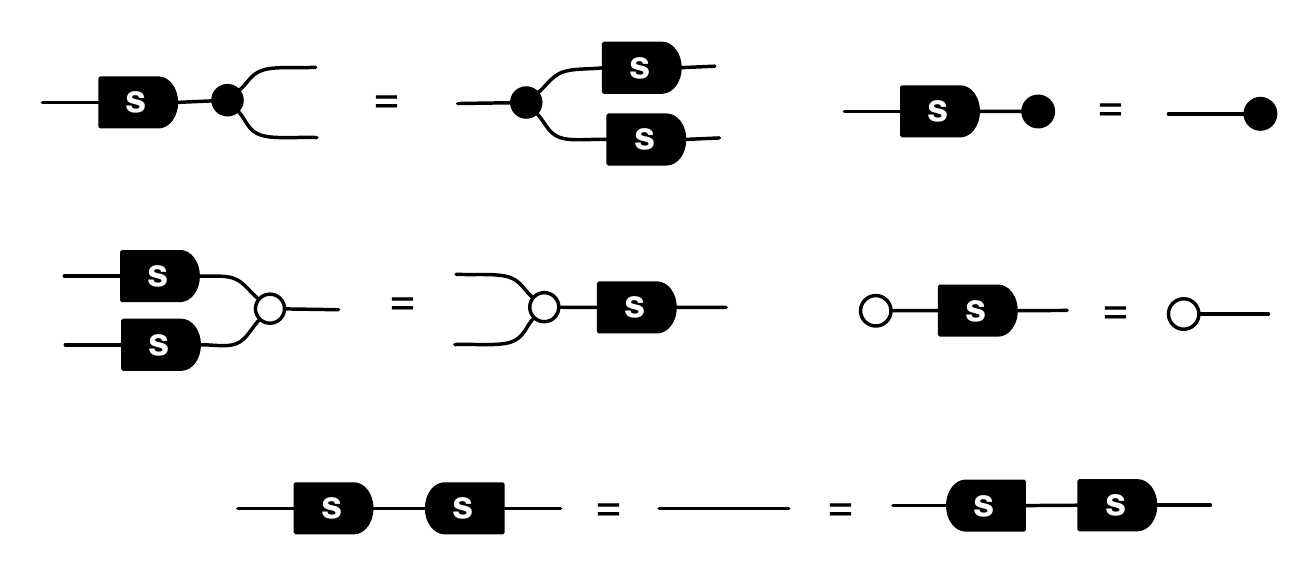}
\]
Finally, we insist that the algebra of $\k$ be compatible with the bimonoid structure and commute with $s$.
\[
\includegraphics[height=5.3cm]{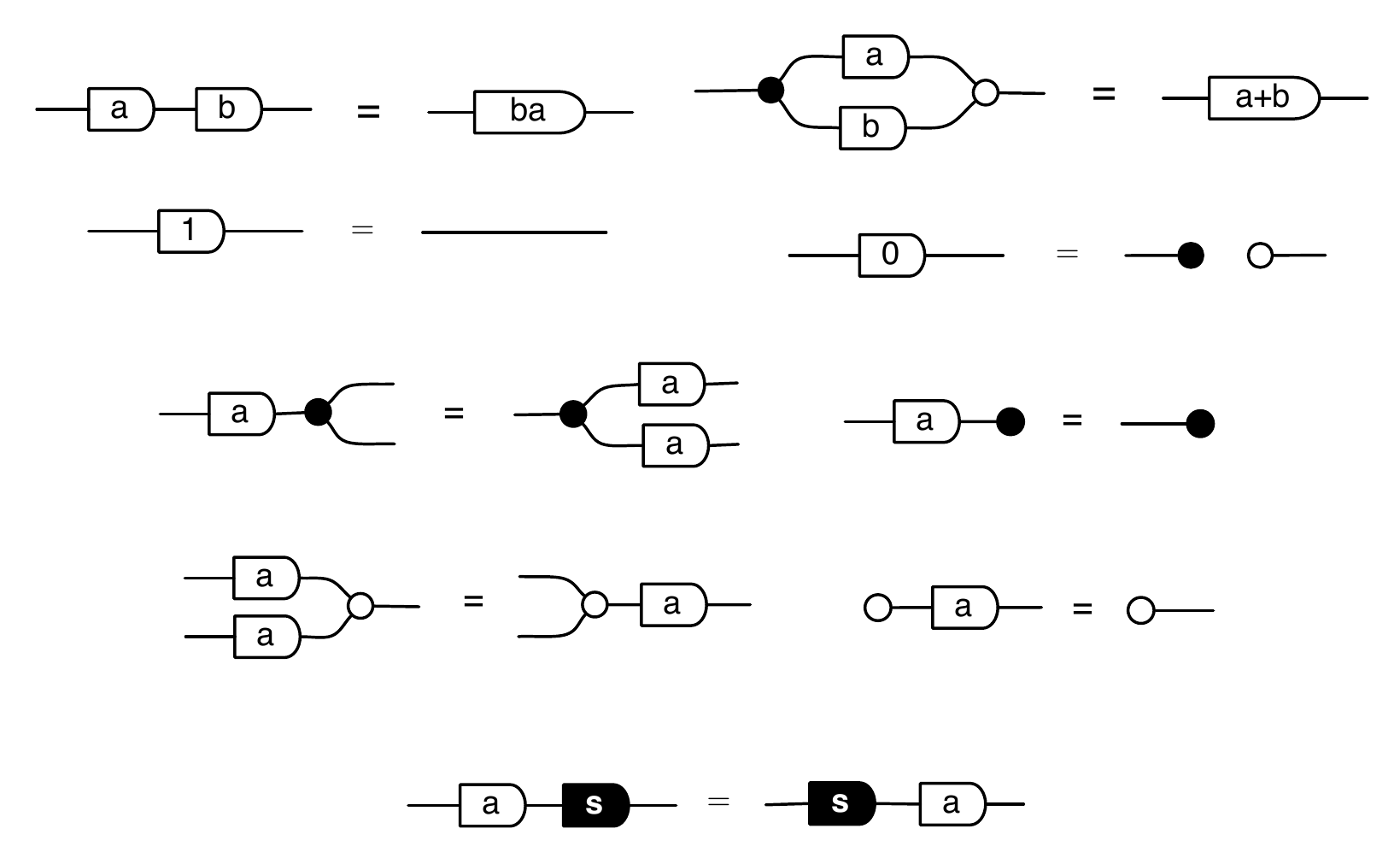}
\]
The three sets of equations above form the theory of Hopf algebras. Write
$\ha_\pk$ for the prop induced by the SMT consisting of the equations
above. The following follows from~\cite[Proposition~3.9]{Za}.
\begin{proposition}
$\mat\pk\cong\ha_\pk$.
\end{proposition}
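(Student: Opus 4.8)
The plan is to exhibit an explicit identity-on-objects symmetric monoidal functor $\Phi\maps \ha_\pk \to \mat\pk$ and then verify the three properties that make such a functor an isomorphism of props: well-definedness (soundness of the equations), fullness, and faithfulness. First I would define $\Phi$ on the generators by sending $\addgen$ to the $1\times 2$ matrix $[1\ 1]$, $\zerogen$ to the unique $1\times 0$ matrix, $\copygen$ to $\left[\begin{smallmatrix}1\\1\end{smallmatrix}\right]$, $\discardgen$ to the $0\times 1$ matrix, each scalar $\scalargen$ to $[a]$, and $\delaygen,\delayopgen$ to $[s]$ and $[s^{-1}]$ respectively, interpreting $\mathrel{;}$ as matrix multiplication and $\oplus$ as block-diagonal sum. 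To see that this assignment descends from the free prop on the generators to $\ha_\pk$, I would check that every defining equation of the theory of Hopf algebras holds between the corresponding matrices: the bimonoid laws are an elementary finite verification on the matrices above, the $s$-compatibility equations hold because $s$ commutes with the bimonoid operations in $\pk$ and $[s][s^{-1}]=[1]$, and the scalar equations hold because multiplication in $k$ is compatible with the bimonoid structure. This gives soundness, so $\Phi$ is a well-defined prop morphism.

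For fullness, the key observation is a \emph{copy--scalar--add} decomposition of an arbitrary matrix $A=(a_{ij})\in\mat\pk$. The comonoid $(\copygen,\discardgen)$ lets each of the $m$ input wires be duplicated or deleted arbitrarily; the scalar, delay, and delay-op generators realise any Laurent polynomial $p(s)=\sum_i a_i s^i\in\pk$ as a diagram $1\to1$ (copy the wire, apply $s^i$ and $a_i$ on each branch, and sum the branches with the monoid); and the monoid $(\addgen,\zerogen)$ collects the contributions to each of the $n$ outputs. Composing these three layers produces a $\Sigma$-term whose image under $\Phi$ is exactly $A$, so $\Phi$ is surjective on each homset.

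The hard part is faithfulness: I must show that any two $\Sigma$-terms with the same image matrix are already equal in $\ha_\pk$. The strategy is a normal-form argument built up in stages. The sub-theory generated by $\{\addgen,\zerogen,\copygen,\discardgen\}$ subject to the bimonoid equations is, by Lack's calculation of the prop of bicommutative bimonoids via a distributive law of props \cite{Lac04}, isomorphic to $\mat\nn$. The scalar equations then promote coefficients from $\nn$ to $k$ --- crucially, since $k$ is a field the scalar $-1$ supplies an antipode, upgrading the bialgebra to a genuine Hopf algebra --- and the $s$-equations, together with the relation making $\delaygen$ and $\delayopgen$ mutually inverse, adjoin an invertible indeterminate, promoting the coefficient ring to $\pk$. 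The main obstacle is establishing that every term rewrites to a unique canonical copy--scalar--add form over the ring $\pk$ (rather than merely over $\nn$), and that two such forms are provably equal precisely when their coefficient matrices coincide; this confluence-and-uniqueness argument is the technical heart of the interacting-Hopf-algebras development of \cite{BSZ2,Za}, and is exactly the content of \cite[Proposition~3.9]{Za}.

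Assembling these, $\Phi$ is an identity-on-objects, full, and faithful symmetric monoidal functor, hence an isomorphism of props, giving $\mat\pk\cong\ha_\pk$. In the write-up I would present the soundness and fullness directions in full, since they are elementary, and cite \cite{Za} for the faithfulness step, as reproving completeness of the Hopf-algebra axioms over $\pk$ would duplicate substantial machinery already available.
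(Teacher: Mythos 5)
Your proposal is correct and ultimately rests on the same foundation as the paper, which simply cites \cite[Proposition~3.9]{Za} for this result; your soundness and fullness steps are routine and accurately described, and you defer the genuinely hard completeness/faithfulness step to the same source. The only caution is to keep the citation honest: \cite[Proposition~3.9]{Za} already packages the whole isomorphism, so your write-up is an exposition of its proof rather than an independent argument.
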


Arrows of $\mat\pk$ are matrices with polynomial entries, but it may not be
 obvious to the reader how polynomials arise with the string
diagrammatic syntax. We illustrate this below.

\begin{example}\label{rem:polys}
Any polynomial $p=\sum_{i=u}^v a_i s^i$, where $u\leq v\in\z$ and with
coefficients $a_i\in\k$, can be written graphically using the building blocks of
$\mathbb{HA}_\pk$. Rather than giving a tedious formal construction, we
illustrate this with an example for $\k=\R$. A term for $3s^{-3}-\pi
s^{-1}+s^{2}$ is:
\[
\includegraphics[height=1.7cm]{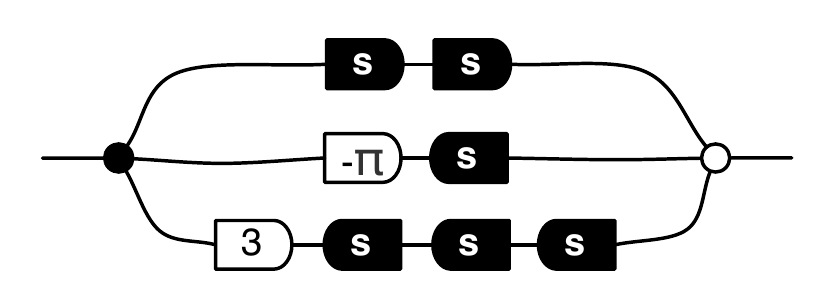}
\]

As an arrow $1 \to 1$ in $\mat\pr$, the above term represents a $1\times
1$-matrix over $\pr$. To demonstrate how higher-dimensional matrices can be
written, we also give a term for the $2 \times 2$-matrix $\begin{bmatrix} 2 & 3s
  \\ s^{-1} & s+1 \end{bmatrix}$:
\[
\includegraphics[height=3cm]{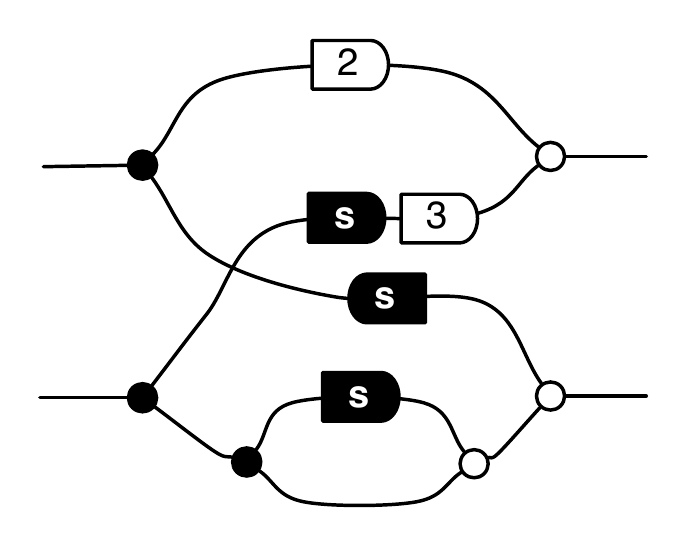}
\]
The above examples are intended to be suggestive of a normal form for terms in
$\ha_\pk$; for further details see \cite{Za}.
\end{example}


To obtain the equational theory of $\cospan\mat \pk$ we need the full set of
generators~\eqref{eq:generators}, along with the equations of $\ha_\pk$, their mirror images, and the
following
\[
\includegraphics[height=4cm]{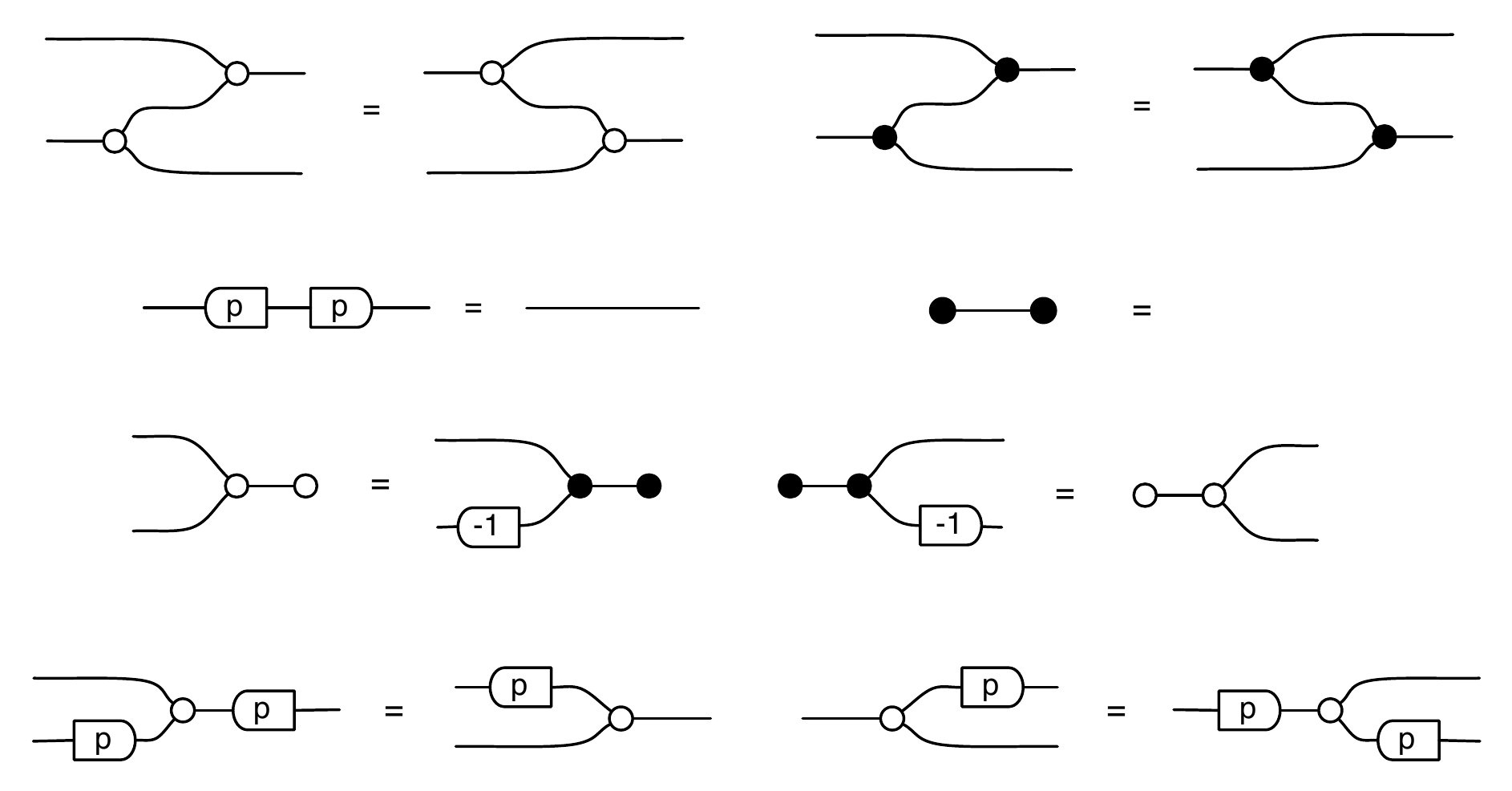}
\]
where $p$ ranges over the nonzero elements of $\pk$ (see
Example~\ref{rem:polys}). Note that in the second equation on the right-hand
side, we use the so-called `empty diagram', or blank space, to represent the
identity map on the monoidal unit, 0. 

The equations of $\ha_\pk$ ensure the generators of $\ha_\pk$ behave as
morphisms in $\mat\pk$, while their mirror images ensure the remaining
generators behave as morphisms in the opposite category $\mat\pk^{\mathrm{op}}$.
The additional equations above govern the interaction between these two sets of
generators, axiomatising pushouts in $\mat\pk$.
Let $\ihcsp$ denote the resulting SMT.
The procedure for obtaining the equations from a distributive law of props 
is explained in~\cite[\S{3.3}]{Za}.
\begin{proposition}[Zanasi~{\cite[Theorem~3.41]{Za}}]\label{prop:cospanpresentation}
\[\cospan \mat\pk \cong \ihcsp.\]
\end{proposition}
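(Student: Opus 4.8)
The plan is to exhibit $\cospan\mat\pk$ as a \emph{composite prop} built from $\mat\pk$ and its opposite via a distributive law, and then to read off a presentation using Lack's machinery for composing props \cite{Lac04}. The starting point is the presentation $\mat\pk\cong\ha_\pk$ already established, together with its mirror image $\mat\pk^{\op}\cong\ha_\pk^{\op}$, which presents the opposite prop. Since $\pk$ is a principal ideal domain, $\fmod\pk$---and hence $\mat\pk$---has finite colimits, so pushouts exist and $\cospan\mat\pk$ is well defined.

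First I would observe that every cospan $m\xrightarrow{A}d\xleftarrow{B}n$ factors canonically as the $\mat\pk$-morphism $A\maps m\to d$ followed by the $\mat\pk^{\op}$-morphism $B^{\op}\maps d\to n$; that is, arrows of $\cospan\mat\pk$ are exactly composites $A\mathrel{;}B^{\op}$. Composition of two cospans forces us to rewrite a middle factor of the form $B^{\op}\mathrel{;}A'$---a \emph{span} $d\xleftarrow{B}n\xrightarrow{A'}d'$---back into canonical cospan form, and the pushout of this span accomplishes exactly this. This defines a candidate distributive law $\lambda\maps \mat\pk^{\op}\mathrel{;}\mat\pk \Rightarrow \mat\pk\mathrel{;}\mat\pk^{\op}$ sending $B^{\op}\mathrel{;}A'$ to $j_d\mathrel{;}j_{d'}^{\op}$, where the $j$ are the pushout coprojections. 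The coherence axioms for $\lambda$ reduce to the functoriality and universal property of pushouts in $\mat\pk$, which I would verify directly; this identifies $\cospan\mat\pk$ with the composite prop $\mat\pk\mathrel{;}\mat\pk^{\op}$.

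With the distributive law in hand, Lack's theorem on composing props \cite{Lac04} supplies a presentation of $\cospan\mat\pk$ consisting of the generators and equations of $\ha_\pk$, those of its mirror image $\ha_\pk^{\op}$, and a further family of equations presenting $\lambda$. It then remains to check that the explicit ``cospan'' equations of $\ihcsp$---with $p$ ranging over the nonzero elements of $\pk$---are precisely such a presentation of the pushout distributive law; the systematic recipe for extracting these equations from $\lambda$ is the one described in \cite[\S3.3]{Za}. Soundness is a routine check that each listed equation holds under the pushout interpretation $\cospanfun$.

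The hard part will be \emph{completeness} of the distributive-law equations: showing that they suffice both to rewrite every $\Sigma$-term into canonical form $A\mathrel{;}B^{\op}$ and to equate any two terms denoting isomorphic cospans. This rests on a normal-form analysis for matrices over the PID $\pk$ (so that arbitrary $A$, $B$ can be brought to standard form inside $\ha_\pk$ and $\ha_\pk^{\op}$) together with careful bookkeeping that the pushout axioms absorb precisely the isomorphisms of apices. Once this normal-form argument is in place the isomorphism $\cospan\mat\pk\cong\ihcsp$ follows. I would note that the entire argument is carried out in Zanasi \cite[Theorem~3.41]{Za}, so in the thesis it suffices to invoke that result; the sketch above indicates the structure of that proof.
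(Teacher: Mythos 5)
Your proposal is correct and matches the paper's treatment: the thesis simply invokes Zanasi's Theorem~3.41, noting that the equations arise from a distributive law of props as in \cite[\S 3.3]{Za}, which is exactly the pushout distributive law $\mat\pk^{\op}\mathrel{;}\mat\pk \Rightarrow \mat\pk\mathrel{;}\mat\pk^{\op}$ you describe. Your sketch of Lack-style composition of $\ha_\pk$ with its mirror image, plus the normal-form/completeness concerns, is a faithful outline of the cited proof rather than a different route.
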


Using Proposition~\ref{prop:cospanpresentation} and the existence of $\cospanfunrest$,
the equational theory of $\ihcsp$ is a sound proof system for reasoning about
$\ltids$. Due to the fact that $\cospanfunrest$ is not faithful (see
Remark~\ref{rmk:faithfulness}), however, the system is not complete. Achieving
completeness is our task for the remainder of this section.


\subsection{Corelations in $\mat\pk$}


In this subsection we identify a factorisation system in $\mat\pk$, and show
that the induced prop $\corel\mat\pk$ of corelations is isomorphic to $\ltids$.
We then give a presentation of $\corel\mat\pk$ and arrive at a sound and 
complete equational theory for $\ltids$.

Recall that split mono is a morphism $m\colon X\to Y$ such that there exists
$m'\colon Y\to X$ with $m'm=\idn_X$. Every morphism in $\mat\pk$ admits a
factorisation into an epi followed by a split mono. 

\begin{proposition}\label{prop.matfactorisation}
  Every morphism $R \in \mat\pk$ can be factored as $R = BA$, where $A$ is an
  epi and $B$ is a split mono.
\end{proposition}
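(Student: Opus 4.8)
I want to factor an arbitrary matrix $R \in \mat\pk$ (an $n \times m$ matrix over the principal ideal domain $\pk = \k[s,s^{-1}]$) as $R = BA$ with $A$ an epimorphism and $B$ a split monomorphism. The key structural fact I would exploit is that $\pk$ is a PID, so finitely generated torsion-free $\pk$-modules are free, and every submodule of a free module is free. The matrix $R$ represents a $\pk$-linear map $\pk^m \to \pk^n$; I would take its image $I = \im R \subseteq \pk^n$. Since $\pk^n$ is free over the PID $\pk$, the submodule $I$ is free, say of rank $d$, so $I \cong \pk^d$.

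First I would make the factorisation concrete. Choosing a basis for $I \cong \pk^d$ gives an inclusion $B \colon \pk^d \to \pk^n$, represented by an $n \times d$ matrix whose columns are a $\pk$-basis of the image. The map $R$ then corestricts to a surjection $A \colon \pk^m \to \pk^d$ onto this basis, represented by a $d \times m$ matrix, and by construction $R = BA$ as maps, hence as matrices (recall composition in $\mat\pk$ is matrix multiplication and the monoidal/identity-on-objects conventions identify objects with the relevant free modules). The map $A$ is epi because it is surjective onto the free module $\pk^d$; since $\pk^d$ is free and $A$ is surjective, $A$ admits a section, so $A$ is in fact a split epi, and in particular epi.

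The heart of the matter is showing $B$ is a \emph{split} mono, not merely a mono. Here I would again use that $\pk$ is a PID. The map $B \colon \pk^d \to \pk^n$ is the inclusion of the image of $R$. I would argue that $\im R$ is a \emph{direct summand} of $\pk^n$: the quotient $\pk^n / \im R$ is a finitely generated $\pk$-module, and by the structure theorem over a PID it decomposes as a free part plus a torsion part. The obstruction to $B$ splitting is precisely the torsion: if $\pk^n/\im R$ has torsion, the image is a submodule of full-rank but not a direct summand, and $B$ is mono but not split. The cleanest fix is to \emph{enlarge} the image to its saturation (the preimage in $\pk^n$ of the torsion submodule of $\pk^n/\im R$, equivalently $\{x \in \pk^n \mid p x \in \im R \text{ for some nonzero } p\}$), which \emph{is} a direct summand and hence the image of a split mono. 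I would then absorb the discrepancy between $\im R$ and its saturation into the epi factor $A$.

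The main obstacle, then, is reconciling surjectivity of $A$ with splitness of $B$: if I factor through $\im R$ exactly, $A$ is a genuine surjection but $B$ may fail to split; if I factor through the saturation $\overline{\im R}$, then $B$ splits but $A$ is now a map $\pk^m \to \pk^{d}$ whose image is the finite-index (torsion-cokernel) submodule $\im R$ inside $\overline{\im R} \cong \pk^d$, so $A$ is no longer epi in $\mat\pk$. I expect the resolution is that, over a PID, one can use Smith normal form: write $R = U D V$ with $U, V$ invertible and $D$ diagonal (in the generalised rectangular sense). Then the epi-split mono factorisation can be read off by grouping $D$ as (the surjective projection onto the nonzero-pivot rows, which is split epi hence epi) followed by (the inclusion of those rows padded by zeros, which is a split mono), and conjugating by $U$ and $V$. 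Concretely, I would set $B = U \left[\begin{smallmatrix} \Sigma \\ 0 \end{smallmatrix}\right]$ and $A = \left[\begin{smallmatrix} I & 0 \end{smallmatrix}\right] V$ (with $\Sigma$ the diagonal block of nonzero invariant factors absorbed appropriately), check $BA = R$, verify $A$ is split epi via the section $V^{-1}\left[\begin{smallmatrix} I \\ 0 \end{smallmatrix}\right]$, and verify $B$ is split mono via the retraction $\left[\begin{smallmatrix} \Sigma^{-1} & 0 \end{smallmatrix}\right] U^{-1}$ (noting that the pivot entries, being nonzero elements of the field of fractions' relevant localisation, are invertible only if units — so I must push all non-unit invariant factors into $A$ rather than $B$). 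Getting this bookkeeping exactly right, so that $B$'s retraction lands back in $\mat\pk$ (i.e. has entries in $\pk$, not merely in the fraction field), is the delicate step, and Smith normal form over the PID $\pk$ is the tool that makes it go through.
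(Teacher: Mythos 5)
Your route converges on the same Smith-normal-form decomposition the paper uses, but there is one genuine gap, and it sits exactly at the point your write-up leaves unresolved: you implicitly treat ``epi in $\mat\pk$'' as meaning ``surjective''. You assert that factoring through the saturation makes $A$ a map whose image is a full-rank, torsion-cokernel submodule of $\pk^d$, ``so $A$ is no longer epi in $\mat\pk$''. That claim is false, and the proposition is only true because it is false. In $\mat\pk$ (equivalently, finitely generated free modules over the integral domain $\pk$) a matrix $A\maps m \to d$ is an epimorphism iff it is right-cancellable, i.e.\ $FA=GA$ implies $F=G$; since $\pk$ embeds in its fraction field, this holds precisely when $A$ has full row rank, whether or not its invariant factors are units. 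It cannot mean surjective: the $1\times 1$ matrix $[\,s+1\,]$ admits no factorisation as a surjection followed by a split mono, since any such composite $1\to e\to 1$ is either zero or a unit. Your own final correction --- push the non-unit invariant factors into $A$, so that $B=U\left[\begin{smallmatrix}I\\0\end{smallmatrix}\right]$ and $A=\left[\begin{smallmatrix}\Sigma&0\end{smallmatrix}\right]V$ --- produces exactly such a non-surjective $A$, so by your own stated criterion your corrected factorisation would fail. The missing ingredient is the observation that $\Sigma$, a full-rank diagonal matrix over an integral domain, is epi by cancellation over the fraction field, hence $A=\Sigma\cdot\bigl(\text{split epi}\bigr)$ is epi.

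With that repaired, your construction is precisely the paper's proof: take the Smith normal form $R=UDV$, set $B=U\left[\begin{smallmatrix}I\\0\end{smallmatrix}\right]$, which is split mono with retraction $\left[\begin{smallmatrix}I&0\end{smallmatrix}\right]U^{-1}$, and let $A$ be the composite of the split epi $\left[\begin{smallmatrix}I&0\end{smallmatrix}\right]V$ with the full-rank diagonal block $\Sigma$, which is epi because $\pk$ is an integral domain (the PID hypothesis is what guarantees the Smith normal form exists in the first place). Your module-theoretic detour through $\im R$ and its saturation is a correct diagnosis of why the na\"ive (surjection, inclusion-of-image) factorisation fails; the cure, however, is not to avoid torsion cokernels but to recognise that the left class $\mathcal E$ of this factorisation system tolerates them.
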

\begin{proof}
%
Given any matrix $R$, the Smith normal form~\cite[Section~6.3]{Ka}  gives
  us  $R = VDU$, where $U$ and $V$ are invertible, and $D$
  is diagonal. In graphical notation we can write it thus:
  \[
\includegraphics[height=1.1cm]{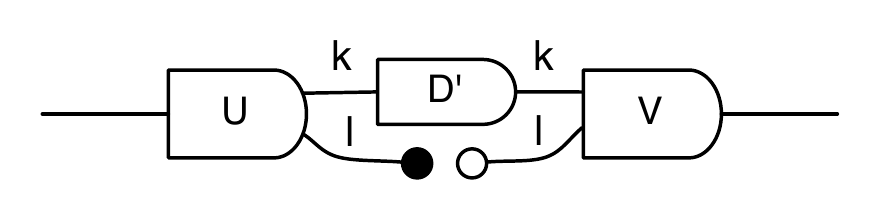}
  \]
  This implies we may write it as $R = U';D';V'$, where $U'$ is a split
  epimorphism, $D'$ diagonal of full rank, and $V'$ a split monomorphism.
  Explicitly, the construction is given by
  \[
\includegraphics[height=1cm]{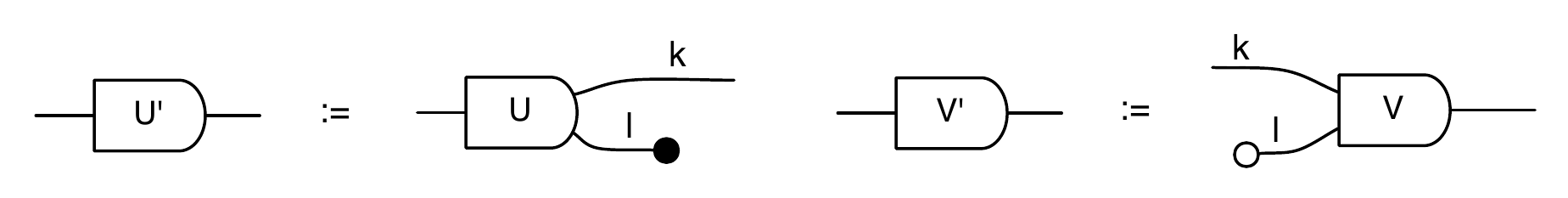}
  \]
  Recall that $\pk$ is a PID, so the full rank diagonal matrix $D'$ is epi. It
  can be checked that $R = V'(D'U')$ is an epi-split mono factorisation. 
\end{proof}
Recalling the argument in Example \ref{ex.factsysts},
Proposition~\ref{prop.matfactorisation} yields:

\begin{corollary} \label{cor.episplitmono}
  Let $\mathcal E$ be the subcategory of epis and
  $\mathcal M$ the subcategory of split monos. The pair $(\mathcal
  E,\mathcal M)$ is a factorisation system on $\mat\pk$, with $\mathcal M$
  stable under pushout.
\end{corollary}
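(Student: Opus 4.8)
The plan is to assemble the corollary from two pieces that are essentially already in hand: Proposition \ref{prop.matfactorisation}, which provides epi--split mono factorisations of arbitrary morphisms in $\mat\pk$, and the general argument recalled in Examples \ref{ex.factsysts}, which shows that epimorphisms together with split monomorphisms form a factorisation system in \emph{any} category admitting such factorisations.

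First I would check the three factorisation system axioms. For (i), every isomorphism is trivially an epimorphism, and is a split mono since its inverse is a one-sided (indeed two-sided) inverse; so both $\mc E$ and $\mc M$ contain all isomorphisms of $\mat\pk$. Axiom (ii)---the existence of a factorisation $R = m \circ e$ with $e \in \mc E$ and $m \in \mc M$---is precisely the content of Proposition \ref{prop.matfactorisation}, reading $e$ as the epi $A$ and $m$ as the split mono $B$. For the unique diagonal filler, axiom (iii), I would simply invoke the computation carried out in Examples \ref{ex.factsysts}: given epis $e_1, e_2$, split monos $m_1, m_2$ with a chosen retraction $m_2' m_2 = \idn$, and a commuting square relating $m_1 e_1$ to $m_2 e_2$, the map $t = m_2' v m_1$ fills both triangles, and uniqueness follows since $e_1$ is epi and $m_2$ is mono. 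Nothing in this argument uses features specific to $\mat\pk$, so it transfers verbatim.

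The remaining task is stability of $\mc M$ under pushout, and here I would argue that split monomorphisms are stable under pushout in \emph{any} category, so again nothing special about $\mat\pk$ is needed. Concretely, consider a pushout square with $m : A \to B$ in $\mc M$ on the bottom edge, left vertical map $u : A \to C$, and resulting maps $j : C \to P$ and $v : B \to P$ into the apex $P = C +_A B$. Picking a retraction $m' : B \to A$ with $m' m = \idn_A$, I would produce a retraction of $j$ by the universal property of the pushout: the pair $(\idn_C : C \to C,\, u m' : B \to C)$ is a valid cocone, since $\idn_C \circ u = u = u m' m = (u m') \circ m$, and hence induces a unique $j' : P \to C$ with $j' j = \idn_C$. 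Thus $j$ is a split mono, i.e.\ $j \in \mc M$, as required.

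Since both ingredients are already available, I expect no real obstacle: the conceptual work was done in establishing the Smith-normal-form factorisation (Proposition \ref{prop.matfactorisation}), and the corollary is a matter of confirming that the generic epi--split mono machinery applies and that split monos are automatically pushout-stable. If anything deserves care, it is merely recording that a retraction witnessing each split mono can be chosen and used consistently in the filler construction of Examples \ref{ex.factsysts}; but this is routine.
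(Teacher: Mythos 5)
Your proposal is correct and follows essentially the same route as the paper, which likewise obtains the factorisation system by combining Proposition \ref{prop.matfactorisation} with the generic epi--split mono argument recalled in Examples \ref{ex.factsysts}. Your explicit universal-property argument that split monos are stable under pushout is a welcome addition, since the paper asserts this part of the corollary without spelling it out.
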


\begin{definition}
  The prop $\corel \mat \pk$ has as morphisms equivalence classes of
  jointly epic cospans in $\mat\pk$.  
\end{definition}

We have a full morphism
\[
  F\maps \cospan \mat \pk \longrightarrow \corel \mat \pk
\]
mapping a cospan to its jointly epic counterpart given by the
 factorisation system. 
Then $\cospanfunrest$
factors through $F$ as follows:
\[
  \xymatrixrowsep{2pc}
  \xymatrix{
    \cospan\mat\pk \ar[d]_F \ar[dr]^{\cospanfunrest} \\
    \corel\mat\pk \ar[r]_-{\Phi} & \ltids
  }
\]
The morphism $\Phi$ along the base of this triangle is an isomorphism of props, 
and this is our main technical result, Theorem~\ref{thm.main}. The proof relies
on the following beautiful result of systems theory.

\begin{proposition}[Willems {\cite[p.565]{Wi3}}] \label{prop.magic}
  Let $M,N$ be matrices over $\pk$. Then $\ker \vectfun M \subseteq \ker \vectfun N$ iff $\exists$ a matrix $X$ s.t.\ $XM = N$.
\end{proposition}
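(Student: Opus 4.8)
The plan is to recast the proposition as a statement about $\pk$-modules and then deduce it from the fact that $\k^\z$, regarded as a $\pk$-module via the shift action, is an injective cogenerator. Throughout write $R=\pk$, which is a principal ideal domain, and for a matrix $M$ with $m$ columns let $\langle M\rangle\subseteq R^m$ denote the $R$-submodule generated by its rows. The equation $XM=N$ (which forces $N$ to have $m$ columns) says exactly that every row of $N$ is an $R$-linear combination of the rows of $M$, i.e.\ that $\langle N\rangle\subseteq\langle M\rangle$. So it suffices to prove
\[
  \ker\vectfun M\subseteq\ker\vectfun N \iff \langle N\rangle\subseteq\langle M\rangle.
\]

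One direction is immediate from functoriality: if $XM=N$ then $\vectfun N=\vectfun X\circ\vectfun M$ (composition in $\mat\pk$ is matrix multiplication and $\vectfun$ is a functor), so any $\mathbf x$ killed by $\vectfun M$ is killed by $\vectfun N$. For the substance of the argument I would pass to a module description of the kernels. Identifying $(\k^m)^\z$ with $\operatorname{Hom}_R(R^m,\k^\z)$ by sending a tuple of streams $\mathbf x=(x_1,\dots,x_m)$ to the homomorphism $\hat{\mathbf x}$ with $\hat{\mathbf x}(e_j)=x_j$, a short computation shows that the $i$-th component of $\vectfun M\,\mathbf x$ is $\hat{\mathbf x}$ evaluated at the $i$-th row of $M$; hence $\vectfun M\,\mathbf x=0$ precisely when $\hat{\mathbf x}$ vanishes on $\langle M\rangle$, giving
\[
  \ker\vectfun M=\{\hat{\mathbf x}:\hat{\mathbf x}|_{\langle M\rangle}=0\}\cong\operatorname{Hom}_R\!\big(R^m/\langle M\rangle,\ \k^\z\big),
\]
and likewise for $N$. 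The forward implication is then trivial, and the reverse implication is what needs the cogenerator property: assuming $\langle N\rangle\not\subseteq\langle M\rangle$, I would choose $\rho\in\langle N\rangle\setminus\langle M\rangle$, note that its image in $R^m/\langle M\rangle$ is nonzero, invoke the cogenerator to produce $\hat{\mathbf x}$ vanishing on $\langle M\rangle$ with $\hat{\mathbf x}(\rho)\ne0$, and observe that the corresponding $\mathbf x$ lies in $\ker\vectfun M$ but not in $\ker\vectfun N$, contradicting the hypothesis.

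The main obstacle, and the only genuinely analytic step, is the lemma that $\k^\z$ is an injective cogenerator over $R=\k[s,s^{-1}]$. Since $R$ is a PID, injectivity is equivalent to divisibility, so I must show that every nonzero $p\in R$ acts surjectively on $\k^\z$. After multiplying by a unit $s^k$ we may take $p=\sum_{i=0}^d c_i s^i$ with $c_0,c_d\ne0$; then solving $p\cdot w=v$ for a given $v\in\k^\z$ is a two-sided linear recurrence that can be propagated upward using $c_0^{-1}$ and downward using $c_d^{-1}$ from any choice of $w$ on a window of $d$ consecutive integers. The Laurent structure, namely the invertibility of both the leading and trailing coefficients, is exactly what makes this propagation work, and is where $\k[s,s^{-1}]$ rather than $\k[s]$ is essential. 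For the cogenerator property I would use that an injective module is a cogenerator as soon as every simple module embeds into it: the simples are the $R/(q)$ for $q$ irreducible, and a nonzero solution $w$ of the homogeneous recurrence $q\cdot w=0$ (which exists by the same window-propagation argument) generates a cyclic submodule $Rw\cong R/(q)$ of $\k^\z$, since $(q)$ is maximal.

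Finally I would assemble the pieces: divisibility gives injectivity, the embedding of simples gives the cogenerator property, and feeding these into the module reformulation yields both implications of the displayed equivalence, hence the proposition. The one point demanding care beyond the recurrence analysis is verifying that the isomorphism $\ker\vectfun M\cong\operatorname{Hom}_R(R^m/\langle M\rangle,\k^\z)$ is compatible enough with inclusions that abstract containment of these $\operatorname{Hom}$-modules matches literal containment of subspaces of $(\k^m)^\z$; this is routine, as both are realised inside $\operatorname{Hom}_R(R^m,\k^\z)=(\k^m)^\z$ via the same inclusion.
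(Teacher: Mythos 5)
Your argument is correct. Note that the paper itself gives no proof of this proposition: it is imported from Willems with a pointer to Schumacher for history and details, so there is no in-paper argument to compare against. What you have written is essentially the standard proof from the behavioural-systems literature: translate $XM=N$ into containment of row modules $\langle N\rangle\subseteq\langle M\rangle$ in $\pk^m$, identify $\ker\vectfun M$ with $\operatorname{Hom}_{\pk}(\pk^m/\langle M\rangle,\k^\z)$ inside $\operatorname{Hom}_{\pk}(\pk^m,\k^\z)\cong(\k^m)^\z$, and reduce everything to the fact that $\k^\z$ is an injective cogenerator over the PID $\k[s,s^{-1}]$. The two analytic ingredients check out: divisibility (hence injectivity, by Baer) follows from the two-sided recurrence $\sum_{i=0}^d c_i w(t-i)=v(t)$ being solvable by propagating upward with $c_0^{-1}$ and downward with $c_d^{-1}$ from an arbitrary seed window of length $d$, with each equation used exactly once; and the cogenerator property follows since every irreducible $q$ has a nonzero solution of $q\cdot w=0$ by the same propagation, giving an embedding of the simple module $\pk/(q)$. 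Your remark that invertibility of \emph{both} the leading and trailing coefficients is what makes this work — i.e.\ that the Laurent ring and biinfinite time axis are essential — is exactly the right point to flag, as it is the reason the paper works over $\pk$ and $\k^\z$ rather than $\k[s]$ and one-sided streams.
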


Further details and a brief history of the above proposition can be found in
Schumacher \cite[pp.7--9]{Sc}. 

\begin{theorem}\label{thm.main}
  There is an isomorphism of props 
  \[
    \Phi\maps \corel\mat\pk \longrightarrow \ltids
  \]
  taking a corelation $\xrightarrow{A}\xleftarrow{B}$ 
  to $\cospanfunrest(\xrightarrow{A}\xleftarrow{B}) = {\ker\theta [A \ -B]}$.
\end{theorem}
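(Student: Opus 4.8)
The plan is to establish that $\Phi$ is a well-defined, identity-on-objects, bijective-on-morphisms prop homomorphism. Since we already have the full morphism $F\maps \cospan\mat\pk \to \corel\mat\pk$ and the full functor $\cospanfunrest = \Phi \circ F$, and since $F$ is full by construction, the map $\Phi$ is automatically full. The crux is therefore \emph{faithfulness}: I must show that if two corelations $\xrightarrow{A}\xleftarrow{B}$ and $\xrightarrow{A'}\xleftarrow{B'}$ have the same image $\ker\vectfun[A\ -B] = \ker\vectfun[A'\ -B']$ in $\ltids$, then they are equal as corelations, i.e.\ the jointly epic cospans are isomorphic. Well-definedness of $\Phi$ (that it does not depend on the choice of representative cospan for a corelation) follows since $\cospanfunrest$ is already well defined on cospans and $F$ quotients by the factorisation system; and functoriality transfers from $\cospanfunrest$ through the fact that $F$ is a full prop morphism.

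First I would reduce the faithfulness question to a matrix-theoretic statement. A corelation $m \xrightarrow{A} d \xleftarrow{B} n$ is, by definition, one where the copairing $[A\ B]\maps m+n \to d$ is an epimorphism in $\mat\pk$. Two jointly epic cospans are isomorphic precisely when there is an invertible matrix $P$ with $PA = A'$ and $PB = B'$, equivalently $P[A\ -B] = [A'\ -B']$. So the goal becomes: given jointly epic $[A\ -B]$ and $[A'\ -B']$ with the same kernel under $\vectfun$, produce such an invertible $P$. This is where Proposition~\ref{prop.magic} does the heavy lifting. Writing $M = [A\ -B]$ and $N = [A'\ -B']$, the hypothesis $\ker\vectfun M = \ker\vectfun N$ gives both inclusions, so by Proposition~\ref{prop.magic} there exist matrices $X, Y$ over $\pk$ with $XM = N$ and $YN = M$.

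The main obstacle is upgrading these one-sided relations $XM = N$, $YN = M$ into a genuine \emph{isomorphism} of cospans. A priori $X$ and $Y$ need not be square or invertible. This is exactly where joint epicness is essential. From $XM=N$ and $YN=M$ we get $YXM = M$ and $XYN = N$. Since $M = [A\ -B]$ is epi (the defining property of a corelation), it is right-cancellable, so $YX = 1$ on the appropriate object; symmetrically $XY = 1$ using that $N$ is epi. Hence $X$ is invertible with inverse $Y$, and $P \Defeq X$ is the required isomorphism $P M = N$, that is $PA = A'$ and $PB = B'$ (tracking the sign on $B$, which cancels). This shows the two corelations coincide, establishing faithfulness.

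Finally I would assemble the pieces: $\Phi$ is identity-on-objects by the definitions ($\Phi(n) = (\k^n)^\z$), it is a prop morphism because $\cospanfunrest$ is and $F$ is full and identity-on-objects, it is full because $\cospanfunrest$ is full and factors as $\Phi \circ F$, and it is faithful by the argument above. A bijective-on-morphisms, identity-on-objects prop morphism is an isomorphism of props, completing the proof. I would double-check one subtle point: that $F$ being full and $\cospanfunrest = \Phi\circ F$ jointly force surjectivity of $\Phi$ on hom-sets, and that the factorisation of $\cospanfunrest$ through $F$ is genuinely well defined on $\corel\mat\pk$—this uses that $\cospanfunrest$ sends any two cospans with the same jointly epic part to the same behaviour, which itself follows from Proposition~\ref{prop.magic} applied to the epi-split-mono factorisation.
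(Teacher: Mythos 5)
Your proof is correct and follows essentially the same route as the paper's: the heart of the matter, faithfulness, is reduced in both to showing that two epimorphisms with equal kernels under $\vectfun$ differ by an invertible matrix, obtained exactly as in the paper from Proposition~\ref{prop.magic} together with right-cancellability of epis ($YXM = M$ forces $YX = 1$, and symmetrically). The only cosmetic difference is that you obtain functoriality and fullness of $\Phi$ by descending $\cospanfunrest$ along the full quotient $F$, whereas the paper invokes the general machinery of functors between corelation categories to build $\Phi$ directly as a corestriction of $\Psi\maps\corel\mat\pk\to\corel\vect$; both are sound.
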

\begin{proof}
  For functoriality, start from $\vectfun\maps\mat\pk \to
  \vect$. Now (i) $\vect$ has an epi-mono factorisation system, (ii)
  $\vectfun$ maps epis to epis and (iii) split monos to monos, so $\vectfun$
  preserves factorisations. Since it is a corollary of Proposition \ref{prop.funct}
  that $\theta$ preserves colimits, it follows that $\vectfun$ extends to
   $\Psi\maps\corel\mat\pk \to \corel\vect$. But $\corel\vect$ is
   isomorphic to $\linrel$ (see \textsection\ref{ssec.linrel}). 
  By Theorem \ref{thm.kernelreps}, the image of $\Psi$ is $\ltids$, and taking
  the corestriction to $\ltids$ gives us precisely $\Phi$, which is therefore a full
  morphism of props.

  As corelations $n \to m$ are in one-to-one correspondence with epis out of
  $n+m$, to prove faithfulness it suffices to prove that if two epis $R$ and $S$
  with the same domain have the same kernel, then there exists an invertible
  matrix $U$ such that $UR =S$. This is immediate from
  Proposition~\ref{prop.magic}: if $\ker R= \ker S$, then we can find $U, V$
  such that $UR = S$ and $VS = R$. Since $R$ is an epimorphism, and since $VUR =
  VS = R$, we have that $VU=1$ and similarly $UV =1$. This proves that any two
  corelations with the same image are isomorphic, and so $\Phi$ is full and
  faithful.  
\end{proof}


\subsection{Presentation of $\corel\mat\pk$}


Thanks to Theorem~\ref{thm.main}, the task of obtaining a presentation of $\ltids$
is that of obtaining one for $\corel\mat\pk$. 
%
To do this, we start with the presentation
$\mathbb{IH}^{\textsf{Csp}}$ for $\cospan\mat\pk$ of \S\ref{subsec:eqhopf}; the task of
this section is to identify the additional equations that equate
exactly those cospans that map via $F$ to the same corelation.


In fact, only one new equation is required, the `white bone' or `extra' law:
\begin{equation}\label{eq.whitebone}
  \lower8pt\hbox{$\includegraphics[height=.7cm]{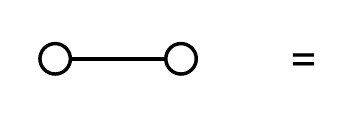}$}\qquad\qquad
\end{equation}
where we have carefully drawn the empty diagram to the right of the equality symbol.
Expressed in terms of cospans, equation \eqref{eq.whitebone} asserts that  
$0\rightarrow 1\leftarrow 0$ and $0\rightarrow 0\leftarrow 0$ are identified:
indeed, the two clearly yield the same corelation. The intuition here is that
cospans $X \xrightarrow{i} S \xleftarrow{j} Y$ map to the same corelation if
their respective copairings $[i,j]\maps X+Y \to S$ have the same jointly epic
parts.  More colloquially, this allows us to `discard' any part of the cospan
that is not connected to the terminals. This is precisely what equation
\eqref{eq.whitebone} represents. Further details on this viewpoint can be found
in \cite{CF}. 

Let $\mathbb{IH}^{\mathrm{Cor}}$ be the SMT obtained
from the equations of $\mathbb{IH}^{\mathrm{Csp}}$ together with equation~\eqref{eq.whitebone}.
\begin{theorem}
$\corel \mat \pk \cong \mathbb{IH}^{\mathrm{Cor}}$.
\end{theorem}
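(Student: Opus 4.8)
The plan is to realise $\corel\mat\pk$ as a quotient of $\cospan\mat\pk$ and to identify exactly which congruence is involved. By Proposition~\ref{prop:cospanpresentation} we have $\ihcsp \cong \cospan\mat\pk$, and since $\ihcor$ is obtained from $\ihcsp$ by imposing the single extra law~\eqref{eq.whitebone}, the prop $\ihcor$ is precisely the quotient of $\cospan\mat\pk$ by the congruence $\approx$ generated by~\eqref{eq.whitebone}. On the other hand $F\maps \cospan\mat\pk \to \corel\mat\pk$ is a full, identity-on-objects prop morphism, so its image $\corel\mat\pk$ is the quotient of $\cospan\mat\pk$ by the kernel congruence $\ker F$. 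The theorem thus reduces to the single claim that $\approx$ and $\ker F$ coincide. First I would dispatch soundness, $\approx \subseteq \ker F$: it suffices to check that the two cospans $0 \to 1 \leftarrow 0$ and $0 \to 0 \leftarrow 0$ have the same jointly epic part, which is immediate since the copairing $0 \to 1$ is the zero map and so factors as an iso $0 \to 0$ followed by a split mono $0 \to 1$. This yields a well-defined full prop morphism $\overline F\maps \ihcor \to \corel\mat\pk$.

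The substance lies in the reverse inclusion $\ker F \subseteq \approx$, equivalently the faithfulness of $\overline F$. The key lemma I would establish is that \emph{in $\ihcor$, every cospan equals its jointly epic part}. Granting this, faithfulness is quick: if $F(\sigma) = F(\tau)$ then the jointly epic parts of $\sigma$ and $\tau$ are isomorphic cospans, hence already equal in $\cospan\mat\pk \cong \ihcsp$ and a fortiori in $\ihcor$; combining this with the lemma applied to each of $\sigma$ and $\tau$ gives $\sigma = \tau$ in $\ihcor$.

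To prove the lemma I would invoke the epi-split-mono factorisation system of Corollary~\ref{cor.episplitmono}. Given a cospan $X \xrightarrow{i} S \xleftarrow{j} Y$, factor its copairing $[i,j]\maps X+Y \to S$ as an epi $e\maps X+Y \to \overline S$ followed by a split mono $m\maps \overline S \to S$. As $\mat\pk$ is (equivalent to) a category of free modules and $m$ splits, the apex decomposes as $S \cong \overline S \oplus C$, with the images of both legs lying in the first summand (each leg factors through $m$). This exhibits an isomorphism of cospans between $X \xrightarrow{i} S \xleftarrow{j} Y$ and the monoidal product
\[
  \big(X \xrightarrow{e\iota_X} \overline S \xleftarrow{e\iota_Y} Y\big)
  \;\oplus\; \big(0 \to C \leftarrow 0\big),
\]
the first factor being exactly the jointly epic part. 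Such an isomorphism is an equality of morphisms in $\cospan\mat\pk \cong \ihcsp$, hence holds in $\ihcor$. It then remains to discard the disconnected summand: writing $|C| = k$, the cospan $0 \to C \leftarrow 0$ is the $k$-fold monoidal power of $0 \to 1 \leftarrow 0$, so applying~\eqref{eq.whitebone} to each factor collapses it to the identity on $0$. This proves the lemma.

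The main obstacle is exactly this lemma: arguing that the lone extra law~\eqref{eq.whitebone} is strong enough to strip away the entire $\mc M$-part of an arbitrary cospan. The crucial ingredients making it work are that split monos in $\mat\pk$ genuinely split, so that the apex splits as a direct sum and the disconnected part is a clean summand, and that this summand, being connected to no terminal, is a disjoint union of zero-legged spiders each removable by the white bone. Everything else is bookkeeping with the symmetric monoidal structure, valid in $\ihcsp$ by Proposition~\ref{prop:cospanpresentation}.
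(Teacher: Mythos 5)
Your proposal is correct and follows essentially the same route as the paper: both reduce the theorem to showing that in $\ihcor$ every cospan equals its jointly epic part, both exploit the explicit split-mono structure (the apex splitting as $\overline S \oplus C$ with both legs factoring through the first summand, which is exactly the Smith-normal-form shape the paper uses), and both discard the complementary summand as a monoidal power of $0 \to 1 \leftarrow 0$ via the white bone law. The only difference is presentational — the paper carries out this last step as an explicit string-diagram derivation rather than your summand-by-summand bookkeeping.
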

\begin{proof}
Since equation~\eqref{eq.whitebone} holds in $\corel\mat\pk$, we have
a full morphism $\ihcor \to \corel\mat\pk$; it remains
to show that it is faithful. It clearly suffices to show that in the equational
theory $\ihcor$ one can prove that every cospan is equal
to its corelation. Suppose then that $m \xrightarrow{A} k \xleftarrow{B} n$ is a cospan
and $m \xrightarrow{A'} k' \xleftarrow{B'} n$ its corelation. Then, by definition, 
there exists a split mono $M\maps k'\to k$ such that $MA'=A$ and $MB'=B$. Moreover, by the construction
of the epi-split mono factorisation in $\mat\pk$, $M$ is of the form $\lower10pt\hbox{$\includegraphics[height=1cm]{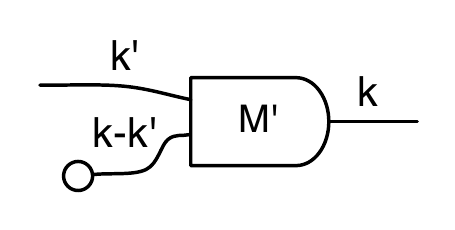}$}$ where $M'\maps k\to k$ is invertible. We can now give the derivation
in $\mathbb{IH}^{\mathrm{Cor}}$:
\begin{align*}
\lower7pt\hbox{$\includegraphics[height=.7cm]{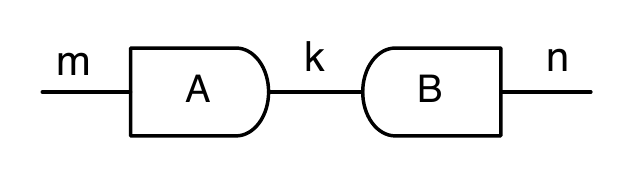}$} \quad
&\stackrel{\ihcsp}{=} \quad
\lower10pt\hbox{$\includegraphics[height=1cm]{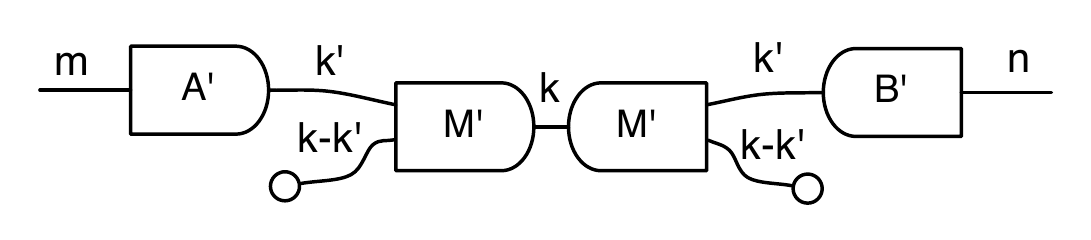}$} \\
\quad &\stackrel{\ihcsp}{=} \qquad 
\lower8pt\hbox{$\includegraphics[height=.8cm]{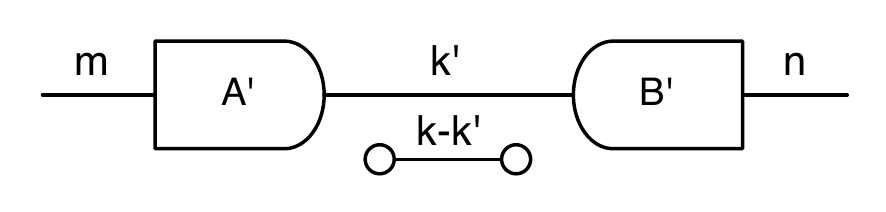}$} \\
&\stackrel{\eqref{eq.whitebone}}{=} \qquad\qquad
\lower7pt\hbox{$\includegraphics[height=.7cm]{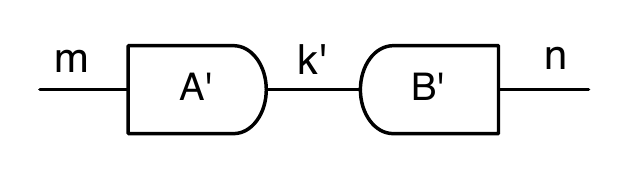}$}
\end{align*}
\end{proof}

%

We therefore have a sound and complete equational theory capable of representing
all LTI systems, and also a normal form for each LTI system: every such system
can be written, in an essentially unique way, as a jointly epic cospan of terms
in $\ha_\pk$ in normal form. We summarise the axioms in Figure
\ref{fig.ihcoraxioms}.

\begin{figure} 
  \includegraphics[width=.9\textwidth]{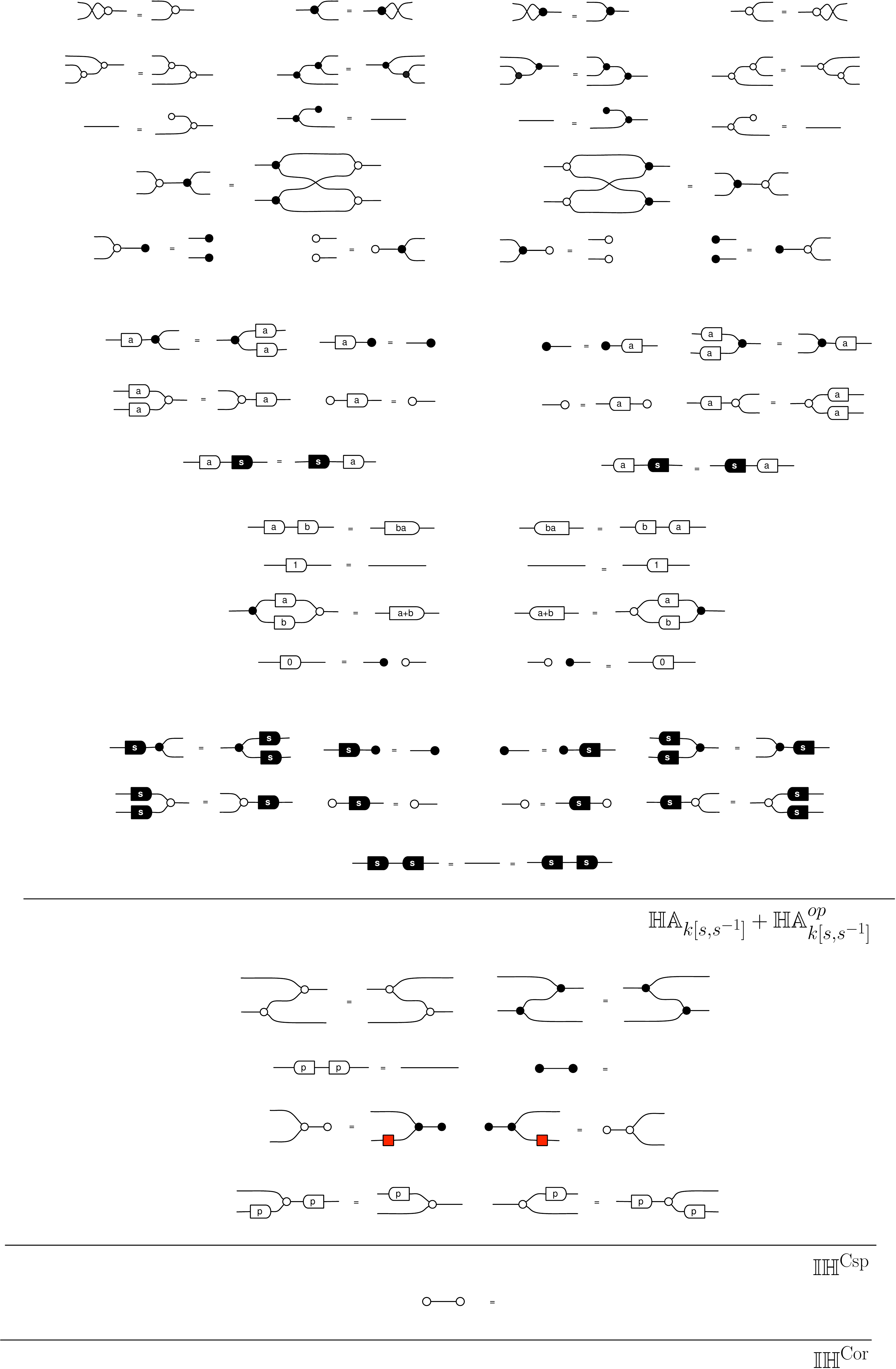}
  \caption{The axioms of $\ihcor$.}\label{fig.ihcoraxioms}
\end{figure}

\begin{remark} \label{rmk.omittedaxs}
 $\ihcor$ 
 can also be described as having the equations of $\ih_\pk$ (see \cite{BSZ2,Za}), but
\emph{without} $\!\!\lower4pt\hbox{$\includegraphics[height=.5cm]{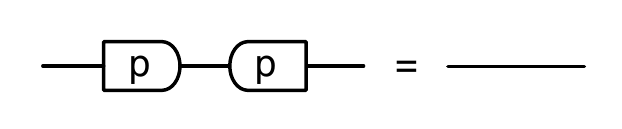}$}$,
$\!\lower7pt\hbox{$\includegraphics[height=.8cm]{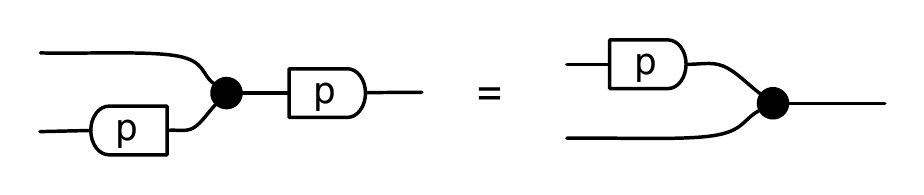}$}$,
and $\!\lower7pt\hbox{$\includegraphics[height=.8cm]{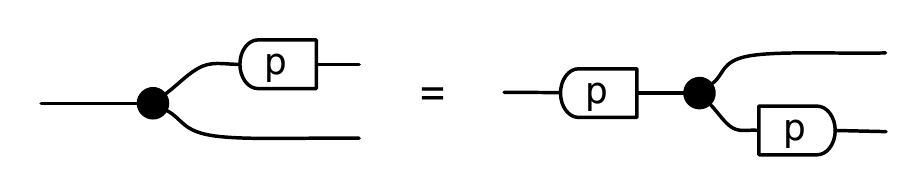}$}$.
Our results generalise; given any PID $R$ we have (informally speaking):
\begin{align*}
  \ihcor_R &= \ih_R -
  \left\{
    \begin{array}{c}
      \lower6pt\hbox{$\includegraphics[height=.6cm]{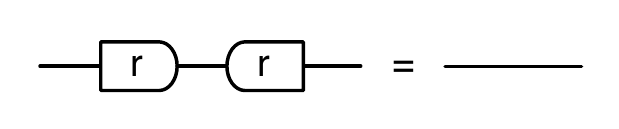}$}, \\
      \lower7pt\hbox{$\includegraphics[height=.7cm]{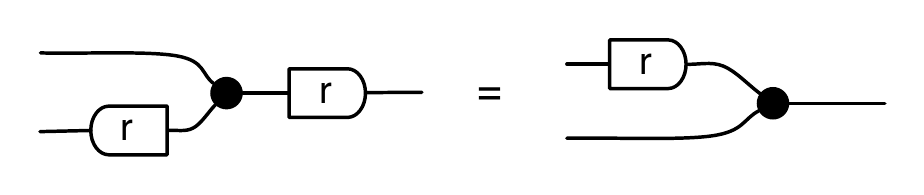}$},\\
      \lower7pt\hbox{$\includegraphics[height=.7cm]{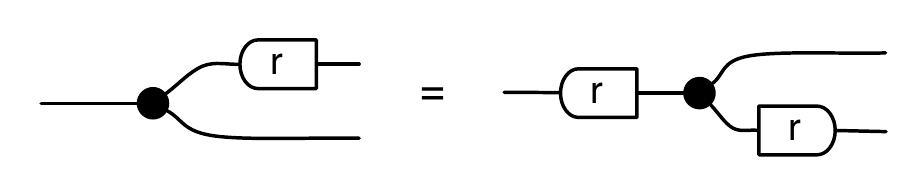}$}
    \end{array}
    \middle|\,r\neq 0\in R\,
  \right\} \\
  &\cong \corel \mat R
\end{align*}
and, because of the transpose duality of matrices:
\begin{align*}
  \ih^{\mathrm{Rel}}_R &= \ih_R - 
  \left\{
    \begin{array}{c} 
      \lower6pt\hbox{$\includegraphics[height=.6cm]{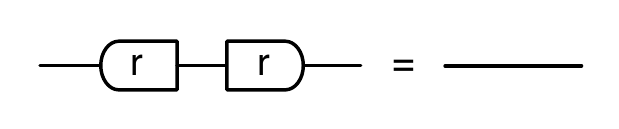}$},\\
      \lower7pt\hbox{$\includegraphics[height=.7cm]{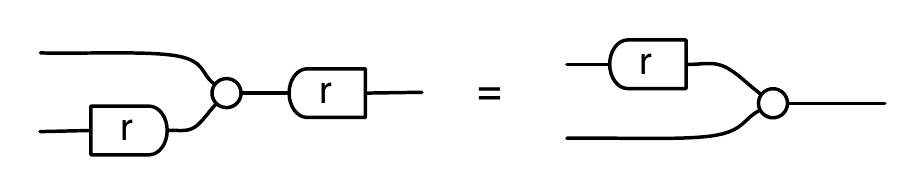}$}, \\
      \lower7pt\hbox{$\includegraphics[height=.7cm]{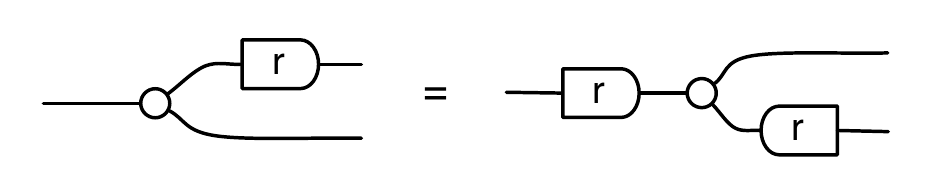}$}
    \end{array}
    \middle|\,r\neq 0\in R\,
  \right\}\\
  &\cong \mathrm{Rel} \mat R.
\end{align*}
\end{remark}

\begin{remark}\label{rmk:splus1}
The omitted equations each associate a cospan
with a span that, in terms of behaviour, has the effect of passing to a 
sub-behaviour\footnote{In fact the `largest controllable sub-behaviour' of the system. We explore 
controllability in Section \ref{sec.control}.}.
Often this is a strict sub-behaviour, hence the failure of soundness of $\ih$ identified in the introduction. 

For example, consider the system $\mathcal B$ represented by the cospan
\[
\lower6pt\hbox{$\includegraphics[height=.8cm]{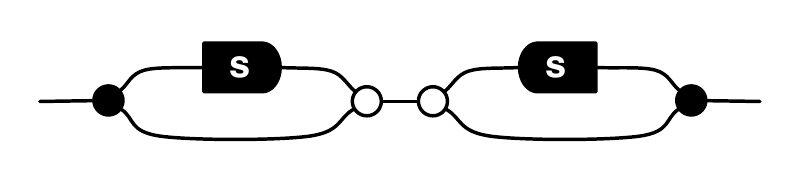}$}
\]
We met this system in the introduction; indeed the following
derivation can be performed in $\ihcor$:
\[
\includegraphics[height=1.3cm]{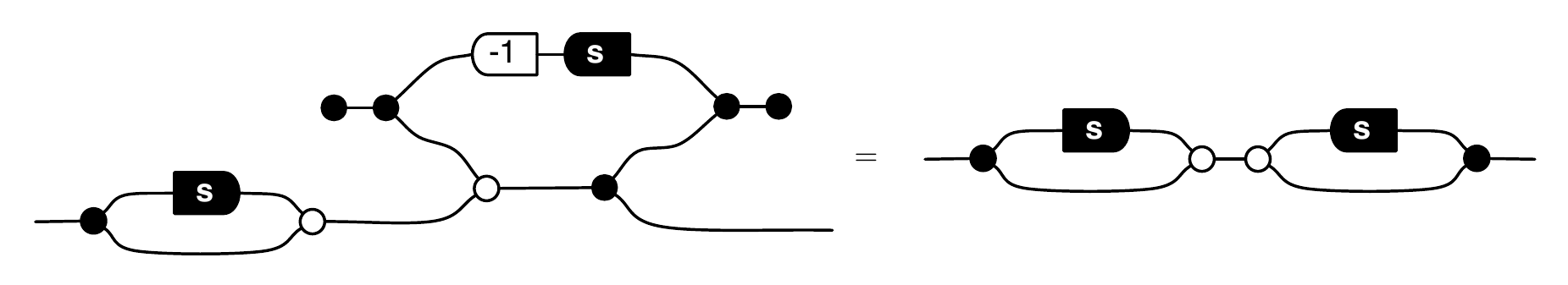}
\]
 The trajectories are $w= (w_1, w_2) \in \k^\z \oplus \k^\z$ where
$(s+1)\cdot w_1= (s+1)\cdot w_2$; that is, they satisfy the difference equation
\[ 
w_1(t-1)+w_1(t)-w_2(t-1)-w_2(t)=0.  
\]
As we saw in the introduction, however, an equation of $\ih_{\k[s]}$ omitted from
$\ihcor_{\k[s,s^{-1}]}$ equates this to the identity system $\id$, with the
identity behaviour those sequences of the form $w = (w_1,w_1) \in \k^\z \oplus
\k^\z$.  The identity behaviour is strictly smaller than $\mathcal B$; e.g., $\mathcal B$ contains $(w_1,w_2)$ with $w_1(t) = (-1)^t$
and $w_2(t) =0$.
\end{remark}

The similarity between our equational presentations of $\ihcor_{\k[s,s^{-1}]}$
and that of $\ih_{\k[s]}$  given in~\cite{BSZ,BSZ3} is remarkable, considering
the differences between the intended semantics of signal flow graphs that string
diagrams in those theories represent, as well as the underlying mathematics of
streams, which for us are elements of the $\k$ vector space $\k^{\mathbb{Z}}$
and in~\cite{BSZ,BSZ3} are Laurent series. We contend that this is evidence of
the \emph{robustness} of the algebraic approach: the equational account of how
various components of signal flow graphs interact is, in a sense, a higher-level
specification than the technical details of the underlying mathematical
formalisations.

\section{Operational semantics} \label{sec.opsem}

In this section we relate the denotational account given in previous sections
with an operational view. 

Operational semantics is given to $\Sigma^*$-terms---that is, to arrows of the
prop $\syntax^*=\mathbf{S}_{(\Sigma^*,\varnothing)}$---where $\Sigma^*$ is
obtained from set of generators in~\eqref{eq:generators} by replacing the formal
variables $s$ and $s^{-1}$ with a family of registers, indexed by the scalars of
$\k$:
\[
\delaygen \leadsto \{ \delaygenk | \, k\in\k \}
\]
\[
\delayopgen \leadsto \{ \delayopgenk |\, k\in\k \}
\]
The idea is that at any time during a computation the register holds the signal
it has received on the previous `clock-tick'. There are no equations, apart from
the laws of symmetric monoidal categories.

Next we introduce the structural rules: the transition relations that occur at
each clock-tick, turning one $\Sigma^*$-term into another. Each transition is given
two labels, written above and below the arrow.  The upper refers to the
signals observed at the `dangling wires' on the left-hand side of the term, and
the lower to those observed on the right hand side.  Indeed, transitions
out of a term of type $m \to n$ have upper labels in $\k^m$ and lower ones in
$\k^n$.

Because---for the purposes of the operational account---we consider these terms
to be syntactic, we must also account for the twist $\twist$ and identity $\id$.
A summary of the structural rules is given below; the rules for the mirror image
generators are symmetric, in the sense that upper and lower labels are swapped.

\[
\copygen \dtrans{\,k\,}{k \, k} \copygen \quad 
\discardgen \dtrans{k}{} \discardgen   
\]
\[
\addgen \dtrans{k \labelSep\, l }{k+l} \addgen \quad 
\zerogen \dtrans{\phantom{b}}{0} \zerogen
\]
\[
\scalargen  \dtrans{\,\,l\,\,}{al} \scalargen \quad 
\delaygenl \dtrans{k}{l} \delaygenk 
\]
\[
\id  \dtrans{k}{k} \id \quad 
\twist \dtrans{k \labelSep l}{l \labelSep k} \twist 
\]
\[
  \frac{s\dtrans{\mathbf{u}}{\mathbf{v}} s' \quad
  t\dtrans{\mathbf{v}}{\mathbf{w}} t'}{s \mathrel{;} t \dtrans
  {\mathbf{u}}{\mathbf{w}} s' \mathrel{;} t'}
 \quad 
 \frac{s\dtrans{\mathbf{u_1}}{\mathbf{v_1}} s'\quad
 t\dtrans{\mathbf{u_2}}{\mathbf{v_2}} t'}
 {s\oplus t \dtrans{\mathbf{u_1 \labelSep u_2}}{\mathbf{v_1 \labelSep v_2}} s'\oplus t'}
\]
Here $k, l,a \in \k$, $s,t$ are $\Sigma^*$-terms, and
$\mathbf{u,v,w,u_1,u_2,v_1,v_2}$
are vectors in $\k$ of the appropriate length. Note that the only generators
that change as a result of computation are the registers; it follows this is the
only source of state in any computation.

Let $\tm\colon m\to n \in \syntax$ be a $\Sigma$-term. Fixing an ordering of
the delays $\delaygen$ in $\tm$ allows us to identify the set of delays with a
finite ordinal $[d]$.  A \define{register assignment} is then simply a function
$\sigma: [d]\to \k$.  We may instantiate the $\Sigma$-term $\tm$ with the
register assignment $\sigma$ to obtain the $\Sigma^\ast$-term $\tm_\sigma
\in\syntax^\ast$ of the same type: for all $i \in [d]$ simply replace the $i$th
delay with a register in state $\sigma(i)$.

A computation on $\tm$ initialised at $\sigma$ is an infinite sequence of register
assignments and transitions: 
\[
  \tm_\sigma \dtrans{\mathbf{u_1}}{\mathbf{v_1}} \tm_{\sigma_1}
  \dtrans{\mathbf{u_2}}{\mathbf{v_2}} \tm_{\sigma_2}  
  \dtrans{\mathbf{u_3}}{\mathbf{v_3}} \dots
\]
The \define{trace} induced by this computation is the sequence 
\[(\mathbf{u_1},\mathbf{v_1}), (\mathbf{u_2},\mathbf{v_2}), \dots\] 
of elements of $\k^m \oplus \k^n$.

\smallskip
To relate the operational and denotational semantics, we introduce the
notion of biinfinite trace: a trace with an infinite past
as well as future. To define these, we use the notion of a \emph{reverse} computation: 
a computation using the operational rules above, but with the rules for delay having their
  left and right hand sides swapped:
\[
\delaygenk \dtrans{k}{l} \delaygenl.
\]

\begin{definition}
  Given $\tm\colon m \to n \in \syntax$, a \define{biinfinite trace on} $\tm$ is a sequence
$w \in (\k^m)^\z \oplus (\k^n)^\z$ such that there exists
\begin{enumerate}[(i)]
\item a register assignment $\sigma$; 
\item an infinite \emph{forward trace} $\phi_\sigma$ of a computation on $\tm$
  initialised at $\sigma$; and,
\item an infinite \emph{backward trace} $\psi_\sigma$ of a reverse computation
  on $\tm$ initialised at $\sigma$,
\end{enumerate}
obeying
\[
w(t) = \begin{cases}
\phi(t) & t\geq 0; \\
\psi(-(t+1)) & t < 0.
\end{cases}
\]
We write $\bit(\tm)$ for the set of all biinfinite traces on $\tm$.
\end{definition}

The following result gives a tight correspondence between the operational and
denotational semantics, and follows via a straightforward structural induction
on $\tm$.
\begin{lemma}
For any $\tm\maps m\to n \in \syntax$, we have 
\[
  \llbracket \tm \rrbracket = \bit(\tm) 
\]
as subsets of $(\k^m)^\z\times (\k^n)^\z$.
\end{lemma}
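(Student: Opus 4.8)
The plan is to prove the equality $\llbracket \tm \rrbracket = \bit(\tm)$ by structural induction on the $\Sigma$-term $\tm$, following the grammar that generates $\syntax$ from the generators together with $\idn$ and $\tw$ via $\mathrel{;}$ and $\oplus$. Both sides are $\k$-linear subspaces of $(\k^m)^\z \oplus (\k^n)^\z$, and at each stage I would verify the two inclusions. The inductive steps are almost immediate once the base cases are settled, because $\llbracket - \rrbracket$ is a prop morphism: $\llbracket \tm_1 \mathrel{;} \tm_2 \rrbracket$ is the relational composite of $\llbracket \tm_1 \rrbracket$ and $\llbracket \tm_2 \rrbracket$, and $\llbracket \tm_1 \oplus \tm_2 \rrbracket$ their direct sum, while the operational structural rules for $\mathrel{;}$ and $\oplus$ have the very same shape. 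The substance therefore lies in the base cases, and in checking that $\bit(-)$ is itself compatible with $\mathrel{;}$ and $\oplus$.

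For the base cases I would first dispatch the stateless generators---the adder, cozero, copier, discard, the scalars, $\idn$, $\tw$, and their mirror images---where the only register assignment is the empty one and every transition is determined pointwise and independently at each tick. For these the forward and backward traces impose no coupling between time steps, so $\bit$ of the generator is exactly the set of biinfinite sequences satisfying the pointwise constraint read off its single transition rule, which is visibly the stated denotation. The one genuinely stateful generator is the delay. Here I would track the single register explicitly: initialising at $\sigma(0)=r$, the forward rule forces $w_{\mathrm{out}}(0)=r$ and $w_{\mathrm{out}}(t)=w_{\mathrm{in}}(t-1)$ for $t\geq 1$, while the reversed rule, run into the past from the \emph{same} state $r$, forces $w_{\mathrm{in}}(-1)=r$ and then $w_{\mathrm{in}}(t)=w_{\mathrm{out}}(t+1)$ for $t<0$. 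The shared initial value $r$ is precisely what makes the constraint $w_{\mathrm{out}}(t)=w_{\mathrm{in}}(t-1)$ hold across the $t=0$ seam, so the set of biinfinite traces on the delay is exactly the graph of the one-step delay $s$, matching its denotation; the reverse-delay generator is symmetric.

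For the inductive steps I would show $\bit(\tm_1 \mathrel{;} \tm_2)$ equals the relational composite $\bit(\tm_1)\mathrel{;}\bit(\tm_2)$, and likewise $\bit(\tm_1\oplus\tm_2)=\bit(\tm_1)\oplus\bit(\tm_2)$. A register assignment on the composite is a pair of assignments on the factors, and the sequential composition rule of the operational semantics shows that a forward (resp.\ backward) computation on $\tm_1\mathrel{;}\tm_2$ is exactly a matching pair of forward (resp.\ backward) computations on $\tm_1$ and $\tm_2$ sharing the internal label stream. Gluing the internal forward labels ($t\geq 0$) to the internal backward labels ($t<0$) produces a single biinfinite stream $\mathbf{v}\in(\k^k)^\z$ witnessing the relational composite, and this gluing is legitimate precisely because each factor's trace is already a biinfinite trace, so the factor register states agree at the seam. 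The $\oplus$ case is the same argument with no interaction between the two sides. Combined with the prop-morphism identities for $\llbracket-\rrbracket$ and the inductive hypotheses, this closes the induction.

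The main obstacle is the issue flagged in the delay base case and reappearing in the composition step: ensuring that the forward half (for $t\geq 0$) and the backward half (for $t<0$) of a putative trajectory can be realised from one and the same register assignment $\sigma$, so that they genuinely splice into a biinfinite trace rather than two unrelated half-infinite ones. This is exactly the content of the consistency of register states at the $t=0$ boundary, and it is where the biinfinite---as opposed to finite-past---formulation of the operational semantics earns its keep. Once this seam condition is handled carefully for the delay and shown to be preserved by $\mathrel{;}$ and $\oplus$, the remaining verifications are routine bookkeeping.
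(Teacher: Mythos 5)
Your proposal is correct and follows exactly the route the paper indicates: the paper gives no details, stating only that the result ``follows via a straightforward structural induction on $\tm$,'' and your proof is a sound elaboration of that induction. Your attention to the delay generator and to the consistency of register states at the $t=0$ seam supplies precisely the content the paper leaves implicit.
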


\section{Controllability} \label{sec.control}

Suppose we are given a current and a target trajectory for a system. Is it
always possible, in finite time, to steer the system onto the target trajectory? If so, the 
 system is deemed controllable, and the problem of controllability of systems is at
the core of control theory. The following definition is due to
Willems~\cite{Wi2}.

\begin{definition}\label{def:contr}
A system $(T,W,\bb)$ (or simply the behaviour $\bb$) is \define{controllable} if
for all $w,w' \in \bb$ and all times $t_0\in\mathbb{Z}$, there exists $w'' \in
\bb$ and $t_0^\prime\in\mathbb{Z}$ such that $t_0'>t_0$ and $w''$ obeys
\[
  w''(t) = 
  \begin{cases} 
    w(t) & t \le t_0 \\
    w'(t-t_0') & t \ge t_0'.
  \end{cases}
\]
\end{definition}

As mentioned previously, a novel feature of our graphical calculus is
that it allows us to consider non-controllable behaviours.

\begin{example} \label{ex.noncontrol}
Consider the system in the introduction, further elaborated in Remark~\ref{rmk:splus1}.
%
As noted previously,
the trajectories of this system are
precisely those sequences $w= (w_1, w_2) \in \k^\z \oplus \k^\z$ that satisfy
the difference equation
\[
  w_1(t-1)+w_1(t)-w_2(t-1)-w_2(t)=0.
\]
To see that the system is non-controllable, note that
\[
  w_1(t-1)-w_2(t-1) = -(w_1(t)-w_2(t)),
\]
so $(w_1-w_2)(t) = (-1)^tc_w$ for some $c_w \in \k$. This $c_w$ is a time-invariant
property of any trajectory. Thus if $w$ and $w'$ are trajectories such that $c_w \ne
c_{w'}$, then it is not possible to transition from the past of $w$ to the
future of $w'$ along some trajectory in $\mathcal B$. 

Explicitly, taking $w(t) = ((-1)^t,0)$ and $w'(t) = ((-1)^t2,0)$ suffices to
show $\mathcal B$ is not controllable.
\end{example}

\subsection{A categorical characterisation}
We now show that controllable systems are precisely those representable as
\emph{spans} of matrices. This novel characterisation leads to new ways of
reasoning about controllability of composite systems. 

Among the various equivalent conditions for controllability, the existence of
\emph{image representations} is most useful for our purposes.
\begin{proposition}[Willems {\cite[p.86]{Wi}}] \label{thm.imagereps}
  An LTI behaviour $\bb$ is controllable iff there exists $M \in \mat\pk$ such
  that $\bb = \im \theta M$.
\end{proposition}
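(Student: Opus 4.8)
The plan is to prove the two implications separately, taking the kernel representation theorem (Theorem \ref{thm.kernelreps}), Proposition \ref{prop.magic}, and the fact that $\pk$ is a principal ideal domain as the main structural inputs.

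First I would dispatch the easy direction: image representation implies controllability. Suppose $\bb = \im \theta M$ for an $n \times m$ matrix $M \in \mat\pk$. The crucial observation is that $\theta M$ has bounded memory: since every entry of $M$ is a Laurent polynomial, there is an integer $L$ such that $(\theta M \cdot u)(t)$ depends only on the values $u(t-L), \dots, u(t+L)$. Given $w = \theta M u$ and $w' = \theta M u'$ in $\bb$ and a time $t_0 \in \z$, I would set $t_0' = t_0 + 2L + 1$ and define a patched input $u''$ equal to $u$ for $t \le t_0 + L$ and equal to the shift $t \mapsto u'(t - t_0')$ for $t > t_0 + L$. Because the windows $[t-L,t+L]$ for $t \le t_0$ and for $t \ge t_0'$ fall entirely within the two respective regions, and because $\theta M$ commutes with the shift (time-invariance), the trajectory $w'' := \theta M u'' \in \bb$ satisfies $w''(t) = w(t)$ for $t \le t_0$ and $w''(t) = w'(t - t_0')$ for $t \ge t_0'$. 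This is exactly the condition of Definition \ref{def:contr}.

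For the converse I would start from a kernel representation $\bb = \ker \theta R$ with $R \in \mat\pk$ a $p \times n$ matrix, which exists by Theorem \ref{thm.kernelreps}; using the Smith normal form (as in the proof of Proposition \ref{prop.matfactorisation}) and Proposition \ref{prop.magic} to discard redundant rows, I may assume $R$ has full row rank. The heart of the matter is the module-theoretic dual picture: a behaviour $\ker\theta R$ is controllable precisely when the cokernel $\pk$-module $\pk^{1\times n}/\pk^{1\times p}R$ is torsion-free. Since $\pk$ is a PID, a finitely generated torsion-free module is free, so this cokernel is free of some rank $m$; choosing a basis and dualising yields a full-column-rank matrix $M \in \mat\pk$ whose columns generate $\ker R$ as a $\pk$-module, so that $\ker R = \im M$ in $\mat\pk$. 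Finally I would transfer this module-level equality to the behavioural level: Proposition \ref{prop.magic}, applied in both directions to the inclusions between $\ker\theta R$ and $\im\theta M$, promotes $\ker R = \im M$ to $\ker\theta R = \im\theta M$, exhibiting the desired image representation.

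The main obstacle is the link, within the converse, between the operational notion of controllability in Definition \ref{def:contr} and the algebraic condition that the cokernel module be torsion-free. This equivalence is where the real content of Willems' theory sits: a torsion element of the cokernel corresponds to an autonomous sub-behaviour---a nonzero trajectory pinned down by a scalar polynomial relation, exactly the $(-1)^t c_w$ invariant obstructing patching in Example \ref{ex.noncontrol}---and conversely. Rather than reprove this classical fact, I would cite Willems \cite[p.86]{Wi} for this implication and record only the PID splitting argument and the application of Proposition \ref{prop.magic}, which are the parts that sit naturally within the present categorical framework.
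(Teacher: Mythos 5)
The paper does not actually prove Proposition \ref{thm.imagereps}: it is imported from Willems \cite{Wi} with only the page citation, so there is no argument of the paper's to compare yours against. Your forward direction is a complete and correct elementary argument that the paper omits --- Laurent-polynomial operators have finite memory and commute with the shift, so the patched input yields a trajectory in $\im\theta M$ agreeing with $w$ for $t\le t_0$ and with the shifted $w'$ for $t\ge t_0'$, which is exactly Definition \ref{def:contr}. Deferring the equivalence between controllability and torsion-freeness of the cokernel module to Willems is also a fair move, since that is precisely what the paper does for the entire statement.

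The one step that fails as written is the last one. Proposition \ref{prop.magic} compares two \emph{kernel} representations: it characterises when $\ker\theta M\subseteq\ker\theta N$. It says nothing about when a kernel is contained in an \emph{image}, and the inclusion $\ker\theta R\subseteq\im\theta M$ is where the analytic content of the converse lives; module-level exactness of $0\to\pk^m\xrightarrow{M}\pk^n\xrightarrow{R}\pk^p$ does not transfer to behaviours by general nonsense, because $\theta$ of an injective matrix need not be injective (e.g.\ $\theta[s+1]$ kills $t\mapsto(-1)^t$, the very phenomenon of Example \ref{ex.noncontrol}). The standard repair stays within your framework: torsion-freeness of the cokernel forces the invariant factors in the Smith normal form of $R$ to be units, so $R=U[\,I_p\;0\,]V$ with $U,V$ invertible over $\pk$; taking $M=V^{-1}\left[\begin{smallmatrix}0\\I\end{smallmatrix}\right]$ and $M'=[\,0\;I\,]V$ gives a B\'ezout identity $I=MM'+YR$, whence $w=\theta M(\theta M'w)$ for every $w\in\ker\theta R$ by functoriality of $\theta$. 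Either make that explicit or fold this direction, too, into the citation of Willems.
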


Restated in our language, Proposition \ref{thm.imagereps} states that controllable
systems are precisely those representable as \emph{spans} of matrices. 

\begin{theorem} \label{cor.spanreps}
  Let $m \xrightarrow{A} d \xleftarrow{B} n$ be a corelation in $\corel\mat\pk$.
  Then the LTI behaviour
  $\Phi(\xrightarrow{A}\xleftarrow{B})$ is controllable iff there exists $R: e
  \to m$, $S: e\to n$ such that 
  \[
    m \xleftarrow{R} e \xrightarrow{S} n = m \xrightarrow{A} d \xleftarrow{B} n
  \]
  as morphisms in $\corel\mat\pk$. 
\end{theorem}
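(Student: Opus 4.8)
The plan is to transport both sides of the claimed equation through the isomorphism $\Phi$ of Theorem~\ref{thm.main} into the semantic prop $\ltids$, where controllability has the concrete image-representation characterisation of Willems (Proposition~\ref{thm.imagereps}). The whole statement then reduces to matching image representations against span representations, using faithfulness of $\Phi$ for one direction.

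First I would pin down what the span $m \xleftarrow{R} e \xrightarrow{S} n$ denotes as a morphism of $\corel\mat\pk$. Since $\corel\mat\pk$ is a hypergraph, and hence self-dual compact closed, category (Theorem~\ref{thm.cospantocorel} together with Corollary~\ref{cor.episplitmono}), a span determines a morphism $m \to n$ by bending its left leg: in the notation of \textsection\ref{sec.cospans}, this morphism is the composite $R^\opp \mathrel{;} S$ of the corelation $R^\opp = (m \xrightarrow{1_m} m \xleftarrow{R} e)$ with the corelation $S = (e \xrightarrow{S} n \xleftarrow{1_n} n)$, and composing these cospans by pushout over $e$ recovers exactly the pushout of $m \xleftarrow{R} e \xrightarrow{S} n$. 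I would then compute its $\Phi$-image. As $\Phi$ is a prop morphism it preserves composition, so $\Phi(R^\opp \mathrel{;} S) = \Phi(R^\opp) \mathrel{;} \Phi(S)$, and the definition of $\Phi$ on corelations gives
\[
  \Phi(R^\opp) = \{(\mathbf{x},\mathbf{z}) \mid \mathbf{x} = (\vectfun R)\mathbf{z}\},
  \qquad
  \Phi(S) = \{(\mathbf{z},\mathbf{y}) \mid \mathbf{y} = (\vectfun S)\mathbf{z}\}.
\]
Their relational composite is
\[
  \big\{\big((\vectfun R)\mathbf{z},(\vectfun S)\mathbf{z}\big) \,\big|\, \mathbf{z} \in (\k^e)^\z\big\}
  = \im \vectfun \begin{bmatrix} R \\ S \end{bmatrix},
\]
so a span maps under $\Phi$ precisely to the image representation determined by $\left[\begin{smallmatrix} R \\ S \end{smallmatrix}\right]$.

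With this in hand the equivalence is immediate in both directions. For the backward implication, suppose the span equals the corelation $\xrightarrow{A}\xleftarrow{B}$ as morphisms of $\corel\mat\pk$; applying $\Phi$ gives $\Phi(\xrightarrow{A}\xleftarrow{B}) = \im\vectfun\left[\begin{smallmatrix} R \\ S \end{smallmatrix}\right]$, which is controllable by Proposition~\ref{thm.imagereps}. For the forward implication, suppose $\Phi(\xrightarrow{A}\xleftarrow{B})$ is controllable. By Proposition~\ref{thm.imagereps} there is a matrix $M \in \mat\pk$ with $\Phi(\xrightarrow{A}\xleftarrow{B}) = \im\vectfun M$; splitting $M$ along the domain/codomain partition into its top $m$ rows $R\maps e \to m$ and bottom $n$ rows $S\maps e \to n$, the span $m \xleftarrow{R} e \xrightarrow{S} n$ satisfies $\Phi(R^\opp \mathrel{;} S) = \im\vectfun M = \Phi(\xrightarrow{A}\xleftarrow{B})$. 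Since $\Phi$ is an isomorphism of props (Theorem~\ref{thm.main}), in particular faithful, the span and the corelation are equal as morphisms of $\corel\mat\pk$.

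The routine verifications are the computation of $\Phi$ on the graph corelations $R^\opp$ and $S$ and of their relational composite, which are direct from the definitions. The one genuinely delicate point—the main obstacle—is the first step: correctly identifying the morphism of $\corel\mat\pk$ that a span represents, and checking that the compact closed leg-bending really produces $R^\opp \mathrel{;} S$, whose $\Phi$-image is an \emph{image} representation rather than a kernel representation. Getting the variances right here—which leg is dualised, and the fact that $\vectfun$ sends $\left[\begin{smallmatrix} R \\ S \end{smallmatrix}\right]$ to the map whose \emph{image} (not kernel) is the behaviour—is exactly what makes the span characterisation land on the controllable behaviours rather than on all behaviours.
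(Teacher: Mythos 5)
Your proposal is correct and follows essentially the same route as the paper's proof: identify the span with the composite corelation $R^\opp \mathrel{;} S$, compute that its $\Phi$-image is $\im\vectfun\left[\begin{smallmatrix} R \\ S\end{smallmatrix}\right]$, and conclude via Willems' image-representation characterisation of controllability. The only difference is that you spell out explicitly the appeal to faithfulness of $\Phi$ in the forward direction, which the paper leaves implicit in ``the result then follows immediately.''
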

\begin{proof}
  To begin, note that the behaviour of a span is its joint image. That is,
  $\Phi(\xleftarrow{R}\xrightarrow{S})$ is the composite of linear
  relations $\ker\vectfun[\mathrm{id}_m \; -R]$ and $\ker\vectfun[S\;
  -\mathrm{id}_n]$, which comprises all $(\mathbf{x},\mathbf{y}) \in (\k^m)^\z
  \oplus (\k^n)^\z$ s.t.\ $\exists$ $\mathbf{z} \in (\k^e)^\z$ with
  $\mathbf{x} = \vectfun R \mathbf{z}$ and $\mathbf{y} = \vectfun S
  \mathbf{z}$. Thus
  \[
    \Phi(\xleftarrow{R}\xrightarrow{S}) = \im \vectfun \left[
    \begin{matrix} R \\ S \end{matrix} \right].
  \]
  The result then follows immediately from Proposition \ref{thm.imagereps}.  
\end{proof}

In terms of the graphical theory, this means that a term in the 
form $\ha_\pk ; \ha_\pk^{op}$ (`cospan form') is controllable iff we can find a
derivation, using the rules of $\ihcor$, that puts it in the form $\ha_\pk^{op}
; \ha_\pk$ (`span form').  This provides a general, easily recognisable
representation for controllable systems. 

Span representations also lead to a test for controllability: take
the pullback of the cospan and check whether the system described by it
coincides with the original one. Indeed, note that as $\pk$ is a PID, the
category $\mat\pk$ has pullbacks. A further consequence of Theorem
\ref{cor.spanreps}, together with Proposition  \ref{prop.magic}, is the following. 

\begin{proposition} \label{prop.ctrlablepart}
  Let $m \xrightarrow{A} d \xleftarrow{B} n$ be a cospan in $\mat\pk$, and write
  the pullback of this cospan $m \xleftarrow{R} e \xrightarrow{S} n$. Then the
  behaviour of the pullback span $\Phi(\xleftarrow{R}\xrightarrow{S})$ is
  the maximal controllable sub-behaviour of
  $\Phi(\xrightarrow{A}\xleftarrow{B})$.
\end{proposition}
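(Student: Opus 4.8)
The plan is to prove three things about $I \Defeq \Phi(\xleftarrow{R}\xrightarrow{S})$ and $K \Defeq \Phi(\xrightarrow{A}\xleftarrow{B})$: that $I$ is a sub-behaviour of $K$, that $I$ is controllable, and that $I$ contains every controllable sub-behaviour of $K$. Throughout I would use the computation from the proof of Theorem \ref{cor.spanreps}, namely $\Phi(\xleftarrow{R}\xrightarrow{S}) = \im\vectfun\left[\begin{smallmatrix}R\\S\end{smallmatrix}\right]$, together with $\Phi(\xrightarrow{A}\xleftarrow{B}) = \ker\vectfun[A\;{-}B]$, the functoriality and faithfulness of $\vectfun$, and the fact (noted just before the statement) that $\mat\pk$ has pullbacks because $\pk$ is a PID.

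For the first two points, since $e$ with $R,S$ is the pullback of $m\xrightarrow{A} d\xleftarrow{B} n$, the defining square commutes, so $AR = BS$ in $\mat\pk$. Applying the functor $\vectfun$ gives $\vectfun A\,\vectfun R = \vectfun B\,\vectfun S$, whence every trajectory $(\vectfun R\,\mathbf z, \vectfun S\,\mathbf z)$ in $I$ is annihilated by $\vectfun[A\;{-}B]$ and so lies in $K=\ker\vectfun[A\;{-}B]$; thus $I\subseteq K$. Controllability of $I$ is then immediate, either directly from Proposition \ref{thm.imagereps} since $I$ is an image, or by observing that $I$ is presented by a span and invoking Theorem \ref{cor.spanreps}.

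The substance is maximality. Let $\mathcal C\subseteq K$ be an arbitrary controllable sub-behaviour. Proposition \ref{thm.imagereps} gives an image representation $\mathcal C = \im\vectfun M$; partitioning $M$ into its first $m$ and last $n$ rows as $M = \left[\begin{smallmatrix}P\\Q\end{smallmatrix}\right]$ with $P\maps c\to m$ and $Q\maps c\to n$, the inclusion $\mathcal C\subseteq K$ says precisely that $\vectfun[A\;{-}B]\circ\vectfun\left[\begin{smallmatrix}P\\Q\end{smallmatrix}\right] = 0$, i.e. $\vectfun(AP - BQ) = \vectfun 0$. Here I would use faithfulness of $\vectfun$ to pull this behavioural identity back to the matrix identity $AP = BQ$; this is the key step converting containment of behaviours into a commuting square of polynomial matrices, and it is the same kind of matrix-level comparison as Proposition \ref{prop.magic}, which is the analogous tool on the kernel side. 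The square $AP = BQ$ exhibits $(P,Q)$ as a cone over the cospan $\xrightarrow{A}\xleftarrow{B}$, so the universal property of the pullback supplies a matrix $U\maps c\to e$ with $RU = P$ and $SU = Q$. Consequently $\mathcal C = \im\vectfun\left[\begin{smallmatrix}P\\Q\end{smallmatrix}\right] = \im\vectfun\!\left(\left[\begin{smallmatrix}R\\S\end{smallmatrix}\right]U\right)\subseteq \im\vectfun\left[\begin{smallmatrix}R\\S\end{smallmatrix}\right] = I$, as required.

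I expect the main obstacle to be exactly the passage from the behavioural inclusion $\mathcal C\subseteq K$ to the matrix equation $AP=BQ$: one must check that faithfulness of $\vectfun$ is precisely what licenses replacing ``the linear map $\vectfun(AP-BQ)$ is zero'' by ``the polynomial matrix $AP-BQ$ is zero'', and that the chosen partition of the image representation $M$ is compatible with the domain/codomain splitting of the terminals. Once that translation is in hand, the universal property of the pullback does the rest mechanically, and the sub-behaviour and controllability claims are routine.
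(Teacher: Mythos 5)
Your proof is correct and follows essentially the same route as the paper's: represent an arbitrary controllable sub-behaviour as an image $\im\vectfun\left[\begin{smallmatrix}P\\Q\end{smallmatrix}\right]$ via Proposition \ref{thm.imagereps}, and use the universal property of the pullback in $\mat\pk$ to factor it through $\left[\begin{smallmatrix}R\\S\end{smallmatrix}\right]$. You are in fact more careful than the paper, which silently passes from the behavioural containment to the matrix identity $AP=BQ$; your appeal to the faithfulness of $\vectfun$ is exactly the right justification for that step.
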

\begin{proof}
  Suppose we have another controllable behaviour $\mathscr{C}$ contained in
  $\ker\vectfun [A\;-B]$. Then this behaviour is the $\Phi$-image of some span
  $m \xleftarrow{R'} e' \xrightarrow{S'}n$. As $\im\vectfun\begin{bmatrix} R' \\
    S'\end{bmatrix}$ lies in $\ker\vectfun [A\;-B]$, the universal property of
  the pullback gives a map $e' \to e$ such that the relevant diagram commutes.
  This implies that the controllable behaviour $\mathscr{C} =
  \im\vectfun\begin{bmatrix} R' \\ S'\end{bmatrix}$ is contained in $\im\vectfun
  \begin{bmatrix} R \\ S\end{bmatrix}$, as required. 
\end{proof}

\begin{corollary}
  Suppose that an LTI behaviour $\bb$ has cospan representation
  \[
    m \stackrel{A}\longrightarrow d \stackrel{B}\longleftarrow n.
  \]
  Then $\bb$ is controllable iff the $\Phi$-image of the pullback of this cospan
  in $\mat\pk$ is equal to $\bb$.
\end{corollary}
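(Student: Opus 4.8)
The plan is to read this off directly from Proposition~\ref{prop.ctrlablepart}, which has already done the substantive work. First I would fix notation: set $\bb = \Phi(\xrightarrow{A}\xleftarrow{B})$, let $m \xleftarrow{R} e \xrightarrow{S} n$ denote the pullback of the given cospan in $\mat\pk$ (which exists since $\pk$ is a PID and hence $\mat\pk$ has pullbacks), and write $\mathscr{C} = \Phi(\xleftarrow{R}\xrightarrow{S})$ for the $\Phi$-image of this pullback span. By Proposition~\ref{prop.ctrlablepart}, $\mathscr{C}$ is the maximal controllable sub-behaviour of $\bb$. The corollary is then just the statement that a behaviour is controllable precisely when it coincides with its own maximal controllable sub-behaviour, so the proof amounts to unpacking what "maximal controllable sub-behaviour" provides in each direction.

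For the forward implication, I would assume $\bb$ is controllable. Then $\bb$ is itself a controllable behaviour contained in $\bb$, so by the maximality clause of Proposition~\ref{prop.ctrlablepart} we obtain $\bb \subseteq \mathscr{C}$. Since $\mathscr{C}$ is by construction a sub-behaviour of $\bb$, the reverse inclusion $\mathscr{C} \subseteq \bb$ also holds, giving $\mathscr{C} = \bb$; that is, the $\Phi$-image of the pullback equals $\bb$.

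For the converse, I would assume $\mathscr{C} = \bb$. Since $\mathscr{C}$ is the $\Phi$-image of a span, it is controllable: this is exactly the content of Theorem~\ref{cor.spanreps} (equivalently, the span behaviour is $\im\vectfun\bigl[\begin{smallmatrix} R \\ S \end{smallmatrix}\bigr]$, which is controllable by Proposition~\ref{thm.imagereps}). Hence $\bb = \mathscr{C}$ is controllable.

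There is no genuine obstacle here, as the difficult analysis—identifying the pullback span behaviour with the maximal controllable sub-behaviour—is already packaged in Proposition~\ref{prop.ctrlablepart}, whose own proof rests on Proposition~\ref{prop.magic}. The only point requiring care is to make explicit that "maximal controllable sub-behaviour" delivers two separate facts that I use in opposite directions: the maximality property (that every controllable sub-behaviour of $\bb$ sits inside $\mathscr{C}$), invoked in the forward direction, and the bare fact that $\mathscr{C}$ is a controllable behaviour contained in $\bb$, invoked in the converse. Keeping these two uses distinct is all that the argument demands.
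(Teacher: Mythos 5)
Your proof is correct and follows exactly the route the paper intends: the corollary is stated as an immediate consequence of Proposition~\ref{prop.ctrlablepart} (the pullback span's $\Phi$-image is the maximal controllable sub-behaviour) combined with the fact that span images are controllable, and your two-directional unpacking of ``maximal controllable sub-behaviour'' is precisely that argument made explicit.
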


Moreover, taking the pushout of this pullback span gives another cospan. The
morphism from the pushout to the original cospan, given by the universal
property of the pushout, describes the way in which the system fails to be
controllable.

To continue Remark \ref{rmk.omittedaxs}, the theory of interacting Hopf
algebras $\ih_\pk$~\cite{BSZ2,Za} may be viewed as our theory $\ihcor$ of LTI
systems together with the axioms given by the pullback in $\mat\pk$. Thus,
graphically, the pullback may be computed by using the axioms of $\ih_\pk$. For
example, the pullback span of the system of Example \ref{ex.noncontrol} is
simply the identity span, as derived in equation \eqref{eq:exampleproof} of the
previous section. In the traditional matrix calculus for control theory, one
derives this by noting the system has kernel representation
$\ker\theta\begin{bmatrix} s+1 & -(s+1) \end{bmatrix}$, and eliminating the
common factor $s+1$ between the entries.  Either way, we conclude that the
maximally controllable subsystem of $1 \xrightarrow{[s+1]} 1 \xleftarrow{[s+1]}
1$ is simply the identity system $1 \xrightarrow{[1]} 1 \xleftarrow{[1]} 1$.


\subsection{Control and interconnection}
From this vantage point we can make useful observations about controllable
systems and their composites: we simply need to ask whether we can rewrite them
as spans. 

\begin{example}
  Suppose that $\bb$ has cospan representation
    $m \xrightarrow{A} d \xleftarrow{B} n.$
  Then $\bb$ is easily seen to be controllable when $A$ or $B$ is invertible.
  Indeed, if $A$ is invertible, then $m  \xleftarrow{A^{-1}B} n
  \xrightarrow{\idn_n} n$ is an equivalent span; if $B$ is invertible, then
  $m\xleftarrow{\idn_m} m \xrightarrow{B^{-1}A} n$.
\end{example}

More significantly, the compositionality of our framework aids understanding of
how controllability behaves under the interconnection of systems---an active
field of investigation in current control theory. We give an example application
of our result.

First, consider the following proposition.
\begin{proposition}\label{prop:veryexciting}
  Let $\bb,\mathscr{C}$ be controllable systems, given by the respective
  $\Phi$-images of the spans
    $m \xleftarrow{B_1} d \xrightarrow{B_2} n$ 
    and 
    $n \xleftarrow{C_1} e \xrightarrow{C_2} l$.
  Then the composite $\mathscr{C} \circ \bb: m \to l$ is controllable
  if $\Phi(\xrightarrow{B_2}\xleftarrow{C_1})$ is
  controllable.
\end{proposition}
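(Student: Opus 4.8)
We are given two controllable systems $\bb$ and $\mathscr{C}$, presented as $\Phi$-images of spans. Since $\bb$ is controllable, by Theorem \ref{cor.spanreps} it can be written in span form $m \xleftarrow{B_1} d \xrightarrow{B_2} n$, and similarly $\mathscr{C}$ as $n \xleftarrow{C_1} e \xrightarrow{C_2} l$. The composite $\mathscr{C} \circ \bb$ is computed in $\corel\mat\pk$ by the $\mc E$-part (jointly epic part) of the pushout over the shared object $n$. The hypothesis is that the `inner' cospan $d \xrightarrow{B_2} n \xleftarrow{C_1} e$ is itself controllable, i.e.\ its $\Phi$-image is.

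**The plan.** The strategy is to reduce controllability of the composite to the span-representability criterion of Theorem \ref{cor.spanreps}, and to exploit compositionality. First I would form the composite span-of-spans diagram: lay out the two spans side by side with the shared foot $n$ in the middle, giving the shape $m \xleftarrow{B_1} d \xrightarrow{B_2} n \xleftarrow{C_1} e \xrightarrow{C_2} l$. The hypothesis says the middle cospan $d \xrightarrow{B_2} n \xleftarrow{C_1} e$ is controllable, so again by Theorem \ref{cor.spanreps} it equals, as a corelation, a span $d \xleftarrow{P} f \xrightarrow{Q} e$ for some matrices $P\maps f \to d$, $Q \maps f \to e$. The key idea is that this span form for the middle piece lets us rewrite the entire composite in span form: substituting the span $\xleftarrow{P}\xrightarrow{Q}$ for the cospan $\xrightarrow{B_2}\xleftarrow{C_1}$ inside the larger expression and composing the outer span legs gives
\[
  m \xleftarrow{B_1 P} f \xrightarrow{C_2 Q} l,
\]
which is manifestly a span representation of $\mathscr{C} \circ \bb$. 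By Theorem \ref{cor.spanreps} again (in the reverse direction), the existence of such a span witnesses controllability of the composite.

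**Making the rewriting rigorous.** The step that needs care is justifying that substituting the span form of the middle cospan into the composite yields the correct behaviour---that is, that composition of corelations is compatible with this local replacement. Since $\Phi\maps \corel\mat\pk \to \ltids$ is an isomorphism of props (Theorem \ref{thm.main}), equalities of corelations correspond exactly to equalities of behaviours, so it suffices to work entirely inside $\corel\mat\pk$. Here I would argue diagrammatically using the sound and complete theory $\ihcor$: the hypothesis gives a derivation in $\ihcor$ rewriting the middle $\ha_\pk;\ha_\pk^{\op}$ term (cospan form) into an $\ha_\pk^{\op};\ha_\pk$ term (span form), and because $\ihcor$ is a congruence with respect to $\mathrel{;}$ and $\oplus$, this derivation can be applied in context inside the full term for $\mathscr{C}\circ\bb$. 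The outer legs $B_1$ and $C_2$, already in $\ha_\pk^{\op}$ and $\ha_\pk$ form respectively, then absorb into the rewritten middle by functoriality of composition, leaving a global span form.

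**The main obstacle.** The subtle point I expect to be the crux is that composition in $\corel\mat\pk$ takes the \emph{jointly epic part} of the pushout, not the pushout itself, so one must verify that the local span-rewriting survives this reduction---i.e.\ that taking $\mc E$-parts commutes with the substitution. This is where the structure established in the proof of Lemma \ref{lem.corelfuncomposition} and the fact that the map $\square\maps\cospan\mat\pk \to \corel\mat\pk$ preserves composition (Theorem \ref{thm.main}'s functoriality) do the heavy lifting: one evaluates the expression in $\cospan\mat\pk$ first and then applies $F$, knowing the result is independent of how we bracket. I would also flag that the converse is \emph{not} claimed---controllability of the composite need not force controllability of the middle cospan---so the proof is genuinely one-directional, and I would not attempt to strengthen the `if' to an `iff'.
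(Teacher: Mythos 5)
Your proposal is correct and is essentially the paper's own argument, expanded: the paper's proof is the single sentence ``Replacing $\xrightarrow{B_2}\xleftarrow{C_1}$ with an equivalent span gives a span representation for $\mathscr{C}\circ\bb$,'' which is precisely your substitution of $d \xleftarrow{P} f \xrightarrow{Q} e$ into the middle to obtain $m \xleftarrow{B_1 P} f \xrightarrow{C_2 Q} l$ and then invoking Theorem \ref{cor.spanreps}. Your additional care about the rewriting being valid in context (congruence of $\ihcor$, compatibility with taking jointly epic parts) and your remark that the converse fails are both sound and consistent with the paper.
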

\begin{proof}
  Replacing $\xrightarrow{B_2}\xleftarrow{C_1}$ with an equivalent span gives a span
  representation for $\mathscr{C} \circ \bb$.
\end{proof}

\begin{example}
  Consider LTI systems
  \[
  \lower25pt\hbox{$\includegraphics[height=2cm]{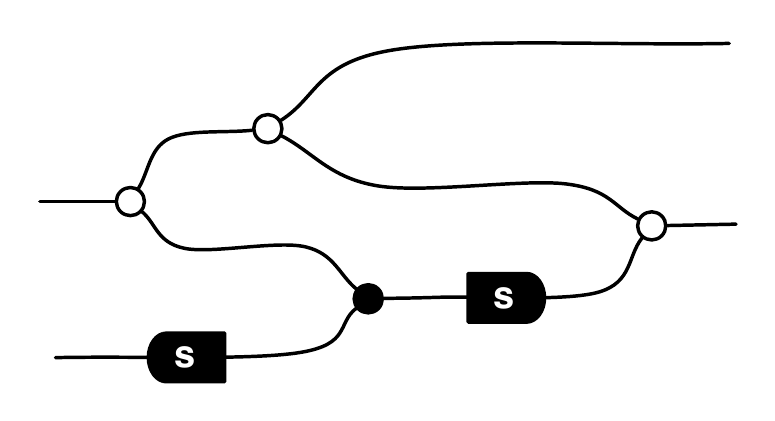}$}
  \quad\text{and}\quad
  \lower22pt\hbox{$\includegraphics[height=1.8cm]{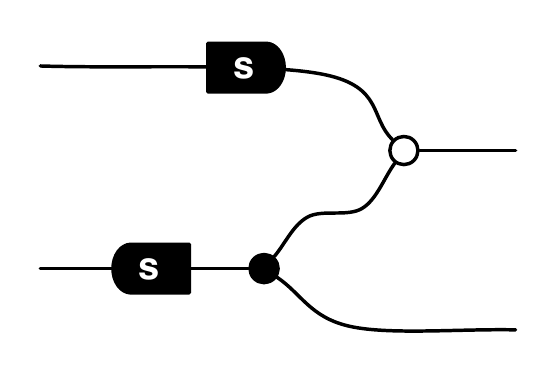}$}.
  \]
  These systems are controllable because each is represented by a span in
  $\mat\pk$. Indeed, recall that each generator of $\ltids=\corel\mat\pk$ arises
  as the image of a generator in $\mat\pk$ or $\mat\pk^{\mathrm{op}}$; for
  example, the white monoid map $\addgen$ represents a morphism in $\mat\pk$,
  while the black monoid map $\copyopgen$ represents a morphism in
  $\mat\pk^{\mathrm{op}}$. The above diagrams are spans as we may partition the
  diagrams above so that each generator in $\mat\pk^{\mathrm{op}}$ lies to the
  left of each generator in $\mat\pk$.

  To determine controllability of the interconnected system
  \[
  \lower25pt\hbox{$\includegraphics[height=2.6cm]{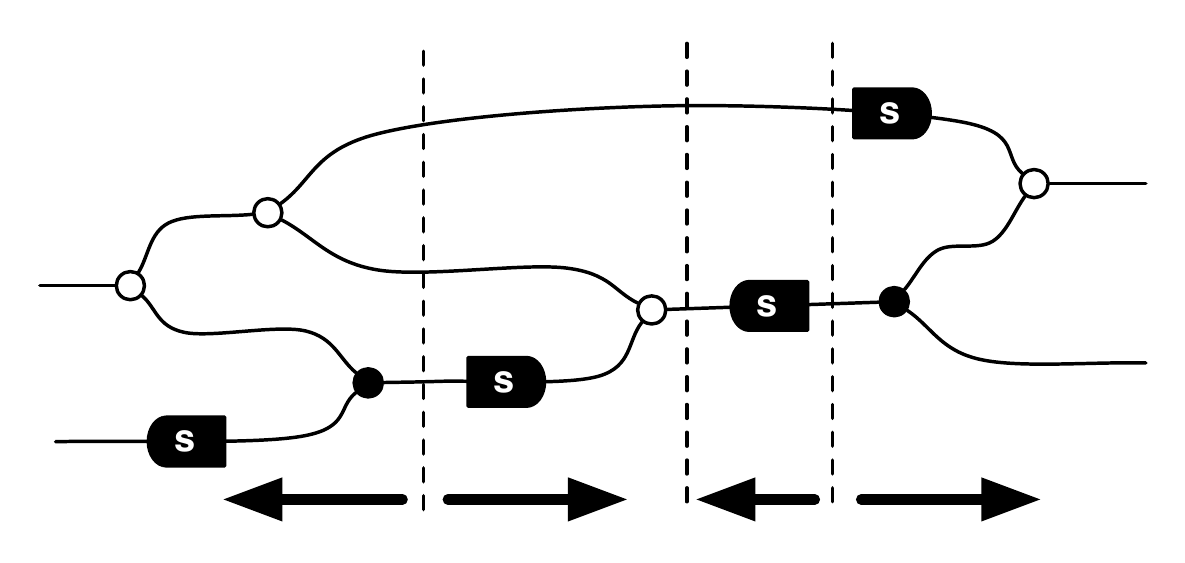}$}
  \]
  Proposition \ref{prop:veryexciting} states that it is enough to consider the
  controllability of the subsystem
  \[
  \lower17pt\hbox{$\includegraphics[height=1.4cm]{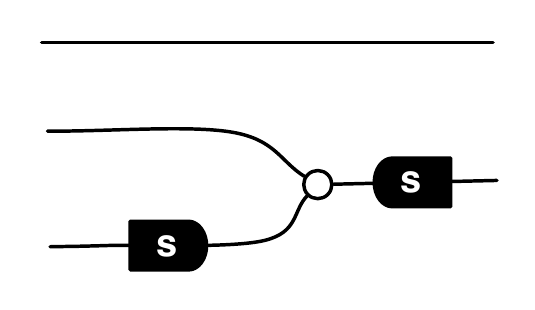}$}.
  \]
  The above diagram gives a representation of the subsystem as a cospan in
  $\mat\pk$. We can prove it is controllable by rewriting it as a span using an
  equation of $\ltids$:
  \[
  \lower25pt\hbox{$\includegraphics[height=2.2cm]{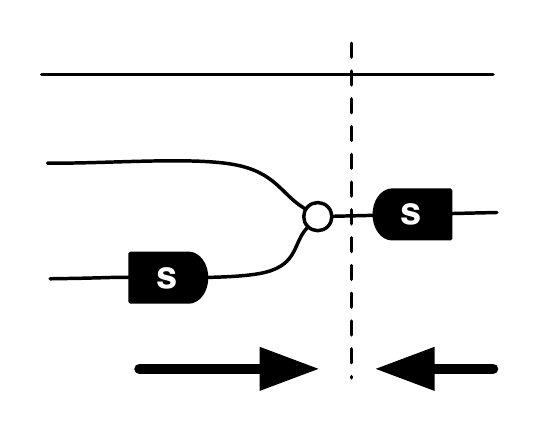}$}
    =
  \lower25pt\hbox{$\includegraphics[height=2.2cm]{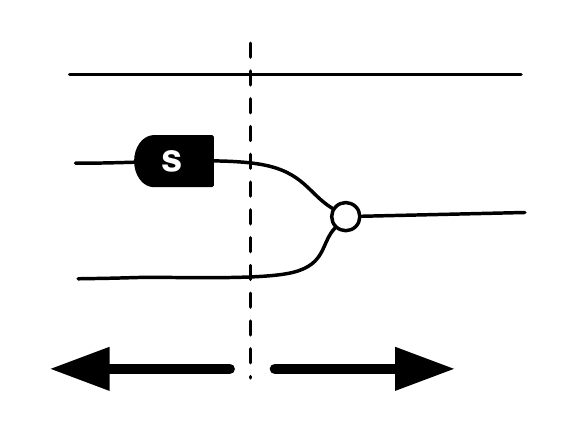}$}.
  \]
  Thus the composite system is controllable.
\end{example}

\begin{remark}
  Note the converse of Proposition \ref{prop:veryexciting} fails. For a simple
  example, consider the system 
  \[
  \lower10pt\hbox{$\includegraphics[height=1cm]{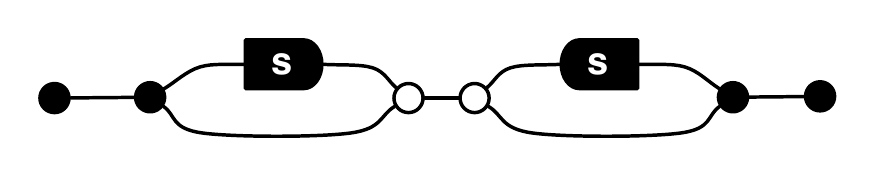}$}
  \]
  This is equivalent to the empty system, and so trivially controllable. The
  central span, however, is not controllable (Example \ref{ex.noncontrol}).
\end{remark}

\subsection{Comparison to matrix methods}
The facility with which the graphical calculus formalises and solves such
controllability issues is especially appealing in view of potential applications
in the analysis of controllability of \emph{systems over networks} (see
\cite{OFM}). To make the reader fully appreciate such potential, we sketch how
complicated such analysis is using standard algebraic methods and
dynamical system theory even for the highly restrictive case of two systems that
compose to make a single-input, single-output system.  See also pages 513 to 516
of Fuhrmann and Helmke's recent book \cite{FH}, where a generalization of the
result of Proposition \ref{prop:veryexciting} is given in a polynomial- and
operator-theoretic setting. 

In the following we abuse notation by writing a matrix for its image under the
functor $\theta$.  The following is a useful result for analysing the
controllability of kernel representations applying only to the single-input
single-output case.

\begin{proposition}[Willems {\cite[p.75]{Wi}}] \label{prop.controlkernel}
  Let $\mathscr B \subseteq (\k^2)^\z$ be a behaviour given by the kernel of the
  matrix $\theta\begin{bmatrix} A & B\end{bmatrix}$, where $A$ and $B$ are column
  vectors with entries in $\k[s,s^{-1}]$. Then $\mathscr B$ is controllable if
  and only if the greatest common divisor $\gcd(A,B)$ of $A$ and $B$ is $1$.
\end{proposition}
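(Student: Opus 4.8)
The plan is to connect this single-input single-output result back to the span/controllability machinery developed above, specifically Theorem~\ref{cor.spanreps} and Proposition~\ref{thm.imagereps}. The behaviour $\mathscr B = \ker\theta\begin{bmatrix} A & B \end{bmatrix}$ with $A,B$ column vectors is a corelation $1 \xrightarrow{A} 1 \xleftarrow{B} 1$ in $\corel\mat\pk$ (up to the domain/codomain partition), and Theorem~\ref{cor.spanreps} tells us that $\mathscr B$ is controllable precisely when this cospan can be rewritten as a span. So the task reduces to showing that the span rewriting is possible if and only if $\gcd(A,B)=1$.

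First I would establish the ``only if'' direction contrapositively. Suppose $d = \gcd(A,B) \neq 1$, so $A = d A'$ and $B = d B'$ for some $A',B' \in \pk$ with $A'$, $B'$ not both units; since $\pk$ is a PID and $d$ is a non-unit, $d$ has a genuine factor that cannot be cancelled. I would then invoke Proposition~\ref{thm.imagereps}: controllability is equivalent to the existence of an image representation $\mathscr B = \im\theta M$. The key computational observation is that $\ker\theta\begin{bmatrix} A & B\end{bmatrix} = \ker\theta\begin{bmatrix} dA' & dB'\end{bmatrix}$ contains $\ker\theta\begin{bmatrix} A' & B'\end{bmatrix}$ as a sub-behaviour, with the former strictly larger exactly when $d$ is a non-unit, because the action of $d$ (a nontrivial Laurent polynomial) on $\k^\z$ has nontrivial kernel. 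The strictly larger kernel then fails to be the image of any matrix, matching the pattern of Example~\ref{ex.noncontrol}, where the common factor $s+1$ obstructs controllability.

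For the ``if'' direction, assume $\gcd(A,B)=1$. Then by B\'ezout's identity in the PID $\pk$ there exist $P, Q \in \pk$ with $PA + QB = 1$. I would use this to construct an explicit image representation, or equivalently an explicit span. The idea is that the matrix $\begin{bmatrix} A & B\end{bmatrix}$ having a left inverse at the level of $\gcd$ means the cospan $1 \xrightarrow{A} 1 \xleftarrow{B} 1$ can be converted to a span: coprimality ensures the pullback of the cospan in $\mat\pk$ recovers the full behaviour, so by the Corollary to Proposition~\ref{prop.ctrlablepart} the system coincides with its maximal controllable subsystem and is therefore controllable. Concretely, the image representation is $\mathscr B = \im\theta\begin{bmatrix} B \\ -A\end{bmatrix}$ (or a suitable rescaling), whose joint image lands in $\ker\theta\begin{bmatrix}A & B\end{bmatrix}$, and coprimality forces this containment to be an equality by a dimension/rank argument via Proposition~\ref{prop.magic}.

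The main obstacle I anticipate is verifying that the candidate image representation in the coprime case is \emph{exactly} the kernel, not merely contained in it. Containment $\im\theta\begin{bmatrix} B \\ -A\end{bmatrix} \subseteq \ker\theta\begin{bmatrix} A & B\end{bmatrix}$ is a one-line check since $\begin{bmatrix} A & B\end{bmatrix}\begin{bmatrix} B \\ -A\end{bmatrix} = AB - BA = 0$. The reverse containment is where coprimality must genuinely be used, and the cleanest route is Proposition~\ref{prop.magic}: if $\ker\theta M \subseteq \ker\theta N$ then $N = XM$ for some matrix $X$. Applying this to compare the kernel with the image (viewed through its own kernel representation) and using $PA+QB=1$ to exhibit the required factorisation should close the gap, but threading the two-sided divisibility argument carefully so that the non-existence of a common factor translates into surjectivity of the relevant map is the delicate step.
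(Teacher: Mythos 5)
The paper does not actually prove this proposition: it is quoted verbatim from Willems \cite[p.75]{Wi} and used as a black box in the comparison with classical matrix methods, so there is no in-paper argument to measure yours against. What you propose is a genuinely new, self-contained derivation from the paper's own span/corelation machinery (Theorem~\ref{cor.spanreps}, Propositions~\ref{thm.imagereps}, \ref{prop.ctrlablepart} and \ref{prop.magic}), and in outline it works --- for the case where $A$ and $B$ are $1\times 1$, i.e.\ the single-input single-output case the paper actually invokes. This is a nice payoff of the categorical framework: Willems' classical fact becomes a statement about when a cospan in $\mat\pk$ agrees with its pullback span.

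Three points need tightening. First, in the ``if'' direction the reverse containment $\ker\theta[A\;\;B]\subseteq\im\theta\left[\begin{smallmatrix}B\\-A\end{smallmatrix}\right]$ is not naturally a job for Proposition~\ref{prop.magic}, whose hypothesis is a containment of \emph{kernels}; the clean route is the explicit B\'ezout witness: from $PA+QB=1$ and $Aw_1+Bw_2=0$ set $\ell=Qw_1-Pw_2$ and check directly that $w_1=B\ell$ and $w_2=-A\ell$. Second, in the ``only if'' direction, ``the strictly larger kernel fails to be the image of any matrix'' is exactly the claim that needs proof. The way to close it is the one you only cite in the other direction: writing $A=dA'$, $B=dB'$ with $\gcd(A',B')=1$, the pullback of the cospan is generated by $(B',-A')$, so by Proposition~\ref{prop.ctrlablepart} the \emph{maximal} controllable sub-behaviour is $\im\theta\left[\begin{smallmatrix}B'\\-A'\end{smallmatrix}\right]=\ker\theta[A'\;\;B']$; a controllable behaviour must equal its maximal controllable part, so your strictness claim finishes the argument. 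For that strictness you should also record why a non-unit $d\in\pk$ has nontrivial kernel on $\k^\z$ (normalise $d$ to a polynomial with nonzero constant and leading coefficients; the resulting two-sided recurrence has solution space of dimension $\deg d\ge 1$) and why a nonzero element of $\ker\theta d$ lifts to a witness $(w_1,w_2)$ --- namely $\theta[A'\;\;B']$ is surjective, again by B\'ezout. Finally, be aware that your argument, and indeed the literal statement, only holds for scalar $A,B$: for genuine column vectors the entrywise gcd is not the right criterion (e.g.\ $A=(1,0)^{T}$, $B=(0,s+1)^{T}$ has entry-gcd $1$ but yields the autonomous, non-controllable behaviour $\{0\}\oplus\ker\theta(s+1)$), so the proposition should be read in the $1\times 2$ case, which is the only case the paper uses.
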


Using the notation of Proposition \ref{prop:veryexciting}, the trajectories of
$\mathscr{B}$ and $\mathscr{C}$ respectively are those $(w_1,w_2) \in (\k^n)^\z
\oplus (\k^m)^\z$ and $(w_2',w_3) \in (\k^m)^\z \oplus (\k^p)^\z$ such that 
\begin{equation}\label{eq:imBnC}
\begin{bmatrix} w_1\\w_2 \end{bmatrix}
=
\begin{bmatrix} B_1\\B_2 \end{bmatrix} \ell_1 
\quad \mbox{ \rm and} \quad  
\begin{bmatrix} w_2^\prime\\w_3 \end{bmatrix}
=
\begin{bmatrix} C_1\\C_2 \end{bmatrix} \ell_2\; 
\end{equation}
for some $\ell_1 \in (\k^b)^\z$, $\ell_2 \in (\k^c)^\z$. These are the explicit
image representations of the two systems.  We assume without loss
of generality that the representations (\ref{eq:imBnC}) are \emph{observable}
(see \cite{Wi}); this is equivalent to $\gcd(B_1,B_2)=\gcd(C_1,C_2)=1$. Augmenting
(\ref{eq:imBnC}) with the interconnection constraint
$w_2=B_2\ell_1=C_1\ell_2=w_2^\prime$ we obtain the representation of the
interconnection: 
\begin{eqnarray}\label{eq:hybBnC}
\begin{bmatrix}
w_1\\w_2\\w_2^\prime\\w_3\\0
\end{bmatrix}&=&\begin{bmatrix} B_1&0\\B_2&0\\0&C_1\\ 0&C_2 \\ B_2&-C_1 \end{bmatrix} \begin{bmatrix} \ell_1\\\ell_2\end{bmatrix}\; .
\end{eqnarray}
Proposition \ref{prop:veryexciting} concerns the controllability of the set
$\mathscr{C} \circ \mathscr{B}$ of trajectories $(w_1,w_3)$ for which there
exist trajectories $w_2$, $w_2^\prime$, $\ell_1$, $\ell_2$ such that
(\ref{eq:hybBnC}) holds. 

To obtain a representation of such behaviour the variables $\ell_1$, $\ell_2$,
$w_2$ and $w_2^\prime$ must be eliminated from (\ref{eq:hybBnC}) via algebraic
manipulations (see the discussion on p. 237 of \cite{Wi2}). Denote
$G=\gcd(B_1,C_2)$, and write $C_2=G C_2^\prime$ and $B_1=G B_1^\prime$, where
$\gcd(B_1^\prime, C_2^\prime)=1$.  Without entering in the algebraic details, it
can be shown that a kernel representation of the projection of the behaviour of
(\ref{eq:hybBnC}) on the variables $w_1$ and $w_3$ is
\begin{equation}\label{eq:extafterelim}
  \begin{bmatrix} C_2^\prime B_2& -B_1^\prime C_2\end{bmatrix} 
  \begin{bmatrix} w_1\\ w_3\end{bmatrix}=0\; .
\end{equation}
We now restrict to the single-input single-output case. Recalling Proposition
\ref{prop.controlkernel}, the behaviour represented by (\ref{eq:extafterelim}) is
controllable if and only if $\gcd(C_2^\prime B_2, B_1^\prime C_1)=1$. 

Finally then, to complete our alternate proof of the single-input single-output
case of Proposition \ref{prop:veryexciting}, note that
$\cospanfunrest(\xrightarrow{B_2}\xleftarrow{C_1})$ is controllable if
$\gcd(B_2, C_1)=1$.  Given the observability assumption, this implies
$\gcd(C_2^\prime B_2, B_1^\prime C_1)=1$, and so the interconnected behaviour
$\mathscr{C} \circ \mathscr{B}$ represented by (\ref{eq:extafterelim}) is
controllable. 

In the multi-input, multi-output case stating explicit conditions on the
controllability of the interconnection given properties of the
representations of the individual systems and their interconnection is rather
complicated. This makes the simplicity of Proposition \ref{prop:veryexciting} and the
straightforward nature of its proof all the more appealing.

\chapter{Passive linear networks} \label{ch.circuits}
This chapter is about using decorated corelations to build semantic functors.

A thorough introduction to this chapter can be found in the next section. Here
we give a quick overview of the structure of this chapter. The first focus is
the traditional, closed semantics of circuit diagrams. We begin in
\textsection\ref{sec:resistors} with a discussion of circuits of linear
resistors, developing the intuition for the governing laws of passive linear
circuits---Ohm's law, Kirchhoff's voltage law, and Kirchhoff's current law---in
a time-independent setting, and showing that Dirichlet forms represent circuit
behaviours.  In \textsection\ref{sec:plcs}, the Laplace transform then allows us
to recapitulate these ideas after introducing inductors and capacitors, speaking
of impedance where we formerly spoke of resistance, and generalizing Dirichlet
forms from the field $\R$ to the field $\R(s)$ of real rational functions. 

The goal of this applications chapter is to show how decorated corelations can
make these semantics compositional. As a network-style diagrammatic language, it
should thus be no surprise that we begin by using decorated cospans to construct
a hypergraph category of circuits (\textsection\ref{sec:circdef}). At the end of
this section, however, we show that Dirichlet forms do not provide the
flexibility to construct a semantic category for circuits.  This motivates the
development of more powerful machinery.

In \textsection\ref{sec:circlagr} we review the basic theory of linear
Lagrangian relations, giving details to the correspondence we have defined
between Dirichlet forms, and hence passive linear circuits, and Lagrangian
relations. Subsection \ref{sec:corel} then takes immediate advantage of the
added flexibility of Lagrangian relations, discussing the `trivial' circuits
comprising only perfectly conductive wires, which mediate the notion of
composition of circuits.

With these application-specific tools, we can use decorated corelations to prove
the functoriality of black-boxing circuits in \textsection\ref{sec:blackbox}.

\section{Introduction}\label{sec:intro}
In late 1940s, just as Feynman was developing his diagrams for processes in
particle physics, Eilenberg and Mac Lane initiated their work on category
theory.  Over the subsequent decades, and especially in the aforementioned work
of Joyal and Street \cite{JS91,JS93}, it became clear that these developments were
profoundly linked: as we have seen, monoidal categories have a precise graphical
representation in terms of string diagrams, and conversely monoidal categories
provide an algebraic foundation for the intuitions behind Feynman diagrams
\cite{BaezStay}.  The key insight is the use of categories where morphisms
describe physical processes, rather than structure-preserving maps between
mathematical objects \cite{CP}.

But even before physicists began using Feynman diagrams, various branches of
engineering were using diagrams that in retrospect are closely related. In the
last chapter we investigated signal flow diagrams. But even before these, and
even more well known, are the ubiquitous electrical circuit diagrams. These are
broad in their application, and similar diagrams are used to describe networks
consisting of mechanical, hydraulic, thermodynamic and chemical systems.
Further work, pioneered in particular by Forrester \cite{Fo} and Odum \cite{Od},
even applies similar diagrammatic methods to biology, ecology, and economics.

As discussed in detail by Olsen \cite{Ol}, Paynter \cite{Pa} and others, there
are mathematically precise analogies between these different systems. We give a
few examples in Table \ref{tab.analogy}. In each case, the system's state is
described by variables that come in pairs, with one variable in each pair
playing the role of  `displacement' and the other playing the role of
`momentum'.  In engineering, the time derivatives of these variables are
sometimes called `flow' and `effort'.    In classical mechanics, this pairing of
variables is well understood using symplectic geometry.  Thus, any mathematical
formulation of the diagrams used to describe networks in engineering needs to
take symplectic geometry as well as category theory into account.

\begin{table}[H]
  \caption {Analogies between variables in various physical settings}
  \label{tab.analogy} 
\begin{scriptsize}
\begin{center}
  \begin{tabular}{|c|c c c c|}
\hline
& displacement  &  flow & momentum & effort \\
& $q$ & $\dot{q}$ & $p$ & $\dot{p}$ \\
\hline
Electronics & charge & current & flux linkage & voltage\\
Mechanics (translation) & position & velocity & momentum & force\\
Mechanics (rotation) & angle & angular velocity & angular momentum & torque\\
Hydraulics & volume & flow & pressure momentum & pressure\\
Thermodynamics & entropy & entropy flow & temperature momentum & temperature \\
Chemistry & moles & molar flow & chemical momentum & chemical potential \\
\hline
\end{tabular}
\end{center}
\end{scriptsize}
\end{table}

Although we shall keep the broad applicability of network diagrams in the back
of our minds, we couch our discussion in terms of electrical circuits, for the
sake of familiarity. In this section our goal is somewhat limited.  We only study circuits built from `passive' components: that is, those that do not produce energy.  Thus, we exclude batteries and current sources.  We only consider components that respond linearly to an applied voltage.   Thus, we exclude components such as nonlinear resistors or diodes.  Finally, we only consider components with one input and one output, so that a circuit can be described as a graph with edges labeled by components.  Thus, we also exclude transformers.  The most familiar components our framework covers are linear resistors, capacitors and inductors.

While we hope to expand our scope in future work, the class of circuits made from these components has appealing mathematical properties, and is worthy of deep study.  Indeed, this class has been studied intensively for many decades by electrical engineers \cite{AV,Budak,Slepian}.  Even circuits made exclusively of resistors have inspired work by mathematicians of the caliber of Weyl \cite{Weyl} and Smale \cite{Smale}.  

The present work relies on this research.  All we add here is an emphasis on symplectic geometry and an explicitly `compositional' framework, which clarifies the way a larger circuit can be built from smaller pieces.  This is where monoidal categories become important: the main operations for building circuits from pieces are composition and tensoring.
 
Our strategy is most easily illustrated for circuits made of linear resistors.  Such a resistor dissipates power, turning useful energy into heat at a rate determined by the voltage across the resistor.  However, a remarkable fact is that a circuit made of these resistors always acts to \emph{minimize} the power dissipated this way.  This `principle of minimum power' can be seen as the reason symplectic geometry becomes important in understanding circuits made of resistors, just as the principle of least action leads to the role of symplectic geometry in classical mechanics.  

Here is a circuit made of linear resistors:
\[
\begin{tikzpicture}[circuit ee IEC, set resistor graphic=var resistor IEC graphic]
\node[contact] (I1) at (0,2) {};
\node[contact] (I2) at (0,0) {};
\node[contact] (O1) at (5.83,1) {};
\node(input) at (-2,1) {\small{\textsf{inputs}}};
\node(output) at (7.83,1) {\small{\textsf{outputs}}};
\draw (I1) 	to [resistor] node [label={[label distance=2pt]85:{$3\Omega$}}] {} (2.83,1);
\draw (I2)	to [resistor] node [label={[label distance=2pt]275:{$1\Omega$}}] {} (2.83,1)
				to [resistor] node [label={[label distance=3pt]90:{$4\Omega$}}] {} (O1);
\path[color=gray, very thick, shorten >=10pt, ->, >=stealth, bend left] (input) edge (I1);		\path[color=gray, very thick, shorten >=10pt, ->, >=stealth, bend right] (input) edge (I2);		
\path[color=gray, very thick, shorten >=10pt, ->, >=stealth] (output) edge (O1);
\end{tikzpicture}
\]
The wiggly lines are resistors, and their resistances are written beside them: for example,
$3\Omega$ means 3 ohms, an `ohm' being a unit of resistance.  To formalize this, define a circuit of linear resistors to consist of:
\begin{itemize}
\item a set $N$ of nodes,
\item a set $E$ of edges, 
\item maps $s,t \maps E \to N$ sending each edge to its source and target node,
\item a map $r\maps E \to (0,\infty)$ specifying the resistance of the resistor 
labelling each edge, 
\item maps $i \maps X \to N$, $o \maps Y \to N$ specifying the
inputs and outputs of the circuit.
\end{itemize}

When we run electric current through such a circuit, each node $n \in N$ gets
a `potential' $\phi(n)$.  The `voltage' across an edge $e \in E$ is defined as the 
change in potential as we move from to the source of $e$ to its target, $\phi(t(e)) - 
\phi(s(e))$, and the power dissipated by the resistor on this edge equals
\[      
\frac{1}{r(e)}\big(\phi(t(e))-\phi(s(e))\big)^2. 
\]
The total power dissipated by the circuit is therefore twice
\[   
P(\phi) = \frac{1}{2}\sum_{e \in E} \frac{1}{r(e)}\big(\phi(t(e))-\phi(s(e))\big)^2.
\]
The factor of $\frac{1}{2}$ is convenient in some later calculations.  
Note that $P$ is a nonnegative quadratic form on the vector space $\R^N$.
However, not every nonnegative definite quadratic form on $\R^N$ arises in this way from some circuit of linear resistors with $N$ as its set of nodes.  The quadratic forms that do arise are called `Dirichlet forms'.  They have been extensively investigated \cite{Fukushima,MR,Sabot1997,Sabot2004}, and they play a major role in our work.

We write $\partial N = i(X) \cup o(Y)$ for the set of `terminals': that is,
nodes corresponding to inputs and outputs.    The principle of minimum
power says that if we fix the potential at the terminals, the circuit will choose
the potential at other nodes to minimize the total power dissipated.   
An element $\psi$ of the vector space $\R^{\partial N}$ assigns a potential 
to each terminal.   Thus, if we fix $\psi$, the total power dissipated will be twice
\[
  Q(\psi) = \min_{\substack{ \phi \in \R^N \\ \phi\vert_{\partial N} = \psi}} \; P(\phi)  
\]
The function $Q \maps \R^{\partial N} \to \R$ is again a Dirichlet form.  We call it the `power functional' of the circuit.  

Now, suppose we are unable to see the internal workings of a circuit, and can only observe its `external behaviour': that is, the potentials at its terminals and the currents flowing into or out of these terminals.   Remarkably, this behaviour is completely determined by the power functional $Q$.  The reason is that the current at any terminal can be obtained by differentiating $Q$ with respect to the potential at this terminal, and relations of this form are \emph{all} the relations that hold between potentials and currents at the terminals.

The Laplace transform allows us to generalize this immediately to circuits that
can also contain linear inductors and capacitors, simply by changing the field we work over, replacing $\R$ by the field $\R(s)$ of rational functions of a single real variable,
and talking of `impedance' where we previously talked of resistance.  We obtain
a category $\Circ$ where, roughly speaking, an object is a finite set, a
morphism $X \to Y$ is a circuit with input set $X$ and output set $Y$, and
composition is given by identifying the outputs of one circuit with the inputs
of the next, and taking the resulting union of labelled graphs.  Each such circuit gives rise to a Dirichlet form, now defined over
$\R(s)$, and this Dirichlet form completely describes the externally observable
behaviour of the circuit.  

We can take equivalence classes of circuits, where two circuits count as the
same if they have the same Dirichlet form.  We wish for these equivalence classes of circuits to form a category. Although
there is a notion of composition for Dirichlet forms, we find that it lacks
identity morphisms or, equivalently, it lacks morphisms representing ideal wires
of zero impedance. To address this we turn to Lagrangian subspaces of
symplectic vector spaces.  These generalize quadratic forms via the map
\[
  \Big(Q\maps \F^{\partial N} \to \F\Big) \longmapsto \Big(\mathrm{Graph}(dQ) =
  \{(\psi, dQ_\psi) \mid \psi \in \F^{\partial N} \} \subseteq \F^{\partial
  N} \oplus (\F^{\partial N})^\ast\Big)
\]
taking a quadratic form $Q$ on the vector space $\F^{\partial N}$
over a field $\F$ to the graph of its differential $dQ$. Here we think of the
symplectic vector space $\F^{\partial N} \oplus (\F^{\partial N})^\ast$ as the
state space of the circuit, and the subspace $\mathrm{Graph}(dQ)$ as the
subspace of attainable states, with $\psi \in \F^{\partial N}$ describing the
potentials at the terminals, and $dQ_\psi \in (\F^{\partial N})^\ast$ the
currents. 

This construction is well known in classical mechanics \cite{Weinstein}, where the principle of least action plays a role analogous to that of the principle of minimum power here.   The set of Lagrangian subspaces is actually an algebraic variety,
the `Lagrangian Grassmannian', which serves as a compactification of the
space of quadratic forms.  The Lagrangian Grassmannian has already played a
role in Sabot's work on circuits made of resistors \cite{Sabot1997,Sabot2004}.
For us, its importance it that we can find identity morphisms
for the composition of Dirichlet forms by taking circuits made of parallel resistors
and letting their resistances tend to zero: the limit is not a Dirichlet form, but
it exists in the Lagrangian Grassmannian.    Indeed, 
there exists a category $\LagrRel$ with finite dimensional
symplectic vector spaces as objects and `Lagrangian relations' as morphisms: 
that is, linear relations from $V$ to $W$ that are given by Lagrangian subspaces of $\overline{V} \oplus W$, where $\overline{V}$ is the symplectic vector space conjugate to $V$.   

To move from the Lagrangian subspace defined by the graph of the differential of
the power functional to a morphism in the category $\LagrRel$---that
is, to a Lagrangian relation---we must treat seriously the input and output
functions of the circuit. These express the circuit as built upon a cospan   
\[
  \xymatrix{
    & N \\
    X \ar[ur]^{i} && Y. \ar[ul]_o
  }
\]
Applicable far more broadly than this present formalization of circuits, cospans
model systems with two `ends', an input and output end, albeit without any
connotation of directionality: we might just as well exchange the role of the
inputs and outputs by taking the mirror image of the above diagram. The role of
the input and output functions, as we have discussed, is to mark the terminals
we may glue onto the terminals of another circuit, and the pushout of cospans
gives formal precision to this gluing construction.

One upshot of this cospan framework is that we may consider circuits with elements
of $N$ that are both inputs and outputs, such as this one:
\[
  \begin{tikzpicture}[circuit ee IEC, set resistor graphic=var resistor iec graphic]
    \node[contact] (c1) at (0,2) {};
    \node[contact] (c2) at (0,0) {};
    \node(input) at (-2,1) {\small{\textsf{inputs}}};
    \node(output) at (2,1) {\small{\textsf{outputs}}};
    \path[color=gray, very thick, shorten >=10pt, ->, >=stealth, bend left]
    (input) edge (c1);		
    \path[color=gray, very thick, shorten >=10pt, ->, >=stealth, bend right]
    (input) edge (c2);	
    \path[color=gray, very thick, shorten >=10pt, ->, >=stealth, bend right]
    (output) edge (c1);
    \path[color=gray, very thick, shorten >=10pt, ->, >=stealth, bend left]
    (output) edge (c2);
  \end{tikzpicture}
\]
This corresponds to the identity morphism on the finite set with two elements.
Another is that some points may be considered an input or output multiple
times; we draw this:
\[
  \begin{tikzpicture}[circuit ee IEC, set resistor graphic=var resistor IEC graphic]
    \node[contact] (I1) at (0,0) {};
    \node[contact] (O1) at (3,0) {};
    \node(input) at (-2,0) {\small{\textsf{inputs}}};
    \node(output) at (5,0) {\small{\textsf{outputs}}};
    \draw (I1) 	to [resistor] node [label={[label distance=3pt]90:{$1\Omega$}}]
    {} (O1);
    \path[color=gray, very thick, shorten >=10pt, ->, >=stealth, bend left] (input)
    edge (I1);		
    \path[color=gray, very thick, shorten >=10pt, ->,
    >=stealth, bend right] (input) edge (I1);		
    \path[color=gray, very thick, shorten >=10pt, ->, >=stealth] (output) edge (O1);
  \end{tikzpicture}
\]
This allows us to connect two distinct outputs to the above double
input.

Given a set $X$ of inputs or outputs, we understand the electrical behaviour on this set 
by considering the symplectic vector space $\vectf{X}$, the direct sum of the space
$\F^X$ of potentials and the space ${(\F^X)}^\ast$ of currents at these points.
A Lagrangian relation specifies which states of the output space $\vectf{Y}$ are
allowed for each state of the input space $\vectf{X}$.
Turning the Lagrangian subspace $\mathrm{Graph}(dQ)$ of a circuit into this
information requires that we understand the `symplectification' 
\[  Sf\maps \vectf{B} \to \vectf{A} \] 
and `twisted symplectification'
\[  S^tf\maps \vectf{B} \to \overline{\vectf{A}}\]
of a function $f\maps A \to B$ between finite sets.  In particular we need to understand how these apply to the input and output functions with codomain restricted to $\partial N$; abusing notation, we also write these $i\maps X \to \partial N$ and $o\maps Y \to \partial N$.

The symplectification is a Lagrangian relation, and the catch
phrase is that it `copies voltages' and `splits currents'.  More precisely,
for any given potential-current pair $(\psi,\iota)$ in $\vectf{B}$, its image
under $Sf$ comprises all elements of $(\psi', \iota') \in \vectf{A}$ such that
the potential at $a \in A$ is equal to the potential at $f(a) \in B$, and such
that, for each fixed $b \in B$, collectively the currents at the $a \in
f^{-1}(b)$ sum to the current at $b$.  We use the symplectification $So$ of the
output function to relate the state on $\partial N$ to that on the
outputs $Y$. As our current framework is set up to report the current \emph{out}
of each node, to describe input currents we define the twisted symplectification
$S^tf\maps \vectf{B} \to \overline{\vectf{A}}$ almost identically to the above, except that we flip the sign of the currents $\iota' \in (\F^A)^\ast$.  We use the twisted symplectification $S^ti$ of the input function to relate the state on $\partial N$
to that on the inputs.

The Lagrangian relation corresponding to a circuit is then the set of all
potential--current pairs that are possible at the inputs and outputs of that circuit. 
For instance, consider a resistor of resistance $r$, with one end considered as an
input and the other as an output:
\[
  \begin{tikzpicture}[circuit ee IEC, set resistor graphic=var resistor IEC graphic]
    \node[contact] (I1) at (0,0) {};
    \node[contact] (O1) at (3,0) {};
    \node(input) at (-2,0) {\small{\textsf{input}}};
    \node(output) at (5,0) {\small{\textsf{output}}};
    \draw (I1) 	to [resistor] node [label={[label distance=3pt]90:{$r$}}] {} (O1);
    \path[color=gray, very thick, shorten >=10pt, ->, >=stealth] (input)
    edge (I1);
    \path[color=gray, very thick, shorten >=10pt, ->, >=stealth] (output) edge (O1);
  \end{tikzpicture}
\]
To obtain the corresponding Lagrangian relation, we must first specify domain and
codomain symplectic vector spaces. In this case, as the input and output sets
each consist of a single point, these vector spaces are both $\F \oplus \F^\ast$,
where the first summand is understood as the space of potentials, and the second
the space of currents.

Now, the resistor has power functional $Q\maps \F^2 \to \F$ given by
\[   Q(\psi_1,\psi_2) = \frac1{2r}(\psi_2-\psi_1)^2, \]
and the graph of the differential of $Q$ is
\[
  \mathrm{Graph}(dQ) = \big\{\big(\psi_1,\psi_2,
  \tfrac1r(\psi_1-\psi_2),\tfrac1r(\psi_2-\psi_1)\big) \,\big|\, \psi_1,\psi_2 \in
  \F\big\} \subseteq \F^2 \oplus (\F^2)^\ast.
\]
In this example the input and output functions $i,o$ are simply the identity
functions on a one element set, so the symplectification of the output function
is simply the identity linear transformation, and the twisted symplectification
of the input function is the isomorphism  between conjugate
symplectic vector spaces $\F\oplus\F^\ast \to \overline{\F\oplus\F^\ast}$ mapping $(\phi,i)$ to $(\phi,-i)$ This implies that the behaviour associated to this
circuit is the Lagrangian relation
\[
  \big\{(\psi_1,i,\psi_2,i) \,\big|\, \psi_1,\psi_2 \in \F, i =
  \tfrac1r(\psi_2-\psi_1)\big\}\subseteq \overline{\F \oplus \F^\ast} \oplus \F
    \oplus \F^\ast.
\]
This is precisely the set of potential-current pairs that are allowed at the
input and output of a resistor of resistance $r$.  In particular, the relation
$i = \tfrac1r(\psi_2-\psi_1)$ is well known in electrical engineering: it is
`Ohm's law'.

A crucial fact is that the process of mapping a circuit to its corresponding
Lagrangian relation identifies distinct circuits.  For example, a single 2-ohm resistor:
\[
  \begin{tikzpicture}[circuit ee IEC, set resistor graphic=var resistor IEC graphic]
    \node[contact] (I1) at (0,0) {};
    \node[contact] (O1) at (3,0) {};
    \node(input) at (-2,0) {\small{\textsf{input}}};
    \node(output) at (5,0) {\small{\textsf{output}}};
    \draw (I1) 	to [resistor] node [label={[label distance=3pt]90:{$2\Omega$}}] {} (O1);
    \path[color=gray, very thick, shorten >=10pt, ->, >=stealth] (input)
    edge (I1);
    \path[color=gray, very thick, shorten >=10pt, ->, >=stealth] (output) edge (O1);
  \end{tikzpicture}
\]
has the same Lagrangian relation as two 1-ohm resistors in series:
\[
  \begin{tikzpicture}[circuit ee IEC, set resistor graphic=var resistor IEC graphic]
    \node[contact] (I1) at (0,0) {};
    \node[circle, minimum width = 3pt, inner sep = 0pt, fill=black] (int) at
    (3,0) {};
    \node[contact] (O1) at (6,0) {};
    \node(input) at (-2,0) {\small{\textsf{input}}};
    \node(output) at (8,0) {\small{\textsf{output}}};
    \draw (I1) 	to [resistor] node [label={[label distance=3pt]90:{$1\Omega$}}] {} (int)
    to [resistor] node [label={[label distance=3pt]90:{$1\Omega$}}] {} (O1);
    \path[color=gray, very thick, shorten >=10pt, ->, >=stealth] (input)
    edge (I1);
    \path[color=gray, very thick, shorten >=10pt, ->, >=stealth] (output) edge (O1);
  \end{tikzpicture}
\]
The Lagrangian relation does not shed any light on the internal workings of a
circuit.  Thus, we call the process of computing this relation `black boxing':
it is like encasing the circuit in an opaque box, leaving only its terminals
accessible. Fortunately, the Lagrangian relation of a circuit is enough to
completely characterize its external behaviour, including how it interacts when
connected with other circuits. 

Put more precisely, the black boxing process is \emph{functorial}: we can 
compute the black boxed version of a circuit made of parts by computing the
black boxed versions of the parts and then composing them.   In fact we shall 
prove that $\Circ$ and $\LagrRel$ are dagger compact categories, and
the black box functor preserves all this structure:

\begin{theorem} \label{main_theorem}
  There exists a hypergraph functor, the \define{black box functor}   
  \[ \blacksquare\maps \Circ \to \LagrRel, \]
   mapping a finite set $X$ to the symplectic vector space
  $\vectf{X}$ it generates, and a circuit $\big((N,E,s,t,r),i,o\big)$ to the Lagrangian     
  relation 
  \[
    \bigcup_{v \in \mathrm{Graph}(dQ)} S^ti(v) \times So(v)
    \subseteq \overline{\F^X \oplus (\F^X)^\ast} \oplus \F^Y \oplus (\F^Y)^\ast,
  \]
  where $Q$ is the circuit's power functional.
\end{theorem}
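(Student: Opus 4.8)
The plan is to exhibit $\blacksquare$ as a decorated corelation functor, so that its functoriality and preservation of the hypergraph structure follow wholesale from Proposition~\ref{prop.deccorelfunctors} and its attendant machinery, rather than from a direct hand computation. Recall that $\Circ$ is built in \textsection\ref{sec:circdef} as a decorated cospan category: its objects are finite sets, and a morphism $X \to Y$ is a cospan $X \xrightarrow{i} N \xleftarrow{o} Y$ in $\FinSet$ whose apex is decorated by a circuit $(N,E,s,t,r)$ with impedances valued in $\F = \R(s)$. Write $\mathrm{Circ}\maps (\FinSet,+) \to (\Set,\times)$ for the corresponding lax symmetric monoidal decorating functor, so that $\Circ = \mathrm{Circ}\,\mathrm{Cospan}$. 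On the semantic side, I would express $\LagrRel$ as a decorated corelation category: following \textsection\ref{ssec.linrel}, linear relations are epi-mono corelations, and I would realise $\LagrRel$ as $G\mathrm{Corel}$ for a decorating functor $G$ that, in the manner of $\linsub$ from \textsection\ref{ssec.exelectricalcircuits} and \textsection\ref{sec:ex}, sends a finite set $N$ to the set of Lagrangian subspaces of the symplectic space $\vectf{N}$, now built over the epi--(split mono) factorisation system on $\FinSet$ so that the $\mc M^\opp$-maps act by the symplectic reduction needed to restrict a subspace to the terminals.

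The heart of the construction is a monoidal natural transformation $\theta$ decorating circuits by Lagrangian subspaces. To each circuit $(N,E,s,t,r)$ I would assign the Lagrangian subspace
\[
  \theta_N(N,E,s,t,r) = \mathrm{Graph}(dP) = \{(\phi, dP_\phi) \mid \phi \in \F^N\} \subseteq \vectf{N},
\]
where $P$ is the Dirichlet form (the unminimised power function) of the circuit on $\F^N$. This is precisely the Laplace-transformed, field-extended analogue of the transformation $\res\maps \lgraph \Rightarrow \linsub$ of \textsection\ref{ssec.exelectricalcircuits}, and I would check, exactly as there, that it is natural in $N$ (pushing a circuit forward along $f\maps N \to M$ commutes with pushing $\mathrm{Graph}(dP)$ forward by the symplectification $Sf$) and monoidal (disjoint union of circuits corresponds to direct sum of subspaces). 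Feeding $\theta$, together with the identity base functor on $\FinSet$, into Proposition~\ref{prop.deccorelfunctors} then yields a hypergraph functor $\Circ \to G\mathrm{Corel} \cong \LagrRel$, where the equivalence on objects holds because every finite-dimensional symplectic space over $\F$ is symplectically isomorphic to some $\vectf{X}$.

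It remains to identify this functor with the stated formula. By the definition of the decorated cospan-to-corelation reduction (Theorem~\ref{thm.fcorel} and the restricted-decoration construction preceding it), the passage from the cospan apex $N$ to the corelation apex $\partial N$ restricts $\mathrm{Graph}(dP)$ along the terminal inclusion; this symplectic reduction is exactly the operation realising the principle of minimum power, replacing $P$ by the minimised power functional $Q$ on $\F^{\partial N}$ and hence the decoration by $\mathrm{Graph}(dQ)$. Finally, the identification of $G\mathrm{Corel}$ with $\LagrRel$ reads off, from the feet $i\maps X \to \partial N$ and $o\maps Y \to \partial N$, the twisted symplectification $S^ti$ and symplectification $So$, so that the corelation decorated by $\mathrm{Graph}(dQ)$ maps to $\bigcup_{v \in \mathrm{Graph}(dQ)} S^ti(v) \times So(v)$, as required.

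The main obstacle is the well-definedness of the semantic decorating functor $G$ at the level of corelations, and with it the naturality of $\theta$: I must show that Lagrangian subspaces are genuinely closed under the symplectic reduction induced by the $\mc M^\opp$-maps, so that $G$ lands in $\LagrRel$ and not merely in linear relations. This is precisely the phenomenon, flagged at the close of \textsection\ref{sec:circdef}, that a Dirichlet form restricted to a subset of its variables need not remain a Dirichlet form---the limiting ideal-wire behaviours escape the class of quadratic forms---yet always remains Lagrangian. Establishing that the graph-of-differential map intertwines (i) gluing of nodes via pushout with symplectification, and (ii) elimination of internal nodes with symplectic reduction, all the while preserving the Lagrangian condition, is the genuine content of the proof; once this is in place, functoriality and preservation of hypergraph structure are delivered automatically by the decorated corelation formalism.
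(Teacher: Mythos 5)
Your overall architecture is the paper's own: realise $\Circ$ as a decorated cospan category, decorate by $\mathrm{Graph}(dP)$ via a monoidal natural transformation, let the decorated-corelation machinery deliver functoriality and preservation of the hypergraph structure, and isolate the real content in the fact that symplectic reduction along the terminal inclusion carries $\mathrm{Graph}(dP)$ to $\mathrm{Graph}(dQ)$ (the paper's Theorem \ref{thm:sympmin}, resting on Example \ref{ex:sympfunction} and Proposition \ref{prop:qfls}). The paper differs only in inserting $\mathrm{DirichCospan}$ as an intermediate stop, which is cosmetic.

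The genuine gap is your choice of codomain. You build $G\mathrm{Corel}$ over the epi--split mono factorisation system on $\FinSet$, so that a morphism $X \to Y$ is a jointly surjective cospan $X \to \partial N \leftarrow Y$ together with a Lagrangian subspace of $\vectf{\partial N}$, and you assert $G\mathrm{Corel} \cong \LagrRel$. This fails: the comparison to $\LagrRel$ is full but not faithful. Already for $X = Y = 1$, the trivial corelation (two-class partition of $X+Y$) decorated by Lagrangian subspaces of $\vectf{2}$ surjects onto all of $\LagrRel\big(\vectf{1},\vectf{1}\big)$, while the merging corelation decorated by Lagrangian subspaces of $\vectf{1}$ supplies further, distinct morphisms of $G\mathrm{Corel}$ landing in the same hom-set; so distinct decorated corelations induce the same Lagrangian relation. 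The paper avoids this by taking the \emph{isomorphism--morphism} factorisation system for $\mathrm{LagrCorel}$ (\textsection\ref{ssec.lagrrelascorel}): the corelation part is then trivial, a morphism $X \to Y$ is exactly a Lagrangian subspace of $\vectf{X+Y}$, and these genuinely biject with Lagrangian relations $\vectf{X} \to \vectf{Y}$ via the sign twist $\overline{\idn_X}$. Your construction can be repaired either by making that substitution or by appending the full (but non-faithful) hypergraph functor from your $G\mathrm{Corel}$ down the factorisation-system poset of Remark \ref{rem.corelposet}; as written, the final identification does not hold. A secondary slip: the obstruction flagged at the close of \textsection\ref{sec:circdef} is not that minimisation leaves the class of Dirichlet forms (Theorem \ref{thm:dirichletminimization} shows minimisation preserves them), but that ideal wires---needed for identities and the Frobenius maps---are not represented by any Dirichlet form at all; this does not affect your argument, but it misdescribes why the passage to Lagrangian subspaces is forced.
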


The goal of this chapter is to prove and explain this result, demonstrating how
the mathematical machinery of Part I provides clarity. With these tools in hand,
the black box functor turns out to rely on a tight relationship between
Kirchhoff's laws, the minimization of Dirichlet forms, and the
`symplectification' of corelations. It is well known that away from the
terminals, a circuit must obey two rules known as Kirchhoff's laws.  We have
already noted that the principle of minimum power states that a circuit will
`choose' potentials on its interior that minimize the power functional.  We
clarify the relation between these points in Theorems
\ref{thm:realizablepotentials} and \ref{thm:dirichletminimization}, which
together show that minimizing a Dirichlet form over some subset amounts to
assuming that the corresponding circuit obeys Kirchhoff's laws on that subset.

We have also mentioned the symplectification of functions above.  Extending this
to allow symplectification of epi-mono corelations in $\FinSet$, this process
gives a map sending corelations to Lagrangian relations that describe the
behaviour of ideal perfectly conductive wires.  We prove that these
symplectified corelations simultaneously impose Kirchhoff's laws (Proposition
\ref{prop:sympfunctor} and Example \ref{ex:sympfunction}) and accomplish the
minimization of Dirichlet forms (Theorem \ref{thm:sympmin}).  

Together, our results show that these three concepts---Kirchhoff's laws from
circuit theory, the analytic idea of minimizing power dissipation, and the
algebraic idea of symplectification of corelations---are merely different faces
of one law: the law of composition of circuits.

\section{Circuits of linear resistors} \label{sec:resistors}
Our first concern is for semantics: ``What do circuit diagrams mean?''. 

To elaborate, while circuit diagrams model electric circuits according to their
physical form, another, often more relevant, way to understand a circuit is by
its external behaviour. This means the following. To an electric circuit we
associate two quantities to each edge: voltage and current. We are not free,
however, to choose these quantities as we like; circuits are subject to
governing laws that imply voltages and currents must obey certain relationships.
From the perspective of control theory we are particularly interested in the
values these quantities take at the so-called terminals, and how altering one
value will affect the other values. We call two circuits equivalent when they
determine the same relationship. Our main task in this first part is to explore
when two circuits are equivalent.

In order to let physical intuition lead the way, we begin by specialising to the
case of linear resistors. In this section we describe how to find the function
of a circuit from its form, advocating in particular the perspective of the
principle of minimum power. This allows us to identify the external behaviour of a
circuit with a so-called Dirichlet form representing the dependence of its power
consumption on potentials at its terminals.

\subsection{Circuits as labelled graphs}

The concept of an abstract open electrical circuit made of linear resistors is
well known in electrical engineering, but we shall need to formalize it with
more precision than usual.  The basic idea is that a circuit of linear resistors
is a graph whose edges are labelled by positive real numbers called
`resistances', and whose sets of vertices is equipped with two subsets: the
`inputs' and `outputs'. This unfolds as follows.

A (closed) circuit of resistors looks like this: 
\[
\begin{tikzpicture}[circuit ee IEC, set resistor graphic=var resistor IEC graphic]
\node (I1) at (0,0) {};
\node (I2) at (0,2) {};
\node (O1) at (5.83,1) {};
\draw (I1) 	to [resistor] node [label={[label distance=2pt]275:{$1\Omega$}}] {} (2.83,1);
\draw (I2)	to [resistor] node [label={[label distance=2pt]85:{$1\Omega$}}] {} (2.83,1)
				to [resistor] node [label={[label distance=3pt]90:{$2\Omega$}}] {} (O1);
\end{tikzpicture}
\]
We can consider this a labelled graph, with each resistor an edge of the graph,
its resistance its label, and the vertices of the graph the points at which
resistors are connected. 

A circuit is `open' if it can be connected to other circuits. To do this we
first mark points at which connections can be made by denoting some vertices as
input and output terminals:
\[
\begin{tikzpicture}[circuit ee IEC, set resistor graphic=var resistor IEC graphic]
\node[contact] (I1) at (0,2) {};
\node[contact] (I2) at (0,0) {};
\node[contact] (O1) at (5.83,1) {};
\node(input) at (-2,1) {\small{\textsf{inputs}}};
\node(output) at (7.83,1) {\small{\textsf{outputs}}};
\draw (I1) 	to [resistor] node [label={[label distance=2pt]85:{$1\Omega$}}] {} (2.83,1);
\draw (I2)	to [resistor] node [label={[label distance=2pt]275:{$1\Omega$}}] {} (2.83,1)
				to [resistor] node [label={[label distance=3pt]90:{$2\Omega$}}] {} (O1);
\path[color=gray, very thick, shorten >=10pt, ->, >=stealth, bend left] (input) edge (I1);		\path[color=gray, very thick, shorten >=10pt, ->, >=stealth, bend right] (input) edge (I2);		
\path[color=gray, very thick, shorten >=10pt, ->, >=stealth] (output) edge (O1);
\end{tikzpicture}
\]
Then, given a second circuit, we may choose a relation between the output set of
the first and the input set of this second circuit, such as the simple relation
of the single output vertex of the circuit above with the single input vertex of
the circuit below.
\[
\begin{tikzpicture}[circuit ee IEC, set resistor graphic=var resistor IEC graphic]
\node[contact] (I1) at (0,1) {};
\node[contact] (O1) at (2.83,2) {};
\node[contact] (O2) at (2.83,0) {};
\node (input) at (-2,1) {\small{\textsf{inputs}}};
\node (output) at (4.83,1) {\small{\textsf{outputs}}};
\draw (I1) 	to [resistor] node [label={[label distance=2pt]95:{$1\Omega$}}] {} (O1);
\draw (I1)		to [resistor] node [label={[label distance=2pt]265:{$3\Omega$}}] {} (O2);
\path[color=gray, very thick, shorten >=10pt, ->, >=stealth] (input) edge (I1);		\path[color=gray, very thick, shorten >=10pt, ->, >=stealth, bend right] (output) edge (O1);
\path[color=gray, very thick, shorten >=10pt, ->, >=stealth, bend left] (output) edge (O2);
\end{tikzpicture}
\]
We connect the two circuits by identifying output and input vertices according
to this relation, giving in this case the composite circuit:
\[
\begin{tikzpicture}[circuit ee IEC, set resistor graphic=var resistor IEC graphic]
\node[contact] (I1) at (0,2) {};
\node[contact] (I2) at (0,0) {};
\coordinate (int1) at (2.83,1) {};
\coordinate (int2) at (5.83,1) {};
\node[contact] (O1) at (8.66,2) {};
\node[contact] (O2) at (8.66,0) {};
\node (input) at (-2,1) {\small{\textsf{inputs}}};
\node (output) at (10.66,1) {\small{\textsf{outputs}}};
\draw (I1) 	to [resistor] node [label={[label distance=2pt]85:{$1\Omega$}}] {} (int1);
\draw (I2)	to [resistor] node [label={[label distance=2pt]275:{$1\Omega$}}] {} (int1)
				to [resistor] node [label={[label distance=3pt]90:{$2\Omega$}}] {} (int2);
\draw (int2) 	to [resistor] node [label={[label distance=2pt]95:{$1\Omega$}}] {} (O1);
\draw (int2)		to [resistor] node [label={[label distance=2pt]265:{$3\Omega$}}] {} (O2);
\path[color=gray, very thick, shorten >=10pt, ->, >=stealth, bend left] (input) edge (I1);		\path[color=gray, very thick, shorten >=10pt, ->, >=stealth, bend right] (input) edge (I2);		
\path[color=gray, very thick, shorten >=10pt, ->, >=stealth, bend right] (output) edge (O1);
\path[color=gray, very thick, shorten >=10pt, ->, >=stealth, bend left] (output) edge (O2);
\end{tikzpicture}
\]

\vskip 1em

More formally, we define a \define{graph} to be a pair of functions $s,t\maps E \to N$ where $E$ and $N$ are finite sets.  We call elements of $E$ \define{edges} and elements of $N$ \define{vertices} or
\define{nodes}.  We say that the edge $e \in E$ has \define{source} $s(e)$ and
\define{target} $t(e)$, and also say that $e$ is an edge \define{from} $s(e)$
\define{to} $t(e)$.

To study circuits we need graphs with labelled edges:

\begin{definition}
Given a set $L$ of \define{labels}, an \define{$L$-graph} is a graph equipped with a function $r\maps E \to L$:
\[
\xymatrix{
L & E \ar@<2.5pt>[r]^{s} \ar@<-2.5pt>[r]_{t} \ar[l]_{r} & N.
}
\]
\end{definition}

For circuits made of resistors we take $L = (0,\infty)$, but later we shall
take $L$ to be a set of positive elements in some more general field.  In either
case, a circuit will be an $L$-graph with some extra structure:

\begin{definition} \label{def_circuit}
Given a set $L$, a \define{circuit over $L$} is an $L$-graph $\xymatrix{
L & E \ar@<2.5pt>[r]^{s} \ar@<-2.5pt>[r]_{t} \ar[l]_{r} & N}$ together with finite sets $X$, $Y$, and functions $i \maps X \to N$ and $o\maps Y \to  N$. We call the sets $i(X)$, $o(Y)$, and $\partial N = i(X) \cup o(Y)$ the \define{inputs},  \define{outputs}, and \define{terminals} or \define{boundary} of the circuit, respectively.
\end{definition}

We will later make use of the notion of connectedness in graphs. Recall that
given two vertices $v, w \in N$ of a graph, a \define{path from $v$ to $w$} is a
finite sequence of vertices $v = v_0, v_1, \dots , v_n = w$ and edges $e_1,
\dots , e_n$ such that for each $1 \le i \le n$, either $e_i$ is an edge from
$v_i$ to $v_{i+1}$, or an edge from $v_{i+1}$ to $v_i$. A subset $S$ of the
vertices of a graph is \define{connected} if, for each pair of vertices in $S$,
there is a path from one to the other. A \define{connected component} of a graph
is a maximal connected subset of its vertices.\footnote{In the theory of
directed graphs the qualifier `weakly' is commonly used before the word
`connected' in these two definitions, in distinction from a stronger notion of
connectedness requiring paths to respect edge directions. As we never consider
any other sort of connectedness, we omit this qualifier.}

In the rest of this section we take $L = (0,\infty) \subseteq \R$ and fix a circuit over 
$(0,\infty)$.  The edges of this circuit should be thought of as `wires'.  The label 
$r_e \in (0,\infty)$ stands for the \define{resistance} of the resistor on the wire $e$.   
There will also be a voltage and current on each wire.  In this section, these will
be specified by functions $V \in \R^E$ and $I \in \R^E$.  Here, as customary in
engineering, we use $I$ for `intensity of current', following Amp\`ere.  

\subsection{Ohm's law, Kirchhoff's laws, and the principle of minimum power}

In 1827, Georg Ohm published a book which included a relation between the voltage
and current for circuits made of resistors \cite{O}.  At the time, the critical
reception was harsh: one contemporary called Ohm's work ``a web of naked
fancies, which can never find the semblance of support from even the most
superficial of observations'', and the German Minister of Education said that a
professor who preached such heresies was unworthy to teach science \cite{D,H}.
However, a simplified version of his relation is now widely used under the name
of `Ohm's law'. We say that \define{Ohm's law} holds if for all edges $e \in
E$ the voltage and current functions of a circuit obey:
\[ 
V(e) = r(e) I(e).  \label{ohm}  
\]

Kirchhoff's laws date to Gustav Kirchhoff in 1845, generalising Ohm's work. They
were in turn generalized into Maxwell's equations a few decades later. We say
\define{Kirchhoff's voltage law} holds if there exists $\phi \in \R^N$ such that
\[
V(e) = \phi(t(e)) - \phi(s(e)).
\]
We call the function $\phi$ a \define{potential}, and think of it as assigning
an electrical potential to each node in the circuit. The voltage then arises as
the differences in potentials between adjacent nodes. If Kirchhoff's voltage law
holds for some voltage $V$, the potential $\phi$ is unique only in the trivial
case of the empty circuit: when the set of nodes $N$ is empty. Indeed, two
potentials define the same voltage function if and only if their difference is
constant on each connected component of the graph.

We say \define{Kirchhoff's current law} holds if for all nonterminal nodes $n
\in N\setminus \partial N$ we have
\[ 
\sum_{s(e) = n} I(e) = \sum_{t(e) = n} I(e).  \label{kcl}  
\]  
This is an expression of conservation of charge within the circuit; it says that
the total current flowing in or out of any nonterminal node is zero. Even when
Kirchhoff's current law is obeyed, terminals need not be sites of zero net
current; we call the function $\iota \in \R^{\partial N}$ that takes a terminal
to the difference between the outward and inward flowing currents,
\begin{align*}
\iota:\partial N &\longrightarrow \R \\
n &\longmapsto \sum_{t(e) = n} I(e) -\sum_{s(e) = n} I(e),
\end{align*}
the \define{boundary current} for $I$.

A \define{boundary potential} is also a function in $\R^{\partial N}$, but
instead thought of as specifying potentials on the terminals of a
circuit. As we think of our circuits as open circuits, with the terminals points
of interaction with the external world, we shall think of these potentials as
variables that are free for us to choose. Using the above three
principles---Ohm's law, Kirchhoff's voltage law, and Kirchhoff's current
law---it is possible to show that choosing a boundary potential determines
unique voltage and current functions on that circuit. 

The so-called `principle of minimum power' gives some insight into how this
occurs, by describing a way potentials on the terminals might determine
potentials at all nodes. From this, Kirchhoff's voltage law then gives rise to a
voltage function on the edges, and Ohm's law gives us a current function too. We
shall show, in fact, that a potential satisfies the principle of minimum power
for a given boundary potential if and only if this current obeys Kirchhoff's
current law.

A circuit with current $I$ and voltage $V$ dissipates energy at a rate
equal to
\[
 \sum_{e \in E} I(e)V(e).
\]  
Ohm's law allows us to rewrite $I$ as $V/r$, while Kirchhoff's voltage law gives
us a potential $\phi$ such that $V(e)$ can be written as
$\phi(t(e))-\phi(s(e))$, so for a circuit obeying these two laws the power can
also be expressed in terms of this potential. We thus arrive at a functional
mapping potentials $\phi$ to the power dissipated by the circuit when Ohm's law
and Kirchhoff's voltage law are obeyed for $\phi$. 

\begin{definition}
The \define{extended power functional} $P\maps \R^N \to \R$ of a circuit is
defined by
\[
P(\phi) =\frac{1}{2} \sum_{e \in E} \frac{1}{r(e)}\big(\phi(t(e))-\phi(s(e))\big)^2.
\]
\end{definition}

\noindent
The factor of $\frac{1}{2}$ is inserted to cancel the factor of 2 that appears when
we differentiate this expression.  We call $P$ the \emph{extended} power functional as we shall see that it is defined even on potentials that are not compatible with the three governing laws of electric circuits. We shall later restrict the domain of this functional so that it is defined precisely on those potentials that \emph{are} compatible with the
governing laws. Note that this functional does not depend on the directions
chosen for the edges of the circuit.

This expression lets us formulate the `principle of minimum power', which gives
us information about the potential $\phi$ given its restriction to the boundary
of $\Gamma$. Call a potential $\phi \in \R^N$ an \define{extension} of a
boundary potential $\psi \in \R^{\partial N}$ if $\phi$ is equal to $\psi$ when
restricted to $\R^{\partial N}$---that is, if $\phi|_{\partial N} = \psi$. 

\begin{definition}
We say a potential $\phi \in \R^{N}$ \define{obeys the principle of minimum
power} for a boundary potential $\psi \in \R^{\partial N}$ if $\phi$ minimizes
the extended power functional $P$ subject to the constraint that  $\phi$ is an
extension of $\psi$. 
\end{definition}

It is well known that, as we have stated above, in the presence of Ohm's law and
Kirchhoff's voltage law, the principle of minimum power is equivalent to
Kirchhoff's current law.

\begin{proposition} \label{minimum_power_implies_kirchhoff_current}
Let $\phi$ be a potential extending some boundary potential $\psi$. Then $\phi$
obeys the principle of minimum power for $\psi$ if and only if the 
current 
\[  I(e) = \frac1{r(e)}(\phi(t(e))-\phi(s(e))) \] 
obeys Kirchhoff's current law.
\end{proposition}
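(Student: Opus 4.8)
The plan is to recognise the principle of minimum power as an elementary constrained optimisation problem and to compute its stationarity condition explicitly. Fixing the boundary potential $\psi$ amounts to fixing the coordinates $\phi(n)$ for $n \in \partial N$; the potential $\phi$ then ranges over the affine subspace of $\R^N$ on which the remaining coordinates $\phi(n)$, for $n \in N \setminus \partial N$, are free. Since $P$ is a sum of squares it is a nonnegative---hence convex---quadratic form on $\R^N$, so $\phi$ minimises $P$ subject to the constraint if and only if the directional derivative of $P$ vanishes in every feasible direction, i.e. if and only if $\partial P / \partial \phi(n) = 0$ for each nonterminal node $n$. Convexity is exactly what lets us promote this first-order stationarity condition to a genuine global minimum, without any second-order test.

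The heart of the argument is then the computation of these partial derivatives. Differentiating $P(\phi) = \tfrac{1}{2}\sum_{e \in E} \tfrac{1}{r(e)}(\phi(t(e))-\phi(s(e)))^2$ with respect to $\phi(n)$ and applying the chain rule, the only surviving terms are those edges $e$ incident to $n$: an edge with $t(e)=n$ contributes a factor $+1$ and one with $s(e)=n$ a factor $-1$, while a self-loop (where $s(e)=t(e)$) contributes nothing, its summand being identically zero. Recalling that $I(e) = \tfrac{1}{r(e)}(\phi(t(e))-\phi(s(e)))$, I would thereby obtain $\tfrac{\partial P}{\partial \phi(n)} = \sum_{t(e)=n} I(e) - \sum_{s(e)=n} I(e)$, which is precisely the net current flowing into the node $n$.

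Combining these two observations completes the proof: $\phi$ obeys the principle of minimum power for $\psi$ if and only if $\sum_{t(e)=n}I(e) - \sum_{s(e)=n}I(e) = 0$ for every $n \in N \setminus \partial N$, and this is verbatim Kirchhoff's current law for the current $I$. I would emphasise that the terminal coordinates are held fixed by the constraint and so impose no stationarity condition, which correctly reflects the fact that nonzero boundary currents are permitted at the terminals while conservation of charge is enforced only in the interior.

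The step I expect to require the most care is the convexity-to-minimum passage rather than the derivative bookkeeping. Because $P$ is only positive \emph{semi}definite---its null directions being the potentials that are locally constant on each connected component---the Hessian is degenerate, so I must invoke convexity (and not strict convexity or a nondegenerate second-derivative test) to conclude that a feasible stationary point is automatically a global minimiser. This degeneracy is benign and indeed expected: it matches the earlier observation that two potentials determine the same voltage exactly when their difference is constant on each connected component, and simply means that a minimiser, when it exists, need not be unique. The remaining combinatorial computation is routine once one is careful to discard the contribution of edges incident to $n$ at both endpoints.
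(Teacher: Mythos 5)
Your proposal is correct and follows essentially the same route as the paper's own proof: fix the boundary coordinates, observe that a nonnegative (hence convex) quadratic is minimised over the free coordinates exactly where its partial derivatives vanish, and compute $\partial P/\partial\phi(n) = \sum_{t(e)=n}I(e)-\sum_{s(e)=n}I(e)$ to identify that stationarity condition with Kirchhoff's current law at the nonterminal nodes. The extra care you take over the degenerate Hessian and self-loops is sound but does not change the argument.
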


\begin{proof}
Fixing the potentials at the terminals to be those given by the boundary
potential $\psi$, the power is a nonnegative quadratic function of the
potentials at the nonterminals. This implies that an extension $\phi$ of $\psi$
minimizes $P$ precisely when 
\[ \left. \frac{\partial P(\varphi)}{\partial \varphi(n)}\right|_{\varphi = \phi} = 0 \]
for all nonterminals $n \in N \setminus \partial N$. Note that the
partial derivative of the power with respect to the potential at $n$ is given by 
\begin{align*}
  \frac{\partial P}{\partial \varphi(n)}\bigg|_{\varphi = \phi} 
  &= \sum_{t(e) = n} \frac1{r(e)}\big(\phi(t(e))-\phi(s(e))\big) - \sum_{s(e) =
  n} \frac1{r(e)}\big(\phi(t(e))-\phi(s(e))\big) \\
  &= \sum_{t(e) = n} I(e) - \sum_{s(e) = n} I(e).
\end{align*}
Thus $\phi$ obeys the principle of minimum power for $\psi$ if and only if
\[ \sum_{s(e) = n} I(e) = \sum_{t(e) = n} I(e)\] 
for all $n \in N \setminus \partial N$, and so if and only if Kirchhoff's current law holds.
\end{proof}

\subsection{A Dirichlet problem}

We remind ourselves that we are in the midst of understanding circuits as objects that define relationships between boundary potentials and boundary currents. This relationship is defined by the stipulation that voltage--current pairs on a circuit must obey Ohm's law and Kirchhoff's laws---or equivalently, Ohm's law, Kirchhoff's voltage law, and the principle of minimum power. In this subsection we show these conditions imply that for each boundary potential $\psi$ on the circuit there exists a potential $\phi$ on the circuit extending $\psi$, unique up to what may be interpreted as a choice of reference potential on each connected component of the circuit. From this potential $\phi$ we can then compute the unique voltage, current, and boundary current functions compatible with the given boundary potential.

Fix again a circuit with extended power functional $P\maps \R^N \to \R$. Let $\nabla\maps \R^{N} \to \R^{N}$ be the operator that maps a potential $\phi \in \R^N$ to the function from $N$ to $\R$ given by
\[
n \longmapsto \frac{\partial P}{\partial \varphi(n)}\bigg|_{\varphi = \phi} \;.
\]
As we have seen, this function takes potentials to twice the pointwise currents that they induce. We have also seen that a potential $\phi$ is compatible with the governing laws of circuits if and only if
\begin{equation}
\nabla \phi \big|_{\R^{\partial N}} = 0 .\label{dirichlet}
\end{equation}
The operator $\nabla$ acts as a discrete analogue of the Laplacian for the graph $\Gamma$, so we call this operator the \define{Laplacian} of $\Gamma$, and say that  equation \eqref{dirichlet} is a version of Laplace's equation. We then say that the problem of finding an extension $\phi$ of some fixed boundary potential $\psi$ that solves this Laplace's equation---or, equivalently, the problem of finding a $\phi$ that obeys the principle of minimum power for $\psi$---is a discrete version of the \define{Dirichlet problem}. 

As we shall see, this version of the Dirichlet problem always has a solution.  However, the solution is not necessarily unique.  If we take a solution $\phi$ and some $\alpha \in \R^N$ that is constant on each connected component and vanishes on the boundary of $\Gamma$, it is clear that $\phi+\alpha$ is still an extension of $\psi$ and that 
\[
\left.\frac{\partial P(\varphi)}{\partial \varphi(n)}\right|_{\varphi = \phi} = 
\left.\frac{\partial P(\varphi)}{\partial \varphi(n)}\right|_{\varphi = \phi + \alpha},
\] 
so $\phi + \alpha$ is another solution. We say that a connected component of a circuit \define{touches the boundary} if it contains a vertex in $\partial N$. Note that such an $\alpha$ must vanish on all connected components touching the boundary.

With these preliminaries in hand, standard techniques can be used to solve the
Dirichlet problem \cite{Fukushima}:
\begin{proposition} \label{dirichlet_problem}
For any boundary potential $\psi \in \R^{\partial N}$ there exists a potential $\phi$ obeying the principle of minimum power for $\psi$.  If we also demand that $\phi$ vanish on every connected component of $\Gamma$ not touching the boundary, then $\phi$ is unique. 
\end{proposition}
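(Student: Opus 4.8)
The plan is to recognise the minimisation of $P$ over the extensions of $\psi$ as a convex quadratic programme and to solve it by finite-dimensional linear algebra. Write $A_\psi = \{\phi \in \R^N : \phi|_{\partial N} = \psi\}$ for the set of extensions of $\psi$; this is a nonempty affine subspace of $\R^N$, with direction space $V_0 = \{\alpha \in \R^N : \alpha|_{\partial N} = 0\}$. Let $B$ denote the symmetric bilinear form polarising $P$, so that $B(\phi,\phi) = 2P(\phi)$ and
\[
  B(\phi,\phi') = \sum_{e \in E} \frac{1}{r(e)}\big(\phi(t(e))-\phi(s(e))\big)\big(\phi'(t(e))-\phi'(s(e))\big).
\]
Since each $r(e)>0$, the form $B$ is positive semidefinite, so $P$ is convex and nonnegative; in particular its restriction to $A_\psi$ is bounded below by $0$.

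For existence, I would fix any $\phi^* \in A_\psi$ and expand $P(\phi^* + \alpha) = P(\phi^*) + B(\phi^*,\alpha) + \tfrac12 B(\alpha,\alpha)$ for $\alpha \in V_0$, then split off the degenerate directions of $B|_{V_0}$. Let $K_0 = \{\alpha \in V_0 : B(\alpha,\alpha)=0\}$, which is the radical of $B|_{V_0}$, and choose a complement $W$, so that $V_0 = W \oplus K_0$ with $B$ positive definite on $W$. Because the quadratic $\alpha \mapsto \tfrac12 B(\alpha,\alpha) + B(\phi^*,\alpha)$ is bounded below on $V_0$, its linear part $B(\phi^*,-)$ must vanish on $K_0$ (otherwise it would run to $-\infty$ along a flat direction). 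Hence on $W$ this function is strictly convex and coercive, so it attains a unique minimum $w_0$; and since the expression is constant along $K_0$, the potential $\phi = \phi^* + w_0$ minimises $P$ over $A_\psi$, which is exactly the principle of minimum power for $\psi$. This argument simultaneously shows the full set of minimisers is the coset $\phi + K_0$.

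It then remains to identify $K_0$ and use it to pin down a unique solution. As $r(e)>0$, we have $\alpha \in K_0$ if and only if $\alpha|_{\partial N}=0$ and $\alpha(t(e))=\alpha(s(e))$ for every edge $e$, that is, if and only if $\alpha$ is constant on each connected component of $\Gamma$ and vanishes on $\partial N$. Such an $\alpha$ is forced to be zero on every component touching the boundary, while on a component not touching the boundary it may take an arbitrary constant value. The extra requirement that $\phi$ vanish on every component not touching the boundary thus selects one element from each coset $\phi + K_0$: two minimisers both meeting it differ by an element of $K_0$ vanishing also on the non-touching components, hence by $0$; and conversely any minimiser is constant on each non-touching component (its values there enter $P$ only through that component's edges, so global minimality forces them constant), so subtracting those constants—an element of $K_0$—produces a minimiser meeting the requirement. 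This yields uniqueness.

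The only genuine obstacle is the degeneracy of $B$, caused precisely by the connected components of $\Gamma$ not touching the boundary: on these, $P$ is insensitive to an overall additive constant, so a naive appeal to coercivity or strict convexity fails. The argument isolates this degeneracy as the kernel $K_0$ and handles it twice over—boundedness below of $P$ forces the linear term to ignore $K_0$, securing existence, while the additional vanishing condition quotients $K_0$ out, securing uniqueness.
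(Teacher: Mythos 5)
Your proof is correct, and it reaches the same essential conclusion as the paper's---that the set of minimisers is a coset of the space $K_0$ of potentials vanishing on $\partial N$ and constant on each connected component---but by a different route. The paper handles existence by citing the general fact that a nonnegative quadratic form attains its minimum on any affine subspace, and handles uniqueness by a one-variable perturbation: for two minimisers $\phi,\phi'$ it sets $f(\lambda)=P(\phi+\lambda(\phi'-\phi))$, notes $f'(0)=0$ kills the linear coefficient, and reads off from $f(1)-f(0)=0$ that the difference must make every edge term vanish. You instead decompose the direction space $V_0$ as $W\oplus K_0$ with $K_0$ the radical of $B|_{V_0}$, use boundedness below to force the linear term to annihilate $K_0$, and minimise the strictly convex restriction to $W$. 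Your approach buys more in one pass: it proves the existence fact the paper only cites, and it exhibits the entire minimiser set as $\phi+K_0$ at once (which is essentially the content of the paper's follow-up Proposition on the structure of solutions), at the cost of invoking the slightly less elementary facts that the radical of a positive semidefinite form is $B$-orthogonal to everything (Cauchy--Schwarz) and that a positive definite quadratic plus a linear term is coercive. The paper's route is more hands-on and self-contained at the level of single edges. One small point worth making explicit in your write-up: the claim that the objective is ``constant along $K_0$'' uses $B(w,k)=0$ for $k\in K_0$, which follows from Cauchy--Schwarz for the semidefinite form $B$; as stated it reads as if it were automatic from the direct-sum decomposition.
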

\begin{proof}
For existence, observe that the power is a nonnegative quadratic form, the extensions of $\psi$ form an affine subspace of $\R^N$, and a nonnegative quadratic form restricted to an affine subspace of a real vector space must reach a minimum somewhere on this subspace. 

For uniqueness, suppose that both $\phi$ and $\phi'$ obey the principle of minimum power for $\psi$. Let 
\[
\alpha = \phi'-\phi.
\]
Then 
\[
\alpha\big|_{\partial N} = \phi'\big|_{\partial N}-\phi\big|_{\partial N} = \psi-\psi =0,
\] 
so $\phi+\lambda\alpha$ is an extension of $\psi$ for all $\lambda \in \R$. This implies that
\[
f(\lambda) := P(\phi+\lambda\alpha)
\]
is a smooth function attaining its minimum value at both $\lambda=0$ and
$\lambda=1$. In particular, this implies that $f'(0)=0$. But this means that when writing $f$ as a quadratic, the coefficient of $\lambda$ must be $0$, so we can write
\begin{align*}
2f(\lambda) &= \sum_{e \in E} \frac1{r(e)}\big((\phi+\lambda\alpha)(t(e))-(\phi+\lambda\alpha)(s(e))\big)^2 \\
&= \sum_{e \in E} \frac1{r(e)}\Big(\big(\phi(t(e))-\phi(s(e))\big)+\lambda\big(\alpha(t(e))-\alpha(s(e))\big)\Big)^2 \\
&=  \sum_{e \in E} \frac1{r(e)}\big(\phi(t(e))-\phi(s(e))\big)^2 + \textrm{$\lambda$-term} +  \lambda^2 \sum_{e \in E} \frac1{r(e)}\big(\alpha(t(e))-\alpha(s(e))\big)^2 \\
&=  \sum_{e \in E} \frac1{r(e)}\big(\phi(t(e))-\phi(s(e))\big)^2 + \lambda^2 \sum_{e \in E} \frac1{r(e)}\big(\alpha(t(e))-\alpha(s(e))\big)^2.
\end{align*}
Then
\[
f(1) - f(0) 
= \frac{1}{2}\sum_{e \in E} \frac1{r(e)}\big(\alpha(t(e))-\alpha(s(e))\big)^2 =0,
\]
so $\alpha(t(e)) = \alpha(s(e))$ for every edge $e \in E$. This implies that $\a$ is constant on each connected component of the graph $\Gamma$ of our circuit. 

Note that as $\alpha|_{\partial N} = 0$, $\alpha$ vanishes on every connected
component of $\Gamma$ touching the boundary. Note also that there always exists
a solution $\phi$ that vanishes on any connected component of $\Gamma$ not
touching the boundary: indeed, the total power dissipated is simply the sum of
the power dissipated on each connected component, and any such vanishing
potential dissipates zero power away from the boundary.  Restricting our
attention to solutions $\phi$ and $\phi'$ with this property, we see that
$\alpha = \phi'-\phi$ vanishes on all connected components of $\Gamma$, and
hence is identically zero. Thus $\phi' = \phi$, and this extra condition ensures
a unique solution to the Dirichlet problem.
\end{proof}

We have also shown the following:

\begin{proposition}\label{dirichlet_problem_2}
Suppose $\psi \in \R^{\partial N}$ and $\phi$ is a potential obeying the principle of minimum power for $\psi$.  Then $\phi'$ obeys the principle of minimum power for $\psi$ if and only if the difference $\phi' - \phi$ is constant on every connected component of $\Gamma$ and vanishes on every connected component touching the boundary of $\Gamma$.
\end{proposition}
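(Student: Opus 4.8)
The plan is to prove the two directions separately, extracting the forward implication almost verbatim from the uniqueness argument already carried out in the proof of Proposition \ref{dirichlet_problem}, and then supplying the converse, which is the genuinely new content. The unifying observation is that the extended power functional $P$ depends on a potential only through the voltages $\phi(t(e)) - \phi(s(e))$ it induces across edges, together with the elementary fact that the two endpoints of any edge lie in the same connected component of $\Gamma$.

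For the forward direction, suppose both $\phi$ and $\phi'$ obey the principle of minimum power for $\psi$, and set $\alpha = \phi' - \phi$. Since both are extensions of $\psi$, we have $\alpha|_{\partial N} = 0$. The function $f(\lambda) = P(\phi + \lambda\alpha)$ is then a nonnegative quadratic in $\lambda$ attaining its minimum at both $\lambda = 0$ and $\lambda = 1$; expanding $f$ exactly as in the proof of Proposition \ref{dirichlet_problem} forces
\[
  f(1) - f(0) = \frac{1}{2}\sum_{e \in E} \frac1{r(e)}\big(\alpha(t(e))-\alpha(s(e))\big)^2 = 0,
\]
whence $\alpha(t(e)) = \alpha(s(e))$ for every edge $e$, so $\alpha$ is constant on each connected component. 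As $\alpha$ vanishes on $\partial N$, and any component containing a terminal touches the boundary, $\alpha$ vanishes on every component touching the boundary. This yields exactly the claimed description of $\phi' - \phi$.

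For the converse, assume $\alpha = \phi' - \phi$ is constant on each connected component and vanishes on every component touching the boundary. First I would check that $\phi'$ is an extension of $\psi$: every terminal lies in a component touching the boundary, so $\alpha|_{\partial N} = 0$ and hence $\phi'|_{\partial N} = \phi|_{\partial N} = \psi$. Next, since the endpoints $s(e), t(e)$ of each edge lie in one component, constancy of $\alpha$ on components gives $\alpha(t(e)) - \alpha(s(e)) = 0$, so $\phi'$ and $\phi$ induce identical voltages across every edge. Therefore $P(\phi') = P(\phi)$, which by hypothesis is the minimum of $P$ over all extensions of $\psi$. Thus $\phi'$ is also a minimizer, i.e.\ obeys the principle of minimum power for $\psi$.

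I do not expect any serious obstacle here: both implications reduce to the fact that $P$ is a function of edge-voltages alone, combined with the graph-theoretic observation that an edge joins vertices of a single connected component. The only point requiring a modicum of care is the boundary bookkeeping---ensuring that the condition $\alpha|_{\partial N} = 0$ is correctly linked to vanishing on \emph{boundary-touching} components in both directions---but this is immediate once one notes that a terminal's own component necessarily touches the boundary.
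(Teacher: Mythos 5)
Your proof is correct and takes essentially the same approach as the paper: the forward direction is verbatim the uniqueness argument in the proof of Proposition \ref{dirichlet_problem}, and the converse is handled by the paper in the remark just before that proposition (where it observes that adding such an $\alpha$ leaves the gradient of $P$ unchanged). Your converse via equality of edge voltages, hence of $P(\phi')$ and $P(\phi)$, is an equally immediate variant of that observation.
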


Furthermore, $\phi$ depends linearly on $\psi$:

\begin{proposition}\label{dirichlet_problem_3: linearity}
Fix $\psi \in \R^{\partial N}$, and suppose $\phi \in \R^N$ is the unique potential obeying the principle of minimum power for $\psi$ that vanishes on all connected components of $\Gamma$ not touching the boundary. Then $\phi$ depends linearly on $\psi$.
\end{proposition}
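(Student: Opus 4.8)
The plan is to exhibit the assignment $\psi \mapsto \phi$ as a genuine linear map and to deduce its linearity directly from the uniqueness already established in Proposition \ref{dirichlet_problem}. The key observation is that, although the principle of minimum power is phrased as a minimization problem, it is equivalent to a \emph{linear} condition on $\phi$. Indeed, the extended power functional $P$ is a quadratic form, so the Laplacian $\nabla\maps \R^N \to \R^N$, $\phi \mapsto \left(n \mapsto \partial P/\partial\varphi(n)\big|_{\varphi=\phi}\right)$, is a linear operator. By Proposition \ref{minimum_power_implies_kirchhoff_current}, an extension $\phi$ of $\psi$ obeys the principle of minimum power precisely when the induced current obeys Kirchhoff's current law, which is exactly the requirement that $\nabla\phi(n) = 0$ for every nonterminal node $n \in N \setminus \partial N$. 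Thus $\phi$ obeys the principle of minimum power for $\psi$ if and only if it satisfies the two linear constraints $\phi|_{\partial N} = \psi$ and $\nabla\phi|_{N\setminus\partial N} = 0$.

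With this in hand, I would write $S\maps \R^{\partial N} \to \R^N$ for the map sending each $\psi$ to the unique potential $\phi = S(\psi)$ obeying the principle of minimum power for $\psi$ and vanishing on every connected component of $\Gamma$ not touching the boundary; this map is well defined by Proposition \ref{dirichlet_problem}. To prove $S$ is linear, I would fix $\psi_1, \psi_2 \in \R^{\partial N}$ and scalars $a, b \in \R$, set $\phi_i = S(\psi_i)$, and verify that $a\phi_1 + b\phi_2$ satisfies the characterizing conditions for the boundary potential $a\psi_1 + b\psi_2$. The restriction to the boundary gives $(a\phi_1 + b\phi_2)|_{\partial N} = a\psi_1 + b\psi_2$, so it is an extension; linearity of $\nabla$ gives $\nabla(a\phi_1 + b\phi_2)|_{N\setminus\partial N} = a\,\nabla\phi_1|_{N\setminus\partial N} + b\,\nabla\phi_2|_{N\setminus\partial N} = 0$, so it obeys the principle of minimum power; and since each $\phi_i$ vanishes on the components not touching the boundary, so too does the combination. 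The uniqueness clause of Proposition \ref{dirichlet_problem} then forces $S(a\psi_1 + b\psi_2) = a\phi_1 + b\phi_2 = a\,S(\psi_1) + b\,S(\psi_2)$, establishing linearity.

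I expect the only genuinely delicate point to be the equivalence of the principle of minimum power with the linear Laplace condition; but this is essentially the content of Proposition \ref{minimum_power_implies_kirchhoff_current}, so no new work is required there. The remaining steps are purely formal, resting on the linearity of the gradient of a quadratic form and, crucially, on the extra vanishing condition that singles out a unique solution. Without that normalization the solution is determined only up to a potential that is constant on each connected component and supported away from the boundary, so $S$ would not even be single-valued; the heart of the argument is therefore the simple but essential remark that this normalization is itself preserved under linear combinations, which it manifestly is.
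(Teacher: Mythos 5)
Your proposal is correct and follows essentially the same route as the paper: recast the principle of minimum power as the linear conditions $\phi|_{\partial N}=\psi$ and $\nabla\phi|_{N\setminus\partial N}=0$, verify that a linear combination of solutions solves the Dirichlet problem for the corresponding combination of boundary potentials, and conclude. If anything, you are slightly more careful than the paper's own proof, which leaves implicit both the preservation of the vanishing condition on components not touching the boundary and the final appeal to uniqueness.
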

\begin{proof}
Fix $\psi, \psi' \in \R^{\partial N}$, and suppose $\phi, \phi' \in \R^N$ obey the principle of minimum power for $\psi,\psi'$ respectively, and that both $\phi$ and $\phi'$ vanish on all connected components of $\Gamma$ not touching the boundary. 

Then, for all $\lambda \in \R$,
\[
(\phi+\lambda\phi')\big|_{\R^{\partial N}} = \phi\big|_{\R^{\partial N}} +
\lambda\phi'\big|_{\R^{\partial N}}  = \psi + \lambda\psi'
\]
and
\[
(\nabla(\phi+\lambda\phi'))\big|_{\R^{\partial N}} = 
(\nabla\phi)\big|_{\R^{\partial N}} +
\lambda(\nabla\phi')\big|_{\R^{\partial N}}  = 0.
\]
Thus $\phi+\lambda\phi'$ solves the Dirichlet problem for $\psi+\lambda\psi'$, and thus $\phi$ depends linearly on $\psi$.
\end{proof}

Bamberg and Sternberg \cite{BS} describe another way to solve the Dirichlet problem, going back to Weyl \cite{Weyl}.

\subsection{Equivalent circuits}

We have seen that boundary potentials determine, essentially uniquely, the value of all the electric properties across the entire circuit. But from the perspective of control theory, this internal structure is irrelevant: we can only access the circuit at its terminals, and hence only need concern ourselves with the relationship between boundary potentials and boundary currents. In this section we streamline our investigations above to state the precise way in which boundary currents depend on boundary potentials. In particular, we shall see that the relationship is completely captured by the functional taking boundary potentials to the minimum power used by any extension of that boundary potential. Furthermore, each such power functional determines a different boundary potential--boundary current relationship, and so we can conclude that two circuits are equivalent if and only if they have the same power functional. 

An `external behaviour', or \define{behaviour} for short, is an equivalence class of circuits, where two are considered equivalent when the boundary current is the same function of the boundary potential. The idea is that the boundary current and boundary potential are all that can be observed `from outside', i.e. by making measurements at the terminals.  Restricting our attention to what can be observed by making measurements at the terminals amounts to treating a circuit as a `black box': that is, treating its interior as hidden from view.  So, two circuits give the same behaviour when they behave the same as `black boxes'.

First let us check that the boundary current is a function of the boundary potential.  For this we introduce an important quadratic form on the space of boundary potentials:

\begin{definition}
The \define{power functional} $Q \maps \R^{\partial N} \to \R$ of a circuit with extended power functional $P$ is given by
\[
 Q(\psi) = \min_{\phi|_{\R^{\partial N}} = \psi } P(\phi).
\]
\end{definition}

Proposition \ref{dirichlet_problem} shows the minimum above exists, so the power functional is well defined.  Thanks to the principle of minimum power, $Q(\psi)$ equals $\frac{1}{2}$ times the power dissipated by the circuit when the boundary voltage is $\psi$.  We will later see that in fact $Q(\psi)$ is a nonnegative quadratic form on $\R^{\partial N}$. 

Since $Q$ is a smooth real-valued function on $\R^{\partial N}$, its differential $d Q$ at any given point $\psi \in \R^{\partial N}$ defines an element of the dual space $(\R^{\partial N})^\ast$, which we denote by $d Q_\psi$.  In fact, this element is equal to the boundary current $\iota$ corresponding to the boundary voltage $\psi$:

\begin{proposition} \label{boundary_current_determines_boundary_voltage}
Suppose $\psi \in \R^{\partial N}$.  Suppose $\phi$ is any extension of $\psi$ minimizing the power. Then $dQ_\psi \in (\R^{\partial N})^\ast \cong \R^{\partial N}$ gives the boundary current of the current induced by the potential $\phi$.
\end{proposition}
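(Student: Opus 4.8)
The plan is to compute the differential $dQ_\psi$ directly by the chain rule, using the fact that the minimizing potential $\phi$ may be taken to depend linearly on $\psi$. First I would fix a boundary potential $\psi$ and, invoking Proposition~\ref{dirichlet_problem} together with the linearity established just above, choose the distinguished minimizer $\phi = \phi(\psi)$ that vanishes on every connected component not touching the boundary, so that $\psi \mapsto \phi(\psi)$ is a linear map $\R^{\partial N} \to \R^N$ and $Q(\psi) = P(\phi(\psi))$ exhibits $Q$ as a composite of smooth maps. Writing $\eta = d\phi_\psi(\delta\psi) \in \R^N$ for the derivative of this linear map in a direction $\delta\psi \in \R^{\partial N}$, note that since $\phi(\psi)$ always restricts to $\psi$ on the boundary, $\eta$ restricts to $\delta\psi$ on $\partial N$.

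Then I would apply the chain rule to obtain $dQ_\psi(\delta\psi) = dP_{\phi}(\eta) = \sum_{n \in N} \frac{\partial P}{\partial \varphi(n)}\big|_{\varphi=\phi}\, \eta(n)$. The computation of these partial derivatives was already carried out in the proof of Proposition~\ref{minimum_power_implies_kirchhoff_current}: for the current $I(e) = \frac{1}{r(e)}(\phi(t(e)) - \phi(s(e)))$ induced by $\phi$, one has $\frac{\partial P}{\partial \varphi(n)}\big|_{\varphi=\phi} = \sum_{t(e)=n} I(e) - \sum_{s(e)=n} I(e)$ for every node $n$. The crucial step is that, because $\phi$ obeys the principle of minimum power, Proposition~\ref{minimum_power_implies_kirchhoff_current} tells us this quantity vanishes at every nonterminal node $n \in N \setminus \partial N$ (Kirchhoff's current law), whereas at terminal nodes it is exactly the boundary current $\iota(n)$. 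Hence the interior terms drop out of the sum, leaving $dQ_\psi(\delta\psi) = \sum_{n \in \partial N} \iota(n)\, \eta(n) = \sum_{n \in \partial N} \iota(n)\, \delta\psi(n)$, using $\eta|_{\partial N} = \delta\psi$. As $\delta\psi$ was arbitrary, this is precisely the statement that $dQ_\psi \in (\R^{\partial N})^\ast$ corresponds to $\iota \in \R^{\partial N}$ under the standard pairing.

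Two bookkeeping points remain. First, I should record that $\iota$ is well defined independently of the choice of minimizer: by Proposition~\ref{dirichlet_problem_2} any two minimizers for $\psi$ differ by a potential that is constant on each connected component, which induces the same voltage differences and hence the same current $I$ and boundary current $\iota$; this also confirms that the answer does not depend on having selected the distinguished minimizer. Second, one could bypass the chain rule and argue by the envelope theorem—since the explicit dependence of the constrained minimization on $\psi$ enters only through the boundary values, the derivative of the minimum value sees only the boundary gradient—but the chain-rule computation above makes the role of Kirchhoff's current law transparent and is self-contained.

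I expect the main obstacle to be conceptual rather than computational: making precise that the interior partial derivatives may legitimately be discarded, even though the minimizer $\phi$ genuinely varies with $\psi$ at interior nodes. This is exactly the envelope phenomenon, and the honest justification is the vanishing of $\frac{\partial P}{\partial \varphi(n)}$ at interior nodes guaranteed by first-order optimality, which is why citing Proposition~\ref{minimum_power_implies_kirchhoff_current} at precisely that point is essential; one must resist the temptation to differentiate the minimizer's dependence on $\psi$ without observing that its contribution through interior nodes is annihilated by the minimization condition.
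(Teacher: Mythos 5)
Your proof is correct and follows essentially the same route as the paper: both arguments use the linearity of the minimizer map $\psi \mapsto \phi(\psi)$ from Proposition \ref{dirichlet_problem_3: linearity}, differentiate $Q(\psi) = P(f\psi)$, and observe that the interior contributions vanish by Kirchhoff's current law (first-order optimality) while the boundary contributions assemble into $\sum_{n \in \partial N} \iota(n)\,\psi'(n)$. The paper carries out the differentiation by expanding the quadratic explicitly rather than phrasing it as the chain rule plus an envelope argument, but the computation and the handling of well-definedness via Proposition \ref{dirichlet_problem_2} are the same.
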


\begin{proof}
Note first that while there may be several choices of $\phi$ minimizing the power subject to the constraint that $\phi|_{\R^{\partial N}} = \psi$, Proposition \ref{dirichlet_problem_2} says that the difference between any two choices vanishes on all components touching the boundary of $\Gamma$.  Thus, these two choices give the same value for the boundary current $\iota\maps \partial N \to \R$. So, with no loss of generality we may assume $\phi$ is the unique choice that vanishes on all components not touching the boundary. Write $\overline\iota\maps N \to \R$ for the extension of $\iota\maps \partial N \to \R$ to $N$ taking value $0$ on $N \setminus \partial N$. 

By Proposition \ref{dirichlet_problem_3: linearity}, there is a linear operator
\[
f\maps \R^{\partial N} \longrightarrow \R^N
\]
sending $\psi \in \R^{\partial N}$ to this choice of $\phi$, and then
\[
Q(\psi) = P(f\psi).
\]
Given any $\psi' \in \R^{\partial N}$, we thus have
\begin{align*}
dQ_\psi(\psi') &= \frac{d}{d\lambda}Q(\phi +\lambda\psi') \bigg|_{\lambda=0} \\
&= \frac{d}{d\lambda}P(f(\psi+\lambda\psi'))\bigg|_{\lambda=0} \\
&= \frac{1}{2} \frac{d}{d\lambda}\sum_{e \in E} \frac1{r(e)}\bigg(f(\psi+\lambda\psi'))(t(e))-(f(\psi+\lambda\psi'))(s(e))\bigg)^2 \bigg|_{\lambda=0} \\
&= \frac{1}{2} \frac{d}{d\lambda}\sum_{e \in E} \frac1{r(e)}\bigg((f\psi(t(e))-f\psi(s(e))) \;+\;\lambda (f\psi'(t(e))- f\psi'(s(e)))\bigg)^2 \bigg|_{\lambda=0} \\
&= \sum_{e \in E} \frac1{r(e)}(f\psi(t(e))-f\psi(s(e)))(f\psi'(t(e))- f\psi'(s(e))) \\
&= \sum_{e \in E} I(e)(f\psi'(t(e))- f\psi'(s(e))) \\
&= \sum_{n \in N}\left(\sum_{t(e) = n} I(e) - \sum_{s(e) = n} I(e)\right)f\psi'(n) \\
&= \sum_{n \in N}\overline \iota(n) f\psi'(n) \\
&= \sum_{n \in \partial N}\iota(n) \psi'(n).
\end{align*}
This shows that $dQ_\psi^\ast = \iota$, as claimed.  Note that this calculation
explains why we inserted a factor of $\frac{1}{2}$ in the definition of $P$: it
cancels the factor of $2$ obtained from differentiating a square.
 \end{proof}

Note this only depends on $Q$, which makes no mention of the potentials at
nonterminals. This is amazing: the way power depends on boundary potentials
completely characterizes the way boundary currents depend on boundary
potentials. In particular, in \textsection\ref{sec:circdef} we shall see that this
allows us to define a composition rule for behaviours of circuits.

To demonstrate these notions, we give a basic example of equivalent circuits.

\begin{example}[Resistors in series] \label{resistors_in_series}
Resistors are said to be placed in \define{series} if they are placed end to end or, more
precisely, if they form a path with no self-intersections. It is well known that
resistors in series are equivalent to a single resistor with resistance equal to
the sum of their resistances. To prove this, consider the following circuit
comprising two resistors in series, with input $A$ and output $C$:
\[
  \begin{tikzpicture}[circuit ee IEC, set resistor graphic=var resistor IEC graphic]
    \node[contact] (I1) at (0,0) [label=left:$A$] {};
    \node[circle, minimum width = 3pt, inner sep = 0pt, fill=black] (int) at (3,0) [label=above:$B$] {};
    \node[contact] (O1) at (6,0) [label=right:$C$] {};
    \draw (I1) 	to [resistor] node [label={[label distance=3pt]90:{$r_{AB}$}}] {} (int)
    to [resistor] node [label={[label distance=3pt]90:{$r_{BC}$}}] {} (O1);
  \end{tikzpicture}
\]
Now, the extended power functional $P\maps \R^{\{A,B,C\}} \to \R$ for this circuit is
\[
P(\phi) = \frac12\left(\frac1{r_{AB}}\big(\phi(A)-\phi(B)\big)^2 +
\frac1{r_{BC}}\big(\phi(B)-\phi(C)\big)^2\right),
\]
while the power functional $Q\maps \R^{\{A,C\}} \to \R$ is given by minimization
over values of $\phi(B) = x$:
\[
Q(\psi) = \min_{x \in \R} \frac12 \left(\frac1{r_{AB}}\big(\psi(A)-x\big)^2 + \frac1{r_{BC}}\big(x-\psi(C)\big)^2 \right). 
\]
Differentiating with respect to $x$, we see that this minimum occurs when
\[
\frac1{r_{AB}}\big(x-\psi(A)\big) + \frac1{r_{BC}}\big(x-\psi(C)\big) = 0,
\]
and hence when $x$ is the $r$-weighted average of $\psi(A)$ and $\psi(C)$:
\[
x = \frac{r_{BC}\psi(A) + r_{AB}\psi(C)}{r_{BC}+ r_{AB}}.
\]
Substituting this value for $x$ into the expression for $Q$ above and simplifying gives
\[
Q(\psi) = \frac12\cdot\frac1{r_{AB}+r_{BC}}\big(\psi(A)-\psi(C)\big)^2. 
\]
This is also the power functional of the circuit
\[
\begin{tikzpicture}[circuit ee IEC, set resistor graphic=var resistor IEC graphic]
\node[contact] (I1) at (0,0) [label=left:$A$] {};
\node[contact] (O1) at (3,0) [label=right:$C$] {};
\draw (I1) 	to [resistor] node [label={[label distance=3pt]90:{$r_{AB}+r_{BC}$}}] {} (O1);
\end{tikzpicture}
\]
and so the circuits are equivalent.
\end{example}

\subsection{Dirichlet forms}

In the previous subsection we claimed that power functionals are quadratic forms
on the boundary of the circuit whose behaviour they represent. They comprise, in
fact, precisely those quadratic forms known as Dirichlet forms.

\begin{definition}
Given a finite set $S$, a \define{Dirichlet form} on $S$ is a quadratic form $Q:
\mathbb{R}^S \to \mathbb{R}$ given by the formula
\[
  Q(\psi) = \sum_{i,j \in S} c_{i j} (\psi_i - \psi_j)^2
\]
for some nonnegative real numbers $c_{i j}$, and where we have written $\psi_i = \psi(i) \in
\mathbb{R}$.
\end{definition}

Note that we may assume without loss of generality that $c_{i i} = 0$ and $c_{i
j} = c_{j i}$; we do this henceforth.  Any Dirichlet form is nonnegative:
$Q(\psi) \ge 0$ for all $\psi \in \mathbb{R}^S$.  However, not all nonnegative
quadratic forms are Dirichlet forms.  For example, if $S = \{1, 2\}$, the
nonnegative quadratic form $Q(\psi) = (\psi_1 + \psi_2)^2$ is not a Dirichlet
form. That said, the concept of Dirichlet form is vastly more general than the
above definition: such quadratic forms are studied not just on
finite-dimensional vector spaces $\mathbb{R}^S$ but on $L^2$ of any measure
space.  When this measure space is just a finite set, the concept of Dirichlet
form reduces to the definition above.  For a thorough introduction to Dirichlet
forms, see the text by Fukushima \cite{Fukushima}.  For a fun tour of the
underlying ideas, see the paper by Doyle and Snell \cite{DS}. 

The following characterizations of Dirichlet forms help illuminate the concept:

\begin{proposition} \label{dirichlet_characterizations}
  Given a finite set $S$ and a quadratic form $Q\maps \mathbb{R}^S \to \mathbb{R}$,
  the following are equivalent:
  \begin{enumerate}[(i)]
    \item $Q$ is a Dirichlet form.

    \item $Q(\phi) \le Q(\psi)$ whenever $|\phi_i - \phi_j| \le |\psi_i -
      \psi_j|$ for all $i, j$. 

    \item $Q(\phi) = 0$ whenever $\phi_i$ is independent of $i$, and $Q$ obeys
      the \define{Markov property}: $Q(\phi) \le Q(\psi)$ when $\phi_i = \min
      (\psi_i, 1) $.
  \end{enumerate}
\end{proposition}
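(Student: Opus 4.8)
The plan is to prove the cyclic chain of implications $(i)\Rightarrow(ii)\Rightarrow(iii)\Rightarrow(i)$. The first two links are short. For $(i)\Rightarrow(ii)$, if $|\phi_i-\phi_j|\le|\psi_i-\psi_j|$ for all $i,j$ then $(\phi_i-\phi_j)^2\le(\psi_i-\psi_j)^2$, and since every $c_{ij}\ge 0$, summing termwise gives $Q(\phi)\le Q(\psi)$. For $(ii)\Rightarrow(iii)$, vanishing on constants follows by applying the hypothesis twice: with $\phi$ constant and $\psi=0$ we get $Q(\phi)\le Q(0)=0$, and with $\phi=0$ and $\psi$ constant we get $0=Q(0)\le Q(\psi)$, so $Q$ is zero on constants; the Markov property then follows because $x\mapsto\min(x,1)$ is nonexpansive, so $\phi_i=\min(\psi_i,1)$ satisfies $|\phi_i-\phi_j|\le|\psi_i-\psi_j|$ and $(ii)$ applies.

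The substance is $(iii)\Rightarrow(i)$. Write $Q(\psi)=\sum_{i,j}a_{ij}\psi_i\psi_j$ for a symmetric matrix $A=(a_{ij})$. The key structural observation is that a symmetric matrix has zero row sums if and only if $Q(\psi)=\sum_{i<j}(-a_{ij})(\psi_i-\psi_j)^2$ identically; this is a direct expansion using $\sum_j a_{ij}=0$. Thus once I know $A$ has zero row sums, $Q$ is automatically of Dirichlet form with weights $c_{ij}=-a_{ij}$, and it only remains to check these weights are nonnegative. So the proof of $(iii)\Rightarrow(i)$ splits into three tasks: (a) $Q$ is positive semidefinite; (b) $A$ has zero row sums; (c) the off-diagonal entries satisfy $a_{ij}\le 0$.

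For (a) and (b) I would first bootstrap the Markov property to arbitrary truncation levels. Rescaling via $Q(\lambda\psi)=\lambda^2 Q(\psi)$ turns $Q(\min(\psi,1))\le Q(\psi)$ into $Q(\min(\psi,a))\le Q(\psi)$ for every $a>0$, and applying this to $-\psi$ (using $Q(-\psi)=Q(\psi)$) gives $Q(\max(\psi,-a))\le Q(\psi)$; composing, truncation of $\psi$ to any interval $[-a,a]$ does not increase $Q$. Letting $a\to 0^+$, the truncation tends to $0$ and continuity of $Q$ yields $0\le Q(\psi)$, proving (a). For (b), since $Q$ vanishes on constants, $Q(\mathbf 1)=0$ kills the quadratic term and the function $t\mapsto Q(\psi+t\mathbf 1)=Q(\psi)+2t\,B(\psi,\mathbf 1)$ is affine in $t$, where $B$ is the symmetric bilinear form polarizing $Q$; being bounded below by $0$, it must be constant, so $B(\psi,\mathbf 1)=0$ for all $\psi$, i.e.\ $A\mathbf 1=0$.

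Finally (c), which I expect to be the main obstacle, since it is the only place the full strength of the Markov property --- rather than mere rescaling and positivity --- is needed. Fixing $p\ne q$, I would test the level-$1$ Markov inequality against the configuration $\psi_p=1+t$, $\psi_q=1-c$, and $\psi_k=1$ otherwise, with $t,c>0$. Truncation changes only the $p$-coordinate, and a short expansion gives
\[
  Q(\psi)-Q(\min(\psi,1)) = \sum_{j\ne p}(-a_{pj})\big(2t(1-\psi_j)+t^2\big)
  = (-a_{pq})(2tc+t^2) + t^2\!\!\sum_{j\ne p,q}(-a_{pj}).
\]
For small $t>0$ the linear term $2tc\,(-a_{pq})$ dominates, so the Markov inequality $Q(\min(\psi,1))\le Q(\psi)$ forces $-a_{pq}\ge 0$. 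Hence all weights are nonnegative and $Q$ is a Dirichlet form. The delicate point throughout is keeping the logical order correct: positivity and zero row sums must be in hand before the weight identity underlying (c) is meaningful, and the test configuration must isolate a single coordinate's truncation so that the sign can be read off cleanly from the leading order in $t$.
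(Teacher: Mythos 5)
Your argument is correct, and it is worth noting that the paper does not actually prove this proposition at all: its ``proof'' consists of the single line ``See Fukushima,'' deferring to the standard reference on Dirichlet forms. You have supplied a complete, self-contained, elementary replacement, and every step checks out. The easy implications $(i)\Rightarrow(ii)\Rightarrow(iii)$ are exactly as you say (nonnegativity of the $c_{ij}$ for the first, and the facts that constants compare with $0$ in both directions and that $x\mapsto\min(x,1)$ is nonexpansive for the second). For the substantive direction $(iii)\Rightarrow(i)$, your three-stage ordering is the right one and is handled cleanly: the rescaling identity $Q(\lambda\psi)=\lambda^2Q(\psi)$ upgrades the level-$1$ Markov property to truncation at any level $a>0$ and, via $Q(-\psi)=Q(\psi)$, to two-sided truncation onto $[-a,a]$, whence continuity at $a\to0^+$ gives $Q\ge0$; nonnegativity then forces the affine function $t\mapsto Q(\psi+t\mathbf{1})=Q(\psi)+2tB(\psi,\mathbf{1})$ to be constant, giving zero row sums and hence the identity $Q(\psi)=\sum_{i<j}(-a_{ij})(\psi_i-\psi_j)^2$; and your test configuration $\psi_p=1+t$, $\psi_q=1-c$, $\psi_k=1$ isolates the truncation to the single coordinate $p$, so that $Q(\psi)-Q(\min(\psi,1))=(-a_{pq})(2tc+t^2)+t^2\sum_{j\ne p,q}(-a_{pj})\ge0$ yields $-a_{pq}\ge0$ after dividing by $t$ and letting $t\to0^+$. (The degenerate cases $|S|\le 2$ are covered by the same formulas, the inner sum simply being empty.) What your approach buys over the paper's is an actual proof at the level of generality the paper needs --- quadratic forms on $\R^S$ for finite $S$ --- without invoking the general measure-theoretic machinery of Fukushima's book; the citation buys brevity and the connection to the broader theory.
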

\begin{proof}
See Fukushima \cite{Fukushima}.
\end{proof}

While the extended power functionals of circuits are evidently Dirichlet forms,
it is not immediate that all power functionals are. For this it is crucial that
the property of being a Dirichlet form is preserved under minimising over linear
subspaces of the domain that are generated by subsets of the given finite set.

\begin{proposition} \label{dirichlet_minimization}
  If $Q\maps \R^{S+T} \to \R$ is a Dirichlet form, then 
  \[
    \min_{\nu \in \R^T} Q(-,\nu)\maps \R^S \to \R 
  \]
  is Dirichlet.
\end{proposition}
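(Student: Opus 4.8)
The plan is to set $\tilde Q(\psi) \Defeq \min_{\nu \in \R^T} Q(\psi,\nu)$ and to verify that it meets the third characterization of Dirichlet forms in Proposition \ref{dirichlet_characterizations}. First note the minimum is attained for each fixed $\psi$: the map $\nu \mapsto Q(\psi,\nu)$ is a nonnegative quadratic-plus-affine function on the finite-dimensional space $\R^T$, and such a function, being bounded below by $0$, attains its infimum. So $\tilde Q$ is a well-defined function on $\R^S$.

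The first task is to check that $\tilde Q$ is a genuine quadratic form, not merely positively homogeneous of degree two. Writing $Q(\psi,\nu) = a(\psi) + 2b(\psi,\nu) + c(\nu)$ with $a$, $c$ quadratic and $b$ bilinear, minimization over $\nu$ is the familiar Schur-complement construction: since $c$ is positive semidefinite and the affine family $\{Q(\psi,-)\}$ always has a minimizer, one may choose a minimizer $f\psi$ depending linearly on $\psi$, whence $\tilde Q(\psi) = Q(\psi, f\psi)$ is a quadratic form in $\psi$. Alternatively---and more in keeping with the circuit perspective---every Dirichlet form on $S+T$ is the extended power functional of a circuit with node set $S+T$ (one edge of the appropriate conductance for each pair $i,j$), and taking $S$ as the boundary and $T$ as the interior, Proposition \ref{dirichlet_problem_3: linearity} supplies exactly this linear dependence of the minimizer on the boundary data.

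With $\tilde Q$ known to be a quadratic form, it remains to verify the two requirements in characterization (iii) of Proposition \ref{dirichlet_characterizations}. That $\tilde Q$ vanishes on constants is immediate: if $\psi_i = c$ for all $i \in S$, extending by the constant $c$ on $T$ gives a constant vector on which the Dirichlet form $Q$ vanishes, so $\tilde Q(\psi) \le 0$; as $Q \ge 0$ forces $\tilde Q \ge 0$, we get $\tilde Q(\psi) = 0$. The Markov property is the crux, and it follows by a single truncation applied to the \emph{whole} minimizing vector. Fix $\psi \in \R^S$ and let $\nu^\ast \in \R^T$ attain $\tilde Q(\psi) = Q(\psi,\nu^\ast)$. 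Set $\phi_i = \min(\psi_i,1)$ for $i \in S$ and $\tilde\nu_j = \min(\nu^\ast_j, 1)$ for $j \in T$, so that $(\phi,\tilde\nu)$ is the level-one truncation of $(\psi,\nu^\ast)$ in $\R^{S+T}$. Since $Q$ is a Dirichlet form it obeys the Markov property, giving $Q(\phi,\tilde\nu) \le Q(\psi,\nu^\ast)$. Therefore
\[
  \tilde Q(\phi) = \min_{\nu \in \R^T} Q(\phi,\nu) \le Q(\phi,\tilde\nu) \le Q(\psi,\nu^\ast) = \tilde Q(\psi),
\]
which is exactly the Markov property for $\tilde Q$. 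By Proposition \ref{dirichlet_characterizations}, $\tilde Q$ is a Dirichlet form.

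The main obstacle is the first step: confirming that partial minimization returns an honest quadratic form rather than some merely homogeneous function. The Markov argument is slick precisely because truncation commutes with the minimization in the right direction---truncating the minimizer for $\psi$ produces an admissible (if not optimal) extension of the truncated boundary data $\phi$, which can only decrease the value---so the bulk of the care goes into the linear-algebraic Schur-complement bookkeeping rather than into the Dirichlet-form structure itself.
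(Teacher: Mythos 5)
Your proof is correct, and while it opens exactly as the paper does---partial minimization is well defined because a nonnegative quadratic attains its minimum on an affine subspace, and returns a genuine quadratic form because a minimizer can be chosen to depend linearly on $\psi$---it diverges from the paper at the decisive step. The paper verifies characterization (ii) of Proposition \ref{dirichlet_characterizations}: it asserts that if $\lvert\psi_i-\psi_j\rvert\le\lvert\psi'_i-\psi'_j\rvert$ for all $i,j\in S$ then the corresponding minima are ordered the same way. Making that rigorous would require producing, from the minimizer $\nu'$ for $\psi'$, some competitor $\nu$ for $\psi$ whose pairwise differences on all of $S+T$ are dominated by those of $(\psi',\nu')$, and the paper does not supply this. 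You instead verify characterization (iii): vanishing on constants comes from extending the constant to $T$, and the Markov property comes from truncating the \emph{entire} minimizing vector $(\psi,\nu^\ast)$ at level one, so that the truncated interior values form an admissible (if suboptimal) extension of the truncated boundary data. This is a genuinely different key mechanism---a concrete contraction applied to a known minimizer rather than an abstract domination of all pairwise gaps---and it buys a verification with no hidden steps, at the mild cost of invoking the less symmetric clause of the characterization theorem; the paper's route is shorter on the page but leaves its central inequality unargued.
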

\begin{proof}
  We first note that $\min_{\nu \in \R^S} Q(-,\nu)$ is a quadratic form. Again,
  $\min_{\nu \in \R^T} Q(-,\nu)$ is well defined as a nonnegative quadratic form
  also attains its minimum on an affine subspace of its domain. Furthermore
  $\min_{\nu \in \R^T} Q(-,\nu)$ is itself a quadratic form, as the partial
  derivatives of $Q$ are linear, and hence the points at which these minima are
  attained depend linearly on the argument of $\min_{\nu \in \R^T} Q(-,\nu)$.

  Now by Proposition \ref{dirichlet_characterizations}, $Q(\phi) \le Q(\phi')$
  whenever $|\phi_i - \phi_j| \le |\phi'_i - \phi'_j|$ for all $i,j \in S+T$. In
  particular, this implies $\min_{\nu \in \R^T} Q(\psi,\nu) \le \min_{\nu \in
  \R^T} Q(\psi',\nu)$ whenever $|\psi_i - \psi_j| \le |\psi'_i - \psi'_j|$ for
  all $i,j \in S$. Using Proposition \ref{dirichlet_characterizations} again
  then implies that $\min_{\nu \in \R^T} Q(-,\nu)$ is a Dirichlet form.
\end{proof}

\begin{corollary}
  Let $Q \maps \R^{\partial N} \to \R$ be the power functional for some circuit. Then
  $Q$ is a Dirichlet form.
\end{corollary}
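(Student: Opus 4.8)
The plan is to recognize this corollary as an immediate instance of Proposition~\ref{dirichlet_minimization}, once we identify the power functional $Q$ as the minimization of the extended power functional $P$ over the nonterminal coordinates. So the work splits into two observations, neither of which requires heavy computation: first, that $P$ itself is a Dirichlet form on $\R^N$; and second, that the constraint $\phi\vert_{\partial N} = \psi$ defining $Q$ is exactly a minimization of the kind handled by the minimization proposition.

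First I would verify that the extended power functional
\[
  P(\phi) = \frac{1}{2}\sum_{e \in E} \frac{1}{r(e)}\big(\phi(t(e))-\phi(s(e))\big)^2
\]
is a Dirichlet form on $\R^N$. Each summand is of the form $c_e\big(\phi_i - \phi_j\big)^2$ with $c_e = \tfrac{1}{2r(e)} \ge 0$, $i = t(e)$, and $j = s(e)$. Collecting edges sharing the same pair of endpoints, we may write $P(\phi) = \sum_{i,j \in N} c_{ij}(\phi_i - \phi_j)^2$ where $c_{ij} = \sum_{e:\,\{s(e),t(e)\}=\{i,j\}} \tfrac{1}{2r(e)}$ is manifestly nonnegative; self-loops (edges with $s(e)=t(e)$) contribute $0$ and cause no trouble. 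This matches the definition of a Dirichlet form on the finite set $N$ verbatim.

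Next I would apply the minimization. Decompose $N = \partial N + (N \setminus \partial N)$ as a disjoint union, inducing $\R^N \cong \R^{\partial N} \oplus \R^{N\setminus\partial N}$ via $\phi \mapsto (\phi\vert_{\partial N},\, \phi\vert_{N\setminus\partial N})$. Under this identification the constraint set $\{\phi : \phi\vert_{\partial N} = \psi\}$ becomes $\{(\psi,\nu) : \nu \in \R^{N\setminus\partial N}\}$, so that by definition $Q(\psi) = \min_{\nu \in \R^{N\setminus\partial N}} P(\psi,\nu)$. Taking $S = \partial N$ and $T = N \setminus \partial N$ in Proposition~\ref{dirichlet_minimization}, and using that $P$ is Dirichlet, we conclude that $Q$ is Dirichlet, as desired.

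The main obstacle here is essentially nonexistent conceptually—the real content was already discharged in Proposition~\ref{dirichlet_minimization} (that Dirichlet-ness is stable under minimization over a coordinate subspace spanned by a subset of the index set) and in Proposition~\ref{dirichlet_problem} (that the minimum is attained, so $Q$ is well defined). The only point demanding care is the bookkeeping in the first step: one must confirm that the edge-indexed sum defining $P$ genuinely collapses to a pair-indexed sum with nonnegative coefficients, accounting for multi-edges and self-loops, so that the hypothesis of the minimization proposition is legitimately met.
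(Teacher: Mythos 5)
Your proof is correct and follows the same route as the paper: observe that the extended power functional $P$ is itself a Dirichlet form on $N$, then apply Proposition~\ref{dirichlet_minimization} to the decomposition $\R^N \cong \R^{\partial N} \oplus \R^{N\setminus\partial N}$. The extra bookkeeping you supply (collapsing the edge-indexed sum to a pair-indexed one, handling multi-edges and self-loops) is a harmless elaboration of what the paper leaves implicit.
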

\begin{proof}
  The extended power functional $P$ is a Dirichlet form, and writing $\R^N=
  \R^{\partial N} \oplus \R^{N \setminus \partial N}$ allows us to write
  \[
    Q(-) =  \min_{\phi\in \R^{N \setminus \partial N}}
    P(-,\phi). \qedhere
  \]
\end{proof}

The converse is also true: simply construct the circuit with set of vertices
$\partial N$ and an edge of resistance $\frac{1}{2c_{ij}}$ between any $i,j \in
\partial N$ such that the term $c_{ij}(\psi_i - \psi_j)$ appears in the
Dirichlet form. This gives: 

\begin{proposition}
  A function is the power functional for some circuit if and only if it is a
  Dirichlet form.
\end{proposition}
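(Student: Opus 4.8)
The statement is a biconditional, and one direction is already in hand. The forward implication---that every power functional is a Dirichlet form---is precisely the Corollary immediately preceding the Proposition, so the plan is simply to invoke it. It remains to establish the converse: that every Dirichlet form arises as the power functional of some circuit.

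For this I would make the construction sketched in the remark following the statement fully precise. Given a Dirichlet form $Q\maps \R^S \to \R$ with
\[
Q(\psi) = \sum_{i,j \in S} c_{ij}(\psi_i - \psi_j)^2, \qquad c_{ij} = c_{ji} \ge 0, \quad c_{ii} = 0,
\]
I would build a circuit whose node set is $N = S$ and in which \emph{every} node is a terminal---concretely, take $X = Y = S$ and $i = o = \idn_S$, so that $\partial N = N$. For each unordered pair $\{i,j\}$ with $c_{ij} > 0$, adjoin a single edge between $i$ and $j$ labelled by resistance $r_{ij} = \tfrac{1}{2c_{ij}}$; pairs with $c_{ij} = 0$ contribute no edge. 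Since $c_{ij} \ge 0$, each such $r_{ij}$ lies in $(0,\infty)$, so this is a legitimate circuit over $(0,\infty)$ in the sense of Definition \ref{def_circuit}.

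The verification then has two parts. First, because $\partial N = N$ there are no interior nodes, so the minimization defining the power functional is vacuous: $N \setminus \partial N = \varnothing$ forces the unique extension of any boundary potential $\psi$ to be $\psi$ itself, whence the power functional $Q'$ of the constructed circuit equals its extended power functional $P$. Second, I would compute $P$ directly from the definition
\[
P(\phi) = \frac12 \sum_{e \in E} \frac{1}{r(e)}\big(\phi(t(e)) - \phi(s(e))\big)^2
\]
and check that, with the stated choice of resistances, it reproduces $Q$ term by term. This reduces to elementary bookkeeping of the leading factor $\tfrac12$ against the convention in the sum defining $Q$ (namely whether each unordered pair is counted once or twice), and the choice $r_{ij} = \tfrac{1}{2c_{ij}}$ is calibrated exactly to make the two expressions agree.

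I do not anticipate any genuine obstacle here: the construction is direct and the computation is a single line. The only point demanding care is the constant-tracking just described, together with the observation that the hypothesis $c_{ij} \ge 0$ built into the definition of a Dirichlet form is exactly what guarantees positivity of the resistances, so that the edges are admissible. Combining the two implications yields the biconditional.
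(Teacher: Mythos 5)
Your proof is correct and takes essentially the same route as the paper: the forward direction is the corollary immediately preceding the statement, and the converse is the same construction of a circuit on the node set $S$ with every node a terminal and an edge of resistance $\tfrac{1}{2c_{ij}}$ wherever $c_{ij}>0$, so that the (vacuous) minimization makes the power functional equal the extended power functional. The one point you rightly flag is the factor-of-two bookkeeping: since the paper's sum defining $Q$ ranges over \emph{ordered} pairs with $c_{ij}=c_{ji}$, a single edge of resistance $\tfrac{1}{2c_{ij}}$ per unordered pair yields only $\tfrac12 Q$, so one should either place one edge per ordered pair (two parallel edges per unordered pair) or use resistance $\tfrac{1}{4c_{ij}}$ --- exactly the calibration you say needs checking.
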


This is an expression of the `star-mesh transform', a well-known fact of
electrical engineering stating that every circuit of linear resistors is
equivalent to some complete graph of resistors between its terminals. For more
details see \cite{vLO}. We may interpret the proof of Proposition
\ref{dirichlet_minimization} as showing that intermediate potentials at minima
depend linearly on boundary potentials, in fact a weighted average, and that
substituting these into a quadratic form still gives a quadratic form.

\bigskip

In summary, in this section we have shown the existence of a surjective function
\[
  \bigg\{\begin{array}{c} \mbox{circuits of linear resistors} \\ \mbox{ with
    boundary $\partial N$} \end{array} \bigg\} \longrightarrow \bigg\{
    \mbox{Dirichlet forms on $\partial N$}\bigg\}
\]
mapping two circuits to the same Dirichlet form if and only if they have the same
external behaviour.  In the next section we extend this result to encompass
inductors and capacitors too.

\section{Inductors and capacitors} \label{sec:plcs}
The intuition gleaned from the study of resistors carries over to inductors and
capacitors too, to provide a framework for studying what are known as passive
linear networks. To understand inductors and capacitors in this way, however, we
must introduce a notion of time dependency and subsequently the Laplace
transform, which allows us to work in the so-called frequency domain. Here, like
resistors, inductors and capacitors simply impose a relationship of
proportionality between the voltages and currents that run across them. The
constant of proportionality is known as the impedance of the component.

As for resistors, the interconnection of such components may be understood, at
least formally, as a minimization of some quantity, and we may represent the
behaviours of this class of circuits with a more general idea of Dirichlet form.

\subsection{The frequency domain and Ohm's law revisited}

In broadening the class of electrical circuit components under examination, we
find ourselves dealing with components whose behaviours depend on the rates of
change of current and voltage with respect to time. We thus now consider
time-varying voltages $v \maps [0,\infty) \to \R$ and currents $i \maps
  [0,\infty) \to \R$, where $t \in [0,\infty)$ is a real variable representing
    time. For mathematical reasons, we
restrict these voltages and currents to only those with (i) zero initial
conditions (that is, $f(0) = 0$) and (ii) Laplace transform lying in the field
\[
  \R(s) = \left\{ Z(s) = \tfrac{P(s)}{Q(s)} \,\Big\vert\, P, Q \mbox{
  polynomials over $\R$ in $s$}, \, Q \ne 0 \right\}
\]
of real rational functions of one variable. 
While it is possible that
physical voltages and currents might vary with time in a more general way, we
restrict to these cases as the rational functions are, crucially, well behaved
enough to form a field, and yet still general enough to provide arbitrarily
close approximations to currents and voltages found in standard applications.

An \define{inductor} is a two-terminal circuit component across which the voltage is
proportional to the rate of change of the current. By convention we draw this as
follows, with the inductance $L$ the constant of proportionality:\footnote{We
  follow the standard convention of denoting inductance by the letter $L$, after
  the work of Heinrich Lenz and to avoid confusion with the $I$ used for
current.}
\[
  \begin{tikzpicture}[circuit ee IEC]
    \node[contact] (I1) at (0,0) {};
    \node[contact] (I2) at (1.83,0) {};
    \draw (I1) 	to [inductor] node [label={[label distance=2pt]{$L$}}]
    {} (I2);
  \end{tikzpicture}
\]
Writing $v_L(t)$ and $i_L(t)$ for the voltage and current over time $t$ across
this component respectively, and using a dot to denote the derivative with
respect to time $t$, we thus have the relationship 
\[
  v_L(t) = L\, \dot{i}_L(t).
\]
Permuting the roles of current and voltage, a \define{capacitor} is a two-terminal
circuit component across which the current is proportional to the rate of change
of the voltage. We draw this as follows, with the capacitance $C$ the constant
of proportionality:
\[
  \begin{tikzpicture}[circuit ee IEC]
    \node[contact] (I1) at (0,0) {};
    \node[contact] (I2) at (1.83,0) {};
    \draw (I1) 	to [capacitor] node [label={[label distance=5pt]{$C$}}]
    {} (I2);
  \end{tikzpicture}
\]
Writing $v_C(t)$, $i_C(t)$ for the voltage and current across the capacitor,
this gives the equation
\[
  i_C(t) = C\, \dot{v}_C(t).
\]
We assume here that inductances $L$ and capacitances $C$ are positive real numbers.

Although inductors and capacitors impose a linear relationship if we involve the
derivatives of current and voltage, to mimic the above work on resistors we wish
to have a constant of proportionality between functions representing the current
and voltage themselves. Various integral transforms perform just this role; electrical
engineers typically use the Laplace transform. This lets us write a function of time $t$ instead as a function of frequencies $s$, and in doing so turns differentiation with respect to $t$ into multiplication by $s$, and integration with respect to $t$ into
division by $s$.  

In detail, given a function $f(t)\maps [0, \infty) \to \R$, we define the
\define{Laplace transform} of $f$
\[
  \mathfrak{L}\{f\}(s) = \int_{0}^\infty f(t) e^{-st} dt.
\]
We also use the notation $\mathfrak{L}\{f\}(s) = F(s)$, denoting the Laplace
transform of a function in upper case, and refer to the Laplace transforms as
lying in the \define{frequency domain} or \define{$s$-domain}. For us, the three
crucial properties of the Laplace transform are then: 
\begin{enumerate}[(i)]
  \item linearity: $\mathfrak{L}\{af+bg\}(s) = aF(s)+bG(s)$ for $a,b\in \R$;
  \item differentiation: $\mathfrak{L}\{\dot{f}\}(s) = s F(s) - f(0)$;
  \item integration: if $g(t) = \int_0^t f(\tau)d\tau$ then 
 $G(s) = \frac{1}{s} F(s)$.
\end{enumerate}
Writing $V(s)$ and $I(s)$ for the Laplace transform of the voltage $v(t)$ and
current $i(t)$ across a component respectively, and recalling that by assumption
$v(t) = i(t) = 0$ for $t \le 0$, the $s$-domain behaviours of components become,
for a resistor of resistance $R$:
\[
  V(s) = RI(s),
\]
for an inductor of inductance $L$:
\[
  V(s) = sLI(s),
\]
and for a capacitor of capacitance $C$:
\[
  V(s) = \frac1{sC} I(s). 
\]

Note that for each component the voltage equals the current times a rational function of
the real variable $s$, called the \define{impedance} and in general denoted by $Z$.
Note also that the impedance is a \define{positive real function}, meaning that it lies
in the set
\[         
  \{ Z \in \R(s) : \forall s \in \C \;\; \mathrm{Re}(s) > 0 \implies
  \mathrm{Re}(Z(s)) > 0 \} . 
\]
While $Z$ is a quotient of polynomials with real cofficients, in this definition
we are applying it to complex values of $s$, and demanding that its real part be
positive in the open left half-plane.  Positive real functions were introduced by Otto 
Brune in 1931, and they play a basic role in circuit theory \cite{Brune}.  

Indeed, Brune convincingly argued that for any conceivable passive linear
component with two terminals we have this generalization of Ohm's law:
\[
  V(s)=Z(s)I(s)
\]
where $I \in \R(s)$ is the \define{current}, $V \in \R(s)$ is the \define{voltage}
and the positive real function $Z$ is the \define{impedance} of the component.   As we shall
see, generalizing from circuits of linear resistors to arbitrary passive linear
circuits is just a matter of formally replacing resistances by impedances. 

As we consider passive linear circuits with more than two terminals, however,
the coefficients of our power functionals lie not just in the set of positive
real functions, but in its closure $\R(s)^+$ under addition, multiplication, and
division. Note that this closure is strictly smaller than $\R(s)$, as each
positive real function is takes strictly positive real values on the positive real
axis, so no matter how we take sums, products, and quotients of positive real
functions it never results in the zero function $0 \in \R(s)$.  To generalise
from linear resistors to passive linear circuits, we then replace the field $\R$
by the larger field $\R(s)$, and replace the set of positive reals, $\R^+ =
(0,\infty)$, by the set $\R(s)^+$.  From a mathematical perspective we might as
well work with any field with a notion of `positive element'.

\begin{definition} 
  Given a field $\F$, we define a \define{set of positive elements} for $\F$ to be  
  any subset $\F^+\subset \F$ not containing $0$ and closed under addition,
  multiplication, and division.
\end{definition}

Our first motivating example arises from circuits made of resistors.  Here $\F =
\R$ is the field of real numbers and we take $\F^+ = (0,\infty)$.   Our second
motivating example arises from general passive linear circuits.  Here $\F =
\R(s)$ is the field of rational functions in one real variable, and we take
$\F^+ = \R(s)^+$ to be the closure of the positive real functions, as defined
above.  

In all that follows, we fix a field $\F$ of characteristic zero equipped with a
set of positive elements $\F^+$.  By a `circuit', we shall henceforth mean a
circuit over $\F^+$, as explained in Definition \ref{def_circuit}.   To fix the
notation:

\begin{definition} \label{def_circuit_2}
A \define{(passive linear) circuit} is a graph $s,t \maps E \to N$ with $E$ as its set of \define{edges} and $N$ as its set of \define{nodes}, equipped with function $Z \maps E \to \F^+$ assigning each edge an \define{impedance}, together with finite sets $X$, $Y$, and functions $i \maps X \to N$ and $o\maps Y \to  N$. We call the sets $i(X)$, $o(Y)$, and $\partial N = i(X) \cup o(Y)$ the \define{inputs}, \define{outputs}, and \define{terminals} or \define{boundary} of the circuit, respectively.
\end{definition}

We next remark on the analogy between electronics and mechanics in light of
these new components.

\subsection{The mechanical analogy} \label{sec:mechanical}

Now that we have introduced inductors and capacitors, it is worth taking 
another glance at the analogy chart in Section \ref{sec:intro}.  What are the
analogues of resistance, inductance and capacitance in mechanics?  If we restrict attention to systems with translational degrees of freedom, the answer is given in the following
chart.

\begin{small}
\begin{center}
\begin{tabular}{|c|c|}
\hline
Electronics & Mechanics (translation) \\
\hline\hline
charge $Q$ & position $q$ \\
\hline
current $i = \dot Q$ & velocity $v = \dot q$ \\
\hline
flux linkage $\lambda$ & momentum $p$ \\
\hline
voltage $v = \dot \lambda$ & force $F = \dot p$ \\
\hline
resistance $R$ & damping coefficient $c$ \\
\hline
inductance $L$ & mass $m$ \\
\hline
inverse capacitance $C^{-1}$ & spring constant $k$ \\
\hline
\end{tabular}
\end{center}
\end{small}

A famous example concerns an electric circuit with a resistor of resistance $R$, an inductor of inductance $L$, and a capacitor of capacitance $C$, all in series:
\[
  \begin{tikzpicture}[circuit ee IEC, set resistor graphic=var resistor IEC graphic]
    \node[contact] (I1) at (0,0) {};
    \node[contact] (I2) at (1.83,0) {};
    \node[contact] (I3) at (3.66,0) {};
    \node[contact] (I4) at (5.49,0) {};
    \draw (I1) 	to [resistor] node [label={[label distance=2pt]{$R$}}]
    {} (I2);
    \draw (I2) 	to [inductor] node [label={[label distance=5pt]{$L$}}]
    {} (I3);
     \draw (I3) 	to [capacitor] node [label={[label distance=5pt]{$C$}}]
    {} (I4);
  \end{tikzpicture}
\]
We saw in Example \ref{resistors_in_series} that for resistors in series, the
resistances add.  The same fact holds more generally for passive linear circuits,
so the impedance of this circuit is the sum
\[   Z = s L + R + (sC)^{-1}  .\]
Thus, the voltage across this circuit is related to the current through the
circuit by
\[  V(s) = (s L + R + (sC)^{-1}) I(s)  \]
If $v(t)$ and $i(t)$ are the voltage and current as functions of time, we conclude that
\[  v(t) = L \frac{d}{dt}i(t) + Ri(t) + C^{-1} \int_0^t i(s) \, ds  \]
It follows that 
\[   
L \ddot{Q} + R \dot{Q} + C^{-1} Q = v
\]
where $Q(t) = \int_0^t i(t) ds$ has units of charge.  As the chart above suggests,
this equation is analogous to that of a damped harmonic oscillator:
\[    
m \ddot{q} + c \dot{q} + k q = F 
\]
where $m$ is the mass of the oscillator, $c$ is the damping coefficient, $k$ is 
the spring constant and $F$ is a time-dependent external force.

For details, and many more analogies of this sort, see the book by Karnopp, 
Margolis and Rosenberg \cite{KRM} or Brown's enormous text \cite{Brown}.   While it would be a distraction to discuss them further here, these analogies mean that our
work applies to a wide class of networked systems, not just electrical circuits.

\subsection{Generalized Dirichlet forms} \label{sec:generalized}

To understand the behaviour of passive linear circuits we need to
understand how the behaviours of individual components, governed by Ohm's law,
fit together to give the behaviour of an entire network.
Kirchhoff's laws still hold, and so does a version of the principle of minimum power.

As before, to each passive linear circuit we associate a generalised Dirichlet
form.

\begin{definition} Given a field $\F$ and
  a finite set $S$, a \define{Dirichlet form over $\F$} on $S$ is a quadratic form   
  $Q\maps \F^S \to \F$ given by the formula 
  \[ 
    Q(\psi) = \sum_{i,j \in S} c_{ij} (\psi_i - \psi_j)^2,
  \]
  where $c_{i j} \in \F^+$.  
\end{definition}

Generalizing from circuits of resistors, we define the
\define{extended power functional} $P\maps \F^N \to \F$ of any circuit by
\[
  P(\varphi) = \frac{1}{2} \sum_{e \in E}
  \frac1{Z(e)}\big(\varphi(t(e))-\varphi(s(e))\big)^2.
\]
and we call $\varphi \in \F^N$ a \define{potential}. Note have assumed that
$\F$ has characteristic zero, so dividing by 2 is allowed.  Note also that the
extended power functional is a Dirichlet form on $N$.

Although it is not clear what it means to minimize over the field $\F$, we can
use formal derivatives to formulate an analogue of the principle of minimum 
power.   This will actually be a `variational principle', saying the derivative of
the power functional vanishes with respect to certain variations in the potential.
As before we shall see that given Ohm's law, this principle is equivalent to
Kirchhoff's current law.

Indeed, the extended power functional $P(\varphi)$ can be considered an element
of the polynomial ring $\F[\{\varphi(n)\}_{n \in N}]$ generated by formal
variables $\varphi(n)$ corresponding to potentials at the nodes $n \in N$. We
may thus take formal derivatives of the extended power functional with respect
to the $\varphi(n)$.  We then call $\phi \in \F^N$ a \define{realizable
potential} for the given circuit if for each nonterminal node, $n \in N\setminus
\partial N$, the formal partial derivative of the extended power functional with
respect to $\varphi(n)$ equals zero when evaluated at $\phi$:
\[
  \frac{\partial P}{\partial \varphi(n)}\bigg\vert_{\varphi = \phi} = 0
\]
This terminology arises from the following fact, a generalization of Proposition
\ref{minimum_power_implies_kirchhoff_current}:

\begin{theorem} \label{thm:realizablepotentials}
The potential $\phi \in \F^N$ is a realizable potential for a given
circuit if and only if the induced current 
\[  I(e) = \frac1{Z(e)}(\phi(t(e))-\phi(s(e))) \]
obeys Kirchhoff's current law:
\[ 
\sum_{s(e) = n} I(e) = \sum_{t(e) = n} I(e)
\]  
for all $n \in N\setminus \partial N$.
\end{theorem}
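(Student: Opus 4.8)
The plan is to reduce the entire statement to establishing the single algebraic identity
\[
\frac{\partial P}{\partial \varphi(n)}\bigg\vert_{\varphi = \phi} = \sum_{t(e) = n} I(e) - \sum_{s(e) = n} I(e)
\]
for each node $n \in N$, where $I(e) = \frac1{Z(e)}(\phi(t(e)) - \phi(s(e)))$ is the induced current. Once this identity is in hand, both directions of the biconditional are immediate: the left-hand side vanishes for every nonterminal $n \in N \setminus \partial N$ exactly when the right-hand side does, and the vanishing of the right-hand side is precisely the assertion that Kirchhoff's current law $\sum_{s(e)=n} I(e) = \sum_{t(e)=n} I(e)$ holds at each such $n$. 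This is the same route taken in the real case, Proposition \ref{minimum_power_implies_kirchhoff_current}; the only thing requiring care is that the formal derivative machinery over $\F$ behaves as expected.

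To establish the identity I would compute the formal partial derivative of $P$ as an element of the polynomial ring $\F[\{\varphi(n)\}_{n \in N}]$. Since $P$ is a sum over edges of terms $\frac1{2Z(e)}(\varphi(t(e)) - \varphi(s(e)))^2$, differentiating with respect to $\varphi(n)$ annihilates every term from an edge not incident to $n$. For an edge $e$ with $t(e) = n$, the formal power rule gives $\frac{\partial}{\partial\varphi(n)}(\varphi(t(e)) - \varphi(s(e)))^2 = 2(\varphi(t(e)) - \varphi(s(e)))$, which after multiplication by $\frac1{2Z(e)}$ contributes $\frac1{Z(e)}(\varphi(t(e)) - \varphi(s(e)))$; for an edge with $s(e) = n$ the chain rule produces the same expression with the opposite sign. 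Evaluating at $\phi$ and summing over incident edges yields exactly the identity above. The crucial use of the hypothesis that $\F$ has characteristic zero is precisely here: the factor of $2$ from differentiating a square cancels against the $\tfrac12$ in the definition of $P$, which would be impossible if $2 = 0$ in $\F$.

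I do not expect a genuine obstacle: the content is entirely formal, as the Laplace-transform passage has already replaced all analytic minimization by a variational (derivative-vanishing) condition, so no appeal to ordering, limits, or minima over $\F$ is needed, and the computation is identical in form to the real case. The only points warranting a sentence of justification are the characteristic-zero cancellation noted above, and the harmlessness of self-loops: an edge $e$ with $s(e) = t(e)$ contributes the identically zero polynomial to $P$, hence nothing to the derivative, and correspondingly satisfies $I(e) = 0$, so it affects neither side of the identity. Having verified the identity for all $n \in N \setminus \partial N$, I would conclude that $\phi$ is a realizable potential if and only if the induced current $I$ obeys Kirchhoff's current law on the interior, as claimed.
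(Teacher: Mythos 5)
Your proposal is correct and follows essentially the same route as the paper, which simply observes that the argument of Proposition \ref{minimum_power_implies_kirchhoff_current} carries over verbatim: compute the formal partial derivative of $P$ at each nonterminal node and identify it with the net current there. Your extra remarks on the characteristic-zero cancellation and on self-loops are sound but not needed beyond what the paper already assumes.
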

\begin{proof}
The proof of this statement is exactly that for Proposition
\ref{minimum_power_implies_kirchhoff_current}. 
\end{proof}

A corollary of Theorem \ref{thm:realizablepotentials} is that the set of
states---that is, potential--current pairs---that are compatible with the
governing laws of a circuit is given by the set of realizable potentials
together with their induced currents. 

\subsection{A generalized minimizability result}

We begin to move from a discussion of the intrinsic behaviours of circuits to a
discussion of their behaviours under composition. The key fact for composition
of generalized Dirichlet forms is that, in analogy with Proposition
\ref{dirichlet_minimization}, we may speak of a formal version of minimization
of Dirichlet forms. We detail this here.  In what follows let $P$ be a Dirichlet
form over $\F$ on some finite set $S$. 

Recall that given $R \subseteq S$, we
call $\tilde\psi \in \F^S$ an \define{extension} of $\psi \in \F^R$ if
$\tilde\psi$ restricted to $R$ equals $\psi$.   We call such an
extension \define{realizable} if 
\[
    \frac{\partial P}{\partial \varphi(s)}\bigg\vert_{\varphi = \tilde\psi} = 0
  \]
for all $s \in S \setminus R$.  Note that over the real numbers $\R$ this means
that among all the extensions of $\psi$, $\tilde\psi$ minimizes the function $P$.

\begin{theorem} \label{thm:dirichletminimization}
  Let $P$ be a Dirichlet form over $\F$ on $S$, and let $R \subseteq S$ be an
  inclusion of finite sets. Then we may uniquely define a Dirichlet form
  \[\min_{S \setminus R}P: \F^R \to \F\] 
   on $R$ by sending each $\psi \in \F^R$ to
  the value $P(\tilde\psi)$ of any realizable extension $\tilde\psi$ of $\psi$.
\end{theorem}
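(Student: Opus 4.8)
The plan is to establish three things: that every $\psi \in \F^R$ admits a realizable extension $\tilde\psi \in \F^S$; that the value $P(\tilde\psi)$ is independent of the chosen realizable extension, so that $\min_{S\setminus R}P$ is a well-defined function; and that this function is a Dirichlet form over $\F$. Over $\R$ the analogous statement is Proposition \ref{dirichlet_minimization}, but that proof leaned on the order-theoretic Markov characterization of Dirichlet forms (Proposition \ref{dirichlet_characterizations}), which has no counterpart over a general field. The replacement tool is single-node elimination, the algebraic form of the star-mesh transform already computed for two resistors in series in Example \ref{resistors_in_series}.

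First I would eliminate the nodes of $T = S\setminus R$ one at a time, by induction on $|T|$. Fix $n\in T$ and let $g_l = c_{n,m_l}\in\F^+$ be the conductances of the edges joining $n$ to its neighbours $m_1,\dots,m_k$. If $n$ is isolated then $\varphi(n)$ does not appear in $P$, the realizability condition $\partial P/\partial\varphi(n)=0$ is vacuous, and $\varphi(n)$ is simply dropped. Otherwise $g := \sum_l g_l$ lies in $\F^+$, and in particular $g\neq 0$ because $\F^+$ is closed under addition and excludes $0$; hence the linear equation $\partial P/\partial\varphi(n)=\sum_l g_l(\varphi(n)-\varphi(m_l))=0$ has the unique solution $\varphi(n)=(\sum_l g_l\varphi(m_l))/g$, the weighted average generalizing Example \ref{resistors_in_series}. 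Substituting this value back into $P$ yields, after the same algebra as in that example, a form $P'$ on $\F^{S\setminus\{n\}}$ whose coefficient between $m_a$ and $m_b$ is augmented by $g_a g_b/g$; since $\F^+$ is closed under multiplication and division this augmented coefficient again lies in $\F^+$, so $P'$ is a Dirichlet form over $\F$.

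Repeating the elimination expresses every interior potential linearly in $\psi$ and terminates in a Dirichlet form on $R$; this both exhibits a realizable extension (so existence holds) and is the candidate for $\min_{S\setminus R}P$. To confirm that eliminating nodes singly agrees with simultaneous realizability, I would invoke the chain rule: after setting $\varphi(n)$ to annihilate $\partial P/\partial\varphi(n)$, for any other interior node $m$ one has $\partial P'/\partial\varphi(m)=\partial P/\partial\varphi(m)+(\partial P/\partial\varphi(n))(\partial\varphi(n)/\partial\varphi(m))$, and the last term vanishes on the locus $\partial P/\partial\varphi(n)=0$, so the realizability conditions transfer intact. For well-definedness, let $\tilde\psi,\tilde\psi'$ be realizable extensions of the same $\psi$ and set $\alpha=\tilde\psi'-\tilde\psi$, which vanishes on $R$. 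Each $\partial P/\partial\varphi(m)$ is linear in $\varphi$, so its value at $\alpha$ is its value at $\tilde\psi'$ minus that at $\tilde\psi$, namely $0$, for all $m\in T$. Writing $P(\varphi)=\tfrac12 B(\varphi,\varphi)$ with $B$ symmetric bilinear,
\[
P(\tilde\psi') = P(\tilde\psi) + B(\tilde\psi,\alpha) + P(\alpha).
\]
Here $B(\tilde\psi,\alpha)=\sum_m \alpha(m)\,(\partial P/\partial\varphi(m))|_{\tilde\psi}$ vanishes since $\alpha(m)=0$ on $R$ and the partial derivative vanishes on $T$; and by Euler's identity $2P(\alpha)=\sum_m \alpha(m)\,(\partial P/\partial\varphi(m))|_{\alpha}=0$ for the same reason. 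Thus $P(\tilde\psi')=P(\tilde\psi)$.

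The main obstacle is the Dirichlet-ness of the reduced form over $\F$: lacking an ordering, we cannot appeal to Proposition \ref{dirichlet_characterizations}, so the positivity of the reduced conductances must instead be read off the explicit star-mesh expression $g_a g_b/g$ together with the closure of $\F^+$ under addition, multiplication, and division. Everything else — existence of a realizable extension and independence of $P(\tilde\psi)$ from the choice — reduces to the linearity of the partial derivatives and to $g=\sum_l g_l\neq 0$, which is precisely the point where the exclusion of $0$ from $\F^+$ is used.
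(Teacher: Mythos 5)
Your proposal is correct and follows essentially the same route as the paper: well-definedness via the quadratic expansion $P(\tilde\psi') = P(\tilde\psi) + dP_{\tilde\psi}(\alpha) + P(\alpha)$ with both terms killed by realizability (you conclude $P(\alpha)=0$ by Euler's identity where the paper uses the symmetry $P(\tilde\psi')-P(\tilde\psi) = -(P(\tilde\psi)-P(\tilde\psi'))$, both needing characteristic zero), followed by iterated single-node elimination with the reduced conductance $c_{ia}c_{ib}/\sum_k c_{ik} \in \F^+$. The only step you leave implicit is the verification that the substituted expression really assembles into a sum of squared differences — the paper does this explicitly via the identity $(\psi_k-\psi_\ell)(\psi_k-\psi_m)+(\psi_\ell-\psi_k)(\psi_\ell-\psi_m)+(\psi_m-\psi_k)(\psi_m-\psi_\ell)=\tfrac12\big((\psi_k-\psi_\ell)^2+(\psi_k-\psi_m)^2+(\psi_\ell-\psi_m)^2\big)$, which is more than "the same algebra" as the two-resistor example — but your claimed formula is the right one.
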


To prove this theorem, we must first show that $\min_{S \setminus R} P$ is well defined as a function.

\begin{lemma} \label{lem:welldefineddirichletmin}
  Let $P$ be a Dirichlet form over $\F$ on $S$, let $R \subseteq S$ be an
  inclusion of finite sets, and let $\psi \in \F^R$. Then for all realizable
  extensions $\tilde\psi$, $\tilde\psi' \in \F^S$ of $\psi$ we have $P(\tilde\psi) =
  P(\tilde\psi')$. 
\end{lemma}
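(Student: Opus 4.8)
The plan is to adapt the uniqueness argument of Proposition \ref{dirichlet_problem}, but replacing the appeal to nonnegativity (unavailable over a general field $\F$) with a symmetric bilinearity trick that forces the relevant quadratic quantity to vanish using only that $\F$ has characteristic zero. Throughout I would work with the abstract Dirichlet form $P(\varphi) = \sum_{i,j\in S} c_{ij}(\varphi_i - \varphi_j)^2$, assuming as the paper does that $c_{ii}=0$ and $c_{ij}=c_{ji}$, and I would set $\alpha = \tilde\psi' - \tilde\psi$. Since $\tilde\psi$ and $\tilde\psi'$ both restrict to $\psi$ on $R$, the difference $\alpha$ vanishes on $R$, so $\alpha_i \neq 0$ only for $i \in S \setminus R$; this is the feature that will interact with realizability.

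First I would record the formal partial derivative. A direct computation using $c_{ij}=c_{ji}$ gives
\[
  \frac{\partial P}{\partial \varphi(s)} = 4\sum_{j} c_{sj}\big(\varphi_s - \varphi_j\big),
\]
so (as $4\neq 0$ in characteristic zero) realizability of an extension $\tilde\psi$ is exactly the statement that $\sum_j c_{sj}(\tilde\psi_s - \tilde\psi_j)=0$ for every $s\in S\setminus R$. Next I would expand $P$ quadratically along $\alpha$: writing the associated symmetric cross term as $\tilde B = \sum_{i,j} c_{ij}(\tilde\psi_i - \tilde\psi_j)(\alpha_i - \alpha_j)$, one has $P(\tilde\psi + \lambda\alpha) = P(\tilde\psi) + 2\lambda\tilde B + \lambda^2 P(\alpha)$. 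The key reduction is that the summand $c_{ij}(\tilde\psi_i-\tilde\psi_j)(\alpha_i-\alpha_j)$ is symmetric in $i,j$, which lets me rewrite $\tilde B = 2\sum_i \alpha_i\big(\sum_j c_{ij}(\tilde\psi_i - \tilde\psi_j)\big)$. Now $\alpha_i=0$ for $i\in R$, while for $i\in S\setminus R$ the inner sum vanishes by realizability of $\tilde\psi$; hence $\tilde B = 0$. The identical argument applied to $\tilde\psi'$ gives $\tilde B' = \sum_{i,j}c_{ij}(\tilde\psi'_i - \tilde\psi'_j)(\alpha_i-\alpha_j) = 0$.

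Finally I would combine the two expansions. Expanding around $\tilde\psi$ at $\lambda=1$ and using $\tilde B=0$ yields $P(\tilde\psi') = P(\tilde\psi) + P(\alpha)$; expanding around $\tilde\psi'$ in the direction $-\alpha$ (so $\tilde\psi' - \alpha = \tilde\psi$) and using $\tilde B'=0$ yields $P(\tilde\psi) = P(\tilde\psi') + P(\alpha)$. Adding these two equations gives $2P(\alpha)=0$, so $P(\alpha)=0$ since $\operatorname{char}\F = 0$, and therefore $P(\tilde\psi')=P(\tilde\psi)$, as required.

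The main obstacle is precisely the point where the $\R$-proof in Proposition \ref{dirichlet_problem} used nonnegativity of the quadratic form to conclude termwise vanishing: over an arbitrary $\F$ with a mere set of positive elements $\F^+$, a sum of `squares' with coefficients in $\F^+$ need not force each square to be zero, and indeed $P(\alpha)=0$ does \emph{not} imply $\alpha$ is locally constant here. The resolution is that the lemma only asks for equality of \emph{values}, not the structural conclusion about $\alpha$; the symmetric pairing of the two realizability conditions yields $P(\alpha)=0$ directly, so the weaker statement goes through without any order structure on $\F$. I would flag the harmless use of $\mathrm{char}\,\F = 0$ (already assumed in \textsection\ref{sec:generalized}) in dividing by $2$.
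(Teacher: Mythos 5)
Your proof is correct and follows essentially the same route as the paper's: both hinge on the observation that the bilinear cross term $dP_{\tilde\psi}(\tilde\psi'-\tilde\psi)$ vanishes (on $R$ because the difference is zero, on $S\setminus R$ by realizability), yielding the two Taylor expansions $P(\tilde\psi') = P(\tilde\psi)+P(\alpha)$ and $P(\tilde\psi) = P(\tilde\psi')+P(\alpha)$, which combine to force $P(\alpha)=0$ in characteristic zero. The only difference is that you compute the cross term explicitly from the formula $\sum_{i,j}c_{ij}(\varphi_i-\varphi_j)^2$ where the paper invokes the formal multivariable Taylor theorem abstractly; this is a presentational choice, not a different argument.
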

\begin{proof}
  This follows from the formal version of the multivariable Taylor theorem for
  polynomial rings over a field of characteristic zero (see
  \cite[\textsection IV.4.5]{Bou90}). Let $\tilde\psi$,
  $\tilde\psi' \in \F^S$ be realizable extensions of $\psi$, and note that
  $dP_{\tilde\psi}(\tilde\psi-\tilde\psi')=0$, since for all $s \in R$ we have
  $\tilde\psi(s) -\tilde\psi'(s) =0$, and for all $s \in S \setminus R$ we have
  \[
    \frac{\partial P}{\partial \varphi(s)}\bigg\vert_{\varphi = \tilde\psi}=0. 
  \]
  We may take the Taylor expansion of $P$ around $\tilde\psi$ and evaluate at
  $\tilde\psi'$. As $P$ is a quadratic form, this gives
  \begin{align*}
    P(\tilde\psi') &=
    P(\tilde\psi)+dP_{\tilde\psi}(\tilde\psi'-\tilde\psi)+P(\tilde\psi'-\tilde\psi)
    \\
    & = P(\tilde\psi)+P(\tilde\psi'-\tilde\psi).
  \end{align*}
  Similarly, we arrive at  
  \[
    P(\tilde\psi)= P(\tilde\psi')+P(\tilde\psi-\tilde\psi').
  \]
  But again as $P$ is a quadratic form, we then see that 
  \[
    P(\tilde\psi')-P(\tilde\psi) = P(\tilde\psi'-\tilde\psi) =
    P(\tilde\psi-\tilde\psi') = P(\tilde\psi)-P(\tilde\psi').
  \]
  This implies that $P(\tilde\psi')-P(\tilde\psi) = 0$, as required.
\end{proof}

It remains to show that $\min P$ remains a Dirichlet form. We do this
inductively.

\begin{lemma} \label{lem:onestepdirichletmin}
  Let $P$ be a Dirichlet form over $\F$ on $S$, and let $s \in S$ be an element
  of $S$. Then the map $\min_{\{s\}} P:\F^{S \setminus\{s\}} \to \F$ sending
  $\psi$ to $P(\tilde\psi)$ is a Dirichlet form on $S \setminus \{s\}$.
\end{lemma}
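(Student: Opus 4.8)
The plan is to reduce the whole statement to a single Schur-complement (`star--mesh') computation. Write $S' = S \setminus \{s\}$ and denote the value of a potential at $s$ by $x$. Since $P$ is a Dirichlet form, using $c_{ij}=c_{ji}$ and $c_{ii}=0$ I would first separate off the terms containing $x$:
\[
P(\psi) = \sum_{i,j \in S'} c_{ij}(\psi_i - \psi_j)^2 + 2\sum_{j \in S'} c_{sj}(x - \psi_j)^2.
\]
Taking the formal partial derivative with respect to $x$ and setting it to zero gives the single linear condition $\sum_{j \in S'} c_{sj}(x - \psi_j) = 0$ that characterizes the realizable extensions of a given $\psi \in \F^{S'}$.

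Next I would split on $C := \sum_{j\in S'} c_{sj}$. Because $\F^+$ is closed under addition and excludes $0$, the sum $C$ is nonzero exactly when some $c_{sj} \in \F^+$; if every $c_{sj}=0$ then $s$ is isolated, $P$ does not involve $x$, every extension is realizable, and $\min_{\{s\}}P = \sum_{i,j\in S'}c_{ij}(\psi_i-\psi_j)^2$ is visibly Dirichlet. In the main case $C \neq 0$, solving the linear condition yields the weighted average $x = C^{-1}\sum_{j\in S'} c_{sj}\psi_j$, which is a linear function of $\psi$. Since Lemma \ref{lem:welldefineddirichletmin} already guarantees $\min_{\{s\}}P$ is well defined as a function, I may compute its value by substituting this particular realizable $x$.

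The heart of the argument is this substitution. Using $\sum_{j} c_{sj} = C$ and $\sum_{j} c_{sj}\psi_j = Cx$, the $x$-terms collapse, and after symmetrizing $\sum_{j,k}c_{sj}c_{sk}(\psi_j^2 - \psi_j\psi_k)$ one obtains
\[
2\sum_{j\in S'} c_{sj}(x-\psi_j)^2 = \frac1C \sum_{j,k \in S'} c_{sj}c_{sk}(\psi_j - \psi_k)^2.
\]
Adding back the terms not involving $s$ gives
\[
\min_{\{s\}}P(\psi) = \sum_{i,j\in S'}\Big(c_{ij} + \tfrac{c_{si}c_{sj}}{C}\Big)(\psi_i - \psi_j)^2,
\]
the promised form. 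I would then check the coefficients: each $c_{si}c_{sj}C^{-1}$ lies in $\F^+$ by closure of $\F^+$ under multiplication and division (or is $0$ if $c_{si}$ or $c_{sj}$ vanishes), and adding $c_{ij}\in \F^+\cup\{0\}$ keeps the total in $\F^+\cup\{0\}$; discarding the zero coefficients exhibits $\min_{\{s\}}P$ as a genuine Dirichlet form on $S'$.

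The main obstacle is conceptual rather than computational. Because $\F$ carries no order we cannot literally minimize, so the entire argument must run through the formal-derivative notion of realizable extension and rely on Lemma \ref{lem:welldefineddirichletmin} for well-definedness. The one genuinely field-theoretic point that must be handled with care is that the new coefficients remain positive: this is precisely where closure of $\F^+$ under addition, multiplication, and division is needed, together with the fact that $0 \notin \F^+$, which forces $C \neq 0$ away from isolated nodes and thereby licenses the division by $C$.
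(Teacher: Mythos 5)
Your proof is correct and follows essentially the same route as the paper's: both solve the formal stationarity condition to get $\psi_s$ as the $c$-weighted average, substitute, and exhibit $\min_{\{s\}}P$ with the new coefficients $c_{ij}+c_{si}c_{sj}/C$, the only cosmetic difference being that you collapse and symmetrize the $x$-terms directly where the paper clears the denominator and invokes a three-term polarization identity. Your explicit handling of the isolated case $C=0$ is a small point of extra care that the paper glosses over, but it does not change the argument.
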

\begin{proof}
  Write $P(\phi) = \sum_{i,j} c_{ij}(\phi_i -\phi_j)^2$, assuming without loss
  of generality that $c_{sk} =0$ for all $k$. We then have
  \[
    \frac{\partial P}{\partial \varphi(s)}\bigg\vert_{\varphi = \phi} = \sum_k
    2c_{ks}(\phi_s-\phi_k),
  \]
  and this is equal to zero when
  \[
    \phi_s = \frac{\sum_k c_{ks}\phi_k}{\sum_k c_{ks}}.
  \]
  Note that the $c_{ij}$ lie in $\F^+$, and $\F^+$ is closed under addition, so
  $\sum_k c_{ks} \ne 0$. Thus $\min_{\{s\}}P$ may be given explicitly by the
  expression
  \[
    \min_{\{s\}} P(\psi) = \sum_{i,j \in S \setminus \{s\}} c_{ij}(\psi_i -\psi_j)^2 +
    \sum_{\ell \in S \setminus \{s\}} c_{\ell s}\left(\psi_\ell - \tfrac{\sum_k
      c_{ks} \psi_k}{\sum_k c_{ks}}\right)^2.
  \]
  We must show this is a Dirichlet form on $S \setminus \{s\}$. 
  
  As the sum of Dirichlet forms is evidently Dirichlet, it suffices to check that the expression 
  \[
    \sum_\ell c_{\ell s}\left(\psi_\ell - \tfrac{\sum_k c_{ks} \psi_k}{\sum_k
      c_{ks}}\right)^2
  \]
  is Dirichlet on $S \setminus \{s\}$. Multiplying through by the constant
  $(\sum_k c_{ks})^2 \in \F^+$, it further suffices to check
  \begin{align*}
    &\quad \sum_\ell c_{\ell s}\left(\sum_k c_{ks} \psi_\ell - \sum_k c_{ks}
    \psi_k\right)^2 \\
    &= \sum_\ell c_{\ell s} \left(\sum_k c_{ks} (\psi_\ell -
    \psi_k)\right)^2 \\
    &= \sum_\ell c_{\ell s} \left(2 \sum_{\substack{k,m \\ k \ne m}} c_{k s} c_{ms}
    (\psi_\ell-\psi_k)(\psi_\ell - \psi_m) + \sum_{k} c_{k
    s}^2(\psi_\ell-\psi_k)^2\right) \\
    &= 2\sum_{\substack{k,\ell,m \\ k \ne m}} c_{\ell s} c_{k s} c_{ms}
    (\psi_\ell-\psi_k)(\psi_\ell - \psi_m) + \sum_{k, \ell} c_{\ell s}c_{k
    s}^2(\psi_\ell-\psi_k)^2
  \end{align*}
  is Dirichlet. But
  \begin{align*}
    &\quad (\psi_k - \psi_\ell)(\psi_k - \psi_m)+(\psi_\ell - \psi_k)(\psi_\ell -
    \psi_m) + (\psi_m-\psi_k)(\psi_m-\psi_\ell) \\ 
    &= \psi_k^2+\psi_\ell^2+\psi_m^2-\psi_k\psi_\ell- \psi_k\psi_m -
    \psi_\ell\psi_m \\
    &= \tfrac12\big( (\psi_k-\psi_\ell)^2 +(\psi_k-\psi_m)^2
    +(\psi_\ell-\psi_m)^2\big),
  \end{align*}
  so this expression is indeed Dirichlet. Indeed, pasting these computations
  together shows that
  \[
    \min_{\{s\}}P(\psi) = \sum_{i,j} \left(c_{ij}+\frac{c_{is}c_{js}}{{\textstyle \sum_k}
    c_{ks}}\right)(\psi_i-\psi_j)^2. \qedhere
  \]
\end{proof}

With these two lemmas, the proof of Theorem \ref{thm:dirichletminimization}
becomes straightforward.

\begin{proof}[Proof of Theorem \ref{thm:dirichletminimization}]
  Lemma \ref{lem:welldefineddirichletmin} shows that $\min_{S \setminus R}P$ is a well-defined
  function. As $R$ is a finite set, we may write it $R = \{s_1,\dots, s_n\}$ for
  some natural number $n$. Then we may define a sequence of functions $P_i =
  \min_{\{s_1, \dots,s_i\}} P_{i-1}$, $1 \le i\le n$. Define also $P_0 = P$, and note
  that $P_n = \min_{S \setminus R}P$. Then, by Lemma
  \ref{lem:onestepdirichletmin}, each $P_i$ is Dirichlet as $P_{i-1}$
  is. This proves the proposition.
\end{proof}

We can thus define the power functional of a circuit by analogy with circuits made
of resistors:

\begin{definition}
The \define{power functional} $Q \maps \R^{\partial N} \to \R$ of a circuit with extended power functional $P$ is given by
\[
 Q = \min_{N \setminus \partial N}  P .
\]
\end{definition}

As before, we call two circuits equivalent if they have the same power
functional, and define the \define{behaviour} of a circuit to be its equivalence
class. 

We now have a complete description of the semantics of diagrams of passive
linear networks. The purpose of this chapter is to show how to use decorated
corelations to make this semantics compositional. We make a start on this in the
next section.

\section{A compositional perspective} \label{sec:circdef}
Thus far our focus has been on the semantics of circuit diagrams, explaining how
labelled graphs represent Dirichlet forms. We now wish to work towards a
\emph{compositional} semantics. 

A prerequisite for this is compositional structure on our circuit diagrams and
Dirichlet forms. We desire a compositional structure that captures the syntax of
circuit diagrams, addressing the question: ``How do we interact with circuit
diagrams?'' Informally, the answer is that we interact with them by connecting them
to each other, perhaps after moving them into the right form by rotating or
reflecting them, or by crossing, bending, splitting, and combining some of the
wires. In line with the overarching philosophy of this thesis, it should come as
no surprise that we hence model circuits as morphisms in a hypergraph category. 

We begin this section by defining a hypergraph category of circuits. We then
want to define a hypergraph category of behaviours. We have three desiderata for
such a category. First, the category of behaviours should have a unique morphism
for each distinct circuit behaviour. Second, the map taking a circuit to its
behaviour should be structure preserving: that is, it should be a hypergraph
functor to a hypergraph category. The third desideratum is one of aesthetics:
the category of behaviours and its composition rule should be easy to define and
work with. 

It turns out, as we shall see, that Dirichlet forms are not quite general enough
to accommodate our needs. In Subsection \ref{ssec.noidentities} we discuss an
attempt to construct a `natural' category of Dirichlet forms, with composition
of Dirichlet forms given by taking their sum and minimising over the `interior'
terminals, but find that this operation does not have an identity. Despite this,
we give an ad hoc construction of a category of Dirichlet corelations that
at least fulfils the two technical desiderata above.

\subsection{The category of open circuits}

In Definition \ref{def_circuit_2}, we defined a circuit of linear resistors to
be a labelled graph with marked input and output terminals, as in the example:
\[
\begin{tikzpicture}[circuit ee IEC, set resistor graphic=var resistor IEC graphic]
\node[contact] (I1) at (0,2) {};
\node[contact] (I2) at (0,0) {};
\coordinate (int1) at (2.83,1) {};
\coordinate (int2) at (5.83,1) {};
\node[contact] (O1) at (8.66,2) {};
\node[contact] (O2) at (8.66,0) {};
\node (input) at (-2,1) {\small{\textsf{inputs}}};
\node (output) at (10.66,1) {\small{\textsf{outputs}}};
\draw (I1) 	to [resistor] node [label={[label distance=2pt]85:{$1\Omega$}}] {} (int1);
\draw (I2)	to [resistor] node [label={[label distance=2pt]275:{$1\Omega$}}] {} (int1)
				to [resistor] node [label={[label distance=3pt]90:{$2\Omega$}}] {} (int2);
\draw (int2) 	to [resistor] node [label={[label distance=2pt]95:{$1\Omega$}}] {} (O1);
\draw (int2)		to [resistor] node [label={[label distance=2pt]265:{$3\Omega$}}] {} (O2);
\path[color=gray, very thick, shorten >=10pt, ->, >=stealth, bend left] (input) edge (I1);		\path[color=gray, very thick, shorten >=10pt, ->, >=stealth, bend right] (input) edge (I2);		
\path[color=gray, very thick, shorten >=10pt, ->, >=stealth, bend right] (output) edge (O1);
\path[color=gray, very thick, shorten >=10pt, ->, >=stealth, bend left] (output) edge (O2);
\end{tikzpicture}
\]
We then defined general passive linear circuits by replacing resistances with
impedances chosen from a set of positive elements $\F^+$ in any field $\F$.
These circuits are examples of decorated cospans.  We now use decorated cospans
to construct a hypergraph category $\Circ$ whose morphisms are circuits, such
that the hypergraph structure expresses the syntactic operations on circuits
discussed above.

Indeed, observe that a circuit is just an $\F^+$-graph, as defined in the
introduction to Chapter \ref{ch.deccospans}.  Recall from Subsection
\ref{ssec.exlabelledgraphs} the lax symmetric monoidal functor
\[
  \mathrm{Graph}\maps (\mathrm{FinSet},+) \longrightarrow (\mathrm{Set},\times)
\]
mapping each finite set $N$ to the set $\mathrm{Graph}(N)$ of
$[0,\infty)$-graphs $(N,E,s,t,r)$ with $N$ as their set of nodes. Define the lax
symmetric monoidal functor $\mathrm{Circuit}$, generalising $\mathrm{Graph}$,
so that the set $N$ is mapped to the set of $\F^+$-graphs.
  
\begin{definition}
  We define the hypergraph category
  \[
    \mathrm{Circ} = \mathrm{CircuitCospan} .
  \]
\end{definition}

\begin{aside}
  As discussed in Subsection \ref{ssec.bicatdeccospan}, by the work of Courser
  \cite{Cou16} we in fact have a symmetric monoidal bicategory $2\mbox{-}\Circ$
  of circuits with 
  \begin{center}
    \begin{tabular}{ c | p{.65\textwidth} }
      \textbf{objects} & finite sets \\ 
      \textbf{morphisms} & cospans of finite sets decorated by $\F^+$-graphs \\ 
      \textbf{2-morphisms} & maps of decorated cospans \\
    \end{tabular}
  \end{center}
  Moreover, the work of Stay \cite{Sta16} can be used to show this bicategory is
  compact closed.  Decategorifying $2\mbox{-}\Circ$ gives a category equivalent
  to our previously defined category $\Circ$.
\end{aside}

The hypergraph structure, including the compactness and dagger, captures the
aforementioned syntactic operations that can be performed on circuits. The
composition expresses the fact that we can connect the outputs of one circuit to
the inputs of the next, like so:
\[
  \begin{array}{c}
    \begin{tikzpicture}[circuit ee IEC, set resistor graphic=var resistor IEC
      graphic,scale=.7]
      \node[circle,draw,inner sep=1pt,fill=gray,color=gray]         (x) at
      (-3,-1.3) {};
      \node at (-3,-3.2) {\footnotesize $X$};
      \node[circle,draw,inner sep=1pt,fill]         (A) at (0,0) {};
      \node[circle,draw,inner sep=1pt,fill]         (B) at (3,0) {};
      \node[circle,draw,inner sep=1pt,fill]         (C) at (1.5,-2.6) {};
      \node[circle,draw,inner sep=1pt,fill=gray,color=gray]         (y1) at
      (6,-.6) {};
      \node[circle,draw,inner sep=1pt,fill=gray,color=gray]         (y2) at
      (6,-2) {};
      \node at (6,-3.2) {\footnotesize $Y$};
      \coordinate         (ua) at (.5,.25) {};
      \coordinate         (ub) at (2.5,.25) {};
      \coordinate         (la) at (.5,-.25) {};
      \coordinate         (lb) at (2.5,-.25) {};
      \path (A) edge (ua);
      \path (A) edge (la);
      \path (B) edge (ub);
      \path (B) edge (lb);
      \path (ua) edge  [resistor, circuit symbol unit=5pt, circuit symbol size=width {5} height 1.5] node[above] {\footnotesize $2\Omega$} (ub);
      \path (la) edge  [resistor, circuit symbol unit=5pt, circuit symbol size=width {5} height 1.5] node[below] {\footnotesize $3\Omega$} (lb);
      \path (A) edge  [resistor, circuit symbol unit=5pt, circuit symbol size=width {5} height 1.5] node[left] {\footnotesize $1\Omega$} (C);
      \path (C) edge  [resistor, circuit symbol unit=5pt, circuit symbol size=width {5} height 1.5] node[right] {\footnotesize $1\Omega$} (B);
      \path[color=gray, very thick, shorten >=10pt, shorten <=5pt, ->, >=stealth] (x) edge (A);
      \path[color=gray, very thick, shorten >=10pt, shorten <=5pt, ->, >=stealth] (y1) edge (B);
      \path[color=gray, very thick, shorten >=10pt, shorten <=5pt, ->, >=stealth] (y2)
      edge (B);
      \node[circle,draw,inner sep=1pt,fill]         (A') at (9,0) {};
      \node[circle,draw,inner sep=1pt,fill]         (B') at (12,0) {};
      \node[circle,draw,inner sep=1pt,fill]         (C') at (10.5,-2.6) {};
      \node[circle,draw,inner sep=1pt,fill=gray,color=gray]         (z1) at
      (15,-.6) {};
      \node[circle,draw,inner sep=1pt,fill=gray,color=gray]         (z2) at (15,-2) {};
      \node at (15,-3.2) {\footnotesize $Z$};
      \path (A') edge  [resistor, circuit symbol unit=5pt, circuit symbol size=width {5} height 1.5] node[above] {\footnotesize $5\Omega$} (B');
      \path (C') edge  [resistor, circuit symbol unit=5pt, circuit symbol size=width {5} height 1.5] node[right] {\footnotesize $8\Omega$} (B');
      \path[color=gray, very thick, shorten >=10pt, shorten <=5pt, ->, >=stealth] (y1) edge (A');
      \path[color=gray, very thick, shorten >=10pt, shorten <=5pt, ->, >=stealth] (y2)
      edge (C');
      \path[color=gray, very thick, shorten >=10pt, shorten <=5pt, ->, >=stealth] (z1) edge (B');
      \path[color=gray, very thick, shorten >=10pt, shorten <=5pt, ->, >=stealth]
      (z2) edge (C');
    \end{tikzpicture} \\
    \Downarrow \\
    \begin{tikzpicture}[circuit ee IEC, set resistor graphic=var resistor IEC
      graphic,scale=0.7]
      \node[circle,draw,inner sep=1pt,fill=gray,color=gray]         (x) at (-4,-1.3) {};
      \node at (-4,-3.2) {\footnotesize $X$};
      \node[circle,draw,inner sep=1pt,fill]         (A) at (0,0) {};
      \node[circle,draw,inner sep=1pt,fill]         (B) at (3,0) {};
      \node[circle,draw,inner sep=1pt,fill]         (C) at (1.5,-2.6) {};
      \node[circle,draw,inner sep=1pt,fill]         (D) at (6,0) {};
      \coordinate         (ua) at (.5,.25) {};
      \coordinate         (ub) at (2.5,.25) {};
      \coordinate         (la) at (.5,-.25) {};
      \coordinate         (lb) at (2.5,-.25) {};
      \coordinate         (ub2) at (3.5,.25) {};
      \coordinate         (ud) at (5.5,.25) {};
      \coordinate         (lb2) at (3.5,-.25) {};
      \coordinate         (ld) at (5.5,-.25) {};
      \path (A) edge (ua);
      \path (A) edge (la);
      \path (B) edge (ub);
      \path (B) edge (lb);
      \path (B) edge (ub2);
      \path (B) edge (lb2);
      \path (D) edge (ud);
      \path (D) edge (ld);
      \node[circle,draw,inner sep=1pt,fill=gray,color=gray]         (z1) at
      (10,-.6) {};
      \node[circle,draw,inner sep=1pt,fill=gray,color=gray]         (z2) at (10,-2) {};
      \node at (10,-3.2) {\footnotesize $Z$};
      \path (ua) edge  [resistor, circuit symbol unit=5pt, circuit symbol size=width {5} height 1.5] node[above] {\footnotesize $2\Omega$} (ub);
      \path (la) edge  [resistor, circuit symbol unit=5pt, circuit symbol size=width {5} height 1.5] node[below] {\footnotesize $3\Omega$} (lb);
      \path (A) edge  [resistor, circuit symbol unit=5pt, circuit symbol size=width {5} height 1.5] node[left] {\footnotesize $1\Omega$} (C);
      \path (C) edge  [resistor, circuit symbol unit=5pt, circuit symbol size=width {5} height 1.5] node[right] {\footnotesize $1\Omega$} (B);
      \path (ub2) edge  [resistor, circuit symbol unit=5pt, circuit symbol size=width {5} height 1.5] node[above] {\footnotesize $5\Omega$} (ud);
      \path (lb2) edge  [resistor, circuit symbol unit=5pt, circuit symbol size=width {5} height 1.5] node[below] {\footnotesize $8\Omega$} (ld);
      \path[color=gray, very thick, shorten >=10pt, shorten <=5pt, ->, >=stealth] (x) edge (A);
      \path[color=gray, very thick, shorten >=10pt, shorten <=5pt, ->, >=stealth] (z1)
      edge (D);
      \path[bend left, color=gray, very thick, shorten >=10pt, shorten <=5pt, ->, >=stealth] (z2)
      edge (B);
    \end{tikzpicture}
  \end{array}
\]
The monoidal composition models the placement of
circuits side-by-side:
\[
  \begin{aligned}
    \begin{tikzpicture}[circuit ee IEC, set resistor graphic=var resistor IEC
      graphic,scale=.7]
      \node[circle,draw,inner sep=1pt,fill=gray,color=gray]         (x) at
      (-2.8,-1.3) {};
      \node at (-2.8,-3.2) {\footnotesize $X$};
      \node[circle,draw,inner sep=1pt,fill]         (A) at (0,0) {};
      \node[circle,draw,inner sep=1pt,fill]         (B) at (3,0) {};
      \node[circle,draw,inner sep=1pt,fill]         (C) at (1.5,-2.6) {};
      \node[circle,draw,inner sep=1pt,fill=gray,color=gray]         (y1) at
      (5.8,-.6) {};
      \node[circle,draw,inner sep=1pt,fill=gray,color=gray]         (y2) at
      (5.8,-2) {};
      \node at (5.8,-3.2) {\footnotesize $Y$};
      \coordinate         (ua) at (.5,.25) {};
      \coordinate         (ub) at (2.5,.25) {};
      \coordinate         (la) at (.5,-.25) {};
      \coordinate         (lb) at (2.5,-.25) {};
      \path (A) edge (ua);
      \path (A) edge (la);
      \path (B) edge (ub);
      \path (B) edge (lb);
      \path (ua) edge  [resistor, circuit symbol unit=5pt, circuit symbol size=width {5} height 1.5] node[above] {\footnotesize $2\Omega$} (ub);
      \path (la) edge  [resistor, circuit symbol unit=5pt, circuit symbol size=width {5} height 1.5] node[below] {\footnotesize $3\Omega$} (lb);
      \path (A) edge  [resistor, circuit symbol unit=5pt, circuit symbol size=width {5} height 1.5] node[left] {\footnotesize $1\Omega$} (C);
      \path (C) edge  [resistor, circuit symbol unit=5pt, circuit symbol size=width {5} height 1.5] node[right] {\footnotesize $1\Omega$} (B);
      \path[color=gray, very thick, shorten >=10pt, shorten <=5pt, ->, >=stealth] (x) edge (A);
      \path[color=gray, very thick, shorten >=10pt, shorten <=5pt, ->, >=stealth] (y1) edge (B);
      \path[color=gray, very thick, shorten >=10pt, shorten <=5pt, ->, >=stealth] (y2)
      edge (B);
      \node at (1.5,-3.5) {$\otimes$};
      \node[circle,draw,inner sep=1pt,fill=gray,color=gray]         (x) at
      (-2.8,-5) {};
      \node at (-2.8,-6.5) {\footnotesize $X'$};
      \node[circle,draw,inner sep=1pt,fill]         (A) at (0,-5) {};
      \node[circle,draw,inner sep=1pt,fill]         (B) at (3,-5) {};
      \node[circle,draw,inner sep=1pt,fill=gray,color=gray]         (y1) at
      (5.8,-5) {};
      \node at (5.8,-6.5) {\footnotesize $Y'$};
      \coordinate         (ua1) at (.5,-4.75) {};
      \coordinate         (ub1) at (2.5,-4.75) {};
      \coordinate         (la1) at (.5,-5.25) {};
      \coordinate         (lb1) at (2.5,-5.25) {};
      \path (A) edge (ua1);
      \path (A) edge (la1);
      \path (B) edge (ub1);
      \path (B) edge (lb1);
      \path (ua1) edge  [resistor, circuit symbol unit=5pt, circuit symbol
      size=width {5} height 1.5] node[above] {\footnotesize $5\Omega$} (ub1);
      \path (la1) edge  [resistor, circuit symbol unit=5pt, circuit symbol
      size=width {5} height 1.5] node[below] {\footnotesize $1\Omega$} (lb1);
      \path[color=gray, very thick, shorten >=10pt, shorten <=5pt, ->, >=stealth] (x) edge (A);
      \path[color=gray, very thick, shorten >=10pt, shorten <=5pt, ->, >=stealth] (y1) edge (B);
    \end{tikzpicture}
  \end{aligned}
  \Rightarrow
  \begin{aligned}
    \begin{tikzpicture}[circuit ee IEC, set resistor graphic=var resistor IEC
      graphic,scale=.7]
      \node[circle,draw,inner sep=1pt,fill=gray,color=gray]         (x) at
      (-2.8,-1.3) {};
      \node at (-2.8,-5.5) {\footnotesize $X+X'$};
      \node[circle,draw,inner sep=1pt,fill]         (A) at (0,0) {};
      \node[circle,draw,inner sep=1pt,fill]         (B) at (3,0) {};
      \node[circle,draw,inner sep=1pt,fill]         (C) at (1.5,-2.6) {};
      \node[circle,draw,inner sep=1pt,fill=gray,color=gray]         (y1) at
      (5.8,-.6) {};
      \node[circle,draw,inner sep=1pt,fill=gray,color=gray]         (y2) at
      (5.8,-2) {};
      \node at (5.8,-5.5) {\footnotesize $Y+Y'$};
      \coordinate         (ua) at (.5,.25) {};
      \coordinate         (ub) at (2.5,.25) {};
      \coordinate         (la) at (.5,-.25) {};
      \coordinate         (lb) at (2.5,-.25) {};
      \path (A) edge (ua);
      \path (A) edge (la);
      \path (B) edge (ub);
      \path (B) edge (lb);
      \path (ua) edge  [resistor, circuit symbol unit=5pt, circuit symbol size=width {5} height 1.5] node[above] {\footnotesize $2\Omega$} (ub);
      \path (la) edge  [resistor, circuit symbol unit=5pt, circuit symbol size=width {5} height 1.5] node[below] {\footnotesize $3\Omega$} (lb);
      \path (A) edge  [resistor, circuit symbol unit=5pt, circuit symbol size=width {5} height 1.5] node[left] {\footnotesize $1\Omega$} (C);
      \path (C) edge  [resistor, circuit symbol unit=5pt, circuit symbol size=width {5} height 1.5] node[right] {\footnotesize $1\Omega$} (B);
      \path[color=gray, very thick, shorten >=10pt, shorten <=5pt, ->, >=stealth] (x) edge (A);
      \path[color=gray, very thick, shorten >=10pt, shorten <=5pt, ->, >=stealth] (y1) edge (B);
      \path[color=gray, very thick, shorten >=10pt, shorten <=5pt, ->, >=stealth] (y2)
      edge (B);
      \node[circle,draw,inner sep=1pt,fill=gray,color=gray]         (x2) at
      (-2.8,-2.7) {};
      \node[circle,draw,inner sep=1pt,fill]         (A1) at (0,-4) {};
      \node[circle,draw,inner sep=1pt,fill]         (B1) at (3,-4) {};
      \node[circle,draw,inner sep=1pt,fill=gray,color=gray]         (y3) at
      (5.8,-3.4) {};
      \coordinate         (ua1) at (.5,-3.75) {};
      \coordinate         (ub1) at (2.5,-3.75) {};
      \coordinate         (la1) at (.5,-4.25) {};
      \coordinate         (lb1) at (2.5,-4.25) {};
      \path (A1) edge (ua1);
      \path (A1) edge (la1);
      \path (B1) edge (ub1);
      \path (B1) edge (lb1);
      \path (ua1) edge  [resistor, circuit symbol unit=5pt, circuit symbol
      size=width {5} height 1.5] node[above] {\footnotesize $5\Omega$} (ub1);
      \path (la1) edge  [resistor, circuit symbol unit=5pt, circuit symbol
      size=width {5} height 1.5] node[below] {\footnotesize $1\Omega$} (lb1);
      \path[color=gray, very thick, shorten >=10pt, shorten <=5pt, ->,
      >=stealth] (x2) edge (A1);
      \path[color=gray, very thick, shorten >=10pt, shorten <=5pt, ->,
      >=stealth] (y3) edge (B1);
    \end{tikzpicture}
  \end{aligned}
\]
Identities and Frobenius maps result from identifying points. For example, the
identity on a single point is the (empty) decorated cospan
\[
      \begin{tikzpicture}[circuit ee IEC, set resistor graphic=var resistor IEC
	graphic]
	\node[circle,draw,inner sep=1pt,fill=gray,color=gray]         (x) at
	(-2.8,0) {};
	\node at (-2.8,-.5) {\footnotesize $X$};
	\node[circle,draw,inner sep=1pt,fill]         (A) at (0,0) {};
	\node[circle,draw,inner sep=1pt,fill=gray,color=gray]         (y1) at
	(2.8,0) {};
	\node at (2.8,-.5) {\footnotesize $Y$};
	\path[color=gray, very thick, shorten >=10pt, shorten <=5pt, ->, >=stealth] (x) edge (A);
	\path[color=gray, very thick, shorten >=10pt, shorten <=5pt, ->,
	>=stealth] (y1) edge (A);
      \end{tikzpicture}
\]
while the multiplication on a point is the empty decorated cospan
\[
      \begin{tikzpicture}[circuit ee IEC, set resistor graphic=var resistor IEC
	graphic]
	\node[circle,draw,inner sep=1pt,fill=gray,color=gray]         (x) at
	(-2.8,0) {};
	\node at (-2.8,-1.3) {\footnotesize $X$};
	\node[circle,draw,inner sep=1pt,fill]         (A) at (0,0) {};
	\node[circle,draw,inner sep=1pt,fill=gray,color=gray]         (y1) at
	(2.8,.8) {};
	\node[circle,draw,inner sep=1pt,fill=gray,color=gray]         (y2) at
	(2.8,-.8) {};
	\node at (2.8,-1.3) {\footnotesize $Y$};
	\path[color=gray, very thick, shorten >=10pt, shorten <=5pt, ->, >=stealth] (x) edge (A);
	\path[color=gray, very thick, shorten >=10pt, shorten <=5pt, ->,
	>=stealth] (y1) edge (A);
	\path[color=gray, very thick, shorten >=10pt, shorten <=5pt, ->,
	>=stealth] (y2) edge (A);
      \end{tikzpicture}
\]
These identifications represent interconnection via ideal, perfectly conductive
wires: if two terminals are connected by such wires, then the electrical
behaviour at both terminals must be identical. The Frobenius maps allow us to
split, combine, and discard wires.  The symmetric monoidal structure allows
us to reorder input and output wires. Derived from these, the compactness captures the
interchangeability between input and outputs of circuits---that is, the fact
that we can choose any input to our circuit and consider it instead as an
output, and vice versa---while the dagger structure expresses the fact that we
may reflect a whole circuit, switching all inputs with all outputs.

Recall that we consider two circuit diagrams equivalent if they have the same
behaviour, or power functional. Now that we have constructed a hypergraph
category where our morphisms are circuit diagrams---our `syntactic' category for
our diagrammatic language---we would also like to construct a hypergraph
category with morphisms behaviours of circuits, and show that the map from a
circuit to its behaviour defines a hypergraph functor.

\subsection{An obstruction to black boxing} \label{ssec.noidentities}

It would be nice to have a category in which Dirichlet forms are morphisms, such
that the map sending a circuit to its behaviour is a functor.  Here we present a
na\"ive attempt to constructed the category with Dirichlet forms as morphisms,
using the principle of minimum power to compose these morphisms.  Unfortunately
the proposed category does not include identity morphisms.  However, it points
in the right direction, and underlines the importance of the cospan formalism we
then turn to develop.

We can define a composition rule for Dirichlet forms that reflects composition
of circuits.  Given finite sets $X$ and $Y$, let $X+Y$ denote their disjoint
union.  Let $D(X,Y)$ be the set of Dirichlet forms on $X+Y$. There is a way to
compose these Dirichlet forms
\[ 
\circ \maps D(Y,Z) \times D(X,Y) \to D(X,Z) 
\]
defined as follows.  Given $P \in D(Y,Z)$ and $Q \in D(X,Y)$, let
\[ 
  (P \circ Q)(\alpha, \gamma) = \min_{\beta \in \F^Y} Q(\alpha, \beta) + P(\beta, \gamma),
\]
where $\alpha \in \F^X, \gamma \in \F^Z$. This operation has a clear
interpretation in terms of electrical circuits: the power used by the entire
circuit is just the sum of the power used by its parts. 

It is immediate from Theorem \ref{thm:dirichletminimization} that this
composition rule is well defined: the composite of two Dirichlet forms is again
a Dirichlet form. Moreover, this composition is associative. However, it fails
to provide the structure of a category, as there is typically no Dirichlet form
$1_X \in D(X,X)$ playing the role of the identity for this composition. For an
indication of why this is so, let $\{\bullet\}$ be a set with one element, and
suppose that some Dirichlet form $I(\beta,\gamma) = k(\beta-\gamma)^2 \in
D(\{\bullet\},\{\bullet\})$ acts as an identity on the right for this
composition. Then for all $Q(\alpha,\beta) = c(\alpha-\beta)^2 \in
D(\{\bullet\},\{\bullet\})$, we must have
\begin{align*}
  c\alpha^2 &= Q(\alpha,0) \\
  &= (I \circ Q)(\alpha,0) \\ 
  &= \min_{\beta \in \F} Q(\alpha, \beta) + I(\beta,0) \\
  &= \min_{\beta \in \F} k(\alpha-\beta)^2 + c\beta^2 \\
  &= \frac{kc}{k+c}\alpha^2,
\end{align*}
where we have noted that $\frac{kc}{k+c}\alpha^2$ minimizes $k(\alpha-\beta)^2 +
c\beta^2$ with respect to $\beta$. But for any choice of $k \in \F$ this
equality only holds when $c = 0$, so no such Dirichlet form exists. Note,
however, that for $k>> c$ we have $c\alpha^2 \approx \frac{kc}{k+c}\alpha^2$, so
Dirichlet forms with large values of $k$---corresponding to resistors with
resistance close to zero---act as `approximate identities'.

In this way we might interpret the identities we wish to introduce
into this category as the behaviours of idealized components with zero
resistance: perfectly conductive wires. Unfortunately, the power functional of a
purely conductive wire is undefined: the formula for it involves division by
zero.  In real life, coming close to this situation leads to the disaster that
electricians call a `short circuit': a huge amount of power dissipated for even
a small voltage.  This is why we have fuses and circuit breakers.

Nonetheless, we have most of the structure required for a category. A `category
without identity morphisms' is called a \define{semicategory}, so we see
\begin{proposition}
There is a semicategory where:
\begin{itemize}
\item the objects are finite sets,

\item a morphism from $X$ to $Y$ is a Dirichlet form $Q \in D(X,Y)$.  

\item composition of morphisms is given by 
\[
(P \circ Q)(\gamma, \alpha) = \min_{Y} Q(\gamma, \beta) + P(\beta, \alpha).
\]

\end{itemize}
\end{proposition}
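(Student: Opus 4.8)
The plan is to verify the two axioms that distinguish a semicategory from a mere collection of composable arrows: that the proposed composition is \emph{well defined}---sending a pair of Dirichlet forms to a genuine Dirichlet form on the correct boundary---and that it is \emph{associative}. No identity morphisms are required, which is exactly the point of the preceding subsection, so these are the only two obligations.

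For well-definedness, I would fix $Q \in D(X,Y)$ and $P \in D(Y,Z)$ and observe that $Q(\alpha,\beta) + P(\beta,\gamma)$ defines a Dirichlet form on the finite set $X+Y+Z$. Indeed $Q$, viewed as a form on $X+Y+Z$ that is independent of the $Z$-coordinates, remains a Dirichlet form (it is still a sum of squared coordinate differences with coefficients in $\F^+$), likewise $P$, and the sum of two Dirichlet forms is manifestly Dirichlet. The composite $P \circ Q$ is then precisely $\min_{(X+Y+Z)\setminus(X+Z)}\big(Q+P\big) = \min_{Y}(Q+P)$, so Theorem \ref{thm:dirichletminimization} immediately yields that $P \circ Q$ is a Dirichlet form on $X+Z$, that is, an element of $D(X,Z)$. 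Over $\R$ with $\F^+ = (0,\infty)$ this $\min$ is a literal minimum; in the general field setting it is the formal minimization of Theorem \ref{thm:dirichletminimization}, defined via realizable extensions, and Lemma \ref{lem:welldefineddirichletmin} guarantees the value is independent of the chosen extension.

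For associativity I would take $Q\in D(X,Y)$, $P\in D(Y,Z)$, $O\in D(Z,W)$ and compute both bracketings directly. Writing $T(\alpha,\beta,\gamma,\delta) = Q(\alpha,\beta)+P(\beta,\gamma)+O(\gamma,\delta)$ for the Dirichlet form on $X+Y+Z+W$ obtained by summing the three (trivially extended) forms, a short expansion shows that
\[
  \big((O\circ P)\circ Q\big)(\alpha,\delta) = \min_{\beta}\min_{\gamma} T, \qquad \big(O\circ(P\circ Q)\big)(\alpha,\delta) = \min_{\gamma}\min_{\beta} T,
\]
where the minimizations range over $\F^Y$ and $\F^Z$ respectively. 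Thus associativity reduces to the single assertion that eliminating the internal variables one block at a time, in either order, agrees with eliminating them jointly: $\min_{Y}\min_{Z} T = \min_{Y+Z} T = \min_{Z}\min_{Y} T$.

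The main obstacle is establishing this exchange of minimizations in the formal setting of an arbitrary field $\F$ with positive elements $\F^+$, where $\min$ is defined through vanishing formal partial derivatives rather than honest minima. Over $\R$ the claim is the trivial observation that a joint infimum can be computed as an iterated infimum. In general I would proceed by the envelope-theorem argument implicit in Lemma \ref{lem:onestepdirichletmin}: a realizable extension $\tilde\psi$ of a boundary assignment to all of $X+Y+Z+W$ is characterized by $\partial T/\partial\varphi(s)\vert_{\tilde\psi}=0$ for every internal node $s \in Y+Z$, a condition symmetric in $Y$ and $Z$. Restricting $\tilde\psi$ to $X+Z+W$ gives a realizable extension for $\min_Y T$, and because the eliminated $Y$-values satisfy $\partial T/\partial\varphi(y)=0$, the chain rule shows the formal derivatives of $\min_Y T$ in the $Z$-directions coincide with those of $T$; hence $\tilde\psi$ is simultaneously realizable for the subsequent $Z$-elimination. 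This identifies $\min_Z\min_Y T$ with $T(\tilde\psi) = \min_{Y+Z} T$, and the symmetric computation handles the other order. Packaging the well-definedness and associativity verifications then gives the semicategory, while the explicit failure of the identity axiom demonstrated just above confirms that it is genuinely only a semicategory.
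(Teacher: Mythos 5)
Your proof is correct, but it does substantially more than the paper does at this point: the paper simply asserts, in the paragraph preceding the proposition, that well-definedness is ``immediate from Theorem \ref{thm:dirichletminimization}'' and that ``this composition is associative,'' and offers no argument for the latter (the burden is effectively deferred to the decorated-cospan/corelation machinery of \textsection\ref{ssec.dirichcospans}, where associativity comes for free from pushouts). Your well-definedness step coincides with the paper's intended one-liner. For associativity you reduce to the exchange of formal minimizations $\min_Z\min_Y T=\min_{Y+Z}T=\min_Y\min_Z T$ and prove it by an envelope argument: a jointly realizable extension over $Y+Z$ restricts to realizable extensions for each stage of the iterated elimination, because the formal $Z$-derivatives of $\min_Y T$ agree with those of $T$ at the realizable $\beta$. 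This is the one place that needs care in the formal setting over $\F$: the chain rule you invoke requires that the realizable $\beta^*$ depend polynomially (in fact linearly) on the remaining variables, which is exactly what the explicit formula in the proof of Lemma \ref{lem:onestepdirichletmin} supplies, and the non-uniqueness of realizable extensions is harmless by Lemma \ref{lem:welldefineddirichletmin}. You should also record the small preliminary fact that a summand independent of the eliminated variables pulls out of the formal $\min$ (its formal partials in those directions vanish identically), which is what turns $\min_\beta\bigl[Q+\min_\gamma(P+O)\bigr]$ into $\min_\beta\min_\gamma T$. With those two points made explicit, your direct verification is a complete and self-contained proof; what the paper's categorical route buys instead is that associativity never has to be checked by hand, at the cost of the cospan formalism.
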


We would like to make this into a category. One easy way to do this is to
formally adjoin identity morphisms; this trick works for any semicategory.
However, we obtain a better category if we include \emph{more} morphisms: more
behaviours corresponding to circuits made of perfectly conductive wires.
The expression for the extended power functional includes the reciprocals of
impedances, such circuits cannot be expressed within the framework we have
developed thus far. Similarly, the Frobenius maps also have semantics as ideal
wires, and cannot be represented using Dirichlet forms.  Indeed, for these
idealized circuits there is no function taking boundary potentials to boundary
currents: the vanishing impedance would imply that any difference in potentials
at the boundary induces `infinite' currents. One way of dealing with this is to
use decorated cospans.

\subsection{The category of Dirichlet cospans} \label{ssec.dirichcospans}

Although we cannot have Dirichlet forms be the morphisms of a category
themselves, we have a standard trick for turning data into the morphisms of a
category: we decorate cospans with them. The cospans then handle the composition
for us. In this section we construct a category of cospans and a category of
corelations decorated by Dirichlet forms. We shall think of the former as having
extended power functionals as morphisms, and the latter as power functionals.

Consider a cospan of finite sets $X \to N \leftarrow Y$ together with a
Dirichlet form $Q_N$ on the apex $N$. We call this a \define{Dirichlet cospan}.
To compose such cospans, say when given another cospan $Y \to M \leftarrow Z$
decorated by Dirichlet form $Q_M$, we decorated the composite cospan $X \to
N+_YM \leftarrow Z$ with the Dirichlet form
\begin{align*}
  \Big({j_N}_\ast Q_N+ {j_M}_\ast Q_M\Big)\maps \F^{N+_YM} &\longrightarrow \F;\\
  \phi &\longmapsto Q_N(\phi\circ j_N)+Q_M(\phi\circ j_M),
\end{align*}
where the $j$ are the maps that include $N$ and $M$ into the pushout as usual.
Interpreted in terms of extended power functionals, this simply says that the
power consumed by the interconnected circuit is just the power consumed by each
part. These form the morphisms of a decorated cospan category.

\begin{proposition}
The following defines a lax symmetric monoidal functor
$(\mathrm{Dirich},\delta)$: let
\[
  \mathrm{Dirich}\maps (\mathrm{FinSet},+) \longrightarrow (\mathrm{Set},\times)
\]
map a finite set $X$ to the set $\mathrm{Dirich}(X)$ of Dirichlet forms
$Q\maps \F^X \to \F$ on $X$, and map a function $f\maps X \to Y$ between finite
sets to the pushforward function
\begin{align*}
  \mathrm{Dirich}(f)\maps \mathrm{Dirich}(X) &\longrightarrow \mathrm{Dirich}(Y); \\
  Q &\longmapsto \Big(f_{\ast}Q\maps \phi \mapsto Q(\phi\circ f)\Big).
\end{align*}

For coherence maps, equip $\mathrm{Dirich}$ with the natural family of maps
\begin{align*}
  \delta_{N,M}\maps \mathrm{Dirich}(N) \times \mathrm{Dirich}(M) &\longrightarrow
  \mathrm{Dirich}(N+M) \\
  (Q_N,Q_M) &\longmapsto {\iota_N}_\ast Q_N+{\iota_M}_\ast Q_M
\end{align*}
and also with the unit
\begin{align*}
  \delta_1\maps 1 &\longrightarrow \mathrm{Dirich}(\varnothing);\\
  \bullet &\longmapsto (\F^\varnothing \to \F; ! \mapsto 0).
\end{align*}
Note that the sum of two Dirichlet forms is given pointwise by the addition in
$\F$.
\end{proposition}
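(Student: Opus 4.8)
The plan is to verify in turn the three requirements for $(\mathrm{Dirich},\delta)$ to be a lax symmetric monoidal functor: that $\mathrm{Dirich}$ is a functor, that $\delta$ is a natural transformation, and that the coherence axioms for a lax symmetric monoidal functor hold. As with the functor $\lgraph$ of Subsection~\ref{ssec.exlabelledgraphs}, almost all of this reduces to the universal property of the coproduct in $\FinSet$ together with the fact that addition of Dirichlet forms—being pointwise addition in $\F$—is associative and commutative.

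First I would check that $\mathrm{Dirich}$ is well defined on morphisms, which is the one step where the specific structure of Dirichlet forms, as opposed to arbitrary quadratic forms, is doing work. Given $f\maps X \to Y$ and a Dirichlet form $Q(\psi) = \sum_{i,j\in X} c_{ij}(\psi_i-\psi_j)^2$ on $X$, the pushforward evaluates as
\[
  (f_\ast Q)(\phi) = Q(\phi\circ f) = \sum_{i,j \in X} c_{ij}(\phi_{f(i)} - \phi_{f(j)})^2 = \sum_{k,l \in Y} \Big(\sum_{\substack{i \in f^{-1}(k)\\ j\in f^{-1}(l)}} c_{ij}\Big)(\phi_k-\phi_l)^2.
\]
Since $\F^+$ is closed under addition, each regrouped coefficient again lies in $\F^+$, or vanishes when the indexing set is empty, so $f_\ast Q$ is once more a Dirichlet form on $Y$. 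Functoriality is then immediate: $(\mathrm{id}_X)_\ast Q = Q$ since $\phi \circ \mathrm{id}_X = \phi$, and $(g\circ f)_\ast Q = g_\ast(f_\ast Q)$ since $\phi \circ (g \circ f) = (\phi \circ g)\circ f$ for composable $f,g$.

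Next I would establish the additivity of the pushforward and the naturality of $\delta$. The pushforward is additive in its form argument, $h_\ast(P_1+P_2) = h_\ast P_1 + h_\ast P_2$, directly from the definition and pointwise addition in $\F$. Naturality of the family $\delta_{N,M}$ then follows by combining this additivity with functoriality and the coproduct identities $(f+g)\circ\iota_N = \iota_{N'}\circ f$ and $(f+g)\circ\iota_M = \iota_{M'}\circ g$: both composites around the naturality square send $(Q_N,Q_M)$ to ${\iota_{N'}}_\ast(f_\ast Q_N) + {\iota_{M'}}_\ast(g_\ast Q_M)$.

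Finally I would dispatch the three coherence diagrams. For the associativity axiom, both legs send a triple $(Q_N,Q_M,Q_P)$ to the form ${\iota_N}_\ast Q_N + {\iota_M}_\ast Q_M + {\iota_P}_\ast Q_P$ on $N+M+P$, transported by the associator; these agree because addition of forms is associative and pushforward is additive and functorial. The unit axioms reduce to the observation that $\mathrm{Dirich}(\varnothing)$ is the one-element set containing only the zero form, as $\F^\varnothing$ is a point, so $\delta_1$ selects it and ${\iota_\varnothing}_\ast 0$ contributes nothing to a sum, leaving the unitors of $\FinSet$ to recover $Q_N$. The symmetry axiom likewise follows from the commutativity of addition, together with the braiding identities for the coproduct inclusions. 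The main obstacle is the first step—confirming that the pushforward lands among Dirichlet forms and fixing the convention on absent or zero coefficients—after which the remaining verifications are the now-familiar coproduct-and-additivity bookkeeping.
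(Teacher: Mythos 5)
Your proposal is correct and follows essentially the same route as the paper's (much terser) proof: functoriality from associativity of composition, naturality of $\delta$ from the universal property of the coproduct, and the coherence axioms from associativity, commutativity, and unitality of addition in $\F$. The one thing you add that the paper leaves implicit is the explicit check that the pushforward of a Dirichlet form is again a Dirichlet form, via regrouping coefficients over fibres of $f$ and using closure of $\F^+$ under addition — a worthwhile detail, handled correctly.
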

\begin{proof}
  As composition of functions is associative and has an identity,
  $\mathrm{Dirich}$ is a functor.  The naturality of the $\delta_{N,M}$ follows
  from the universal property of the coproduct in $\FinSet$, while the symmetric
  monoidal coherence axioms follow from the associativity, unitality, and
  commutativity of addition in $\F$.
\end{proof}

Using decorated cospans, we thus obtain a hypergraph category
$\mathrm{DirichCospan}$ where a morphism is a cospan of finite sets whose apex
is equipped with a Dirichlet form.  Next, we use decorated cospans to construct
a strict hypergraph functor $\Circ \to \mathrm{DirichCospan}$ sending a circuit
to a cospan decorated by its extended power functional. For this we need a
monoidal natural transformation $\theta\maps (\mathrm{Circuit},\rho) \Rightarrow
(\mathrm{Dirich},\delta)$.

\begin{proposition} 
  The collection of maps
\begin{align*}
  \theta_N\maps \mathrm{Circuit}(N) &\longrightarrow \mathrm{Dirich}(N); \\
  (N,E,s,t,r) &\longmapsto \left(\phi \in \F^N \mapsto \frac{1}{2} \sum_{e \in E}
  \frac{1}{r(e)}\big(\phi(s(e))-\phi(t(e))\big)^2\right).
\end{align*}
defines a monoidal natural transformation
\[
  \theta\maps (\mathrm{Circuit},\rho) \Longrightarrow
  (\mathrm{Dirich},\delta).
\]
\end{proposition}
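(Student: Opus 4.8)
The plan is to verify two things: that each $\theta_N$ is a well-defined function into $\mathrm{Dirich}(N)$, and that the collection $\theta$ satisfies the two coherence diagrams required of a monoidal natural transformation, namely naturality in $N$ and compatibility with the respective coherence maps $\rho$ and $\delta$. The first point is essentially immediate from the definitions established in \textsection\ref{sec:generalized}: given an $\F^+$-graph $(N,E,s,t,r)$, the expression $\frac12\sum_{e\in E}\frac1{r(e)}(\phi(s(e))-\phi(t(e)))^2$ is precisely the extended power functional, and we already observed there that the extended power functional is a Dirichlet form on $N$ (with coefficients $c_{ij}=\frac12\sum_{e:\{s(e),t(e)\}=\{i,j\}}\frac1{r(e)}\in\F^+$, using that $\F^+$ is closed under addition). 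So $\theta_N$ lands in $\mathrm{Dirich}(N)$ as claimed.

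For naturality, I would take a function $f\maps N\to M$ and an $\F^+$-graph $(N,E,s,t,r)\in\mathrm{Circuit}(N)$, and chase it around the square both ways. Going via $\mathrm{Circuit}(f)$ first pushes the graph forward to $(M,E,f\circ s,f\circ t,r)$ and then applies $\theta_M$, yielding the form $\psi\mapsto\frac12\sum_{e\in E}\frac1{r(e)}(\psi(f(s(e)))-\psi(f(t(e))))^2$. Going via $\theta_N$ first and then $\mathrm{Dirich}(f)=f_\ast$ yields the form $\psi\mapsto(f_\ast\theta_N(\dots))(\psi)=\theta_N(\dots)(\psi\circ f)=\frac12\sum_{e\in E}\frac1{r(e)}((\psi\circ f)(s(e))-(\psi\circ f)(t(e)))^2$. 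Since $(\psi\circ f)(s(e))=\psi(f(s(e)))$ and likewise for $t$, these two expressions are visibly equal, so the naturality square commutes.

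The monoidality conditions are the two diagrams in the definition of monoidal natural transformation (\textsection\ref{sec.smcs}). For the binary coherence map, I would check $\theta_{N+M}\circ\rho_{N,M}=\delta_{N,M}\circ(\theta_N\times\theta_M)$: given graphs $(N,E,\dots)$ and $(M,F,\dots)$, the functor coherence map $\rho_{N,M}$ of $\mathrm{Circuit}$ forms their disjoint union $(N+M,E+F,s+s',t+t',[r,r'])$, whose image under $\theta_{N+M}$ is the single sum over $E+F$; on the other side, $\delta_{N,M}$ adds the pushforwards ${\iota_N}_\ast\theta_N+{\iota_M}_\ast\theta_M$, which by definition of the pushforward splits exactly into the $E$-sum plus the $F$-sum. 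These agree because summation over a disjoint union $E+F$ decomposes as the sum over $E$ plus the sum over $F$, and because the inclusion $\iota_N\maps N\hookrightarrow N+M$ is injective so that $\phi\circ\iota_N$ simply restricts $\phi$. For the unit coherence map, both $\theta_\varnothing\circ(\rho_1)$ and $\delta_1$ send the point to the unique (empty, hence zero) Dirichlet form on $\F^\varnothing$, so that triangle commutes trivially.

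I do not expect a genuine obstacle here; the statement is a routine verification and the only mild subtlety is bookkeeping with the pushforward maps $f_\ast$ and the coproduct inclusions. The one place to be slightly careful is ensuring that the coefficients produced by $\theta_N$ genuinely lie in $\F^+$ rather than merely in $\F$, which is why closure of $\F^+$ under addition (as stipulated in the definition of a set of positive elements) is invoked; without it the ``Dirichlet form'' target would be wrong. Given that, the whole proof is a matter of matching up the two sides of each coherence diagram by unwinding definitions, exactly as the naturality chase above illustrates.
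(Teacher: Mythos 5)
Your proposal is correct and follows essentially the same route as the paper's proof: naturality is verified by unwinding both composites around the square to the same pushed-forward power functional, and the two monoidality diagrams are checked by observing that both paths sum the Dirichlet forms of the two graphs (for the square) and that the unit triangle involves only singleton sets. The extra remarks on well-definedness (closure of $\F^+$ under addition) are fine and consistent with what the paper establishes in \textsection\ref{sec:generalized}.
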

\begin{proof}
Naturality requires that the square
\[
  \xymatrix{
    \mathrm{Circuit}(N) \ar[r]^{\theta_N} \ar[d]_{\mathrm{Circuit}(f)} &
    \mathrm{Dirich}(N) \ar[d]^{\mathrm{Dirich}(f)}  \\
    \mathrm{Circuit}(M) \ar[r]_{\theta_M} & \mathrm{Dirich}(M)
  }
\]
commutes. Let $(N,E,s,t,r)$ be an $\F^+$-graph on $N$ and $f\maps N \to M$ be a
function $N$ to $M$. Then both $\mathrm{Dirich}(f) \circ \theta_N$ and $\theta_M
\circ \mathrm{Circuit}(f)$ map $(N,E,s,t,r)$ to the Dirichlet form
\begin{align*}
  \F^M &\longrightarrow \F;\\
  \psi &\longmapsto \frac{1}{2} \sum_{e \in E}\frac{1}{r(e)}
  \big(\psi(f(s(e)))-\psi(f(t(e)))\big)^2.
\end{align*}
Thus both methods of constructing a power functional on a set of nodes $M$ from
a circuit on $N$ and a function $f\maps N \to M$ produce the same power functional.

To show that $\theta$ is a monoidal natural transformation, we must check that
the square
\[
\xymatrix{
  \mathrm{Circuit}(N) \times \mathrm{Circuit}(M) \ar[r]^{\theta_N \times
  \theta_M} \ar[d]_{\rho_{N,M}} & \mathrm{Dirich}(N) \times \mathrm{Dirich}(M)
  \ar[d]^{\delta_{N,M}}  \\
  \mathrm{Circuit}(N+M) \ar[r]_{\theta_{N+M}} & \mathrm{Dirich}(N+M)
}
\]
and the triangle
\[
\xymatrix{
  & 1 \ar[dl]_{\rho_\varnothing} \ar[dr]^{\delta_\varnothing}\\
\mathrm{Circuit}(\varnothing)  \ar[rr]_{\theta_\varnothing} &&
\mathrm{Dirich}(\varnothing)
}
\]
commute. It is readily observed that both paths around the square lead to taking
two graphs and summing their corresponding Dirichlet forms, and that the
triangle commutes immediately as all objects in it are the one element set.
\end{proof}

From decorated cospans, we thus obtain a strict hypergraph functor
\[
  Q \maps \Circ = \mathrm{CircuitCospan} \longrightarrow \mathrm{DirichCospan}.
\]
Informally, this says that the process of composition for circuit diagrams is
the same as that of composition for Dirichlet cospans. Note that this is not a
faithful functor.  For example, applying $Q$ to a circuit 
\[
\begin{tikzpicture}[circuit ee IEC, set resistor graphic=var resistor IEC
	graphic,scale=.8]
	\node[circle,draw,inner sep=1pt,fill=gray,color=gray]         (x) at
	(-2.8,0) {};
	\node at (-2.8,-1) {\footnotesize $X$};
	\node[circle,draw,inner sep=1pt,fill]         (A) at (0,0) {};
	\node[circle,draw,inner sep=1pt,fill]         (B) at (3,0) {};
	\node[circle,draw,inner sep=1pt,fill=gray,color=gray]         (y1) at
	(5.8,0) {};
	\node at (5.8,-1) {\footnotesize $Y$};
	\coordinate         (ua) at (.5,.25) {};
	\coordinate         (ub) at (2.5,.25) {};
	\coordinate         (la) at (.5,-.25) {};
	\coordinate         (lb) at (2.5,-.25) {};
	\path (A) edge (ua);
	\path (A) edge (la);
	\path (B) edge (ub);
	\path (B) edge (lb);
	\path (ua) edge  [resistor, circuit symbol unit=5pt, circuit symbol
	size=width {5} height 1.5] node[label={[label
	distance=1pt]90:{\footnotesize $r$}}] {} (ub);
	\path (la) edge  [resistor, circuit symbol unit=5pt, circuit symbol
	size=width {5} height 1.5] node[label={[label
	distance=1pt]270:{\footnotesize $s$}}] {} (lb);
	\path[color=gray, very thick, shorten >=10pt, shorten <=5pt, ->, >=stealth] (x) edge (A);
	\path[color=gray, very thick, shorten >=10pt, shorten <=5pt, ->, >=stealth] (y1) edge (B);
      \end{tikzpicture}
    \]
with two parallel edges of resistance $r$ and $s$ respectively, we obtain
the same result as for the circuit
\[
\begin{tikzpicture}[circuit ee IEC, set resistor graphic=var resistor IEC
	graphic,scale=.8]
	\node[circle,draw,inner sep=1pt,fill=gray,color=gray]         (x) at
	(-2.8,0) {};
	\node at (-2.8,-1) {\footnotesize $X$};
	\node[circle,draw,inner sep=1pt,fill]         (A) at (0,0) {};
	\node[circle,draw,inner sep=1pt,fill]         (B) at (3,0) {};
	\node[circle,draw,inner sep=1pt,fill=gray,color=gray]         (y) at
	(5.8,0) {};
	\node at (5.8,-1) {\footnotesize $Y$};
	\path (A) edge  [resistor, circuit symbol unit=5pt, circuit symbol
	size=width {5} height 1.5] node[label={[label
	distance=1pt]90:{\footnotesize $t$}}] {} (B);
	\path[color=gray, very thick, shorten >=10pt, shorten <=5pt, ->, >=stealth] (x) edge (A);
	\path[color=gray, very thick, shorten >=10pt, shorten <=5pt, ->, >=stealth] (y) edge (B);
      \end{tikzpicture}
    \]
with just a single edge with resistance
\[
  t = \frac{1}{\tfrac{1}{r} + \tfrac{1}{s}}.
\]
That is, both map to the Dirichlet cospan $X=\{x\} \to \{x,y\} \leftarrow \{y\}
= Y$ with $\{x,y\}$ decorated by the Dirichlet form $P(\psi) =
\tfrac{1}{t}(\psi_x-\psi_y)^2$. Nonetheless, this functor is far from the
desired semantic functor: many different extended power functionals can restrict
to the same power functional on the boundary. 

\subsection{A category of behaviours}

For our functorial semantics, we wish to map a circuit to its power functional.
We take a moment to briefly sketch how one could use decorated corelations to
construct a codomain for such a functor. The result, however, is a little ad
hoc, and we shall not pursue it in depth. Instead, in the next section, we shall
generalize Dirichlet forms to Lagrangian relations.

Indeed, ideally we would have liked to use an isomorphism-morphism factorisation on
$\FinSet$, to construct isomorphism-morphism corelations decorated by Dirichlet
forms. Then for a map $X \to Y$ the Dirichlet form must decorate the set
$X+Y$, and we need not talk at all about cospans. We saw however, in the
previous section, that a category with morphisms Dirichlet forms on the disjoint
union of the domain and codomain is not posible.  As a consolation prize we may
use the epi-mono factorisation system $(\mathrm{Sur},\mathrm{Inj})$ on
$\FinSet$. 

The key idea is that given an injection $m\maps \overline N \to N$, we may
minimise a Dirichlet form on $N$ over the complement of the image of $\overline
N$ to obtain, in effect, a Dirichlet form on $\overline N$. Given a circuit $X
\to Y$, we may factor the function $X+Y \to N$ as $X+Y \stackrel{e}\to
\overline{N} \stackrel{m}\to N$, where $e$ is a surjection and $m$ is an
injection. Note the image $m(\overline{N})$ of $m$ in $N$ is equal to the
boundary $\partial N$ of the circuit. Thus if we map a circuit to its extended
power functional, and then minimise the Dirichlet form onto $\overline N$, we
obtain the power functional of the circuit. 

This motivates the following proposition. 

\begin{proposition}
  There is a lax symmetric monoidal functor
\[
  \mathrm{Dirich}\maps (\mathrm{FinSet};\mathrm{Inj}^\opp,+) \longrightarrow
  (\mathrm{Set},\times),
\]
extending the functor $\mathrm{Dirich} \maps (\FinSet,+) \rightarrow
(\Set,\times)$, such that the image of the cospan $N \stackrel{f}\to A
\stackrel{m}\hookleftarrow M$, where $f$ is a function and $m$ an injection, is the
map
\begin{align*}
  \mathrm{Dirich}(f;m^\opp)\maps \mathrm{Dirich}(N) &\longrightarrow
  \mathrm{Dirich}(M);\\
  Q &\longmapsto \min_{A\setminus M} f_\ast Q.
\end{align*}
Note that in writing $A \setminus M$ we are considering $M$ as a subset of $A$
by way of the injection $m$.

\end{proposition}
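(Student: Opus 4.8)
The plan is to verify in turn that the assignment is well defined on morphisms, respects isomorphism classes, preserves identities and composition, and carries the lax symmetric monoidal structure, with composition being the only step requiring real work. Throughout I would lean on the fact, established earlier, that $\mathrm{Dirich}\maps(\FinSet,+)\to(\Set,\times)$ is already a lax symmetric monoidal functor, together with the minimisation results of Theorem \ref{thm:dirichletminimization} and its supporting lemmas.

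First I would check well-definedness. Since $\mathrm{Dirich}$ is a functor on $\FinSet$, for any function $f\maps N\to A$ the pushforward $f_\ast Q$ is a Dirichlet form on $A$. Viewing $M$ as the subset $m(M)\subseteq A$, Theorem \ref{thm:dirichletminimization} then guarantees that $\min_{A\setminus M}f_\ast Q$ is a Dirichlet form on $M$, so $\mathrm{Dirich}(f;m^\opp)$ indeed lands in $\mathrm{Dirich}(M)$. Representation independence is routine: an isomorphism of cospans is an iso $A\xrightarrow{\sim}A'$ commuting with the legs, and since pushforward along an isomorphism is mere relabelling and minimisation is invariant under such relabelling, isomorphic cospans induce the same function. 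The identity morphism on $N$ is the cospan $N\xrightarrow{1_N}N\xleftarrow{1_N}N$, for which $\min_{\varnothing}(1_N)_\ast Q=Q$, so identities are preserved.

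The crux is preservation of composition. Given composable morphisms $N\xrightarrow{f}A\xleftarrow{m}M$ and $M\xrightarrow{g}B\xleftarrow{n}P$, their composite is the pushout cospan $N\xrightarrow{j_A f}A+_M B\xleftarrow{j_B n}P$, where $j_B$ is monic because $m$ is and monos are stable under pushout in $\FinSet$. Writing $R=f_\ast Q$, I must show $\min_{(A+_M B)\setminus P}(j_A)_\ast R$ equals the two-stage expression $\min_{B\setminus P}\,g_\ast\big(\min_{A\setminus M}R\big)$. The key is a compatibility lemma: minimising $(j_A)_\ast R$ over the $A$-interior nodes $(A+_M B)\setminus j_B(B)$---which are exactly the classes $[a]$ for $a\in A\setminus m(M)$, a copy of $A\setminus M$---yields, under the embedding $j_B\maps B\hookrightarrow A+_M B$, precisely $g_\ast\big(\min_{A\setminus M}R\big)$. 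This holds because the gluing identifies $[m(x)]$ with $[g(x)]$, so edges of $R$ incident to boundary nodes are transported to their $g$-images and nodes merged by $g$ are already identified in the pushout---exactly the effect of $g_\ast$---while the extra nodes $j_B(B\setminus g(M))$ carry no edges of $R$. Granting this, the result follows by the tower property of minimisation: the set $(A+_M B)\setminus P$ partitions into the $A$-interior nodes and $B\setminus P$, and since minimisation can be carried out one node at a time (Lemma \ref{lem:onestepdirichletmin}) with an outcome independent of the order (Lemma \ref{lem:welldefineddirichletmin}), minimising over the union equals minimising over one part and then the other.

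I expect this compatibility lemma to be the main obstacle, since it requires matching the node-by-node algebra of $\min$ and $g_\ast$ against the combinatorics of the pushout along the injection $m$; the cleanest route is to reduce both sides to explicit coefficient formulas using the one-step minimisation of Lemma \ref{lem:onestepdirichletmin}. Finally, the lax symmetric monoidal structure is inherited from the $\FinSet$ functor: the coherence maps $\delta_{N,M}$ and unit $\delta_1$ are unchanged, their naturality with respect to the new morphisms reduces to the identity $\min_{(A+A')\setminus(M+M')}(f+f')_\ast(Q\oplus Q')=\big(\min_{A\setminus M}f_\ast Q\big)\oplus\big(\min_{A'\setminus M'}f'_\ast Q'\big)$ on disjoint node sets, and the coherence and symmetry axioms hold because they already hold for $\mathrm{Dirich}$ on $\FinSet$ and involve only structural isomorphisms common to both categories.
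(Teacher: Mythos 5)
Your proposal is correct and follows exactly the route the paper itself indicates: the paper gives no detailed proof of this proposition, remarking only that the two points requiring work are preservation of composition and naturality of the coherence maps $\delta$, and that both reduce to showing that minimisation and pushforward commute in the required circumstances --- which is precisely the compatibility lemma you isolate, together with the staged-minimisation (tower) property already implicit in the proof of Theorem \ref{thm:dirichletminimization}. One simplification for the step you flag as the main obstacle: rather than matching coefficient formulas via Lemma \ref{lem:onestepdirichletmin}, note that since $m$ is injective the pushout satisfies $A+_MB\cong(A\setminus m(M))+B$, and precomposition with $j_A$ gives a bijection between extensions of $\psi\in\F^B$ to the pushout and extensions of $\psi\circ g\circ m^{-1}\in\F^{m(M)}$ to $A$ which preserves both the values of the forms ($((j_A)_\ast R)(\chi)=R(\chi\circ j_A)$) and realizability (the formal partial derivatives at nodes of $A\setminus m(M)$ agree), so the compatibility lemma follows immediately from the definitions.
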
 
By extending the previous functor $\mathrm{Dirich}$, we mean the triangle
\[
  \xymatrixcolsep{3pc}
  \xymatrixrowsep{1pc}
  \xymatrix{
    (\FinSet, +) \ar[dr]^{\mathrm{Dirich}} \ar@{^{(}->}[dd] \\
    & (\Set,\times) \\
 (\mathrm{FinSet};\mathrm{Inj}^\opp,+) \ar[ur]_{\mathrm{Dirich}}.
  }
\]
commutes, where the vertical map is the subcategory inclusion.

There are two aspects of this proposition that require detailed proof: that the
map $\mathrm{Dirich}$ preserves composition, and that the coherence maps
$\delta$ are still natural in this larger category. Both facts come down to
proving that the minimisation and pushforward commute in certain required circumstances. 

This extended functor $\mathrm{Dirich}$ then gives rise to a decorated
corelations category $\mathrm{DirichCorel}$ with morphisms jointly-epic cospans
decorated by Dirichlet forms; composition is given by summing the Dirichlet
forms on the factors, then minimising over the interior.

This category satisfies the two precise desiderata for a semantic category: each
behaviour is uniquely represented, and the map of a circuit to its behaviour
preserves all compositional structure. Nonetheless, we will not pursue this
construction in depth, feeling that this construction uses corelations to
shoehorn in the necessary compositional structure. Instead, we find that a
cleaner, more general construction, based on the idea of a linear relation, can
be given by a slight generalisation of our decorations.

\section{Networks as Lagrangian relations} \label{sec:circlagr}
In the first part of this chapter, we explored the semantic content contained in
circuit diagrams, leading to an understanding of circuit diagrams as expressing
some relationship between the potentials and currents that can simultaneously be
imposed on some subset, the so-called terminals, of the nodes of the circuit. We
called this collection of possible relationships the behaviour of the circuit.
While in that setting we used the concept of Dirichlet forms to describe this
relationship, we saw in the end that describing circuits as Dirichlet forms does
not allow for a straightforward notion of composition of circuits. 

In this section, inspired by the principle of least action of classical
mechanics in analogy with the principle of minimum power, we develop a setting
for describing behaviours that allows for easy discussion of composite
behaviours: Lagrangian subspaces of symplectic vector spaces. These Lagrangian
subspaces provide a more direct, invariant perspective, comprising precisely the
set of vectors describing the possible simultaneous potential and current
readings at all terminals of a given circuit. As we shall see, one immediate and
important advantage of this setting is that we may model wires of zero
resistance. Moreover, they lie within our Willems-esque aesthetic, choosing to
define physical systems via a complete list of possible observations at the
terminals.

Recall that we write $\F$ for some field, which for our applications is
usually the field $\R$ of real numbers or the field $\R(s)$ of rational
functions of one real variable.

\subsection{Symplectic vector spaces}

A circuit made up of wires of positive resistance defines a function from
boundary potentials to boundary currents. A wire of zero resistance, however,
does not define a function: the principle of minimum power is obeyed as long as
the potentials at the two ends of the wire are equal. More generally, we may
thus think of circuits as specifying a set of allowed voltage-current pairs, or
as a relation between boundary potentials and boundary currents. This set forms
what is called a Lagrangian subspace, and is given by the graph of the
differential of the power functional. More generally, Lagrangian submanifolds
graph derivatives of smooth functions: they describe the point evaluated and the
tangent to that point within the same space.

The material in this section is all known, and follows without great difficulty
from the definitions. To keep this section brief we omit proofs. See any
introduction to symplectic vector spaces, such as Cimasoni and Turaev \cite{CT} or
Piccione and Tausk \cite{PT}, for details.

\begin{definition}
  Given a finite-dimensional vector space $V$ over a field $\F$, a 
  \define{symplectic form}
  $\omega\maps V \times V \to \F$ on $V$ is an alternating nondegenerate bilinear
  form.  That is, a symplectic form $\omega$ is a function $V \times V \to \F$
  that is
  \begin{enumerate}[(i)]
    \item bilinear: for all $\lambda \in \F$ and all $u,v \in V$ we have
      $\omega(\lambda u,v) = \omega(u,\lambda v) =  \lambda \omega(u,v)$;
    \item alternating: for all $v \in V$ we have $\omega(v,v) = 0$; and
    \item nondegenerate: given $v \in V$, $\omega(u,v) = 0$ for all $u \in V$ if
      and only if $u = 0$.
  \end{enumerate} 
  A \define{symplectic vector space} $(V,\omega)$ is a vector space $V$ equipped
  with a symplectic form $\omega$. 

  Given symplectic vector spaces $(V_1,\omega_1), (V_2, \omega_2)$, a
  \define{symplectic map} is a linear map 
  \[
    f\maps (V_1,\omega_1) \longrightarrow (V_2, \omega_2)
  \]
  such that $\omega_2(f(u),f(v)) = \omega_1(u,v)$ for all $u,v \in V_1$. A
  \define{symplectomorphism} is a symplectic map that is also an isomorphism. 
\end{definition}

An alternating form is always \define{antisymmetric}, meaning that $\omega(u,v) = 
-\omega(v,u)$ for all $u,v \in V$.  The converse is true except in characteristic 2.
A \define{symplectic basis} for a symplectic vector space $(V,\omega)$ is a
basis $\{p_1,\dots,p_n,q_1,\dots,q_n\}$ such that $\omega(p_i,p_j) =
\omega(q_i,q_j) = 0$ for all $1 \le i,j \le n$, and $\omega(p_i,q_j) =
\delta_{ij}$ for all $1 \le i,j\le n$, where $\delta_{ij}$ is the Kronecker delta,
equal to $1$ when $i =j$, and $0$ otherwise. A symplectomorphism maps symplectic
bases to symplectic bases, and conversely, any map that takes a symplectic basis
to another symplectic basis is a symplectomorphism.

\begin{example}
  \label{ex:symplectic_space_generated_by_set}
  Given a finite set $N$, we consider the vector space $\vectf{N}$ a symplectic
  vector space $(\vectf{N},\omega)$, with symplectic form 
  \[
    \omega\big((\phi,i),(\phi',i')\big) = i'(\phi)-i(\phi').  
  \] 
  Let $\{\phi_n\}_{n \in N}$ be the basis of $\F^N$ consisting of the functions
  $N \to \F$ mapping $n$ to $1$ and all other elements of $N$ to $0$, and let
  $\{i_n\}_{n \in N} \subseteq {(\F^N)}^\ast$ be the dual basis. Then
  $\{(\phi_n,0),(0,i_n)\}_{n\in N}$ forms a symplectic basis for $\vectf{N}$.  

  This is known as the \define{standard symplectic space}; an important
  structure theorem states that every finite dimensional symplectic vector space
  is symplectomorphic---that is, isomorphic as a symplectic vector space---to
  one of this form. 
\end{example}

There are two common ways we will build symplectic spaces from other symplectic
spaces: conjugation and summation. Given a symplectic form $\omega$ on $V$, we
may define its \define{conjugate} symplectic form $\overline\omega = - \omega$,
and write the conjugate symplectic space $(V,\overline\omega)$ as $\overline V$.
Given two symplectic vector spaces $(U, \nu),(V,\omega)$, we consider their
direct sum $U \oplus V$ a symplectic vector space with the symplectic form
$\nu+\omega$, and call this the \define{sum} of the two symplectic vector
spaces. Note that this is not a product in the category of symplectic vector
spaces and symplectic maps.

\begin{example}
  \label{ex:symplectomorphisms}
  A symplectic vector space and its conjugate are symplectomorphic. This is a
  necessary consequence of the fact that they have the same dimension, and so
  are both symplectomorphic to the same standard symplectic space. 
  
  We also define the conjugate of the standard symplectic space,
  $\overline{\vectf{N}}$, as the vector space $\vectf{N}$ together with the
  (unsurprising) symplectic form
  \[
    \omega\big((\phi,i),(\phi',i')\big) = i(\phi')-i'(\phi).  
  \] 
  We write 
  \[
    \overline{\idn_N}\maps \vectf{N} \longrightarrow \overline{\vectf{N}}
  \]
  for the symplectomorphism mapping $(\phi,i)$ to $(\phi,-i)$ relating these two
  spaces.
\end{example}

The symplectic form provides a notion of orthogonal complement.  Given a subspace $S$ of $V$, we define its \define{complement}
\[
  S^\circ = \{v \in V \mid \omega(v,s) = 0 \textrm{ for all } s \in S\}.
\]
Note that this construction obeys the following identities, where $S$ and $T$
are subspaces of $V$:
\begin{align*}
  \dim S+ \dim S^\circ &= \dim V \\
  (S^\circ)^\circ &= S \\
  (S + T)^\circ &= S^\circ \cap T^\circ \\
  (S \cap T)^\circ &= S^\circ + T^\circ.
\end{align*}

In the symplectic vector space $\vectf{N}$, the subspace $\F^N$ has the
property of being a maximal subspace such that the symplectic form restricts to
the zero form on this subspace. Subspaces with this property are known as Lagrangian
subspaces, and they may all be realized as the image of $\vectf{N}$ 
under symplectomorphisms from $\vectf{N}$ to itself.

\begin{definition} 
  Let $S$ be a linear subspace of a symplectic vector space $(V,\omega)$. We say
  that $S$ is \define{isotropic} if $\omega|_{S \times S} = 0$, and that $S$ is
  \define{coisotropic} if $S^\circ$ is isotropic. A subspace is
  \define{Lagrangian} if it is both isotropic and coisotropic, or equivalently, if it  
  is a maximal isotropic subspace.
\end{definition}

Lagrangian subspaces are also known as Lagrangian correspondences and canonical
relations. Note that a subspace $S$ is isotropic if and only if $S \subseteq
S^\circ$. This fact helps with the following characterizations of Lagrangian
subspaces.

\begin{proposition} \label{lagrangian_characterization} 
  Given a subspace $L \subset V$ of a symplectic vector space $(V,\omega)$, the
  following are equivalent: 
  \begin{enumerate}[(i)] 
    \item $L$ is Lagrangian.  
    \item $L$ is maximally isotropic.  
    \item $L$ is minimally coisotropic.  
    \item $L = L^\circ$.  
    \item $L$ is isotropic and $\dim L = \frac12 \dim V$.
  \end{enumerate} 
\end{proposition}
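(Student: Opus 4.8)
The plan is to use condition (iv), the self-duality $L = L^\circ$, as a central hub and prove that each of the other four conditions is equivalent to it; the identities $\dim S + \dim S^\circ = \dim V$ and $(S^\circ)^\circ = S$ recorded just before the proposition, together with the characterisation of isotropy as $S \subseteq S^\circ$, do essentially all the work.

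First I would dispatch (i) $\iff$ (iv). Unwinding the definitions, $L$ is isotropic precisely when $L \subseteq L^\circ$, while $L$ is coisotropic precisely when $L^\circ$ is isotropic, i.e. $L^\circ \subseteq (L^\circ)^\circ = L$, using the involutivity $(S^\circ)^\circ = S$. Hence $L$ is Lagrangian -- both isotropic and coisotropic -- if and only if $L \subseteq L^\circ$ and $L^\circ \subseteq L$, that is, $L = L^\circ$. Next, (iv) $\iff$ (v) is a dimension count: from $\dim L + \dim L^\circ = \dim V$ we see that $L = L^\circ$ forces $\dim L = \tfrac12 \dim V$, and conversely if $L$ is isotropic with $\dim L = \tfrac12 \dim V$ then $\dim L^\circ = \dim V - \dim L = \dim L$, so the inclusion $L \subseteq L^\circ$ of equidimensional spaces must be an equality.

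The only step with genuine (if elementary) content is (ii) $\iff$ (iv). For the forward implication, assuming $L = L^\circ$ is isotropic, I would show any isotropic $M \supsetneq L$ leads to a contradiction: applying the order-reversing complement to $L \subsetneq M$ gives $M^\circ \subsetneq L^\circ = L$, whence $M \subseteq M^\circ \subsetneq L \subsetneq M$, absurd. For the converse, given $L$ maximally isotropic (so $L \subseteq L^\circ$) and supposing $L \subsetneq L^\circ$, I would choose $v \in L^\circ \setminus L$ and verify directly that $L + \langle v \rangle$ is isotropic: for $l_1, l_2 \in L$ and scalars $\lambda, \mu$, bilinearity expands $\omega(l_1 + \lambda v, l_2 + \mu v)$ into four terms, each vanishing -- $\omega(l_1, l_2) = 0$ by isotropy of $L$, $\omega(v, l_2) = 0$ and $\omega(l_1, v) = -\omega(v, l_1) = 0$ since $v \in L^\circ$ and $\omega$ is antisymmetric, and $\omega(v,v) = 0$ since $\omega$ is alternating. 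This produces an isotropic subspace strictly containing $L$, contradicting maximality, so $L = L^\circ$.

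Finally I would obtain (iii) $\iff$ (iv) for free by duality. Because $S \mapsto S^\circ$ is an inclusion-reversing involution sending isotropic subspaces to coisotropic ones and vice versa, $L$ is minimally coisotropic if and only if $L^\circ$ is maximally isotropic; by the equivalence (ii) $\iff$ (iv) already proved, applied to $L^\circ$, this holds iff $L^\circ = (L^\circ)^\circ = L$, i.e. iff $L = L^\circ$. The main obstacle, such as it is, is purely organisational: marshalling the order-reversing complement and the involution identity so that the maximality and minimality arguments fall out cleanly, rather than any substantive difficulty -- the lone computation, the isotropy of $L + \langle v\rangle$, is immediate from the alternating and antisymmetric properties of $\omega$.
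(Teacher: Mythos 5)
Your proof is correct. Note that the paper itself omits the proof of this proposition, deferring to standard references on symplectic vector spaces, so there is no in-text argument to compare against; your strategy of using (iv) as the hub, with the involutivity $(S^\circ)^\circ = S$ and the dimension identity $\dim S + \dim S^\circ = \dim V$ doing the work, is the standard one and all steps check out — including the only delicate points, namely the strictness of $M^\circ \subsetneq L^\circ$ (which follows from injectivity of the complement, itself a consequence of involutivity) and the isotropy computation for $L + \langle v \rangle$, where your appeal to the alternating and antisymmetric properties of $\omega$ is exactly what is needed.
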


From this proposition it follows easily that the direct sum of two Lagrangian
subspaces is Lagrangian in the sum of their ambient spaces. We also observe that
an advantage of isotropy is that there is a good way to take a quotient of a
symplectic vector space by an isotropic subspace---that is, there is a way to
put a natural symplectic structure on the quotient space.

\begin{proposition}
  Let $S$ be an isotropic subspace of a symplectic vector space $(V,\omega)$.
  Then $S^\circ/S$ is a symplectic vector space with symplectic form
  $\omega'(v+S,u+S) = \omega(v,u)$.
\end{proposition}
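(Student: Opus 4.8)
The plan is to verify in turn that $\omega'$ is well defined on cosets, that it inherits bilinearity and the alternating property from $\omega$, and finally that it is nondegenerate; this last point is where the only real work lies. First, since $S$ is isotropic we have $S \subseteq S^\circ$, so the quotient $S^\circ/S$ is a genuine vector space and the formula $\omega'(v+S, u+S) = \omega(v,u)$ at least typechecks, with representatives $v,u \in S^\circ$.

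For well-definedness I would take $v,v',u,u' \in S^\circ$ with $v - v' \in S$ and $u - u' \in S$, write $v = v' + s$, $u = u' + t$ with $s,t \in S$, and expand bilinearly:
\[
  \omega(v,u) = \omega(v',u') + \omega(v',t) + \omega(s,u') + \omega(s,t).
\]
Each of the last three terms vanishes: $\omega(s,t) = 0$ as $S$ is isotropic; $\omega(v',t) = 0$ since $v' \in S^\circ$ and $t \in S$; and $\omega(s,u') = -\omega(u',s) = 0$ using antisymmetry of $\omega$ together with $u' \in S^\circ$, $s \in S$. Hence $\omega(v,u) = \omega(v',u')$ and $\omega'$ is independent of the choice of representatives. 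Bilinearity then descends immediately from that of $\omega$, and the alternating property is automatic, since $\omega'(v+S,v+S) = \omega(v,v) = 0$.

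The main obstacle, and really the only substantive point, is nondegeneracy. Suppose $v + S \in S^\circ/S$ satisfies $\omega'(v+S, u+S) = 0$ for every $u + S$; I must show $v \in S$, i.e. $v + S = 0$. Unwinding the definition, the hypothesis says $\omega(v,u) = 0$ for all $u \in S^\circ$, which is precisely the statement that $v \in (S^\circ)^\circ$. Here I would invoke the complement identity $(S^\circ)^\circ = S$ recorded among the displayed identities above (valid because $\omega$ is nondegenerate on the finite-dimensional space $V$). This yields $v \in S$, so $v + S = 0$, establishing nondegeneracy. Assembling the well-definedness, bilinearity, alternating, and nondegeneracy checks shows that $(S^\circ/S, \omega')$ is a symplectic vector space, which completes the proof.
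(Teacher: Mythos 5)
Your proof is correct and follows essentially the same route as the paper, which merely sketches the well-definedness check and asserts that the remaining symplectic-form axioms "can be checked"; you fill in those checks, including nondegeneracy via the identity $(S^\circ)^\circ = S$ recorded just before the proposition. No gaps.
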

\begin{proof} 
  The function $\omega'$ is well defined due to the isotropy of
  $S$---by definition adding any pair $(s,s')$ of elements of $S$ to a pair
  $(v,u)$ of elements of $S^\circ$ does not change the value of
  $\omega(v+s,u+s')$. As $\omega$ is a symplectic form, one can check that
  $\omega'$ is too.  
\end{proof}

\subsection{Lagrangian subspaces from quadratic forms}

Lagrangian subspaces are of relevance to us here as the behaviour of any passive
linear circuit forms a Lagrangian subspace of the symplectic vector space
generated by the nodes of the circuit. We think of this vector space as
comprising two parts: a space $\F^N$ of potentials at each node, and a dual
space ${(\F^N)}^\ast$ of currents. To make clear how circuits can be interpreted
as Lagrangian subspaces, here we describe how Dirichlet forms on a finite set
$N$ give rise to Lagrangian subspaces of $\vectf{N}$. More generally, we show
that there is a one-to-one correspondence between Lagrangian subspaces and
quadratic forms.

\begin{proposition} \label{prop:qfls}
  Let $N$ be a finite set. Given a quadratic form $Q$ over $\F$ on $N$, the
  subspace 
  \[ 
    L_Q = \big\{(\phi,dQ_\phi) \mid \phi \in \F^N\big\} \subseteq \vectf{N},
  \] 
  where $dQ_\phi \in {(\F^N)}^\ast$ is the formal differential of $Q$ at $\phi
  \in \F^N$, is Lagrangian. Moreover, this construction gives a one-to-one correspondence 
  \[ 
    \left\{\begin{array}{c} 
      \\ \mbox{Quadratic forms over $\F$ on $N$} \\ \phantom{.}
    \end{array} \right\} 
    \longleftrightarrow
    \left\{\begin{array}{c} 
      \mbox{Lagrangian subspaces of $\vectf{N}$}\\
      \mbox{with trivial intersection with} \\ 
      \{0\} \oplus {(\F^N)}^\ast \subseteq \vectf{N} 
    \end{array} \right\}.  
  \]
\end{proposition}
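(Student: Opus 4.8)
The plan is to recognise $L_Q$ as the graph of a linear ``gradient'' map and then invoke the dimension-plus-isotropy characterisation of Lagrangian subspaces (Proposition \ref{lagrangian_characterization}). Since $\F$ has characteristic zero, the quadratic form $Q$ is determined by its polarisation: setting $B(\phi,\psi) = Q(\phi+\psi) - Q(\phi) - Q(\psi)$ gives a symmetric bilinear form with $Q(\phi) = \tfrac12 B(\phi,\phi)$, and computing $\tfrac{d}{dt}Q(\phi + t\psi)|_{t=0}$ shows the formal differential satisfies $dQ_\phi(\psi) = B(\phi,\psi)$. In particular $dQ_\phi$ is linear in $\phi$, so $L_Q$ is the graph of the linear map $T_Q\maps \F^N \to (\F^N)^\ast$, $\phi \mapsto dQ_\phi$, and hence a subspace of dimension $|N| = \tfrac12\dim\vectf{N}$.

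First I would check isotropy. Using the symplectic form $\omega\big((\phi,i),(\phi',i')\big) = i'(\phi) - i(\phi')$ from Example \ref{ex:symplectic_space_generated_by_set}, for two elements of $L_Q$ we have $\omega\big((\phi,dQ_\phi),(\phi',dQ_{\phi'})\big) = dQ_{\phi'}(\phi) - dQ_\phi(\phi') = B(\phi',\phi) - B(\phi,\phi')$, which vanishes by the symmetry of $B$. Together with the dimension count, Proposition \ref{lagrangian_characterization} then gives that $L_Q$ is Lagrangian. The trivial intersection with $\{0\}\oplus(\F^N)^\ast$ is immediate: if $(\phi,dQ_\phi) = (0,i)$ then $\phi = 0$, whence $i = dQ_0 = 0$.

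For the correspondence, I would show the assignment $Q \mapsto L_Q$ is injective and surjective onto the stated class. Injectivity follows since $L_Q = L_{Q'}$ forces $T_Q = T_{Q'}$, hence $B = B'$, and then $Q = Q'$ via $Q(\phi) = \tfrac12 B(\phi,\phi)$. For surjectivity, take a Lagrangian subspace $L$ meeting $\{0\}\oplus(\F^N)^\ast$ trivially. The trivial intersection says the projection $L \to \F^N$ onto the first summand has trivial kernel; since $\dim L = |N| = \dim \F^N$ by Proposition \ref{lagrangian_characterization}, this projection is an isomorphism, so $L$ is the graph of a linear map $T\maps \F^N \to (\F^N)^\ast$. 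Isotropy of $L$, unwound exactly as above, states that $(T\phi')(\phi) = (T\phi)(\phi')$ for all $\phi,\phi'$; that is, $B_T(\phi,\psi) := (T\phi)(\psi)$ is symmetric. Setting $Q(\phi) = \tfrac12 B_T(\phi,\phi)$ then yields a quadratic form with $dQ_\phi = T\phi$, so $L = L_Q$.

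The steps are all routine, and the only point demanding care is the identification of the formal differential with the polarisation, $dQ_\phi(\psi) = B(\phi,\psi)$, which is what makes the graph linear and forces the symmetry of $B$ to coincide exactly with the isotropy condition for $\omega$. This is where the characteristic-zero hypothesis is essential, both to divide by $2$ and to guarantee that symmetric bilinear forms and quadratic forms are interchangeable; over a field of characteristic $2$ the correspondence---and indeed the very notion of recovering $Q$ from its differential---would break down. I expect no genuine obstacle beyond this bookkeeping once the identification is in place.
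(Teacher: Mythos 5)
Your proof is correct and follows essentially the same route as the paper's: symmetry of the polarised bilinear form gives isotropy, a dimension count (left implicit in the paper) upgrades this to Lagrangian, and the reverse direction reconstructs $Q$ from the graph of a linear map $\F^N \to (\F^N)^\ast$. If anything you are slightly more careful than the paper, which asserts the existence of $i_\phi$ for each $\phi$ without the projection-is-an-isomorphism argument you supply, and which omits the factor of $\tfrac12$ needed so that the differential of the reconstructed quadratic form recovers $i_\phi$ exactly.
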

\begin{proof}
  The symplectic structure on $\vectf{N}$ and our notation for it is given in
  Example \ref{ex:symplectic_space_generated_by_set}. 

  Note that for all $n,m \in N$ the corresponding basis elements 
  \[
    \frac{\partial^2 Q}{\partial \phi_n \partial \phi_m} = dQ_{\phi_n}(\phi_m) =
    dQ_{\phi_m}(\phi_n),
  \]
  so $dQ_\phi(\psi) = dQ_\psi(\phi)$ for all $\phi,\psi \in \F^N$. Thus $L_Q$ is
  indeed Lagrangian: for all $\phi,\psi \in \F^N$ 
  \[
    \omega\big((\phi,dQ_\phi),(\psi,dQ_\psi)\big) = dQ_\psi(\phi) -
    dQ_\phi(\psi) = 0.
  \]

  Observe also that for all quadratic forms $Q$ we have $dQ_0 = 0$, so the only
  element of $L_Q$ of the form $(0,i)$, where $i \in {(\F^N)}^\ast$, is $(0,0)$.
  Thus $L_Q$ has trivial intersection with the subspace $\{0\} \oplus
  {(\F^N)}^\ast$ of $\vectf{N}$. This $L_Q$ construction forms the leftward
  direction of the above correspondence.

  For the rightward direction, suppose that $L$ is a Lagrangian subspace of
  $\vectf{N}$ such that $L \cap (\{0\} \oplus {(\F^N)}^\ast) = \{(0,0)\}$. Then
  for each $\phi \in \F^N$, there exists a unique $i_\phi \in {(\F^N)}^\ast$
  such that $(\phi,i_\phi) \in L$. Indeed, if $i_\phi$ and $i_\phi'$ were
  distinct elements of ${(\F^N)}^\ast$ with this property, then by linearity
  $(0,i_\phi-i_\phi')$ would be a nonzero element of $L \cap (\{0\} \oplus
  {(\F^N)}^\ast)$, contradicting the hypothesis about trivial intersection. We
  thus can define a function, indeed a linear map, $\F^N \to {(\F^N)}^\ast; \phi
  \mapsto i_\phi$. This defines a bilinear form $Q(\phi,\psi) = i_\phi(\psi)$ on
  $\F^N \oplus \F^N$, and so $Q(\phi) = i_\phi(\phi)$ defines a quadratic form
  on $\F^N$. 

  Moreover, $L$ is Lagrangian, so
  \[
    \omega\big((\phi,i_\phi), (\psi,i_\psi)\big) = i_\psi(\phi) - i_\phi(\psi) =
    0,
  \]
  and so $Q(-,-)$ is a symmetric bilinear form. This gives a one-to-one
  correspondence between Lagrangian subspaces of specified type, symmetric
  bilinear forms, and quadratic forms, and so in particular gives the claimed
  one-to-one correspondence. 
\end{proof}

In particular, every Dirichlet form defines a Lagrangian subspace. 

\subsection{Lagrangian relations}

Recall that a relation between sets $X$ and $Y$ is a subset $R$ of their product
$X \times Y$. Furthermore, given relations $R \subseteq X \times Y$ and $S
\subseteq Y \times Z$, there is a composite relation $(S \circ R) \subseteq X
\times Z$ given by pairs $(x,z)$ such that there exists $y \in Y$ with $(x,y)
\in R$ and $(y,z) \in S$---a direct generalization of function composition. A
Lagrangian relation between symplectic vector spaces $V_1$ and $V_2$ is a
relation between $V_1$ and $V_2$ that forms a Lagrangian subspace of the
symplectic vector space $\overline{V_1} \oplus V_2$. This gives us a
way to think of certain Lagrangian subspaces, such as those arising from
circuits, as morphisms, giving a way to compose them.

\begin{definition}
  A \define{Lagrangian relation} $L\maps V_1 \to V_2$ is a Lagrangian subspace $L$
  of $\overline{V_1} \oplus V_2$. 
\end{definition}

This is a generalization of the notion of symplectomorphism: any symplectomorphism
$f\maps V_1 \to V_2$ forms a Lagrangian subspace when viewed as a
relation $f \subseteq \overline{V_1} \oplus V_2$. More generally, any symplectic
map $f\maps V_1 \to V_2$ forms an isotropic subspace when viewed as a relation in
$\overline{V_1} \oplus V_2$. 

Importantly for us, the composite of two Lagrangian relations is again a
Lagrangian relation.  This is well known \cite{Weinstein}, but sufficiently 
easy and important to us that we provide a proof.

\begin{proposition} \label{prop:lagrangian_composition}
  Let $L\maps V_1 \to V_2$ and $L'\maps V_2 \to V_3$ be Lagrangian relations. Then their
  composite relation $L' \circ L$ is a Lagrangian relation $V_1 \to V_3$.
\end{proposition}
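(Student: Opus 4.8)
The plan is to show that $L' \circ L$, which is plainly a linear subspace of $\overline{V_1} \oplus V_3$, is Lagrangian by checking two things via Proposition \ref{lagrangian_characterization}: that it is isotropic, and that it has the expected dimension $\tfrac12(\dim V_1 + \dim V_3) = \tfrac12\dim(\overline{V_1}\oplus V_3)$. Throughout I write $\omega_i$ for the symplectic form on $V_i$, so that the forms on $\overline{V_1}\oplus V_2$ and $\overline{V_2}\oplus V_3$ are $-\omega_1\oplus\omega_2$ and $-\omega_2\oplus\omega_3$ respectively, while the form on $\overline{V_1}\oplus V_3$ is $\Omega = -\omega_1\oplus\omega_3$.

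Isotropy I expect to be immediate. Given $(v_1,v_3),(w_1,w_3)\in L'\circ L$, I would choose witnesses $v_2,w_2\in V_2$ with $(v_1,v_2),(w_1,w_2)\in L$ and $(v_2,v_3),(w_2,w_3)\in L'$. Isotropy of $L$ gives $-\omega_1(v_1,w_1)+\omega_2(v_2,w_2)=0$ and isotropy of $L'$ gives $-\omega_2(v_2,w_2)+\omega_3(v_3,w_3)=0$; adding these, the middle terms cancel and I am left with $\Omega\big((v_1,v_3),(w_1,w_3)\big) = -\omega_1(v_1,w_1)+\omega_3(v_3,w_3)=0$.

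The substance lies in the dimension count. First I would realize $L'\circ L$ as $\pi(F)$, where $F = \{(v_1,v_2,v_3) : (v_1,v_2)\in L,\ (v_2,v_3)\in L'\}$ and $\pi(v_1,v_2,v_3)=(v_1,v_3)$. Writing $d\maps L\oplus L'\to V_2$ for the difference map $\big((v_1,v_2),(v_2',v_3)\big)\mapsto v_2-v_2'$, I have $F=\ker d$, while $\ker(\pi|_F)=A\cap B$ with $A=\{v\in V_2 : (0,v)\in L\}$ and $B=\{v\in V_2 : (v,0)\in L'\}$. Rank--nullity, together with $\dim L=\tfrac12(\dim V_1+\dim V_2)$ and $\dim L'=\tfrac12(\dim V_2+\dim V_3)$, reduces the claim $\dim(L'\circ L)=\tfrac12(\dim V_1+\dim V_3)$ to the single identity $\dim V_2 - \operatorname{rank} d = \dim(A\cap B)$. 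Here $\operatorname{im} d = \pi_2(L)+\pi_2(L')$, where $\pi_2(L)=\{v : \exists v_1,\ (v_1,v)\in L\}$ and similarly for $L'$, so using $\dim S+\dim S^\circ=\dim V_2$ the left side equals $\dim\big(\pi_2(L)+\pi_2(L')\big)^\circ$. The key lemma is that $A=\pi_2(L)^\circ$ and $B=\pi_2(L')^\circ$ as subspaces of $(V_2,\omega_2)$; granting this, the complement identity $(S+T)^\circ=S^\circ\cap T^\circ$ from \textsection\ref{sec:circlagr} gives $\big(\pi_2(L)+\pi_2(L')\big)^\circ=A\cap B$ and hence the required equality. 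To prove $A=\pi_2(L)^\circ$ I would use that $L$ is Lagrangian, so $L=L^\circ$: if $(0,v)\in L$ then for every $(v_1',v')\in L$ the pairing $-\omega_1(0,v_1')+\omega_2(v,v')=\omega_2(v,v')$ vanishes, so $v\in\pi_2(L)^\circ$; conversely if $\omega_2(v,v')=0$ for all $v'\in\pi_2(L)$ then $(0,v)$ pairs trivially with all of $L$, whence $(0,v)\in L^\circ=L$ and $v\in A$. The argument for $B$ is identical, noting that replacing $\omega_2$ by $\overline{\omega_2}=-\omega_2$ leaves orthogonal complements unchanged.

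With isotropy and the half-dimension equality in hand, Proposition \ref{lagrangian_characterization} identifies $L'\circ L$ as Lagrangian. I expect the main obstacle to be the key lemma $A=\pi_2(L)^\circ$: the cancellation in the isotropy step and the rank--nullity bookkeeping are routine, but this lemma is exactly where the \emph{Lagrangian} (not merely isotropic) hypothesis enters, through $L=L^\circ$. An alternative packaging of the identical content is symplectic reduction: form $L\oplus L'$, which is Lagrangian in $\overline{V_1}\oplus V_2\oplus\overline{V_2}\oplus V_3$, and reduce along the coisotropic subspace $C=\{v_2=v_2'\}$; one checks $C^\circ=\{(0,v,v,0)\}$ and $C/C^\circ\cong\overline{V_1}\oplus V_3$ as symplectic spaces, under which the image of $(L\oplus L')\cap C$ is precisely $L'\circ L$, so that the general fact that reduction carries Lagrangians to Lagrangians finishes the proof. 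I would present whichever of these the reader finds more transparent.
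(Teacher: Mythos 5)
Your proposal is correct. Your primary argument (isotropy by cancellation of the middle $\omega_2$-terms, plus a rank--nullity dimension count hinging on the lemma $A=\pi_2(L)^\circ$) is a genuinely different route from the paper's. The paper instead proves two structural lemmas --- that $(L\cap S^\circ)+S$ is Lagrangian for $S$ isotropic, and that $L/S$ is Lagrangian in $S^\circ/S$ when $S\subseteq L$ --- and then realises $L'\circ L$ as $\big(((L\oplus L')\cap\Delta^\circ)+\Delta\big)/\Delta$ for the diagonal isotropic $\Delta=\{(0,v_2,v_2,0)\}$; this is exactly the symplectic-reduction packaging you sketch as your alternative (your $C$ is the paper's $\Delta^\circ$). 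The trade-off: the paper's route factors the work into two reusable lemmas about restriction and quotient of Lagrangians, at the cost of introducing the quotient space $\Delta^\circ/\Delta$ and an identification with $\overline{V_1}\oplus V_3$; your direct route stays inside the ambient spaces, is more elementary (only rank--nullity and the complement identities $\dim S+\dim S^\circ=\dim V$ and $(S+T)^\circ=S^\circ\cap T^\circ$ already listed in the paper), and makes transparent that the full Lagrangian hypothesis $L=L^\circ$ is needed precisely in the key lemma $A=\pi_2(L)^\circ$ --- isotropy alone gives only the inclusion $A\subseteq\pi_2(L)^\circ$. Both arguments are complete and correct as written.
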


We prove this proposition by way of two lemmas detailing how the Lagrangian
property is preserved under various operations. The first lemma says that the
intersection of a Lagrangian space with a coisotropic space is in some sense
Lagrangian, once we account for the complement.

\begin{lemma} \label{restriction_of_lagrangians}
  Let $L \subseteq V$ be a Lagrangian subspace of a symplectic vector space $V$,
  and $S \subseteq V$ be an isotropic subspace of $V$. Then $(L\cap S^\circ) +S
  \subseteq V$ is Lagrangian in $V$.
\end{lemma}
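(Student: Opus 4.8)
The plan is to use the characterisation from Proposition \ref{lagrangian_characterization} that a subspace $W$ is Lagrangian if and only if $W = W^\circ$, and to establish this equality for $W = (L\cap S^\circ) + S$ by a direct computation of its complement using the four identities relating $\circ$ to sums and intersections recorded just above the statement.

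First I would apply $(A+B)^\circ = A^\circ \cap B^\circ$ to obtain
\[
W^\circ = \big((L\cap S^\circ) + S\big)^\circ = (L\cap S^\circ)^\circ \cap S^\circ.
\]
Then I would expand $(L\cap S^\circ)^\circ = L^\circ + (S^\circ)^\circ$ via the identity $(A\cap B)^\circ = A^\circ + B^\circ$, and simplify using that $L$ is Lagrangian, so $L^\circ = L$, and that $(S^\circ)^\circ = S$. This yields $(L\cap S^\circ)^\circ = L + S$, and hence
\[
W^\circ = (L + S)\cap S^\circ.
\]

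The crux is then to show $(L+S)\cap S^\circ = (L\cap S^\circ)+S$, at which point $W^\circ = W$ and we are finished. This is precisely an instance of the Dedekind modular law for subspaces: for subspaces $A,B,C$ of a vector space with $A\subseteq C$ one has $(A+B)\cap C = A + (B\cap C)$. I would apply this with $A = S$, $B = L$, $C = S^\circ$; the hypothesis $A\subseteq C$ becomes $S\subseteq S^\circ$, which holds exactly because $S$ is isotropic. This is the one place the isotropy hypothesis is used, and recognising that the identity we need is the modular law—rather than attempting a brute-force dimension count—is the main point; the rest of the argument is a mechanical chain of the complement identities.

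Assembling the steps gives $W^\circ = (L+S)\cap S^\circ = (L\cap S^\circ)+S = W$, so by Proposition \ref{lagrangian_characterization} the subspace $(L\cap S^\circ)+S$ is Lagrangian, as claimed. If one preferred to avoid invoking the modular law, an alternative would be to verify directly that $W$ is isotropic—both $L\cap S^\circ$ and $S$ are isotropic, and their mutual symplectic pairing vanishes since $S\subseteq S^\circ$—and then compute $\dim W = \tfrac12 \dim V$ using $\dim S + \dim S^\circ = \dim V$ together with the dimension formula for a sum and intersection; but the complement computation above is cleaner and I would present that.
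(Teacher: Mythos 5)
Your proof is correct and follows essentially the same route as the paper's: compute the symplectic complement of $(L\cap S^\circ)+S$ using the sum/intersection identities and conclude via $W=W^\circ$. In fact you are slightly more careful than the paper at the one nontrivial step — the paper silently writes $(L+S)\cap S^\circ=(L\cap S^\circ)+(S\cap S^\circ)$, which is exactly the instance of the Dedekind modular law (valid because $S\subseteq S^\circ$ by isotropy) that you identify and justify explicitly.
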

\begin{proof}
  Recall from Proposition \ref{lagrangian_characterization} that a subspace is
  Lagrangian if and only if it is equal to its complement. The lemma is then
  immediate from the way taking the symplectic complement interacts with sums
  and intersections:
  \begin{multline*}
    ((L\cap S^\circ) +S)^\circ = (L\cap S^\circ)^\circ \cap S^\circ = (L^\circ +
    (S^\circ)^\circ) \cap S^\circ \\
    = (L+S) \cap S^\circ = (L \cap S^\circ)+(S
    \cap S^\circ) = (L\cap S^\circ) +S.
  \end{multline*}
  Since $(L\cap S^\circ) +S$ is equal to its complement, it is Lagrangian.
\end{proof}

The second lemma says that if a subspace of a coisotropic space is Lagrangian,
taking quotients by the complementary isotropic space does not affect this.

\begin{lemma} \label{quotients_of_lagrangians}
  Let $L \subseteq V$ be a Lagrangian subspace of a symplectic vector space $V$,
  and $S \subseteq L$ an isotropic subspace of $V$ contained in $L$. Then $L/S
  \subseteq S^\circ/S$ is Lagrangian in the quotient symplectic space
  $S^\circ/S$.
\end{lemma}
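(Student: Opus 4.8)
The plan is to verify the statement directly from the characterisations of Lagrangian subspaces in Proposition \ref{lagrangian_characterization} together with the dimension identities for the symplectic complement. Before anything else, though, I would check that $L/S$ even makes sense as a subspace of $S^\circ/S$: this requires $S \subseteq L \subseteq S^\circ$. The first inclusion is a hypothesis. For the second, note that since $L$ is Lagrangian we have $L = L^\circ$, so $L$ is isotropic; hence for every $l \in L$ and every $s \in S \subseteq L$ we have $\omega(l,s) = 0$, which says exactly $l \in S^\circ$. Thus $L \subseteq S^\circ$, and since $S$ is isotropic the quotient $S^\circ/S$ carries a well-defined symplectic form $\omega'(v+S, u+S) = \omega(v,u)$ by the preceding proposition, with $L/S$ a genuine linear subspace.

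Next I would establish the two conditions of Proposition \ref{lagrangian_characterization}(v), namely that $L/S$ is isotropic and of half-dimension. Isotropy is immediate: for $l, l' \in L$ we have
\[
  \omega'(l+S, l'+S) = \omega(l,l') = 0,
\]
using that $L$ is isotropic in $V$. So $L/S$ is an isotropic subspace of $S^\circ/S$.

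For the dimension count I would use the identities $\dim S + \dim S^\circ = \dim V$ and $\dim L = \tfrac12 \dim V$ (the latter from Proposition \ref{lagrangian_characterization}(v) applied to $L$ in $V$). These give
\[
  \dim(S^\circ/S) = \dim S^\circ - \dim S = (\dim V - \dim S) - \dim S = \dim V - 2\dim S,
\]
and therefore
\[
  \tfrac12 \dim(S^\circ/S) = \tfrac12 \dim V - \dim S = \dim L - \dim S = \dim(L/S).
\]
Combining isotropy with this half-dimension equality, Proposition \ref{lagrangian_characterization} concludes that $L/S$ is Lagrangian in $S^\circ/S$, as required.

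I do not anticipate a serious obstacle here; the one point deserving care is the well-definedness step, confirming $L \subseteq S^\circ$ so that the quotient $L/S$ lives in the symplectic space $S^\circ/S$ rather than merely in $V/S$. Everything else is a short application of the stated complement identities and the equal-to-complement/half-dimension characterisations, so the proof should be only a few lines. (One could alternatively verify $L/S = (L/S)^\circ$ directly, but the dimension argument is cleaner and reuses the already-tabulated identities.)
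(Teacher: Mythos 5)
Your proof is correct and follows essentially the same route as the paper's: isotropy of $L/S$ is inherited directly from isotropy of $L$, and the Lagrangian property then follows from the half-dimension count using $\dim S + \dim S^\circ = \dim V$ and $\dim L = \tfrac12 \dim V$. Your preliminary check that $L \subseteq S^\circ$ (so that $L/S$ genuinely sits inside $S^\circ/S$) is a worthwhile addition that the paper leaves implicit.
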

\begin{proof}
  As $L$ is isotropic and the symplectic form on $S^\circ/S$ is given by
  $\omega'(v+S,u+S) = \omega(v,u)$, the quotient $L/S$ is immediately isotropic.
  Recall from Proposition \ref{lagrangian_characterization} that an isotropic
  subspace $S$ of a symplectic vector space $V$ is Lagrangian if and only if
  $\dim S = \frac12 \dim V$. Also recall that for any subspace $\dim S + \dim
  S^\circ = \dim V$. Thus
  \begin{multline*}
    \dim(L/S) = \dim L - \dim S = \tfrac12 \dim V - \dim S \\ = \tfrac12(\dim S
    + \dim S^\circ) - \dim S = \tfrac12(\dim S^\circ - \dim S) = \tfrac12
    \dim(S^\circ/S).
  \end{multline*}
  Thus $L/S$ is Lagrangian in $S^\circ/S$.
\end{proof}

Combining these two lemmas gives a proof that the composite of two Lagrangian
relations is again a Lagrangian relation.

\begin{proof}[Proof of Proposition \ref{prop:lagrangian_composition}]
  Let $\Delta$ be the diagonal subspace
  \[
    \Delta = \{(0,v_2,v_2,0) \mid v_2 \in V_2\} \subseteq \overline{V_1} \oplus
    V_2 \oplus \overline{V_2} \oplus V_3.
  \]
  Observe that $\Delta$ is isotropic, and has coisotropic complement
  \[
    \Delta^\circ = \{(v_1,v_2,v_2,v_3) \mid v_i \in V_i\} \subseteq
    \overline{V_1} \oplus V_2 \oplus \overline{V_2} \oplus V_3.
  \]
  As $\Delta$ is the kernel of the restriction of the projection map
  $\overline{V_1} \oplus V_2 \oplus \overline{V_2} \oplus V_3 \to \overline{V_1}
  \oplus V_3$ to $\Delta^\circ$, and after restriction this map is still
  surjective, the quotient space $\Delta^\circ/\Delta$ is isomorphic to
  $\overline{V_1} \oplus V_3$. 

  Now, by definition of composition of relations, 
  \[
    L' \circ L = \{(v_1,v_3) \mid \mbox{there exists } v_2 \in V_2 \mbox{ such
    that } (v_1,v_2) \in L, (v_2,v_3) \in L'\}.
  \]
  But note also that 
  \[
    L \oplus L'  = \{(v_1,v_2,v_2',v_3) \mid (v_1,v_2) \in L, (v_2',v_3) \in
    L'\},
  \]
  so 
  \[
    (L \oplus L')\cap \Delta^\circ = \{(v_1,v_2,v_2,v_3) \mid \mbox{there exists
    } v_2 \in V_2 \mbox{ such that } (v_1,v_2) \in L, (v_2,v_3) \in L'\}.
  \]
  Quotienting by $\Delta$ then gives
  \[
    L' \circ L = ((L \oplus L')\cap \Delta^\circ)+\Delta)/\Delta.
  \]
  As $L' \oplus L$ is Lagrangian in $\overline{V_1} \oplus V_2 \oplus
  \overline{V_2} \oplus V_3$, Lemma \ref{restriction_of_lagrangians} says that
  $(L' \oplus L)\cap \Delta^\circ)+\Delta$ is also Lagrangian in $\overline{V_1}
  \oplus V_2 \oplus \overline{V_2} \oplus V_3$. Lemma
  \ref{quotients_of_lagrangians} thus shows that $L' \circ L$ is Lagrangian in
  $\Delta^\circ/\Delta = \overline{V_1} \oplus V_3$, as required.
\end{proof}

Note that this composition is associative. We shall prove this composition
agrees with composition of Dirichlet forms, and hence also composition of
circuits. 

\subsection{The symmetric monoidal category of Lagrangian relations}

Lagrangian relations solve the identity problems we had with Dirichlet forms:
given a symplectic vector space $V$, the Lagrangian relation $\idn\maps V \to V$
specified by the Lagrangian subspace
\[
  \idn = \{(v,v) \mid v \in V\} \subseteq \overline{V} \oplus V,
\]
acts as an identity for composition of relations. We thus have a category. As
our circuits have finitely many nodes, we choose only the finite dimensional
symplectic vector spaces as objects.

\begin{definition}
  We write $\LagrRel$ for the category with finite dimensional
  symplectic vector spaces as objects and Lagrangian relations as morphisms. 
\end{definition}

We define the tensor product of two objects of $\LagrRel$ to be their
direct sum. Similarly, we define the tensor product of two morphisms $L\maps U
\to V$, $L \subseteq \overline{U}\oplus V$ and $K\maps T \to W$, $K \subseteq
\overline{T} \oplus W$ to be their direct sum
\[
  L \oplus K \subseteq \overline{U}\oplus V \oplus\overline{T} \oplus W,
\]
\emph{but} considered as a subspace of the naturally isomorphic space
$\overline{U \oplus T} \oplus V \oplus W$.  Despite this subtlety, we abuse our
notation and write their tensor product $L \oplus K\maps U \oplus T \to V \oplus
W$, and move on having sounded this note of caution. 

Note that the direct sum of two Lagrangian subspaces is
again Lagrangian in the direct sum of their ambient spaces, and the zero
dimensional vector space $\{0\}$ acts as an identity for direct sum. Indeed,
defining for all objects $U,V,W$ in $\LagrRel$ unitors: 
\begin{align*}
  \lambda_V &= \{(0,v,v)\} \subseteq \overline{\{0\} \oplus V} \oplus V, \\
  \rho_V &= \{(v,0,v)\} \subseteq \overline{V \oplus \{0\}} \oplus V,
\end{align*}
associators:
\[
  \alpha_{U,V,W}= \{(u,v,w,u,v,w)\} \subseteq \overline{(U \oplus V)\oplus W}
  \oplus U \oplus (V \oplus W),
\]
and braidings:
\[
  \s_{U,V} = \{(u,v,v,u) \mid u \in U, v \in V\} \subseteq \overline{U \oplus V}
  \oplus V \oplus U,
\]
we have a symmetric monoidal category.  Note that all these structure
maps come from symplectomorphisms between the domain and codomain. From this
viewpoint it is immediate that all the necessary diagrams commute, so we
have a symmetric monoidal category. 

In fact the move to the setting of Lagrangian relations, rather than Dirichlet
forms, adds far richer structure than just identity morphisms. In the next
section we endow $\LagrRel$ with a hypergraph structure.

\section{Ideal wires and corelations} \label{sec:corel}
We want to give a decorated corelations construction of $\LagrRel$ for two main
reasons: first, it endows $\LagrRel$ with a hypergraph structure, and second, it
allows us to use decorated corelations to construct functors to it.  

The key takeaway of this section is that examining the graphical elements that
facilitate composition---here ideal wires---shows us how to define hypergraph
structure and hence obtain a decorated corelations construction. More broadly,
to turn a semantic function into a semantic \emph{functor}, it's enough to look
at the Frobenius maps.

In this section we shall see that these ideal wires are modelled by epi-mono
corelations in $\FinSet$, a generalization of the notion of function that
forgets the directionality from the domain to the codomain. We then observe
that Kirchhoff's laws follow directly from interpreting these structures in the
category of linear relations.

A more powerful route to the same conclusion might be to directly define a
hypergraph structure on $\mathrm{LagrRel}$ and then use
Theorem~\ref{thm.hypdeccorcats} to obtain a decorated corelations construction;
we nonetheless favour the following as it sheds insight into remarkable links
between the corelations and ideal wires, the contravariant `free' vector space
functor and Kirchhoff's voltage law, and the covariant free vector space functor
and Kirchhoff's current law. 

\subsection{Ideal wires as corelations}

The term corelation in this section exclusively refers to epi-mono corelations
in $\FinSet$. As such, we write this hypergraph category $\mathrm{Corel}$. 

To motivate the use of this category, let us start with a set of input
terminals $X$, and a set of output terminals $Y$.  We may connect these
terminals with ideal wires of zero impedance, whichever way we like---input to
input, output to output, input to output---producing something like:
\[
  \begin{tikzpicture}[circuit ee IEC]
	\begin{pgfonlayer}{nodelayer}
		\node [contact] (0) at (-2, 1) {};
		\node [contact] (1) at (-2, 0.5) {};
		\node [contact] (2) at (-2, -0) {};
		\node [contact] (3) at (-2, -0.5) {};
		\node [contact] (4) at (-2, -1) {};
		\node [contact] (5) at (1, 0.75) {};
		\node [contact] (6) at (1, 0.25) {};
		\node [contact] (7) at (1, -0.25) {};
		\node [contact] (8) at (1, -0.75) {};
		\node [style=none] (9) at (-2.75, -0) {$X$};
		\node [style=none] (10) at (1.75, -0) {$Y$};
	\end{pgfonlayer}
	\begin{pgfonlayer}{edgelayer}
	  \draw [thick] (0.center) to (5.center);
		\draw [thick] (5.center) to (1.center);
		\draw [thick] (6.center) to (1.center);
		\draw [thick] (3.center) to (2.center);
		\draw [thick] (4.center) to (8.center);
		\draw [thick] (5.center) to (6.center);
		\draw [thick] (6.center) to (0.center);
	\end{pgfonlayer}
\end{tikzpicture}
\]
In doing so, we introduce a notion of equivalence on our terminals, where two 
terminals are equivalent if we, or if electrons, can traverse from one to 
another via some sequence of wires.   Because of this, we consider our 
perfectly-conducting components to be equivalence relations on $X+Y$,
transforming the above picture into
\[
  \begin{tikzpicture}[circuit ee IEC]
	\begin{pgfonlayer}{nodelayer}
		\node [contact, outer sep=5pt] (0) at (-2, 1) {};
		\node [contact, outer sep=5pt] (1) at (-2, 0.5) {};
		\node [contact, outer sep=5pt] (2) at (-2, -0) {};
		\node [contact, outer sep=5pt] (3) at (-2, -0.5) {};
		\node [contact, outer sep=5pt] (4) at (-2, -1) {};
		\node [contact, outer sep=5pt] (5) at (1, 0.75) {};
		\node [contact, outer sep=5pt] (6) at (1, 0.25) {};
		\node [contact, outer sep=5pt] (7) at (1, -0.25) {};
		\node [contact, outer sep=5pt] (8) at (1, -0.75) {};
		\node [style=none] (9) at (-2.75, -0) {$X$};
		\node [style=none] (10) at (1.75, -0) {$Y$};
		\node [style=none] (11) at (-0.5, 0.625) {};
		\node [style=none] (12) at (-0.5, -0.25) {};
		\node [style=none] (13) at (-0.5, -0.875) {};
	\end{pgfonlayer}
	\begin{pgfonlayer}{edgelayer}
		\draw [color=gray] (0.center) to (11.center);
		\draw [color=gray] (1.center) to (11.center);
		\draw [color=gray] (5.center) to (11.center);
		\draw [color=gray] (6.center) to (11.center);
		\draw [color=gray] (2.center) to (12.center);
		\draw [color=gray] (12.center) to (3.center);
		\draw [color=gray] (4.center) to (13.center);
		\draw [color=gray] (13.center) to (8.center);
		\draw [rounded corners=5pt, dotted] 
   (node cs:name=0, anchor=north west) --
   (node cs:name=1, anchor=south west) --
   (node cs:name=6, anchor=south east) --
   (node cs:name=5, anchor=north east) --
   cycle;
		\draw [rounded corners=5pt, dotted] 
   (node cs:name=2, anchor=north west) --
   (node cs:name=3, anchor=south west) --
   (node cs:name=3, anchor=south east) --
   (node cs:name=2, anchor=north east) --
   cycle;
		\draw [rounded corners=5pt, dotted] 
   (node cs:name=4, anchor=north west) --
   (node cs:name=4, anchor=south west) --
   (node cs:name=8, anchor=south east) --
   (node cs:name=8, anchor=north east) --
   cycle;
		\draw [rounded corners=5pt, dotted] 
   (node cs:name=7, anchor=north west) --
   (node cs:name=7, anchor=south west) --
   (node cs:name=7, anchor=south east) --
   (node cs:name=7, anchor=north east) --
   cycle;
	\end{pgfonlayer}
\end{tikzpicture}
\]
The dotted lines indicate equivalence classes of points, while for reference the
grey lines indicate ideal wires connecting these points, running through a
central hub.

Given another circuit of this sort, say from sets $Y$ to $Z$,
\[
\begin{tikzpicture}[circuit ee IEC]
	\begin{pgfonlayer}{nodelayer}
		\node [style=none] (0) at (-2.75, -0) {$Y$};
		\node [style=none] (1) at (1.75, 0) {$Z$};
		\node [contact, outer sep=5pt] (2) at (-2, 0.75) {};
		\node [contact, outer sep=5pt] (3) at (-2, 0.25) {};
		\node [contact, outer sep=5pt] (4) at (-2, -0.25) {};
		\node [contact, outer sep=5pt] (5) at (-2, -0.75) {};
		\node [contact, outer sep=5pt] (6) at (1, 1) {};
		\node [contact, outer sep=5pt] (7) at (1, 0.5) {};
		\node [contact, outer sep=5pt] (8) at (1, -0) {};
		\node [contact, outer sep=5pt] (9) at (1, -0.5) {};
		\node [contact, outer sep=5pt] (10) at (1, -1) {};
		\node [style=none] (11) at (-0.5, 0.75) {};
		\node [style=none] (12) at (-0.5, -0.25) {};
		\node [style=none] (13) at (-0.5, -0.875) {};
	\end{pgfonlayer}
	\begin{pgfonlayer}{edgelayer}
	  \draw [color=gray] (2.center) to (11.center);
		\draw [color=gray] (11.center) to (6.center);
		\draw [color=gray] (3.center) to (11.center);
		\draw [color=gray] (8.center) to (12.center);
		\draw [color=gray] (12.center) to (4.center);
		\draw [color=gray] (9.center) to (12.center);
		\draw [color=gray] (5.center) to (13.center);
		\draw [color=gray] (13.center) to (10.center);
	\end{pgfonlayer}
		\draw [rounded corners=5pt, dotted] 
   (node cs:name=2, anchor=north west) --
   (node cs:name=3, anchor=south west) --
   (node cs:name=6, anchor=south east) --
   (node cs:name=6, anchor=north east) --
   cycle;
		\draw [rounded corners=5pt, dotted] 
   (node cs:name=4, anchor=north west) --
   (node cs:name=4, anchor=south west) --
   (node cs:name=9, anchor=south east) --
   (node cs:name=8, anchor=north east) --
   cycle;
		\draw [rounded corners=5pt, dotted] 
   (node cs:name=5, anchor=north west) --
   (node cs:name=5, anchor=south west) --
   (node cs:name=10, anchor=south east) --
   (node cs:name=10, anchor=north east) --
   cycle;
		\draw [rounded corners=5pt, dotted] 
   (node cs:name=7, anchor=north west) --
   (node cs:name=7, anchor=south west) --
   (node cs:name=7, anchor=south east) --
   (node cs:name=7, anchor=north east) --
   cycle;
\end{tikzpicture}
\]
we may combine these circuits in to a circuit $X$ to $Z$
\[
  \begin{aligned}
\begin{tikzpicture}[circuit ee IEC]
	\begin{pgfonlayer}{nodelayer}
		\node [contact, outer sep=5pt] (0) at (1, 0.75) {};
		\node [contact, outer sep=5pt] (1) at (1, 0.25) {};
		\node [contact, outer sep=5pt] (2) at (1, -0.25) {};
		\node [contact, outer sep=5pt] (3) at (1, -0.75) {};
		\node [style=none] (4) at (-2.75, -0) {$X$};
		\node [style=none] (5) at (4.75, -0) {$Z$};
		\node [contact, outer sep=5pt] (6) at (-2, 1) {};
		\node [contact, outer sep=5pt] (7) at (-2, -0.5) {};
		\node [contact, outer sep=5pt] (8) at (-2, 0.5) {};
		\node [contact, outer sep=5pt] (9) at (-2, -0) {};
		\node [contact, outer sep=5pt] (10) at (-2, -1) {};
		\node [contact, outer sep=5pt] (11) at (4, -0) {};
		\node [contact, outer sep=5pt] (12) at (4, -1) {};
		\node [contact, outer sep=5pt] (13) at (4, -0.5) {};
		\node [contact, outer sep=5pt] (14) at (4, 0.5) {};
		\node [style=none] (15) at (-0.5, 0.625) {};
		\node [style=none] (16) at (-0.5, -0.25) {};
		\node [style=none] (17) at (-0.5, -0.875) {};
		\node [style=none] (18) at (2.5, -0.875) {};
		\node [contact, outer sep=5pt] (19) at (4, 1) {};
		\node [style=none] (20) at (1, -1.25) {$Y$};
		\node [style=none] (21) at (2.5, 0.75) {};
		\node [style=none] (22) at (2.5, -0.25) {};
	\end{pgfonlayer}
	\begin{pgfonlayer}{edgelayer}
		\draw [color=gray] (6.center) to (15.center);
		\draw [color=gray] (8.center) to (15.center);
		\draw [color=gray] (0.center) to (15.center);
		\draw [color=gray] (1.center) to (15.center);
		\draw [color=gray] (9.center) to (16.center);
		\draw [color=gray] (7.center) to (16.center);
		\draw [color=gray] (10.center) to (17.center);
		\draw [color=gray] (17.center) to (3.center);
		\draw [color=gray] (3.center) to (18.center);
		\draw [color=gray] (18.center) to (12.center);
		\draw [color=gray] (0.center) to (21.center);
		\draw [color=gray] (1.center) to (21.center);
		\draw [color=gray] (21.center) to (19.center);
		\draw [color=gray] (2.center) to (22.center);
		\draw [color=gray] (22.center) to (11.center);
		\draw [color=gray] (22.center) to (13.center);
		\draw [rounded corners=5pt, dotted] 
   (node cs:name=6, anchor=north west) --
   (node cs:name=8, anchor=south west) --
   (node cs:name=1, anchor=south east) --
   (node cs:name=0, anchor=north east) --
   cycle;
		\draw [rounded corners=5pt, dotted] 
   (node cs:name=9, anchor=north west) --
   (node cs:name=7, anchor=south west) --
   (node cs:name=7, anchor=south east) --
   (node cs:name=9, anchor=north east) --
   cycle;
		\draw [rounded corners=5pt, dotted] 
   (node cs:name=10, anchor=north west) --
   (node cs:name=10, anchor=south west) --
   (node cs:name=3, anchor=south east) --
   (node cs:name=3, anchor=north east) --
   cycle;
		\draw [rounded corners=5pt, dotted] 
   (node cs:name=2, anchor=north west) --
   (node cs:name=2, anchor=south west) --
   (node cs:name=2, anchor=south east) --
   (node cs:name=2, anchor=north east) --
   cycle;
		\draw [rounded corners=5pt, dotted] 
   (node cs:name=0, anchor=north west) --
   (node cs:name=1, anchor=south west) --
   (node cs:name=19, anchor=south east) --
   (node cs:name=19, anchor=north east) --
   cycle;
		\draw [rounded corners=5pt, dotted] 
   (node cs:name=2, anchor=north west) --
   (node cs:name=2, anchor=south west) --
   (node cs:name=13, anchor=south east) --
   (node cs:name=11, anchor=north east) --
   cycle;
		\draw [rounded corners=5pt, dotted] 
   (node cs:name=3, anchor=north west) --
   (node cs:name=3, anchor=south west) --
   (node cs:name=12, anchor=south east) --
   (node cs:name=12, anchor=north east) --
   cycle;
		\draw [rounded corners=5pt, dotted] 
   (node cs:name=14, anchor=north west) --
   (node cs:name=14, anchor=south west) --
   (node cs:name=14, anchor=south east) --
   (node cs:name=14, anchor=north east) --
   cycle;
	\end{pgfonlayer}
\end{tikzpicture}
\end{aligned}
\:
  =
\:
\begin{aligned}
\begin{tikzpicture}[circuit ee IEC]
	\begin{pgfonlayer}{nodelayer}
		\node [style=none] (0) at (-2.75, -0) {$X$};
		\node [style=none] (1) at (1.75, -0) {$Z$};
		\node [contact, outer sep=5pt] (2) at (-2, 1) {};
		\node [contact, outer sep=5pt] (3) at (-2, -0.5) {};
		\node [contact, outer sep=5pt] (4) at (-2, 0.5) {};
		\node [contact, outer sep=5pt] (5) at (-2, -0) {};
		\node [contact, outer sep=5pt] (6) at (-2, -1) {};
		\node [contact, outer sep=5pt] (7) at (1, -0) {};
		\node [contact, outer sep=5pt] (8) at (1, -1) {};
		\node [contact, outer sep=5pt] (9) at (1, -0.5) {};
		\node [contact, outer sep=5pt] (10) at (1, 0.5) {};
		\node [style=none] (11) at (-0.5, 0.875) {};
		\node [style=none] (12) at (-1, -0.25) {};
		\node [contact, outer sep=5pt] (13) at (1, 1) {};
		\node [style=none] (14) at (0, -0.25) {};
	\end{pgfonlayer}
	\begin{pgfonlayer}{edgelayer}
		\draw [color=gray] (2.center) to (11.center);
		\draw [color=gray] (4.center) to (11.center);
		\draw [color=gray] (5.center) to (12.center);
		\draw [color=gray] (3.center) to (12.center);
		\draw [color=gray] (14.center) to (7.center);
		\draw [color=gray] (14.center) to (9.center);
		\draw [color=gray] (6.center) to (8.center);
		\draw [color=gray] (11.center) to (13.center);
		\draw [rounded corners=5pt, dotted] 
   (node cs:name=2, anchor=north west) --
   (node cs:name=4, anchor=south west) --
   (node cs:name=13, anchor=south east) --
   (node cs:name=13, anchor=north east) --
   cycle;
		\draw [rounded corners=5pt, dotted] 
   (node cs:name=5, anchor=north west) --
   (node cs:name=3, anchor=south west) --
   (node cs:name=3, anchor=south east) --
   (node cs:name=5, anchor=north east) --
   cycle;
		\draw [rounded corners=5pt, dotted] 
   (node cs:name=6, anchor=north west) --
   (node cs:name=6, anchor=south west) --
   (node cs:name=8, anchor=south east) --
   (node cs:name=8, anchor=north east) --
   cycle;
		\draw [rounded corners=5pt, dotted] 
   (node cs:name=10, anchor=north west) --
   (node cs:name=10, anchor=south west) --
   (node cs:name=10, anchor=south east) --
   (node cs:name=10, anchor=north east) --
   cycle;
		\draw [rounded corners=5pt, dotted] 
   (node cs:name=7, anchor=north west) --
   (node cs:name=9, anchor=south west) --
   (node cs:name=9, anchor=south east) --
   (node cs:name=7, anchor=north east) --
   cycle;
	\end{pgfonlayer}
\end{tikzpicture}
\end{aligned}
\]
by taking the transitive closure of the two equivalence relations, and then
restricting this to an equivalence relation on $X+Z$. 

In the category of sets we hold the fundamental relationship between sets to be
that of functions. These encode the idea of a deterministic process that takes
each element of one set to a unique element of the other. For the study of
networks this is less appropriate, as the relationship between terminals is not
an input-output one, but rather one of interconnection. Willems has repeatedly
emphasised the prevalence of input-output thinking as a limitation of current
techniques in control theory \cite{Wi,Wi2}.  

In particular, the direction of a function becomes irrelevant, and to describe
these interconnections via the category of sets we must develop an understanding
of how to compose functions head to head and tail to tail. We have so far used
cospans and pushouts to address this.  Cospans, however, come with an apex, which
represents extraneous structure beyond the two sets we wish to specify a
relationship between. Corelations, in line with our `black boxing' intution for
them, arise from omitting this information.

\subsection{Potentials on corelations} \label{ssec:potentialsoncorelations}

Chasing our interpretation of corelations as ideal wires, our aim now is to
build a functor
\[
  S\maps \mathrm{Corel} \longrightarrow \LagrRel
\]
that expresses this interpretation. We break this functor down into the sum 
of two parts, according to the behaviours of potentials and currents
respectively. 

The consideration of potentials gives a functor $\Phi\maps \mathrm{Corel}
\to \mathrm{LinRel}$, where $\mathrm{LinRel}$ is the symmetric
monoidal dagger category of finite-dimensional $\F$-vector spaces, linear
relations, direct sum, and transpose. In particular, this functor expresses
Kirchhoff's voltage law: it requires that if two elements are in the same part
of the corelation partition---that is, if two nodes are connected by ideal
wires---then the potential at those two points must be the same.

To construct this functor, we recall Subsection \ref{ssec.linrel}, where we
observed that $\LinRel$ is the category of jointly-epic corelations in
$\FinVect^\opp$, and then appeal to our results on functors between corelation
categories.

\begin{proposition}
  Define the functor 
  \[ 
    \Phi\maps \mathrm{Corel} \longrightarrow \mathrm{LinRel}, 
  \] 
  on objects by sending a finite set $X$ to the vector space $\F^X$, and on
  morphisms by sending a corelation $e\maps X \to Y$ to the linear subspace
  $\Phi(e)$ of $\F^X \oplus \F^Y$ comprising functions $\phi\maps X+Y \to \F$
  that are constant on each element of the equivalence class induced by
  $e$.  This is a strong symmetric monoidal functor, with coherence maps
  the usual natural isomorphisms $\F^X \oplus \F^Y \cong \F^{X + Y}$ and
  $\{0\} \cong \F^\varnothing$. 
\end{proposition}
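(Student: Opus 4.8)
The plan is to recognize this proposition as a direct application of the machinery developed in Chapter \ref{ch.corelations}, specifically Proposition \ref{prop.corelfunctors}, which constructs hypergraph functors between corelation categories from colimit-preserving functors between the underlying categories. First I would set the stage by recalling the identification established in \textsection\ref{ssec.linrel}: the category $\mathrm{LinRel}$ of finite-dimensional $\F$-vector spaces and linear relations is isomorphic to $\corel(\FinVect^\opp)$ with respect to the epi-mono factorisation system in $\FinVect^\opp$ (equivalently, jointly-monic spans in $\FinVect$). Under this identification, a linear relation $L \subseteq \F^X \oplus \F^Y$ corresponds to a jointly-epic cospan in $\FinVect^\opp$, i.e. a jointly-monic span $\F^X \leftarrow L \rightarrow \F^Y$ in $\FinVect$.

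The heart of the argument is to produce a colimit-preserving functor $A\maps \FinSet \to \FinVect^\opp$ whose image lies appropriately within the chosen factorisation classes, and then invoke Proposition \ref{prop.corelfunctors}. The natural candidate is the contravariant `free vector space' functor $\F^{(-)}\maps \FinSet^\opp \to \FinVect$, sending a finite set $X$ to $\F^X$ and a function $f\maps X \to Y$ to the precomposition map $f^\ast\maps \F^Y \to \F^X$. Viewed as a functor $\FinSet \to \FinVect^\opp$, this sends coproducts in $\FinSet$ to coproducts in $\FinVect^\opp$ (which are products in $\FinVect$, namely direct sums $\F^X \oplus \F^Y \cong \F^{X+Y}$), so it preserves finite colimits. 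I would then check that $A$ carries the $\mc M$-class of the epi-mono factorisation on $\FinSet$ (the injections) into the corresponding $\mc M'$-class on $\FinVect^\opp$: an injection $f\maps X \hookrightarrow Y$ induces a surjection $f^\ast\maps \F^Y \to \F^X$, which is an epimorphism in $\FinVect$ and hence a monomorphism in $\FinVect^\opp$, as required. With these hypotheses verified, Proposition \ref{prop.corelfunctors} immediately yields a hypergraph functor $\corel(\FinSet) \to \corel(\FinVect^\opp) \cong \mathrm{LinRel}$.

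It then remains to confirm that this functor agrees on the nose with the concrete description of $\Phi$ given in the statement. Tracing through the construction: a corelation $e\maps X \to Y$ in $\FinSet$ is a jointly-epic cospan $X \to N \leftarrow Y$, i.e. a surjection $e\maps X+Y \to N$ encoding an equivalence relation on $X+Y$. Applying $A = \F^{(-)}$ produces the linear map $e^\ast\maps \F^N \to \F^{X+Y}$, whose image, after taking the $\mc E'$-part, is precisely the subspace of functions $X+Y \to \F$ that factor through $e$---that is, those constant on each equivalence class of $e$. This is exactly $\Phi(e) \subseteq \F^X \oplus \F^Y$, so the two constructions coincide. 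The coherence maps supplied by Proposition \ref{prop.corelfunctors} are the images of the coherence isomorphisms, which unwind to the stated natural isomorphisms $\F^X \oplus \F^Y \cong \F^{X+Y}$ and $\{0\} \cong \F^\varnothing$; since $A$ is colimit-preserving these are isomorphisms, making $\Phi$ strong monoidal.

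The main obstacle, and the step deserving the most care, is verifying that the abstract $\mc E'$-part produced by the factorisation system really computes the `constant on equivalence classes' subspace, and that the isomorphism $\corel(\FinVect^\opp) \cong \mathrm{LinRel}$ from \textsection\ref{ssec.linrel} matches the kernel/image dictionary appropriately; one must be careful that the direction of the duality does not introduce a spurious transpose. Everything else---functoriality, symmetric monoidality, and the coherence axioms---follows formally from Proposition \ref{prop.corelfunctors} once $A$ is shown to be colimit-preserving and $\mc M$-compatible, so I would not grind through those verifications, merely citing that the required properties are inherited from the general theorem.
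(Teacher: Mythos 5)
Your proposal follows essentially the same route as the paper's own proof: define the contravariant functor $X \mapsto \F^X$, $f \mapsto f^\ast$, view it as a covariant functor $\FinSet \to \FinVect^\opp$, check colimit preservation and that injections land in the monomorphisms of $\FinVect^\opp$, and then invoke the general corelation-functor machinery together with the identification $\LinRel \cong \corel(\FinVect^\opp)$.

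One step is stated too quickly: you write that the functor sends coproducts to coproducts ``so it preserves finite colimits.'' Coproduct preservation alone does not give preservation of all finite colimits; one must also check coequalisers (or pushouts). The paper does this explicitly, verifying that the coequaliser $C = Y/\big(f(x)\sim g(x)\big)$ is sent to the equaliser $\ker(f_\ast - g_\ast) \subseteq \F^Y$, which is the image of $c_\ast\maps \F^C \to \F^Y$. This is routine but not automatic, and should be included for the argument to be complete. The remainder of your tracing-through --- that the $\mc E'$-part of $e^\ast\maps \F^N \to \F^{X+Y}$ is the subspace of functions constant on equivalence classes, and that the coherence maps unwind to the stated isomorphisms --- matches the paper.
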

\begin{proof}
This functor $\Phi$ extends the contravariant functor $\FinSet \to \FinVect$
that maps a set to the vector space of $\F$-valued functions on that set.
Indeed, define the contravariant functor $\Psi\maps \FinSet \to \FinVect$
mapping a finite set $X$ to the vector space $\F^X$, and the function $f\maps X
\to Y$ to the linear map $f_\ast\maps \F^Y \to \F^X$ sending $\phi\maps Y \to \F$
to $\phi \circ f\maps X \to \F$.

The functor $\Psi$ maps finite colimits to finite limits. To show this, we need only
check it maps coproducts to products and coequalisers to equalisers. The former
follows from the same well-known natural isomorphisms as the coherence maps:
$\F^X \oplus \F^Y \cong \F^{X\times Y}$. To see the latter, write $C = Y/\big(f(x)
\sim g(x)\big)$ for the coequaliser of the diagram 
\[
\xymatrix{
  X \ar@<.75ex>[r]^{f}
  \ar@<-.75ex>[r]_{g}
&
Y.}
\]
This is the quotient of the set $Y$ by the equivalence relation induced by $f(x)
\sim g(x)$ for all $x \in X$; write $c\maps Y \to C$ for the quotient map.  Now
the equaliser of the image of this diagram under $\Psi$,
\[
\xymatrix{
  F^Y \ar@<.75ex>[r]^{f_\ast}
  \ar@<-.75ex>[r]_{g_\ast}
&
F^X.}
\]
is the kernel of $f_\ast-g_\ast$. This kernel comprises those $v \in \F^Y$ such
that $v(f(x)) = v(g(x))$ for all $x \in X$. But this is precisely the image of
$c_\ast\maps \F^C \to \F^Y$. Thus $\Psi$ sends coequalisers to equalisers, and
hence finite colimits to finite limits.

Note also that the image of an injective function in $\FinSet$ is a
surjective linear transformation $\F^Y \to \F^X$, and hence a monomorphism in
$\FinVect^\opp$. Thus, considering $\Psi$ as a covariant functor $\FinSet \to
\FinVect^\opp$, we have a functor that preserves finite colimits and such that
the image of the monomorphisms of $\FinSet$ is contained in the monomorphisms of
$\FinVect^\opp$.

Recall that $\FinVect$ has an epi-mono factorisation system, and that $\LinRel$
is the category of relations with respect to it; equivalently, this means that
$\FinVect^\opp$ has an epi-mono factorisation system, and $\LinRel$ is the its
category of corelations (Subsection \ref{ssec.linrel}). Thus $\Psi$ extends to a
functor 
\[
  \Phi\maps \corel(\FinSet) = \mathrm{Corel} \longrightarrow
  \corel(\FinVect^\opp) \cong \LinRel.
\]
This functor $\Phi$ maps a corelation $e= [i,o]\maps X+Y \to N$ to the linear
relation $e_\ast(\F^N) \subset \F^X \oplus \F^Y$ comprising functions $X+Y \to
\F$ that are constant on each equivalence class of elements of $X+Y$.
\end{proof}

To recap, we have now constructed a functor $\Phi\maps \mathrm{Corel} \to
\mathrm{LinRel}$ expressing the behaviour of potentials on corelations interpreted
as ideal wires. Next, we do the same for currents.

\subsection{Currents on corelations}

In this subsection we consider the case of currents, described by a functor
$I\maps \mathrm{Corel} \to \mathrm{LinRel}$. This functor expresses Kirchhoff's
current law: it requires that the sum of currents flowing into each part of the
corelation partition must equal to the sum of currents flowing out.  
\begin{proposition}
  Define the functor
  \[
    I\maps \mathrm{Corel} \longrightarrow \mathrm{LinRel}.
  \]
  as follows. On objects send a finite set $X$ to the vector space
  $(\F^{X})^\ast=\F[X]$, the free vector space on $X$. On morphisms send a
  corelation $e=[i,o]\maps X +Y \to N$ to the linear relation $I(e)$ comprising precisely
  those 
  \[
    (i_X,i_Y) = \left(\sum_{x \in X} \lambda_xdx,\sum_{y \in Y}
    \lambda_ydy\right)  \in (\F^{X})^\ast \oplus (\F^{Y})^\ast
  \]
  such that for all $n \in N$ the sum of the coefficients of the elements
  of $i^{-1}(n) \subseteq X$ is equal to that for $o^{-1}(n) \subseteq Y$:
  \[
    \sum_{i(x)=n} \lambda_x = \sum_{o(y)=n} \lambda_y.
  \]
  This is a strong symmetric monoidal functor, with coherence maps the natural
  isomorphisms $(\F^X)^\ast \oplus (\F^Y)^\ast \to (\F^{X+Y})^\ast$ and $\{0\}
  \to (\F^\varnothing)^\ast$.
\end{proposition}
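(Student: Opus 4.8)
The plan is to mirror the construction of the potentials functor $\Phi$ from the previous proposition, but using the \emph{covariant} free vector space functor in place of the contravariant dual. Concretely, I would define
\[
  A\maps \FinSet \longrightarrow \FinVect, \qquad X \longmapsto \F[X] = (\F^X)^\ast,
\]
sending a function $f\maps X \to Y$ to the linear map $\F[f]\maps \F[X] \to \F[Y]$ that takes the basis vector $x$ to the basis vector $f(x)$. Under the identification of $(\F^X)^\ast$ with $\F[X]$ via the dual basis, this reproduces the action on objects demanded in the statement. The goal is then to check that $A$ satisfies the hypotheses of Proposition \ref{prop.corelfunctors}, so that $A$ extends automatically to a hypergraph functor between corelation categories, which is in particular strong symmetric monoidal.

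First I would verify these hypotheses. The functor $A$ is left adjoint to the forgetful functor $\FinVect \to \FinSet$, hence preserves finite colimits; in particular it sends the coproduct $X+Y$ to the direct sum $\F[X]\oplus\F[Y]$, and these isomorphisms are the claimed coherence maps. Moreover $A$ sends an injection of finite sets to an injective linear map, since the images of a linearly independent basis remain linearly independent; that is, $A(\mathrm{Inj}_{\FinSet}) \subseteq \mathrm{Inj}_{\FinVect}$. Recall that $(\mathrm{Sur},\mathrm{Inj})$ is a factorisation system on $\FinSet$ with $\mathrm{Inj}$ stable under pushout (Examples \ref{ex.corelcats}), and that $\FinVect$ carries an epi--mono factorisation system with monos stable under pushout (\textsection\ref{ssec.linrel}). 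Proposition \ref{prop.corelfunctors} therefore yields a hypergraph functor
\[
  \square\maps \corel(\FinSet) = \mathrm{Corel} \longrightarrow \corel(\FinVect) \cong \mathrm{LinRel},
\]
where the isomorphism $\corel(\FinVect) \cong \mathrm{LinRel}$ is the one established in \textsection\ref{ssec.linrel}.

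It then remains to identify this induced functor with the concrete $I$ of the statement. A corelation $e=[i,o]\maps X+Y \to N$ maps under $A$ to the cospan $\F[X]\oplus\F[Y] \xrightarrow{[\F[i],\F[o]]} \F[N]$, and by the kernel description of $\corel(\FinVect)\cong\mathrm{LinRel}$ from \textsection\ref{ssec.linrel} its image is the linear relation $\ker[\F[i]\;{-}\F[o]] = \{(i_X,i_Y) \mid \F[i](i_X) = \F[o](i_Y)\}$. Writing $i_X = \sum_x \lambda_x\, x$ we have $\F[i](i_X) = \sum_n \big(\sum_{i(x)=n}\lambda_x\big)\,n$, and similarly for $o$, so the condition $\F[i](i_X)=\F[o](i_Y)$ reads $\sum_{i(x)=n}\lambda_x = \sum_{o(y)=n}\lambda_y$ for every $n \in N$ — exactly Kirchhoff's current law as phrased in the statement. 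Matching coherence maps then completes the identification.

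The main obstacle will be keeping the variance straight: the potentials functor $\Phi$ arises from the contravariant functor $\F^{(-)}$, viewed as covariant into $\FinVect^{\opp}$, whereas the currents functor $I$ uses the genuinely covariant free functor into $\FinVect$ itself. One must therefore be careful that the factorisation system used for the codomain lives in $\FinVect$ rather than $\FinVect^{\opp}$, and that the kernel construction of \textsection\ref{ssec.linrel} is applied with the correct orientation. Everything else — colimit preservation, monos to monos, and the final coefficient-counting computation recovering current conservation at each junction — is routine once the setup is fixed.
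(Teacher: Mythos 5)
Your proposal is correct and follows essentially the same route as the paper: define the covariant free vector space functor $\FinSet \to \FinVect$, observe that as a left adjoint it preserves finite colimits and sends injections to monomorphisms, invoke Proposition \ref{prop.corelfunctors} to extend it to a hypergraph (hence strong symmetric monoidal) functor $\mathrm{Corel} \to \corel(\FinVect) \cong \mathrm{LinRel}$, and identify the image of a corelation $[i,o]$ with the kernel of the induced map, which unwinds to the stated coefficient condition. The only difference is cosmetic: you spell out the final coefficient computation slightly more explicitly than the paper does.
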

\begin{proof}
  The functor $I$ is a generalization of the covariant functor $\FinSet \to
  \FinVect$ that maps a set to the vector space of $\F$-linear combinations of
  elements of that set. We proceed in a similar fashion to the functor for
  potentials. Define the functor $J\maps \FinSet \to \FinVect$ mapping a finite
  set $X$ to the vector space $(\F^{X})^\ast$, the sum of formal linear
  combinations of the symbols $dx$, where $x$ is an element of $X$. Given a
  function $f\maps X \to Y$, $J$ maps $f$ to the linear map $f^\ast\maps
  (\F^X)^\ast \to (\F^Y)^\ast$ taking $\sum_{x \in X} \lambda_x dx \in
  (\F^X)^\ast$ to $\sum_{x \in X} \lambda_xd(fx)$.  

  This is the restriction to $\FinSet$ of the free vector space functor $\Set
  \to \Vect$. As the free vector space functor is left adjoint to the forgetful
  functor, it preserves finite colimits. Hence $\FinSet$ also preserves finite
  colimits. Moreover, $J$ maps monomorphisms in $\FinSet$ to monomorphisms in
  $\FinVect$. By Proposition \ref{prop.corelfunctors}, $J$ thus extends to a functor
  \[
    I\maps \corel(\FinSet) = \mathrm{Corel} \longrightarrow \corel(\FinVect) =
    \mathrm{LinRel}.
  \]
  This functor maps a corelation $e=[i,o]\maps X+Y \to N$ to the kernel
  $\ker(Ie)$ of the linear map $Ie\maps (\F^X)^\ast \oplus (\F^Y)^\ast \to
  (\F^N)^\ast$. This comprises all linear combinations $\left(\sum_{x \in X}
  \lambda_xdx,\sum_{y \in Y} \lambda_ydy\right)$ such that $\sum_{x \in X}
  \lambda_x d(ix) =  \sum_{y \in Y} \lambda_y d(oy)$. This is what we wanted to
  show.
\end{proof}

\subsection{The symplectification functor}

We have now defined functors that, when interpreting corelations as connections
of ideal wires, describe the behaviours of the currents and potentials at the
terminals of these wires. In this section, we combine these to define a single
functor $S\maps \mathrm{Corel} \to \LagrRel$ describing the behaviour of both
currents and potentials as a Lagrangian subspace.

\begin{proposition} \label{prop:sympfunctor}
  We define the symplectification functor
  \[
    S\maps \mathrm{Corel} \longrightarrow \LagrRel
  \]
  sending a finite set $X$ to the symplectic vector space
  \[
    SX = \F^X \oplus (\F^X)^\ast,
  \]
  and a corelation $e=[i,o]\maps X+Y \to N$ to the Lagrangian relation
  \[
    Se = \Phi e \oplus Ie \subseteq \overline{\F^X \oplus
    (\F^X)^\ast}\oplus \F^Y \oplus (\F^Y)^\ast.
  \]
  Then $S$ is a strong symmetric monoidal functor, with coherence maps those of
  $\Phi$ and $I$.
\end{proposition}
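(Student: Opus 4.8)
The plan is to build $S$ out of the two functors $\Phi$ and $I$ already in hand, treating the symplectification as their interleaved direct sum, and to isolate the one genuinely new fact---that each $Se$ is Lagrangian---as the heart of the argument. Concretely, after reassociating summands, a morphism $Se = \Phi e \oplus Ie$ is a linear relation $SX \to SY$, where the reshuffle $(\F^X \oplus \F^Y) \oplus ((\F^X)^\ast \oplus (\F^Y)^\ast) \cong (\F^X \oplus (\F^X)^\ast) \oplus (\F^Y \oplus (\F^Y)^\ast)$ is a fixed natural isomorphism. Since relational composition and direct sum satisfy $(R' \oplus S') \circ (R \oplus S) = (R' \circ R) \oplus (S' \circ S)$, and since $\Phi$ and $I$ are functors preserving composition, identities, and the monoidal structure (both established above), the assignment $S = \Phi \oplus I$ is automatically a strong symmetric monoidal functor into $\mathrm{LinRel}$, landing in the self-dual objects $\F^X \oplus (\F^X)^\ast$. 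The coherence maps of $S$ are inherited from those of $\Phi$ and $I$, and these are readily checked to be symplectomorphisms, since the canonical pairing of $\F^{X+Y}$ with $(\F^{X+Y})^\ast$ decomposes as the sum of the pairings over $X$ and $Y$.

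It then remains to show that $S$ factors through the inclusion $\LagrRel \hookrightarrow \mathrm{LinRel}$; that is, each $Se$ is a Lagrangian subspace of $\overline{SX} \oplus SY$. By Proposition \ref{lagrangian_characterization}(v) it suffices to verify isotropy together with the dimension count $\dim Se = \tfrac12 \dim(\overline{SX} \oplus SY) = |X|+|Y|$. The dimension count is where the corelation hypothesis enters: because $e$ is jointly epic, the copairing $X+Y \to N$ is surjective, so $\Phi e$ (functions constant on equivalence classes) has dimension $|N|$, while $Ie$ (the kernel of the surjection $(\F^X)^\ast \oplus (\F^Y)^\ast \to (\F^N)^\ast$) has dimension $|X|+|Y|-|N|$; these sum to $|X|+|Y|$.

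The crux is the isotropy computation, which is where the conjugation on $SX$ and the mutually dual natures of $\Phi$ (built from the contravariant free vector space functor) and $I$ (from the covariant one) conspire. Writing a general element of $Se$ as $\big((\phi_X,i_X),(\phi_Y,i_Y)\big)$ with $(\phi_X,\phi_Y) \in \Phi e$ and $(i_X,i_Y) \in Ie$, a direct expansion of the form $\overline{\omega_X} \oplus \omega_Y$ on a pair of such elements collects into two instances of the \emph{twisted pairing} $T\big((i_X,i_Y),(\phi_X,\phi_Y)\big) = i_X(\phi_X) - i_Y(\phi_Y)$. The key lemma I would prove is that $T$ vanishes identically on $\Phi e \times Ie$: expanding $i_X(\phi_X) - i_Y(\phi_Y)$ and grouping terms by equivalence class $n$, the coefficient of the constant value of $\phi$ on class $n$ is exactly $\sum_{i(x)=n}\lambda_x - \sum_{o(y)=n}\lambda_y$, which is zero precisely by the defining balance condition of $Ie$. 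Thus both twisted-pairing terms vanish and the symplectic form is zero on $Se$, giving isotropy.

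I expect this isotropy lemma---recognising that the symplectic form reduces to the twisted pairing, and that this pairing is exactly the Kirchhoff-duality between ``voltages equal across a wire'' and ``currents balance at a junction''---to be the main conceptual obstacle; the bookkeeping of signs introduced by the conjugate space $\overline{SX}$ must be tracked with care, since it is precisely this twist that aligns the two Kirchhoff laws into a single Lagrangian condition. Once isotropy and the dimension count are in place, Lagrangianness follows from Proposition \ref{lagrangian_characterization}, so $S$ lands in $\LagrRel$; and because composition, tensor, and identities in $\LagrRel$ agree on the nose with those in $\mathrm{LinRel}$ (the former being a subcategory under the forgetful identification $\overline V = V$), the functoriality and strong symmetric monoidality transported from $\Phi$ and $I$ complete the proof.
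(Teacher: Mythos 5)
Your proof is correct and follows essentially the same route as the paper: obtain strong symmetric monoidality for free by viewing $S$ as the monoidal product $\Phi \oplus I$ in $\mathrm{LinRel}$, then verify Lagrangianness of each $Se$ via Proposition \ref{lagrangian_characterization}(v), with the dimension count $\dim(\Phi e)+\dim(Ie) = \#N + (\#(X+Y)-\#N)$ using joint epicness and the isotropy computation reducing, class by class, to the balance condition $\sum_{i(x)=n}\lambda_x = \sum_{o(y)=n}\lambda_y$. Your packaging of the isotropy step as the vanishing of the twisted pairing on $\Phi e \times Ie$ is just a cleaner organisation of the same expansion the paper carries out inline.
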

Note that $\Phi e\oplus Ie$ is more properly a subspace of the vector space $\F^X
\oplus \F^Y \oplus (\F^X)^\ast \oplus (\F^Y)^\ast$. In the above we consider it
instead as a subspace of the symplectic vector space $\overline{\F^X \oplus
(\F^X)^\ast}\oplus \F^Y \oplus (\F^Y)^\ast$, whose underlying vector space is
canonically isomorphic to $\F^X \oplus \F^Y \oplus (\F^X)^\ast \oplus
(\F^Y)^\ast$.

\begin{proof}
  As $S$ is the monoidal product in $\mathrm{LinRel}$ of the strong symmetric
  monoidal functors $\Phi, I\maps \mathrm{Corel} \to \mathrm{LinRel}$, it is
  itself a strong symmetric monoidal functor $\mathrm{Corel} \to
  \mathrm{LinRel}$. Thus it only remains to be checked that, with respect to the
  symplectic structure we put on the objects $SX$, the image of each
  corelation $e$ is Lagrangian.

  This follows from condition (v) of Proposition
  \ref{lagrangian_characterization}: that a Lagrangian subspace is an isotropic
  subspace of dimension half that of the symplectic vector space.

  Given $(\phi_X,i_X,\phi_Y,i_Y) \in Se$, note that for each $n \in N$ there
  exists a constant $\phi_n \in \F$ such that $\phi_n=\phi_X(x) = \phi_Y(y)$ for
  all $x \in X$ such that $i(x) = n$ and all $y \in Y$ such that $o(y)= n$. 
  Then $Se$ is isotropic as, for all 
  $(\phi_X,i_X,\phi_Y,i_Y)$, $(\phi_X',i_X',\phi_Y',i_Y') \in Se$ we have
  \begin{align*}
    &\phantom{=} \enskip
    \omega\big((\phi_X,i_X,\phi_Y,i_Y),(\phi_X',i_X',\phi_Y',i_Y')\big)
    \\
    &= \overline{\omega_X}\big((\phi_X,i_X),(\phi_X',i_X')\big) +
    \omega_Y\big((\phi_Y,i_Y),(\phi_Y',i_Y')\big) \\
    &= -\big(i_X'(\phi_X)-i_X(\phi_X')\big) + i_Y'(\phi_Y)-i_Y(\phi_Y') \\
    &= i_X(\phi_X')-i_Y(\phi_Y') + i_X'(\phi_X)-i_Y'(\phi_Y) \\
    &= \sum_{x \in X} \lambda_x dx(\phi_X') - \sum_{y \in Y} \lambda_y dy(\phi_Y') +
    \sum_{y \in Y} \lambda_y' dy(\phi_Y) - \sum_{x \in X} \lambda_x' dx(\phi_X)\\
    &= \sum_{n \in N}\left(\sum_{i(x)=n} \lambda_x dx(\phi_X') -
    \sum_{o(y)=n} \lambda_y dy(\phi_Y')\right) \\
    &\qquad \qquad+ \sum_{n \in N}\left(\sum_{i(x)=n} \lambda_x'dx(\phi_X) -
    \sum_{o(y)=n} \lambda_y' dy(\phi_Y)\right)\\
    &= \sum_{n \in N}\left(\sum_{i(x)=n} \lambda_x - \sum_{o(y)=n} \lambda_y
    \right)\phi'_n + \sum_{n \in N}\left(\sum_{i(x)=n} \lambda_x'- \sum_{o(y)=n}
    \lambda_y'\right)\phi_n\\
    &= 0.
  \end{align*}
  Second, $Se$ has dimension equal to 
  \begin{multline*}
    \dim(\Phi e)+ \dim(Ie) = \#N+\#(X+Y) - \#N \\
    = \#(X+Y) = \tfrac12
    \dim\big(\overline{\F^X \oplus (\F^X)^\ast} \oplus \F^Y \oplus
    (\F^Y)^\ast\big).
  \end{multline*}
  This proves the proposition.
\end{proof}

We have thus shown that we do indeed have a functor $S\maps \mathrm{Corel} \to
\LagrRel$. In the next section we shall see that this functor provides
the engine of our black box functor. To get there, an understanding of the
action of $S$ on functions will be helpful.

\begin{example}[Symplectification of functions] \label{ex:sympfunction}
  Let $f: X \to Y$ be a function; we may also consider it as the corelation $X
  \stackrel{f}\to Y \stackrel{\idn_Y}\leftarrow Y$. In this example we show that
  $Sf$ has the form 
  \[
    Sf = \big\{(\phi_X,i_X,\phi_Y,i_Y) \, \big\vert \, \phi_X = f^\ast\phi_Y,
    i_Y = f_\ast i_X \big\} \subseteq \overline{\F^X \oplus (\F^X)^\ast} \oplus
    \F^Y \oplus (\F^Y)^\ast,
  \]
  where $f^\ast$ is the pullback map
  \begin{align*}
    f^\ast\maps \F^Y &\longrightarrow \F^X; \\
    \phi &\longmapsto \phi \circ f,
  \end{align*}
  and $f_\ast$ is the pushforward map
  \begin{align*}
    f_\ast\maps (\F^X)^\ast &\longrightarrow (\F^Y)^\ast; \\
    i(-) &\longmapsto i(-\circ f).
  \end{align*}
  The claim is then that these pullback and pushforward constructions express
  Kirchhoff's laws.

  Recall that the corelation corresponding to $f$ partitions $X+Y$ into $\#Y$
  parts, each of the form $f^{-1}(y) \cup \{y\}$. The linear relation $\Phi(f)$
  requires that if $x \in X$ and $y \in Y$ lie in the same part of the
  partition, then they have the same potential: that is, $\phi_X(x) =
  \phi_Y(y)$. This is precisely the arrangement imposed by $f^\ast \phi_Y =
  \phi_X$: 
  \[
    \phi_X(x) = \phi_Y(f(x)) =\phi_Y(y).
  \] 
  On the other hand, the linear relation $I(f)$ requires that if $i_X = \sum_{x 
  \in X}\lambda_xdx$, $i_Y = \sum_{y \in Y}\lambda_y dy$, then for each $y \in Y$
  we have 
  \[
    \sum_{x \in f^{-1}(y)} \lambda_x = \lambda_y.
  \]
  This is precisely what is required by $f_\ast$: given any $\phi \in \F^Y$, we
  have
  \[
    f_\ast i_X(\phi) = f_\ast \sum_{x \in X}\lambda_xdx(\phi) = \sum_{x
    \in X}\lambda_xdx(\phi \circ f)= \sum_{y \in Y}\left( \sum_{x \in f^{-1}(y)}
    \lambda_x\right)dy.
  \]
  This gives us the above representation of $Sf$ when $f$ is a function.
\end{example}

\subsection{Lagrangian relations as decorated corelations}
\label{ssec.lagrrelascorel}
Recall that symplectic vector spaces of the form $\vectf{X}$ are known as
standard symplectic spaces. Here we shall use decorated corelations to
construct a hypergraph category $\mathrm{LagrCorel}$ in which the objects are standard
symplectic spaces and the morphisms are in one-to-one correspondence with
Lagrangian relations between them.  As every finite dimensional symplectic
vector spaces is symplectomorphic to a standrd symplectic space,
$\mathrm{LagrCorel}$ is equivalent as a symmetric monoidal category to
$\LagrRel$. This equivalence allows us to endow $\LagrRel$ with a hypergraph
structure so that the two categories are hypergraph equivalent. 

Analogous with our constructions in \textsection\ref{sec.setdecs} and
\textsection\ref{sec.allhypergraphs}, we will use the composite of the hom
functor $\LagrRel(\varnothing,-)\maps \LagrRel \to \Set$, the
symplectification functor $S\maps\corel \to \LagrRel$, and the functor
$\cospan(\FinSet) \to \corel$.

\begin{proposition}
Define 
\[
  \mathrm{Lagr}\maps (\cospan(\FinSet),+) \longrightarrow (\mathrm{Set},\times)
\]
as follows. For objects let $\mathrm{Lagr}$ map a finite set $X$ to the set
$\mathrm{Lagr}(X)$ of Lagrangian subspaces of the symplectic vector space
$\vectf{X}$.  For morphisms, given a cospan of finite sets $X \to Y$ we
take its jointly-epic part $e$, and thus obtain a map $Se\maps \vectf{X} \to
\vectf{Y}$. As Lagrangian relations map Lagrangian subspaces to Lagrangian
subspaces (Proposition \ref{prop:lagrangian_composition}), this gives a map: 
\begin{align*}
  \mathrm{Lagr}(f)\maps \mathrm{Lagr}(X) &\longrightarrow \mathrm{Lagr}(Y); \\
  L &\longmapsto Se(L).
\end{align*}
Moreover, equipping this functor with the family of maps
\begin{align*}
  \lambda_{N,M}\maps \mathrm{Lagr}(N) \times \mathrm{Lagr}(M) &\longrightarrow
  \mathrm{Lagr}(N+M);\\
  (L_N,L_M) &\longmapsto L_N \oplus L_M,
\end{align*}
and unit
\begin{align*}
  \lambda_1\maps 1 &\longrightarrow \mathrm{Lagr}(\varnothing);\\
  \bullet &\longmapsto \{0\}
\end{align*}
defines a lax symmetric monoidal functor.
\end{proposition}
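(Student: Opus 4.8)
The plan is to prove that $(\mathrm{Lagr}, \lambda)$ is a lax symmetric monoidal functor by verifying, in order, functoriality, the naturality of the coherence maps $\lambda_{N,M}$, and finally the coherence axioms (associativity, unitality, and the braiding compatibility). Since $\mathrm{Lagr}$ is built as a composite, the most efficient strategy is to exploit the functoriality and monoidality of its constituent pieces rather than computing everything by hand. Specifically, $\mathrm{Lagr}$ factors as
\[
  \cospan(\FinSet) \stackrel{\square}\longrightarrow \mathrm{Corel}
  \stackrel{S}\longrightarrow \LagrRel
  \xrightarrow{\LagrRel(\varnothing,-)} \Set,
\]
where $\square$ takes a cospan to its jointly-epic part (a strict hypergraph functor by the Corollary to Theorem \ref{thm.cospantocorel}), $S$ is the symplectification functor (strong symmetric monoidal by Proposition \ref{prop:sympfunctor}), and $\LagrRel(\varnothing,-)$ is the covariant hom functor on the monoidal unit, which is lax symmetric monoidal by the general reasoning of Proposition \ref{prop.monglobalsecs}. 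A composite of a strong (or strict) symmetric monoidal functor with a lax symmetric monoidal functor is again lax symmetric monoidal, so in principle functoriality and the coherence axioms follow formally once I identify $\mathrm{Lagr}$ with this composite.

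First I would check that the definition given really does agree with this composite. On objects, $\LagrRel(\varnothing, SX) = \LagrRel(\varnothing, \vectf{X})$ is exactly the set of Lagrangian subspaces of $\vectf{X}$, since a morphism $\varnothing \to \vectf{X}$ in $\LagrRel$ is a Lagrangian subspace of $\overline{\{0\}} \oplus \vectf{X} \cong \vectf{X}$; this matches $\mathrm{Lagr}(X)$. On morphisms, a cospan $X \to Y$ maps under $\square$ to its jointly-epic corelation $e$, then under $S$ to the Lagrangian relation $Se$, and finally post-composition with $Se$ sends a Lagrangian subspace $L \subseteq \vectf{X}$ to $Se(L) \subseteq \vectf{Y}$ — precisely the stated formula $L \mapsto Se(L)$. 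Functoriality is then immediate: $\square$ and $S$ are functors, and post-composition respects composition and identities, with the caveat that one must use Proposition \ref{prop:lagrangian_composition} to know $Se(L)$ is genuinely Lagrangian so that the map lands in $\mathrm{Lagr}(Y)$ as claimed.

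For the monoidal structure, I would verify that the proposed coherence maps $\lambda_{N,M}(L_N, L_M) = L_N \oplus L_M$ and $\lambda_1(\bullet) = \{0\}$ coincide with those inherited from the composite. The functor $S$ carries the coproduct $+$ to direct sum $\oplus$ of symplectic spaces (with coherence maps from those of $\Phi$ and $I$), and the hom functor $\LagrRel(\varnothing,-)$ has coherence maps taking a pair of subspaces to their direct sum via the reasoning of Proposition \ref{prop.monglobalsecs} (using $\rho^{-1}$ and the monoidal structure of $\LagrRel$). Composing these gives exactly $(L_N, L_M) \mapsto L_N \oplus L_M$. The naturality of $\lambda_{N,M}$ and the coherence axioms then follow from the naturality and coherence of the constituent functors' coherence maps, together with the fact that direct sum of Lagrangian subspaces is Lagrangian in the direct sum of ambient spaces (a consequence of Proposition \ref{lagrangian_characterization}, already noted after that proposition).

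The main obstacle I anticipate is the careful bookkeeping of the symplectic conjugation in the $\overline{(-)}$ appearing in the definition of Lagrangian relations, and ensuring the identification $\LagrRel(\varnothing, \vectf{X}) \cong \{\text{Lagrangian subspaces of } \vectf{X}\}$ interacts correctly with the direct-sum coherence maps — in particular that the canonical isomorphism $\overline{\F^N \oplus (\F^N)^\ast} \oplus \F^M \oplus (\F^M)^\ast \cong \overline{\vectf{N+M}}$ used implicitly in $S$ is compatible with $\lambda_{N,M}$. This is the same note of caution sounded when defining the tensor product in $\LagrRel$. Once this identification is pinned down, the remaining coherence verifications are routine, as they reduce to the already-established coherence of $S$, $\square$, and the hom functor; I would present them briefly by appeal to these prior results rather than expanding each commuting diagram.
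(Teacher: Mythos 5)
Your proof is correct and takes essentially the same approach as the paper: the paper's own proof is a single sentence deferring to the functoriality of $S$ and the properties of direct sums of Lagrangian subspaces, which is precisely what your factorisation through $\square$, $S$, and the hom functor $\LagrRel(\varnothing,-)$ unpacks in detail. The identification of $\mathrm{Lagr}(X)$ with $\LagrRel(\varnothing,\vectf{X})$ and the resulting coherence maps are the same device the paper itself uses for hom functors on the monoidal unit (Proposition \ref{prop.monglobalsecs}) and in the proof of Theorem \ref{thm.hypdeccorcats}, so your more explicit treatment is a faithful elaboration rather than a different route.
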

\begin{proof}
  The functoriality of this construction follows from the functoriality of $S$;
  the lax symmetric monoidality from the relevant properties of the direct sum
  of vector spaces.
\end{proof}

This functor gives us two categories. First, we can construct a decorated
cospans category $\mathrm{LagrCospan}$ on the functor $\mathrm{Lagr}$ restricted
to $\FinSet$, giving a category with each morphism a cospan of finite sets $X
\to N \leftarrow Y$ decorated by a Lagrangian subspace of $\vectf{N}$. This will
be useful for interpreting extended power functionals as Lagrangian subspaces in
a compositional setting. 

The second category is $\mathrm{LagrCorel}$. This as usual has finite sets as
objects. For morphisms $X \to Y$ it has Lagrangian subspaces of 
\[
  \vectf{X+Y} \cong \vectf{X} \oplus \vectf{Y}
\]
\emph{not}, as might be expected, Lagrangian subspaces of $\overline{\vectf{X}}
\oplus \vectf{Y}$. The reason is that composition is also a slightly tweaked
from relational composition. 

Composition in $\mathrm{LagrCorel}$ is enacted by the symplectification of the
cospan 
\[
  X+Y+Y+Z \xrightarrow{\idn_X+[\idn_Y,\idn_Y]+\idn_Z} X+Y+Z
  \xleftarrow{\iota_X+\iota_Z} X+Z.
\]
This relation relates 
\[
  (\phi_X,\phi_Y,\phi_Y',\phi_Z',i_X,i_Y,i_Y',i_Z') \in 
  \F^{X+Y+Y+Z}\oplus (\F^{X+Y+Y+Z})^\ast
\]
with 
\[
  (\phi_X,\phi_Z',i_X,i_Z') \in \F^{X+Z}\oplus (\F^{X+Z})^\ast
\]
if and only if two conditions hold: (i) $\phi_Y = \phi_Y'$ and (ii) $i_Y +i_Y' =
0$. Thus on the `potentials' $\phi$ we have relational composition, but on the
`currents' we instead require $i_Y=-i_Y'$. We intepret this to mean that the
decorations of $\mathrm{LagrCorel}$ record only the currents \emph{out} of the
nodes of the circuit. Thus if we were to interconnect two nodes, the current out
of the first must be equal to the negative of the current out---that is, the
current \emph{in}---of the second node. It is remarkable that this physical fact
is embedded so deeply in the underlying mathematics.

As these categories are constructed using decorated corelations, we have for
free a hypergraph functor 
\[
  \mathrm{LagrCospan} \longrightarrow \mathrm{LagrCorel}.
\]
The above discussion shows that we also have an embedding 
\[
  \mathrm{LagrCorel} \hooklongrightarrow \mathrm{LagrRel}
\]
mapping a finite set $X$ to the standard symplectic space $\vectf{X}$, and a
morphism $L \subset \vectf{X} \oplus \vectf{Y}$ to its image under the
symplectomorphism 
\[
  \overline{\idn_X} \oplus \idn_Y \maps\vectf{X} \oplus \vectf{Y}
  \stackrel{\cong}\longrightarrow \overline{\vectf{X}} \oplus \vectf{Y}.
\]
This is a fully faithful and essentially surjective strong symmetric monoidal
functor. Recall that $\mathrm{LagrCorel}$ is a hypergraph category; indeed, the
hypergraph structure is given by `ideal wires', as we have spent this chapter
exploring. Choosing an isomorphism of each symplectic vector space with a
standard symplectic vector space, we may equip each object of $\mathrm{LagrRel}$
with special commutative Frobenius structure, and hence consider the above
embedding an equivalence of hypergraph categories.

\section{The black box functor} \label{sec:blackbox}
We have now developed enough machinery to prove Theorem \ref{main_theorem}:
there is a hypergraph functor, the black box functor
\[  
\blacksquare\maps \Circ \to \LagrRel 
\]
taking passive linear circuits to their behaviours. To recap, we have so far
developed two categories: $\Circ$, in which morphisms are passive linear
circuits, and $\LagrRel$, which captures the external behaviour of such circuits.
We now define a functor that maps each circuit to its behaviour, before proving
it is indeed a hypergraph functor. 

The role of the functor we construct here is to identify all circuits with the
same external behaviour, making the internal structure of the circuit
inaccessible. Circuits treated this way are frequently referred to as
`black boxes', so we call this functor the \define{black box functor},
\[
\blacksquare\maps \Circ \to \LagrRel.
\] 
In this section we first provide the definition of this functor, check
that our definition really does map a circuit to its behaviour, and then finally
use decorated corelations to verify its functoriality.

\subsection{Definition}
It should be no surprise that the black box functor maps a finite set $X$ to the
symplectic vector space $\F^X \oplus (\F^X)^\ast$ of potentials and currents
that may be placed on that set. The challenge is to provide a succinct
statement of its action on the circuits themselves. To do this we take advantage
of three processes defined above.

Let $\Gamma\maps X \to Y$ be a circuit, that is a cospan of finite sets $X
\stackrel{i}{\longrightarrow} N \stackrel{o}{\longleftarrow} Y$ decorated by an
$\F^+$-graph $(N,E,s,t,r)$. To define the image of $\Gamma$ under our functor
$\blacksquare$, by definition a Lagrangian relation $\blacksquare(\Gamma):
\blacksquare(X) \to \blacksquare(Y)$, we must specify a Lagrangian subspace
$\blacksquare(\Gamma) \subseteq \overline{\F^X \oplus (\F^X)^\ast} \oplus \F^Y
\oplus (\F^Y)^\ast$.  

Recall that to each $\F^+$-graph $\Gamma$ we can associate a Dirichlet form, 
the extended power functional 
\begin{align*}
  P_\Gamma\maps \F^N &\longrightarrow \F; \\
  \phi &\longmapsto \frac{1}{2} \sum_{e \in E} \frac{1}{r(e)} \big( \phi(t(e)) -
  \phi(s(e))  \big)^2,
\end{align*}
and to this Dirichlet form we associate a Lagrangian subspace, thought of as a
Lagrangian relation:
\[
  \mathrm{Graph}(dP_\Gamma) = \{(\phi,d(P_\Gamma)_\phi) \mid \phi \in
  \F^N\}\maps \{0\} \to \F^N \oplus (\F^N)^\ast.
\]
Next, from the legs of the cospan $\Gamma$, the symplectification functor $S$
gives the Lagrangian relation
\[
  S[i,o]^\opp\maps \F^N \oplus (\F^N)^\ast \longrightarrow \F^X \oplus
  (\F^X)^\ast \oplus \F^Y \oplus (\F^Y)^\ast.
\]
Lastly, we have the symplectomorphism
\begin{align*}
  \overline{\idn_X}\maps \F^X \oplus (\F^X)^\ast &\longrightarrow \overline{\F^X \oplus
  (\F^X)^\ast}; \\
  (\phi,i) &\longmapsto (\phi,-i).
\end{align*}
Putting these all together:
\begin{definition}
  We define the black box functor 
  \[
    \blacksquare\maps \Circ \to \LagrRel 
  \]
  on objects by mapping a finite set $X$ to the symplectic vector space 
  \[
    \blacksquare(X) = \F^X \oplus (\F^X)^\ast.
  \]
  and on morphisms by mapping a circuit $\big(X \stackrel{i}\to N
  \stackrel{o}\leftarrow Y, \enspace (N,E,s,t,r)\big)$, to the Lagrangian
  relation
  \[
    \blacksquare(\Gamma) = (\overline{\idn_X}\oplus \idn_Y) \circ S[i,o]^\opp
    \circ \mathrm{Graph}(dP_\Gamma).
  \]
  The coherence maps are given by the natural isomorphisms $\F^X \oplus
  (\F^X)^\ast \oplus \F^Y \oplus (\F^Y)^\ast \cong \F^{X+Y} \oplus
  (\F^{X+Y})^\ast$ and $\{0\} \cong \F^\varnothing \oplus (\F^\varnothing)^\ast$
\end{definition}

As isomorphisms of cospans of $\F^+$-graphs amount to no more than a 
relabelling of nodes and edges, this construction is independent of the cospan 
chosen as representative of the isomorphism class of cospans forming the 
circuit.

\begin{theorem} \label{thm:main}
  The black box functor is a well-defined hypergraph functor.
\end{theorem}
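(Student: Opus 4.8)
The plan is to avoid verifying functoriality and hypergraph-preservation for $\blacksquare$ directly from its formula, and instead to exhibit it as a composite of maps each of which is already known to be a hypergraph functor through the decorated (co)relations machinery of Part~\ref{part.maths}. Concretely, I would factor $\blacksquare$ as
\[
\Circ \xrightarrow{\;Q\;} \mathrm{DirichCospan} \xrightarrow{\;T\;} \mathrm{LagrCospan} \xrightarrow{\;\square\;} \mathrm{LagrCorel} \xrightarrow{\;\sim\;} \LagrRel,
\]
where $Q$ is the strict hypergraph functor of \textsection\ref{ssec.dirichcospans} sending a circuit to the cospan decorated by its extended power functional $P_\Gamma$, the arrow $\square$ is the canonical decorated-cospans-to-decorated-corelations hypergraph functor (Theorem~\ref{thm.fcorel}, Proposition~\ref{prop.deccorelfunctors}) built in \textsection\ref{ssec.lagrrelascorel}, and the final arrow is the hypergraph equivalence $\mathrm{LagrCorel} \hooklongrightarrow \LagrRel$ of \textsection\ref{ssec.lagrrelascorel}. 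Since a composite of hypergraph functors is again a hypergraph functor, this immediately yields the desired structure once the middle arrow $T$ is supplied and the composite is shown to coincide with $\blacksquare$.

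To produce $T$, I would first check that $P \mapsto \mathrm{Graph}(dP) = L_P$ defines a monoidal natural transformation $\mathrm{Graph}(d-)\maps \mathrm{Dirich} \Rightarrow \mathrm{Lagr}$ between the decorating functors restricted to $\FinSet$. Well-definedness is Proposition~\ref{prop:qfls}: a Dirichlet form is in particular a quadratic form, hence determines a Lagrangian subspace of $\vectf{N}$. Monoidality reduces to the identity $L_{P \oplus P'} = L_P \oplus L_{P'}$, which is immediate from the pointwise definition of the sum of Dirichlet forms. The substantive check is naturality: for $f\maps N \to M$ one must verify $\mathrm{Lagr}(f)(L_P) = L_{f_\ast P}$, that is, that pushing $\mathrm{Graph}(dP)$ forward along the symplectification $Sf$---which by Example~\ref{ex:sympfunction} copies potentials and sums currents along $f$---yields the graph of the differential of $f_\ast P(\phi) = P(\phi \circ f)$. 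This is a direct computation using the chain rule for the formal differential together with the explicit description of $Sf$. Theorem~\ref{thm:decoratedfunctors}, applied with $A = \idn_{\FinSet}$, $B = \idn_{\Set}$ and $\theta = \mathrm{Graph}(d-)$, then promotes this natural transformation to a hypergraph functor $T\maps \mathrm{DirichCospan} \to \mathrm{LagrCospan}$.

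It then remains to verify that this composite is literally the functor $\blacksquare$ of the definition, and this is the step I expect to be the main obstacle. Tracing $\Gamma = (X \xrightarrow{i} N \xleftarrow{o} Y, (N,E,s,t,r))$ through the factorisation: $Q$ produces $P_\Gamma$; $T$ replaces it by $\mathrm{Graph}(dP_\Gamma) \subseteq \vectf{N}$; $\square$ reduces the copairing $[i,o]\maps X+Y \to N$ to its jointly $\mc E$-part while transporting the decoration; and the final equivalence reinterprets the result as a Lagrangian relation in $\overline{\F^X\oplus(\F^X)^\ast}\oplus\F^Y\oplus(\F^Y)^\ast$. The content is to show that the combined effect of ``restrict to the jointly-epic part and symplectify'' equals post-composition with $S[i,o]^\opp$ followed by the conjugation $\overline{\idn_X}\oplus\idn_Y$. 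The crucial ingredient is the identification of symplectified corelations with the minimization of Dirichlet forms (Theorem~\ref{thm:sympmin}): passing from the cospan to its corelation corresponds exactly to minimizing $P_\Gamma$ over the non-terminal nodes, i.e.\ to replacing the extended power functional by the power functional $Q_\Gamma$, whose graph is then split across $X$ and $Y$ by the symplectification of the boundary maps, with the sign flip on $X$ recording ``current out'' (cf.\ \textsection\ref{ssec.lagrrelascorel}). Careful bookkeeping of which summand carries potentials, which carries currents, and where the conjugation enters then shows agreement on morphisms; agreement on objects and coherence maps is immediate, since all four functors send $X$ to $\vectf{X}$ and carry the same canonical isomorphisms as coherence data. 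This establishes that $\blacksquare$ is a well-defined hypergraph functor.
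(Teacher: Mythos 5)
Your proposal is correct and follows essentially the same route as the paper: the same four-step factorisation $\Circ \to \mathrm{DirichCospan} \to \mathrm{LagrCospan} \to \mathrm{LagrCorel} \to \LagrRel$, with the middle arrow supplied by checking that $Q \mapsto \mathrm{Graph}(dQ)$ is a monoidal natural transformation (the paper's naturality check is likewise the identity $f_\ast dQ_{f^\ast\psi} = d(f_\ast Q)_\psi$), and the identification of the composite with the defining formula for $\blacksquare$ via Theorem~\ref{thm:sympmin}. No gaps to report.
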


We will prove this shortly. Before we get there, we first assure ourselves that
we have indeed arrived at the theorem we set out to prove.

\subsection{Minimization via composition of relations}

At this point the reader might voice two concerns: firstly, why does the
\emph{black box} functor refer to the \emph{extended} power functional $P$ and,
secondly, since it fails to talk about power minimization, how is it the same
functor as that defined in Theorem~\ref{main_theorem}? These fears are allayed
by the remarkable trinity of minimization, the symplectification of functions,
and Kirchhoff's laws. 

We have seen that symplectification of functions views the cograph of the
function as a picture of ideal wires, governed by Kirchhoff's laws (Example
\ref{ex:sympfunction}). We have also seen that Kirchhoff's laws are closely
related to the principle of minimium power (Theorems
\ref{thm:realizablepotentials} and \ref{thm:dirichletminimization}). The final
aspect of this relationship is that we may use symplectification of functions to
enact minimization.

\begin{theorem} \label{thm:sympmin}
  Let $\iota: \partial N \to N$ be an injection, and let $P$ be a Dirichlet form on
  $N$. Write $Q = \min_{N \setminus \partial N} P$ for the Dirichlet form on
  $\partial N$ given by minimization over $N \setminus \partial N$. Then we have an
  equality of Lagrangian subspaces
  \[
    S\iota^\opp \big( \mathrm{Graph}(dP)\big) = \mathrm{Graph}(dQ).
  \]
\end{theorem}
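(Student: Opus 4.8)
The plan is to unwind both sides as graphs of differentials of quadratic forms and then verify that they coincide as subsets of $\vectf{\partial N}$. First I would make the relation $S\iota^\opp$ explicit. The morphism $\iota^\opp$ is the corelation $N \xrightarrow{\idn_N} N \xleftarrow{\iota} \partial N$, whose copairing $[\idn_N,\iota]\maps N + \partial N \to N$ is an epimorphism; identifying $\partial N$ with its image $\iota(\partial N) \subseteq N$, the induced partition groups each $n \in N$ together with the at most one point of $\iota^{-1}(n)$. Feeding this through $S = \Phi \oplus I$ exactly as in Example \ref{ex:sympfunction} (with domain and codomain exchanged by the opposite), one finds that $S\iota^\opp$ consists of those $\big((\phi_N, i_N),(\psi, i_{\partial N})\big)$ with $\psi = \iota^\ast\phi_N = \phi_N|_{\partial N}$ and $i_N = \iota_\ast i_{\partial N}$; the current condition forces $i_N$ to vanish on $N \setminus \partial N$ and to restrict to $i_{\partial N}$ on $\partial N$.

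Next I would compute the image. A pair $(\psi, i_{\partial N})$ lies in $S\iota^\opp\big(\mathrm{Graph}(dP)\big)$ precisely when there is some $\phi_N \in \F^N$ with $i_N = dP_{\phi_N}$ meeting the two constraints above. Unwinding, this says that $\phi_N$ extends $\psi$ and that $dP_{\phi_N}$ vanishes on every $n \in N \setminus \partial N$ --- which is exactly the statement that $\phi_N$ is a \emph{realizable} extension of $\psi$ (Theorem \ref{thm:realizablepotentials}) --- together with $i_{\partial N} = dP_{\phi_N}|_{\partial N}$. Thus the image equals $\big\{(\psi,\, dP_{\phi_N}|_{\partial N}) \mid \psi \in \F^{\partial N},\ \phi_N \text{ a realizable extension of } \psi\big\}$.

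It then remains to identify this set with $\mathrm{Graph}(dQ)$. Existence of a realizable extension for each $\psi$, together with the fact that it may be chosen to depend linearly on $\psi$, follows by iterating the explicit one-variable formula of Lemma \ref{lem:onestepdirichletmin}; write $f\maps \F^{\partial N} \to \F^N$ for the resulting linear map, so that $Q = P \circ f$ by the definition in Theorem \ref{thm:dirichletminimization}. Since $f$ is linear, the formal chain rule gives $dQ_\psi = dP_{f\psi} \circ f$, and so $dQ_\psi(\psi') = dP_{f\psi}(f\psi')$; because $f\psi$ is realizable, $dP_{f\psi}$ is supported on $\partial N$, and as $f\psi'$ extends $\psi'$ this collapses to $\big(dP_{f\psi}|_{\partial N}\big)(\psi')$. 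This is the formal-derivative analogue of the calculation in Proposition \ref{boundary_current_determines_boundary_voltage}. Hence $dQ_\psi = dP_{f\psi}|_{\partial N}$, yielding one inclusion; Lemma \ref{lem:welldefineddirichletmin} shows $dP_{\phi_N}|_{\partial N}$ is independent of the chosen realizable extension $\phi_N$, so every image element has this form and the two subspaces agree. As both sides are graphs of differentials of quadratic forms, Proposition \ref{prop:qfls} confirms they are genuinely Lagrangian, completing the argument.

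I expect the main obstacle to be carrying out all of this over a general field $\F$ rather than $\R$, where honest minimization is unavailable and every appeal to the principle of minimum power must be replaced by the formal variational condition. The two delicate points are (i) deriving the linearity of the realizable-extension map $f$ from the algebraic formula of Lemma \ref{lem:onestepdirichletmin} instead of from an analytic minimization argument, and (ii) justifying the chain-rule identity $dQ_\psi = dP_{f\psi} \circ f$ for formal differentials via the multivariable Taylor theorem, as already invoked in Lemma \ref{lem:welldefineddirichletmin}. A secondary bookkeeping hazard is keeping the transpose and opposite conventions straight, so that the constraint from $I(\iota^\opp)$ really is ``$i_N$ vanishes off $\partial N$ and restricts to $i_{\partial N}$''; this vanishing is exactly what encodes realizability, i.e.\ Kirchhoff's current law at the interior nodes.
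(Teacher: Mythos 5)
Your proposal is correct in substance, and its first two stages coincide with the paper's proof: you unwind $S\iota^\opp$ exactly as in Example \ref{ex:sympfunction}, and you identify the image of $\mathrm{Graph}(dP)$ with the set of pairs $(\psi,\, dP_{\tilde\psi}|_{\partial N})$ where $\tilde\psi$ ranges over realizable extensions of $\psi$ --- the observation that the current constraint $\iota_\ast i = dP_\phi$ encodes vanishing of the interior partials is precisely the paper's step. Where you diverge is the endgame. The paper never constructs the extension map $f$ or differentiates $Q$: it first notes the image is Lagrangian and meets $\{0\}\oplus(\F^{\partial N})^\ast$ trivially, hence by Proposition \ref{prop:qfls} equals $\mathrm{Graph}(dO)$ for \emph{some} quadratic form $O$, and then evaluates $O(\psi) = dO_\psi(\psi) = dP_{\tilde\psi}(\tilde\psi) = P(\tilde\psi) = Q(\psi)$, so that equality of the quadratic forms forces equality of the graphs. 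Your route --- building the linear realizable-extension map $f$ from Lemma \ref{lem:onestepdirichletmin}, writing $Q = P\circ f$, and applying the formal chain rule --- is also valid and has the virtue of producing the explicit identity $dQ_\psi = dP_{f\psi}|_{\partial N}$, which the paper's argument only yields implicitly.

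One step needs repair. You invoke Lemma \ref{lem:welldefineddirichletmin} to conclude that $dP_{\phi_N}|_{\partial N}$ is independent of the chosen realizable extension $\phi_N$, but that lemma only asserts independence of the \emph{value} $P(\phi_N)$, not of the restricted differential. The fact you need is nonetheless true and cheap: for two realizable extensions $\tilde\psi,\tilde\psi'$ of $\psi$ and any $\psi'$, symmetry of the associated bilinear form gives $dP_{\tilde\psi}(f\psi') - dP_{\tilde\psi'}(f\psi') = dP_{f\psi'}(\tilde\psi - \tilde\psi') = 0$, since $dP_{f\psi'}$ is supported on $\partial N$ where $\tilde\psi - \tilde\psi'$ vanishes. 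Alternatively, you can bypass independence entirely: your chain-rule computation already gives the inclusion $\mathrm{Graph}(dQ) \subseteq S\iota^\opp(\mathrm{Graph}(dP))$, and since both sides are Lagrangian subspaces of $\F^{\partial N}\oplus(\F^{\partial N})^\ast$ (the left by Proposition \ref{prop:qfls}, the right as the image of a Lagrangian subspace under a Lagrangian relation), they have the same dimension and the inclusion is an equality. Either patch closes the gap with one line.
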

\begin{proof}
  Recall from Example \ref{ex:sympfunction} that $S\iota^\opp$ is the Lagrangian relation
  \[
    S\iota^\opp = \big\{(\phi, \iota_\ast i,\phi \circ \iota, i) \, \big\vert
      \, \phi \in \F^{N}, i \in (\F^{\partial N})^\ast \big\} \subseteq
      \overline{\F^N \oplus (\F^N)^\ast} \oplus \F^{\partial N} \oplus
      (\F^{\partial N})^\ast,
  \]
  where $\iota_\ast i(\phi) = i(\phi \circ \iota)$, and note that 
  \[
    \mathrm{Graph}(dP) = \big\{(\phi,dP_\phi) \,\big\vert\, \phi \in \F^N\big\}.
  \]
  This implies that their composite is given by the set
  \[
    S\iota^\opp \circ \mathrm{Graph}(dP) = \big\{(\phi \circ \iota, i)
    \,\big\vert\, \phi \in \F^N, i \in (\F^{\partial N})^\ast, dP_\phi =
  \iota_\ast i \big\}.
  \]
  We must show this Lagrangian subspace is equal to $\mathrm{Graph}(dQ)$.
  
  Consider the constraint $dP_\phi = \iota_\ast i$. This states that for all
  $\varphi \in \F^N$ we have $dP_\phi(\varphi) = i(\varphi\circ \iota)$. Letting
  $\chi_n: N \to \F$ be the function sending $n \in N$ to $1$ and all other
  elements of $N$ to $0$, we see that when $n \in N \setminus \partial N$ we
  must have
  \[
    \frac{dP}{d\varphi(n)}\Bigg\vert_{\varphi = \phi}  = dP_\phi(\chi_n) = 
    i(\chi_n \circ \iota) = i(0) = 0.
  \]
  So $\phi$ must be a realizable extension of $\psi = \phi \circ \iota$. We
  henceforth write $\tilde\psi = \phi$. As $\iota$ is injective, $\psi = \phi
  \circ \iota$ gives no constraint on $\psi \in \F^{\partial N}$. 
 
  We next observe that we can write $S\iota^\opp \circ \mathrm{Graph}(dP) =
  \mathrm{Graph}(dO)$ for some quadratic form $O$. Recall that Proposition
  \ref{prop:qfls} states that a Lagrangian subspace $L$ of $\F^{\partial N}
  \oplus (\F^{\partial N})^\ast$ is of the form $\mathrm{Graph}(dO)$ if and only
  if $L$ has trivial intersection with $\{0\} \oplus (\F^N)^\ast$. But indeed,
  if $\psi = 0$ then $0$ is a realizable extension of $\psi$, so $\iota_\ast i =
  dP_0 = 0$, and hence $i = 0$. 
  
  It remains to check that $O = Q$. This is a simple computation:
  \[
    O(\psi) = dO_\psi(\psi) = dO_\psi(\tilde\psi \circ \iota) = \iota_\ast
    dQ_\psi(\tilde\psi) = dP_{\tilde\psi}(\tilde\psi) = P(\tilde\psi) = Q(\psi),
  \]
  where $\tilde\psi$ is any realizable extension of $\psi \in \F^{\partial N}$.
\end{proof}

Write $\iota: \partial N \to N$ for the inclusion of the terminals into the set
of nodes of the circuit, and $i\rvert^{\partial N}: X \to \partial N$,
$o\rvert^{\partial N}: Y \to \partial N$ for the respective corestrictions of
the input and output map to $\partial N$. Note that $[i,o] = \iota \circ
[i\rvert^{\partial N}, o\rvert^{\partial N}]$. Also, in the introduction, we
introduced the `twisted symplectification' $S^t$, which we can now understand as
an application of the functor $S$ followed by the isomorphism
$\overline{\idn_X}$ of a standard symplectic space with its conjugate.  Then we
have the equalities of sets, and thus Lagrangian relations:
\begin{align*}
  &\phantom{= .}(\overline{\idn_X}\oplus \idn_Y) \circ S[i,o]^\opp \circ
  \mathrm{Graph}(dP_\Gamma) \\
  &= (\overline{\idn_X}\oplus \idn_Y) \circ S[i\rvert^{\partial
  N},o\rvert^{\partial N}]^\opp \circ S\iota^\opp \circ \mathrm{Graph}(dP_\Gamma) \\
  &= (\overline{\idn_X}\oplus \idn_Y) \circ S[i\rvert^{\partial
  N},o\rvert^{\partial N}]^\opp \circ \mathrm{Graph}(dQ_\Gamma) \\
  &= \bigcup_{v \in \mathrm{Graph}(dQ)} S^ti\rvert^{\partial N}(v) \times
  So\rvert^{\partial N}(v)
\end{align*}
We see now that Theorem \ref{thm:main} is a restatement of Theorem
\ref{main_theorem} in the introduction.

\subsection{Proof of functoriality} \label{sec:proof}
To prove that the black box functor is a hypergraph functor, we merely assemble
it from other functors, almost all of which we have already discussed. Indeed,
our factorisation will run
\[
  \blacksquare\maps \Circ \to \mathrm{DirichCospan} \to \mathrm{LagrCospan}
  \to \mathrm{LagrCorel} \to \LagrRel.
\]
The first three of these functors are decorated corelations functors, assembled
from the monoidal natural transformations
\[
  \xymatrixcolsep{4pc}
  \xymatrixrowsep{3pc}
  \xymatrix{
    (\FinSet,+) \ar^{\mathrm{Circuit}}[r] \ar@{=}[d] \drtwocell
    \omit{_\:\theta} & (\Set,\times) \ar@{=}[d]  \\
    (\FinSet,+) \ar^{\mathrm{Dirich}}[r] \ar@{=}[d] \drtwocell
    \omit{_\:\beta} & (\Set,\times) \ar@{=}[d]  \\
    (\FinSet,+) \ar^{\mathrm{Lagr}}[r] \ar@{^{(}->}[d] \drtwocell
    \omit{_\:\pi} & (\Set,\times) \ar@{=}[d]  \\
    (\cospan(\FinSet),+) \ar_{\mathrm{Lagr}}[r] & (\Set,\times).
  }
\]
The last factor, the functor from $\mathrm{LagrCorel}$ to $\mathrm{LagrRel}$, is
the hypergraph equivalence discussed at the end of Subsection
\ref{ssec.lagrrelascorel}.

We have also already discussed the strict hypergraph functors $\Circ \to
\mathrm{DirichCospan}$ given by $\theta$ (in
\textsection\ref{ssec.dirichcospans}) and $\mathrm{LagrCospan} \to
\mathrm{LagrCorel}$ given by $\pi$ (in \textsection\ref{ssec.lagrrelascorel}).
It remains to address $\beta$.

\begin{proposition}
Let
\[
  \beta\maps (\mathrm{Dirich},\delta) \Longrightarrow (\mathrm{Lagr},\lambda)
\]
be the collection of functions
\begin{align*}
  \beta_N\maps \mathrm{Dirich}(N) &\longrightarrow \mathrm{Lagr}(N); \\
  Q &\longmapsto \{(\phi,dQ_\phi) \mid \phi \in \F^N\} \subseteq \vectf{N}.
\end{align*}
Then $\beta$ is a monoidal natural transformation.
\end{proposition}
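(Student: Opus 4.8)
The plan is to verify the two defining conditions of a monoidal natural transformation from Definition earlier in the excerpt: first, that $\beta$ is a natural transformation (the naturality squares commute), and second, that it respects the coherence maps $\delta$ and $\lambda$ (the hexagon/square and unit triangle).

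For naturality, I would fix a function $f\maps N \to M$ and chase a Dirichlet form $Q$ around the square
\[
  \xymatrix{
    \mathrm{Dirich}(N) \ar[r]^{\beta_N} \ar[d]_{\mathrm{Dirich}(f)} &
    \mathrm{Lagr}(N) \ar[d]^{\mathrm{Lagr}(f)} \\
    \mathrm{Dirich}(M) \ar[r]_{\beta_M} & \mathrm{Lagr}(M).
  }
\]
Going right then down sends $Q$ to $Sf(\{(\phi,dQ_\phi) \mid \phi \in \F^N\})$, where $f$ is read as the corelation $N \xrightarrow{f} M \xleftarrow{\idn_M} M$; going down then right sends $Q$ to $\{(\psi, d(f_\ast Q)_\psi) \mid \psi \in \F^M\}$, where $f_\ast Q(\psi) = Q(\psi \circ f) = Q(f^\ast \psi)$. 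The key computational step is to identify these two Lagrangian subspaces. Using the description of $Sf$ on functions from Example \ref{ex:sympfunction}, the image $Sf(L)$ consists of all $(\psi, i_M)$ such that there is $(\phi, i_N) \in L$ with $\phi = f^\ast \psi$ and $i_M = f_\ast i_N$. Substituting $\phi = f^\ast\psi$ and $i_N = dQ_{f^\ast \psi}$, the task reduces to the chain-rule identity $d(f_\ast Q)_\psi = f_\ast\big(dQ_{f^\ast\psi}\big)$ as elements of $(\F^M)^\ast$, which follows by differentiating $f_\ast Q = Q \circ f^\ast$ and using that $f^\ast$ is linear with transpose $f_\ast$. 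This is the only genuine calculation, and I expect it to be the main obstacle: one must be careful to check it holds even though $f$ need not be injective (so $f^\ast$ need not be surjective), which is exactly where the symplectification copies potentials and splits currents.

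For monoidality, I would check the square relating $\delta_{N,M}$ and $\lambda_{N,M}$: both composites send a pair $(Q_N, Q_M)$ to a Lagrangian subspace of $\vectf{N+M}$. Along one route we form the sum Dirichlet form ${\iota_N}_\ast Q_N + {\iota_M}_\ast Q_M$ and take its graph; along the other we take $\beta_N(Q_N) \oplus \beta_M(Q_M) = L_{Q_N} \oplus L_{Q_M}$. Since the extended power functional of a disjoint sum is the sum of the functionals and the differential is additive, $d({\iota_N}_\ast Q_N + {\iota_M}_\ast Q_M)_{(\phi_N,\phi_M)} = \big((dQ_N)_{\phi_N}, (dQ_M)_{\phi_M}\big)$ under the canonical identification $\vectf{N+M} \cong \vectf{N} \oplus \vectf{M}$, so the two subspaces agree. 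The unit triangle is immediate: $\beta_\varnothing$ sends the unique (zero) Dirichlet form on $\F^\varnothing$ to the zero-dimensional Lagrangian subspace $\{0\}$, matching $\lambda_1$.

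Finally, I should note for completeness that each $\beta_N$ is well defined, i.e. lands in $\mathrm{Lagr}(N)$: this is exactly the content of Proposition \ref{prop:qfls}, which guarantees that the graph of the differential of any quadratic form over $\F$ is a Lagrangian subspace of $\vectf{N}$. With well-definedness, naturality, and the two coherence conditions verified, $\beta$ is a monoidal natural transformation. I would present the naturality chain-rule computation carefully and dispatch the monoidality conditions briefly, as they reduce to additivity of differentiation and the triviality of the empty case.
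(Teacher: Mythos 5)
Your proposal is correct and follows essentially the same route as the paper: naturality reduces to the identity $d(f_\ast Q)_\psi = f_\ast\bigl(dQ_{f^\ast\psi}\bigr)$ (the differential commuting with pullback along $f^\ast$), and monoidality to additivity of the differential on disjoint sums together with the trivial empty case. The only cosmetic difference is that you route the ``right then down'' composite through the explicit description of $Sf$ from the symplectification example, whereas the paper writes the image set directly; the computation is the same.
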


\begin{proof}
Naturality requires that the square
\[
\xymatrix{
  \mathrm{Dirich}(N) \ar[r]^{\beta_N} \ar[d]_{\mathrm{Dirich}(f)} &
  \mathrm{Lagr}(N) \ar[d]^{\mathrm{Lagr}(f)}  \\
  \mathrm{Dirich}(M) \ar[r]_{\beta_M} & \mathrm{Lagr}(M)
}
\]
commutes for every function $f\maps N \to M$. This is primarily a consequence of the
fact that the differential commutes with pullbacks. As we did in Example
\ref{ex:sympfunction},
write $f^\ast$ for the pullback map and $f_\ast$ for the pushforward map.
Then $\mathrm{Dirich}(f)$ maps a Dirichlet form $Q$ on $N$ to the form $f_\ast Q$,
and $\beta_M$ in turn maps this to the Lagrangian subspace 
\[
  \big\{(\psi,d(f_\ast Q)_\psi) \, \big\vert \, \psi \in \F^M\big\} \subseteq
  \F^M \oplus (\F^M)^\ast.
\]
On the other hand, $\beta_N$ maps a Dirichlet form $Q$ on $N$ to the Lagrangian
subspace
\[
\big\{(\phi,dQ_\phi) \,\big\vert\, \phi \in \F^N\big\}\subseteq
  \F^N \oplus (\F^N)^\ast, 
\]
before $\mathrm{Lagr}(f)$ maps this to the Lagrangian subspace
\[
  \big\{(\psi, f_\ast dQ_\phi) \,\big\vert\, \psi \in \F^M, \phi =
  f^\ast(\psi)\big\} \subseteq \F^M\oplus (\F^M)^\ast.
\]
But 
\[
  f_\ast dQ_{f^\ast\psi} = d(f_\ast Q)_{\psi},
\]
so these two processes commute.

Monoidality requires that the diagrams 
\[
\xymatrix{
  \mathrm{Dirich}(N) \times \mathrm{Dirich}(M) \ar[r]^(.52){\beta_N \times
  \beta_M} \ar[d]_{\delta_{N,M}} & \mathrm{Lagr}(N) \times \mathrm{Lagr}(M)
  \ar[d]^{\lambda_{N,M}}  \\
  \mathrm{Dirich}(N+M) \ar[r]_{\beta_{N+M}} & \mathrm{Lagr}(N+M)
}
\]
and
\[
  \xymatrix{
  & 1 \ar[dl]_{\delta_\varnothing} \ar[dr]^{\lambda_\varnothing}\\
\mathrm{Dirich}(\varnothing)  \ar[rr]_{\beta_\varnothing} &&
\mathrm{Lagr}(\varnothing)
}
\]
commute. These do: the Lagrangian subspace corresponding to the sum of Dirichlet
forms is equal to the sum of the Lagrangian subspaces that correspond to the
summand Dirichlet forms, while there is only a unique map $1 \to
\mathrm{Lagr}(\varnothing)$.
\end{proof}

We thus obtain a strict hypergraph functor 
\[   
\mathrm{DirichCospan} \to \mathrm{LagrCospan},
\]
which simply replaces the decoration on each cospan in $\mathrm{DirichCospan}$
with the corresponding Lagrangian subspace.

Observe now that the black box functor does indeed factor as described:
\[
    \blacksquare\maps \Circ \to \mathrm{DirichCospan} \to \mathrm{LagrCospan}
    \to \mathrm{LagrCorel} \to \LagrRel.
\]
The first factor takes a circuit $\Gamma$ to its extended power functional
$P_\Gamma$ and the second to its graph $\mathrm{Graph}(P_\Gamma)$. The next two
functors turn this from a cospan of finite sets decorated by a Lagrangian
subspace into a genuine Lagrangian relation.


\chapter{Further directions} \label{ch.further}
In summary, we have set up a framework for formalising network-style
diagrammatic languages, and illustrated it in detail with applications to two
types of such languages.

We close with a few words regarding ongoing research extending these ideas. For
balance, the first is theoretical, regarding the use of epi-mono factorisation
systems for black boxing systems, and the second applied, regarding consequences
of our work on passive linear networks in Chapter \ref{ch.circuits}. In
particular, we include these to illustrate that the existence of the black
box functor is not just a pretty category theoretic abstraction, but creates a
language in which genuinely applied, novel results can be formulated and proved.

\section{Bound colimits}

A theme of this thesis is that while cospans in some category $\mc C$ with
finite colimits are adequate for describing the interconnection of open systems,
their composition often retains more information than can be observed from the
boundary. In the cases of ideal wires (\textsection\ref{ssec.equivrels}), linear
relations (\textsection\ref{ssec.linrel}), and signal flow diagrams (Chapter
\ref{ch.sigflow}), corelations with respect to an epi-mono factorisation system
instead provide the right notion of black boxed open system. Why? What do
different factorisation systems model, and how to choose the right one?

In \cite{RSW08}, Rosebrugh, Sabadini, and Walters describe how composition of
\linebreak
cospans can be used to calculate colimits. Here we argue that the idea of an
open system or, equivalently, a system with boundary, motivates a variant of
the colimit, which we term the bound colimit.

Given a category, an open system is a morphism $s\maps B \to T$. Call $B$ the
boundary, and $T$ the (total) system. For example, in $\FinSet$ a system is just
a set of `connected components', and the boundary describes how we may connect
other systems to these components.

In the above, we have worked with the boundary as two objects, so an open
system is a cospan $B_1 \to T \leftarrow B_2$. To return to the above picture,
we just observe our boundary consists of the discrete two object category
$\{B_1,B_2\}$, and take a colimit, yielding the open system $B_1+B_2 \to T$.

In general we can think of specifying an open system in this way: not just as a
single morphism $B \to T$, but as a diagram in our category, where some objects
$B_i$ are denoted `boundary' objects, and the remainder $T_j$ `system' objects. It
helps intuition to add the condition that our diagram cannot contain maps from a
system object to a boundary object.

We can compute a more efficient representation of our diagrammed open system by
first taking the colimit of the full subcategory with objects the boundary
objects $B_i$, to arrive at a single boundary object $B$. We can also take a
colimit over the entire diagram to arrive at a single total system object $T$.
Since a cocone over the entire diagram is a fortiori a cocone over the boundary
subdiagram, we get a map $B \to T$. This is our open system.

Suppose now we are interested in the most efficient representation of this
system. In the examples mentioned above this has meant $s\maps B \to T$ is an
epimorphism. Why? Consider the following intuition. Varying over all `viewing
objects' $V$, a system $T$ is completely described by $\hom(T,V)$ (in the sense
of the Yoneda lemma).  But from the boundary $B$, not all of these maps can
be witnessed. Instead the best we can do is study the image of $\hom(T,V)$ in
$\hom(B,V)$, where the image is given by precomposition with $s\maps B \to T$.
Call two systems with boundary $B$ equivalent if for all objects $V$ they give
the same image in $\hom(B,V)$.

Now, many different systems will be equivalent. For example, if we take any open
system $s\maps B \to T$ and split mono $m\maps T \to T'$ (with retraction
$m'\maps T' \to T$), then any map $f\maps B \xrightarrow{s} T \xrightarrow{r} V$
can be written $f\maps B \xrightarrow{s;m} T' \xrightarrow{m';r} V$.  And any
map $g\maps B \xrightarrow{s;m} T' \xrightarrow{r'}V$ can be written $g\maps B
\xrightarrow{s} T \xrightarrow{m;r'} V$. So $s\maps B \to T$ and $s;m\maps B \to
T'$ give the same image in $\hom(B,V)$.  (These images being the images of the
maps $\hom(T,V) \to \hom(B,V)$ and $\hom(T',V) \to \hom(B,V)$ they respectively
induce.)

Consequently, if we are interested in efficient representations, then we should
subtract the largest split monic we can from T. This retracts the redundant
information. Also note that if we have an epimorphism $e\maps B \to T$ then the
map $\hom(T,V) \to \hom(B,V)$ is one-to-one.

Thus, if our category has an epi-split mono factorisation system, then
equivalence classes of open systems can be indexed by epis $e\maps B \to T$. The
epi $e$ is the minimal representation, or the black boxed system.

Suppose now that we want to compute the interconnection---or colimit---and minimal
representation---the epi part---simultaneously. This allows us to consider a
setting where there are only minimal representations: where everything is always
`black boxed'. The object of this computation is the bound colimit. 

To compute the minimal representation we were interested in the image of system
homset $\hom(T,V)$ in the boundary homset $\hom(B,V)$. What is the universal
property of the epi part $e\maps B \to M$ of an open system $s\maps B \to T$?

First, assuming we have an epi-split mono factorisation $s = e;m$, with some
retraction $m'$, then $e\maps B \to M$ is a cocone over $B$ that extends
(nonuniquely) to a cocone
\[
  \xymatrix{
    & M \\
    B \ar[ur]^e \ar[rr]^s &&  T \ar[ul]_{m'}
  }
\]
over $s\maps B \to T$. Second, if there is another cocone $c\maps B \to N$ over $B$ that extends to
some cocone
\[
  \xymatrix{
    & N \\
    B \ar[ur]^c \ar[rr]^s &&  T \ar[ul]_{x}
  }
\]
over $s\maps B \to T$, then there exists a unique map $m;x\maps M \to N$ such that
\[
  \xymatrix{
    M \ar[rr]^{m;x} && N\\
    & B \ar[ul]^e \ar[ur]_c
  }
\]
commutes. Uniqueness is by the epi property of $e$.

Thus the `minimal representation' $M$ is initial in the cocones over $B$ that
extend to cocones over $s: B \to T$.  The intuition is that $M$ finds the
balance between removing useful information---if we remove too much it won't
extend to a cocone over $B \to T$---and removing redundant information---if we
don't remove enough then it won't be initial.

To generalise this diagrams of open systems, rather than just an arrow $B \to
T$, we instead want the apex of the cocone that is initial out of all cocones
over the boundary diagram that extend to a cocone over the system diagram. Call
this the bound colimit. Then, instead of describing composition of corelations
as `taking the jointly epic part of the pushout', we might simply say we
`compute the bound pushout'. Further work will explore the relationship between
epi-mono corelations and bound colimits in analogy with the relationship between
cospans and colimits.

\section{Open circuits and other systems}

We conclude with brief mentions of two applications of the category of passive
linear networks, illustrating the further utility of formalising network-style
diagrammatic languages as hypergraph categories.

The first application concerns explicit use of the compositional structure to
study networks of linear resistors. In \cite{Jek}, Jekel uses the compositional
structure on circuits to analyse the inverse problem. Roughly speaking, the
inverse problem seeks to determine the resistances on the edges of a circuit
knowing only the underlying graph and the behaviour. Jekel defines a notion of
elementary factorisation of circuits in terms of the compositional structure in
$\Circ$, and shows that these factorisations can be used to state a sufficient
condition for the size of connections through the graph to be detectable from
the linear-algebraic properties of the behaviour.

The second application demonstrates how formalisation of network-style
diagrammatic languages allows formal exploration of the relationships between
them. Indeed, work with Baez and Pollard \cite{BFP,Pol16} defines a decorated cospan
category $\mathrm{DetBalMark}$ in which the morphisms are open detailed balanced
continuous-time Markov chains, or open detailed balanced Markov processes for
short.  An open Markov process is one where probability can flow in or out of
certain states called `inputs' and `outputs'. A detailed balanced Markov process
is one equipped with a chosen equilibrium distribution such that the flow of
probability from any state $i$ to any state $j$ is equal to the flow of
probability from $j$ to $i$. 

The behaviour of a detailed balanced open Markov process is determined by a
principle of minimum dissipation, closely related to Prigogine's principle of
minimum entropy production. Moreover, the semantics of these Markov processes are
given by a functor $\square\maps \mathrm{DetBalMark} \to \mathrm{LagrRel}$. This has a strong analogy with the principle of
minimum power and the black box functor, and we show there is a functor $K\maps
\mathrm{DetBalMark} \to \Circ$ and a natural transformation $\alpha$
\[
  \xymatrixrowsep{2ex}
  \xymatrix{
    \mathrm{DetBalMark} \ar[dd]_{K} \ar[drr]^{\square}  \\
    &\twocell \omit{_\:\alpha}& \mathrm{LagrRel} \\
    \Circ. \ar[urr]_{\blacksquare} 
  }
\]
This natural transformation makes precise these analogies, and shows that we can
model detailed balanced Markov processes with circuits of linear resistors in a
compositional way.


%
%
%
%
%
%
%
%
%
%
%
%
%


\phantomsection
\addcontentsline{toc}{chapter}{Bibliography}



\end{document}